\definecolor{grey1}{rgb}{0.5,0.5,0.5}
\definecolor{grau}{rgb}{0.8,0.8,0.8}
\newcommand{\chen}[1]{\color{red}}
\definecolor{grau}{rgb}{0.8,0.8,0.8}
\numberwithin{equation}{section}
\numberwithin{equation}{section}
\theoremstyle{plain}
\newtheorem{theorem}{Theorem}[section]
\newtheorem{lemma}{Lemma}[section]
\newtheorem{corollary}{Corollary}[section]
\theoremstyle{remark}
\newtheorem{remark}{Remark}[section]
\DeclareMathOperator*{\argmin}{arg\,min}
\newcommand{\prob}{{\mathbb{P}}}
\newcommand{\var}{{\mathrm{var}}}
\newcommand{\expect}{\mathbb{E}}
\newcommand{\vect}{\mathrm{vec}}
\newcommand{\vech}{\mathrm{vech}}
\newcommand{\Span}{\mathrm{Span}}
\newcommand{\diag}{\mathrm{diag}}
\newcommand{\vol}{\mathrm{vol}}
\newcommand{\transpose}{^{\mathrm{T}}}
\newcommand{\inverseT}{^{-\mathrm{T}}}
\newcommand{\bLambda}{{\bm{\Lambda}}}
\newcommand{\calA}{{\mathcal{A}}}
\newcommand{\calB}{{\mathcal{B}}}
\newcommand{\calC}{{\mathcal{C}}}
\newcommand{\calE}{{\mathcal{E}}}
\newcommand{\calG}{{\mathcal{G}}}
\newcommand{\calI}{{\mathcal{I}}}
\newcommand{\calJ}{{\mathcal{J}}}
\newcommand{\calL}{{\mathcal{L}}}
\newcommand{\calN}{{\mathcal{N}}}
\newcommand{\calP}{{\mathcal{P}}}
\newcommand{\calS}{{\mathcal{S}}}
\newcommand{\calT}{{\mathcal{T}}}
\newcommand{\calU}{{\mathcal{U}}}
\newcommand{\balpha}{{\boldsymbol{\alpha}}}
\newcommand{\bv}{{\mathbf{v}}}
\newcommand{\bx}{{\mathbf{x}}}
\newcommand{\bA}{{\mathbf{A}}}
\newcommand{\bc}{{\mathbf{c}}}
\newcommand{\bB}{{\mathbf{B}}}
\newcommand{\bC}{{\mathbf{C}}}
\newcommand{\bD}{{\mathbf{D}}}
\newcommand{\bE}{{\mathbf{E}}}
\newcommand{\bF}{{\mathbf{F}}}
\newcommand{\bG}{{\mathbf{G}}}
\newcommand{\bH}{{\mathbf{H}}}
\newcommand{\bM}{{\mathbf{M}}}
\newcommand{\bK}{{\mathbf{K}}}
\newcommand{\bJ}{{\mathbf{J}}}
\newcommand{\bO}{{\mathbf{O}}}
\newcommand{\bP}{{\mathbf{P}}}
\newcommand{\bQ}{{\mathbf{Q}}}
\newcommand{\bR}{{\mathbf{R}}}
\newcommand{\bS}{{\mathbf{S}}}
\newcommand{\bT}{{\mathbf{T}}}
\newcommand{\bU}{{\mathbf{U}}}
\newcommand{\bV}{{\mathbf{V}}}
\newcommand{\bW}{{\mathbf{W}}}
\newcommand{\bX}{{\mathbf{X}}}
\newcommand{\bY}{{\mathbf{Y}}}
\newcommand{\bZ}{{\mathbf{Z}}}
\newcommand{\bw}{{\mathbf{w}}}
\newcommand{\bg}{{\mathbf{g}}}
\newcommand{\bell}{{\boldsymbol{\ell}}}
\newcommand{\bt}{{\mathbf{t}}}
\newcommand{\bu}{{\mathbf{u}}}
\newcommand{\by}{{\mathbf{y}}}
\newcommand{\be}{{\mathbf{e}}}
\newcommand{\bgamma}{{\bm{\gamma}}}
\newcommand{\bGamma}{{\bm{\Gamma}}}
\newcommand{\bbeta}{{\bm{\beta}}}
\newcommand{\bPi}{{\bm{\Pi}}}
\newcommand{\bpi}{{\bm{\pi}}}
\newcommand{\bDelta}{{\bm{\Delta}}}
\newcommand{\bOmega}{{\bm{\Omega}}}
\newcommand{\bSigma}{{\bm{\Sigma}}}
\newcommand{\beps}{{\bm{\varepsilon}}}
\newcommand{\eye}{{\mathbf{I}}}
\newcommand{\btheta}{{\bm{\theta}}}
\newcommand{\bTheta}{{\bm{\Theta}}}
\newcommand{\bmu}{{\bm{\mu}}}
\newcommand{\bPsi}{{\bm{\Psi}}}
\newcommand{\bvarphi}{{\bm{\varphi}}}
\newcommand{\bpsi}{{\bm{\psi}}}
\newcommand{\zero}{{\bm{0}}}
\newcommand{\eps}{\epsilon}
\newcommand{\keywords}[1]{\par\addvspace\baselineskip\noindent\enspace\ignorespaces \textbf{Keywords: }#1}
\setlist[enumerate]{itemsep=0mm}
\setlist[itemize]{itemsep=0mm}
\author{Fangzheng Xie}
\author{Fangzheng Xie\thanks{Department of Statistics, Indiana University}
% \footnotemark[1] \thanks{Correspondence should be addressed to Fangzheng Xie (fxie@iu.edu)}
}
\title{Euclidean Representation of Low-Rank Matrices and Its Statistical Applications}
\begin{document}
\allowdisplaybreaks

\maketitle
% \doublespacing
%\section*{Acknowledgement}

\begin{abstract}
Low-rank matrices are pervasive throughout statistics, machine learning, signal processing, optimization, and applied mathematics. In this paper, we propose a novel and user-friendly Euclidean representation framework for low-rank matrices. Correspondingly, we establish a collection of technical and theoretical tools for analyzing the intrinsic perturbation of low-rank matrices in which the underlying referential matrix and the perturbed matrix both live on the same low-rank matrix manifold. 
Our analyses show that, locally around the referential matrix, the sine-theta distance between subspaces is equivalent to the Euclidean distance between two appropriately selected orthonormal basis, circumventing the orthogonal Procrustes analysis. We also establish the regularity of the proposed Euclidean representation function, which has a profound statistical impact and a meaningful geometric interpretation. 
These technical devices are applicable to a broad range of statistical problems. Specific applications considered in detail include Bayesian sparse spiked covariance model with non-intrinsic loss, efficient estimation in stochastic block models where the block probability matrix may be degenerate, and least-squares estimation in biclustering problems. Both the intrinsic perturbation analysis of low-rank matrices and the regularity theorem may be of independent interest. 
\end{abstract}

\keywords{Low-rank matrix manifold, intrinsic perturbation, sparse spiked covariance model, stochastic block model, biclustering}

\tableofcontents

\section{Introduction} % (fold)
\label{sec:introduction}

% \subsection{Background} % (fold)
% \label{sub:background}

Due to the emergence of high-dimensional data, low-rank matrix models have been extensively studied and broadly applied in statistics, probability, machine learning, optimization, applied mathematics, and various application domains. Statistical analysis of low-rank matrix models also appear in principal component analysis \citep{doi:10.1198/106186006X113430,amini2009,pmlr-v22-vu12,vu2013minimax,ma2013,cai2013sparse,koltchinskii2017new}, covariance matrix estimation \citep{fan2013large,cai2015optimal,johnstone2001distribution,cai2016,donoho2018}, low-rank matrix denoising and completion \citep{candes2009exact,candes2010power,donoho2014,chatterjee2015,cai2018}, random graph inference \citep{sussman2012consistent,rohe2011,athreya2017statistical,abbe2018community,tang2018limit,10.1093/biomet/asaa031}, among others. From the practical perspective, specific domain-oriented applications involving low-rank matrices include collaborative filtering \cite{goldberg1992using}, neural science \citep{eichler2017complete,8570772}, social networks \citep{young2007random,nickel2008random}, and cryo-EM \citep{doi:10.1137/120863642}. 

Spectral methods have been ubiquitous to gain insight into low-rank matrix models in the presence of high-dimensional data. For example, in the stochastic block model, the $K$-means clustering procedure is applied to the rows of the leading eigenvector matrix of the observed adjacency matrix or its normalized Laplacian matrix to discover the underlying community structure \citep{rohe2011,sussman2012consistent,abbe2018community,tang2018limit}. Meanwhile, the theoretical understanding of spectral methods has also been developed based on matrix perturbation analysis \citep{doi:10.1137/0707001,wedin1972perturbation,bhatia2013matrix,Stewart90,doi:10.1093/biomet/asv008,cai2018}. Specifically, given an approximately low-rank matrix $\bSigma_0$ and a perturbation matrix $\bE$ that is comparatively smaller than $\bSigma_0$ in magnitude, matrix perturbation analysis studies how the eigenspaces or singular subspaces of the perturbed matrix $\bSigma:=\bSigma_0 + \bE$ differ from those of the original matrix $\bSigma_0$ in terms of the behavior of the perturbation $\bE$. Notably, when $\bSigma$ and $\bSigma_0$ are symmetric matrices, the famous Davis-Kahan theorem \citep{doi:10.1137/0707001,doi:10.1093/biomet/asv008} asserts that the distance between the subspace spanned by the leading eigenvectors of $\bSigma$ and that of $\bSigma_0$, formally defined via the canonical angles (see Section \ref{sub:notations} below), can be upper bounded by the matrix norms of $\bE$. 
Several extensions and generalizations of the matrix perturbation tools have been developed. \cite{wedin1972perturbation} and \cite{cai2018} later extended the Davis-Kahan theorem to deal with singular subspaces and rectangular matrices. In the context where $\bE$ is a mean-zero random matrix, \cite{https://doi.org/10.1002/rsa.20367} and \cite{o2018random} obtained optimal and sharp results that improve the classical deterministic bounds due to \cite{doi:10.1137/0707001} and \cite{wedin1972perturbation}. Recently, there has also been a collection of works focusing on the 
entrywise perturbation behavior of eigenvectors as well as the two-to-infinity norm of eigenvector perturbation analysis when $\bE$ is deterministic or random \citep{pmlr-v83-eldridge18a,cape2019,10.1093/biomet/asy070,JMLR:v18:16-140,abbe2020}.

% [General setting of the paper]
In this paper, we focus on the intrinsic perturbation analysis of low-rank matrices, in which both the referential matrix $\bSigma_0$ and the perturbed matrix $\bSigma = \bSigma_0 + \bE$ lie on the same low-rank matrix manifold. A key feature of this setup is that the rank of $\bSigma$ is the same as the rank of $\bSigma_0$. 
% both $\bSigma_0$ and $\bSigma$ have the same rank.
% low-rank structure, i.e., $\mathrm{rank}(\bSigma_0) = \mathrm{rank}(\bSigma)$. 
This is slightly different from the classical random perturbation setup where the referential matrix $\bSigma_0$ is perturbed by a mean-zero but potentially full-rank random matrix $\bE$, as the resulting perturbed matrix $\bSigma$ may not necessarily be low-rank. Nevertheless, understanding the intrinsic perturbation of low-rank matrices is of fundamental interest in many statistical problems. For example, in Bayesian statistics, the referential matrix $\bSigma_0$ may correspond to the ground truth of the parameter of interest, and $\bSigma$ is, under the posterior distribution, a random matrix taking values in a low-rank matrix manifold such that $\mathrm{rank}(\bSigma) = \mathrm{rank}(\bSigma_0)$. In the context of intrinsically perturbed low-rank matrices, the classical tools following the Davis-Kahan framework (e.g., those developed in \citealp{doi:10.1137/0707001,wedin1972perturbation,10.1093/biomet/asv008}), although still valid, are less user-friendly to obtain sharp and optimal results in various statistical problems. The main difference is that, in the random perturbation setup, the matrix $\bE$ either has independent mean-zero random variables as its entries (e.g., low-rank matrix denoising and completion, stochastic block model), or is a sum of independent mean-zero random matrices having simple structures (e.g., covariance matrix estimation, canonical correlation analysis). In contrast, in scenarios where the analysis of the difference between two matrices having the same rank is desired (e.g., sparse principal component analysis), the perturbation matrix $\bE$ is structurally more complicated for analysis. Hence, it brings additional technical challenges when the Davis-Kahan framework is applied directly. 

This paper establishes a novel Euclidean representation framework for low-rank matrices and provides a collection of theoretical and technical tools for studying the intrinsic perturbation of low-rank matrices. Both symmetric square matrices and general rectangular matrices are considered. Specifically, leveraging the Cayley parameterization for subspaces \citep{jauch2020}, we propose a matrix-valued function to represent generic
 low-rank matrices using vectors in an open subset of the Euclidean space. Furthermore, built upon the proposed Euclidean representation framework, we show that the intrinsic perturbation of low-rank matrices can be characterized by the behavior of their representing Euclidean vectors. Consequently, the Frobenius sine-theta distance between subspaces (formally defined in Section \ref{sec:preliminaries} below) is locally equivalent to the Frobenius distance between two suitably selected Stiefel matrices spanning the corresponding subspaces, which is user-friendly and circumvents the need for an orthogonal Procrustes analysis. Another fundamental result of the proposed framework is that the collection of low-rank matrices of interest can be viewed as a Euclidean manifold, and our proposed Euclidean representation function serves as a coordinate system for the low-rank matrix manifold. 
  % we establish a homeomorphism between an open subset of the Euclidean space and the collection of low-rank matrices, 
% Hence, we can successfully transform the analysis of non-Euclidean low-rank matrix perturbation into the analysis of Euclidean vectors through the proposed coordinate system. 

Subsequently, we apply the newly developed Euclidean representation framework for low-rank matrices and the accompanying technical devices to several statistical problems in detail. In this paper, we present the applications of the proposed framework to the following statistical problems involving low-rank matrices and obtain sharp and optimal results:
\begin{enumerate}
	\item Bayesian sparse spiked covariance model: Spiked covariance model, initially named by \cite{doi:10.1198/jasa.2009.0121}, is a natural probabilistic model for principal component analysis. We focus on the sparse spiked covariance model where the covariance matrix can be decomposed as the sum of a low-rank matrix and an identity matrix, and the eigenvector matrix of the low-rank component exhibits the so-called row sparsity (formally defined in Section \ref{sub:bayesian_sparse_pca_and_non_intrinsic_loss}). Specifically, we consider a Bayesian model where a sparsity enforcing prior is assigned to the rows of the leading eigenvector matrix and apply the obtained technical tools to obtain the minimax-optimal posterior contraction rate under the spectral norm loss. The main technical challenge is that, unlike the Frobenius norm loss, the spectral norm loss is not equivalent to the intrinsic metric of the model, and the rate-optimal posterior contraction under a non-intrinsic loss is non-trivial. To the best of our knowledge, this is the first non-trivial rate-optimal posterior contraction result under a non-intrinsic loss function for Bayesian sparse spiked covariance model.

	\item Stochastic block model: Statistical analysis of network data has been gaining popularity in statistics, machine learning, physics, and social science. Among various network models, the stochastic block model \citep{HOLLAND1983109} has been serving as a simple yet flexible enough model for network analysis. In the case where the underlying block probability matrix $\bSigma_0$ may be potentially singular, we propose a novel one-step estimator for $\bSigma_0$ based on the proposed Euclidean representation framework for low-rank matrices and apply the obtained technical tools to show that the one-step estimator is asymptotically efficient. Furthermore, the one-step estimator has a smaller mean-squared error asymptotically than the naive maximum likelihood estimator proposed in \cite{bickel2013} when $\bSigma$ is singular. 

	\item Biclustering: Biclustering was originally proposed in \cite{doi:10.1080/01621459.1972.10481214}. Suppose one observes a rectangular data matrix, and both the row and columns of the matrix possess certain cluster structures. When the observed data matrix is binary, it can be viewed as a rectangular extension of the stochastic block model when the data matrix is the off-diagonal block of the adjacency matrix of a bipartite network. We apply the obtained technical tools for rectangular low-rank matrices and establish the asymptotic normality of the least-squares estimator for the block mean matrix when it could be potentially rank-deficient.
\end{enumerate}
We remark that the Euclidean representation framework for low-rank matrices and the corresponding theoretical results can be applied to the analysis of several other statistical contexts, including sparse canonical correlation analysis, cross-covariance matrix estimation, sparse reduced-rank regression, Bayesian denoising of simultaneously low-rank and sparse matrices, among others, to obtain new and optimal results in comparison with existed works. 

The rest of the paper is organized as follows. In Section \ref{sec:preliminaries}, we present the proposed Euclidean representation framework for low-rank matrices after the introduction of basic notations and definitions. Section \ref{sec:main_results} elaborates on our main technical results, including the intrinsic perturbation theorems and the regularity theorem of the proposed Euclidean representation function. In Section \ref{sec:applications}, we apply the proposed framework and the obtained technical tools to Bayesian sparse spiked covariance model, the stochastic block model, and biclustering. Further discussion is included in Section \ref{sec:discussion}. The proofs of the main results are contained in Section \ref{sec:proofs_of_the_main_results}. 
% section introduction (end)

\section{Preliminaries} % (fold)
\label{sec:preliminaries}

\subsection{Notations and definitions} % (fold)
\label{sub:notations}

We use the symbol $:=$ to assign mathematical definitions of quantities. 
For $a, b\in \mathbb{R}$, let $a\wedge b := \min(a, b)$ and $a\vee b := \max(a, b)$. 
For a positive integer $p$, let $[p] := \{1,\ldots,p\}$. Given two positive integers $p, k$, and two functions $\tau_1,\tau_2: [p]\to [k]$, the Hamming distance between $\tau_1$ and $\tau_2$ is defined by
% \[
$d_{\mathrm{H}}(\tau_1, \tau_2) := \sum_{i = 1}^p\mathbbm{1}\{\tau_1(i)\neq \tau_2(i)\}$.
% \]
For two non-negative sequences $(a_n)_{n = 1}^\infty$ and $(b_n)_{n = 1}^\infty$, we use the symbol $a_n\lesssim b_n$ ($a_n\gtrsim b_n$, resp.) to mean that $a_n\leq Cb_n$ ($a_n\geq Cb_n$, resp.) for some constant $C > 0$, and we use the notation $a_n\asymp b_n$ to indicate that $a_n\lesssim b_n$ and $a_n\gtrsim b_n$. The notation $\bA^{\dagger}$ denotes the Moore-Penrose pseudoinverse of an arbitrary matrix $\bA$. 
We use $C, C_0, C_1, C_2, c, c_0, c_1, c_2, \ldots $ to denote generic constants that may change from line to line unless otherwise stated. For any two positive semidefinite matrices $\bA$ and $\bB$ of the same dimension, we use $\bA\succeq \bB$ ($\bA\preceq \bB$, resp.) to indicate that $\bA - \bB$ is positive semidefinite ($\bB - \bA$ is positive semidefinite, resp.). The $r\times r$ identity matrix is denoted by $\eye_r$, the $p$-dimensional zero vector is denoted by $\zero_p$, and the $p\times q$ zero matrix is denoted by $\zero_{p\times q}$. We reserve the symbol $\eye(\cdot)$ without a subscript for the Fisher information matrix of a (regular) statistical model and it should not be confused with the identity matrix. Given two positive integers $p, r$, we denote $\mathbb{O}(p, r) = \{\bU\in\mathbb{R}^{p\times r}:\bU\transpose\bU = \eye_r\}$ the collection of all $p\times r$ Stiefel matrices and write $\mathbb{O}(r):= \mathbb{O}(r, r)$. 
For any $\bU\in\mathbb{O}(p, r)$, we use $\Span(\bU)$ to denote the $r$-dimensional subspace in $\mathbb{R}^p$ spanned by the columns of $\bU$. 
The collection of all $r\times r$ symmetric matrices is denoted by $\mathbb{M}(r)$ and the collection of all $r\times r$ symmetric positive definite matrices is denoted by $\mathbb{M}_+(r)$. For a matrix $\bSigma\in\mathbb{R}^{p_1\times p_2}$ and indices $i\in[p_1],j\in [p_2]$, let $[\bSigma]_{ij}$ denote the element on the $i$th row and $j$th column of $\bSigma$, $[\bSigma]_{i*}$ denote the $i$th row of $\bSigma$, and $[\bSigma]_{j*}$ denote the $j$th column of $\bSigma$. Furthermore, we use $\sigma_1(\bSigma),\ldots,\sigma_{p_1\wedge p_2}(\bSigma)$ to denote the singular values of $\bSigma$ sorted in the non-increasing order, i.e., $\sigma_1(\bSigma)\geq\ldots\geq\sigma_{p_1\wedge p_2}(\bSigma)$. When $\bSigma$ is a $p\times p$ symmetric square matrix, $\lambda_1(\bSigma),\ldots,\lambda_p(\bSigma)$ denote the eigenvalues of $\bSigma$ sorted in the non-increasing order in magnitude, namely, $|\lambda_1(\bSigma)|\geq\ldots\geq|\lambda_p(\bSigma)|$. The spectral norm of a general matrix $\bSigma$, denoted by $\|\bSigma\|_2$, is the largest singular value of $\bSigma$, and the Frobenius norm of $\bSigma$, denoted by $\|\bSigma\|_{\mathrm{F}}$, is defined to be $\|\bSigma\|_{\mathrm{F}} = (\sum_{i = 1}^{p_1}\sum_{j = 1}^{p_2}[\bSigma]_{ij}^2)^{1/2}$. 
For a Euclidean vector $\bx = [x_1,\ldots,x_p]\transpose\in\mathbb{R}^p$, we denote $[\bx]_i:= x_i$, $\|\bx\|_2$ the usual Euclidean norm $\|\bx\|_2 = (\sum_ix_i^2)^{1/2}$, let $B_2(\bx, \eps):=\{\by\in\mathbb{R}^p:\|\by - \bx\|_2 < \eps\}$, and let $\mathrm{diag}(\bx)$ be the $p\times p$ diagonal matrix with $x_i$ being the element on its $i$th row and $i$th column. 

For a $p_1\times p_2$ matrix $\bM$, the operator $\vect(\cdot)$ transform $\bM$ to a $p_1p_2$-dimensional Euclidean vector by stacking the columns of $\bSigma$ consecutively, i.e., 
\begin{align*}
\vect(\bM)
& = [[\bM]_{*1}\transpose,[\bM]_{*2}\transpose,\ldots,[\bM]_{*p_2}]\transpose]\transpose\\
& = [[\bM]_{11},\ldots,[\bM]_{p_11},[\bM]_{12},\ldots,[\bM]_{p_12},\ldots,[\bM]_{1p_2},\ldots,[\bM]_{p_1p_2}]\transpose.
\end{align*}
The operator $\vech(\cdot)$ transform an $r\times r$ square symmetric matrix $\bM$ to an $r(r + 1)/2$-dimensional Euclidean vector by eliminating all its super-diagonal elements, i.e., 
\[
\mathrm{vech}(\bM) = [[\bM]_{11}, [\bM]_{21},\ldots,[\bM]_{r1}, [\bM]_{22},\ldots,[\bM]_{r2},\ldots,[\bM]_{rr}]\transpose.
\]
For any two positive integers $p, q$, we denote $\bK_{pq}$ the $pq\times pq$ commutation matrix such that $\vect(\bM\transpose) = \bK_{pq}\vect(\bM)$ for any $\bM\in \mathbb{R}^{p\times q}$, and denote $\mathbb{D}_p$ the duplication matrix such that $\vect(\bM) = \mathbb{D}_p\vech(\bM)$ for any symmetric $\bM\in\mathbb{R}^{p\times p}$. We refer the readers to \cite{magnus1988linear} for a review of the properties of the commutation matrix $\bK_{pq}$ and the duplication matrix $\mathbb{D}_p$. 
For two matrices $\bA\in\mathbb{R}^{p\times q}$ and $\bB\in\mathbb{R}^{m\times n}$, we use $\bA\otimes\bB$ to denote the Kronecker product of $\bA$ and $\bB$, defined to be the $pm\times qn$ matrix of the form
\[
\bA\otimes \bB = \begin{bmatrix*}
[\bA]_{11}\bB & [\bA]_{12}\bB & \ldots & [\bA]_{1q}\bB\\
[\bA]_{21}\bB & [\bA]_{22}\bB & \ldots & [\bA]_{2q}\bB\\
\vdots & \vdots & & \vdots\\
[\bA]_{p1}\bB & [\bA]_{p2}\bB & \ldots & [\bA]_{pq}\bB
\end{bmatrix*}.
\]

The distance between linear subspaces can be measured in terms of the canonical angles, formally defined as follows. Given two Stiefel matrices $\bU,\bU_0\in\mathbb{O}(p, r)$, let $\sigma_1(\bU_0\transpose\bU)\geq\ldots\geq\sigma_r(\bU_0\transpose\bU)\geq0$ be the singular values of $\bU_0\transpose\bU$. Note the singular values of $\bU_0\transpose\bU$ are unitarily invariant and only depend on $\Span(\bU)$ and $\Span(\bU_0)$. The canonical angles between $\bU_0$ and $\bU$ are defined to be the diagonal entries of 
\[
\Theta(\bU_0, \bU):=\mathrm{diag}\left[\cos^{-1}\left\{\sigma_1(\bU_0\transpose\bU)\right\},\ldots,\cos^{-1}\left\{\sigma_1(\bU_0\transpose\bU)\right\}\right]\in\mathbb{R}^{r\times r}.
\]
Then the spectral sine-theta distance and the Frobenius sine-theta distance between $\Span(\bU_0)$ and $\Span(\bU)$ are defined by $\|\sin\Theta(\bU_0, \bU)\|_2$ and $\|\sin\Theta(\bU_0, \bU)\|_{\mathrm{F}}$, respectively.

% subsection notations (end)

\subsection{Euclidean representation of subspaces} % (fold)
\label{sub:euclidean_representation_of_subspaces}
We first introduce the Cayley parameterization of Subspaces proposed by \cite{jauch2020}, which serves as an intermediate step towards our proposed Euclidean representation framework for low-rank matrices. 

The collection of all $r$-dimensional subspaces in $\mathbb{R}^p$ is of fundamental interest in multivariate statistics. When equipped with an appropriate topology and an atlas, the collection of all $r$-dimensional subspace in $\mathbb{R}^p$ is referred to as the Grassmannian and is denoted by $\calG(p, r)$. Nevertheless, the elements in $\calG(p, r)$ are too abstract and inconvenient for analysis. It is therefore desirable to find a concrete and user-friendly respresentation of general $r$-dimensional subspace in $\mathbb{R}^p$.

Suppose $\mathbb{S}\subset\mathbb{R}^p$ is an $r$-dimensional linear subspace in $\mathbb{R}^p$. It is always possible to find a Stiefel matrix $\bU\in\mathbb{O}(p, r)$ whose columns span $\mathbb{S}$, and one may use $\bU$ as a representer for the subspace $\mathbb{S}$. The disadvantage of this representation is that $\bU$ cannot be uniquely identified by $\mathbb{S}$, since for any orthogonal rotation matrix $\bW\in\mathbb{O}(r)$, $\Span(\bU) = \Span(\bU\bW)$. Such non-identifiability of orthonormal basis brings natural inconvenience for statistical analysis because the Fisher information matrix with a non-identifiable parameterization will be singular. 
% As is known to all, given the subspace $\mathbb{S}$, its orthonormal basis, namely, the columns of $\bU$, can only be identified up to an orthogonal matrix in $\mathbb{O}(r)$, in the sense that the columns of $\bU\bW$ always form another orthonormal basis spanning $\mathbb{S}$ for any $\bW\in\mathbb{O}(r)$. 
% The set of all $r$-dimensional subspaces in $\mathbb{R}^p$ is called the Grassmannian and is denoted by $\calG(p, r)$. Denote $\prob_\calG$ the uniform distribution on the Grassmannian $\calG(p, r)$. For $\prob_\calG$-almost every subspace $\calS\in\calG(p, r)$, 
Thanks to the result of \cite{jauch2020}, almost every $r$-dimensional subspace in $\mathbb{R}^p$ can be uniquely represented by a Stiefel matrix $\bU = [\bQ_1\transpose, \bQ_2\transpose]\transpose$ in $\mathbb{O}(p, r)$ such that $\bQ_1\in\mathbb{R}^{r\times r}$ is symmetric positive definite. Formally, denote
\[
\mathbb{O}_+(p, r) = \left\{
\bU = \begin{bmatrix*}
\bQ_1\\\bQ_2
\end{bmatrix*}\in\mathbb{O}(p, r):\bQ_1\in\mathbb{M}_+(r)
\right\}
\]
and suppose $l:\mathbb{O}_+(p, r)\to\calG(p, r)$ is the map defined by $\ell(\bU) = \mathrm{Span}(\bU)$. By Proposition 3.2 in \cite{jauch2020}, the image of $l$ has probability one with respect to the uniform probability distribution on $\calG(p, r)$. Hence, with probability one, every $r$-dimensional subspace in $\mathbb{R}^p$ can be uniquely represented by a Stiefel matrix in $\mathbb{O}_+(p, r)$, and therefore, finding a suitable representation of subspaces reduces to finding a suitable representation of Stiefel matrices in $\mathbb{O}_+(p, r)$.
% The subset $\mathbb{O}_+(p, r)$ of all $p\times r$ Stiefel matrices with orthonormal columns serves as the basis of the Cayley parameterization of subspaces. 

% We consider the Cayley parameterization introduced in \cite{jauch2020}. 
Let $\bA$ be a $(p - r)\times r$ matrix with $\|\bA\|_2 < 1$, and denote $\bvarphi = \vect(\bA)$. Then for any $\bU\in\mathbb{O}_+(p, r)$, the Cayley parameterization of $\bU$ via $\bvarphi = \vect(\bA)$ is defined as the following map \citep{jauch2020}:
\begin{align}\label{eqn:Cayley_transform}
\bU:\bvarphi\in\mathbb{R}^{(p - r)r}\mapsto \bU(\bvarphi) := (\eye_p + \bX_\bvarphi)(\eye_p - \bX_\bvarphi)^{-1}\eye_{p\times r}\in\mathbb{O}(p, r),
\end{align}
where 
\begin{align}
\bX_\bvarphi = \begin{bmatrix*}
\zero_{r\times r} & -\bA\transpose\\ \bA & \zero_{(p - r)\times(p - r)}
\end{bmatrix*},
% \quad \bB = -\bB\transpose\in\mathrm{Skew}(r), 
\quad 
\text{and}
\quad
\eye_{p\times r} = \begin{bmatrix*}
\eye_r\\ \zero_{(p - r)\times r}
\end{bmatrix*}.
\end{align}
% The Cayley transform (CT) of $\bU = [\bQ_1\transpose, \bQ_2\transpose]\transpose$ is defined by : 
The Cayley parameterization \eqref{eqn:Cayley_transform} above immediately leads to the following explicit expression for the submatrices of $\bU$:
\begin{align*}
\bU & = \begin{bmatrix*}
\bQ_1\\\bQ_2
\end{bmatrix*}
= \begin{bmatrix*}
 (\eye_r - \bA\transpose\bA)(\eye_r + \bA\transpose\bA)^{-1}\\
 2\bA(\eye_r + \bA\transpose\bA)^{-1}
\end{bmatrix*}.
\end{align*}
By Proposition 3.4 in \cite{jauch2020}, the Cayley parameterization $\bU(\cdot)$, viewed as a map from $\{\bvarphi = \vect(\bA):\bA\in\mathbb{R}^{(p - r)\times r},\|\bA\|_2 < 1\}$ to $\mathbb{R}^{p\times r}$, is also differentiable with the Fr\'echet derivative
\begin{align}\label{eqn:Differential_CT}
D\bU(\bvarphi) = 2[\eye_{p\times r}\transpose (\eye_p - \bX_\bvarphi)\inverseT\otimes(\eye_p - \bX_\bvarphi)^{-1}]\bGamma_\bvarphi,
\end{align}
where $\bGamma_\bvarphi$ is a matrix such that $\bGamma_\bvarphi\bvarphi = \vect(\bX_\bvarphi)$. An explicit formula for $\bGamma_\bvarphi$ is also available \citep{jauch2020}:
% $\eye_{p\times r} = \begin{bmatrix*}
% \eye_r & \zero_{r\times (p - r)}
% \end{bmatrix*}\transpose$, 
% \begin{align}\label{eqn:Gamma_matrix}
$\bGamma_\bvarphi = (\eye_{p^2} - \bK_{pp})(\bTheta_1\transpose\otimes\bTheta_2\transpose)$,
% \end{align}
% $\bK_{pp}$ is the commutation matrix satisfying $\bK_{pp}\vect(\bB) = \vect(\bB\transpose)$ for any matrix $\bB\in\mathbb{R}^{p\times p}$, 
where
$\bTheta_1 = \eye_{p\times r}\transpose$, and $\bTheta_2 = [\zero_{(p-r)\times r}, \eye_{p - r}]$. The following theorem is a refined version of Proposition 3.4 in \cite{jauch2020} in terms of a global and dimension-free control of the remainder. 
% We formally summarize this result in the following theorem.
\begin{theorem}
% [Second-order deviation of CT]
\label{thm:second_order_deviation_CT}
Let $\bU:\bvarphi\mapsto\bU(\bvarphi)$ be the Cayley parameterization defined by \eqref{eqn:Cayley_transform}. Denote $D\bU(\bvarphi)$ the Fr\'echet derivative of $\bU$ given by \eqref{eqn:Differential_CT}. Then for all $\bvarphi, \bvarphi_0$, there exists a matrix $\bR_\bU(\bvarphi, \bvarphi_0)\in\mathbb{R}^{p\times r}$, such that
\begin{align*}
\vect\{\bU(\bvarphi) - \bU(\bvarphi_0)\} = D\bU(\bvarphi_0)(\bvarphi - \bvarphi_0) + \vect\{\bR_\bU(\bvarphi, \bvarphi_0)\}, 
\end{align*}
where $\|\bR_\bU(\bvarphi, \bvarphi_0)\|_{\mathrm{F}}\leq 8 \|\bvarphi - \bvarphi_0\|_2^2$. The above equation can be written in the following matrix form:
\begin{align*}
\bU(\bvarphi) - \bU(\bvarphi_0)
& = 2(\eye_p - \bX_{\bvarphi_0})^{-1}(\bX_{\bvarphi} - \bX_{\bvarphi_0})(\eye_p - \bX_{\bvarphi_0})^{-1}\eye_{p\times r} + \bR_\bU(\bvarphi, \bvarphi_0).
\end{align*}
In particular, 
% \begin{align*}
$
\|\bU(\bvarphi) - \bU(\bvarphi_0)\|_{\mathrm{F}}\leq 2\sqrt{2}\|\bvarphi - \bvarphi_0\|_2
$
% \end{align*}
for all $\bvarphi$ and $\bvarphi_0$. 
\end{theorem}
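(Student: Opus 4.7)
The plan is to reduce the analysis of the Cayley map to a resolvent expansion of $(\eye_p - \bX_\bvarphi)^{-1}$. The key algebraic manoeuvre is the identity $\eye_p + \bX_\bvarphi = 2\eye_p - (\eye_p - \bX_\bvarphi)$, which rewrites the Cayley transform as $\bU(\bvarphi) = 2(\eye_p - \bX_\bvarphi)^{-1}\eye_{p\times r} - \eye_{p\times r}$. After this reformulation, all the subtleties of the map collapse into expanding a single matrix resolvent around the base point $\bvarphi_0$.

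The engine is the resolvent identity applied twice. Writing $A = \eye_p - \bX_\bvarphi$, $B = \eye_p - \bX_{\bvarphi_0}$, and $\bDelta = \bX_\bvarphi - \bX_{\bvarphi_0} = B - A$, the single-step identity $A^{-1} - B^{-1} = A^{-1}\bDelta B^{-1}$ gives $\bU(\bvarphi) - \bU(\bvarphi_0) = 2A^{-1}\bDelta B^{-1}\eye_{p\times r}$, from which the Lipschitz bound will follow immediately once the norm estimates below are in place. Substituting $A^{-1} = B^{-1} + A^{-1}\bDelta B^{-1}$ into the leading factor splits this difference into the linear term $2B^{-1}\bDelta B^{-1}\eye_{p\times r}$ plus the quadratic remainder
\[
\bR_\bU(\bvarphi,\bvarphi_0) = 2A^{-1}\bDelta B^{-1}\bDelta B^{-1}\eye_{p\times r},
\]
which is the matrix-form decomposition in the theorem. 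To pass to the vectorized form, I would apply the identity $\vect(M_1 M_2 M_3) = (M_3\transpose \otimes M_1)\vect(M_2)$ to the linear term and invoke the linearity of $\bvarphi \mapsto \bX_\bvarphi$, which gives $\vect(\bDelta) = \bGamma_{\bvarphi_0}(\bvarphi - \bvarphi_0)$; matching coefficients then recovers $D\bU(\bvarphi_0)(\bvarphi - \bvarphi_0)$ as in \eqref{eqn:Differential_CT}.

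It remains to bound these matrices by constants that are dimension-free and uniform in the base point. The crucial observation is that $\bX_\bvarphi$ is skew-symmetric by its block construction, so $(\eye_p - \bX_\bvarphi)\transpose(\eye_p - \bX_\bvarphi) = \eye_p + \bX_\bvarphi\transpose\bX_\bvarphi \succeq \eye_p$ and hence $\|(\eye_p - \bX_\bvarphi)^{-1}\|_2 \le 1$ globally, with no restriction on $\|\bA\|_2$. Combined with the block identity $\|\bDelta\|_{\mathrm{F}} = \sqrt{2}\|\bvarphi - \bvarphi_0\|_2$ and the submultiplicative inequalities $\|CD\|_{\mathrm{F}} \le \|C\|_2\|D\|_{\mathrm{F}}$ and $\|DE\|_{\mathrm{F}} \le \|D\|_{\mathrm{F}}\|E\|_2$, a direct norm count on the two displays above yields $\|\bU(\bvarphi) - \bU(\bvarphi_0)\|_{\mathrm{F}} \le 2\sqrt{2}\|\bvarphi - \bvarphi_0\|_2$ and $\|\bR_\bU(\bvarphi, \bvarphi_0)\|_{\mathrm{F}} \le 8\|\bvarphi - \bvarphi_0\|_2^2$.

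The main obstacle is organizational rather than conceptual: the challenge is to secure constants that are independent of both the dimension $p$ and the base point $\bvarphi_0$. Everything hinges on the single observation that the skew-symmetry of $\bX_\bvarphi$ forces the resolvent $(\eye_p - \bX_\bvarphi)^{-1}$ to be a spectral-norm contraction, and it is this that upgrades the local Fr\'echet differentiability in \cite{jauch2020} to the dimension-free global second-order expansion claimed in the theorem.
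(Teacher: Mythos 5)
Your proof is correct, and it takes a genuinely cleaner route than the paper's. The paper keeps the Cayley map in its original product form $\bU(\bvarphi) = (\eye_p + \bX_\bvarphi)(\eye_p - \bX_\bvarphi)^{-1}\eye_{p\times r}$ and performs a product-rule expansion, first isolating the resolvent difference $(\eye_p-\bX_\bvarphi)^{-1}-(\eye_p-\bX_{\bvarphi_0})^{-1}$, then writing $\eye_p+\bX_\bvarphi = \eye_p+\bX_{\bvarphi_0} + (\bX_\bvarphi-\bX_{\bvarphi_0})$, and finally regrouping. This produces a remainder that is a sum of two matrix products and requires an extra bound on $\|\eye_p + \bX_{\bvarphi_0}\|_2$ (which is where the hypothesis $\|\bA_0\|_2 < 1$ silently enters). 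Your affine rewriting $\bU(\bvarphi) = 2(\eye_p - \bX_\bvarphi)^{-1}\eye_{p\times r} - \eye_{p\times r}$ dissolves the $(\eye_p + \bX_\bvarphi)$ factor, so the difference $\bU(\bvarphi) - \bU(\bvarphi_0)$ is immediately a pure resolvent difference, and two applications of $A^{-1}-B^{-1}=A^{-1}(B-A)B^{-1}$ produce the linear term and a single compact remainder $2A^{-1}\bDelta B^{-1}\bDelta B^{-1}\eye_{p\times r}$. Both approaches depend on the same contraction fact that skew-symmetry of $\bX_\bvarphi$ forces $\|(\eye_p-\bX_\bvarphi)^{-1}\|_2 \le 1$, and your vectorization step correctly reproduces $D\bU(\bvarphi_0)(\bvarphi - \bvarphi_0)$ since $\bGamma_\bvarphi$ is linear in $\bvarphi$ (indeed constant). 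Two minor observations: your norm count in fact gives the sharper constant $\|\bR_\bU\|_{\mathrm{F}} \le 2\|\bDelta\|_{\mathrm{F}}^2 = 4\|\bvarphi-\bvarphi_0\|_2^2$, better than the stated $8$; and your argument does not need $\|\bA\|_2 < 1$ at all, so it slightly extends the theorem's domain of validity relative to the paper's argument, which does lean on that bound when controlling $\|\eye_p + \bX_{\bvarphi_0}\|_2$.
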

\cite{jauch2020} also showed that the Cayley parameterization is one-to-one, and we refer to the inverse map $\bA:\mathbb{O}_+(p, r)\to \{\bA\in\mathbb{R}^{(p - r)\times r}:\|\bA\|_2 < 1\}$ as the inverse Cayley parameterization. Formally, for any $\bU = [\bQ_1\transpose, \bQ_2\transpose]\transpose\in\mathbb{O}_+(p, r)$ where $\bQ_1\in\mathbb{M}_+(r)$, the inverse Cayley parameterization of $\bU$ is given by
\begin{equation}\label{eqn:inverse_Cayley_transform}
\begin{aligned}
% \bF(\bU) &= (\eye_r - \bQ_1)(\eye_r + \bQ_1)^{-1},\\
% \bB(\bU) &= \frac{1}{2}(\bF(\bU)\transpose - \bF(\bU)),\\
\bA & = 
\bA(\bU)
 % &= 
 =
\bQ_2(\eye_r + \bQ_1)^{-1}
% \frac{1}{2}\bQ_2(\eye_r + \bF(\bU))
.
\end{aligned}
\end{equation}
The following theorem claims that the map $\bA(\cdot)$ is globally Lipschitz continuous. 
\begin{theorem}
% [First-order extrinsic deviation of ICT]
\label{thm:first_order_deviation_ICT}
Let $\bA(\cdot)$ be the inverse Cayley parameterization defined by \eqref{eqn:inverse_Cayley_transform}.
% $\bU = [\bQ_1\transpose, \bQ_2\transpose]\transpose,\bU_0 = [\bQ_{01}\transpose, \bQ_{02}\transpose]\transpose\in\mathbb{O}_+(p, r)$, where $\bQ_1,\bQ_{01}\in\mathbb{M}_+(r)$. 
Then
\begin{align*}
% \|\bF(\bU) - \bF(\bU_0)\|_{\mathrm{F}}
% &\leq 3\|\bQ_1 - \bQ_{01}\|_{\mathrm{F}},\\
% \|\bB(\bU) - \bB(\bU_0)\|_{\mathrm{F}}
% &\leq 3\|\bQ_1 - \bQ_{01}\|_{\mathrm{F}},\\
\|\bA(\bU) - \bA(\bU_0)\|_{\mathrm{F}}
&\leq 2\|\bU - \bU_{0}\|_{\mathrm{F}}
\end{align*}
for all $\bU, \bU_0\in\mathbb{O}_+(p, r)$.
% , 
% where . 
\end{theorem}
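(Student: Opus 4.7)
The plan is to prove the Lipschitz bound by brute-force algebraic manipulation of the explicit formula $\bA(\bU) = \bQ_2(\eye_r + \bQ_1)^{-1}$ for $\bU = [\bQ_1\transpose, \bQ_2\transpose]\transpose \in \mathbb{O}_+(p, r)$. Write $\bU_0 = [\bQ_{10}\transpose, \bQ_{20}\transpose]\transpose$ analogously, with $\bQ_{10} \in \mathbb{M}_+(r)$. The target estimate should fall out of two standard identities followed by submultiplicative norm inequalities, so I expect no serious obstacle; the only thing to watch is ensuring every operator-norm factor is bounded by $1$, which is where the constraint $\bQ_1, \bQ_{10} \succ \zero$ enters crucially.

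First I would decompose the difference via a telescoping step,
\begin{align*}
\bA(\bU) - \bA(\bU_0)
&= (\bQ_2 - \bQ_{20})(\eye_r + \bQ_1)^{-1} + \bQ_{20}\bigl[(\eye_r + \bQ_1)^{-1} - (\eye_r + \bQ_{10})^{-1}\bigr],
\end{align*}
and then apply the resolvent identity to the bracketed term,
\begin{align*}
(\eye_r + \bQ_1)^{-1} - (\eye_r + \bQ_{10})^{-1}
= (\eye_r + \bQ_1)^{-1}(\bQ_{10} - \bQ_1)(\eye_r + \bQ_{10})^{-1}.
\end{align*}

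Next I would apply the triangle inequality and $\|\bX\bY\|_{\mathrm{F}} \le \|\bX\|_2 \|\bY\|_{\mathrm{F}}$. Because $\bQ_1, \bQ_{10}$ are symmetric positive definite, all eigenvalues of $\eye_r + \bQ_1$ and $\eye_r + \bQ_{10}$ are at least $1$, so $\|(\eye_r + \bQ_1)^{-1}\|_2 \le 1$ and $\|(\eye_r + \bQ_{10})^{-1}\|_2 \le 1$; and because $\bU_0 \in \mathbb{O}(p,r)$, $\|\bQ_{20}\|_2 \le \|\bU_0\|_2 = 1$. Chaining these yields
\begin{align*}
\|\bA(\bU) - \bA(\bU_0)\|_{\mathrm{F}} \le \|\bQ_2 - \bQ_{20}\|_{\mathrm{F}} + \|\bQ_1 - \bQ_{10}\|_{\mathrm{F}}.
\end{align*}

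Finally, the block decomposition gives $\|\bU - \bU_0\|_{\mathrm{F}}^2 = \|\bQ_1 - \bQ_{10}\|_{\mathrm{F}}^2 + \|\bQ_2 - \bQ_{20}\|_{\mathrm{F}}^2$, and an elementary Cauchy--Schwarz (or AM--QM) step bounds the sum $\|\bQ_1 - \bQ_{10}\|_{\mathrm{F}} + \|\bQ_2 - \bQ_{20}\|_{\mathrm{F}}$ by $\sqrt{2}\,\|\bU - \bU_0\|_{\mathrm{F}} \le 2\,\|\bU - \bU_0\|_{\mathrm{F}}$, completing the proof. The key conceptual point worth emphasizing is that restricting to $\mathbb{O}_+(p,r)$ is precisely what makes the resolvents $(\eye_r + \bQ_\cdot)^{-1}$ nonexpansive, hence the inverse Cayley map globally Lipschitz with a dimension-free constant; without positive definiteness of the leading block, the spectrum of $\eye_r + \bQ_\cdot$ could approach $0$ and the bound would blow up.
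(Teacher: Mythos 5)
Your proof is correct and follows essentially the same route as the paper's: telescope the difference, apply the resolvent identity to $(\eye_r + \bQ_1)^{-1} - (\eye_r + \bQ_{10})^{-1}$, use $\|(\eye_r + \bQ_\cdot)^{-1}\|_2 \le 1$ and $\|\bQ_{20}\|_2 \le 1$, and combine the block differences. The only cosmetic difference is that you note the sharper constant $\sqrt{2}$ via AM--QM before relaxing to $2$, whereas the paper simply bounds $\|\bQ_1-\bQ_{10}\|_{\mathrm{F}} + \|\bQ_2-\bQ_{20}\|_{\mathrm{F}} \le 2\|\bU-\bU_0\|_{\mathrm{F}}$ directly.
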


% subsection euclidean_representation_of_subspaces (end)

\subsection{Euclidean represention of low-rank matrices and intrinsic perturbation} % (fold)
\label{sub:euclidean_represention_of_low_rank_matrices_and_intrinsic_perturbation}

We now leverage the aforementioned Cayley parameterization of subspaces and establish a Euclidean representation framework for symmetric low-rank matrices. 
Consider a symmetric $p\times p$ matrix $\bSigma$ with $\mathrm{rank}(\bSigma) = r\leq p$. Let $\bSigma$ yield the spectral decomposition
% \[
$\bSigma = \bV\bLambda\bV\transpose$,
% \]
where $\bV \in\mathbb{O}(p, r)$ is the Stiefel matrix of eigenvectors, and $\bLambda = \mathrm{diag}(\lambda_1, \ldots,\lambda_r)$ is the diagonal matrix of non-zero eigenvalues of $\bSigma$ with $|\lambda_1|\geq\ldots\geq|\lambda_r| > 0$. 
% In some concrete statistical applications one typically posits that the rank $r\ll p$. 
In scenarios where the eigenvalues $\lambda_1,\ldots,\lambda_r$ may include multiplicity, the eigenvector matrix $\bV$ may only be determined up to an orthogonal matrix in $\mathbb{O}(r)$. Also note by the aforementioned analysis, for almost every $\Span(\bV)\in\calG(p, r)$, there exists another Stiefel matrix $\bU\in\mathbb{O}_+(p, r)$ such that $\mathrm{Span}(\bV) = \mathrm{Span}(\bU)$. It follows that almost every $\bSigma$ with $\mathrm{rank}(\bSigma) = r$ can be reparameterized as $\bSigma = \bU\bM\bU\transpose$ for a $p\times r$ Stiefel matrix $\bU\in\mathbb{O}_+(p, r)$ and a $r\times r$ symmetric matrix $\bM\in\mathbb{M}(r)$. Then by the result of Section \ref{sub:euclidean_representation_of_subspaces}, there exists a unique $\bA\in\mathbb{R}^{(p - r)\times r}$, $\|\bA\|_2 < 1$, and $\bvarphi = \vect(\bA)$, such that $\bU = \bU(\bvarphi)$, where $\bU(\cdot)$ is the Cayley parameterization defined by \eqref{eqn:Cayley_transform}. 
% In addition, suppose the $r\times r$ positive definite matrix $\bM$ can be written as
% \[
% \bM = \begin{bmatrix*}
% [\bM]_{11} & [\bM]_{21} & \ldots & [\bM]_{r1}\\
% [\bM]_{21} & [\bM]_{22} & \ldots & [\bM]_{r2}\\
% \vdots & \vdots &		 & \vdots\\
% [\bM]_{r1} & [\bM]_{r2} & \ldots & [\bM]_{rr}
% \end{bmatrix*}.
% \]
Let $\bmu = \vech(\bM)$.
 % as the half vectorization of the symmetric matrix $\bM$
 Conversely, the matrix $\bM$ can be viewed as a function of $\bmu$, denoted generically by $\bM(\cdot):\mathbb{R}^{r(r + 1)/2}\to\mathbb{M}(r)$, as the inverse of the map $\bM \mapsto \bmu = \vech(\bM)$.
Therefore, by denoting $\btheta = [\bvarphi\transpose, \bmu\transpose]\transpose$, we can represent almost every $\bSigma$ with $\mathrm{rank}(\bSigma) = r$ through the following matrix-valued function, which is generically denoted by $\bSigma(\cdot)$:
\begin{align}
\label{eqn:Cayley_parameterization_PSD}
\bSigma(\cdot):\mathscr{D}(p, r)\to \mathscr{S}(p, r),\quad\btheta
%  = \begin{bmatrix*}
% \bvarphi\\\bmu
% \end{bmatrix*}
\mapsto
 % := \bSigma = 
% \bU\bM\bU\transpose = 
\bU(\bvarphi)\bM(\bmu)\bU(\bvarphi)\transpose,
\end{align}
where
\begin{align}
\label{eqn:domain_of_parameterization}
\mathscr{D}(p, r) := \left\{\btheta = \begin{bmatrix*}
\vect(\bA)\\
\bmu
\end{bmatrix*}\in\mathbb{R}^{(p - r)r}\times\mathbb{R}^{r(r + 1)/2}:\bA\in\mathbb{R}^{(p - r)\times r},\|\bA\|_2 < 1\right\},
\end{align}
% where $\bM(\bmu)$ is the matrix-valued function given by inverse of the map $\bmu \mapsto \vect\{\bM(\bmu)\}$. 
is the domain of the map $\bSigma(\cdot)$ and 
\begin{align}
\label{eqn:low_rank_matrix_class}
\mathscr{S}(p, r) := \{\bSigma = \bU\bM\bU\transpose:\bU\in\mathbb{O}_+(p, r), \bM\in\mathbb{M}(r)\}
\end{align}
is the collection of $p\times p$ rank-$r$ matrices of interest.
 % and 
% Then $\mathscr{S}(p, r)$ is the range of the map $\bSigma(\cdot)$ and $\mathscr{D}(p, r)$ is the corresponding domain. 

This paper is primarily interested in the intrinsic perturbation analysis between $\bSigma_0$ and $\bSigma$ with $\bSigma, \bSigma_0\in\mathscr{S}(p, r)$, and $\bE:= \bSigma - \bSigma_0$ is assumed to be comparatively smaller than $\bSigma_0$ in magnitude. In many statistical problems, $\bSigma_0$ is the referential matrix of interest, but only the perturbed version $\bSigma$ is accessible. To be more specific, $\bSigma$ typically plays the role of a function of the observed data, namely, an estimator for the unknown $\bSigma_0$. By the aforementioned analysis, $\bSigma$ and $\bSigma_0$ can be represented by some Euclidean vectors $\btheta,\btheta_0\in\mathscr{D}(p, r)$, such that $\bSigma = \bSigma(\btheta)$ and $\bSigma_0 = \bSigma(\btheta_0)$.
% , where we use $\bSigma(\cdot)$ to generically denote the map defined by \eqref{eqn:Cayley_parameterization_PSD}.
 % of the matrix-valued function $\btheta\mapsto \bSigma(\btheta)$, i.e., both the referential matrix and the perturbed matrix lie on the same space of low-rank matrices. Specifically, given a referential low-rank matrix $\bSigma_0$ that can be represented by an Euclidean vector $\btheta_0$ such that $\bSigma_0 = \bSigma(\btheta_0)$, and an intrinsically perturbed low-rank matrix $\bSigma$ that can be represented by another Euclidean vector $\btheta$ such that $\bSigma = \bSigma(\btheta)$, 
 % We focus on the relation between the matrix perturbation $\bE:= \bSigma - \bSigma_0$ and the perturbation of the their respective representing Euclidean vectors, i.e., $\btheta - \btheta_0$. In the context of statistical models, $\bSigma_0$ typically serves as the underlying unobserved population matrix, and $\bSigma$ represents an estimator of $\bSigma$, which is a function of the observed data. 
 In turn, the problem of estimating the unobserved referential matrix $\bSigma_0$ reduces to estimating the  Euclidean representer $\btheta_0$ by an estimator ${\btheta}$, and hence,
% , as will be seen in Section \ref{sec:main_results},
 the perturbation analysis of $\bSigma$ naturally translates to the problem of the perturbation analysis of $\btheta$. 
% Now consider the collection of all low-rank matrices 

We conclude this section with the introduction of the following matrix-valued functions. 
For any $\btheta = [\bvarphi\transpose, \bmu\transpose]\transpose\in\mathscr{D}(p, r)$,
 % where $\bvarphi = \vect(\bA)$ for $(p - r)\times r$ matrix $\bA$ with $\|\bA\|_2 < 1$, 
 let
% Let $\bGamma_\bmu$ be the matrix such that $\vect(\bM) = \bGamma_\bmu\bmu$. 
% For any $\btheta = [\bvarphi\transpose, \bmu\transpose]\transpose\in\mathbb{R}^{d + r(r + 1)/2}$ where $\bvarphi = \vect(\bA)$ for a $(p - r)\times r$ matrix $\bA$ and $\bM\in \mathbb{R}^{r\times r}$ is symmetric with upper triangular entries being elements of $\bmu$, denote the matrix-valued functions
\begin{equation}\label{eqn:covariance_differential}
\begin{aligned}
D_\bvarphi\bSigma(\btheta) & := (\eye_{p^2} + \bK_{pp})\{\bU(\bvarphi)\bM(\bmu)\otimes \eye_p\}D\bU(\bvarphi),\\
D_\bmu\bSigma(\btheta) 	   & := \{\bU(\bvarphi)\otimes \bU(\bvarphi)\}\mathbb{D}_r,\\
D\bSigma(\btheta)		   & := 
\begin{bmatrix*}
 D_\bvarphi\bSigma(\btheta)
% (\eye_{p^2} + \bK_{pp})\{\bU(\bvarphi)\bM\otimes \eye_p\}D\bU(\bvarphi)
 & D_\bmu\bSigma(\btheta)
\end{bmatrix*},
\end{aligned}
\end{equation}
where $D\bU(\bvarphi)$ is the Fr\'echet derivative of the Cayley parameterization defined by \eqref{eqn:Differential_CT}. 
% As will be seen in Section \ref{sec:main_results}, the matrix $D\bSigma(\btheta)$ is exactly the Fr\'echet derivative of the function $\bSigma(\cdot)$. 
When $\btheta$ takes value at the referential Euclidean vector $\btheta_0 = [\bvarphi_0\transpose, \bmu_0\transpose]\transpose$ such that $\bSigma_0 = \bSigma(\btheta_0) = \bU(\bvarphi_0)\bM(\bmu_0)\bU(\bvarphi_0)\transpose$, we simply write $\bU_0 = \bU(\bvarphi_0)$ and $\bM_0 = \bM(\bmu_0)$. 

% subsection euclidean_represention_of_low_rank_matrices_and_intrinsic_perturbation (end)

% section preliminaries (end)

\section{Main results} % (fold)
\label{sec:main_results}

\subsection{Intrinsic perturbation theorems} % (fold)
\label{sub:intrinsic_perturbation_theorems}
We present our first main result in Theorem \ref{thm:CT_deviation_Sigma} below, which translates the perturbation of two matrices $\bSigma(\btheta)$, $\bSigma(\btheta_0)$ on the same manifold $\mathscr{S}(p, r)$ to the perturbation of the corresponding representing Euclidean vectors $\btheta$, $\btheta_0$ through a first-order Taylor expansion device. 
% An application of Theorem \ref{thm:CT_deviation_Sigma} to local asymptotic normality in statistical models is also discussed in Remark \ref{rmk:LAN} subsequently. 
\begin{theorem}
\label{thm:CT_deviation_Sigma}
Under the setup and notations of Sections \ref{sub:euclidean_representation_of_subspaces} and \ref{sub:euclidean_represention_of_low_rank_matrices_and_intrinsic_perturbation}, if $\|\bA_0\|_2 < 1$ and $\|\bA\|_2 < 1$, then there exists a $p\times p$ matrix $\bR(\btheta, \btheta_0)$ depending on $\btheta, \btheta_0$, such that
\begin{equation}
\label{eqn:Taylor_expansion_PSD_vector_form}
\begin{aligned}
\vect\{\bSigma(\btheta) - \bSigma_0\}
& = D\bSigma(\btheta_0)(\btheta - \btheta_0) + \vect\{\bR(\btheta, \btheta_0)\}
% \\
% & = D_\bvarphi\bSigma(\btheta_0)(\bvarphi - \bvarphi_0) + D_\bmu\bSigma(\btheta_0)(\bmu - \bmu_0) + \vect\{\bR(\btheta, \btheta_0)\}
, 
\end{aligned}
\end{equation}
and 
% \[
$\|\bR(\btheta, \btheta_0)\|_{\mathrm{F}}\leq 16(1 + \|\bM_0\|_2)\|\btheta - \btheta_0\|_{\mathrm{2}}^2$.
% \]
In matrix form, Equation \eqref{eqn:Taylor_expansion_PSD_vector_form} can be written as
\begin{equation}
\label{eqn:Taylor_expansion_PSD_matrix_form}
\begin{aligned}
\bSigma(\btheta) - \bSigma_0
& = 2(\eye_p - \bX_{\bvarphi_0})^{-1}(\bX_{\bvarphi} - \bX_{\bvarphi_0})(\eye_p - \bX_{\bvarphi_0})^{-\mathrm{T}}\bSigma_0\\
&\quad - 2\bSigma_0(\eye_p - \bX_{\bvarphi_0})^{-1}(\bX_{\bvarphi} - \bX_{\bvarphi_0})(\eye_p - \bX_{\bvarphi_0})^{-\mathrm{T}}\\
&\quad + \bU_0\{\bM(\bmu) - \bM_0\}\bU_0\transpose
% \\&\quad
 + \bR(\btheta, \btheta_0).
\end{aligned}
\end{equation}
\end{theorem}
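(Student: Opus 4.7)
My plan is a straightforward Taylor-expansion argument at the matrix level: expand $\bU(\bvarphi)\bM(\bmu)\bU(\bvarphi)\transpose - \bU_0\bM_0\bU_0\transpose$, isolate the pieces that are formally first-order in the two increments $\Delta\bU := \bU(\bvarphi) - \bU_0$ and $\Delta\bM := \bM(\bmu) - \bM_0$, plug Theorem~\ref{thm:second_order_deviation_CT} into the $\Delta\bU$ slots, and then clean up the geometry with two Cayley identities before bounding the leftover.

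First I would write
\[
\bU\bM\bU\transpose - \bU_0\bM_0\bU_0\transpose \;=\; \Delta\bU\,\bM_0\bU_0\transpose + \bU_0\bM_0\,\Delta\bU\transpose + \bU_0\,\Delta\bM\,\bU_0\transpose + \bE_1(\btheta,\btheta_0),
\]
where $\bE_1$ collects the four cross-terms $\Delta\bU\,\Delta\bM\,\bU_0\transpose$, $\bU_0\,\Delta\bM\,\Delta\bU\transpose$, $\Delta\bU\,\bM_0\,\Delta\bU\transpose$, and $\Delta\bU\,\Delta\bM\,\Delta\bU\transpose$. Into each linear slot I substitute the expansion $\Delta\bU = 2(\eye-\bX_{\bvarphi_0})^{-1}(\bX_\bvarphi-\bX_{\bvarphi_0})(\eye-\bX_{\bvarphi_0})^{-1}\eye_{p\times r} + \bR_\bU$ furnished by Theorem~\ref{thm:second_order_deviation_CT}. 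This splits $\Delta\bU\,\bM_0\bU_0\transpose$ and its transpose each into a linear-in-$\bvarphi$ main term plus a $\bR_\bU$-piece that I fold into the remainder.

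Next I would rewrite the resulting main terms in the compact form appearing in \eqref{eqn:Taylor_expansion_PSD_matrix_form}. Using $\bX_{\bvarphi_0}\transpose = -\bX_{\bvarphi_0}$ and $\bU_0 = (\eye+\bX_{\bvarphi_0})(\eye-\bX_{\bvarphi_0})^{-1}\eye_{p\times r}$, a short manipulation gives the key identity
\[
(\eye - \bX_{\bvarphi_0})^{-1}\eye_{p\times r}\,\bM_0\bU_0\transpose \;=\; (\eye-\bX_{\bvarphi_0})^{-\mathrm{T}}\bSigma_0,
\]
which rewrites the linear contribution of $\Delta\bU\,\bM_0\bU_0\transpose$ as $2(\eye-\bX_{\bvarphi_0})^{-1}(\bX_\bvarphi-\bX_{\bvarphi_0})(\eye-\bX_{\bvarphi_0})^{-\mathrm{T}}\bSigma_0$; transposing produces the corresponding expression for $\bU_0\bM_0\,\Delta\bU\transpose$, and the third first-order piece is already $\bU_0(\bM(\bmu)-\bM_0)\bU_0\transpose$. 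Applying $\vect(\cdot)$ to these three blocks and matching against the block definition of $D\bSigma(\btheta_0)$ in \eqref{eqn:covariance_differential} reproduces $D\bSigma(\btheta_0)(\btheta-\btheta_0)$ and yields \eqref{eqn:Taylor_expansion_PSD_vector_form}.

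All that remains is to bound the remainder $\bR(\btheta,\btheta_0) = \bR_\bU\bM_0\bU_0\transpose + \bU_0\bM_0\bR_\bU\transpose + \bE_1$. I would control each of the six pieces using the toolkit $\|\bR_\bU\|_{\mathrm{F}} \le 8\|\bvarphi-\bvarphi_0\|_2^2$ and $\|\Delta\bU\|_{\mathrm{F}} \le 2\sqrt{2}\|\bvarphi-\bvarphi_0\|_2$ from Theorem~\ref{thm:second_order_deviation_CT}, the inequality $\|\Delta\bM\|_{\mathrm{F}} \le \sqrt{2}\|\bmu-\bmu_0\|_2$ that follows from $\|\bM\|_{\mathrm{F}}^2 \le 2\|\vech(\bM)\|_2^2$, the Stiefel bound $\|\bU_0\|_2 = 1$, and the elementary inequality $2\|\bvarphi-\bvarphi_0\|_2\|\bmu-\bmu_0\|_2 \le \|\btheta-\btheta_0\|_2^2$. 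The principal obstacle is the cubic-looking term $\Delta\bU\,\Delta\bM\,\Delta\bU\transpose$: a naive triple application of the Frobenius bound yields an estimate of order $\|\bvarphi-\bvarphi_0\|_2^2\|\bmu-\bmu_0\|_2$, which is not quadratic in $\|\btheta-\btheta_0\|_2$. The fix is to deploy the uniform spectral bound $\|\Delta\bU\|_2 \le \|\bU\|_2 + \|\bU_0\|_2 \le 2$ in one of the three slots and a Frobenius bound in another, after which an AM--GM step turns the product into an honest $O(\|\btheta-\btheta_0\|_2^2)$ contribution. Summing the six bounds and separating those that carry a factor of $\|\bM_0\|_2$ from those that do not yields the advertised estimate $\|\bR(\btheta,\btheta_0)\|_{\mathrm{F}} \le 16(1+\|\bM_0\|_2)\|\btheta-\btheta_0\|_2^2$; the rest is bookkeeping.
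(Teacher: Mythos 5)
Your proposal is correct and follows essentially the same route as the paper: expand $\bU\bM\bU\transpose - \bU_0\bM_0\bU_0\transpose$ around $(\bU_0,\bM_0)$, substitute the Cayley expansion of Theorem~\ref{thm:second_order_deviation_CT} into the linear $\Delta\bU$ terms, rewrite them via the skew-symmetry identity $(\eye_p-\bX_{\bvarphi_0})^{-\mathrm{T}}\bU_0 = (\eye_p-\bX_{\bvarphi_0})^{-1}\eye_{p\times r}$, match against the block definition of $D\bSigma(\btheta_0)$, and bound the leftover with Theorem~\ref{thm:second_order_deviation_CT} plus the Frobenius/spectral and AM--GM inequalities. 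The one place you diverge is the handling of the cubic cross-term: you keep $\Delta\bU\,\Delta\bM\,\Delta\bU\transpose$ as a separate remainder piece and tame it by putting the spectral bound $\|\Delta\bU\|_2\le 2$ on one factor, whereas the paper sidesteps the issue entirely by regrouping $\bU_0\Delta\bM\Delta\bU\transpose + \Delta\bU\Delta\bM\Delta\bU\transpose = \bU\,\Delta\bM\,\Delta\bU\transpose$ and exploiting $\|\bU\|_2 = 1$. Both fixes produce a bilinear $\|\Delta\bU\|_{\mathrm{F}}\|\Delta\bM\|_{\mathrm{F}}$ estimate that AM--GM turns into a quadratic; your regrouping loses a small constant factor relative to the paper's, so the final constant you would obtain by ``bookkeeping'' is slightly larger than $16(1+\|\bM_0\|_2)$, but the qualitative statement is unaffected (and the paper's own constant is already not sharp). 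The minor imprecision in your phrasing---claiming the naive cubic estimate ``yields $\|\bvarphi-\bvarphi_0\|_2^2\|\bmu-\bmu_0\|_2$, which is not quadratic''---is in fact the right concern because the theorem is a global bound and $\|\bmu-\bmu_0\|_2$ is unbounded, so the cubic estimate genuinely fails to give the claim; just note that you need the spectral bound on a $\Delta\bU$ slot together with $\|\Delta\bM\|_2\le\|\Delta\bM\|_{\mathrm{F}}$, not a spectral bound in one slot and Frobenius in ``another,'' since putting two spectral bounds on the $\Delta\bU$ factors gives only a linear estimate.
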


Theorem \ref{thm:CT_deviation_Sigma} immediately implies that, locally at $\bSigma(\btheta_0)$, the Frobenius norm of the perturbation $\|\bE\|_{\mathrm{F}} = \|\bSigma(\btheta) - \bSigma(\btheta_0)\|_{\mathrm{F}}$ between $\bSigma(\btheta)$ and $\bSigma(\btheta_0)$ on the same low-rank matrix manifold $\mathscr{S}(p, r)$ can be well controlled by the Euclidean norm $\|\btheta - \btheta_0\|_2$ of the perturbation of their representing Euclidean vector. 
% Theorem \ref{thm:CT_deviation_Sigma} loosely state that 
% locally at $\bSigma(\btheta_0)$, 
% the intrinsic perturbation of low-rank matrices can be well controlled by the perturbation of the corresponding representing Euclidean vectors. This is expected due to the Fr\'echet differentiability of the map $\btheta\mapsto \bSigma(\btheta)$. 
Theorem \ref{thm:intrinsic_deviation_Sigma} below, which is our second main result, asserts that the reverse statement is true: the perturbation of the representing Euclidean vectors can be well controlled by the perturbation of the original matrices locally at $\bSigma_0$.
\begin{theorem}
% [First-Order Intrinsic Deviation of ICT]
\label{thm:intrinsic_deviation_Sigma}
% Let $\bU = [\bQ_1\transpose, \bQ_2\transpose]$ and $\bU_0 = [\bQ_{01}\transpose, \bQ_{02}\transpose]\transpose\in\mathbb{O}_+(p, r)$. Suppose $\bvarphi,\bvarphi_0\in\mathbb{R}^d$ are Euclidean vectors such that $\bU = \bU(\bvarphi)$ and $\bU_0 = \bU(\bvarphi_0)$, where $\bvarphi = \vect(\bA)$, $\bvarphi_0 = \vect(\bA_0)$ for $(p - r)\times r$ matrix $\bA,\bA_0$ with $\|\bA\|_2,\|\bA_0\|_2 < 1$, and $d = (p - r)r$. Let $\bM,\bM_0\in\mathbb{M}_+^{r}$ and $\bmu,\bmu_0$ be the vectors whose elements are the upper diagonal entries of $\bM$ and $\bM_0$, respectively. Denote
% \begin{align*}
% \bSigma(\btheta) = \bU(\bvarphi)\bM\bU(\bvarphi)\transpose
%  % + \sigma^2\eye_p
% ,\quad
% \bSigma(\btheta_0) = \bU(\bvarphi_0)\bM_0\bU(\bvarphi_0)\transpose.
%  % + \sigma^2\eye_p,
% \end{align*}
% where $\sigma^2\geq 0$ is a nuisance parameter. 
Under the setup and notations of Sections \ref{sub:euclidean_representation_of_subspaces} and \ref{sub:euclidean_represention_of_low_rank_matrices_and_intrinsic_perturbation}, if $\|\bA_0\|_2 < 1$, $\|\bA\|_2 < 1$, $\bM_0$ and $\bM$ are positive definite, and 
\begin{align*}
\|\bSigma(\btheta) - \bSigma(\btheta_0)\|_{\mathrm{F}}\leq \frac{(1 - \|\bA_0\|_2^2)^2\lambda_r(\bM_0)}{4\sqrt{2}(1 + \|\bA_0\|_2^2)^2},
\end{align*}
then
\begin{align*}
\|\btheta - \btheta_0\|_2
&\leq 
\left[1 
% + \frac{8(1 + \|\bA_0\|_2^2)^2}{\lambda_{r}(\bM_0)(1 - \|\bA_0\|_2^2)^2}
 + \frac{16\sqrt{2}\{1 + \lambda_{1}(\bM_0)\}(1 + \|\bA_0\|_2^2)}{\lambda_{r}(\bM_0)(1 - \|\bA_0\|_2^2)}\right]
 % \\&\quad\times
\|\bSigma(\btheta) - \bSigma(\btheta_0)\|_{\mathrm{F}}.
\end{align*}
\end{theorem}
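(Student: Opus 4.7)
The strategy is to invert the first-order Taylor expansion from Theorem \ref{thm:CT_deviation_Sigma} and absorb the quadratic remainder via a bootstrap. Starting from the matrix form
\[
\bSigma(\btheta) - \bSigma_0 = 2[\bY,\bSigma_0] + \bU_0(\bM - \bM_0)\bU_0\transpose + \bR(\btheta,\btheta_0),
\]
where $\bY := (\eye_p - \bX_{\bvarphi_0})^{-1}(\bX_\bvarphi - \bX_{\bvarphi_0})(\eye_p - \bX_{\bvarphi_0})\inverseT$ is skew-symmetric and $\|\bR\|_{\mathrm{F}} \leq 16(1 + \|\bM_0\|_2)\|\btheta - \btheta_0\|_2^2$, I would separately control the two pieces $\|\bvarphi - \bvarphi_0\|_2 = \|\bA - \bA_0\|_{\mathrm{F}}$ and $\|\bmu - \bmu_0\|_2 \leq \|\bM - \bM_0\|_{\mathrm{F}}$ by projecting this identity onto the natural blocks determined by $\bP_0 := \bU_0\bU_0\transpose$, then close the argument with a contraction-type absorption of $\bR$.

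Left-multiplying the identity by $\bU_0\transpose$ and right-multiplying by $\eye_p - \bP_0$ annihilates the $\bU_0(\bM - \bM_0)\bU_0\transpose$ term, and since $\bSigma_0(\eye_p - \bP_0) = 0$, the commutator reduces to $-2\bM_0\,\bU_0\transpose\bY(\eye_p - \bP_0)$; this yields $\|\bU_0\transpose\bY(\eye_p - \bP_0)\|_{\mathrm{F}} \leq (\|\bSigma - \bSigma_0\|_{\mathrm{F}} + \|\bR\|_{\mathrm{F}})/(2\lambda_r(\bM_0))$. Sandwiching the identity by $\bU_0\transpose$ and $\bU_0$ instead gives $\bU_0\transpose(\bSigma - \bSigma_0)\bU_0 = 2[\bU_0\transpose\bY\bU_0,\bM_0] + (\bM - \bM_0) + \bU_0\transpose\bR\bU_0$, whence $\|\bM - \bM_0\|_{\mathrm{F}} \leq \|\bSigma - \bSigma_0\|_{\mathrm{F}} + 4\lambda_1(\bM_0)\|\bY\|_{\mathrm{F}} + \|\bR\|_{\mathrm{F}}$.

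The main technical obstacle is converting the bound on the off-diagonal block of $\bY$ into one on $\|\bA - \bA_0\|_{\mathrm{F}}$. Passing to the basis defined by the $p \times p$ orthogonal matrix $\bL_0 := (\eye_p + \bX_{\bvarphi_0})(\eye_p - \bX_{\bvarphi_0})^{-1}$, so that $\bU_0 = \bL_0\eye_{p\times r}$, one computes $\hat\bY := \bL_0\transpose\bY\bL_0 = (\eye_p + \bX_{\bvarphi_0})^{-1}(\bX_\bvarphi - \bX_{\bvarphi_0})(\eye_p - \bX_{\bvarphi_0})^{-1}$, and the Frobenius norm of its $(2,1)$ block in the $(r, p-r)$ split equals $\|\bU_0\transpose\bY(\eye_p - \bP_0)\|_{\mathrm{F}}$. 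Using the Schur-complement block inversion of $\eye_p \pm \bX_{\bvarphi_0}$ and the push-through identity $(\eye_{p-r} + \bA_0\bA_0\transpose)^{-1}\bA_0 = \bA_0(\eye_r + \bA_0\transpose\bA_0)^{-1}$, this $(2,1)$ block can be written as an explicit linear map in $\bA - \bA_0$ whose inverse has operator norm of order $(1 + \|\bA_0\|_2^2)/(1 - \|\bA_0\|_2^2)$. Combined with the elementary bound $\|\bY\|_{\mathrm{F}} \leq \sqrt{2}\|\bA - \bA_0\|_{\mathrm{F}}$ obtained from $\|(\eye_p - \bX_{\bvarphi_0})^{-1}\|_2 \leq 1$, this delivers linear control of both $\|\bA - \bA_0\|_{\mathrm{F}}$ and $\|\bM - \bM_0\|_{\mathrm{F}}$ by $\|\bSigma - \bSigma_0\|_{\mathrm{F}} + \|\bR\|_{\mathrm{F}}$.

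Summing the component bounds produces $\|\btheta - \btheta_0\|_2 \leq K(\bSigma_0)(\|\bSigma - \bSigma_0\|_{\mathrm{F}} + \|\bR\|_{\mathrm{F}})$ with the constant $K(\bSigma_0)$ of the form stated. Substituting $\|\bR\|_{\mathrm{F}} \leq 16(1 + \|\bM_0\|_2)\|\btheta - \btheta_0\|_2^2$, the smallness hypothesis on $\|\bSigma - \bSigma_0\|_{\mathrm{F}}$ is calibrated precisely so that $16K(\bSigma_0)(1 + \|\bM_0\|_2)\|\btheta - \btheta_0\|_2 \leq 1/2$, whereupon the quadratic term is absorbed into half of the left-hand side and rearrangement yields the linear bound claimed in the theorem.
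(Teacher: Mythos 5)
Your proposal takes a genuinely different route from the paper. The paper's proof is direct and non-iterative: it applies Weyl's inequality to bound $|\lambda_r(\bQ_1) - \lambda_r(\bQ_{01})|$, uses the Davis--Kahan theorem to control $\|\bU\bU\transpose - \bU_0\bU_0\transpose\|_{\mathrm{F}}$ by $\|\bSigma(\btheta) - \bSigma(\btheta_0)\|_{\mathrm{F}}$, concludes $\lambda_r(\bQ_1) \geq \lambda_r(\bQ_{01})/2$ from the smallness hypothesis, and then invokes the Lipschitz-type bound on the inverse Cayley parameterization (Lemma~\ref{thm:First-order intrinsic deviation of ICT}, proved via a mean-value inequality on the matrix square root) to pass from projection matrices to Euclidean vectors. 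Your block-projection computation for the linear map $\Delta \mapsto \bD\Delta\bC + \bA_0\bC\Delta\transpose\bA_0\bC$ is in fact the same algebra that appears in Lemma~\ref{lemma:Cmatrix_singular_value}, which the paper uses to prove Theorem~\ref{thm:DSigma_singular_value} rather than Theorem~\ref{thm:intrinsic_deviation_Sigma}.

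The gap is in the final absorption step. You obtain $\|\btheta - \btheta_0\|_2 \leq K\left(\|\bSigma(\btheta) - \bSigma(\btheta_0)\|_{\mathrm{F}} + C\|\btheta - \btheta_0\|_2^2\right)$ and then claim the smallness hypothesis on $\|\bSigma(\btheta) - \bSigma(\btheta_0)\|_{\mathrm{F}}$ ensures $KC\|\btheta - \btheta_0\|_2 \leq 1/2$, but that is circular: the hypothesis controls $\|\bSigma(\btheta) - \bSigma(\btheta_0)\|_{\mathrm{F}}$, not $\|\btheta - \btheta_0\|_2$, and the scalar inequality $x \leq a + bx^2$ with $4ab < 1$ only forces $x \notin (x_-, x_+)$, leaving the branch $x \geq x_+ \gtrsim 1/b$ unexcluded. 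Since $\|\bmu - \bmu_0\|_2$ is not a priori bounded (only $\|\bvarphi - \bvarphi_0\|_2 \leq 2\sqrt{r}$ is, via $\|\bA\|_2 < 1$), nothing in your argument rules out that $\btheta$ lies on the large branch. To close the bootstrap you would need either a connectedness/path argument in $\mathscr{D}(p,r)$ establishing that $\|\btheta - \btheta_0\|_2$ cannot jump past $x_-$, or an independent a priori bound on $\|\bmu - \bmu_0\|_2$ from $\|\bSigma(\btheta) - \bSigma(\btheta_0)\|_{\mathrm{F}}$; neither is supplied. The paper's Weyl-plus-Davis--Kahan route avoids this issue by bounding $\lambda_r(\bQ_1)$ directly from the hypothesis without ever needing a preliminary estimate on $\|\btheta - \btheta_0\|_2$.
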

When $\bM = \bM_0 = \eye_r$, 
% which corresponds to the perturbation of projection matrices, 
Theorem \ref{thm:intrinsic_deviation_Sigma}, together with the fact that 
$\|\bU\bU\transpose - \bV\bV\transpose\|_{\mathrm{F}} = \sqrt{2}\|\sin\Theta(\bU,\bV)\|_{\mathrm{F}}$ for any $\bU,\bV\in\mathbb{O}(p, r)$, directly leads to the following corollary regarding intrinsic perturbation of subspaces. 
% , which
% asserts that the Frobenius sine-theta distance between subspaces can be lower bounded by the Euclidean distance between their representing vectors. 
 % controls the perturbation of the representing Euclidean vectors by the Frobenius sine-theta distance of the corresponding subspaces. 
\begin{corollary}
\label{corr:intrinsic_deviation_Projection}
% Let $\bU = [\bQ_1\transpose, \bQ_2\transpose]$ and $\bU_0 = [\bQ_{01}\transpose, \bQ_{02}\transpose]\transpose\in\mathbb{O}_+(p, r)$. Suppose $\bvarphi,\bvarphi_0\in\mathbb{R}^d$ are Euclidean vectors such that $\bU = \bU(\bvarphi)$ and $\bU_0 = \bU(\bvarphi_0)$, where $\bvarphi = \vect(\bA)$, $\bvarphi_0 = \vect(\bA_0)$ for $(p - r)\times r$ matrix $\bA,\bA_0$ with $\|\bA\|_2,\|\bA_0\|_2 < 1$, and $d = (p - r)r$. 
% If 
Under the setup and notations of Section \ref{sub:euclidean_representation_of_subspaces},
 % and \ref{sub:euclidean_represention_of_low_rank_matrices_and_intrinsic_perturbation}, 
 if $\|\bA_0\|_2 < 1$, $\|\bA\|_2 < 1$, 
 % $\bM_0 = \bM = \eye_r$ so that $\bSigma = \bU\bU\transpose$ and $\bSigma_0 = \bU_0\bU_0\transpose$, 
 and 
\begin{align*}
% \sqrt{2}
\|\sin\Theta\{\bU(\bvarphi), \bU(\bvarphi_0)\}\|_{\mathrm{F}}
 % = \|\bU\bU\transpose - \bU_0\bU_0\transpose\|_{\mathrm{F}}
 \leq \frac{(1 - \|\bA_0\|_2^2)^2}{8(1 + \|\bA_0\|_2^2)^2},
\end{align*}
then
\begin{align*}
\|\bvarphi - \bvarphi_0\|_2
&\leq 
% \left\{1 + \frac{8(1 + \|\bA_0\|_2^2)^2}{(1 - \|\bA_0\|_2^2)^2} + \frac{16(1 + \|\bA_0\|_2^2)}{(1 - \|\bA_0\|_2^2)}\right\}\|\bU\bU\transpose - \bU_0\bU_0\transpose\|_{\mathrm{F}}\\
% & = 
\sqrt{2}
\left\{1 + \frac{32\sqrt{2}(1 + \|\bA_0\|_2^2)^2}{(1 - \|\bA_0\|_2^2)^2}
 % + \frac{16(1 + \|\bA_0\|_2^2)}{(1 - \|\bA_0\|_2^2)}
 \right\}\|\sin\Theta\{\bU(\bvarphi), \bU(\bvarphi_0)\}\|_{\mathrm{F}}.
\end{align*}
On the other hand, the following reverse inequality always holds for all $\bvarphi$ and $\bvarphi_0$:
\[
\|\sin\Theta\{\bU(\bvarphi), \bU(\bvarphi_0)\}\|_{\mathrm{F}}\leq 4\|\bvarphi - \bvarphi_0\|_2.
\]
\end{corollary}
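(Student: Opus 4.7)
The plan is to derive Corollary~\ref{corr:intrinsic_deviation_Projection} as a special instance of Theorem~\ref{thm:intrinsic_deviation_Sigma} combined with Theorem~\ref{thm:second_order_deviation_CT}, using the rank-$r$ orthogonal projection $\bU\bU\transpose$ in place of the general low-rank matrix $\bSigma$.

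For the main (forward) inequality, I take $\bM = \bM_0 = \eye_r$ inside the framework of Section~\ref{sub:euclidean_represention_of_low_rank_matrices_and_intrinsic_perturbation}. Then $\bmu = \bmu_0 = \vech(\eye_r)$, so $\btheta = [\bvarphi\transpose, \bmu\transpose]\transpose$ and $\btheta_0 = [\bvarphi_0\transpose, \bmu\transpose]\transpose$ differ only in the $\bvarphi$-coordinates, giving $\|\btheta - \btheta_0\|_2 = \|\bvarphi - \bvarphi_0\|_2$. Likewise, $\bSigma(\btheta) = \bU(\bvarphi)\bU(\bvarphi)\transpose$ and $\bSigma(\btheta_0) = \bU(\bvarphi_0)\bU(\bvarphi_0)\transpose$ are the orthogonal projections onto $\Span\{\bU(\bvarphi)\}$ and $\Span\{\bU(\bvarphi_0)\}$, respectively. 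Invoking the standard identity $\|\bU\bU\transpose - \bU_0\bU_0\transpose\|_{\mathrm{F}} = \sqrt{2}\|\sin\Theta(\bU, \bU_0)\|_{\mathrm{F}}$ for any $\bU, \bU_0 \in \mathbb{O}(p, r)$, the smallness hypothesis $\|\sin\Theta\{\bU(\bvarphi), \bU(\bvarphi_0)\}\|_{\mathrm{F}} \leq (1 - \|\bA_0\|_2^2)^2 / \{8(1 + \|\bA_0\|_2^2)^2\}$ translates precisely into $\|\bSigma(\btheta) - \bSigma(\btheta_0)\|_{\mathrm{F}} \leq (1 - \|\bA_0\|_2^2)^2/\{4\sqrt{2}(1 + \|\bA_0\|_2^2)^2\}$, the hypothesis needed to activate Theorem~\ref{thm:intrinsic_deviation_Sigma} with $\lambda_1(\bM_0) = \lambda_r(\bM_0) = 1$. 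Plugging the constants into the conclusion of that theorem and using the same identity once more yields
\[
\|\bvarphi - \bvarphi_0\|_2 \leq \sqrt{2}\left\{1 + \frac{32\sqrt{2}(1 + \|\bA_0\|_2^2)}{1 - \|\bA_0\|_2^2}\right\}\|\sin\Theta\{\bU(\bvarphi), \bU(\bvarphi_0)\}\|_{\mathrm{F}},
\]
which is then weakened, using $(1 + \|\bA_0\|_2^2)/(1 - \|\bA_0\|_2^2) \leq (1 + \|\bA_0\|_2^2)^2/(1 - \|\bA_0\|_2^2)^2$, to the advertised form with squared factors.

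For the reverse (always-valid) inequality, I bound projection differences by Stiefel differences via the triangle inequality $\|\bU\bU\transpose - \bU_0\bU_0\transpose\|_{\mathrm{F}} \leq \|(\bU - \bU_0)\bU\transpose\|_{\mathrm{F}} + \|\bU_0(\bU - \bU_0)\transpose\|_{\mathrm{F}} \leq 2\|\bU - \bU_0\|_{\mathrm{F}}$, since $\|\bU\|_2 = \|\bU_0\|_2 = 1$. Applying the Lipschitz estimate $\|\bU(\bvarphi) - \bU(\bvarphi_0)\|_{\mathrm{F}} \leq 2\sqrt{2}\|\bvarphi - \bvarphi_0\|_2$ from Theorem~\ref{thm:second_order_deviation_CT} and dividing by $\sqrt{2}$ yields $\|\sin\Theta\{\bU(\bvarphi), \bU(\bvarphi_0)\}\|_{\mathrm{F}} \leq \sqrt{2}\|\bU(\bvarphi) - \bU(\bvarphi_0)\|_{\mathrm{F}} \leq 4\|\bvarphi - \bvarphi_0\|_2$.

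The proof is essentially a substitution exercise, so the only real checking is of the bookkeeping: that $\btheta - \btheta_0$ collapses to $\bvarphi - \bvarphi_0$, that the sine-theta/projection identity rescales the hypothesis and conclusion correctly, and that the bound on $\lambda_{1,r}(\bM_0)$ at $\eye_r$ simplifies the constants. I anticipate no real obstacle beyond ensuring the numerical constants line up; the weakening from a linear $(1 + \|\bA_0\|_2^2)/(1 - \|\bA_0\|_2^2)$ factor to the squared version is harmless and keeps the statement dimensionally uniform across corollaries.
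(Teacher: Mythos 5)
Your proof is correct and follows exactly the route the paper indicates immediately before the corollary: take $\bM=\bM_0=\eye_r$ in Theorem~\ref{thm:intrinsic_deviation_Sigma}, translate between $\|\bSigma(\btheta)-\bSigma(\btheta_0)\|_{\mathrm{F}}$ and $\|\sin\Theta\|_{\mathrm{F}}$ via the $\sqrt{2}$ projection identity, note $\bmu=\bmu_0$ collapses $\|\btheta-\btheta_0\|_2$ to $\|\bvarphi-\bvarphi_0\|_2$, and get the reverse inequality from the Lipschitz bound in Theorem~\ref{thm:second_order_deviation_CT} together with $\|\bU\bU\transpose-\bU_0\bU_0\transpose\|_{\mathrm{F}}\le 2\|\bU-\bU_0\|_{\mathrm{F}}$. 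Your bookkeeping is right — the direct substitution in fact yields the sharper constant with a linear factor $(1+\|\bA_0\|_2^2)/(1-\|\bA_0\|_2^2)$, and weakening to the squared factor gives the corollary exactly as stated.
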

% The implication of 
Corollary \ref{corr:intrinsic_deviation_Projection} suggests
% is 
that, 
locally around $\bvarphi_0$, the Frobenius sine-theta distance between subspaces $\Span\{\bU(\bvarphi)\}$ and $\Span\{\bU(\bvarphi_0)\}$ is equivalent to the Euclidean distance between their representing Eulicdean vectors $\bvarphi$ and $\bvarphi_0$. Furthermore, by taking Theorem \ref{thm:second_order_deviation_CT} into consideration, we conclude immediately that $\|\sin\Theta\{\bU(\bvarphi),\bU(\bvarphi_0)\}\|_{\mathrm{F}}$ is locally equivalent to $\|\bU(\bvarphi) - \bU(\bvarphi_0)\|_{\mathrm{F}}$.
% , provided that $\|\bA_0\|_2$ is bounded away from $1$. 
This result is formally stated in the following Theorem for ease of reference. 
\begin{theorem}\label{thm:alignment_free_Davis_Kahan}
Under the setup and notations of Section \ref{sub:euclidean_representation_of_subspaces},
 % and \ref{sub:euclidean_represention_of_low_rank_matrices_and_intrinsic_perturbation}, 
 if $\|\bA_0\|_2 < 1$, $\|\bA\|_2 < 1$, 
 % $\bM_0 = \bM = \eye_r$ so that $\bSigma = \bU\bU\transpose$ and $\bSigma_0 = \bU_0\bU_0\transpose$, 
 and 
 \begin{align*}
% \sqrt{2}
\|\sin\Theta\{\bU(\bvarphi), \bU(\bvarphi_0)\}\|_{\mathrm{F}}
 % = \|\bU\bU\transpose - \bU_0\bU_0\transpose\|_{\mathrm{F}}
 \leq \frac{(1 - \|\bA_0\|_2^2)^2}{8(1 + \|\bA_0\|_2^2)^2},
\end{align*}
then
\begin{align*}
\|\bU(\bvarphi) - \bU(\bvarphi_0)\|_{\mathrm{F}}
&\leq 
% \left\{1 + \frac{8(1 + \|\bA_0\|_2^2)^2}{(1 - \|\bA_0\|_2^2)^2} + \frac{16(1 + \|\bA_0\|_2^2)}{(1 - \|\bA_0\|_2^2)}\right\}\|\bU\bU\transpose - \bU_0\bU_0\transpose\|_{\mathrm{F}}\\
% & = 
4\left\{1 + \frac{32\sqrt{2}(1 + \|\bA_0\|_2^2)^2}{(1 - \|\bA_0\|_2^2)^2}
 % + \frac{16(1 + \|\bA_0\|_2^2)}{(1 - \|\bA_0\|_2^2)}
 \right\}\|\sin\Theta\{\bU(\bvarphi), \bU(\bvarphi_0)\}\|_{\mathrm{F}}.
\end{align*}
On the other hand, the following reverse inequality always holds for all $\bvarphi$ and $\bvarphi_0$:
\[
\|\sin\Theta\{\bU(\bvarphi), \bU(\bvarphi_0)\}\|_{\mathrm{F}}\leq \sqrt{2}\|\bU(\bvarphi) - \bU(\bvarphi_0)\|_{\mathrm{F}}.
\]
\end{theorem}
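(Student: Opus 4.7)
The plan is to obtain the forward inequality by chaining two results already established in the excerpt, and to derive the reverse inequality from a standard factorization of the difference of projectors.

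First I would invoke Corollary \ref{corr:intrinsic_deviation_Projection}: under exactly the sine-theta smallness hypothesis appearing in the present theorem, the perturbation of the Cayley coordinates is controlled by the Frobenius sine-theta distance, namely
\[
\|\bvarphi - \bvarphi_0\|_2 \leq \sqrt{2}\left\{1 + \frac{32\sqrt{2}(1 + \|\bA_0\|_2^2)^2}{(1 - \|\bA_0\|_2^2)^2}\right\}\|\sin\Theta\{\bU(\bvarphi), \bU(\bvarphi_0)\}\|_{\mathrm{F}}.
\]
I would then apply the global Lipschitz bound furnished by Theorem \ref{thm:second_order_deviation_CT},
\[
\|\bU(\bvarphi) - \bU(\bvarphi_0)\|_{\mathrm{F}} \leq 2\sqrt{2}\,\|\bvarphi - \bvarphi_0\|_2,
\]
and multiply the two estimates. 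The product of the prefactors $2\sqrt{2}\cdot\sqrt{2} = 4$ reproduces the exact leading constant on the right-hand side of the claimed forward inequality, and because the two sine-theta smallness hypotheses are identical, no additional bookkeeping on the threshold is required.

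For the reverse inequality, which is unconditional, I would use the identity $\|\bU\bU\transpose - \bU_0\bU_0\transpose\|_{\mathrm{F}} = \sqrt{2}\,\|\sin\Theta(\bU, \bU_0)\|_{\mathrm{F}}$ recalled in the paragraph between Theorems \ref{thm:intrinsic_deviation_Sigma} and \ref{thm:alignment_free_Davis_Kahan}, together with the algebraic decomposition
\[
\bU\bU\transpose - \bU_0\bU_0\transpose = (\bU - \bU_0)\bU\transpose + \bU_0(\bU - \bU_0)\transpose.
\]
Writing $\bU = \bU(\bvarphi)$ and $\bU_0 = \bU(\bvarphi_0)$ and applying submultiplicativity of the Frobenius norm against the spectral norm, combined with $\|\bU\|_2 = \|\bU_0\|_2 = 1$ (since they are Stiefel matrices), yields $\sqrt{2}\,\|\sin\Theta(\bU,\bU_0)\|_{\mathrm{F}} \leq 2\|\bU - \bU_0\|_{\mathrm{F}}$, which is the desired bound.

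No genuine obstacle arises in this argument: the theorem is essentially a direct composition of Theorem \ref{thm:second_order_deviation_CT} and Corollary \ref{corr:intrinsic_deviation_Projection}. The only care needed is verifying that the multiplicative factor $4$ emerges from the exact pairing of the constants $2\sqrt{2}$ (from the Lipschitz constant of the Cayley parameterization) and $\sqrt{2}$ (from the leading prefactor in the intrinsic perturbation bound on the Cayley coordinates), and that the threshold hypotheses match without modification.
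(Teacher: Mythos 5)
Your proposal is correct and takes essentially the same route the paper intends: the forward bound by chaining Corollary \ref{corr:intrinsic_deviation_Projection} with the Lipschitz bound from Theorem \ref{thm:second_order_deviation_CT} (the constants $2\sqrt{2}\cdot\sqrt{2}=4$ and the identical smallness threshold check out), and the reverse bound is correctly derived via the projector decomposition $\bU\bU\transpose - \bU_0\bU_0\transpose = (\bU-\bU_0)\bU\transpose + \bU_0(\bU-\bU_0)\transpose$ together with $\|\bU\bU\transpose-\bU_0\bU_0\transpose\|_{\mathrm{F}}=\sqrt{2}\|\sin\Theta(\bU,\bU_0)\|_{\mathrm{F}}$, which is the natural argument the paper leaves implicit.
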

\begin{remark}
Given two Stiefel matrices $\bV$ and $\bV_0$ in $\mathbb{O}(p, r)$, their Frobenius sine-theta distance is equivalent to $\|\bV - \bV_0\bW^*\|_{\mathrm{F}}$, where $\bW^*$ is the solution to the orthogonal Procrustes problem $\min_{\bW\in\mathbb{O}(r)}\|\bV - \bV_0\bW\|_{\mathrm{F}}$ and can be computed explicitly using $\bV_0$ and $\bV$. Formally, by Lemma 1 in \cite{cai2018}, we have
\[
\|\sin\Theta(\bV, \bV_0)\|_{\mathrm{F}}
\leq
\|\bV - \bV_0\bW^*\|_{\mathrm{F}}
\leq
\sqrt{2}\|\sin\Theta(\bV, \bV_0)\|_{\mathrm{F}}.
\]
As pointed out in \cite{cai2018}, it is sometimes more convenient to work with the explicit expression $\|\bV - \bV_0\bW^*\|_{\mathrm{F}}$ based on the representing Stiefel matrices than to work with the original definition of the sine-theta distance between subspaces. Nevertheless, the orthogonal alignment matrix $\bW^*$ may still cause inconvenience for some theoretical analyses.
% , since $\bW^*$ depends on both the underlying referential Stiefel matrix $\bV_0$ and the perturbed matrix $\bV$, which is random in a broad range of statistical contexts. 
In contrast, Theorem \ref{thm:alignment_free_Davis_Kahan} loosely asserts that, in a small neighborhood of $\bV_0$ (or equivalently, a small neighborhood of $\Span(\bV_0)$ with respect to the sine-theta metric), by finding suitable representing Stiefel matrices $\bU(\bvarphi),\bU(\bvarphi_0)\in\mathbb{O}_+(p, r)$ such that $\Span\{\bU(\bvarphi)\} = \Span(\bV)$ and $\Span\{\bU(\bvarphi_0)\} = \Span(\bV_0)$, the Frobenius sine-theta distance between the subspaces $\Span(\bV)$ and $\Span(\bV_0)$ is equivalent to $\|\bU(\bvarphi) - \bU(\bvarphi_0)\|_{\mathrm{F}}$, circumventing the orthogonal alignment matrix $\bW^*$ and facilitating many technical analyses. Therefore, the metric $\|\bU(\bvarphi) - \bU(\bvarphi_0)\|_{\mathrm{F}}$ provides an alignment-free and user-friendly local surrogate for the Frobenius sine-theta distance. 
% In other words, the choice of the form of the orthonormal basis $\bU(\bvarphi)$ allows us to use the Frobenius norm $\|\bU(\bvarphi) - \bU(\bvarphi_0)\|_{\mathrm{F}}$ as a surrogate for the $\sin\Theta$ distance between the corresponding subspaces $\Span(\bV)$ and $\Span(\bV_0)$ locally around $\Span(\bV_0)$.  
\end{remark}

% subsection intrinsic_perturbation_theorems (end)

\subsection{The regularity theorem} % (fold)
\label{sub:the_regularity_theorem}
Theorem \ref{thm:DSigma_singular_value} below is the third main result of this work. It asserts that the map $\bSigma(\cdot):\mathscr{D}(p, r)\to\mathscr{S}(p, r)$ is regular by showing that the Fr\'echet derivative $D\bSigma(\cdot)$ has full column rank, and $\sigma_{\min}\{D\bSigma(\btheta_0)\}$ can be lower bounded using $\sigma_r(\bM_0)$ and $\|\bA_0\|_2$.
% the regularity of the Euclidean representation of low-rank matrices under the setup of Sections \ref{sub:euclidean_representation_of_subspaces} and \ref{sub:euclidean_represention_of_low_rank_matrices_and_intrinsic_perturbation}. 
Subsequently, Remark \ref{rmk:statistical_interpretation_regularity} illustrates the statistical impact of Theorem \ref{thm:DSigma_singular_value}: A statistical model parameterized by $\btheta$ through the map $\bSigma(\cdot)$ has a non-singular Fisher information matrix provided that the Fisher information matrix with regard to $\bSigma$ is non-singular. 
In addition, a geometric perspective of Theorem \ref{thm:DSigma_singular_value} is explained in Remark \ref{rmk:regularity_interpretation}. Specifically, the class of low-rank matrices $\mathscr{S}(p, r)$ can be viewed as a $d$-dimensional manifold in $\mathbb{R}^{p^2}$ with $d = r(r + 1)/2 + (p - r)r$ due to the regularity of the map $\bSigma(\cdot)$.
% , and the map $\bSigma:\mathscr{D}(p, r)\to\mathscr{S}(p, r), \btheta\mapsto\bSigma(\btheta)$ establishes a coordinate chart of $\mathscr{S}(p, r)$. 
% We also provide a sample application of Theorem \ref{thm:DSigma_singular_value} to the covariance matrix estimation in Example \ref{example:application_covariance}. 
\begin{theorem}\label{thm:DSigma_singular_value}
% Let $\bU_0 = \bU(\bvarphi_0)$ be defined by \eqref{eqn:Cayley_transform}, where $\bvarphi_0 = \vect(\bA_0)$, and $\bA_0$ is a $(p - r)\times r$ matrix such that $\|\bA\|_2 < 1$. Let $\btheta_0 = [\bvarphi_0\transpose,\bmu_0\transpose]$, where $\bmu_0$ is a $r(r + 1)/2$-dimensional vector whose elements are upper diagonal entries of a $r\times r$ symmetric matrix $\bM_0$. Let $D\bSigma(\btheta)$ and $D_\bvarphi\bSigma(\btheta)$ be defined by \eqref{eqn:covariance_differential}. 
Under the setup and notations of Sections \ref{sub:euclidean_representation_of_subspaces} and \ref{sub:euclidean_represention_of_low_rank_matrices_and_intrinsic_perturbation}, 
% if $\|\bA_0\|_2 < 1$, $\|\bA\|_2 < 1$, 
% $\bM_0$ and $\bM$ are positive definite, 
% then
% Then
\begin{align*} 
\sigma_{\min}\{D_\bvarphi\bSigma(\btheta_0)\} \geq 
\left\{
\begin{aligned}
&\frac{2\sqrt{2}\sigma_{r}(\bM_0)(1 - \|\bA_0\|_2^2)}{(1 + \|\bA_0\|_2^2)^{2}},&\quad\text{if }r \geq 2,\\
&\frac{2\sqrt{2}\sigma_{r}(\bM_0)}{1 + \|\bA_0\|_2^2}, & \quad\text{if } r = 1.
\end{aligned}
\right.
\end{align*}
Furthermore, 
\begin{align*}
\|\{D\bSigma(\btheta_0)\transpose D\bSigma(\btheta_0)\}^{-1}\|_2
\leq\left\{
\begin{aligned}
& 1 + \frac{(1 + 64\|\bM_0\|_2^2)(1 + \|\bA_0\|_2^2)^{4}}{8\lambda_{r}^2(\bM_0)(1 - \|\bA_0\|_2^2)^2},&\quad\text{if }r\geq 2,\\
&1 + \frac{(1 + 64\|\bM_0\|_2^2)(1 + \|\bA_0\|_2^2)^2}{8\lambda_{r}^2(\bM_0)},&\quad\text{if }r = 1.
\end{aligned}
\right.
\end{align*}
\end{theorem}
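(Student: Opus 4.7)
The plan is to pass to the rotated basis defined by the orthogonal Cayley matrix $\bW_0 := (\eye_p + \bX_{\bvarphi_0})(\eye_p - \bX_{\bvarphi_0})^{-1}$, reduce $D_\bvarphi\bSigma(\btheta_0)\bdelta$ to a commutator via Theorem~\ref{thm:CT_deviation_Sigma}, and then read off the lower bound for $\sigma_{\min}\{D_\bvarphi\bSigma(\btheta_0)\}$ from a single off-diagonal block of that commutator. The bound on $\|\{D\bSigma(\btheta_0)\transpose D\bSigma(\btheta_0)\}^{-1}\|_2$ will follow after a linear change of variable that decouples the $D_\bmu\bSigma$ contribution from the commutator's on-diagonal block, reducing the minimum singular-value problem to the singular values of a block-triangular matrix.

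For the first claim, I would use the identities $\bW_0\transpose(\eye_p - \bX_{\bvarphi_0})^{-1} = (\eye_p + \bX_{\bvarphi_0})^{-1}$ and $\bW_0\transpose\bSigma_0\bW_0 = \bLambda := \bigl[\begin{smallmatrix}\bM_0 & \zero \\ \zero & \zero\end{smallmatrix}\bigr]$ to transform the matrix form from Theorem~\ref{thm:CT_deviation_Sigma} into
\[
\bW_0\transpose D_\bvarphi\bSigma(\btheta_0)\bdelta\,\bW_0 = 2[\bY_\bdelta,\bLambda],\quad \bY_\bdelta := (\eye_p+\bX_{\bvarphi_0})^{-1}\bX_\bdelta(\eye_p-\bX_{\bvarphi_0})^{-1},
\]
where $\bY_\bdelta$ is skew-symmetric. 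Writing $\bY_\bdelta = \bigl[\begin{smallmatrix}\bY_{11} & \bY_{12} \\ -\bY_{12}\transpose & \bY_{22}\end{smallmatrix}\bigr]$ in conformal $r\times(p-r)$ block form, the off-diagonal blocks of $[\bY_\bdelta,\bLambda]$ are $\pm\bM_0\bY_{12}$, yielding $\|D_\bvarphi\bSigma(\btheta_0)\bdelta\|_{\mathrm{F}}^2 \geq 8\sigma_r^2(\bM_0)\|\bY_{12}\|_{\mathrm{F}}^2$. The block inverse formula for $(\eye_p-\bX_{\bvarphi_0})^{-1}$ produces the closed form $\bY_{12} = -\bE_r(\bA_0\transpose\bB\bA_0\transpose+\bB\transpose)\bE_{p-r}$ where $\bE_r:=(\eye_r+\bA_0\transpose\bA_0)^{-1}$, $\bE_{p-r}:=(\eye_{p-r}+\bA_0\bA_0\transpose)^{-1}$, and $\bdelta = \vect(\bB)$. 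For $r\geq 2$, the bound $\|AXB\|_{\mathrm{F}}\geq \sigma_{\min}(A)\sigma_{\min}(B)\|X\|_{\mathrm{F}}$ combined with the reverse triangle inequality $\|\bA_0\transpose\bB\bA_0\transpose+\bB\transpose\|_{\mathrm{F}} \geq (1-\|\bA_0\|_2^2)\|\bB\|_{\mathrm{F}}$ produces the claimed bound. For $r=1$, the expression $\bA_0\transpose\bB\bA_0\transpose+\bB\transpose$ collapses to the row vector $(s\bA_0+\bB)\transpose$ with $s = \bA_0\transpose\bB$ scalar, and an orthogonal decomposition $\bB = (s/\|\bA_0\|_2^2)\bA_0 + \bB^\perp$ combined with the explicit eigenstructure of $\bE_{p-r}$ (whose eigendirections are $\bA_0$ and its orthogonal complement) yields the identity $\|\bE_{p-r}(s\bA_0+\bB)\|_2^2 = \|\bB\|_2^2$, removing the $(1-\|\bA_0\|_2^2)$ factor and giving the sharper $r=1$ bound.

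For the second claim, the same $\bW_0$-conjugation applied to $D_\bmu\bSigma(\btheta_0)\bbeta = \bU_0\bM(\bbeta)\bU_0\transpose$ yields $\bigl[\begin{smallmatrix}\bM(\bbeta) & \zero \\ \zero & \zero\end{smallmatrix}\bigr]$, whence
\[
\|D\bSigma(\btheta_0)\bzeta\|_{\mathrm{F}}^2 = \|2[\bY_{11},\bM_0]+\bM(\bbeta)\|_{\mathrm{F}}^2 + 8\|\bM_0\bY_{12}\|_{\mathrm{F}}^2.
\]
Setting $\bgamma := \bbeta + \bbeta^*(\bdelta)$ with $\bbeta^*(\bdelta) := \vech(2[\bY_{11}(\bdelta),\bM_0])$ absorbs the cross term and produces $\|D\bSigma(\btheta_0)\bzeta\|_{\mathrm{F}}^2 \geq \|\bgamma\|_2^2 + \kappa\|\bdelta\|_2^2$ with $\kappa := \sigma_{\min}^2\{D_\bvarphi\bSigma(\btheta_0)\}$ from the first claim. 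The Jacobian $J: \bdelta\mapsto\bbeta^*(\bdelta)$ satisfies $\|J\|_2 \leq 8\|\bA_0\|_2\|\bM_0\|_2$ by bounding $\|\bY_{11}\|_{\mathrm{F}} \leq 2\|\bA_0\|_2\|\bdelta\|_2$ via the analogous closed form and $\|\bE_r\|_2 \leq 1$. In the original $(\bdelta,\bbeta)$ coordinates the lower quadratic form reads $\bzeta\transpose R\transpose R\bzeta$ with block-triangular $R := \bigl[\begin{smallmatrix}\sqrt{\kappa}\,\eye & \zero \\ J & \eye\end{smallmatrix}\bigr]$, and a direct variational maximization of $\|R^{-1}v\|_2^2$ over unit $v$ (choosing $v_1$ aligned with the top singular direction of $J$ and $v_2$ anti-aligned with $Jv_1$) gives the explicit bound $\|R^{-1}\|_2^2 \leq 1 + (1+\|J\|_2^2)/\kappa$. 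Combining with $\|J\|_2^2 \leq 64\|\bA_0\|_2^2\|\bM_0\|_2^2 \leq 64\|\bM_0\|_2^2$ (using $\|\bA_0\|_2<1$) yields the stated bound; for $r=1$ the improvement comes from the fact that $\bY_{11}\equiv 0$ identically, which makes $J\equiv 0$ and removes the $64\|\bM_0\|_2^2$ factor.

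The main obstacle is the sharp $r=1$ computation: the identity $\|\bE_{p-r}(s\bA_0+\bB)\|_2 = \|\bB\|_2$ cannot be extracted from the generic singular-value and triangle inequalities used for $r\geq 2$, and instead requires the specific orthogonal decomposition above that exploits how the eigendirections of $\bE_{p-r}$ align with $\bA_0$ and its orthogonal complement. The remaining matrix-algebra steps are routine once the commutator reduction and the decoupling change of variable are in hand.
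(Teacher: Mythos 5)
Your proof is correct and takes a genuinely different route from the paper's. The paper never leaves the Kronecker-product/vectorized setting: it expands $\|D_\bvarphi\bSigma(\btheta_0)\bvarphi\|_2^2$ into a quadratic form in $(\bC_0\transpose\otimes\bC_0\transpose)(\bSigma_0^2\otimes\eye_p - 2\bSigma_0\otimes\bSigma_0 + \eye_p\otimes\bSigma_0^2)(\bC_0\otimes\bC_0)$, lower-bounds that kernel via a dedicated lemma (Lemma~\ref{lemma:DSigma_lower_bound}, a somewhat heavy spectral comparison of Kronecker blocks), reduces the remaining quadratic form to $\sigma_{\min}\{\bC_{11}\otimes\bC_{22} - (\bC_{21}\transpose\otimes\bC_{12}\transpose)\bK_{(p-r)r}\}$ via Lemma~\ref{lemma:Cmatrix_singular_value}, and handles $\|\{D\bSigma\transpose D\bSigma\}^{-1}\|_2$ by a Schur-complement decomposition. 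You instead conjugate by the orthogonal Cayley matrix $\bW_0 = (\eye_p+\bX_{\bvarphi_0})(\eye_p-\bX_{\bvarphi_0})^{-1}$, which immediately turns $D_\bvarphi\bSigma(\btheta_0)\bdelta$ into the commutator $2[\bY_\bdelta,\bLambda]$ with $\bY_\bdelta$ skew-symmetric, so that a single off-diagonal block $-\bM_0\bY_{12}$ carries the lower bound; the closed form $\bY_{12}=-\bE_r(\bA_0\transpose\bB\bA_0\transpose+\bB\transpose)\bE_{p-r}$ then yields the constant by elementary singular-value and reverse-triangle inequalities, and the $r=1$ sharpening falls out of the one-dimensional eigenstructure of $\bE_{p-r}$. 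For the inverse bound, your change of variable $\bgamma = \bbeta + \bbeta^*(\bdelta)$ decouples the $\bmu$-part, replacing the Schur complement with a direct variational bound on $\|R^{-1}\|_2^2$ for a block-triangular $R$; this is more transparent and also happens to give a sharper $r=1$ constant (no factor $1+64\|\bM_0\|_2^2$, because $\bY_{11}\equiv 0$ kills the cross-coupling), which still implies the stated bound. The only slip is notational: when you write $\kappa := \sigma_{\min}^2\{D_\bvarphi\bSigma(\btheta_0)\}$, the inequality you actually use is $8\|\bM_0\bY_{12}\|_{\mathrm{F}}^2 \geq \kappa\|\bdelta\|_2^2$, which you have only proved with $\kappa$ equal to the explicit lower bound $8\sigma_r^2(\bM_0)(1-\|\bA_0\|_2^2)^2/(1+\|\bA_0\|_2^2)^4$ (for $r\geq 2$), not with $\kappa = \sigma_{\min}^2$, which could be strictly larger since $\|D_\bvarphi\bSigma\bdelta\|_{\mathrm{F}}^2$ also contains a nonnegative $\|[\bY_{11},\bM_0]\|_{\mathrm{F}}^2$ term. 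Using the explicit constant throughout fixes this and reproduces the theorem exactly.
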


\begin{remark}[Statistical impact of Theorem \ref{thm:CT_deviation_Sigma}]
\label{rmk:statistical_interpretation_regularity}
Many multivariate statistical models are parameterized by symmetric matrices.
 % $\bOmega\in\mathbb{R}^{p\times p}$.
Formally, let $\mathscr{C}(p)$ be a collection of $p\times p$ symmetric matrices and $\calP = \{p_\bOmega(\bx):\bOmega\in\mathscr{C}(p)\}$ be a parametric model indexed by $\bOmega$, where $(p_\bOmega)_{\bOmega}$ are density functions with regard to some underlying $\sigma$-finite measure. Suppose independent and identically distributed data $\bx_1,\ldots,\bx_n$ are collected from some distribution $p_{\bOmega}(\cdot)$ with $\bOmega\in\mathscr{C}(p)$. Since $\bOmega$ is symmetric, we can further reduce the free parameters to $\vech(\bOmega)$. Denote
% \[
$
\dot{\bell}_\bOmega(\bx_i) = 
\nabla_{\vech(\bOmega)}\log p_\bOmega(\bx_i)
% \frac{\partial\log p_\bOmega(\bx_i)}{\partial{\vech(\bOmega)}}
$
% \]
the score function with regard to $\vech(\bOmega)$, and 
% \[
$\eye(\bOmega) := \expect\{\dot{\bell}_\bOmega(\bx)\dot{\bell}_\bOmega(\bx)\transpose\}$ 
% \equiv\expect_0\left\{\frac{\partial\log p_\bOmega(\bx_i)}{\partial{\vech(\bOmega)}}\frac{\partial\log p_\bOmega(\bx_i)}{\partial{\vech(\bOmega)}\transpose}\right\}
% \] 
the corresponding Fisher information matrix. We assume that the model $\calP$ is regular, namely, there exists an invertible $p^2\times p^2$ matrix $\bPsi(\bOmega_0)$ such that $\eye(\bOmega_0) = \mathbb{D}_p\transpose \bPsi(\bOmega_0) \mathbb{D}_p$. A classical example is the normal covariance model, where $p_\bOmega(\bx) = \det(2\pi\bOmega)^{-1/2}e^{-(1/2)\bx\transpose\bOmega^{-1}\bx}$, $\bOmega$ is the covariance matrix of interest, and $\mathscr{C}(p)$ is a collection of positive definite matrices. In this model, the Fisher information matrix with regard to $\vech(\bOmega)$ evaluated $\bOmega = \bOmega_0$ is given by (see, e.g., Chapter 10 in \citealp{magnus1988linear})
\[
\eye(\bOmega_0) = \frac{1}{2}\mathbb{D}_p\transpose(\bOmega_0^{-1}\otimes \bOmega_0^{-1})\mathbb{D}_p.
\]

However, if instead one restricts $\bOmega$ onto the spiked matrix class
\[
% $
{\mathscr{C}}(p, r) = \{\bOmega := \bSigma + \eye_p:\bSigma\in\mathscr{S}(p, r)\}
% $
\] 
% (which was originally proposed in \cite{doi:10.1198/jasa.2009.0121}) 
and assumes $\bOmega_0 = \bSigma_0 + \eye_p$ for some $\bSigma_0 \in\mathscr{C}(p, r)$,
% \]
% and assume that $\widetilde{\mathscr{C}}(p, r)\subset\mathscr{C}$ as well as $r < p$,
% namely, $\bOmega$ has a spike structure and can be written as $\bOmega = \bSigma + \eye_p$ for a rank-$r$ matrix $\bSigma\in\mathscr{S}$ with $r < p$, and $\mathscr{S}$ is defined in \eqref{eqn:low_rank_matrix_class}, 
then the spiked matrix $\bOmega$ can be represented by a lower dimensional Euclidean vector $\btheta\in\mathscr{D}(p, r)$. 
% with $\mathscr{D}(p, r)$ being the domain of the function $\btheta\mapsto \bSigma(\btheta)$, defined by \eqref{eqn:domain_of_parameterization}. 
Thus, the statistical submodel under the $\btheta$-parameterization can be written as 
\[
\calG(p, r) = \{p_{\bOmega(\btheta)}(\cdot):\bOmega(\btheta) = \bSigma(\btheta) + \eye_p,\btheta\in\mathscr{D}(p, r)\}.
\]
Denote $\btheta_0\in\mathscr{D}(p, r)$ the vector such that $\bOmega_0 = \bSigma(\btheta_0) + \eye_p$. By Theorem \ref{thm:CT_deviation_Sigma}, the Fisher information matrix with respect to the $\btheta$-parameterization in the submodel $\calG(p, r)$ evaluated at $\btheta = \btheta_0$ is given by
\begin{align*}
\eye(\btheta_0)
% & = D\bSigma(\btheta_0)\transpose \eye(\bOmega_0) D\bSigma(\btheta_0)\\
& = D\bSigma(\btheta_0)\transpose (\mathbb{D}_p^\dagger)\transpose\mathbb{D}_p\transpose \bPsi(\bOmega_0) \mathbb{D}_p\mathbb{D}_p^\dagger D\bSigma(\btheta_0).
\end{align*}
Using the fact that $\mathbb{D}_p\mathbb{D}_p^\dagger = (1/2)(\eye_{p^2} + \bK_{pp})$, $(\mathbb{D}_p\mathbb{D}_p^\dagger)(\eye_{p^2} + \bK_{pp})^2 = (\eye_{p^2} + \bK_{pp})$, and $\mathbb{D}_p\mathbb{D}_p^\dagger(\bU_0\otimes\bU_0)\mathbb{D}_p = (\bU_0\otimes\bU_0)\mathbb{D}_p$ (see, e.g., Chapter 4 in \citealp{magnus1988linear}), we further conclude that
\begin{align*}
\eye(\btheta_0)
 = D\bSigma(\btheta_0)\transpose (\mathbb{D}_p^\dagger)\transpose\mathbb{D}_p\transpose \bPsi(\bOmega_0) \mathbb{D}_p\mathbb{D}_p^\dagger D\bSigma(\btheta_0)
 = D\bSigma(\btheta_0)\transpose \bPsi(\bOmega_0) D\bSigma(\btheta_0).
% & = \begin{bmatrix*}
% (\mathbb{D}_p\mathbb{D}_p^\dagger)(\eye_{p^2} + \bK_{pp})(\bU_0\bM_0\otimes\eye_p)D\bU(\bvarphi_0) & 
% (\mathbb{D}_p\mathbb{D}_p^\dagger)(\bU_0\otimes\bU_0)\mathbb{D}_r
% \end{bmatrix*}\\
% & = \begin{bmatrix*}
% (\eye_{p^2} + \bK_{pp})(\bU_0\bM_0\otimes\eye_p)D\bU(\bvarphi_0) & 
%  (1/2)(\eye_{p^2} + \bK_{pp})(\bU_0\otimes\bU_0)\mathbb{D}_r
% \end{bmatrix*}\\
% & = \begin{bmatrix*}
% (\eye_{p^2} + \bK_{pp})(\bU_0\bM_0\otimes\eye_p)D\bU(\bvarphi_0) & 
%  (\bU_0\otimes\bU_0)(1/2)(\eye_{p^2} + \bK_{pp})\mathbb{D}_r
% \end{bmatrix*}\\
% & = \begin{bmatrix*}
% (\eye_{p^2} + \bK_{pp})(\bU_0\bM_0\otimes\eye_p)D\bU(\bvarphi_0) & 
%  (\bU_0\otimes\bU_0)\mathbb{D}_r
% \end{bmatrix*}
%  = 
% D\bSigma(\btheta_0).
\end{align*}
Hence, by Theorem \ref{thm:DSigma_singular_value}, the Fisher information matrix $\eye(\btheta_0)$ with respect to the $\btheta$-parameterization in the submodel $\calG(p, r)$ is also non-singular provided that $\bPsi(\bOmega_0)$ is non-singular. 

In the aforementioned covariance model, the submodel $\calG(p, r) = \{\mathrm{N}(\zero_p, \bSigma + \eye_r):\bSigma\in\mathscr{S}(p, r)\}$ is also referred to as the spiked covariance model \citep{doi:10.1198/jasa.2009.0121}. The Fisher information matrix with regard to the $\btheta$-parameterization evaluated at $\btheta = \btheta_0$ is 
\[
\eye(\btheta_0) = \frac{1}{2}D\bSigma(\btheta_0)\transpose (\bOmega_0^{-1}\otimes \bOmega_0^{-1}) D\bSigma(\btheta_0).
\]
In particular, $\eye(\btheta_0)$ is always non-singular since $\bOmega_0$ is invertible. Further application of the non-singularity of the Fisher information matrix in the spiked covariance model will be discussed in Section \ref{sub:bayesian_sparse_pca_and_non_intrinsic_loss}. 
\end{remark} 

\begin{remark}[Geometric interpretation of Theorem \ref{thm:CT_deviation_Sigma}]
\label{rmk:regularity_interpretation}
The geometric interpretation of Theorem \ref{thm:DSigma_singular_value} can be loosely stated as follows. The function $\btheta\mapsto \vect\{\bSigma(\btheta)\}$ can be viewed as a function from $\mathbb{R}^d\to\mathbb{R}^{p^2}$, where $d = r(r + 1)/2 + (p - r)r$. Note that $d\leq p^2$ because $r\leq p$, and Theorem \ref{thm:DSigma_singular_value} 
asserts that the Fr\'echet derivative $D\bSigma(\btheta) = \partial\vect\{\bSigma(\btheta)\}/\partial\btheta\transpose$ has full column rank. 
% Therefore, the function $\btheta\mapsto \bSigma(\btheta)$ is an injection, namely, $\bSigma(\btheta_1) \neq \bSigma(\btheta_2)$ whenever $\btheta_1\neq \btheta_2$. 
Now suppose $\btheta = [\theta_1,\ldots,\theta_d]\transpose$ and write
\[
\vect\{\bSigma(\btheta)\}
= \begin{bmatrix*}
\sigma_1(\theta_1,\ldots,\theta_d)\\\vdots
% \\\sigma_d(\theta_1,\ldots,\theta_d)
% \\
% \sigma_{d + 1}(\theta_1,\ldots,\theta_d)\\\vdots
\\\sigma_{p^2}(\theta_1,\ldots,\theta_d)
\end{bmatrix*}.
\]
Without loss of generality, we may assume the Jacobian matrix 
\[
\frac{\partial(\sigma_1,\ldots,\sigma_d)}{\partial(\theta_1,\ldots,\theta_d)}:
=
\begin{bmatrix*}
\frac{\partial\sigma_1}{\partial\theta_1} & \ldots & \frac{\partial\sigma_1}{\partial\theta_d}\\
\vdots & & \vdots\\
\frac{\partial\sigma_d}{\partial\theta_1} & \ldots & \frac{\partial\sigma_d}{\partial\theta_d}
\end{bmatrix*}
\]
is non-singular at $\btheta = \btheta_0$. Consider an extension of $\bSigma(\cdot)$ defined by $\bg:\mathscr{D}(p, r)\times\mathbb{R}^{p^2 - d}\to\mathbb{R}^{p^2}$, where
\[
\bg(\theta_1,\ldots,\theta_d,\theta_{d + 1},\ldots,\theta_{p^2})
 = \begin{bmatrix*}
 \sigma_1(\theta_1,\ldots,\theta_d)\\\vdots
\\\sigma_d(\theta_1,\ldots,\theta_d)
\\
\sigma_{d + 1}(\theta_1,\ldots,\theta_d) + \theta_{d + 1}\\\vdots
\\\sigma_{p^2}(\theta_1,\ldots,\theta_d) + \theta_{p^2}
 \end{bmatrix*}.
\]
Then the Jacobian matrix of $\bg$ with respect to the vector $[\theta_1,\ldots,\theta_d,\theta_{d + 1},\ldots,\theta_p]\transpose$ is also non-singular at $[\btheta_0\transpose, \theta_{d + 1},\ldots,\theta_{p^2}]\transpose$ for any $[\theta_{d+1},\ldots,\theta_{p^2}]\transpose\in\mathbb{R}^{p^2 - d}$. 
% the function $\bSigma:\mathscr{D}\to\mathscr{S},\btheta\mapsto \bSigma(\btheta)$ is both continuous and invertible. An
Therefore, a standard argument based on the inverse mapping theorem implies that
 % $\btheta\mapsto\bSigma(\btheta)$ is a homeomorphism between $\mathscr{D}$ and $\mathscr{S}$. Using the terminology in differential geometry, $(\mathscr{D}, \bSigma(\cdot))$ is a coordinate chart of $\mathscr{S}$, and 
 $\mathscr{S}(p, r)$ is a $d$-dimensional manifold and $\bSigma(\cdot)$ serves as a coordinate system for $\mathscr{S}(p, r)$ (see, e.g., Chapter 5 in \citealp{SpivakManifoldCalculus}). 
\end{remark}

\subsection{Extension to general rectangular matrices} % (fold)
\label{sub:extension_to_general_rectangular_matrices}
Using a similar approach, we can extend the Euclidean representation framework for symmetric low-rank matrices to general and possibly rectangular low-rank matrices, which can be applied to a broader range of problems. Rather than using Euclidean vectors to represent the eigenspaces as an intermediate step, we use Euclidean vectors to represent the corresponding right singular subspaces as follows. 

Suppose $\bSigma$ is a $p_1\times p_2$ matrix with rank $r$ and let $\bSigma = \bV_1\bLambda\bV_2\transpose$ be its singular value decomposition, where $\bV_1\in\mathbb{O}(p_1, r)$, $\bV_2\in\mathbb{O}(p_2, r)$, and $\bLambda = \mathrm{diag}\{\sigma_1(\bSigma),\ldots,\sigma_r(\bSigma)\}$.
 % is a diagonal matrix of singular values of $\bSigma$. 
Assume that $\eye_{p_2\times r}\transpose\bV_2$ is invertible and yields singular value decomposition $\eye_{p_2\times r}\transpose\bV_2 = \bW_1\mathrm{diag}\{\sigma_1(\eye_{p_2\times r}\transpose\bV_2),\ldots,\sigma_r(\eye_{p_2\times r}\transpose\bV_2)\}\bW_2\transpose$, where $\bW_1,\bW_2\in\mathbb{O}(r)$. Similar to Section \ref{sub:euclidean_represention_of_low_rank_matrices_and_intrinsic_perturbation}, we can parameterize the rectangular matrix $\bSigma$ using the following two matrices:
\begin{align*}
\bU = \bV_2\bW_2\bW_1\transpose,\quad \bM = \bV_1\bLambda\bW_2\bW_1\transpose.
\end{align*}
Clearly, $\bSigma = \bM\bU\transpose$, where $\bM\in\mathbb{R}^{p_1\times r}$ is a constraint-free full-rank matrix and $\bU\in\mathbb{O}_+(p_2, r)$. Invoking the Cayley parameterization \eqref{eqn:Cayley_transform}, we can further reparameterize $\bU$ using a $(p_2 - r)\times r$ matrix $\bA$ with $\|\bA\|_2 < 1$, such that $\bU = \bU(\bvarphi)$, where $\bvarphi = \vect(\bA) \in \mathbb{R}^{(p_2 - r)r}$.
% , and
% \[
% \bU(\bvarphi) = (\eye_p + \bX_\bvarphi)(\eye_p - \bX_\bvarphi)^{-1}\eye_{p_2\times r},\quad
% \text{where}\quad
% \bX_\bvarphi = \begin{bmatrix*}
% \zero_{r\times r} & -\bA\transpose\\
% \bA & \zero_{(p_2 - r)\times (p_2 - r)}
% \end{bmatrix*}.
% \]
In particular, we see that $\Span(\bV_2) = \Span(\bU)$. 
% the columns of $\bU$ and those of $\bV_2$ span the same $r$-dimensional subspace in $\mathbb{R}^{p_2}$. 
Now denote $\btheta = [\bvarphi\transpose, \bmu\transpose]\transpose$, where $\bmu = \vect(\bM)$. Then we can view $\bSigma$ generically as a matrix-valued function of $\btheta$:
\begin{align}
\label{eqn:Cayley_transform_rectangular}
\btheta\mapsto \bSigma(\btheta) = \bM(\bmu) \bU(\bvarphi)\transpose,
\end{align}
where $\bM(\cdot):\mathbb{R}^{p_1r}\to\mathbb{R}^{p_1\times r}$ is the inverse of the function $\vect(\cdot):\mathbb{R}^{p_1\times r}\to\mathbb{R}^{p_1r}$. 

We use $\bSigma_0\in\mathbb{R}^{p_1\times p_2}$ to denote the referential matrix of interest. 
% In many statistical applications, it typically corresponds to the true value of the parameter that gives rise to the distribution of the observed data. 
Let $\mathrm{rank}(\bSigma_0) = r \leq \min(p_1, p_2)$. Recall that the right singular vector matrix of $\bSigma_0$ is $\bV_2$. We further assume that $\eye_{p_2\times r}\transpose \bV_2$ is non-singular. Similarly, $\bSigma_0$ can also be represented by a Euclidean vector $\btheta_0$. Write $\btheta_0 = [\bvarphi_0\transpose, \bmu_0\transpose]\transpose\in\mathbb{R}^{(p_2 - r)r}\times \mathbb{R}^{p_1r}$, where $\bvarphi_0 = \vect(\bA_0)$ for a $(p_2 - r)\times r$ matrix $\bA_0$ with $\|\bA_0\|_2 < 1$ and $\bmu_0 = \vect(\bM_0)$ for a $p_1\times r$ full-rank matrix $\bM_0$.
% , and $\btheta_0$ is the inverse image of $\bSigma_0$ under the map $\bSigma(\cdot)$, namely, $\bSigma_0 = \bSigma(\btheta_0)$. 
Let $\bU_0 := \bU(\bvarphi_0)$.
% , where $\bU(\cdot)$ is the Cayley parameterization defined in \eqref{eqn:Cayley_transform}. 
We then define the following matrix-valued functions, extending the functions in \eqref{eqn:covariance_differential} to general rectangular $\bSigma(\cdot)$:
\begin{equation}
\label{eqn:Cayley_transform_rectangular_DSigma}
\begin{aligned}
D_\bvarphi \bSigma(\btheta)&= \bK_{p_2p_1}(\bM \otimes \eye_{p_2}) D\bU(\bvarphi), \\
D_\bmu \bSigma(\btheta)&= \bU(\bvarphi)\otimes \eye_{p_1},\\
D\bSigma(\btheta)&= 
\begin{bmatrix*}
D_\bvarphi \bSigma(\btheta) & D_\bmu \bSigma(\btheta)
\end{bmatrix*}.
\end{aligned}
\end{equation}

Theorem \ref{thm:CT_deviation_Sigma_rectangular} below, which extends Theorem \ref{thm:CT_deviation_Sigma} to general rectangular matrices, asserts that $D\bSigma(\btheta_0)$ defined in \eqref{eqn:Cayley_transform_rectangular_DSigma} is exactly the Fr\'echet derivative of the map $\bSigma(\cdot)$ defined by \eqref{eqn:Cayley_transform_rectangular} evaluated at $\btheta = \btheta_0$.
\begin{theorem}
\label{thm:CT_deviation_Sigma_rectangular}
% Let $\bU_0 = \bU(\bvarphi_0)$ be defined by \eqref{eqn:Cayley_transform}, where $\bvarphi_0 = \vect(\bA_0)$, and $\bA_0\in\mathbb{R}^{(p_2 - r)\times r}$. Let $\btheta_0 = [\bvarphi_0\transpose,\bmu_0\transpose]$, where $\bmu_0$ is a $p_1r$-dimensional vector such that $\bmu_0 = \vect(\bM_0)$ for a $p_1\times r$ matrix $\bM_0$. Let $D_\bvarphi\bSigma(\btheta)$, $D_\bmu\bSigma(\btheta)$, and $D\bSigma(\btheta)$ be defined by \eqref{eqn:Cayley_transform_rectangular_Sigma}, where $\btheta = [\bvarphi\transpose,  \bmu\transpose]\transpose$, where $\bvarphi = \vect(\bA)\transpose$ and $\|\bA\|_2 < 1$. Denote $\bSigma(\btheta) = \bM\bU(\bvarphi)\transpose$, where $\bmu = \vect(\bM)$ for a $p_1\times r$ matrix $\bM$, and similarly $\bSigma_0 = \bSigma(\btheta_0) = \bM_0\bU_0\transpose$. 
Under the setup and notations of Sections \ref{sub:euclidean_representation_of_subspaces} and \ref{sub:extension_to_general_rectangular_matrices}, if $\|\bA_0\|_2 < 1$ and $\|\bA\|_2 < 1$, then
 % for all $\btheta$ with $\|\btheta - \btheta_0\|_2 < 1/(2\sqrt{2})$, 
 there exists a $p_1\times p_2$ matrix $\bR(\btheta, \btheta_0)$ depending on $\btheta, \btheta_0$, such that
\begin{align*}
\vect\{\bSigma(\btheta) - \bSigma_0\}
& = D\bSigma(\btheta_0)(\btheta - \btheta_0) + \vect\{\bR(\btheta, \btheta_0)\}
% \\
% & = D_\bvarphi\bSigma(\btheta_0)(\bvarphi - \bvarphi_0) + D_\bmu\bSigma(\btheta_0)(\bmu - \bmu_0) + \vect\{\bR(\btheta, \btheta_0)\}
,
\end{align*}
where $\bR(\btheta, \btheta_0)$ satisfies 
% \[
$\|\bR(\btheta, \btheta_0)\|_{\mathrm{F}}\leq (4 + 8\|\bM_0\|_2)\|\btheta - \btheta_0\|_{\mathrm{F}}^2$ for all $\btheta, \btheta_0$. In matrix form, the above displayed equation can be written as
% \]
\begin{align*}
\bSigma(\btheta) - \bSigma_0
& = 2\bM_0\bU_0(\eye_{p_2} - \bX_{\bvarphi_0})^{-1}(\bX_{\bvarphi_0} - \bX_\bvarphi)(\eye_{p_2} - \bX_{\bvarphi_0})^{-\mathrm{T}}
\\
&\quad
 + (\bM - \bM_0)\bU_0\transpose + \bR(\btheta, \btheta_0).
\end{align*}
% where
% \[
% \bX_\bvarphi = \begin{bmatrix*}
% \zero_{r\times r} & -\bA\transpose\\
% \bA & \zero_{(p_2 - r)\times (p_2 - r)}
% \end{bmatrix*},\quad\text{and}\quad
% \bX_{\bvarphi_0} = \begin{bmatrix*}
% \zero_{r\times r} & -\bA_0\transpose\\
% \bA_0 & \zero_{(p_2 - r)\times (p_2 - r)}
% \end{bmatrix*}.
% \]
\end{theorem}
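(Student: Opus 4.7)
The plan is to mirror the strategy from the proof of Theorem \ref{thm:CT_deviation_Sigma}, exploiting the fact that the rectangular parameterization $\bSigma(\btheta) = \bM(\bmu)\bU(\bvarphi)\transpose$ has a simpler algebraic structure than the symmetric sandwich form $\bU\bM\bU\transpose$. First I would add and subtract mixed terms to write
\begin{align*}
\bSigma(\btheta) - \bSigma_0
 = \bM\bU\transpose - \bM_0\bU_0\transpose
 = (\bM - \bM_0)\bU_0\transpose + \bM_0(\bU - \bU_0)\transpose + (\bM - \bM_0)(\bU - \bU_0)\transpose,
\end{align*}
so the three summands are, respectively, linear in $\bmu - \bmu_0$, linear in $\bU - \bU_0$, and genuinely bilinear (to be absorbed into the remainder).

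Next I would substitute the first-order expansion of the Cayley parameterization from Theorem \ref{thm:second_order_deviation_CT}, namely
\begin{align*}
\bU(\bvarphi) - \bU(\bvarphi_0) = 2(\eye_{p_2} - \bX_{\bvarphi_0})^{-1}(\bX_\bvarphi - \bX_{\bvarphi_0})(\eye_{p_2} - \bX_{\bvarphi_0})^{-1}\eye_{p_2 \times r} + \bR_\bU(\bvarphi,\bvarphi_0),
\end{align*}
together with $\|\bR_\bU\|_{\mathrm{F}} \leq 8\|\bvarphi - \bvarphi_0\|_2^2$. Taking transposes and using the skew-symmetry of $\bX_{\bvarphi_0}$, so that $(\eye_{p_2} - \bX_{\bvarphi_0})^{-\mathrm{T}} = (\eye_{p_2} + \bX_{\bvarphi_0})^{-1}$ and $(\bX_\bvarphi - \bX_{\bvarphi_0})\transpose = \bX_{\bvarphi_0} - \bX_\bvarphi$, together with the identity $\eye_{p_2 \times r}\transpose(\eye_{p_2} + \bX_{\bvarphi_0})^{-1} = \bU_0\transpose(\eye_{p_2} - \bX_{\bvarphi_0})^{-1}$ coming from the Cayley representation of $\bU_0$, produces the asserted closed-form expression for $\bM_0(\bU - \bU_0)\transpose$ up to the remainder $\bM_0 \bR_\bU\transpose$. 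To confirm that the linear part equals $D\bSigma(\btheta_0)(\btheta - \btheta_0)$ in the sense of \eqref{eqn:Cayley_transform_rectangular_DSigma}, I would vectorize both pieces using the standard Kronecker identities $\vect((\bM - \bM_0)\bU_0\transpose) = (\bU_0 \otimes \eye_{p_1})(\bmu - \bmu_0)$ and $\vect(\bM_0(\bU - \bU_0)\transpose) = \bK_{p_2 p_1}(\bM_0 \otimes \eye_{p_2})\vect(\bU - \bU_0)$, and then invoke $\vect(\bU - \bU_0) = D\bU(\bvarphi_0)(\bvarphi - \bvarphi_0) + \vect(\bR_\bU)$ to recover exactly $D_\bmu\bSigma(\btheta_0)(\bmu - \bmu_0) + D_\bvarphi\bSigma(\btheta_0)(\bvarphi - \bvarphi_0)$.

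Finally I would collect the remainder as $\bR(\btheta,\btheta_0) = \bM_0\bR_\bU(\bvarphi,\bvarphi_0)\transpose + (\bM - \bM_0)(\bU - \bU_0)\transpose$ and bound its Frobenius norm by submultiplicativity. The first piece is at most $\|\bM_0\|_2 \|\bR_\bU\|_{\mathrm{F}} \leq 8\|\bM_0\|_2\|\bvarphi - \bvarphi_0\|_2^2 \leq 8\|\bM_0\|_2\|\btheta - \btheta_0\|_2^2$, and the bilinear cross term is controlled by the global Lipschitz estimate $\|\bU - \bU_0\|_{\mathrm{F}} \leq 2\sqrt{2}\|\bvarphi - \bvarphi_0\|_2$ from Theorem \ref{thm:second_order_deviation_CT} together with $\|\bM - \bM_0\|_{\mathrm{F}} = \|\bmu - \bmu_0\|_2$, giving $\|(\bM - \bM_0)(\bU - \bU_0)\transpose\|_{\mathrm{F}} \leq 2\sqrt{2}\|\bvarphi - \bvarphi_0\|_2\|\bmu - \bmu_0\|_2 \leq 2\sqrt{2}\|\btheta - \btheta_0\|_2^2$. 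Summing yields $\|\bR\|_{\mathrm{F}} \leq (2\sqrt{2} + 8\|\bM_0\|_2)\|\btheta - \btheta_0\|_2^2 \leq (4 + 8\|\bM_0\|_2)\|\btheta - \btheta_0\|_2^2$, as required. The only real obstacle is bookkeeping: threading the commutation matrix $\bK_{p_2 p_1}$ and one- versus two-sided Kronecker factorizations through the vec operator while keeping transposes aligned, which is mechanical but sign-sensitive.
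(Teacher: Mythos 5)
Your proof follows essentially the same route as the paper's: the same three-term split of $\bM\bU\transpose - \bM_0\bU_0\transpose$, the same substitution of the expansion from Theorem \ref{thm:second_order_deviation_CT}, and the same Kronecker/commutation-matrix vectorization to match $D\bSigma(\btheta_0)$ from \eqref{eqn:Cayley_transform_rectangular_DSigma}, differing only in how the bilinear cross term is bounded (your split gives $2\sqrt{2}\|\btheta-\btheta_0\|_2^2$ where the paper gets $4\|\btheta-\btheta_0\|_2^2$, both absorbed into $4 + 8\|\bM_0\|_2$). As a bonus, your derivation of the matrix form via the identity $\bU_0\transpose(\eye_{p_2}-\bX_{\bvarphi_0})^{-1} = \eye_{p_2\times r}\transpose(\eye_{p_2}+\bX_{\bvarphi_0})^{-1}$ correctly yields $\bM_0\bU_0\transpose$ in the leading factor, which exposes a dimensional typo ($\bM_0\bU_0$) in the theorem's displayed matrix-form equation.
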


Analogously, Theorem \ref{thm:DSigma_singular_value_rectangular} below also extends Theorem \ref{thm:DSigma_singular_value} to general rectangular $\bSigma(\cdot)$. 
\begin{theorem}\label{thm:DSigma_singular_value_rectangular}
Under the setup and notations of Sections \ref{sub:euclidean_representation_of_subspaces} and \ref{sub:extension_to_general_rectangular_matrices}, 
% if $\|\bA_0\|_2 < 1$ and $\|\bA\|_2 < 1$, then 
\[
\|\{D\bSigma(\btheta_0)\transpose D\bSigma(\btheta_0)\}^{-1}\|_2\leq 
\left\{
\begin{aligned}
&1 + \frac{(1 + 8\|\bM_0\|_2^2)(1 + \|\bA_0\|_2^2)^4}{4\sigma_r^2(\bM_0)(1 - \|\bA_0\|_2^2)^2},\quad&\text{if }r \geq 2,\\
&1 + \frac{(1 + 8\|\bM_0\|_2^2)(1 + \|\bA_0\|_2^2)^2}{4\sigma_r^2(\bM_0)},\quad&\text{if }r = 1.
\end{aligned}\right.
\]
\end{theorem}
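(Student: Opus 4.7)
The plan is to control $\|(D\bSigma(\btheta_0)\transpose D\bSigma(\btheta_0))^{-1}\|_2$ by exploiting the block structure of $\bH := D\bSigma(\btheta_0)\transpose D\bSigma(\btheta_0)$ induced by the partition $\btheta=[\bvarphi\transpose,\bmu\transpose]\transpose$. Writing
$\bH = \begin{bmatrix}\bH_{\bvarphi\bvarphi}&\bH_{\bvarphi\bmu}\\\bH_{\bvarphi\bmu}\transpose&\bH_{\bmu\bmu}\end{bmatrix}$,
the mixed-product property for Kronecker products applied to $D_\bmu\bSigma(\btheta_0) = \bU_0\otimes\eye_{p_1}$ immediately yields $\bH_{\bmu\bmu} = (\bU_0\transpose\bU_0)\otimes\eye_{p_1} = \eye_{p_1r}$, so that $D_\bmu\bSigma(\btheta_0)$ is an isometry and the Schur complement of $\bH_{\bmu\bmu}$ reduces to $\bS := \bH_{\bvarphi\bvarphi} - \bH_{\bvarphi\bmu}\bH_{\bvarphi\bmu}\transpose$. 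The standard block-inverse formula then gives, for any $\bx = [\bx_\bvarphi\transpose,\bx_\bmu\transpose]\transpose$,
\begin{equation*}
\bx\transpose\bH^{-1}\bx = (\bx_\bvarphi - \bH_{\bvarphi\bmu}\bx_\bmu)\transpose\bS^{-1}(\bx_\bvarphi - \bH_{\bvarphi\bmu}\bx_\bmu) + \|\bx_\bmu\|_2^2,
\end{equation*}
and estimating $\|\bx_\bvarphi - \bH_{\bvarphi\bmu}\bx_\bmu\|_2^2 \leq (1 + \|\bH_{\bvarphi\bmu}\|_2^2)\|\bx\|_2^2$ by Cauchy--Schwarz yields the master inequality $\|\bH^{-1}\|_2 \leq 1 + \|\bS^{-1}\|_2(1 + \|\bH_{\bvarphi\bmu}\|_2^2)$.

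The cross-block is controlled through $\|\bH_{\bvarphi\bmu}\|_2 \leq \|D_\bvarphi\bSigma(\btheta_0)\|_2$: since $D_\bvarphi\bSigma(\btheta_0) = \bK_{p_2 p_1}(\bM_0\otimes\eye_{p_2})D\bU(\bvarphi_0)$ and the first-order consequence of Theorem \ref{thm:second_order_deviation_CT} is $\|D\bU(\bvarphi_0)\|_2 \leq 2\sqrt 2$, I obtain $1 + \|\bH_{\bvarphi\bmu}\|_2^2 \leq 1 + 8\|\bM_0\|_2^2$. For $\|\bS^{-1}\|_2$ I would use the factorization $\bS = \bQ\transpose\bQ$, where $\bQ := D_\bvarphi\bSigma(\btheta_0) - D_\bmu\bSigma(\btheta_0)\bH_{\bvarphi\bmu}\transpose$ is the orthogonal residual of $D_\bvarphi\bSigma(\btheta_0)$ modulo $\mathrm{Range}(D_\bmu\bSigma(\btheta_0)) = \{\mathrm{vec}(\bA\bU_0\transpose):\bA\in\mathbb{R}^{p_1\times r}\}$. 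Decomposing $D\bU(\bvarphi_0)d\bvarphi = \bU_0\bE_1 + \bU_{0,\perp}\bE_2$ with respect to $\bU_0$ and an orthonormal completion $\bU_{0,\perp}\in\mathbb{O}(p_2, p_2-r)$, a short calculation using $\eye_{p_2} - \bU_0\bU_0\transpose = \bU_{0,\perp}\bU_{0,\perp}\transpose$ shows that $\bQ d\bvarphi$ corresponds in matrix form to $\bM_0\bE_2\transpose\bU_{0,\perp}\transpose$, so
\begin{equation*}
\|\bQ d\bvarphi\|_2^2 = \|\bM_0\bE_2\transpose\|_{\mathrm{F}}^2 \geq \sigma_r^2(\bM_0)\,\|\bE_2\|_{\mathrm{F}}^2 = \sigma_r^2(\bM_0)\,\|\bU_{0,\perp}\transpose D\bU(\bvarphi_0)d\bvarphi\|_{\mathrm{F}}^2.
\end{equation*}

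This reduces the proof to the Cayley geometric lemma asserting $\|\bU_{0,\perp}\transpose D\bU(\bvarphi_0)d\bvarphi\|_{\mathrm{F}} \geq \alpha\,\|d\bvarphi\|_2$ with $\alpha = 2(1-\|\bA_0\|_2^2)/(1+\|\bA_0\|_2^2)^2$ when $r\geq 2$ and $\alpha = 2/(1+\|\bA_0\|_2^2)$ when $r=1$, which is the main obstacle. I would establish it by substituting \eqref{eqn:Differential_CT} into the product $\bU_{0,\perp}\transpose D\bU(\bvarphi_0)$, isolating the bottom $(p_2-r)\times r$ block via the explicit block decomposition of $(\eye_{p_2}-\bX_{\bvarphi_0})^{-1}$, and invoking $\|(\eye_r - \bA_0\transpose\bA_0)^{-1}\|_2 = (1-\|\bA_0\|_2^2)^{-1}$. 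The dichotomy at $r = 1$ arises because $\bU_0\transpose D\bU(\bvarphi_0)d\bvarphi$ is always skew-symmetric and hence vanishes when $r = 1$, removing one factor of $(1-\|\bA_0\|_2^2)$ from the worst-case direction. Since this calculation is the $\bM_0 = \eye_r$ specialization of the lower bound on $\sigma_{\min}(D_\bvarphi\bSigma(\btheta_0))$ already needed for the symmetric Theorem \ref{thm:DSigma_singular_value}, the lemma can be imported from that proof. Combining $\|\bS^{-1}\|_2 \leq 1/(\sigma_r^2(\bM_0)\alpha^2)$ with the master inequality and the explicit form of $\alpha$ produces the stated bound.
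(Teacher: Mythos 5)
Your proposal is correct and follows essentially the same route as the paper's proof: block partition of $D\bSigma(\btheta_0)\transpose D\bSigma(\btheta_0)$, observation that $D_\bmu\bSigma(\btheta_0)=\bU_0\otimes\eye_{p_1}$ is an isometry so the lower-right block is the identity, reduction to the Schur complement, the bound $\|\bH_{\bvarphi\bmu}\|_2\leq 2\sqrt 2\|\bM_0\|_2$ from $\|D\bU(\bvarphi_0)\|_2\leq 2\sqrt 2$, and the key lower bound $\sigma_r^2(\bM_0)\|\bU_{0\perp}\transpose D\bU(\bvarphi_0)d\bvarphi\|_{\mathrm F}^2$ for the Schur complement quadratic form, which matches the expression $4\sigma_r^2(\bM_0)\|\bC_{22}\transpose\bA\bC_{11}-\bC_{12}\transpose\bA\transpose\bC_{21}\|_{\mathrm F}^2$ and reduces to Lemma \ref{lemma:Cmatrix_singular_value}. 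The only cosmetic difference is that you derive the master inequality $\|\bH^{-1}\|_2\leq 1+\|\bS^{-1}\|_2(1+\|\bH_{\bvarphi\bmu}\|_2^2)$ by a self-contained completion-of-squares plus Cauchy--Schwarz on the quadratic form of the block inverse, whereas the paper cites a matrix-inequality lemma from the literature for the same bound; your version is slightly more elementary but arrives at an identical estimate.
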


% subsection extension_to_general_rectangular_matrices (end)

\section{Applications} % (fold)
\label{sec:applications}

\subsection{Bayesian sparse spiked covariance model} % (fold)
\label{sub:bayesian_sparse_pca_and_non_intrinsic_loss}
% [Basic model and brief introduction]
This section presents the analysis of Bayesian sparse spiked covariance model and explores the posterior contraction rate with regard to the spectral norm. 
In general, the posterior contraction rate of a Bayesian model under the intrinsic metric of the sampling model (i.e., the Fisher information metric) can be established following the seminal work of \cite{ghosal2000convergence} and its offsprings. However, because the spectral norm is a non-intrinsic metric as opposed to the Frobenius norm, the rate-optimal posterior contraction in the spectral norm for Bayesian sparse spiked covariance model is a highly non-trivial result, as discussed in \cite{gine2011} and \cite{hoffmann2015}. In this section, leveraging the technical tools developed in Section \ref{sec:main_results}, we show that the posterior contraction rate under the spectral norm is minimax optimal. 

The spiked covariance model was originally named by \cite{doi:10.1198/jasa.2009.0121} and has been explored in several works, including \cite{10.2307/24307692}, \cite{cai2013sparse}, and \cite{donoho2018}. Due to the structural convenience, the spiked covariance model has been used as a natural probabilistic model for principal component analysis (PCA). Formally, let $\by_1,\ldots,\by_n$ be independent and identically distributed $\mathrm{N}(\zero_p, \bOmega)$ random vectors where $\bOmega\in\mathbb{M}_+(p)$. The spiked covariance model posits the following structure on the covariance matrix $\bOmega$:
\begin{align}
\label{eqn:spiked_covariance}
\bOmega = \bV\bLambda\bV\transpose + \eye_p,
\end{align}
where $\bV\in\mathbb{O}(p, r)$ is the matrix of eigenvectors corresponding to the $r$-largest eigenvalues of $\bOmega$, $\bLambda = \mathrm{diag}(\lambda_1,\ldots,\lambda_r)$ is a $r\times r$ diagonal matrix with $\lambda_1\geq\ldots\geq\lambda_r > 0$, and $r \ll p$. 
In a high-dimensional regime where the model dimension $p$ far exceeds the number of samples $n$, \cite{doi:10.1198/jasa.2009.0121} showed that the classical PCA might lead to inconsistent estimates, and certain structural assumptions are needed, e.g., a sparse structure \citep{doi:10.1198/jasa.2009.0121,cai2013sparse} or an effective rank constraint \citep{koltchinskii2017new,koltchinskii2017}. 
Here we focus on the case where the leading eigenvector matrix $\bV$ exhibits the row sparsity. Formally, we define 
the support of $\bV$ as 
% to be the indices of the rows of $\bV$ that are non-zero, namely, 
$\mathrm{supp}(\bV) := \{j\in[p]:[\bV]_{j*}\neq \zero\}$ and assume that $\bV$ satisfies the row sparsity constraint that $|\mathrm{supp}(\bV)|\leq s$. Note that the row sparsity is subspace invariant, i.e., $\mathrm{supp}(\bV) = \mathrm{supp}(\bV\bW)$ for any $\bW\in\mathbb{O}(r)$. 
% that $s\geq r$ due to the fact that $\bV\transpose\bV = \eye_r$. 
Correspondingly, the sparse structure of $\bV$ motivates the development of sparse PCA methods. For an incomplete list of works related to the sparse spiked covariance model and sparse PCA, see \cite{doi:10.1198/106186006X113430,amini2009,doi:10.1198/jasa.2009.0121,pmlr-v22-vu12,vu2013minimax,cai2013sparse,ma2013,berthet2013,lei2015,cai2015optimal}.

% In the case where $p/n\to \gamma$ for some constant $\gamma\in (0, 1]$, \cite{10.2307/24307692} and \cite{donoho2018} studied the asymptotics of the sample eigenstructure. In \cite{koltchinskii2017new} and \cite{koltchinskii2017}, the so-called effective rank assumption is made, in the sense that $\mathrm{tr}(\bOmega)/\|\bOmega\|_2 = o(n)$, which in turn requires that $\lambda_1$ needs to diverge to $\infty$ sufficiently fast. 

When the parameter of interest is the principal subspace $\Span(\bV)$, 
% with a minimal assumption on the sampling distribution, \cite{pmlr-v22-vu12} and 
\cite{vu2013minimax} established the following minimax rate under the Frobenius sine-theta distance: 
\begin{align}
\label{eqn:minimax_rate_SPCA_Frobenius}
\inf_{\widehat{\bV}}\sup_{\bV\in\mathbb{O}(p, r),|\mathrm{supp}(\bV)|\leq s}\expect_\bV\{\|\sin\Theta(\widehat\bV, \bV)\|_{\mathrm{F}}^2\} \asymp \frac{rs + s\log p}{n}.
\end{align}
Furthermore, \cite{cai2015optimal} derived the minimax rate for the principle subspace under the spectral sine-theta distance:
\begin{align}
\label{eqn:minimax_rate_SPCA_spectral}
\inf_{\widehat{\bV}}\sup_{\bOmega\in\Theta_0(s, p, r, \lambda, \tau)}\expect_\bOmega\{\|\sin\Theta(\widehat{\bV}, \bV)\|_2^2\} \asymp \frac{s\log p}{n},
% \frac{(\lambda + 1)s}{\lambda^2 n}\log\frac{ep}{s} \wedge 1,
\end{align}
where $\Theta_0(s, p, r, \lambda, \tau) = \{\bV\bLambda\bV\transpose + \eye_p:\bV\in\mathbb{O}(p, r), |\mathrm{supp}(\bV)|\leq s,\lambda/\tau\leq \lambda_r(\bLambda)\leq \lambda_1(\bLambda)\leq \lambda\}$, and $\lambda,\tau$ are bounded away from $0$ and $\infty$.
% , the minimax rate under the spectral $\sin\Theta$ distance between principal subspaces is exactly $\sqrt{(s\log p)/n}$. 
Note that the minimax rate \eqref{eqn:minimax_rate_SPCA_Frobenius} under the Frobenius sine-theta distance has an extra term $\sqrt{rs/n}$, which is inferior than \eqref{eqn:minimax_rate_SPCA_spectral} when $r\gg \log p$. 

In this section, we focus on the posterior contraction rate of Bayesian sparse spiked covariance model. We are particularly interested in the posterior contraction under the spectral sine-theta distance between principal subspaces when $r\gg \log p$, in which the phase transition phenomenon between the two minimax rates \eqref{eqn:minimax_rate_SPCA_Frobenius} and \eqref{eqn:minimax_rate_SPCA_spectral} occurs. 
% In contrast to the well-developed theory from the frequentist side, the literature regarding Bayesian approaches for sparse spiked covariance model is slightly underexplored. 
\cite{pati2014posterior} first studied the minimax-optimal posterior contraction of $\bOmega$ with sparse priors, assuming the rank $r$ is bounded. Under a more general assumption that $|\mathrm{supp}(\bV)|\leq rs$, \cite{gao2015rate} established the rate-optimal posterior contraction of Bayesian sparse PCA under the Frobenius sine-theta distance. Recently, \cite{xie2018bayesian} and \cite{ning2021spike} focused on the posterior contraction rate under the spectral norm, assuming that $r\log n\lesssim \log p$. 
% Under the same low-rank assumption, \cite{Ning2020} studied the contraction rate under the spectral norm for both the exact posterior distribution and the variational distribution.
% , and proposed computationally efficient algorithms for Bayesian inference with sparse priors. 
% We remark that the assumption $r\log n\lesssim \log p$ greatly simplies the problem
%  % of posterior contraction in spectral norm 
% since the minimax rate \eqref{eqn:minimax_rate_SPCA_spectral} under the spectral $\sin\Theta$ distance coincides with the minimax rate \eqref{eqn:minimax_rate_SPCA_Frobenius} under the Frobenius $\sin\Theta$ distance, when $\lambda_1(\bLambda)$ and $\lambda_r(\bLambda)$ are bounded away from $0$ and $\infty$. Namely, rate-optimal posterior contraction under the Frobenius $\sin\Theta$ distance directly implies rate-optimal posterior contraction under the spectral $\sin\Theta$ distance when $r\log n\lesssim \log p$. 
% [a little introduction to Bayesian sparse pca]
Under the regime $r/\log p\to\infty$, it is unknown whether the rate-optimal posterior contraction under the (non-intrinsic) spectral sine-theta distance is achievable, which is precisely the gap we aim to fill in. 
% In the following discussion, we do not impose rank assumption other than the automatic constraint $r\leq s$. 

% Sampling model for spiked covariance matrix model: A spiked covariance matrix estimation problem \citep{johnstone2001distribution}:
% \begin{align}
% \by_i\iidsim\mathrm{N}(0, \bSigma),\quad \bSigma = \bU\bLambda \bU\transpose{} + \eye_p,\quad \bU\in\mathbb{O}(p,r),\quad\bLambda = \mathrm{diag}(\lambda_1,\cdots,\lambda_r).
% \end{align}
% We denote the true value of $\bOmega$ corresponding to the distribution of the data $\by_1,\ldots,\by_n$ by $\bOmega_0$. 
Recall that in the spiked covariance model \eqref{eqn:spiked_covariance}, the leading eigenvector matrix $\bV$ can only be identified up to an orthogonal matrix in $\mathbb{O}(r)$ in the presence of eigenvalue multiplicity. 
Because, for any covariance matrix $\bOmega$ of the form \eqref{eqn:spiked_covariance}, there exists some permutation matrix $\bPi$ such that $\eye_{p\times r}\transpose(\bPi\bV)$ is non-singular, and the leading eigenvector matrix of $\bPi\bOmega\bPi\transpose$ is exactly $\bPi\bV$. Therefore, without loss of generality, we assume that $\eye_{p\times r}\transpose\bV$ is invertible, namely, the top square block of $\bV$ is non-singular. By the construction in Sections \ref{sub:euclidean_representation_of_subspaces} and \ref{sub:euclidean_represention_of_low_rank_matrices_and_intrinsic_perturbation}, $\bOmega$ can be written as $\bOmega = \bSigma + \eye_p$ for some $\bSigma\in\mathscr{S}(p, r)$, and there exist some $\bU\in\mathbb{O}_+(p, r)$ and $\bM\in\mathbb{M}_+(r)$, such that
\[
\bOmega = \bSigma + \eye_p = \bU\bM\bU\transpose + \eye_p
% ,\quad \bU\in\mathbb{O}_+(p, r),\quad \bM\in\mathbb{M}_+(r)
.
\]
We follow the setup and notations in Sections \ref{sub:euclidean_representation_of_subspaces} and \ref{sub:euclidean_represention_of_low_rank_matrices_and_intrinsic_perturbation}. 
Since $\bSigma$ can be parameterized by an Euclidean vector $\btheta = [\bvarphi\transpose, \bmu\transpose]\transpose\in\mathscr{D}(p, r)$ through the map $\bSigma(\cdot):\mathscr{D}(p, r)\to \mathscr{S}(p, r),\btheta\mapsto \bSigma(\btheta)$, we then use $\bOmega(\cdot)$ to generically denote the induced map $\btheta\mapsto \bOmega(\btheta):=\bSigma(\btheta) + \eye_p$. Furthermore, let $\bOmega_0$ denote the true value of the covariance $\bOmega$ corresponding to the distribution of $\by_1,\ldots,\by_n$, $\btheta_0\in\mathscr{D}(p, r)$ be the inverse image of $\bOmega_0$ under the map $\bOmega(\cdot)$, and $\bSigma_0 := \bSigma(\btheta_0)$. Let $\bA,\bA_0\in\mathbb{R}^{(p - r)\times r}$ be matrices such that $\bvarphi = \vect(\bA), \bvarphi_0 = \vect(\bA_0)$ where $\|\bA\|_2 < 1$, $\|\bA_0\|_2 < 1$, and let $\bM$, $\bM_0\in\mathbb{M}_+(r)$ be positive definite matrices such that $\bmu = \vech(\bM)$ and $\bmu_0 = \vech(\bM_0)$, respectively. 

The advantage of the Cayley parameterization is that $\mathrm{supp}(\bV) = \mathrm{supp}\{\bU(\bvarphi)\}$, and the row sparsity of $\bU(\bvarphi)$ can be directly incorporated to the rows of $\bA$. By the construction of the Cayley parameterization, $\bU(\bvarphi)$ can be written as
\[
\bU(\bvarphi) = \begin{bmatrix*}
(\eye_r - \bA\transpose\bA)(\eye_r + \bA\transpose\bA)^{-1}\\
2\bA(\eye_r + \bA\transpose\bA)^{-1}
\end{bmatrix*}.
\]
It follows that for any $j\in \{r + 1,r + 2,\ldots,p\}$, $[\bU(\bvarphi)]_{j*} = \zero$ if and only if $[\bA]_{(j - r)*}$. Furthermore, $\bU(\bvarphi)$ is subject to the orthonormal constraint $\bU(\bvarphi)\transpose\bU(\bvarphi) = \eye_r$, whereas working with $\bA$ is more convenient. Hence, we consider the following sparsity inducing prior on $\bA$. Let $\pi_p$ be the density of a discrete distribution supported on $\{0,1,2,\ldots,p - r\}$ of the form
\begin{align}\label{eqn:prior_pi_p}
\pi_p(t) = \frac{1}{z_n} n^{-rt}(p - r)^{-at},\quad t = 0,\ldots,p - r
\end{align}
for some constants $a, c> 0$, where 
\[
z_n = \sum_{t = 0}^{p - r}\left\{\frac{1}{n^r(p - r)^a}\right\}^t = \frac{1 - \{n^{-r}(p - r)^{-a}\}^{p - r + 1}}{1 - n^{-r}(p - r)^{-a}}
\]
is the normalizing constant. Based on $\pi_p$, A subset $S\subset[p - r]$ representing the support of $\bA$ is drawn from the following distribution:
\begin{align}\label{eqn:model_selection_prior}
\pi_S(S) = \frac{\pi_p(|S|)}{{p - r\choose |S|}},\quad S\subset[p - r],
\end{align}
where we use $|S|$ to denote the cardinality of a finite set $S$. 
Given $S = \{j_1,j_2,\ldots,j_{|S|}\}\subset[p - r]$, suppose $S^c := [p - r]\backslash S$ can be written as $S^c = \{k_1,k_2,\ldots,k_{|S^c|}\}$, and denote 
\begin{align*}
\bA_S := \begin{bmatrix*}
[\bA]_{j_11} & [\bA]_{j_12} & \ldots & [\bA]_{j_1r}\\
[\bA]_{j_21} & [\bA]_{j_22} & \ldots & [\bA]_{j_2r}\\
\vdots & \vdots &  & \vdots\\
[\bA]_{j_{|S|}1} & [\bA]_{j_{|S|}2} & \ldots & [\bA]_{j_{|S|}r}\\
\end{bmatrix*},\quad
% \text{and}\quad
\bA_{S^c} := \begin{bmatrix*}
[\bA]_{k_11} & [\bA]_{k_12} & \ldots & [\bA]_{k_1r}\\
[\bA]_{k_21} & [\bA]_{k_22} & \ldots & [\bA]_{k_2r}\\
\vdots & \vdots &  & \vdots\\
[\bA]_{k_{|S^c|}1} & [\bA]_{k_{|S^c|}2} & \ldots & [\bA]_{k_{|S^c|}r}\\
\end{bmatrix*}.
\end{align*}
% $\bA_S := [[\bA]_{jk}:j\in S, k\in [r]]_{|S|\times r}$, and $\bA_{S^c} := [[\bA]_{jk}:j\in S^c, k\in [r]]_{|S^c|\times r}$. 
We then define the prior distribution of $\bA$ by
\begin{equation}
\label{eqn:prior_A}
\begin{aligned}
\Pi_\bA(\mathrm{d}\bA)& = \sum_{S\subset[p - r]}\pi_S(S)\left\{\pi_{\bA_S}(\bA_S)\mathrm{d}\bA_S\right\}\left\{\delta_{\zero_{|S^c|\times r}}(\mathrm{d}\bA_{S^c})\right\},\\
\pi_{\bA_S}(\bA_S) & = \frac{\exp\{-2\|\vect(\bA_S)\|_1\}\mathbbm{1}(\|\bA_S\|_2 < 1)\mathrm{d}\bA_S}{
\int_{\|\bA_S\|_2 < 1}\exp(-2\|\vect(\bA_S)\|_1)\mathrm{d}\bA_S}.
\end{aligned}
\end{equation} 
The prior distribution on the entire covariance matrix $\bOmega$ through $\btheta$ is completed by assigning the following prior distribution to $\bM$, which is independent of $\Pi_\bA(\mathrm{d}\bA)$:
\begin{align}
\label{eqn:prior_M}
\begin{aligned}
\pi_\bmu(\bmu)\propto \exp\left( - 2\|\bmu\|_1\right)\mathbbm{1}\{\bM(\bmu) \in \mathbb{M}_+(r)\}.
\end{aligned}
\end{align}
% where $\mathbb{M}_+^r$ is the cone of the all positive definite matrices in $\mathbb{R}^{r\times r}$. 
Then the joint prior distribution on $\btheta = [\bvarphi\transpose,\bmu\transpose]\transpose = [\vect(\bA)\transpose,\bmu\transpose]\transpose\in\mathscr{D}(p, r)$ is defined as the product of the sparsity inducing prior \eqref{eqn:prior_A} on $\bA$ and the prior distribution \eqref{eqn:prior_M} on $\bmu$:
\begin{align}\label{eqn:prior_sparse_pca}
\Pi_\btheta(\mathrm{d}\btheta) = \Pi_\bA(\mathrm{d}\bA)\pi_\bmu(\bmu)\mathrm{d}\bmu.
\end{align}
% $\bvarphi = \vect(\bA)$ and $\btheta = [\bvarphi\transpose, \bmu\transpose]\transpose$. Then the prior specification $\Pi_\btheta$ on $\btheta = [\vect(\bA)\transpose, \bmu\transpose]\transpose$ also induces a prior distribution $\Pi_\bSigma$ on $\bSigma(\btheta)$ over the space $\calC_n = \{\bSigma = \bU\bLambda\bU\transpose + \eye_p:\bU\in\mathbb{O}(p, r), \bLambda = \diag(\lambda_1,\ldots,\lambda_r), \lambda_1 \geq \ldots \geq \lambda_r > 0\}$
Denote $\bY_n = [\by_1,\ldots,\by_n]\in\mathbb{R}^{p\times n}$ the data matrix concatenated by $\by_1,\ldots,\by_n$ and
\[
\ell(\bOmega) = -\frac{n}{2}\log\det(2\pi\bOmega) - \frac{n}{2}\mathrm{tr}(\widehat\bOmega\bOmega^{-1})
\]
the log-likelihood function of $\bOmega$, where $\widehat\bOmega := (1/n)\sum_{i = 1}^n\by_i\by_i\transpose$. We assume the high-dimensionality setup $p/n\to\infty$ so that the sample covariance matrix $\widehat{\bOmega}$ is no longer invertible. Then the posterior distribution of interest given the data matrix $\bY_n$ can be written using the Bayes formula:
\begin{align*}
\Pi_\btheta(\btheta\in A\mid\bY_n)
& = \frac{\int_A\exp\{\ell(\bOmega(\btheta)) - \ell(\bOmega_0)\}\Pi_\btheta(\mathrm{d}\btheta)}{\int\exp\{\ell(\bOmega(\btheta)) - \ell(\bOmega_0)\}\Pi_\btheta(\mathrm{d}\btheta)}.
\end{align*}
where $A$ is any measurable subset of $\mathscr{D}(p, r)$. 

The main result of this section is Theorem \ref{thm:contraction_spectral_norm} below, which asserts that the posterior contraction rate under $\|\sin\Theta\{\bU(\bvarphi),\bU_0\}\|_2$ is minimax optimal. We first present the following assumptions:
\begin{itemize}
	\item[A1] (Row sparsity) $\bU_0$ is jointly $s$-sparse for some $s\geq 2r$, namely, $s = |\mathrm{supp}(\bU_0)|\geq 2r$.
	\item[A2] (Regularity) The spectral norm of $\bA_0$ is bounded away from $1$, i.e., $\sup_{n}\|\bA_0\|_2 < 1$. 
	% \item[A3] The matrix $\bQ_{01}$ satisfies $\lambda_r(\bQ_{01}) = \Theta(1)$. 
	\item[A3] (Bounded spectra) There exists some constants $\underline{\lambda},\overline{\lambda} > 0$ such that 
	\[
	\underline{\lambda}\leq \lambda_r(\bM_0) \leq \lambda_1(\bM_0)\leq \overline{\lambda}.
	\]
	\item[A4] (Fast convergence rate) 
	% $p/n\to\infty $ and 
	% \[
	% \frac
	$
	{(r^2s^2\log n + rs^2\log p)^3}/{n}\to 0
	$
	.
	% \] 
	\item[A5] (Minimum signal strength) The non-zero rows of $\bA_0$ satisfies
	\[
	\frac{\min_{j\in \mathrm{supp}(\bA_0)}\|[\bA_0]_{j*}\|_2}{\sqrt{(rs\log n + s\log p)/n}} \to \infty. 
	\]
\end{itemize}
% \begin{remark}
\begin{remark}
Some remarks regarding assumptions A1-A5 are in order. Assumptions A1 and A3 are standard conditions for the sparse spiked covariance model. Assumption A2 requires that the spectral norm of $\bA_0$ is bounded away from $1$. By the results in Section \ref{sec:main_results}, locally around $\bU_0$, the Frobenius sine-theta distance between $\bU$ and $\bU_0$ is equivalent to the Frobenius norm $\|\bU - \bU_0\|_{\mathrm{F}}$, and hence, $\|\bvarphi - \bvarphi_0\|_2$, up to a constant factor. Furthermore, Theorem \ref{thm:DSigma_singular_value} and Remark \ref{rmk:statistical_interpretation_regularity} indicate that the Fisher information matrix with regard to the $\btheta$-parameterization evaluated at $\btheta = \btheta_0$, given by
\begin{align}\label{eqn:Fisher_information_matrix_spiked_covariance}
\eye(\btheta_0) = \frac{1}{2}D\bSigma(\btheta_0)\transpose(\bOmega_0^{-1}\otimes\bOmega_0^{-1})D\bSigma(\btheta_0),
\end{align}
is asymptotically non-singular, i.e., $\|\eye(\btheta_0)^{-1}\|_2$ is bounded away from $\infty$ when $n\to\infty$. 
% \end{remark}

Assumption A4 claims that the posterior contraction rate with regard to $\|\btheta - \btheta_0\|_1$ is sufficiently fast. Furthermore, roughly speaking, using the technical tools developed in Section \ref{sec:main_results}, we are able to derive a local asymptotic normality expansion of the log-likelihood function $\ell(\bOmega(\btheta))$ as follows:
\begin{align}
\label{eqn:LAN}
\begin{aligned}
\ell(\bOmega(\btheta)) - \ell(\bOmega_0)
& = \frac{n}{2}\vect(\widehat{\bOmega} - \bOmega_0)\transpose(\bOmega_0^{-1}\otimes\bOmega_0^{-1})D\bSigma(\btheta_0)(\btheta - \btheta_0)\\
&\quad - \frac{n}{2}(\btheta - \btheta_{0})\transpose\eye(\btheta_0)(\btheta - \btheta_{0})
% \frac{n}{4}(\btheta - \btheta_{0})\transpose D\bSigma(\btheta_{0})\transpose(\bOmega_0^{-1}\otimes\bOmega_0^{-1})D\bSigma(\btheta_{0})(\btheta - \btheta_{0})\\
% &\quad
 + R_n(\btheta, \btheta_{0}),
\end{aligned}
\end{align}
where the remainder $R_n$ is negligible under
 % certain regularity conditions together with 
Assumption A4. 

Assumption A5 requires that the minimum of the Euclidean norms of the non-zero rows of $\bA_0$ cannot be too small. It is similar to the so-called $\beta$-min condition in the sparse linear regression model (see, e.g., \citealp{buhlmann2011statistics}). In \cite{lei2015}, a similar condition is also required for the exact recovery of $\mathrm{supp}(\bU_0)$ using the Fantope projection and selection method. 
\end{remark}

\begin{theorem}\label{thm:contraction_spectral_norm}
Under the setup and notations in Sections \ref{sub:euclidean_representation_of_subspaces}, \ref{sub:euclidean_represention_of_low_rank_matrices_and_intrinsic_perturbation}, and \ref{sub:bayesian_sparse_pca_and_non_intrinsic_loss},
% Assume the conditions of Theorem \ref{thm:contraction_Frobenius_norm} hold, and further assume that $n\eps_n(\btheta)^3\to 0$ as $n\to\infty$, where $\eps_n(\btheta) = \sqrt{(r^2s_0^2\log n + rs_0^2\log p)/n}$. 
% If $\eps_n^2 = (rs\log n + s\log p)/n \to 0$ and $p\to\infty$ as $n\to\infty$, 
there exists some large constant $M > 0$, such that
\[
\expect_0\left[\Pi_\btheta\left\{\|\sin\Theta(\bU(\bvarphi), \bU_0)\|_{\mathrm{2}} > M\sqrt{\frac{s\log p}{n}}\mathrel{\Bigg|}\bY_n\right\}\right] \to 0.
\]
\end{theorem}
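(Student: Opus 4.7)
The plan is to combine three ingredients: a standard Ghosal--Ghosh--van der Vaart-type posterior contraction argument under an intrinsic (Frobenius-type) metric, support recovery of $\bA_0$ enabled by the sparsity prior in \eqref{eqn:prior_A} together with the minimum signal strength Assumption A5, and a local asymptotic normality expansion of the log-likelihood \eqref{eqn:LAN} that, via the intrinsic perturbation theorems of Section \ref{sub:intrinsic_perturbation_theorems}, translates into control of the spectral sine-theta distance at the sharp rate.

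First, I would establish a crude posterior contraction rate in the Euclidean metric: with $\expect_0$ posterior probability tending to one, $\|\btheta - \btheta_0\|_2 \lesssim \sqrt{(rs\log n + s\log p)/n}$. This follows from verifying the standard conditions of \cite{ghosal2000convergence}, namely sufficient prior mass on Euclidean neighborhoods of $\btheta_0$ (using Assumptions A1 and A3 together with the form of $\pi_p$ in \eqref{eqn:prior_pi_p}), entropy control on the support of the sparsity prior, and existence of exponential tests. The intrinsic Kullback--Leibler and Hellinger distances on $\bOmega$ are equivalent to $\|\btheta - \btheta_0\|_2$ locally around $\btheta_0$ by Theorem \ref{thm:CT_deviation_Sigma} together with the non-singularity of the Fisher information matrix $\eye(\btheta_0)$ in \eqref{eqn:Fisher_information_matrix_spiked_covariance}, which itself rests on Theorem \ref{thm:DSigma_singular_value} and Assumptions A2--A3.

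Second, leveraging Assumption A5, I would upgrade this to exact support recovery: the posterior must concentrate on draws with $\mathrm{supp}(\bA) = \mathrm{supp}(\bA_0)$, because any misidentified nonzero row of $\bA_0$ would force $\|\btheta - \btheta_0\|_2$ to be at least of order $\min_{j \in \mathrm{supp}(\bA_0)} \|[\bA_0]_{j*}\|_2$, contradicting the crude rate. Once the support is fixed, the effective model dimension drops to $O(rs + r^2)$, the prior restricted to the correct support has a bounded density, and the LAN expansion \eqref{eqn:LAN} applies on the contraction ball since Assumption A4 makes the cubic/quadratic remainder $R_n$ negligible. A Bernstein--von Mises type argument then identifies the posterior of $\btheta$ with an approximate Gaussian $\mathrm{N}(\widehat{\btheta}, n^{-1}\eye(\btheta_0)^{-1})$ centered at an efficient estimator $\widehat{\btheta}$.

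Finally, I would convert this Gaussian-type posterior statement into the desired spectral sine-theta bound. Using Theorem \ref{thm:second_order_deviation_CT} and the identity $\|\bU\bU\transpose - \bU_0\bU_0\transpose\|_{\mathrm{F}} = \sqrt{2}\|\sin\Theta(\bU,\bU_0)\|_{\mathrm{F}}$ together with its spectral-norm analogue, the spectral sine-theta distance is, up to constants depending on $\|\bA_0\|_2$, comparable to the spectral norm of $\bA - \bA_0$ viewed as an $s \times r$ matrix after support recovery. The approximate Gaussian law of $\bA - \bA_0$ on $s \times r$ entries, combined with a concentration bound for the spectral norm of a sub-Gaussian random matrix and a logarithmic slack of size $\sqrt{\log p/n}$ to account for the random support selection, delivers $\|\bA - \bA_0\|_2 \lesssim \sqrt{s\log p/n}$. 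The main obstacle is precisely this last step: the Frobenius contraction gives only a rate of order $\sqrt{(rs + s\log p)/n}$, so extracting the sharper spectral rate requires the refined matrix-Gaussian argument on the reduced-dimensional submodel and careful bookkeeping to ensure the $\log p$ factor is incurred once for support selection rather than being compounded with the ambient $r$.
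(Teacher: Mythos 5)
Your high-level strategy mirrors the paper's: establish prior concentration and posterior sparsity, derive a LAN expansion under Assumption A4, obtain a Bernstein--von Mises type approximation of the posterior by a Gaussian-mixture (the paper's Theorem \ref{thm:BvM}), and then extract the spectral rate via a reduced-dimension matrix concentration argument, with Theorem \ref{thm:second_order_deviation_CT} and Corollary \ref{corr:intrinsic_deviation_Projection} translating back to the sine-theta distance. The references to Theorems \ref{thm:CT_deviation_Sigma}, \ref{thm:DSigma_singular_value}, and the regularity of $\eye(\btheta_0)$ under Assumptions A2--A3 are all used essentially as you indicate. So the architecture is right.

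However, there is a genuine gap in the second stage: you claim exact support recovery, $\mathrm{supp}(\bA) = \mathrm{supp}(\bA_0)$ under the posterior, and you justify it only via Assumption A5. The minimum-signal-strength argument you give shows one inclusion: any draw with $S_0 \not\subset \mathrm{supp}(\bA)$ incurs $\|\btheta - \btheta_0\|_2 \geq \min_{j \in S_0}\|[\bA_0]_{j*}\|_2$, which is ruled out by the Frobenius contraction rate. But nothing in that argument prevents spurious rows, i.e., $\mathrm{supp}(\bA) \supsetneq S_0$; the sparsity prior \eqref{eqn:prior_A} penalizes large supports but does not force no false inclusions at this signal level. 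Indeed, the paper does not claim exact recovery: the limit posterior $\Pi^\infty_\btheta$ in Theorem \ref{thm:BvM} is a \emph{mixture} of Gaussians indexed by $S \in \calS_0 := \{S : \mathrm{supp}(\bA_0) \subset S, |S| \leq \kappa_0 s_0\}$, where $\kappa_0 s_0$ is what the posterior-sparsity Lemma delivers. Your proof would need to either prove the reverse inclusion (which would require additional assumptions beyond A5) or, as the paper does, carry the entire family $\calS_0$ through the last step.

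This matters in the final spectral argument. Because $S$ ranges over $\calS_0$, the reduced-dimension matrix concentration has to be union-bounded over all $S \in \calS_0$ (contributing $\exp(O(\kappa_0 s_0 \log p))$ to the exponent), and the dimension of the reduced sphere is $|S| + r \leq \kappa_0 s_0 + r$, not $s_0 + r$. Moreover, the approximate posterior is centered at a \emph{random} point $\widehat\btheta_S$, and the deviation $\widehat\btheta_S - \btheta_{0S}$ must itself be controlled with matching tail bounds (the paper does this via the Wishart moment generating function); a sub-Gaussian matrix bound applied only to the Gaussian fluctuation around $\widehat\btheta_S$ misses this term. If you patch the exact-recovery claim by instead working with the whole family $\calS_0$ as in Theorem \ref{thm:BvM}, and add the Chernoff/Wishart control of the random centers $\widehat\btheta_S$ alongside the $\eps$-net over the reduced sphere, the argument closes and recovers the $\sqrt{s\log p/n}$ rate.
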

% \begin{remark}
Theorem \ref{thm:contraction_spectral_norm} is a non-trivial result and relies on the asymptotic characterization of the shape of the posterior distribution $\Pi_\btheta(\mathrm{d}\btheta\mid\bY_n)$, which is summarized in Theorem \ref{thm:BvM} below. 
\begin{theorem}\label{thm:BvM}
Assume the setup and notations in Sections \ref{sub:euclidean_representation_of_subspaces}, \ref{sub:euclidean_represention_of_low_rank_matrices_and_intrinsic_perturbation}, and \ref{sub:bayesian_sparse_pca_and_non_intrinsic_loss} hold. 
% Let $\widehat\bOmega = (1/n)\sum_{i = 1}^n\by_i\by_i\transpose$ be the sample covariance matrix, and $\eye(\btheta_0)$ be the Fisher information matrix with regard to the $\btheta$ parameterization given by \eqref{eqn:Fisher_information_matrix_spiked_covariance}. 
For any index set $S\in\calS_0:=\{S\subset[p - r]:\mathrm{supp}(\bA_0)\subset S,|S|\leq \kappa_0s_0\}$, let $\bF_S$ be the matrix such that 
\[
\btheta_S:=\begin{bmatrix*}
\vect(\bA_S)\\\bmu
\end{bmatrix*} = \bF_S\btheta\quad\text{ for any vector }\btheta = 
\begin{bmatrix*}
\vect(\bA)\\\bmu
\end{bmatrix*}.
\]
Further define the following quantities:
\begin{align*}
&\eye_S(\btheta_0) = \bF_S\transpose\eye(\btheta_0)\bF_S,\quad
\widehat{\btheta}_S  = \btheta_{0S} + \eye_S(\btheta_0)^{-1}\bF_S\transpose D\bSigma(\btheta_0)\transpose\vect\{\bOmega_0^{-1}(\widehat\bOmega - \bOmega_0)\bOmega_0^{-1}\},\\
&\widehat{w}_S \propto \frac{
	\pi_p(|S|)\det\{2\pi\eye_S(\btheta_0)^{-1}\}^{1/2}\exp\{(1/2)\widehat{\btheta}_S\transpose\eye_S(\btheta_0)\widehat{\btheta}_S\}
}{{p - r\choose |S|} \int_{\{\|\bA_S\|_2 < 1\}} \exp(-2\|\vect(\bA_S)\|_1)\mathrm{d}\bA_S }\quad\text{such that}\quad\sum_{S\in\calS_0}\widehat{w}_S = 1.
\end{align*}
Let $\Pi_\btheta^\infty(\btheta\in\cdot\mid\bY_n)$ be the following random mixture of normals
\begin{align}
\label{eqn:Pi_infinity}
\Pi^\infty_\btheta(\mathrm{d}\btheta\mid\bY_n) & = \sum_{S\in\calS_0}\widehat{w}_{S}\{\phi(\btheta_{S}\mid\widehat\btheta_{S}, \eye_S(\btheta_0)^{-1})\mathrm{d}\btheta_{S}\}\{\delta_{\zero_{|S^c|\times r}}(\mathrm{d}\btheta_{S^c})\},
\end{align}
Here, $\phi(\bx\mid\bu, \bOmega):=\det(2\pi\bOmega)^{-1/2}e^{-(\bx - \bu)\transpose\bOmega^{-1}(\bx - \bu)/2}$ and $\btheta_{S^c} := \vect(\bA_{S^c})$. Then there exists some constant $\kappa_0 \geq 1$ such that
\begin{align*}
\|\Pi_\btheta(\btheta\in\cdot\mid\bY_n) - \Pi^\infty_\btheta(\btheta\in\cdot\mid\bY_n)\|_{\mathrm{TV}} = o_{\prob_0}(1).
\end{align*}
\end{theorem}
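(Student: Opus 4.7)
I would treat this as a Bernstein--von Mises-type theorem for a spike-and-slab posterior, with a two-stage decomposition: first eliminate supports outside $\calS_0$ via a model-selection argument, then on each retained support perform a Laplace/LAN approximation that turns the restricted posterior into the conditional Gaussian $\phi(\btheta_S\mid\widehat{\btheta}_S,\eye_S(\btheta_0)^{-1})$. The limiting mixing weights $\widehat{w}_S$ then emerge automatically from comparing the Laplace-evaluated marginal likelihoods to the prior support probabilities $\pi_p(|S|)/\binom{p-r}{|S|}$.

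\textbf{Step 1 (support recovery).} I would show $\Pi_\btheta(\mathrm{supp}(\bA)\in\calS_0\mid\bY_n)\to 1$ in $\prob_0$-probability. Starting from the Ghosal--Ghosh--van der Vaart machinery (KL prior-mass bound plus exponentially consistent tests for the Gaussian covariance model), one obtains a preliminary rate $\|\bSigma(\btheta)-\bSigma_0\|_{\mathrm{F}}=O_{\prob_0}(\eps_n)$ on the posterior with $\eps_n^2\asymp(rs\log n+s\log p)/n$. Theorem \ref{thm:intrinsic_deviation_Sigma} combined with Assumption~A3 then transfers this to $\|\btheta-\btheta_0\|_2=O_{\prob_0}(\eps_n)$, whence Assumption~A5 forces $\mathrm{supp}(\bA_0)\subset\mathrm{supp}(\bA)$ posteriorly with high probability. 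The geometric tail $\pi_p(|S|)\propto\{n^{-r}(p-r)^{-a}\}^{|S|}$ from \eqref{eqn:prior_pi_p}, paired with sieve tests over supports of increasing size, controls the excess false positives and yields $|\mathrm{supp}(\bA)|\leq\kappa_0 s_0$ for a suitable constant $\kappa_0$.

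\textbf{Step 2 (Laplace approximation and weights).} On each $S\in\calS_0$, parameters supported on $S$ make $\btheta_S=\bF_S\btheta$ the only free coordinate and $\btheta_{S^c}=\zero$. Invoking the LAN expansion \eqref{eqn:LAN} and completing the square in $\btheta_S$ gives
\[
\ell(\bOmega(\btheta))-\ell(\bOmega_0) = -\tfrac{1}{2}(\btheta_S-\widehat{\btheta}_S)\transpose\eye_S(\btheta_0)(\btheta_S-\widehat{\btheta}_S) + \tfrac{1}{2}\widehat{\btheta}_S\transpose\eye_S(\btheta_0)\widehat{\btheta}_S + R_n(\btheta,\btheta_0),
\]
with $\eye_S(\btheta_0)$ uniformly non-singular across $S\in\calS_0$ by Theorem \ref{thm:DSigma_singular_value} and Remark \ref{rmk:statistical_interpretation_regularity}. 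The Laplace-type prior density $\pi_{\bA_S}$ and the prior $\pi_\bmu$ are essentially constant across the $O_{\prob_0}(n^{-1/2})$-wide effective support of this Gaussian (their logarithms vary by $O(\|\btheta_S-\btheta_{0S}\|_1)$), and the constraint $\|\bA_S\|_2<1$ is inactive near $\btheta_0$ by Assumption~A2. Integrating the Gaussian against this nearly flat prior density and multiplying by the support prior $\pi_p(|S|)/\binom{p-r}{|S|}$ produces precisely the unnormalised quantity appearing in the definition of $\widehat{w}_S$, with the denominator $\int_{\{\|\bA_S\|_2<1\}}\exp(-2\|\vect(\bA_S)\|_1)\mathrm{d}\bA_S$ arising as the normaliser of $\pi_{\bA_S}$. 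Renormalising across $\calS_0$ matches $\Pi_\btheta^\infty$, and the total-variation distance is controlled support by support via the Laplace-remainder bound weighted by the posterior support probabilities from Step~1.

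\textbf{Main obstacle.} The principal difficulty is controlling the LAN remainder $R_n(\btheta,\btheta_0)$ uniformly on the contraction neighbourhood while $r$, $s$ and $p$ all grow. Because $\bSigma(\cdot)$ is nonlinear through the Cayley parameterisation, $R_n$ carries terms that are cubic in $\btheta-\btheta_0$ with operator-norm prefactors that scale with $r$ and $s$, and additional cross-terms coupling $\widehat{\bOmega}-\bOmega_0$ with the quadratic remainder $\bR(\btheta,\btheta_0)$ from Theorem \ref{thm:CT_deviation_Sigma}. The quadratic Frobenius bound $\|\bR(\btheta,\btheta_0)\|_{\mathrm{F}}\lesssim(1+\|\bM_0\|_2)\|\btheta-\btheta_0\|_2^2$ is exactly what lets one dominate these cubic contributions, but only once one multiplies by $\sqrt{n}\|\widehat{\bOmega}-\bOmega_0\|$-type concentration bounds and by the squared radius $\eps_n^2$ of the contraction ball; this is where the sharp budget $(r^2s^2\log n+rs^2\log p)^3/n\to 0$ of Assumption~A4 is needed, and verifying $R_n=o_{\prob_0}(1)$ uniformly in $\btheta$ and in $S\in\calS_0$ is the most delicate piece of the argument.
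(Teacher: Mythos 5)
Your proposal is correct and follows essentially the same route as the paper: posterior restriction to supports in $\calS_0$ via posterior sparsity (Lemma \ref{lemma:posterior sparsity}), Frobenius contraction (Theorem \ref{thm:contraction_Frobenius_norm}) and Assumption~A5, then a LAN/Laplace comparison on each retained support (the paper formalizes this as a density-ratio bound between measures $\mu_S$ and $\nu_S$), with $\widehat{w}_S$ emerging as the Laplace-evaluated marginal weighted by the support prior, and Assumption~A4 controlling the remainder $R_n$ uniformly. The one piece you omit (and the paper needs) is a concentration statement for the limit mixture $\Pi^\infty_\btheta$ itself (Lemma \ref{lemma:normal_restriction}), which supplies the third leg of the triangle-inequality bound on the total variation distance; also, the paper achieves posterior sparsity by a direct Fubini/evidence-lower-bound argument rather than the sieve tests you mention, though either works.
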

\begin{remark}
By Theorem \ref{thm:DSigma_singular_value} and Remark \ref{rmk:statistical_interpretation_regularity}, the Fisher information matrix $\eye(\btheta_0)$ is strictly positive definite, implying that the submatrix $\eye_S(\btheta_0) = \bF_S\transpose\eye(\btheta_0)\bF_S$ is also strictly positive definite. Hence, leveraging the intrinsic perturbation tools developed in Section \ref{sec:main_results} and Theorem \ref{thm:BvM}, we are able to study the behavior of $\|\sin\Theta\{\bU(\bvarphi), \bU_0\}\|_2$ under the exact posterior distribution $\Pi_\btheta(\btheta\in\cdot\mid\bY_n)$ through the behavior of $\btheta - \btheta_0$ under the limit posterior distribution $\Pi^\infty_\btheta(\btheta\in\cdot\mid\bY_n)$. Theorem \ref{thm:BvM} may be of independent interest as well.
\end{remark}

\subsection{Stochastic Block Model} % (fold)
\label{sub:stochastic_block_model}
In this section, we apply the technical tools developed in Section \ref{sec:main_results} to the parameter estimation in stochastic block models, in which the block probability matrix may not necessarily be full rank. 

The stochastic block model (SBM) is a popular random graph model initially developed by \cite{HOLLAND1983109} for studying social networks. Since then, a rich collection of offsprings and variations of SBM, such as mixed-membership SBM \citep{airoldi2008mixed}, degree-corrected SBM \citep{PhysRevE.83.016107}, hierarchical SBM \citep{7769223}, vertex-contextualized SBM \citep{10.1093/biomet/asx008}, and multi-layer SBM \citep{BOCCALETTI20141}, have been developed, further popularizing the development of statistical network analysis. The readers are referred to \cite{abbe2018community} for a survey of the recent advances in statistical analyses of SBM. 

The SBM can be formally stated as follows in terms of the random adjacency matrix. Given $n$ vertices $[n] = \{1,\ldots,n\}$, a $K\times K$ symmetric block probability matrix $\bSigma\in(0, 1)^{K\times K}$, and a cluster assignment function $\tau:[n]\to[K]$, we say that an $n\times n$ symmetric random matrix $\bA\in\{0, 1\}^{n\times n}$ is the adjacency matrix of a SBM with block probability matrix $\bSigma$ and cluster assignment function $\tau$, denoted by $\bA\sim\mathrm{SBM}(\bSigma, \tau)$, if the random variables $(A_{ij}:1\leq i < j\leq n) := ([\bA]_{ij}:1\leq i < j\leq n)$ are independent, $A_{ij}\sim\mathrm{Bernoulli}([\bSigma]_{\tau(i)\tau(j)})$, $[\bA]_{ji} = [\bA]_{ij}$ for all $i\neq j$, and $[\bA]_{ii} = 0$ for all $i\in [n]$. The likelihood function of $\bSigma,\tau$ is
\begin{align}
\label{eqn:likelihood_SBM}
\calL_{\bA}(\bSigma, \tau) = \prod_{i < j}[\bSigma]_{\tau(i)\tau(j)}^{A_{ij}}(1 - [\bSigma]_{\tau(i)\tau(j)}^{1 - A_{ij}}).
\end{align}
A fundamental task of interest in SBM is the recovery of the cluster assignment function $\tau$ given the observed network encoded in $\bA$, referred to as community detection. The theory and methods for community detection have been studied extensively. In particular, successful community detection algorithms include modularity and likelihood maximization methods \citep{Bickel21068,celisse2012}, spectral clustering \citep{rohe2011,pmlr-v23-chaudhuri12,sussman2012consistent,lei2015}, and semidefinite programming \citep{7298436,7440870}. 
% We refer to \cite{fortunato2010community} for a survey. 

Here we focus on estimating the block probability matrix $\bSigma$, an inference task closely related to community detection. The method of maximum likelihood and its variational approximation have been studied in \cite{bickel2013}. Alternatively, \cite{Bickel21068} proposed to estimate $\bSigma$ based on a strongly consistent estimate of $\tau$ that can be obtained by a modularity maximization method. The estimators proposed in \cite{bickel2013} and \cite{Bickel21068} are asymptotically efficient provided that $\mathrm{rank}(\bSigma) = K$. Nonetheless, as observed in \cite{tang2017asymptotically}, when $\mathrm{rank}(\bSigma) < K$, neither 
% the maximum likelihood estimator nor the modularity-maximization-based 
estimator is asymptotically efficient. This section aims at providing an asymptotically efficient estimator of $\bSigma$ when $\bSigma$ is potentially singular. 

Let $\bA\sim\mathrm{SBM}(\bSigma_0, \tau_0)$, where $\bSigma_0\in(0, 1)^{K\times K}$ is symmetric, and $\tau_0:[n]\to[K]$ is a cluster assignment function. Let $\mathrm{rank}(\bSigma_0) = r\leq K$. 
We follow the setup and notations in Sections \ref{sub:euclidean_representation_of_subspaces} and \ref{sub:euclidean_represention_of_low_rank_matrices_and_intrinsic_perturbation}. 
% We assume that $\mathrm{rank}(\bSigma_0) = r \leq d$, and 
Suppose $\bSigma_0$ has the spectral decomposition $\bSigma_0 = \bV\bS\bV\transpose$, where $\bV\in\mathbb{O}(K, r)$ and $\bS = \mathrm{diag}\{\lambda_1(\bSigma_0),\ldots,\lambda_r(\bSigma_0)\}$. Note that $\bSigma_0$ can only be identified up to a permutation of rows and columns. We assume that any $r$ columns of $\bV$ are linearly independent. This implies that, for any permutation matrix $\bPi\in\mathbb{R}^{K\times K}$, $\bPi\bSigma_0\bPi\transpose\in\mathscr{S}(K, r)$, and hence,
% , where $\lambda_1(\bSigma_0)\geq\ldots\geq\lambda_r(\bSigma_0)$. 
$\bPi\bSigma_0\bPi\transpose$ can be represented by a Euclidean vector in $\mathscr{D}(K, r)$, denoted by $\btheta_{0\bPi} = [\bvarphi_{0\bPi}\transpose, \bmu_{0\bPi}\transpose]\transpose$. Therefore, $\bPi\bSigma_0\bPi\transpose = \bSigma(\btheta_{0\bPi}) = \bU(\bvarphi_{0\bPi})\bM_{0\bPi}\bU(\bvarphi_{0\bPi})\transpose$, where $\bM_{0\bPi}\in\mathbb{R}^{r\times r}$ is symmetric with $\vech(\bM_{0\bPi}) = \bmu_{0\bPi}$, and there exists an $(K - r)\times r$ matrix $\bA_{0\bPi}$ such that $\bvarphi_{0\bPi} = \vect(\bA_{0\bPi})$ and $\|\bA_{0\bPi}\|_2 < 1$. Note that we use the subscript $\bPi$ to suggest that $\btheta_{0\bPi}$ depends on $\bPi$. We further assume that there exists a probability vector $\bpi = [\pi_1,\ldots,\pi_K]\in\mathbb{R}^K$ such that
\[
% $
\frac{1}{n}\sum_{i = 1}^n\mathbbm{1}\{\tau_0(i) = k\} \to \pi_k
% $, $
,\quad
k\in [K]
% $
.
\]

We propose a one-step estimator for $\bSigma_0$ and establish the asymptotic normality under the regime $n\to\infty$. In general, the one-step estimator can be obtained by a single iteration of Newton-Raphson's algorithm for maximizing the log-likelihood function. In a classical parametric model, under certain regularity conditions, the one-step update leads to an asymptotically efficient estimator when the initial guess is $\sqrt{n}$-consistent (see, e.g., Section 5.7 in \citealp{van2000asymptotic}). The same idea has also appeared in \cite{xie2019efficient} for efficient estimation of a more general low-rank random graph. In what follows, we apply the one-step procedure to SBM when $\bSigma_0$ is potentially singular and obtain an efficient estimator. Let $\ell(\btheta, \tau) := \log\calL_\bA(\bSigma(\btheta), \tau)$ be the log-likelihood function under the $\btheta$-parameterization. Given the cluster assignment $\tau$, the score function with respect to $\btheta$ is
\begin{align}\label{eqn:score_function_SBM}
\frac{\partial\ell}{\partial\btheta}(\btheta, \tau) & = \sum_{s = 1}^K\sum_{t = 1}^K\frac{m_{st}(\tau) - n_{st}(\tau)[\bSigma(\btheta)]_{st}}{[\bSigma(\btheta)]_{st}\{1 - [\bSigma(\btheta)]_{st}\}}D\bSigma(\btheta)\transpose\vect(\bE_{st}),
\end{align}
where, for any $s,t\in [K]$,
\[
m_{st}(\tau) := \sum_{i < j\in [n]}A_{ij}\mathbbm{1}\{\tau(i) = s,\tau(j) = t\},\quad
n_{st}(\tau) := \sum_{i < j\in [n]}\mathbbm{1}\{\tau(i) = s,\tau(j) = t\},
\]
and $\bE_{st}$ is a $K\times K$ matrix of all zeros except $1$ at the $(s, t)$th element. In addition, given $\tau$, the Fisher information matrix of SBM with respect to $\btheta$ is
\begin{align}\label{eqn:Fisher_infor_SBM}
\eye(\btheta, \tau) & := \sum_{s = 1}^K\sum_{t = 1}^K\frac{n_{st}(\tau)D\bSigma(\btheta)\transpose\vect(\bE_{st})\vect(\bE_{st})\transpose D\bSigma(\btheta)}{[\bSigma(\btheta)]_{st}\{1 - [\bSigma(\btheta)]_{st}\}}.
\end{align}
Since $\btheta_{0\bPi}\in\mathscr{D}(K, r)$ for any permutation matrix $\bPi$, it follows from Theorem \ref{thm:DSigma_singular_value} that $\eye(\btheta, \tau)$ is invertible in a local neighborhood of $\btheta_{0\bPi}$. 
We then construct the proposed one-step estimator as follows:
\begin{itemize}
	\item[(I)] Let $\widehat{\tau}$ be a strongly consistent estimator of $\tau_0$, namely, there exists a sequence of permutations $(\omega_n)_n$, $\omega_n:[K]\to [K]$ such that
	\[
	\prob_0\left\{  d_{\mathrm{H}}(\widehat{\tau}, \omega_n\circ\tau_0) = 0\right\}\to 1.
	\]
	% where the infimum is taken over all possible permutations $\gamma:[K]\to[K]$. 
	This can be obtained, e.g., by applying the $K$-means procedure to the rows of the leading eigenvector matrix $\widehat{\bV}$ of $\bA$, namely, $\bA\widehat{\bV} = \widehat{\bV}\mathrm{diag}\{\lambda_1(\bA),\ldots,\lambda_r(\bA)\}$ with $\widehat{\bV}\in\mathbb{O}(p, r)$. By Lemma 4 in \cite{tang2017asymptotically}, this results in a strongly consistent estimator of $\tau_0$. 
	\item[(II)] Compute an initial estimator $\widetilde{\bSigma}_n$ that is $n$-consistent, namely, there exists a sequence of $K\times K$ permutation matrices $(\bPi_n)_{n = 1}^\infty$, such that $\|\widetilde{\bSigma}_n - \bPi_n\bSigma_0\bPi_n\transpose\|_{\mathrm{F}} = O_{\prob_0}(n^{-1})$. An example of such an estimator is given by $\widetilde{\bSigma}_n = [[\widetilde{\bSigma}_n]_{st}]_{K\times K}$, where
	\begin{align}\label{eqn:initial_estimator_SBM}
	[\widetilde{\bSigma}_n]_{st} = \frac{\sum_{i,j\in [n]}A_{ij}\mathbbm{1}\{\widehat{\tau}(i) = s, \widehat{\tau}(j) = t\}}
	{
	\sum_{i,j\in [n]}\mathbbm{1}\{\widehat{\tau}(i) = s, \widehat{\tau}(j) = t\}
	},\quad s,t\in [K].
	\end{align}
	\item[(III)] Solve the least squares problem
	\[
	\widetilde{\btheta}_n = \argmin_{\btheta\in\mathscr{D}(K, r)}\|\widetilde{\bSigma}_n - \bSigma(\btheta)\|_{\mathrm{F}}^2.
	\]
	\item[(IV)] Compute the following one-step estimator:
	\begin{align*}
	\widehat{\btheta}_n = \widetilde{\btheta}_n - \eye(\widetilde{\btheta}_n, \widehat{\tau})^{-1}\frac{\partial\ell}{\partial\btheta}(\widetilde\btheta_n, \widehat{\tau}). 
	\end{align*}
\end{itemize}
Theorem \ref{thm:OSE_SBM} below, which is the main result of this section, states the asymptotic normality of the proposed one-step estimator $\widehat{\btheta}_n$.
\begin{theorem}\label{thm:OSE_SBM}
Under the notations and setup above, there exists a sequence of $K\times K$ permutation matrices $(\bPi_n)_{n = 1}^\infty$ such that
\[
{n}\bJ_{\bPi_n}(\btheta_{0\bPi_n})^{1/2}(\widehat{\btheta}_n - \btheta_{0\bPi_n})\overset{\calL}{\to}\mathrm{N}(\zero_d, \eye_d),
\]
where, for any permutation matrix $\bPi\in\mathbb{R}^{K\times K}$ and any $\btheta$ such that $\bSigma(\btheta)\in(0, 1)^{K\times K}$, the matrix $\bJ_{\bPi}(\btheta)$ is  defined by
\begin{align*}
% \label{eqn:limit_Fisher_infor_SBM}
\bJ_{\bPi}(\btheta) &: = \sum_{s = 1}^K\sum_{t = 1}^K\frac{[\bPi\bpi]_s[\bPi\bpi]_t D\bSigma(\btheta)\transpose\vect(\bE_{st})\vect(\bE_{st})\transpose D\bSigma(\btheta)}{2[\bSigma(\btheta)]_{st}\{1 - [\bSigma(\btheta)]_{st}\}},
\end{align*} 
and $d = (K - r)r + r(r + 1)/2$ is the dimension of $\mathscr{D}(K, r)$.
\end{theorem}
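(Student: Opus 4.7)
The plan is to follow the classical one-step Fisher-scoring analysis but rely on the Euclidean representation toolkit of Section~\ref{sec:main_results} to translate perturbation bounds between $\bSigma$-space and $\btheta$-space. I proceed in four stages.

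First, by the strong consistency assumed in step (I), the event $\calE_n := \{\widehat{\tau} = \omega_n \circ \tau_0\}$ has probability tending to $1$. Let $\bPi_n \in \mathbb{R}^{K \times K}$ be the permutation matrix induced by $\omega_n$, so that on $\calE_n$ the conditional distribution of $\bA$ given $\widehat\tau$ has Bernoulli probabilities $[\bPi_n \bSigma_0 \bPi_n\transpose]_{\widehat\tau(i)\widehat\tau(j)}$, and $\btheta_{0\bPi_n}$ is the ``true'' Euclidean representer. All subsequent estimates are carried out on $\calE_n$, and the claimed limit for $\widehat\btheta_n$ follows without the conditioning by standard probabilistic arguments.

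Second, I transfer the $n^{-1}$-consistency of $\widetilde\bSigma_n$ to $\widetilde\btheta_n$. Optimality of the least-squares minimizer in step (III) and the triangle inequality yield
$\|\bSigma(\widetilde\btheta_n) - \bPi_n \bSigma_0 \bPi_n\transpose\|_{\mathrm{F}} \leq 2 \|\widetilde\bSigma_n - \bPi_n \bSigma_0 \bPi_n\transpose\|_{\mathrm{F}} = O_{\prob_0}(n^{-1})$.
Theorem~\ref{thm:CT_deviation_Sigma} gives $\vect\{\bSigma(\widetilde\btheta_n) - \bSigma(\btheta_{0\bPi_n})\} = D\bSigma(\btheta_{0\bPi_n})(\widetilde\btheta_n - \btheta_{0\bPi_n}) + $ a quadratic remainder, and Theorem~\ref{thm:DSigma_singular_value} supplies a strictly positive lower bound on $\sigma_{\min}\{D\bSigma(\btheta_{0\bPi_n})\}$. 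A standard bootstrapping argument then shows that $\widetilde\btheta_n$ eventually lies in the local bi-Lipschitz neighborhood of $\btheta_{0\bPi_n}$, so that $\|\widetilde\btheta_n - \btheta_{0\bPi_n}\|_2 = O_{\prob_0}(n^{-1})$.

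Third, I perform a second-order Taylor expansion of $\nabla_\btheta \ell(\cdot, \widehat\tau)$ about $\btheta_{0\bPi_n}$. Three facts combine: (i) $\nabla^2_\btheta \ell(\btheta_{0\bPi_n}, \widehat\tau) = -\eye(\btheta_{0\bPi_n}, \widehat\tau) + \bDelta_n$, where $\bDelta_n$ is a sum of $O(n^2)$ mean-zero bounded independent terms (from Bernoulli residuals $A_{ij} - [\bPi_n\bSigma_0\bPi_n\transpose]_{\widehat\tau(i)\widehat\tau(j)}$), so $\|\bDelta_n\|_2 = O_{\prob_0}(n)$; (ii) $\|\eye(\btheta_{0\bPi_n}, \widehat\tau)^{-1}\|_2 = O(n^{-2})$ by Theorem~\ref{thm:DSigma_singular_value} combined with $n_{st}(\widehat\tau) \asymp n^2$; and (iii) the third derivative of $\ell$ is $O(n^2)$ in operator norm, so the Taylor remainder is $O_{\prob_0}(n^2) \cdot O_{\prob_0}(n^{-2}) = O_{\prob_0}(1)$. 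Substituting the expansion into the one-step update and simplifying yields
\[
\widehat\btheta_n - \btheta_{0\bPi_n} = \eye(\btheta_{0\bPi_n}, \widehat\tau)^{-1} \nabla_\btheta \ell(\btheta_{0\bPi_n}, \widehat\tau) + o_{\prob_0}(n^{-1})
\]
(up to the appropriate sign convention in the one-step formula).

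Fourth, the strong law of large numbers gives $n_{st}(\widehat\tau)/\binom{n}{2} \to [\bPi_n \bpi]_s [\bPi_n \bpi]_t$ for each $s,t \in [K]$, hence $n^{-2} \eye(\btheta_{0\bPi_n}, \widehat\tau) \to \bJ_{\bPi_n}(\btheta_{0\bPi_n})$ (which is invertible by Theorem~\ref{thm:DSigma_singular_value}). The score $n^{-1}\nabla_\btheta \ell(\btheta_{0\bPi_n}, \widehat\tau)$ is a normalized sum of $\binom{n}{2}$ independent mean-zero bounded random vectors with covariance tending to $\bJ_{\bPi_n}(\btheta_{0\bPi_n})$, and the Lindeberg--Feller CLT yields $n^{-1}\nabla_\btheta \ell(\btheta_{0\bPi_n}, \widehat\tau) \overset{\calL}{\to} \mathrm{N}(\zero_d, \bJ_{\bPi_n}(\btheta_{0\bPi_n}))$. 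Since $n \bJ_{\bPi_n}^{1/2}\, \eye^{-1} \sim n^{-1} \bJ_{\bPi_n}^{-1/2}$, Slutsky's theorem combines these pieces to deliver the claim.

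The main obstacle is Step~2: without the explicit lower bound on $\sigma_{\min}\{D\bSigma(\btheta_{0\bPi_n})\}$ from Theorem~\ref{thm:DSigma_singular_value}, one could not transfer the $n^{-1}$ rate from $\bSigma$-space (ambient dimension $K^2$) to the lower-dimensional $\btheta$-space, particularly because $\bPi_n \bSigma_0 \bPi_n\transpose$ may be singular (rank $r < K$), which is precisely the regime that makes the maximum-likelihood estimator of \cite{bickel2013} lose efficiency. A secondary technical subtlety is verifying the Taylor remainder is truly $o_{\prob_0}(n^{-1})$; this needs the $n^{-1}$ rate from Step~2 (rather than the weaker $n^{-1/2}$ typical of Fisher-scoring theory) to absorb the $O(n^2)$ size of the third derivatives of $\ell$.
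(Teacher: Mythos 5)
Your overall architecture matches the paper's: condition on the strong-consistency event so that $\widehat\tau = \omega_n\circ\tau_0$, transfer the $n^{-1}$-rate from $\bSigma$-space to $\btheta$-space via the regularity of $\bSigma(\cdot)$, linearize the score, and invoke a CLT plus Slutsky. The paper organizes this as a separate oracle lemma (Lemma~\ref{lemma:OSE_oracle}, verifying the three conditions of the classical one-step theory à la Theorem~5.45 of van der Vaart) and then patches in the data-dependent permutation exactly as you do in your first stage; the score linearization and information convergence in your Steps~3--4 track the paper's verifications of conditions~(b) and~(c) closely.

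The genuine gap is in your Step~2. Your argument is circular: you invoke the forward Taylor expansion of Theorem~\ref{thm:CT_deviation_Sigma}, whose remainder is bounded by $\|\widetilde\btheta_n - \btheta_{0\bPi_n}\|_2^2$, and then want to ``bootstrap'' to the rate; but the quadratic remainder bound is only useful once you \emph{already} know $\|\widetilde\btheta_n - \btheta_{0\bPi_n}\|_2$ is small, and nothing in your Step~2 establishes that a priori. The map $\btheta\mapsto\bSigma(\btheta)$ is not globally bi-Lipschitz on $\mathscr{D}(K,r)$ (the Cayley parameterization degenerates as $\|\bA\|_2\to 1$), so closeness of $\bSigma(\widetilde\btheta_n)$ to $\bSigma(\btheta_{0\bPi_n})$ does not by itself imply closeness of the $\btheta$'s. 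The paper avoids this by identifying $\bSigma(\widetilde\btheta_n)$ with the best rank-$r$ approximation of $\widetilde\bSigma_n$ (so that $\bU(\widetilde\bvarphi_n)$ spans its top-$r$ eigenspace), applying the Davis--Kahan theorem, and then converting back to Euclidean distance via Corollary~\ref{corr:intrinsic_deviation_Projection} and Theorem~\ref{thm:alignment_free_Davis_Kahan} --- which carry their own built-in locality hypotheses that the Davis--Kahan bound verifies. A related issue you do not address: Theorem~\ref{thm:intrinsic_deviation_Sigma}, the natural inverse tool here, requires $\bM$ and $\bM_0$ positive definite, which fails when $\bSigma_0$ is indefinite (a case the theorem statement explicitly allows). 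The paper sidesteps this by applying Davis--Kahan to the squared matrices $(\bPi_n\widetilde\bSigma_n\bPi_n^{\mathrm T})^2$ and $\bSigma_0^2$, which \emph{are} PSD and share the same column space as the original matrices. Your sketch needs an analogous device, or an independent consistency argument, to close the loop.

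A smaller point worth fixing: your Step~4 states $n_{st}(\widehat\tau)/\binom{n}{2}\to[\bPi_n\bpi]_s[\bPi_n\bpi]_t$ for all $s,t$. For $s\neq t$ this is not quite right as stated --- individual $n_{st}$ depends on the arbitrary vertex ordering, and what converges is the symmetrized quantity $n_{st}+n_{ts}$. The normalized Fisher information $(1/n^2)\eye(\btheta,\widehat\tau)$ still converges to $\bJ_{\bPi_n}(\btheta)$ because, using symmetry of $\bSigma$, the $(s,t)$ and $(t,s)$ contributions to $\eye$ are identical and only the sum $n_{st}+n_{ts}$ enters. This is harmless but should be stated precisely.
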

\begin{remark}
We briefly compare Theorem \ref{thm:OSE_SBM} with the results of \cite{Bickel21068} and \cite{bickel2013}. When $r = K$, i.e., $\bSigma_0$ is invertible, by Lemma 1 in \cite{bickel2013} and Corollary 1 \cite{Bickel21068}, the initial estimator defined \eqref{eqn:initial_estimator_SBM} is already asymptotically efficient, in the sense that there exists a sequence of permutation matrices $(\bPi_n)_{n = 1}^\infty$ such that
\begin{align*}
n\bG_{\bPi_n}^{1/2}\vech(\widetilde{\bSigma}_n - \bPi_n \bSigma_0\bPi_n\transpose) \overset{\calL}{\to}\mathrm{N}(\zero_{K(K + 1)/2}, \eye_{K(K + 1)/2}),
\end{align*}
where
\begin{align*}
\bG_{\bPi_n} := 
&\sum_{s = 1}^K\frac{[\bPi_n\bpi]_s^2\vech(\bE_{ss})\vech(\bE_{ss})\transpose}{2[\bPi_n\bSigma_0\bPi_n\transpose]_{ss}(1 - [\bPi_n\bSigma_0\bPi_n\transpose]_{ss})}
\\&
 + \sum_{s = 2}^K\sum_{t = 1}^{r - 1}\frac{[\bPi_n\bpi]_s[\bPi_n\bpi]_t\vech(\bE_{st})\vech(\bE_{st})\transpose}{[\bPi_n\bSigma_0\bPi_n\transpose]_{st}(1 - [\bPi_n\bSigma_0\bPi_n\transpose]_{st})}.
\end{align*}
Observe that $\bG_{\bPi_n}$ is exactly the Fisher information matrix under the $\vech(\bSigma)$-parameterization up to a permutation $\bPi_n$ of rows and columns of $\bSigma$. 
We remark that, under the condition that $r = K$, the plug-in $\bSigma(\widehat{\btheta}_n)$ of the one-step estimator $\widehat{\btheta}_n$ is asymptotically equivalent to $\widetilde{\bSigma}_n$ by Theorem \ref{thm:OSE_SBM}. Note that when $r = K$, $\bU_{0\bPi} = \eye_K$ and $D\bSigma(\btheta_{0\bPi}) = \mathbb{D}_K$ for any permutation matrix $\bPi$. Then the matrix $\bJ_{0\bPi_n}(\btheta_{0\bPi_n})$ has the following form:
\begin{align*}
\bJ_{\bPi_n}(\btheta_{0\bPi_n})
& = \sum_{s = 1}^K\frac{[\bPi_n\bpi]_s^2\vech(\bE_{ss})\vech(\bE_{ss})\transpose}{2[\bPi_n\bSigma_0\bPi_n\transpose]_{ss}(1 - [\bPi_n\bSigma_0\bPi_n\transpose]_{ss})}\\
&\quad + \sum_{s = 2}^K\sum_{t = 1}^{r - 1}\frac{[\bPi_n\bpi]_s[\bPi_n\bpi]_t\vech(\bE_{st})\vech(\bE_{st})\transpose}{[\bPi_n\bSigma_0\bPi_n\transpose]_{st}(1 - [\bPi_n\bSigma_0\bPi_n\transpose]_{st})}
\end{align*}
By Theorem \ref{thm:OSE_SBM} and the delta method, the asymptotic covariance matrix of $n\vech\{\bSigma(\widehat{\btheta}_n) - \bPi_n\bSigma_0\bPi_n\transpose\}$ is 
\begin{align*}
% \bH_{\bPi_n}(\btheta_{0\bPi_n})&:=
\mathbb{D}_K^{\dagger}D\bSigma(\btheta_{0\bPi_n})\bJ_{\bPi_n}(\btheta_{0\bPi_n})^{-1}D\bSigma(\btheta_{0\bPi_n})\transpose(\mathbb{D}_K^{\dagger})\transpose
% \\
% & = \mathbb{D}_K^{\dagger}\mathbb{D}_K\bJ(\btheta_{0\bPi_n})^{-1}\mathbb{D}_K\transpose(\mathbb{D}_K^{\dagger})\transpose
& = \bJ_{\bPi_n}(\btheta_{0\bPi_n})^{-1} = \bG_{\bPi_n}^{-1}.
\end{align*}
Namely, the two estimators $\widetilde{\bSigma}_n$ and $\widehat{\bSigma}_n$ have the same asymptotic covariance matrix. 

Furthermore, when $r < K$, Theorem \ref{thm:OSE_SBM} implies that the plug-in estimator $\bSigma(\widehat{\btheta}_n)$ has an asymptotic covariance matrix no greater than that of $\widetilde{\bSigma}$ in spectra. In fact, by the delta method, the asymptotic covariance matrix of $n\vech\{\bSigma(\widehat{\btheta}_n) - \bPi_n\bSigma_0\bPi_n\transpose\}$ is
\begin{align*}
\bH_{\bPi_n}(\btheta_{0\bPi_n}):=\mathbb{D}_{K}^{\dagger} D\bSigma(\btheta_{0\bPi_n})\bJ_{\bPi_n}(\btheta_{0\bPi_n})^{-1} D\bSigma(\btheta_{0\bPi_n})\transpose(\mathbb{D}_K^\dagger)\transpose.
\end{align*}
Let $\mathbf{O}_1\bS\mathbf{O}_2\transpose$ be the singular value decomposition of $D\bSigma(\btheta_{0\bPi_n})$, where $\mathbf{O}_1\in\mathbb{O}(K^2, d)$, $\mathbf{O}_2\in\mathbb{O}(d)$, and $\bS = \mathrm{diag}[\sigma_1\{D\bSigma(\btheta_{0\bPi_n})\},\ldots,\sigma_r\{D\bSigma(\btheta_{0\bPi_n})\}]$
% , and $d = r(r + 1)/2 + (K - r)r$
. By the matrix Cauchy-Schwarz inequality (see, e.g., \citealp{marshall1990matrix}), 
\begin{align*}
&\bO_1\transpose\bD_{\bPi_n}(\btheta_{0\bPi_n})^{-1}\bO_1\succeq 
\left\{\bO_1\transpose\bD_{\bPi_n}(\btheta_{0\bPi_n})\bO_1\right\}^{-1},
\end{align*}
where, for any permutation matrix $\bPi\in\mathbb{R}^{K\times K}$ and $\btheta\in\mathscr{D}(p, r)$, the matrix $\bD_\bPi(\btheta)$ is defined by
\[
\bD_{\bPi}(\btheta) := \sum_{s = 1}^K\sum_{t = 1}^K\frac{[\bPi\bpi]_s[\bPi\bpi]_t \vect(\bE_{st})\vect(\bE_{st})\transpose }{2[\bSigma(\btheta)]_{st}\{1 - [\bSigma(\btheta)]_{st}\}}.
\]
Therefore,
\begin{align*}
\bH_{\bPi_n}(\btheta_{0\bPi_n})& = \mathbb{D}_{K}^{\dagger} D\bSigma(\btheta_{0\bPi_n})\bJ_{\bPi_n}(\btheta_{0\bPi_n})^{-1} D\bSigma(\btheta_{0\bPi_n})\transpose(\mathbb{D}_K^\dagger)\transpose\\
% &
% = \mathbb{D}_{K}^{\dagger} \bO_1\bS\bO_2\transpose\left\{
% \bO_2\bS\bO_1\transpose \bD_{\bPi_n}(\btheta_{0\bPi_n}) \bO_1\bS\bO_2\transpose
% \right\}^{-1} 
% \bO_2\bS\bO_1\transpose(\mathbb{D}_K^\dagger)\transpose\\
% &
% = \mathbb{D}_{K}^{\dagger} \bO_1\bS\bO_2\transpose
% \bO_2\bS^{-1}\{\bO_1\transpose \bD_{\bPi_n}(\btheta_{0\bPi_n}) \bO_1\}^{-1}\bS^{-1}\bO_2\transpose
% \bO_2\bS\bO_1\transpose(\mathbb{D}_K^\dagger)\transpose\\
&
= \mathbb{D}_{K}^{\dagger} \bO_1\{\bO_1\transpose \bD_{\bPi_n}(\btheta_{0\bPi_n}) \bO_1\}^{-1}\bO_1\transpose(\mathbb{D}_K^\dagger)\transpose\\
&
\preceq
\mathbb{D}_K^{\dagger} \bD_{\bPi_n}(\btheta_{0\bPi_n})^{-1} (\mathbb{D}_K^{\dagger})\transpose = 
\bG_{\bPi_n}^{-1},
\end{align*}
where $\bH_{\bPi_n}(\btheta_{0\bPi_n})$ is the asymptotic covariance matrix of $\bSigma(\widehat{\btheta}_n)$
 % based on the one-step estimator $\widehat{\btheta}_n$, 
 and $\bG_{\bPi_n}^{-1}$ is that of the initial estimator $\widetilde{\bSigma}_n$. 
\end{remark}

% subsection stochastic_block_model (end)

\subsection{Biclustering} % (fold)
\label{sub:biclustering}
Biclustering can be viewed as a natural extension of SBM to general and possibly rectangular matrices. It was originally explored in \cite{doi:10.1080/01621459.1972.10481214} and later studied under different contexts, including latent block models with exponential family distributions \citep{brault2020}, community detection in bipartite networks \citep{zhou2020optimal}, matrix completion with biclustering structure, and co-clustering of separately exchangeable nonparametric networks \citep{choi2014}. Much of the existing works have focused on the recovery of the cluster assignment \citep{mariadassou2015,zhou2020optimal,flynn2020,brault2020} or estimating the expected value of the data matrix \citep{gao2016optimal}, with an exception being \cite{brault2020}, who established the asymptotic normality of the maximum likelihood estimator for the parameter of interest. In this section, we extend the idea in Section \ref{sub:stochastic_block_model} to general rectangular data matrices. 

Let $\bY = [y_{ij}]_{m\times n}:=[[\bY]_{ij}]_{m\times n}\in\mathbb{R}^{m\times n}$ be the observed $m\times n$ data matrix with the following structure: the expected value of $\bY$ is a low-rank matrix, i.e., $\bY_0^*:=\expect_0(\bY)$ with $\mathrm{rank}(\bY_0^*) = r \ll \min(m, n)$, and $\bE:= \bY - \bY_0^*\in\mathbb{R}^{m\times n}$ is a mean-zero noise matrix whose entries are independent random variables. We assume that $[\bE]_{ij}$'s are identically distributed mean-zero sub-Gaussian random variables with $\mathrm{var}([\bE]_{ij}) = \sigma^2 > 0$. Under the biclustering setup, the mean matrix $\bY^*_0$ has the following structure: There exist two integers $p_1, p_2 > 0$, representing the numbers of row clusters and column clusters, respectively, two cluster assignment functions $\tau_0:[m]\to[p_1]$ for the rows and $\gamma_0:[n]\to [p_2]$ for the columns, and a block mean matrix $\bSigma_0\in\mathbb{R}^{p_1\times p_2}$ with $\mathrm{rank}(\bSigma_0) = r\leq \min(p_1, p_2)$, such that $[\bY^*_0]_{ij} = [\bSigma_0]_{\tau_0(i)\gamma_0(j)}$, $i\in [m]$ and $j\in [n]$. Alternatively, by taking $\bP_0 \in \{0, 1\}^{m\times p_1}$ and $\bQ_0\in\{0, 1\}^{n\times p_2}$ as row and column cluster assignment matrices such that $[\bP_0]_{is} = \mathbbm{1}\{\tau_0(i) = s\}$, $i\in[m],s\in[p_1]$ and $[\bQ_0]_{jt} = \mathbbm{1}\{\gamma_0(j) = t\}$, $j\in [n],t\in [p_2]$, we can equivalently write $\bY^*_0 = \bP_0\bSigma_0\bQ_0\transpose$. Similar to the treatment in Section \ref{sub:stochastic_block_model}, we focus on estimating the block mean matrix $\bSigma_0$ when it may be potentially rank-deficient under the regime $\min(m, n)\to\infty$. 

% $\bY^* = \bP\bSigma\bQ\transpose$. Here $\bP$ and $\bQ$ are binary matrices such that each column of $\bP$ and $\bQ$ contains only one $1$ on its coordinates, and 

% There exists two matrices $\bP\in\mathbb{R}^{p_1\times p_1}$ and $\bQ\in\mathbb{R}^{p_2\times p_2}$ = \bP\bSigma\bQ\transpose + \bE$, $\bSigma\in\mathbb{R}^{p_1\times p_2}$ is a fixed rectangular matrix with $\mathrm{rank}(\bSigma) = r\leq \min(p_1, p_2)$, $\bZ\in\mathbb{R}^{m\times p_1}$, $\bL\in\mathbb{R}^{n\times p_2}$, $\bE = [e_{ij}]_{m\times n}$ with $e_{ij}$ are independent and identically distributed mean-zero sub-Gaussian random variables, $i\in[m]$, $j\in [n]$. There exists cluster assignment functions $\tau:[m]\to p_1$ and $\gamma:[n]\to p_2$ such that $P_{is} = \mathbbm{1}\{\tau(i) = k\}$, $i\in [m],s\in[p_1]$ and $Q_{jt} = \mathbbm{1}\{\gamma(j) = t\}$, $j\in[n], t\in [p_2]$. 

Suppose $\bSigma_0 = \bV_1\mathrm{diag}\{\sigma_1(\bSigma_0),\ldots,\sigma_r(\bSigma_0)\}\bV_2\transpose$ is the singular value decomposition of $\bSigma_0$, where $\bV_1\in\mathbb{O}(m, r)$ and $\bV_2\in\mathbb{O}(n, r)$. Similar to SBM, the block mean matrix $\bSigma_0$ can only be identified up to a row permutation and a column permutation. Thus, for convenience, we assume that any $r$ columns of $\bV_2$ are linearly independent. Following the notations and setup in Sections \ref{sub:euclidean_representation_of_subspaces} and \ref{sub:extension_to_general_rectangular_matrices}, for any permutation matrices $\bPi_1\in\mathbb{R}^{p_1\times p_1}$ and $\bPi_2\in\mathbb{R}^{p_2\times p_2}$, we can represent $\bPi_1\bSigma_0\bPi_2\transpose$ by a $p_1r + (p_2 - r)r$-dimensional Euclidean vector, denoted by $\btheta_{0\bPi_1\bPi_2}$, such that $\bSigma_0 = \bSigma(\btheta_{0\bPi_1\bPi_2})$, where $\bSigma(\cdot)$ is the map defined by \eqref{eqn:Cayley_transform_rectangular}. Note that $\btheta_{0\bPi_1\bPi_2}$ depends on the permutation matrices $\bPi_1$ and $\bPi_2$.
 % $\bvarphi = \vect(\bA)$ for a $(p_2 - r)\times r$ matrix $\bA$ with $\|\bA\|_2 < 1$, and $\bmu = \vect(\bM)$. We also denote $\btheta_0 = [\bvarphi_0\transpose, \bmu_0\transpose]\transpose$ the true value of $\btheta$ corresponding to the data generating distribution, where $\bvarphi_0 = \vect(\bA_0)$ for a $(p_2 - r)\times r$ matrix $\bA_0$ with $\|\bA_0\|_2 < 1$, and $\bmu_0 = \vect(\bM_0)$ for a $p_1\times r$ matrix $\bM_0\in\mathbb{R}^{p_1\times r}$. 
 % We further denote $\bSigma_0 = \bSigma(\btheta_0)$, $\bU_0 = \bU(\bvarphi_0)$, and the true values of $\bP$, $\bQ$, $\tau$, $\gamma$, $\bY^*$ by $\bP_0$, $\bQ_0$, $\tau_0$, $\gamma_0$, $\bY_0^*$, respectively. The data matrix $\bY$ can thus be written as $\bY = \bY_0^* + \bE = \bP_0 \bSigma_0\bQ_0\transpose + \bE$ under the true distribution. 
% Similar to Section \ref{sub:stochastic_block_model}, 
We further assume that there exist probability vectors $\bw = [w_1,\ldots,w_{p_1}]\transpose$ and $\bpi = [\pi_1,\ldots,\pi_{p_2}]\transpose$, such that \[
\frac{1}{m}\sum_{i = 1}^m\mathbbm{1}\{\tau_0(i) = s\} \to w_s > 0,\quad
\frac{1}{n}\sum_{j = 1}^n\mathbbm{1}\{\gamma_0(j) = t\} \to \pi_t > 0
\]
for all $s\in [p_1]$ and $t\in [p_2]$ as $\min(m, n)\to\infty$. 

Below, we propose a least-squares estimator for $\bSigma_0$ by taking advantage of the technical results in Section \ref{sub:extension_to_general_rectangular_matrices}. 
In preparation for doing so, we need strongly consistent estimators of $\tau_0$ and $\gamma_0$, which can be achieved by the classical spectral clustering method based on the singular vector matrices of $\bY$. 
Formally, let $\widehat{\bU}\in\mathbb{O}(m, r)$ and $\widehat{\bV}\in\mathbb{O}(n, r)$ be the leading $r$ singular vector matrices such that $\bY\widehat{\bV} = \widehat{\bU}\mathrm{diag}\{\sigma_1(\bY),\ldots,\sigma_r(\bY)\}$. Then we apply
the $K$-means clustering procedure (see, for example, \citealp{1056489}) to the rows of $\widehat{\bU}$ and $\widehat{\bV}$, respectively. 
Formally, suppose that the rows of $\widehat{\bU}$ and the rows of $\widehat{\bV}$ are to be assigned into $p_1$ and $p_2$ clusters, respectively. The $K$-means clustering centroids of $\widehat{\bU}$ and $\widehat{\bV}$, represented by an $m\times r$ matrix $\bC(\widehat\bU)$ with $p_1$ distinct rows and an $n\times r$ matrix $\bC(\widehat\bV)$ with $p_2$ distinct rows, are given by
\begin{align*}
&\bC(\widehat\bU) = \argmin_{\bC\in\calC(m, r, p_1)}\| \bC - \widehat\bU\|_{\mathrm{F}},\quad
\bC(\widehat\bV) = \argmin_{\bC\in\calC(n, r, p_2)}\| \bC - \widehat\bV\|_{\mathrm{F}},
\end{align*}
where, for any positive integers $a \geq b$, $\calC(a, r, b) := \{\bC\in\mathbb{R}^{a\times r}:\bC\text{ has }b\text{ distinct rows}\}$. 
Correspondingly, the estimated cluster assignment function $\widehat{\tau}$ for the rows of $\widehat{\bU}$ is defined to be any function $\widehat{\tau}:[m]\to [p_1]$ such that $\widehat{\tau}(i_1) = \widehat{\tau}(i_2)$ if and only if $\bC(\widehat\bU)_{i_1*} = \bC(\widehat\bU)_{i_2*}$ for $i_1,i_2\in[m]$. The estimated cluster assignment function $\widehat{\gamma}$ for the rows of $\widehat{\bV}$ is defined in the same way. Then Theorem \ref{thm:Kmeans_biclustering} below guarantees that the spectral clustering estimates $\widehat{\tau}$ and $\widehat{\gamma}$ are strongly consistent. 

\begin{theorem}\label{thm:Kmeans_biclustering}
Assume the notations and setup in Sections \ref{sub:euclidean_representation_of_subspaces}, \ref{sub:extension_to_general_rectangular_matrices}, and \ref{sub:biclustering} hold. Suppose $\bSigma_0 = \bW_1\bD\bW_2\transpose$ is the singular value decomposition of $\bSigma_0$ with $\bW_1\in\mathbb{O}(p_1, r)$, $\bW_2\in\mathbb{O}(p_2, r)$, and $\bD = \mathrm{diag}\{\sigma_1(\bSigma_0),\ldots,\sigma_r(\bSigma_0)\}$. Denote $\bSigma_{01} = \bW_1\bD^{1/2}$, $\bSigma_{02} = \bW_2\bD^{1/2}$, and assume that there exists some constant $\delta > 0$ such that
\begin{align*}
\min_{s_1\neq s_2}\|[\bSigma_{01}]_{s_1*} - [\bSigma_{01}]_{s_2*}\|_2 \geq\delta,\quad
\min_{t_1\neq t_2}\|[\bSigma_{02}]_{t_1*} - [\bSigma_{02}]_{t_2*}\|_2 \geq\delta.
\end{align*}
If $\log m = o(\sqrt{n})$ and $\log n = o(\sqrt{m})$, then the spectral clustering estimates $\widehat{\tau}$ and $\widehat{\gamma}$ are strongly consistent, i.e., there exists two sequences of permutations $(\omega_m)_m$, $(\iota_n)_n$, such that 
\begin{align}\label{eqn:strong_consistency_biclustering}
\prob_0\left\{d_{\mathrm{H}}(\widehat{\tau}, \omega_m\circ\tau_0) = 0,  d_{\mathrm{H}}(\widehat{\gamma}, \iota_n\circ \gamma_0) = 0\right\}\to 1.
\end{align}
% where the infimum over $\omega$ and $\iota$ are with respect to all permutations $\omega:[p_1]\to[p_1]$ and $\iota:[p_2]\to[p_2]$, respectively.
\end{theorem}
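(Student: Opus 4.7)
The plan is to follow the classical two-step template for spectral clustering in stochastic block models, which goes back to \cite{lei2015}: first apply Wedin's singular-subspace perturbation theorem to control the deviation of the leading singular vector matrices of $\bY$ from those of $\bY_0^*:=\expect_0(\bY)$, and then feed this subspace bound into a $K$-means consistency lemma to upgrade it to exact cluster recovery. The argument for $\widehat{\gamma}$ is symmetric to the one for $\widehat{\tau}$, so I would describe only the latter.

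First, I would analyze the SVD structure of the noiseless signal $\bY_0^*=\bP_0\bSigma_0\bQ_0\transpose=(\bP_0\bSigma_{01})(\bQ_0\bSigma_{02})\transpose$. Writing the thin SVD $\bY_0^*=\bU^*\bLambda^*(\bV^*)\transpose$, one has $\bU^*=\bP_0\bSigma_{01}(\bSigma_{01}\transpose\bN\bSigma_{01})^{-1/2}\bR$ for some $\bR\in\mathbb{O}(r)$, where $\bN=\mathrm{diag}(n_1,\ldots,n_{p_1})$ collects the row-cluster sizes $n_s:=\#\{i:\tau_0(i)=s\}$. In particular $[\bU^*]_{i*}$ depends on $i$ only through $\tau_0(i)$, so $\bU^*$ has exactly $p_1$ distinct rows $\bc_1^*,\ldots,\bc_{p_1}^*$. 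The frequency assumption yields $n_s\asymp m$ eventually, and the separation hypothesis $\min_{s_1\neq s_2}\|[\bSigma_{01}]_{s_1*}-[\bSigma_{01}]_{s_2*}\|_2\geq\delta$ combined with $\|\bSigma_{01}\transpose\bN\bSigma_{01}\|_2\leq(\max_s n_s)\sigma_1(\bSigma_0)$ gives the centroid separation $\delta_{\min}:=\min_{s_1\neq s_2}\|\bc_{s_1}^*-\bc_{s_2}^*\|_2\gtrsim\delta/\sqrt{m}$.

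Next, a standard sub-Gaussian random matrix concentration bound yields $\|\bE\|_2\leq C(\sqrt{m}+\sqrt{n})$ with probability $1-o(1)$, while $\sigma_r(\bY_0^*)\geq\sqrt{(\min_s n_s)(\min_t m_t)}\sigma_r(\bSigma_0)\gtrsim\sqrt{mn}$. Wedin's theorem therefore gives $\min_{\bW\in\mathbb{O}(r)}\|\widehat{\bU}\bW-\bU^*\|_{\mathrm{F}}\lesssim\sqrt{r}(\sqrt{m}+\sqrt{n})/\sqrt{mn}$. A $K$-means consistency lemma in the spirit of Lemma 5.3 of \cite{lei2015} then states that whenever this Frobenius perturbation is smaller than a $p_1$-dependent constant times $\sqrt{\min_s n_s}\,\delta_{\min}$, the global minimizer $\bC(\widehat{\bU})$ recovers $\tau_0$ exactly up to a permutation $\omega_m$. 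Plugging in the bounds above, the required inequality reduces to $\sqrt{r}(\sqrt{m}+\sqrt{n})/\sqrt{mn}\ll\sqrt{m}\cdot\delta/\sqrt{m}=\delta$, which is eventually satisfied as $\min(m,n)\to\infty$. A symmetric argument for $\widehat{\bV}$ and $\widehat{\gamma}$, combined with a union bound, yields \eqref{eqn:strong_consistency_biclustering}; the hypotheses $\log m=o(\sqrt{n})$ and $\log n=o(\sqrt{m})$ provide enough slack for the sub-Gaussian tail events and the frequency-concentration events on both row and column sides to hold simultaneously with probability tending to $1$.

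The main obstacle is the bookkeeping of scales. The factor $(\bSigma_{01}\transpose\bN\bSigma_{01})^{-1/2}$ appearing in $\bU^*$ shrinks centroid separations by $1/\sqrt{m}$, while the $K$-means lemma allows a Frobenius budget of $\sqrt{\min_s n_s}\asymp\sqrt{m}$ on the allowed perturbation; these two $\sqrt{m}$ factors must cancel cleanly, leaving a dimension-free margin of order $\delta$ against the Wedin perturbation $(\sqrt{m}+\sqrt{n})/\sqrt{mn}\to 0$. Once these cancellations are traced, the remainder of the proof is a routine chaining of the sub-Gaussian tail bound, Wedin's inequality, and the deterministic $K$-means consistency lemma, applied once for rows and once for columns.
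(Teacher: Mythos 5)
Your approach has a genuine gap at the step where you pass from the Wedin Frobenius-norm bound to exact recovery via a Lei--Rinaldo-type $K$-means lemma. The lemma you cite (Lemma 5.3 of \cite{lei2015}) only bounds the \emph{number} of misclassified rows by a multiple of $\|\widehat{\bU}\bW-\bU^*\|_{\mathrm{F}}^2/\delta_{\min}^2$; it does not force this count to be zero. Plugging in the scales you derived, $\|\widehat{\bU}\bW-\bU^*\|_{\mathrm{F}}\asymp 1/\sqrt{m}+1/\sqrt{n}$ and $\delta_{\min}\asymp\delta/\sqrt{m}$, one gets a misclassification-count budget of order $(1+m/n)/\delta^2$. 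This is bounded but not $o(1)$ (and can even diverge if $m/n\to\infty$), so the Frobenius-level argument proves weak consistency (vanishing misclassification \emph{fraction}) but not the claimed strong consistency (exact recovery with probability tending to one). The inequality you verify, namely $\sqrt{r}(\sqrt{m}+\sqrt{n})/\sqrt{mn}\ll\delta$, is the condition under which the Lei--Rinaldo lemma applies, not the condition under which the misclassified set is empty.

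The paper's proof takes a fundamentally different route precisely to close this gap: it establishes a two-to-infinity norm bound $\|\widehat{\bU}-\bU\bW_\bU\|_{2\to\infty}=O_{\prob_0}\{m^{-1/2}(\log m/\sqrt{n}+m^{-1/2})\}$ via Theorem 3.7 of \cite{cape2019}, which gives uniform row-wise control rather than aggregate control. Since each row of $\widehat{\bU}$ is then within $o(1/\sqrt{m})$ of its true centroid while distinct centroids are separated by $\gtrsim 1/\sqrt{m}$, a deterministic pigeonhole/contradiction argument forces the global $K$-means optimizer to cluster every row correctly. This is also where the hypotheses $\log m=o(\sqrt{n})$ and $\log n=o(\sqrt{m})$ enter substantively, through the Hoeffding-type bounds $\|\bE\bV\|_{2\to\infty}=o_{\prob_0}(\log m)$ and $\|\bE\transpose\bU\|_{2\to\infty}=o_{\prob_0}(\log n)$ that feed the two-to-infinity deviation bound. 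In your write-up these hypotheses are invoked only as giving vague ``slack,'' which is a further sign that the two-to-infinity machinery, rather than a Frobenius bound, is doing the real work. To repair the proof you would need to replace the Wedin Frobenius step with a row-wise (two-to-infinity) perturbation bound of the kind used in the paper.
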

Below, we present the asymptotic normality of the least-squares estimator in Theorem \ref{thm:asymptotic_normality_biclustering}, which is the main result of this section. 
\begin{theorem}\label{thm:asymptotic_normality_biclustering}
Assume the notations and setup in Sections \ref{sub:euclidean_representation_of_subspaces}, \ref{sub:extension_to_general_rectangular_matrices}, and \ref{sub:biclustering} hold. 
% Further assume that any $r$ rows of $\bU_0$ are linearly independent. For any permutation matrices $\bPi_1\in\mathbb{R}^{p_1\times p_1}$ and $\bPi_2\in\mathbb{R}^{p_2\times p_2}$, let $\btheta_{0\bPi_1\bPi_2}$ be the inverse image of $\bPi_1\bSigma_0\bPi_2\transpose$ under the map $\btheta\mapsto \bSigma(\btheta)$ given by \eqref{eqn:Cayley_transform_rectangular}, i.e., $\bSigma(\btheta_{0\bPi_1\bPi_2}) = \bPi_1\bSigma_0\bPi_2\transpose$. 
Let $\widehat{\tau},\widehat{\gamma}$ be strongly consistent estimators of the cluster assignment functions $\tau_0,\gamma_0$ in the sense of \eqref{eqn:strong_consistency_biclustering}. For any $s\in [p_1],t\in [p_2]$, define 
\[
[\widehat{\bSigma}]_{st} = \frac{
	\sum_{i = 1}^m\sum_{j = 1}^ny_{ij}\mathbbm{1}\{\widehat{\tau}(i) = s, \widehat{\gamma}(j) = t\}
}{
	\sum_{i = 1}^m\sum_{j = 1}^n\mathbbm{1}\{\widehat{\tau}(i) = s, \widehat{\gamma}(j) = t\}
}
\]
and let $\widehat{\bSigma} = [[\widehat{\bSigma}]_{st}]_{p_1\times p_2}$. Let
\[
\widehat{\btheta}_{mn} := \argmin_{\btheta\in\mathscr{T}(p_1, p_2, r)}\|\widehat{\bSigma} - \bSigma(\btheta)\|_{\mathrm{F}}^2
\]
be the least-squares estimator, 
where $\mathscr{T}(p_1, p_2, r) = \{\btheta = [\vect(\bA)\transpose, \bmu\transpose]\transpose\in\mathbb{R}^{(p_2 - r)r}\times\mathbb{R}^{p_1r}:\|\bA\|_2 < 1\}$. Then there exist two sequences of permutation matrices $(\bPi_{1m})_m\subset\mathbb{O}(m), (\bPi_{2n})_n\subset\mathbb{O}(n)$, such that
% \[
% \widetilde{\btheta} - \btheta = \{D\bSigma(\btheta)\transpose D\bSigma(\btheta)\}^{-1}D\bSigma(\btheta)\transpose\vect\{\widetilde{\bSigma} - \bSigma(\btheta)\} + o_{\prob_0}\left(\frac{1}{\sqrt{mn}}\right).
% \]
% In addition,
\begin{align*}
\sqrt{mn}\bG(\bPi_{1m},\bPi_{2n})^{-1/2}(\widehat{\btheta}_{mn} - \btheta_{0\bPi_{1m}\bPi_{2n}}) \overset{\calL}{\to}\mathrm{N}(\zero_d, \eye_d),
\end{align*}
where, for any two permutation matrices $\bPi_1\in\mathbb{O}(m)$, $\bPi_2\in\mathbb{O}(n)$,
\begin{align*}
\bG(\bPi_{1},\bPi_{2})
& := 
\{D\bSigma(\btheta_{0\bPi_{1}\bPi_{2}})\transpose
D\bSigma(\btheta_{0\bPi_{1}\bPi_{2}})\}^{-1}\\
&\quad\times D\bSigma(\btheta_{0\bPi_{1}\bPi_{2}})\transpose(\bPi_{2}\otimes\bPi_{1}) \mathrm{diag}\{\sigma^2\vect(\bw\bpi\transpose)\}\\
&\quad\times (\bPi_{2}\transpose\otimes\bPi_{1}\transpose)D\bSigma(\btheta_{0\bPi_{1}\bPi_{2}})\\
&\quad\times 
\{D\bSigma(\btheta_{0\bPi_{1}\bPi_{2}})\transpose D\bSigma(\btheta_{0\bPi_{1}\bPi_{2}})\}^{-1}
,
\end{align*}
and $d = p_1r + (p_2 - r)r$ is the dimension of the parameter space $\mathscr{T}(p_1, p_2, r)$.
\end{theorem}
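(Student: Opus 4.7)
The plan is to work on the high-probability event where the spectral clustering estimates are exactly correct (up to permutations), reduce $\widehat{\bSigma}$ to block-wise averages of i.i.d.\ noise for which a multivariate CLT applies, and then pass this asymptotic normality through the least-squares projection by combining the first-order optimality condition, the Taylor expansion in Theorem~\ref{thm:CT_deviation_Sigma_rectangular}, and the regularity (invertibility of $D\bSigma\transpose D\bSigma$) from Theorem~\ref{thm:DSigma_singular_value_rectangular}.

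First, by \eqref{eqn:strong_consistency_biclustering} there exist permutation sequences $(\omega_m)_m,(\iota_n)_n$ such that the event $\calE_{mn}:=\{\widehat{\tau}=\omega_m\circ\tau_0,\ \widehat{\gamma}=\iota_n\circ\gamma_0\}$ has $\prob_0$-probability tending to one; all subsequent statements may be made conditional on $\calE_{mn}$. Let $\bPi_{1m},\bPi_{2n}$ be the permutation matrices encoding $\omega_m^{-1},\iota_n^{-1}$, and set $\btheta_0:=\btheta_{0\bPi_{1m}\bPi_{2n}}$ so that $\bSigma(\btheta_0)=\bPi_{1m}\bSigma_0\bPi_{2n}\transpose$. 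On $\calE_{mn}$, each $[\widehat{\bSigma}]_{st}$ is the sample average of $N_{st}$ i.i.d.\ sub-Gaussian entries of $\bE$ with variance $\sigma^2$, and different blocks are independent. Since $N_{st}/(mn)\to[\bPi_{1m}\bw]_s[\bPi_{2n}\bpi]_t>0$, a Lindeberg--Feller CLT yields $\sqrt{mn}\,\vect\{\widehat{\bSigma}-\bSigma(\btheta_0)\}\overset{\calL}{\to}\mathrm{N}(\zero,\bLambda)$ for the diagonal matrix $\bLambda$ matching the middle factor in $\bG(\bPi_{1m},\bPi_{2n})$; in particular, $\|\widehat{\bSigma}-\bSigma(\btheta_0)\|_{\mathrm{F}}=O_{\prob_0}((mn)^{-1/2})$.

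Second, apply the first-order condition $D\bSigma(\widehat\btheta_{mn})\transpose\vect\{\bSigma(\widehat\btheta_{mn})-\widehat{\bSigma}\}=\zero$ of the least-squares problem. A local-diffeomorphism argument built on Theorem~\ref{thm:DSigma_singular_value_rectangular} shows that the smooth map $\btheta\mapsto\bSigma(\btheta)$ admits a smooth local left-inverse near $\btheta_0$, and therefore $\|\widehat\btheta_{mn}-\btheta_0\|_2=O_{\prob_0}((mn)^{-1/2})$. Substituting the Taylor expansion from Theorem~\ref{thm:CT_deviation_Sigma_rectangular} and absorbing the quadratic remainder $\bR(\widehat\btheta_{mn},\btheta_0)$ of order $O_{\prob_0}((mn)^{-1})$ together with the difference $D\bSigma(\widehat\btheta_{mn})-D\bSigma(\btheta_0)$ into an $o_{\prob_0}((mn)^{-1/2})$ error, the first-order condition becomes
\begin{align*}
\widehat\btheta_{mn}-\btheta_0 = \{D\bSigma(\btheta_0)\transpose D\bSigma(\btheta_0)\}^{-1} D\bSigma(\btheta_0)\transpose \vect\{\widehat{\bSigma}-\bSigma(\btheta_0)\} + o_{\prob_0}((mn)^{-1/2}),
\end{align*}
where Theorem~\ref{thm:DSigma_singular_value_rectangular} supplies the inversion with bounded norm. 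Multiplying by $\sqrt{mn}$ and combining with the preceding CLT via Slutsky's lemma yields the claimed convergence, with the covariance $\bG(\bPi_{1m},\bPi_{2n})$ arising by direct matrix multiplication of the three factors.

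The main obstacle is verifying the $(mn)^{-1/2}$-consistency of $\widehat\btheta_{mn}$: because $\widehat{\bSigma}$ does not lie on the manifold $\{\bSigma(\btheta):\btheta\in\mathscr{T}(p_1,p_2,r)\}$, one cannot appeal directly to any intrinsic perturbation inequality, and must instead establish that the Frobenius projection onto the manifold is well defined in a shrinking neighborhood of $\btheta_0$ with Jacobian given by $\{D\bSigma(\btheta_0)\transpose D\bSigma(\btheta_0)\}^{-1}D\bSigma(\btheta_0)\transpose$; this is where the regularity in Theorem~\ref{thm:DSigma_singular_value_rectangular} is genuinely needed. A secondary technical point is uniformity of all estimates over the data-dependent permutations $\bPi_{1m},\bPi_{2n}$, which is routine as there are only $p_1!\,p_2!$ such permutations to handle on $\calE_{mn}$.
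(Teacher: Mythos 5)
There is a genuine gap in your treatment of the $\sqrt{mn}$-consistency of $\widehat{\btheta}_{mn}$, and you in fact identify it yourself in your final paragraph but do not resolve it. The logical leap in ``A local-diffeomorphism argument \dots shows that the smooth map $\btheta\mapsto\bSigma(\btheta)$ admits a smooth local left-inverse near $\btheta_0$, and therefore $\|\widehat\btheta_{mn}-\btheta_0\|_2=O_{\prob_0}((mn)^{-1/2})$'' is a non-sequitur: a local left-inverse of $\bSigma(\cdot)$ only controls preimages of points that \emph{lie on} the manifold $\{\bSigma(\btheta)\}$, while $\widehat{\bSigma}$ does not. To pass from $\|\widehat{\bSigma}-\bSigma(\btheta_0)\|_{\mathrm{F}}=O_{\prob_0}((mn)^{-1/2})$ to $\|\widehat\btheta_{mn}-\btheta_0\|_2=O_{\prob_0}((mn)^{-1/2})$ you would need a quantitative tubular-neighborhood argument (uniform control of the second fundamental form and projection Lipschitz constants as $m,n\to\infty$), which you neither supply nor reduce to the cited theorems.

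The paper circumvents this cleanly with the ``basic inequality'' of least squares (Lemma~\ref{lemma:consistency_biclustering}): since $\widehat\btheta_{mn}$ minimizes $\btheta\mapsto\|\widehat{\bSigma}-\bSigma(\btheta)\|_{\mathrm{F}}^2$ and $\btheta_{0}$ is a feasible point, expanding the square yields
\[
\|\bSigma(\widehat\btheta_{mn})-\bSigma(\btheta_0)\|_{\mathrm{F}}
\leq 2\sup_{\|\bDelta\|_{\mathrm{F}}\leq 1}\left|\langle\widehat{\bSigma}-\bSigma(\btheta_0),\bDelta\rangle_{\mathrm{F}}\right|,
\]
so that the on-manifold quantity $\bSigma(\widehat\btheta_{mn})$ inherits the $O_{\prob_0}((mn)^{-1/2})$ rate of $\widehat{\bSigma}$ without any local-inversion argument. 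Only \emph{then} does the paper invoke a Wedin-type $\sin\Theta$ bound followed by the intrinsic perturbation result (Corollary~\ref{corr:intrinsic_deviation_Projection}) to convert $\|\bSigma(\widehat\btheta_{mn})-\bSigma(\btheta_0)\|_{\mathrm{F}}$ into $\|\widehat\btheta_{mn}-\btheta_{0}\|_2$ --- this is exactly where the intrinsic perturbation framework earns its keep, because both arguments of the comparison now live on the manifold. Your second step (first-order condition plus Taylor expansion via Theorem~\ref{thm:CT_deviation_Sigma_rectangular}, Lyapunov CLT for the block averages, permutation bookkeeping) agrees with the paper's Lemma~\ref{thm:asymptotic_normality_biclustering_oracle} and the final reduction, so once the consistency step is repaired the remainder of your argument goes through.
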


\begin{remark}
In Theorem \ref{thm:asymptotic_normality_biclustering}, the asymptotic normality of the least-squares estimator $\widehat{\btheta}_{mn}$ does not require a parametric form of the distribution of the entries of $\bY$, or equivalently, that of the entries of $\bE$. When the entries of $\bE$ are independent and identically distributed $\mathrm{N}(0, \sigma^2)$ random variables, the least-squares estimator $\widehat{\btheta}_{mn}$ coincides with the maximum likelihood estimator and is therefore asymptotically efficient. When the likelihood function of $\bY$ is available and is non-Gaussian, one can also follow the idea of Section \ref{sub:stochastic_block_model} and implement the one-step procedure initialized at $\widehat{\btheta}_{mn}$. Under certain regularity conditions (e.g., the conditions required by Theorem 5.45 in \citealp{van2000asymptotic}), the one-step estimator will be asymptotically efficient as well. 
\end{remark}

% subsection applications (end)

\section{Discussion} % (fold)
\label{sec:discussion}

In this paper, we present a novel Euclidean representation framework for low-rank matrices and, correspondingly, develop a collection of technical devices for studying the intrinsic perturbation of low-rank matrices, i.e., when the referential matrix and the perturbed matrix have the same rank. These technical tools are then subsequently applied to three concrete statistical problems in detail, namely, the rate-optimal posterior contraction of Bayesian sparse spiked covariance model under the spectral sine-theta distance (a non-intrinsic loss), the one-step estimator for SBM and its asymptotic efficiency, and least-squares estimation in biclustering. The applications of the obtained technical devices in their respective statistical contexts lead to new and optimal results, demonstrating the usefulness of the proposed framework. 

As mentioned in Section \ref{sec:introduction}, besides the three concrete applications discussed in detail in this paper, there are several other potential applications of the current framework, including sparse canonical correlation analysis, cross-covariance matrix estimation, sparse reduced-rank regression, and Bayesian denoising of simultaneously low-rank and sparse matrices. These applications are naturally connected to general rectangular low-rank matrices. As observed in \cite{cai2018}, the unilateral perturbation bound for the right singular subspace of a low-rank rectangular matrix can be sharper than the spectral/Frobenius norm of the perturbation matrix itself, and Wedin's sine-theta theorem may lead to sub-optimal results. 
As an illustrative example, we briefly discuss how the proposed framework could lead to 
% unilateral  perturbation bound for singular subspace when both the referential matrix and the perturbed matrix have the same rank, which could be further applied to derive, e.g., 
a unilateral posterior contraction rate for singular subspaces in Bayesian denoising of sparse and low-rank matrix models. For simplicity, we assume $\mathrm{rank}(\bSigma(\btheta)) = \mathrm{rank}(\bSigma_0) = 1$ and let $\bu, \bu_0$ be the right singular vectors of $\bSigma$ and $\bSigma_0$, respectively. 
Then simple algebra shows that $D_\bvarphi\bSigma(\btheta_0)\transpose D_\bmu\bSigma(\btheta_0) = \zero$. 
Therefore, the matrix $D\bSigma(\btheta_0)\transpose D\bSigma(\btheta_0)$, which is the Fisher information matrix of the low-rank matrix denoising model, has a block diagonal structure:
\[
D\bSigma(\btheta_0)\transpose D\bSigma(\btheta_0) = \begin{bmatrix*}
D_\bvarphi\bSigma(\btheta_0)\transpose D_\bvarphi\bSigma(\btheta_0) & \zero \\
\zero & D_\bmu\bSigma(\btheta_0)\transpose D_\bmu\bSigma(\btheta_0)
\end{bmatrix*}
\]
Correspondingly, the asymptotic shape of the marginal posterior distribution of $\bvarphi$ only depends on $D_\bvarphi\bSigma(\btheta_0)\transpose D_\bvarphi\bSigma(\btheta_0)$. By Corollary \ref{corr:intrinsic_deviation_Projection}, the unilateral posterior contraction for the right singular vector, namely, $\|\sin\Theta(\bu, \bu_0)\|_2$, can be obtained by a direct analysis of the asymptotic marginal posterior distribution of $\bvarphi$ using a technique similar to that developed in Section \ref{sub:bayesian_sparse_pca_and_non_intrinsic_loss}. We defer the technical details to future works.

% section discussion (end)

\section{Proofs of the main results} % (fold)
\label{sec:proofs_of_the_main_results}

\subsection{Proofs for Section \ref{sub:euclidean_representation_of_subspaces}} % (fold)
\label{sub:proofs_for_section_sub:euclidean_representation_of_subspaces}

\begin{proof}[Proof of Theorem \ref{thm:second_order_deviation_CT}]
First observe that by definition of $\bGamma_\bvarphi$, we have,
\begin{align*}
\|\bGamma_\bvarphi\|_2 = \max_{\|\bvarphi\|_2 = 1}\|\bGamma_\bvarphi\bvarphi\|_2 = \max_{\|\bvarphi\|_2 = 1}\|\vect(\bX_\bvarphi)\|_2 = \max_{\|\bvarphi\|_2 = 1}(2\|\vect(\bA)\|_2^2)^{1/2} = \sqrt{2}.
\end{align*}
% Therefore, 
% \[
% \|\bX_\bvarphi - \bX_{\bvarphi_0}\|_{\mathrm{F}} \leq \|\bGamma_\bvarphi\|_2\|\bvarphi - \bvarphi_0\|_2 < 1/2
% \]
% whenever $\|\bvarphi - \bvarphi_0\| < 1/(2\sqrt{2})$. 
Because $\bX_\bvarphi\transpose = -\bX_\bvarphi$, we also have
\begin{align*}
\|(\eye_p + \bX_\bvarphi)^{-1}\|_2
& = 
\|(\eye_p - \bX_\bvarphi)^{-1}\|_2 = \lambda_{\min}^{-1/2}\left((\eye_p + \bX_\bvarphi)(\eye_p - \bX_\bvarphi)\right)\\
& = \lambda_{\min}^{-1/2}(\eye_p + \bX_\bvarphi\bX_\bvarphi\transpose)\leq \lambda_{\min}^{-1/2}(\eye_p) = 1
\end{align*}
for any $\bvarphi$. Write
\begin{align*}
\|(\eye_p - \bX_\bvarphi)^{-1} - (\eye_p - \bX_{\bvarphi_0})^{-1}\|_{\mathrm{F}}
& = \|(\eye_p - \bX_\bvarphi)^{-1}(\bX_\bvarphi - \bX_{\bvarphi_0})(\eye_p - \bX_{\bvarphi_0})^{-1}\|_{\mathrm{F}}
\\
& \leq \|(\eye_p - \bX_\bvarphi)^{-1}\|_2\|\bX_\bvarphi - \bX_{\bvarphi_0}\|_{\mathrm{F}}\|(\eye_p - \bX_{\bvarphi_0})^{-1}\|_2\\
&\leq \|\bX_\bvarphi - \bX_{\bvarphi_0}\|_{\mathrm{F}}.
\end{align*}
% Therefore, 
Furthermore, by matrix algebra,
\begin{align*}
(\eye_p - \bX_\bvarphi)^{-1} - (\eye_p - \bX_{\bvarphi_0})^{-1}
& = (\eye_p - \bX_\bvarphi)^{-1}(\bX_\bvarphi - \bX_{\bvarphi_0})(\eye_p - \bX_{\bvarphi_0})^{-1}\\
& = (\eye_p - \bX_{\bvarphi_0})^{-1}(\bX_\bvarphi - \bX_{\bvarphi_0})(\eye_p - \bX_{\bvarphi_0})^{-1}
\\
&\quad + \{(\eye_p - \bX_\bvarphi)^{-1} - (\eye_p - \bX_{\bvarphi_0})^{-1}\}(\bX_\bvarphi - \bX_{\bvarphi_0})(\eye_p - \bX_{\bvarphi_0})^{-1}.
\end{align*}
% \begin{align*}
% (\eye_p - \bX_\bvarphi)^{-1} 
% & = [(\eye_p - \bX_{\bvarphi_0})\{\eye_p - (\eye_p - \bX_{\bvarphi_0})^{-1}(\bX_\bvarphi - \bX_{\bvarphi_0})\}]^{-1}\\
% & = \{\eye_p - (\eye_p - \bX_{\bvarphi_0})^{-1}(\bX_\bvarphi - \bX_{\bvarphi_0})\}^{-1}(\eye_p - \bX_{\bvarphi_0})^{-1}\\
% & = (\eye_p - \bX_{\bvarphi_0})^{-1} + \sum_{m = 1}^{\infty}\{(\eye_p - \bX_{\bvarphi_0})^{-1}(\bX_\bvarphi - \bX_{\bvarphi_0})\}^m(\eye_p - \bX_{\bvarphi_0})^{-1}.
% \end{align*}
Denote
\[
\bT(\bvarphi, \bvarphi_0) := 
\{(\eye_p - \bX_\bvarphi)^{-1} - (\eye_p - \bX_{\bvarphi_0})^{-1}\}(\bX_\bvarphi - \bX_{\bvarphi_0})(\eye_p - \bX_{\bvarphi_0})^{-1}.
% \sum_{m = 2}^{\infty}\{(\eye_p - \bX_{\bvarphi_0})^{-1}(\bX_\bvarphi - \bX_{\bvarphi_0})\}^m(\eye_p - \bX_{\bvarphi_0})^{-1}.
\]
% and hence,
% \begin{align*}
% \|(\eye_p - \bX_\bvarphi)^{-1} - (\eye_p - \bX_{\bvarphi_0})^{-1} \|_{\mathrm{F}}
% &\leq \sum_{m = 1}^\infty \|\bX_\bvarphi - \bX_{\bvarphi_0}\|_{\mathrm{F}} = \frac{\|\bX_\bvarphi - \bX_{\bvarphi_0}\|_{\mathrm{F}}}{1 - \|\bX_\bvarphi - \bX_{\bvarphi_0}\|_{\mathrm{F}}}.
% \end{align*}
It follows that
\[
(\eye_p - \bX_\bvarphi)^{-1}  -  (\eye_p - \bX_{\bvarphi_0})^{-1} =
(\eye_p - \bX_{\bvarphi_0})^{-1}(\bX_\bvarphi - \bX_{\bvarphi_0})(\eye_p - \bX_{\bvarphi_0})^{-1}
 + \bT(\bvarphi, \bvarphi_0),
\]
and
\[
\|\bT(\bvarphi,\bvarphi_0)\|_{\mathrm{F}}\leq \|\bX_{\bvarphi} - \bX_{\bvarphi_0}\|_{\mathrm{F}}^2.
\]
Therefore, we can write
\begin{align*}
\bU(\bvarphi) - \bU(\bvarphi_0)
& = \{(\eye_p + \bX_{\bvarphi})(\eye_p - \bX_{\bvarphi})^{-1} - (\eye_p + \bX_{\bvarphi_0})(\eye_p - \bX_{\bvarphi_0})^{-1}\}\eye_{p\times r}\\
& = (\eye_p + \bX_{\bvarphi_0} + \bX_\bvarphi - \bX_{\bvarphi_0})\{(\eye_p - \bX_{\bvarphi})^{-1} - (\eye_p - \bX_{\bvarphi_0})^{-1}\}\eye_{p\times r}\\
&\quad + (\bX_\bvarphi - \bX_{\bvarphi_0})(\eye_p - \bX_{\bvarphi_0})^{-1}\eye_{p\times r}\\
& = (\eye_p + \bX_{\bvarphi_0})(\eye_p - \bX_{\bvarphi_0})^{-1}(\bX_\bvarphi - \bX_{\bvarphi_0})(\eye_p - \bX_{\bvarphi_0})^{-1}\eye_{p\times r}\\
&\quad + (\eye_p + \bX_{\bvarphi_0})\bT(\bvarphi, \bvarphi_0)\eye_{p\times r}\\
&\quad + (\bX_{\bvarphi} - \bX_{\bvarphi_0})\{(\eye_p - \bX_\bvarphi)^{-1}  -  (\eye_p - \bX_{\bvarphi_0})^{-1}\}\eye_{p\times r}\\
&\quad + (\eye_p - \bX_{\bvarphi_0})(\eye_p - \bX_{\bvarphi_0})^{-1}(\bX_\bvarphi - \bX_{\bvarphi_0})(\eye_p - \bX_{\bvarphi_0})^{-1}\eye_{p\times r}\\
& = 2(\eye_p - \bX_{\bvarphi_0})^{-1}(\bX_\bvarphi - \bX_{\bvarphi_0})(\eye_p - \bX_{\bvarphi_0})^{-1}\eye_{p\times r}
% \\&\quad 
+ 
\bR_\bU(\bvarphi, \bvarphi_0),
\end{align*}
where
\begin{align*}
\bR_\bU(\bvarphi, \bvarphi_0)
& := (\eye_p + \bX_{\bvarphi_0})\bT(\bvarphi, \bvarphi_0)\eye_{p\times r}\\
&\quad + (\bX_{\bvarphi} - \bX_{\bvarphi_0})\{(\eye_p - \bX_\bvarphi)^{-1}  -  (\eye_p - \bX_{\bvarphi_0})^{-1}\}\eye_{p\times r}.
\end{align*}
% and when $\|\bvarphi - \bvarphi_0\|_2 < 1/(2\sqrt{2})$,
Using the aforementioned results, we further compute
\begin{align*}
\|\bR_\bU(\bvarphi, \bvarphi_0)\|_{\mathrm{F}}
&\leq \|\eye_p + \bX_{\bvarphi_0}\|_2\|\bT(\bvarphi, \bvarphi_0)\|_{\mathrm{F}} + \|\bX_\bvarphi - \bX_{\bvarphi_0}\|_{\mathrm{F}}^2\\
&\leq (1 + 2\|\bA\|_2)\|\bX_{\bvarphi} - \bX_{\bvarphi_0}\|_{\mathrm{F}}^2 + \|\bX_{\bvarphi} - \bX_{\bvarphi_0}\|_{\mathrm{F}}^2\\
&\leq 8\|\bvarphi - \bvarphi_0\|_2^2.
\end{align*}
The proof is thus completed. 
\end{proof} 

\begin{proof}[Proof of Theorem \ref{thm:first_order_deviation_ICT}]
% Since $\|(\eye + \bQ_{01})^{-1}(\bQ_1 - \bQ_{01})\|_2 < \alpha\|(\eye_r + \bQ_{01})\|_2\leq 1/2$, we obtain by the matrix series that
% \begin{align*}
% (\eye_r + \bQ_1)^{-1}
% & = \left[(\eye_r + \bQ_{01})\left\{\eye_r + (\eye_r + \bQ_{01})^{-1}(\bQ_1 - \bQ_{01})\right\}\right]^{-1}\\
% & = \left\{\eye_r - (\eye_r + \bQ_{01})^{-1}(\bQ_{01} - \bQ_{1})\right\}^{-1}(\eye_r + \bQ_{01})^{-1}\\
% & = \left[\eye_r + \sum_{m = 1}^\infty\{(\eye_r + \bQ_{01})^{-1}(\bQ_{01} - \bQ_{1})\}^m\right](\eye_r + \bQ_{01})^{-1}\\
% & = (\eye_r + \bQ_{01})^{-1} + \sum_{m = 1}^\infty\{(\eye_r + \bQ_{01})^{-1}(\bQ_1 - \bQ_{01})\}^m(\eye_r + \bQ_{01})^{-1}. 
% \end{align*}
Observe that
\begin{align*}
\|(\eye_r + \bQ_1)^{-1} - (\eye_r + \bQ_{01})^{-1}\|_{\mathrm{F}}
& = \|(\eye_r + \bQ_1)^{-1}\{(\eye_r + \bQ_{01}) - (\eye_r + \bQ_{1})\}(\eye_r + \bQ_{01})^{-1}\|_{\mathrm{F}}\\
&\leq \|(\eye_r + \bQ_1)^{-1}\|_2 \|\bQ_1 - \bQ_{01}\|_{\mathrm{F}} \|(\eye_r + \bQ_{01})^{-1}\|_2\\
&\leq \|\bQ_1 - \bQ_{01}\|_{\mathrm{F}}
\end{align*}
because $\eye_r + \bQ_1\succeq \eye_r$ and $\eye_r + \bQ_{01}\succeq \eye_r$. 
Therefore,
\begin{align*}
\|\bA(\bU) - \bA(\bU_0)\|_{\mathrm{F}}
% &\leq \frac{1}{2}\|(\bQ_2 - \bQ_{02})\{\eye_r + \bF(\bU_0) - \bF(\bU_0) + \bF(\bU)\}\|_{\mathrm{F}}\\
% &\quad + \frac{1}{2}\|\bQ_{02}\{\bF(\bU) - \bF(\bU_0)\}\|_{\mathrm{F}}\\
&\leq \|\bQ_2 - \bQ_{02}\|_{\mathrm{F}}\|(\eye_r + \bQ_1)^{-1}\|_2
\\&\quad
 + \|\bQ_{02}\|_2\|(\eye_r + \bQ_1)^{-1} - (\eye_r + \bQ_{01})^{-1}\|_{\mathrm{F}}\\
&\leq \|\bQ_2 - \bQ_{02}\|_{\mathrm{F}} + \|\bQ_1 - \bQ_{01}\|_{\mathrm{F}}\\
&\leq 2\|\bU - \bU_{0}\|_{\mathrm{F}}.
\end{align*}
The proof is thus completed. 
\end{proof}

% subsection proofs_for_section_sub:euclidean_representation_of_subspaces (end)

\subsection{Proofs for Section \ref{sub:intrinsic_perturbation_theorems}} % (fold)
\label{sub:proofs_for_section_sub:intrinsic_perturbation_theorems}

\begin{proof}[Proof of Theorem \ref{thm:CT_deviation_Sigma}]
First observe that the following matrix decomposition holds:
\begin{align*}
% \bSigma - \bSigma_0
&\bSigma(\btheta) - \bSigma(\btheta_{0})\\
&\quad = \bU_{0}(\bM - \bM_0)\bU_{0}\transpose + \bU_{0}\bM_0\{\bU(\bvarphi) - \bU_{0}\}\transpose + \{\bU(\bvarphi) - \bU_{0}\}\bM_0\bU_{0}\transpose + \bR_{\bSigma}(\btheta, \btheta_{0}),
\end{align*}
where the remainder
\begin{align*}
R_\bSigma(\btheta, \btheta_{0}) & = \bU(\bvarphi)(\bM - \bM_0)\{\bU(\bvarphi) - \bU_{0}\}\transpose + \{\bU(\bvarphi) - \bU_{0}\}\bM_0\{\bU(\bvarphi) - \bU_{0}\}\transpose\\
&\quad + \{\bU(\bvarphi) - \bU_0\}(\bM - \bM_0)\bU_0\transpose
\end{align*}
satisfies 
\begin{align*}
\|R_\bSigma(\btheta, \btheta_{0})\|_{\mathrm{F}}
&\leq 2\|\bM - \bM_0\|_{\mathrm{F}}\|\bU(\bvarphi) - \bU_0\|_{\mathrm{F}} + \|\bM_0\|_2\|\bU(\bvarphi) - \bU_0\|_{\mathrm{F}}^2\\
&\leq \|\bM - \bM_0\|_2^2 + \|\bU(\bvarphi) - \bU_0\|_{\mathrm{F}}^2 + \|\bM_0\|_2\|\bU(\bvarphi) - \bU_0\|_{\mathrm{F}}^2\\
&\leq 2\|\bmu - \bmu_0\|_2^2 + \left(1 + \|\bM_0\|_2\right)\|\bU(\bvarphi) - \bU_0\|_{\mathrm{F}}^2.
\end{align*}
By Theorem \ref{thm:second_order_deviation_CT}, 
\begin{align*}
% \|\bU(\bvarphi) - \bU_0\|_{\mathrm{F}}
% & \leq \|D\bU(\bvarphi_0)\|_2\|\bvarphi - \bvarphi_0\|_2 + 8\|\bvarphi - \bvarphi_0\|_2^2,\\
\|\bU(\bvarphi) - \bU_0\|_{\mathrm{F}}
& \leq 2\sqrt{2}\|\bvarphi - \bvarphi_0\|_2
\end{align*}
for all $\bvarphi$ and $\bvarphi_0$. 
% Note that 
% \begin{align*}
% \|D\bU(\bvarphi_0)\|_2 &\leq 2\|(\eye_p - \bX_0)^{-\mathrm{T}}\|_2\|(\eye_p - \bX_0)^{-1}\|_2\|\bGamma_\bvarphi\|_2\\
% &\leq 2\sqrt{2}\|(\eye_p - \bX_0)^{-1}\|_2^2\leq 2\sqrt{2}. 
% \end{align*}
Therefore,
\[
\|\bR_\bSigma(\btheta, \btheta_0)\|_{\mathrm{F}}\leq 8(1 + \|\bM_0\|_2)\|\btheta - \btheta_0\|_2^2.
\]
% when $\|\btheta - \btheta_0\|_2\leq 1/(2\sqrt{2})$. 
Furthermore, using Theorem \ref{thm:second_order_deviation_CT} again, we obtain
\begin{align*}
\vect\{\bU(\bvarphi) - \bU_0\} = 
D\bU(\bvarphi_0)(\bvarphi - \bvarphi_0) + \vect\{\bR_\bU(\bvarphi, \bvarphi_0)\},
\end{align*}
where $\|\bR_\bU(\bvarphi, \bvarphi_0)\|_{\mathrm{F}}\leq 8\|\bvarphi - \bvarphi_0\|_2^2$ for all $\bvarphi, \bvarphi_0$. In matrix form, we have
\begin{align*}
\bU(\bvarphi) - \bU_0 = 2(\eye_p - \bX_0)^{-1}(\bX_\bvarphi - \bX_0)(\eye_p - \bX_0)^{-1}\eye_{p\times r} + \bR_\bU(\bvarphi, \bvarphi_0).
\end{align*}
Hence we finally obtain
\begin{align*}
\vect\{\bSigma(\btheta) - \bSigma_0\}
& = D_\bmu\bSigma(\btheta_0)(\bmu - \bmu_0) + (\eye_{p^2} + \bK_{pp})(\bU_0\bM_0\otimes \eye_p)\vect\{\bU(\bvarphi )- \bU_0\}\\
&\quad + \vect\{\bR_\bSigma(\btheta, \btheta_0)\}\\
& = D\bSigma(\btheta_0)(\btheta - \btheta_0) + (\eye_{p}^2 + \bK_{pp})(\bU_0\bM_0\otimes\eye_p)\vect\{\bR_\bU(\bvarphi, \bvarphi_0)\}\\
&\quad + \vect\{\bR_\bSigma(\btheta, \btheta_0)\}\\
& = D\bSigma(\btheta_0)(\btheta - \btheta_0) + \vect\{\bR(\btheta, \btheta_0)\},
\end{align*}
where
\[
\bR(\btheta, \btheta_0) = (\eye_{p}^2 + \bK_{pp})(\bU_0\bM_0\otimes\eye_p)\vect\{\bR_\bU(\bvarphi, \bvarphi_0)\} + \vect\{\bR_\bSigma(\btheta, \btheta_0)\},
\]
and the proof is completed by observing that
\begin{align*}
\|\bR(\btheta, \btheta_0)\|_{\mathrm{F}}
&\leq 2\|\bM_0\|_2\|\bR_\bU(\bvarphi, \bvarphi_0)\|_{\mathrm{F}} + \|\bR_\bSigma(\btheta, \btheta_0)\|_{\mathrm{F}}\leq 16(1 + \|\bM_0\|_2)\|\btheta - \btheta_0\|_2^2.
\end{align*}
\end{proof}

Before proving Theorem \ref{thm:intrinsic_deviation_Sigma}, we introduce the following intermediate Lemma claiming that the perturbation of projection matrices can be controlled by the corresponding Euclidean representing vectors.

\begin{lemma}
% [First-Order Intrinsic Deviation of ICT]
\label{thm:First-order intrinsic deviation of ICT}
Under the setup of Section \ref{sub:euclidean_representation_of_subspaces}, if $\|\bA\|_2,\|\bA_0\|_2 < 1$, then
\begin{align*}
\|\bU(\bvarphi) - \bU(\bvarphi_0)\|_{\mathrm{F}}
&\leq \frac{2\|\bU(\bvarphi)\bU(\bvarphi)\transpose - \bU(\bvarphi_0)\bU(\bvarphi_0)\transpose\|_{\mathrm{F}}}{\min\{\lambda_{r}(\bQ_{01}), \lambda_r(\bQ_1)\}},\\
\|\bvarphi - \bvarphi_0\|_2
&\leq \frac{4\|\bU(\bvarphi)\bU(\bvarphi)\transpose - \bU(\bvarphi_0)\bU(\bvarphi_0)\transpose\|_{\mathrm{F}}}{\min\{\lambda_r(\bQ_{01}), \lambda_r(\bQ_1)\}}.
\end{align*}
% Furthermore, if 
% \[
% \|\bU\bU\transpose - \bU_0\bU_0\transpose\|_{\mathrm{F}}\leq2\left(\frac{1 + \|\bA_0\|_2^2}{1 - \|\bA_0\|_2^2}\right)^2,
% \]
% then
% \begin{align*}
% \|\bU - \bU_0\|_{\mathrm{F}}
% &\leq \frac{2\|\bU\bU\transpose - \bU_0\bU_0\transpose\|_{\mathrm{F}}}{\min\{\lambda_{r}(\bQ_{01}), \lambda_r(\bQ_1)\}},\\
% \|\bvarphi - \bvarphi_0\|_2
% &\leq \frac{2\|\bU\bU\transpose - \bU_0\bU_0\transpose\|_{\mathrm{F}}}{\lambda_r(\bQ_{01})\min\{\lambda_r(\bQ_{01}), \lambda_r(\bQ_1)\}}.
% \end{align*}
\end{lemma}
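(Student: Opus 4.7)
The plan is to translate the perturbation of the projection matrices $\bP := \bU(\bvarphi)\bU(\bvarphi)\transpose$ and $\bP_0 := \bU(\bvarphi_0)\bU(\bvarphi_0)\transpose$ into a perturbation of the top $r \times r$ blocks $\bQ_1, \bQ_{01}$, exploit that both of these blocks are symmetric positive definite, and then feed the resulting bound on $\|\bU(\bvarphi) - \bU(\bvarphi_0)\|_{\mathrm{F}}$ into Theorem \ref{thm:first_order_deviation_ICT} to obtain the bound on $\|\bvarphi - \bvarphi_0\|_2$.

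The first step is to set up the key algebraic identity. Multiplying $\bP$ by $\eye_{p\times r}$ extracts its first $r$ columns, and a direct computation gives $\bP \eye_{p\times r} = \bU(\bvarphi)\bQ_1$, since $\eye_{p\times r}\transpose \bU(\bvarphi) = \bQ_1$ and $\bQ_1$ is symmetric. An analogous identity holds for $\bU(\bvarphi_0)$, so
\begin{equation*}
\bU(\bvarphi)\bQ_1 - \bU(\bvarphi_0)\bQ_{01} = (\bP - \bP_0)\eye_{p\times r},
\end{equation*}
which immediately yields $\|\bU(\bvarphi)\bQ_1 - \bU(\bvarphi_0)\bQ_{01}\|_{\mathrm{F}} \leq \|\bP - \bP_0\|_{\mathrm{F}}$. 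I would then split this difference in two ways, $\bU(\bvarphi)\bQ_1 - \bU(\bvarphi_0)\bQ_{01} = \{\bU(\bvarphi) - \bU(\bvarphi_0)\}\bQ_1 + \bU(\bvarphi_0)(\bQ_1 - \bQ_{01})$ and the analogous version producing $\{\bU(\bvarphi)-\bU(\bvarphi_0)\}\bQ_{01}$, giving
\begin{equation*}
\|\{\bU(\bvarphi) - \bU(\bvarphi_0)\}\bQ_1\|_{\mathrm{F}} \leq \|\bP - \bP_0\|_{\mathrm{F}} + \|\bQ_1 - \bQ_{01}\|_{\mathrm{F}},
\end{equation*}
and similarly with $\bQ_{01}$ in place of $\bQ_1$. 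Since $\bQ_1, \bQ_{01}$ are symmetric positive definite, the standard inequality $\|\bM \bN\|_{\mathrm{F}} \geq \lambda_r(\bN)\|\bM\|_{\mathrm{F}}$ converts these into $\|\bU(\bvarphi)-\bU(\bvarphi_0)\|_{\mathrm{F}}$ bounds with $\lambda_r(\bQ_1)$ and $\lambda_r(\bQ_{01})$ in the denominator, respectively.

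Next, I need to control $\|\bQ_1 - \bQ_{01}\|_{\mathrm{F}}$. The top-left $r \times r$ block of $\bP - \bP_0$ equals $\bQ_1^2 - \bQ_{01}^2$, so $\|\bQ_1^2 - \bQ_{01}^2\|_{\mathrm{F}} \leq \|\bP - \bP_0\|_{\mathrm{F}}$. Since $\bQ_1(\bQ_1 - \bQ_{01}) + (\bQ_1 - \bQ_{01})\bQ_{01} = \bQ_1^2 - \bQ_{01}^2$ is a Lyapunov equation with two symmetric positive definite coefficients, diagonalizing both and reading off entries of the solution yields
\begin{equation*}
\|\bQ_1 - \bQ_{01}\|_{\mathrm{F}} \leq \frac{\|\bQ_1^2 - \bQ_{01}^2\|_{\mathrm{F}}}{\lambda_r(\bQ_1) + \lambda_r(\bQ_{01})} \leq \frac{\|\bP - \bP_0\|_{\mathrm{F}}}{2\min\{\lambda_r(\bQ_1),\lambda_r(\bQ_{01})\}}.
\end{equation*}
Plugging this back into the bound on $\|\{\bU(\bvarphi)-\bU(\bvarphi_0)\}\bQ_1\|_{\mathrm{F}}$ (or its symmetric counterpart, choosing the more favorable of $\bQ_1, \bQ_{01}$) and simplifying gives the first stated inequality with the constant $2$ and the $\min\{\lambda_r(\bQ_1), \lambda_r(\bQ_{01})\}$ in the denominator.

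For the second inequality, I would invoke Theorem \ref{thm:first_order_deviation_ICT}, which asserts global Lipschitz continuity of the inverse Cayley parameterization with constant $2$. Since $\bvarphi - \bvarphi_0 = \vect(\bA) - \vect(\bA_0)$ and the Euclidean norm of this vector equals $\|\bA(\bU(\bvarphi)) - \bA(\bU(\bvarphi_0))\|_{\mathrm{F}}$, the theorem gives $\|\bvarphi - \bvarphi_0\|_2 \leq 2\|\bU(\bvarphi) - \bU(\bvarphi_0)\|_{\mathrm{F}}$, and chaining with the first inequality produces the factor of $4$. The main obstacle is the bookkeeping in the combining step: obtaining precisely the constant $2$ in the first inequality requires using the two-way symmetry between $\bQ_1$ and $\bQ_{01}$ carefully, so that the residual $\|\bQ_1 - \bQ_{01}\|_{\mathrm{F}}$ is absorbed using the sharper Lyapunov inequality with $\lambda_r(\bQ_1) + \lambda_r(\bQ_{01}) \geq 2 \min\{\lambda_r(\bQ_1), \lambda_r(\bQ_{01})\}$ rather than weaker single-sided bounds.
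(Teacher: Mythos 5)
Your route is genuinely different from the paper's and in some ways cleaner. The paper bounds $\|\bQ_1-\bQ_{01}\|_{\mathrm{F}}$ and $\|\bQ_2-\bQ_{02}\|_{\mathrm{F}}$ separately and adds them, controlling the second via the identity $\bQ_2-\bQ_{02}=\bQ_2(\bQ_{01}-\bQ_1)\bQ_{01}^{-1}+(\bQ_2\bQ_1-\bQ_{02}\bQ_{01})\bQ_{01}^{-1}$ (which injects an extra $\|\bQ_{01}^{-1}\|_2$), whereas you bound $\|\bU-\bU_0\|_{\mathrm{F}}$ in one stroke through the column-extraction identity $(\bU\bU\transpose-\bU_0\bU_0\transpose)\eye_{p\times r}=\bU\bQ_1-\bU_0\bQ_{01}$. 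You also replace the paper's matrix square-root/mean-value argument for $\|\bQ_1-\bQ_{01}\|_{\mathrm{F}}$ with a Sylvester-equation argument; both give the identical bound $\|\bQ_1-\bQ_{01}\|_{\mathrm{F}}\leq\|\bQ_1^2-\bQ_{01}^2\|_{\mathrm{F}}/\{\lambda_r(\bQ_1)+\lambda_r(\bQ_{01})\}$, so that part is sound. The second displayed inequality of the lemma then follows from Theorem \ref{thm:first_order_deviation_ICT} exactly as you and the paper both do.

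However, the final combining step does not deliver the claimed constant $2$, and this is a genuine gap, not bookkeeping. Writing $a=\lambda_r(\bQ_1)$, $b=\lambda_r(\bQ_{01})$ and taking (as you suggest) the more favorable of the two, say $a\geq b$, your decomposition yields
\[
\|\bU-\bU_0\|_{\mathrm{F}}\leq\frac{1}{a}\left(1+\frac{1}{a+b}\right)\|\bU\bU\transpose-\bU_0\bU_0\transpose\|_{\mathrm{F}},
\]
and $\frac{1}{a}\bigl(1+\frac{1}{a+b}\bigr)\leq\frac{2}{b}$ fails whenever $a$ and $b$ are small: for $a=b=0.1$ the left-hand side is $60$ while $2/b=20$. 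To absorb the residual $\|\bQ_1-\bQ_{01}\|_{\mathrm{F}}$ term into the main term with total constant $2$ one would essentially need $\lambda_r(\bQ_1)+\lambda_r(\bQ_{01})\geq 1$, but $\lambda_r(\bQ_1)=(1-\|\bA\|_2^2)/(1+\|\bA\|_2^2)$ can be arbitrarily close to $0$. The ``sharper Lyapunov inequality'' you flag as the fix is already built into the display above, so it does not close the gap; what your argument actually yields scales like $\min\{\lambda_r(\bQ_1),\lambda_r(\bQ_{01})\}^{-2}$ rather than $\min^{-1}$.
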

\begin{proof}
% [\bf Proof of Theorem \ref{thm:First-order intrinsic deviation of ICT}]
For convenience we denote $\bU = \bU(\bvarphi)$ and $\bU_0 = \bU(\bvarphi_0)$. First note that
\begin{align*}
\|\bU\bU\transpose - \bU_0\bU_0\transpose\|_{\mathrm{F}}^2
& = \left\|
\begin{bmatrix*}
\bQ_1^2 - \bQ_{01}^2 & \bQ_1\bQ_2\transpose - \bQ_{01}\bQ_{02}\transpose\\
\bQ_2\bQ_1 - \bQ_{02}\bQ_{01} & \bQ_2\bQ_2\transpose - \bQ_{02}\bQ_{02}\transpose
\end{bmatrix*}
\right\|_{\mathrm{F}}^2\\
& = \|\bQ_1^2 - \bQ_{01}^2\|_{\mathrm{F}} + 2\|\bQ_2\bQ_1 - \bQ_{02}\bQ_{01}\|_{\mathrm{F}}^2 + \|\bQ_2\bQ_2\transpose - \bQ_{02}\bQ_{02}\transpose\|_{\mathrm{F}}^2.
\end{align*}
For any positive definite matrix $\bQ\in\mathbb{R}^{r\times r}$, denote $\bTheta(\bQ) = \bQ^{1/2}$ the matrix square root function evaluated at $\bQ$. Then the matrix differential technique yields
\[
\frac{\partial\vect\{\bTheta(\bQ)\}}{\partial\vect(\bQ)\transpose} = (\bQ^{1/2}\otimes \eye_r + \eye_r\otimes \bQ^{1/2})^{-1}\preceq \frac{1}{2\sigma_{\min}(\bQ^{1/2})}\eye_{r^2} =  \frac{1}{2\sigma_{\min}^{1/2}(\bQ)}\eye_{r^2}. 
\]
By the mean-value inequality,
\begin{align*}
\|\bQ_1 - \bQ_{01}\|_{\mathrm{F}} 
 & = \|\bTheta(\bQ_1^2) - \bTheta(\bQ_{01}^2)\|_{\mathrm{F}}\\
 &\leq \max_{t\in [0, 1]}\left\|\frac{\partial\vect\{\bTheta(\bQ)\}}{\partial\vect(\bQ)\transpose}\mathrel{\Bigg|}_{\bQ = (t\bQ_{01}^2 + (1 - t)\bQ_1^2)}\right\|_2\|\bQ_1^2 - \bQ_{01}^2\|_{\mathrm{F}}\\
 & = \frac{1}{2}\max_{t\in [0,1]}\lambda_{\min}^{-1/2}\{t\bQ_{01}^2 + (1 - t)\bQ_1^2\}\|\bQ_1^2 - \bQ_{01}^2\|_{\mathrm{F}}\\
 &\leq \frac{\|\bQ_1^2 - \bQ_{01}^2\|_{\mathrm{F}}}{2\min\{\lambda_{r}(\bQ_{01}), \lambda_r(\bQ_1)\}}
 \leq \frac{\|\bU\bU\transpose - \bU_0\bU_0\transpose\|_{\mathrm{F}}}{2\min\{\lambda_{r}(\bQ_{01}), \lambda_r(\bQ_1)\}}.
\end{align*}
For $\bQ_2$, we have
\begin{align*}
\bQ_2 - \bQ_{02} & = \bQ_2\bQ_{01}\bQ_{01}^{-1} - \bQ_2\bQ_1\bQ_{01}^{-1} + \bQ_2\bQ_1\bQ_{01}^{-1} - \bQ_{02}\bQ_{01}\bQ_{01}^{-1}\\
& = \bQ_2(\bQ_{01} - \bQ_1)\bQ_{01}^{-1} + (\bQ_2\bQ_1 - \bQ_{02}\bQ_{01})\bQ_{01}^{-1}.
\end{align*}
Using the previous result, we further write
\begin{align*}
\|\bQ_2 - \bQ_{02}\|_{\mathrm{F}} 
& \leq \|\bQ_2\|_2\|\bQ_{01} - \bQ_1\|_{\mathrm{F}}\|\bQ_{01}^{-1} \|_2 + \|\bQ_2\bQ_1 - \bQ_{02}\bQ_{01}\|_{\mathrm{F}}\|\bQ_{01}^{-1}\|_2\\
& \leq \frac{\|\bU\bU\transpose - \bU_0\bU_0\transpose\|_{\mathrm{F}}\|\bQ_{01}^{-1}\|_2}{2\min\{\lambda_r(\bQ_{01}), \lambda_r(\bQ_1)\}} + \|\bU\bU\transpose - \bU_0\bU_0\transpose\|_{\mathrm{F}}\|\bQ_{01}^{-1}\|_2
 % = \frac{2\|\bU\bU\transpose - \bU_0\bU_0\transpose\|_{\mathrm{F}}}{\lambda_r(\min\{\lambda_r(\bQ_{01}), \lambda_r(\bQ_1)\}}
 .
\end{align*}
Hence, 
\begin{align*}
\|\bU - \bU_0\|_{\mathrm{F}}
& = (\|\bQ_1 - \bQ_{01}\|_{\mathrm{F}}^2 + \|\bQ_2 - \bQ_{02}\|_{\mathrm{F}}^2)^{1/2}\\
&\leq\|\bQ_1 - \bQ_{01}\|_{\mathrm{F}} + \|\bQ_2 - \bQ_{02}\|_{\mathrm{F}}\\
&\leq \frac{\|\bU\bU\transpose - \bU_0\bU_0\transpose\|_{\mathrm{F}}\|\bQ_{01}^{-1}\|_2}{\min\{\lambda_r(\bQ_{01}), \lambda_r(\bQ_1)\}} + \|\bU\bU\transpose - \bU_0\bU_0\transpose\|_{\mathrm{F}}\|\bQ_{01}^{-1}\|_2\\
&\leq \frac{2\|\bU\bU\transpose - \bU_0\bU_0\transpose\|_{\mathrm{F}}\|\bQ_{01}^{-1}\|_2}{\min\{\lambda_r(\bQ_{01}), \lambda_r(\bQ_1)\}}.
\end{align*}
This completes the proof of the first inequality. For the second inequality, note that
\begin{align*}
\|(\eye_r + \bQ_1)^{-1} - (\eye_r + \bQ_{01})^{-1}\|_{\mathrm{F}}
& = \|(\eye_r + \bQ_1)^{-1}\{(\eye_r + \bQ_{01}) - (\eye_r + \bQ_{1})\}(\eye_r + \bQ_{01})^{-1}\|_{\mathrm{F}}\\
&\leq \|(\eye_r + \bQ_1)^{-1}\|_2 \|\bQ_1 - \bQ_{01}\|_{\mathrm{F}} \|(\eye_r + \bQ_{01})^{-1}\|_2\\
&\leq \|\bQ_1 - \bQ_{01}\|_{\mathrm{F}}.
\end{align*}
Therefore, by Theorem \ref{thm:first_order_deviation_ICT},
\begin{align*}
\|\bvarphi - \bvarphi_0\|_2 & = \|\bA - \bA_0\|_{\mathrm{F}}
% \\
% & = \left\|\frac{1}{2}\bQ_2\{\eye_r + (\eye_r - \bQ_1)(\eye_r + \bQ_1)^{-1}\} - \frac{1}{2}\bQ_{02}\{\eye_r + (\eye_r - \bQ_{01})(\eye_r + \bQ_{01})^{-1}\}\right\|_{\mathrm{F}}\\
% & 
% = \left\|\bQ_2(\eye_r + \bQ_1)^{-1} - \bQ_{02}(\eye_r + \bQ_{01})^{-1}\right\|_{\mathrm{F}}\\
% & = \left\|(\bQ_2 - \bQ_{02})(\eye_r + \bQ_1)^{-1}\|_{\mathrm{F}} + \|\bQ_{02}\{(\eye_r + \bQ_1)^{-1} - (\eye_r + \bQ_{01})^{-1}\}\right\|_{\mathrm{F}}\\
% &\leq \left\|\bQ_2 - \bQ_{02}\|_{\mathrm{F}}\|(\eye_r + \bQ_1)^{-1}\|_2 + \|\{(\eye_r + \bQ_1)^{-1} - (\eye_r + \bQ_{01})^{-1}\}\right\|_{\mathrm{F}}\|\bQ_{02}\|_2\\
% &
% \leq \|\bQ_2 - \bQ_{02}\|_{\mathrm{F}} + \|\bQ_1 - \bQ_{01}\|_{\mathrm{F}}
\leq 2\|\bU - \bU_0\|_{\mathrm{F}}.
\end{align*}
The proof is completed by combining the obtained upper bound for $\|\bU - \bU_0\|_{\mathrm{F}}$. 
\end{proof}

\begin{proof}[Proof of Theorem \ref{thm:intrinsic_deviation_Sigma}]
By Weyl's inequality, we have
\begin{align*}
|\lambda_r(\bQ_1) - \lambda_r(\bQ_{01})| & = \frac{|\lambda^2_r(\bQ_1) - \lambda^2_r(\bQ_{01})|}{\lambda_r(\bQ_1) + \lambda_r(\bQ_{01})}
\leq \frac{\|\bQ_1^2 - \bQ_{01}^2\|_{\mathrm{F}}}{\lambda_r(\bQ_{01})}\leq \frac{\|\bU\bU\transpose - \bU_0\bU_0\transpose\|_{\mathrm{F}}}{\lambda_r(\bQ_{01})}.
\end{align*}
Note that
\begin{align*}
\lambda_r(\bQ_{01}) & = \lambda_r\left\{(\eye_r - \bA_0\transpose\bA_0)(\eye_r + \bA_0\transpose\bA_0)^{-1}\right\}
 % = \frac{1 - \sigma_1^2(\bA_0)}{1 + \sigma_1^2(\bA_0)}.
 = \frac{1 - \|\bA_0\|_2^2}{1 + \|\bA_0\|_2^2}. 
\end{align*}
By the Davis-Kahan theorem (see, e.g., Theorem 2 in \citealp{10.1093/biomet/asv008}),
\begin{align*}
|\lambda_r(\bQ_1) - \lambda_r(\bQ_{01})|\leq \frac{\|\bU\bU\transpose - \bU_0\bU_0\transpose\|_{\mathrm{F}}}{\lambda_r(\bQ_{01})}\leq 2\sqrt{2}\left(\frac{1 + \|\bA_0\|_2^2}{1 - \|\bA_0\|_2^2}\right)\frac{\|\bSigma(\btheta) - \bSigma(\btheta_0)\|_{\mathrm{F}}}{\lambda_{r}(\bM_0)}.
\end{align*}
Therefore,
\begin{align*}
\lambda_r(\bQ_1)
&\geq\lambda_r(\bQ_{01}) - \frac{2\sqrt{2}(1 + \|\bA_0\|_2^2)\|\bSigma(\btheta) - \bSigma(\btheta_0)\|_{\mathrm{F}}}{(1 - \|\bA_0\|_2^2)\lambda_{r}(\bM_0)}\\
& = \frac{1 - \|\bA_0\|_2^2}{1 + \|\bA_0\|_2^2} - \frac{2\sqrt{2}(1 + \|\bA_0\|_2^2)\|\bSigma(\btheta) - \bSigma(\btheta_0)\|_{\mathrm{F}}}{(1 - \|\bA_0\|_2^2)\lambda_{r}(\bM_0)}\\
&\geq \frac{1 - \|\bA_0\|_2^2}{2(1 + \|\bA_0\|_2^2)} = \frac{\lambda_r(\bQ_{01})}{2}.
\end{align*}
Hence, by Lemma \ref{thm:First-order intrinsic deviation of ICT} and the Davis-Kahan theorem, we have
\begin{align*}
\|\bU - \bU_0\|_{\mathrm{F}}
& \leq \frac{4\|\bU\bU\transpose - \bU_0\bU_0\transpose\|_{\mathrm{F}}}{\lambda_r(\bQ_{01})}
\leq \frac{8\sqrt{2}\|\bSigma(\btheta) - \bSigma(\btheta_0)\|_{\mathrm{F}}}{\lambda_r(\bQ_{01})\lambda_r(\bM_0)}
\\&
 = \frac{8\sqrt{2}(1 + \|\bA_0\|_2^2)}{\lambda_r(\bM_0)(1 - \|\bA_0\|_2^2)}\|\bSigma(\btheta) - \bSigma(\btheta_0)\|_{\mathrm{F}},\\
\|\bvarphi - \bvarphi_0\|_2
&\leq \frac{8\|\bU\bU\transpose - \bU_0\bU_0\transpose\|_{\mathrm{F}}}{\lambda_r(\bQ_{01})}
 \leq \frac{16\sqrt{2}\|\bSigma(\btheta) - \bSigma(\btheta_0)\|_{\mathrm{F}}}{\lambda_r(\bQ_{01})\lambda_r(\bM_0)}
 \\&
 = \frac{16\sqrt{2}(1 + \|\bA_0\|_2^2)}{\lambda_r(\bM_0)(1 - \|\bA_0\|_2^2)}\|\bSigma(\btheta) - \bSigma(\btheta_0)\|_{\mathrm{F}}.
\end{align*}
For the matrix $\bM$ and the vector $\bmu$, we have,
\begin{align*}
\|\bmu - \bmu_0\|_2 &\leq \|\bM - \bM_0\|_{\mathrm{F}} = \| \bU\transpose\bSigma(\btheta)\bU - \bU_0\transpose\bSigma(\btheta_0)\bU_0 \|_{\mathrm{F}}\\
& \leq \|\bU\transpose\{\bSigma(\btheta) - \bSigma(\btheta_0)\}\bU\|_{\mathrm{F}} + \|\bU\transpose\bSigma_0(\bU - \bU_0)\|_{\mathrm{F}} + \|(\bU - \bU_0)\transpose\bSigma_0\bU_0\|_{\mathrm{F}}\\
& \leq \|\bSigma(\btheta) - \bSigma(\btheta_0)\|_{\mathrm{F}} + 2\lambda_1(\bM_0)\|\bU - \bU_0\|_{\mathrm{F}}\\
& \leq \left\{1 + \frac{16\sqrt{2}\lambda_{1}(\bM_0)(1 + \|\bA_0\|_2^2)}{\lambda_{r}(\bM_0)(1 - \|\bA_0\|_2^2)}\right\}\|\bSigma(\btheta) - \bSigma(\btheta_0)\|_{\mathrm{F}}.
\end{align*}
Therefore, we conclude that
\begin{align*}
\|\btheta - \btheta_0\|_2
&\leq \|\bvarphi - \bvarphi_0\|_2 + \|\bmu - \bmu_0\|_2\\
&\leq \left[1 + \frac{16\sqrt{2}\{1 + \lambda_{1}(\bM_0)\}(1 + \|\bA_0\|_2^2)^2}{\lambda_{r}(\bM_0)(1 - \|\bA_0\|_2^2)^2}
 % + \frac{16\lambda_{1}(\bM_0)(1 + \|\bA_0\|_2^2)}{\lambda_{r}(\bM_0)(1 - \|\bA_0\|_2^2)}
 \right]\|\bSigma(\btheta) - \bSigma(\btheta_0)\|_{\mathrm{F}}.
\end{align*}
\end{proof}

% subsection proofs_for_section_sub:intrinsic_perturbation_theorems (end)

\subsection{Proof of Theorem \ref{thm:DSigma_singular_value}} % (fold)
\label{sub:proof_of_theorem_thm:dsigma_singular_value}

The proof of Theorem \ref{thm:DSigma_singular_value} is involved and relies on the following two technical lemmas, the proofs of which are deferred to the Supplementary Material. 

\begin{lemma}\label{lemma:DSigma_lower_bound}
Under the setup and notations in Sections \ref{sub:euclidean_representation_of_subspaces} and \ref{sub:euclidean_represention_of_low_rank_matrices_and_intrinsic_perturbation}, 
\begin{align*}
&\bSigma_0^2\otimes\eye_p - \bSigma_0\otimes\bSigma_0 + \eye_p\otimes\bSigma_0^2\\
&\quad\succeq \lambda_{r}^2(\bM_0)\{\bU_0\bU_0\transpose\otimes(\eye_p - \bU_0\bU_0\transpose) + (\eye_p - \bU_0\bU_0\transpose)\otimes \bU_0\bU_0\transpose\},\\
&\bSigma_0^2\otimes (\eye_p - \bU_0\bU_0\transpose) + (\eye_p - \bU_0\bU_0\transpose)\otimes\bSigma_0^2\\
&\quad\succeq \lambda_{r}^2(\bM_0)\{\bU_0\bU_0\transpose\otimes(\eye_p - \bU_0\bU_0\transpose) + (\eye_p - \bU_0\bU_0\transpose)\otimes \bU_0\bU_0\transpose\}.
\end{align*}
\end{lemma}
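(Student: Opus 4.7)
The plan is to prove both inequalities simultaneously by simultaneous orthogonal diagonalization. I would extend the columns of $\bU_0$ by an orthonormal basis $\bU_0^\perp\in\mathbb{O}(p,p-r)$ of $\mathrm{Span}(\bU_0)^\perp$, write a spectral decomposition $\bM_0 = \bW\mathrm{diag}(\lambda_1(\bM_0),\ldots,\lambda_r(\bM_0))\bW\transpose$ with $\bW\in\mathbb{O}(r)$, and form $\bV := [\bU_0\bW,\ \bU_0^\perp]\in\mathbb{O}(p)$. In this basis, $\bSigma_0 = \bV\bD\bV\transpose$ with $\bD = \mathrm{diag}(d_1,\ldots,d_p)$ where $d_i = \lambda_i(\bM_0)$ for $i\leq r$ and $d_i = 0$ otherwise, while $\bU_0\bU_0\transpose = \bV\bE\bV\transpose$ with $\bE = \mathrm{diag}(e_1,\ldots,e_p)$ and $e_i = \mathbbm{1}\{i\leq r\}$. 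All six matrices appearing in the two inequalities are therefore simultaneously diagonalized by the orthogonal change of basis $\bV\otimes\bV$ on $\mathbb{R}^{p^2}$.

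Conjugating both inequalities by $\bV\otimes\bV$ turns them into diagonal inequalities on $\mathbb{R}^{p^2}$, so it suffices to check, for every $(i,j)\in[p]^2$, the two scalar inequalities $d_i^2 - d_id_j + d_j^2 \geq \lambda_r^2(\bM_0)\{e_i(1-e_j) + (1-e_i)e_j\}$ and $d_i^2(1-e_j) + (1-e_i)d_j^2 \geq \lambda_r^2(\bM_0)\{e_i(1-e_j) + (1-e_i)e_j\}$. I would handle these by a four-case split on whether each index exceeds $r$.

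When $i,j\leq r$ or $i,j>r$, the right-hand sides vanish and both left-hand sides are manifestly non-negative, using $d_i^2 - d_id_j + d_j^2 = (d_i - d_j/2)^2 + 3d_j^2/4\geq 0$ for the first. If exactly one of $i,j$ exceeds $r$, say $i\leq r<j$, then both left-hand sides reduce to $d_i^2 = \lambda_i^2(\bM_0)$ and both right-hand sides equal $\lambda_r^2(\bM_0)$; the magnitude ordering $|\lambda_i(\bM_0)|\geq|\lambda_r(\bM_0)|$ from the notation section then gives $\lambda_i^2(\bM_0)\geq\lambda_r^2(\bM_0)$, and the symmetric case $j\leq r<i$ is identical.

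The main obstacle, though modest, is the bookkeeping around the sign convention: since $\lambda_k(\bM_0)$ is sorted by magnitude and may be negative, one must verify that the nonzero eigenvalues of $\bSigma_0$ are precisely the $\lambda_k(\bM_0)$ (which follows because $\bU_0\bW$ is orthonormal) and that $\lambda_k^2(\bM_0) = |\lambda_k(\bM_0)|^2$, so that the final magnitude comparison is clean. Beyond this, the argument is a routine reduction of a Kronecker-algebraic PSD inequality to a two-parameter scalar check.
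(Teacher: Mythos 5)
Your proof is correct and follows the same core strategy as the paper's: simultaneous orthogonal diagonalization of all six matrices, reducing both PSD inequalities to scalar entrywise checks. Your presentation is tighter---a single conjugation by $\bV\otimes\bV$ and a direct four-way case split on $(i,j)$---whereas the paper first conjugates by $\bW_0\otimes\bW_0$, then by $\widetilde{\bV}_0\otimes\widetilde{\bV}_0$ (which together amount to your $\bV\otimes\bV$), recognizes the first left-hand side as a perfect square $(\bD\otimes\eye_p - \eye_p\otimes\bD)^2$, and then lower-bounds each resulting diagonal block $(\lambda_{0k}\eye_p - \widetilde{\bLambda}_0)^2$ separately. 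Your explicit care with the sign convention is worth keeping: $\bM_0$ is not assumed positive definite here, and the magnitude ordering $|\lambda_1(\bM_0)|\geq\cdots\geq|\lambda_r(\bM_0)|$ is precisely what delivers $\lambda_i^2(\bM_0)\geq\lambda_r^2(\bM_0)$ in the only nontrivial case.

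One discrepancy you should flag to the author: the lemma as printed has coefficient $-1$ on $\bSigma_0\otimes\bSigma_0$, but the paper's own proof of the lemma, and its use in the proof of Theorem~\ref{thm:DSigma_singular_value}, both work with $\bSigma_0^2\otimes\eye_p - 2\bSigma_0\otimes\bSigma_0 + \eye_p\otimes\bSigma_0^2$. Your argument covers the printed $-1$ via the completion $d_i^2 - d_id_j + d_j^2 = (d_i - d_j/2)^2 + 3d_j^2/4$; the $-2$ version is even simpler since $(d_i-d_j)^2\geq 0$, and the only binding case (exactly one of $i,j$ exceeding $r$) is unaffected because there $d_j=0$ and both left sides collapse to $d_i^2$. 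So your proof is correct for the statement as written and adapts immediately to the variant the paper actually uses.
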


\begin{lemma}\label{lemma:Cmatrix_singular_value}
Let $\bA_0\in\mathbb{R}^{(p - r)\times r}$ with $\|\bA_0\|_2 < 1$, and define
\begin{align*}
\bC_{11} &= (\eye_r + \bA_0\transpose\bA_0)^{-1},& &\bC_{12} = -(\eye_r + \bA_0\transpose\bA_0)^{-1}\bA_0\transpose,\\
\bC_{21} &= \bA_0(\eye_r + \bA_0\transpose\bA_0)^{-1},& &\bC_{22} = \eye_{p - r} - \bA_0(\eye_r + \bA_0\transpose\bA_0)^{-1}\bA_0\transpose.
\end{align*}
Let 
\[
\bC_0 = \begin{bmatrix*}
\bC_{11} & \bC_{12}\\\bC_{21} & \bC_{22}
\end{bmatrix*},\quad\text{and}\quad \bU_0 = \bC_0^{-\mathrm{T}}\bC_0\eye_{p\times r}
% = \bC_0^{-\mathrm{T}}\bC_0\begin{bmatrix*}
% \eye_r\\\zero_{(p - r)\times r}
% \end{bmatrix*}
.
\]
% and $\bU_{0\perp}$ be the orthogonal complement of $\bU_0$ such that $[\bU_0,\bU_{0\perp}]\in\mathbb{O}(p)$. 
\begin{itemize}
	\item[(i)] For any vector $\bA\in\mathbb{R}^{(p - r)\times r}$ and $\bvarphi := \vect(\bA)$,
	\begin{align*}
	&\vect(\bX_\bvarphi)\transpose
	(\bC_0\transpose\otimes\bC_0\transpose)\{\bU_0\bU_0\otimes(\eye_p - \bU_{0}\bU_0\transpose)\}(\bC_0\otimes\bC_0)
	\vect(\bX_\bvarphi)\\
	&\quad = 
	\vect(\bX_\bvarphi)\transpose
	(\bC_0\transpose\otimes\bC_0\transpose)\{(\eye_p - \bU_{0}\bU_0\transpose)\otimes\bU_0\bU_0\transpose\}(\bC_0\otimes\bC_0)
	\vect(\bX_\bvarphi)\\
	&\quad = \|\bC_{22}\transpose\bA\bC_{11} - \bC_{12}\transpose\bA\transpose\bC_{21}\|_{\mathrm{F}}^2,
	\end{align*}
	where
	\[
	\bX_{\bvarphi}:= \begin{bmatrix*}
	\zero_{r\times r} & -\bA\transpose\\
	\bA & \zero_{(p - r)\times (p - r)}
	\end{bmatrix*}.
	\]
	\item[(ii)] $\bC_{11}\otimes \bC_{22} - (\bC_{21}\transpose\otimes \bC_{12}\transpose)\bK_{(p - r)r}$ has the following lower bound in spectra:
	\[
	\sigma_{\min}\{\bC_{11}\otimes \bC_{22} - (\bC_{21}\transpose\otimes \bC_{12}\transpose)\bK_{(p - r)r}\}\geq 
	\left\{
	\begin{aligned}
	&\frac{1 - \|\bA_0\|_2^2}{(1 + \|\bA_0\|_2^2)^2},\quad &\text{if }r > 1,\\
	&\frac{1}{1 + \|\bA_0\|_2^2},\quad &\text{if }r = 1.
	\end{aligned}
	\right.
	\]
\end{itemize}
\end{lemma}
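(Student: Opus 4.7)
\emph{Part (i).} The key observation is that $\bC_0 = (\eye_p - \bX_{\bvarphi_0})^{-1}$, which follows from a routine block inversion of the $2\times 2$ block matrix $\eye_p - \bX_{\bvarphi_0}$ (with $\bC_{11} = (\eye_r + \bA_0\transpose\bA_0)^{-1}$ arising as the Schur complement, and the other three blocks read off directly). Because $\bX_{\bvarphi_0}$ is skew-symmetric, $\bC_0\inverseT = \eye_p + \bX_{\bvarphi_0}$, and hence $\bC_0\inverseT\bC_0 = (\eye_p + \bX_{\bvarphi_0})(\eye_p - \bX_{\bvarphi_0})^{-1}$ is a full $p\times p$ orthogonal matrix (the Cayley transform of $-\bX_{\bvarphi_0}$); its first $r$ columns are exactly $\bU_0$ and its last $p-r$ columns $\bU_0^\perp$ satisfy $\bU_0^\perp(\bU_0^\perp)\transpose = \eye_p - \bU_0\bU_0\transpose$. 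Applying $(\bB\transpose\otimes\bA)\vect(\bX) = \vect(\bA\bX\bB)$ twice shows that the first quadratic form in part (i) equals $\|(\bU_0^\perp)\transpose\bC_0\bX_\bvarphi\bC_0\transpose\bU_0\|_{\mathrm{F}}^2$. Direct computation gives $\bC_0\transpose\bU_0 = \begin{bmatrix}\bC_{11} \\ \bC_{21}\end{bmatrix}$ and $(\bU_0^\perp)\transpose\bC_0 = \begin{bmatrix}\bC_{12}\transpose & \bC_{22}\transpose\end{bmatrix}$ (using symmetry of $\bC_{11},\bC_{22}$), and plugging in the block form of $\bX_\bvarphi$ collapses the product to $\bC_{22}\transpose\bA\bC_{11} - \bC_{12}\transpose\bA\transpose\bC_{21}$. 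The analogous identity for $(\eye_p - \bU_0\bU_0\transpose)\otimes\bU_0\bU_0\transpose$ follows because $(\bC_0\bX_\bvarphi\bC_0\transpose)\transpose = -\bC_0\bX_\bvarphi\bC_0\transpose$, so the two matrices are negatives of each other's transposes and share the same Frobenius norm.

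\emph{Part (ii).} By Kronecker identities, $\bT := \bC_{11}\otimes\bC_{22} - (\bC_{21}\transpose\otimes\bC_{12}\transpose)\bK_{(p-r)r}$ sends $\vect(\bA)$ to $\vect(\bC_{22}\bA\bC_{11} - \bC_{12}\transpose\bA\transpose\bC_{21})$, so it suffices to lower bound $\|\bC_{22}\bA\bC_{11} - \bC_{12}\transpose\bA\transpose\bC_{21}\|_{\mathrm{F}}/\|\bA\|_{\mathrm{F}}$. I would take the full SVD $\bA_0 = \bU_a\bS_a\bV_a\transpose$ with $\bU_a\in\mathbb{O}(p-r)$, $\bV_a\in\mathbb{O}(r)$, and rectangular diagonal $\bS_a\in\mathbb{R}^{(p-r)\times r}$ whose nonzero entries are $s_1\geq\ldots\geq s_q\geq 0$ with $q := \min(p-r,r)$, and apply the orthogonal change of variables $\widetilde{\bA} := \bU_a\transpose\bA\bV_a$, which preserves Frobenius norms. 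Under this substitution each of $\bC_{11},\bC_{22}$ becomes diagonal with entries $(1+s_i^2)^{-1}$ (or $1$ in the excess directions), while $\bC_{21},\bC_{12}$ become rectangular diagonal with entries $s_i(1+s_i^2)^{-1}$ (or $0$ in the excess directions), so the target reduces to $\|\bD_{22}\widetilde{\bA}\bD_{11} + \bE_{21}\widetilde{\bA}\transpose\bE_{21}\|_{\mathrm{F}}$, where $\bD_{11},\bD_{22},\bE_{21}$ are these explicit diagonal matrices.

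\emph{Entrywise analysis and expected obstacle.} Reading off entries of the reduced matrix, each pair $(i,j)$ with $i\neq j\leq q$ couples the entries $\widetilde{a}_{ij},\widetilde{a}_{ji}$ of $\widetilde{\bA}$ through the $2\times 2$ symmetric block $\frac{1}{(1+s_i^2)^2(1+s_j^2)^2}\begin{pmatrix} 1+s_i^2s_j^2 & 2s_is_j \\ 2s_is_j & 1+s_i^2s_j^2 \end{pmatrix}$, whose smaller eigenvalue $(1-s_is_j)^2/[(1+s_i^2)(1+s_j^2)]^2$ is bounded below by $(1-\|\bA_0\|_2^2)^2/(1+\|\bA_0\|_2^2)^4$; diagonal entries $i=j\leq q$ and ``solo'' entries with $\max(i,j)>q$ contribute the strictly larger coefficient $(1+s_i^2)^{-2}\geq(1+\|\bA_0\|_2^2)^{-2}$. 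For $r\geq 2$ the binding off-diagonal coefficient yields $\sigma_{\min}(\bT)\geq(1-\|\bA_0\|_2^2)/(1+\|\bA_0\|_2^2)^2$, whereas for $r=1$ the matrix $\widetilde{\bA}$ has only one column so no off-diagonal pair arises and the diagonal/solo coefficient alone gives the sharper bound $\sigma_{\min}(\bT)\geq(1+\|\bA_0\|_2^2)^{-1}$. The main anticipated obstacle is the bookkeeping when $p-r\neq r$: then $\bS_a$ is strictly rectangular and the ``solo'' entries of $\widetilde{\bA}$ appear with identity multipliers in $\bD_{11}$ or $\bD_{22}$, but since they contribute only the larger coefficient they do not affect the binding bound, so the nuisance is organizational rather than conceptual.
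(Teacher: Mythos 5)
Your plan is correct, and the underlying idea — diagonalize via the SVD of $\bA_0$, then read off the singular values of the resulting block-diagonal operator — is the same one the paper uses. The differences are in execution and they genuinely streamline the argument. For part (i), your route of computing $\bC_0\transpose\bU_0 = [\bC_{11}\transpose,\,\bC_{21}\transpose]\transpose$ and $\bU_{0\perp}\transpose\bC_0 = [\bC_{12}\transpose,\,\bC_{22}\transpose]$ and then collapsing the block product is exactly what the paper does. For part (ii), two things distinguish your plan from the paper's. First, the paper uses a \emph{thin} SVD of $\bA_0$ with an explicit orthogonal complement $\bU_{\bA_0\perp}$ and consequently has to split the proof into the cases $p - r \ge r$ and $p - r < r$, each with its own bookkeeping; your \emph{full} SVD with the change of variables $\widetilde{\bA} := \bU_a\transpose\bA\bV_a$ absorbs both cases into the excess directions of the rectangular diagonal $\bS_a$, as you note. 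Second, the paper proves that $\bT$ is a symmetric PSD matrix (invoking the eigenvalue structure of the commutation matrix $\bK_{rr}$ and an external product-of-PSD-matrices result), so that $\sigma_{\min}(\bT) = \lambda_{\min}(\bT)$, and then lower-bounds $\lambda_{\min}$ via a Kronecker-product $\succeq$ comparison; your plan simply computes $\|\bT\vect(\bA)\|_2/\|\bA\|_{\mathrm{F}}$ entrywise, where the $2\times 2$ coupled blocks with eigenvalues $(1\pm s_is_j)^2/[(1+s_i^2)(1+s_j^2)]^2$ give the minimum singular value directly, bypassing the symmetry discussion entirely. One small imprecision in your write-up: for the ``solo'' entries with $i > q,\ j \le q$ the coefficient is $(1+s_j^2)^{-2}$ rather than $(1+s_i^2)^{-2}$ (since $s_i$ is undefined for $i > q$), but this does not affect the bound. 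Overall you recover both cases of the paper's bound, including the $r=1$ refinement, with less machinery.
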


\begin{proof}[Proof of first assertion of Theorem \ref{thm:DSigma_singular_value}]
Let $\bA$ be any $(p-r)\times r$ matrix and $\bvarphi: = \vect(\bA)$. Denote
\[
\bX_\bvarphi = \begin{bmatrix*}
\zero_{r\times r} & -\bA\transpose\\ \bA & \zero_{(p - r)\times (p - r)}
\end{bmatrix*}.
\]
Write $\bX_0 = \bX_{\bvarphi_0}$ and $\bC_0 = (\eye_p - \bX_0)^{-1}$. Let $\bA_0$ be the corresponding $(p - r)\times r$ matrix such that $\bvarphi_0 = \vect(\bA_0)$. Then for any $\bvarphi = \vect(\bA)\in\mathbb{R}^d$, we have $\bGamma_{\bvarphi}\bvarphi = \vect(\bX_\bvarphi)$ by the definition of $\bGamma_\bvarphi$. Therefore,
\begin{align*}
(\bU_0\bM_0\otimes \eye_r)D\bU(\bvarphi_0)\bvarphi 
& = 2(\bU_0\bM_0\otimes \eye_r)[\eye_{p\times r}\transpose(\eye_p - \bX_0)\inverseT\otimes (\eye - \bX_0)^{-1}]\vect(\bX_\bvarphi)\\
& = 2[\bU_0\bM_0\eye_{p\times r}\transpose (\eye_p - \bX_0)\inverseT \otimes (\eye - \bX_0)^{-1}]\vect(\bX_\bvarphi)\\
& = 2\vect\left\{ (\eye_p - \bX_0)^{-1}\bX_\bvarphi (\eye_p - \bX_0)^{-1}\eye_{p\times r}\bM_0\bU_0\transpose \right\}\\
% & = 2\vect\left\{ (\eye_p - \bX_0)^{-1}\bX_\bvarphi (\eye_p - \bX_0)\inverseT (\eye_p + \bX_0)(\eye_p - \bX_0)^{-1}\eye_{p\times r}\bM_0\bU_0\transpose \right\}\\
& = 2\vect\left\{ (\eye_p - \bX_0)^{-1}\bX_\bvarphi (\eye_p - \bX_0)\inverseT \bU_0\bM_0\bU_0\transpose \right\}.
\end{align*}
% By Theorem 3.1 (vi) of \cite{magnus1979}, the eigenvalues of $\bK_{pp}$ are $\pm1$'s, and we see that 
By definition of the commutation matrix $\bK_{pp}$, 
$\bK_{pp}\vect(\bM) = \vect(\bM)$ for any symmetric $\bM\in\mathbb{R}^{p\times p}$ and $\bK_{pp}\vect(\bM) = -\vect(\bM)$ for any skew-symmetric $\bM\in\mathbb{R}^{p\times p}$. Hence, for any $p\times p$ matrix $\bM$, we have, $(\eye_{p^2} + \bK_{pp})\vect(\bM) = \vect(\bM + \bM\transpose)$, and hence,
\begin{align*}
D_\bvarphi\bSigma(\btheta_0)\bvarphi
& = (\eye_{p^2} + \bK_{pp})(\bU_0\bM_0\otimes \eye_r)D\bU(\bvarphi_0)\bvarphi \\
& = 2(\eye_{p^2} + \bK_{pp})\vect\left\{ (\eye_p - \bX_0)^{-1}\bX_\bvarphi (\eye_p - \bX_0)\inverseT \bU_0\bM_0\bU_0\transpose \right\}\\
& = 2\vect\left(\bC_0\bX_\bvarphi\bC_0\transpose\bSigma_0 - \bSigma_0\bC_0\bX_\bvarphi\bC_0\transpose\right)\\
& = 2(\bSigma_0\bC_0\otimes \bC_0 - \bC_0\otimes \bSigma_0\bC_0)\vect(\bX_\bvarphi).
\end{align*}
% where $\bB_0 = \bU_0\bM_0\bU_0\transpose$. 
Let $\bU_{0\perp}$ be the orthogonal complement of $\bU_0$ such that $[\bU_0, \bU_{0\perp}]\in\mathbb{O}(p)$. By Lemma \ref{lemma:DSigma_lower_bound},
\begin{align*}
&\|D_\bvarphi\bSigma(\btheta_0)\bvarphi\|_2^2\\
&\quad = 4\vect(\bX_\bvarphi)\transpose(\bSigma_0\bC_0\otimes \bC_0 - \bC_0\otimes \bSigma_0\bC_0)\transpose (\bSigma_0\bC_0\otimes \bC_0 - \bC_0\otimes \bSigma_0\bC_0)\vect(\bX_\bvarphi)\\
% &\quad = 4\vect(\bX_\bvarphi)\transpose(\bC_0\transpose\bSigma_0^2\bC_0\otimes \bC_0\transpose\bC_0 - 2\bC_0\transpose \bSigma_0\bC_0\otimes \bC_0\transpose\bSigma_0\bC_0 + \bC_0\transpose\bC_0\otimes\bC_0\transpose\bSigma_0^2\bC_0)\vect(\bX_\bvarphi)\\
&\quad = 4\vect(\bX_\bvarphi)\transpose
\left\{(\bC_0\transpose\otimes\bC_0\transpose)(\bSigma_0^2\otimes\eye_p - 2\bSigma_0\otimes\bSigma_0 + \eye_p\otimes \bSigma_0^2)(\bC_0\otimes\bC_0)\right\}
\vect(\bX_\bvarphi)\\
&\quad\geq 4\lambda_{r}^2(\bM_0)\vect(\bX_\bvarphi)\transpose
(\bC_0\transpose\otimes\bC_0\transpose)
(\bU_0\bU_0\transpose\otimes\bU_{0\perp}\bU_{0\perp}\transpose + \bU_{0\perp}\bU_{0\perp}\transpose\otimes\bU_0\bU_0\transpose)\\
&\quad\quad\times
(\bC_0\otimes\bC_0)
\vect(\bX_\bvarphi)\\
&\quad = 4\lambda_{r}^2\vect(\bX_\bvarphi)\transpose
(\bC_0\transpose\otimes\bC_0\transpose)\{(\eye_p - \bU_{0}\bU_0\transpose)\otimes\bU_0\bU_0\transpose\}(\bC_0\otimes\bC_0)
\vect(\bX_\bvarphi)\\
&\quad\quad + 4\lambda_{r}^2\vect(\bX_\bvarphi)\transpose
(\bC_0\transpose\otimes\bC_0\transpose)\{\bU_0\bU_0\transpose\otimes(\eye_p - \bU_{0}\bU_0\transpose)\}
(\bC_0\otimes\bC_0)
\vect(\bX_\bvarphi).
\end{align*}
Write $\bC_0$ in the block form
\[
\bC_0 = \begin{bmatrix*}
\bC_{11} & \bC_{12} \\ \bC_{21} & \bC_{22}
\end{bmatrix*}
= 
\begin{bmatrix*}
(\eye_r + \bA_0\transpose\bA_0)^{-1} & -(\eye_r + \bA_0\transpose\bA_0)^{-1}\bA_0\transpose\\
\bA_0(\eye_r + \bA_0\transpose\bA_0)^{-1} & \eye_{p - r} - \bA_0(\eye_r + \bA_0\transpose\bA_0)^{-1}\bA_0\transpose
\end{bmatrix*}
\]
according to Appendix C in \cite{jauch2020}. Invoking Lemma \ref{lemma:Cmatrix_singular_value} (i), we further obtain the following lower bound for $\|D_\bvarphi\bSigma(\btheta_0)\bvarphi\|_2^2$:
\begin{align*}
\|D_\bvarphi\bSigma(\btheta_0)\bvarphi\|_2^2
& \geq 8\lambda_{r}^2(\bM_0)\|\bC_{22}\transpose\bA\bC_{11} - \bC_{12}\transpose\bA\transpose\bC_{21}\|_{\mathrm{F}}^2\\
& = 8\lambda_{r}^2(\bM_0)\|(\bC_{11}\transpose\otimes \bC_{22}\transpose)\vect(\bA) - (\bC_{21}\transpose\otimes \bC_{12}\transpose)\vect(\bA\transpose)\|_2^2\\
& = 8\lambda_{r}^2(\bM_0)\|(\bC_{11}\transpose\otimes \bC_{22}\transpose)\vect(\bA) - (\bC_{21}\transpose\otimes \bC_{12}\transpose)\bK_{(p - r)r}\vect(\bA)\|_2^2\\
& \geq 8\lambda_{r}^2(\bM_0)\sigma_{\min}^2\left\{(\bC_{11}\otimes \bC_{22}) - (\bC_{21}\transpose\otimes \bC_{12}\transpose)\bK_{(p - r)r}\right\}\|\bvarphi\|_{2}^2,
\end{align*}
which immediately implies that
\[
\sigma_{\min}\{D_\bvarphi\bSigma(\btheta_0)\} \geq 2\sqrt{2}\sigma_r(\bM_0)\sigma_{\min}\left\{(\bC_{11}\otimes \bC_{22}) - (\bC_{21}\transpose\otimes \bC_{12}\transpose)\bK_{(p - r)r}\right\}.
\]
By Lemma \ref{lemma:Cmatrix_singular_value} (ii), we finally obtain
\begin{align*}
\sigma_{\min}\{D_\bvarphi\bSigma(\btheta_0)\}
&\geq \left\{
\begin{aligned}
&\frac{2\sqrt{2}\sigma_{r}(\bM_0)(1 - \|\bA_0\|_2^2)}{(1 + \|\bA_0\|_2^2)^2},\quad&\text{if }r > 1,\\
&\frac{2\sqrt{2}\sigma_{r}(\bM_0)}{1 + \|\bA_0\|_2^2},\quad&\text{if }r = 1.
\end{aligned}
\right.
\end{align*}
completing the proof of the first assertion regarding $\sigma_{\min}\{D_\bvarphi\bSigma(\btheta_0)\}$. 
\end{proof}

\begin{proof}[Proof of second assertion of Theorem \ref{thm:DSigma_singular_value}]
Write
\[
D\bSigma(\btheta_0)\transpose D\bSigma(\btheta_0) = \begin{bmatrix*}
\bJ_1 & \bJ_2\\
\bJ_2\transpose & \bJ_3
\end{bmatrix*}: = \begin{bmatrix*}
D_\bvarphi\bSigma(\btheta_0)\transpose D_\bvarphi\bSigma(\btheta_0) & 
D_\bvarphi\bSigma(\btheta_0)\transpose D_\bmu\bSigma(\btheta_0)\\
D_\bmu\bSigma(\btheta_0)\transpose D_\bvarphi\bSigma(\btheta_0) &
D_\bmu\bSigma(\btheta_0)\transpose D_\bmu\bSigma(\btheta_0)
\end{bmatrix*}
.
\]
Note that the Schur complement of the block $D_\bmu\bSigma(\btheta_0)\transpose D_\bmu\bSigma(\btheta_0)$ of the entire matrix $D\bSigma(\btheta_0)\transpose D\bSigma(\btheta_0)$ is given by
\begin{align*}
\bJ_1 - \bJ_2\bJ_3^{-1}\bJ_2\transpose
& = D_\bvarphi\bSigma(\btheta_0)\transpose D_\bvarphi\bSigma(\btheta_0)\\
&\quad - D_\bvarphi\bSigma(\btheta_0)\transpose D_\bmu\bSigma(\btheta_0)\{D_\bmu\bSigma(\btheta_0)\transpose D_\bmu\bSigma(\btheta_0)\}^{-1}D_\bmu\bSigma(\btheta_0)\transpose D_\bvarphi\bSigma(\btheta_0),
\end{align*}
We now assume that $\bJ_1 - \bJ_2\bJ_3^{-1}\bJ_2\transpose$ is invertible. Then by the block matrix inversion formula,
\begin{align*}
&\{D\bSigma(\btheta_0)\transpose D\bSigma(\btheta_0)\}^{-1}\\
&\quad = \begin{bmatrix*}
\bJ_1 & \bJ_2\\
\bJ_2\transpose & \bJ_3
\end{bmatrix*}^{-1}
% \\&
 = \begin{bmatrix*}
(\bJ_1 - \bJ_2\bJ_3^{-1}\bJ_2\transpose)^{-1} & -(\bJ_1 - \bJ_2\bJ_3^{-1}\bJ_2\transpose)\bJ_2\bJ_3^{-1}\\
-\bJ_3^{-1}\bJ_2\transpose(\bJ_1 - \bJ_2\bJ_3^{-1}\bJ_2\transpose)^{-1} & \bJ_3^{-1} + \bJ_3^{-1}\bJ_2\transpose(\bJ_1 - \bJ_2\bJ_3^{-1}\bJ_2\transpose)^{-1}\bJ_2\bJ_3^{-1}
\end{bmatrix*},
\end{align*}
By construction, 
\[
\|\bJ_3^{-1}\|_2 = \|\{D_\bmu\bSigma(\btheta_0)\transpose D_\bmu\bSigma(\btheta_0)\}^{-1}\|_2 = \|\{\mathbb{D}_r\transpose(\eye_r\otimes\eye_r)\mathbb{D}_r\}^{-1}\|\leq 1,
\]
where the last inequality is due to Theorem 4.4 in \cite{magnus1988linear}. In addition, 
\begin{align*}
\|\bJ_2\|_2 &\leq \|D\bU(\bvarphi_0)\transpose(\bM_0\bU_0\transpose\otimes \eye_p)(\eye_{p^2} + \bK_{pp})(\bU_0\otimes\bU_0)\mathbb{D}_r\|\\
&\leq 2\|D\bU(\bvarphi_0)\|_2\|\bM_0\|_2\|\mathbb{D}_r\|_2\leq 8\|\bM_0\|_2\|\eye_{p\times r}(\eye_p - \bX_{\bvarphi_0})^{-1}\|_2\|(\eye_p - \bX_{\bvarphi_0})^{-1}\|_2\\
&\leq 8\|\bM_0\|_2.
\end{align*}
Thus, by Lemma 3.4 of \cite{10.1112/blms/bds080}, we see that
\begin{align}\label{eqn:Schur_complement_upper_spectral_bound}
\begin{aligned}
\|\{D\bSigma(\btheta_0)\transpose D\bSigma(\btheta_0)\}^{-1}\|_2
&\leq \|(\bJ_1 - \bJ_2\bJ_3^{-1}\bJ_2\transpose)^{-1}\|_2\\
&\quad + \|\bJ_3^{-1} + \bJ_3^{-1}\bJ_2\transpose(\bJ_1 - \bJ_2\bJ_3^{-1}\bJ_2\transpose)^{-1}\bJ_2\bJ_3^{-1}\|_2\\
&\leq \|(\bJ_1 - \bJ_2\bJ_3^{-1}\bJ_2\transpose)^{-1}\|_2\\
&\quad + \|\bJ_3^{-1}\|_2 + \|\bJ_3^{-1}\|_2^2\|\bJ_2\|_2^2\|(\bJ_1 - \bJ_2\bJ_3^{-1}\bJ_2\transpose)^{-1}\|_2\\
&\leq 1 + (1 + 64\|\bM_0\|_2^2)\|(\bJ_1 - \bJ_2\bJ_3^{-1}\bJ_2\transpose)^{-1}\|_2.
\end{aligned}
\end{align}
Therefore, it is sufficient to provide a lower bound for the smallest eigenvalue of $\bJ_1 - \bJ_2\bJ_3^{-1}\bJ_2\transpose$.
For any $\bvarphi$, we follow the computation above and write
\begin{align*}
&\bvarphi\transpose(\bJ_1 - \bJ_2\bJ_3^{-1}\bJ_2\transpose)\bvarphi\\
&\quad = \bvarphi\transpose
D_\bvarphi\bSigma(\btheta_0)\transpose D_\bvarphi\bSigma(\btheta_0)\bvarphi \\
&\quad\quad - \bvarphi\transpose D_\bvarphi\bSigma(\btheta_0)\transpose D_\bmu\bSigma(\btheta_0)\{ D_\bmu\bSigma(\btheta_0)\transpose D_\bmu\bSigma(\btheta_0)\}^{-1}D_\bmu\bSigma(\btheta_0)\transpose D_\bvarphi\bSigma(\btheta_0)
\bvarphi\\
&\quad = 4\vect(\bX_\bvarphi)\transpose
\left\{(\bC_0\transpose\otimes\bC_0\transpose)(\bSigma_0^2\otimes\eye_p - 2\bSigma_0\otimes\bSigma_0 + \eye_p\otimes \bSigma_0^2)(\bC_0\otimes\bC_0)\right\}
\vect(\bX_\bvarphi)\\
&\quad\quad - 4\vect(\bX_\bvarphi)(\bC_0\transpose\otimes\bC_0\transpose)(\bSigma_0\otimes\eye_p - \eye_p\otimes\bSigma)\\
&\quad\quad\qquad
\times
D_\bmu\bSigma(\btheta_0)\{ D_\bmu\bSigma(\btheta_0)\transpose D_\bmu\bSigma(\btheta_0)\}^{-1}D_\bmu\bSigma(\btheta_0)\\
&\quad\quad\qquad
\times
(\bSigma_0\otimes\eye_p - \eye_p\otimes\bSigma)
(\bC_0\otimes\bC_0)\vect(\bX_\bvarphi).
\end{align*}
Note that
\begin{align*}
&D_\bmu\bSigma(\btheta_0)\{ D_\bmu\bSigma(\btheta_0)\transpose D_\bmu\bSigma(\btheta_0)\}^{-1}D_\bmu\bSigma(\btheta_0)\\
&\quad = (\bU_0\otimes\bU_0)\mathbb{D}_r\{\mathbb{D}_r\transpose(\eye_r\otimes\eye_r)\mathbb{D}_r\}^{-1}\mathbb{D}_r\transpose(\bU_0\otimes\bU_0)\transpose
\end{align*}
is a projection matrix. Since $\mathbb{D}_r\in\mathbb{R}^{r^2\times r(r + 1)/2}$ has full column rank and $r^2\geq r(r + 1)/2$, then $\mathbb{D}_r\{\mathbb{D}_r\transpose(\eye_r\otimes\eye_r)\mathbb{D}_r\}^{-1}\mathbb{D}_r\preceq \eye_{r^2}$, and hence, 
\begin{align*}
D_\bmu\bSigma(\btheta_0)\{ D_\bmu\bSigma(\btheta_0)\transpose D_\bmu\bSigma(\btheta_0)\}^{-1}D_\bmu\bSigma(\btheta_0)
% & = (\bU_0\otimes\bU_0)\bGamma_\bmu\{\bGamma_\bmu\transpose(\eye_r\otimes\eye_r)\bGamma_\bmu\}^{-1}\bGamma_\bmu\transpose(\bU_0\otimes\bU_0)\transpose\\
& \preceq (\bU_0\otimes\bU_0)(\bU_0\otimes\bU_0)\transpose.
\end{align*}
Therefore, we invoke Lemmas \ref{lemma:DSigma_lower_bound} and \ref{lemma:Cmatrix_singular_value} and proceed to compute
\begin{align*}
&\bvarphi\transpose(\bJ_1 - \bJ_2\bJ_3^{-1}\bJ_2\transpose)\bvarphi\\
% &\quad = 4\vect(\bX_\bvarphi)\transpose
% \left\{(\bC_0\transpose\otimes\bC_0\transpose)(\bSigma_0^2\otimes\eye_p - 2\bSigma_0\otimes\bSigma_0 + \eye_p\otimes \bSigma_0^2)(\bC_0\otimes\bC_0)\right\}
% \vect(\bX_\bvarphi)\\
% &\quad\quad - 4\vect(\bX_\bvarphi)(\bC_0\transpose\otimes\bC_0\transpose)(\bSigma_0\otimes\eye_p - \eye_p\otimes\bSigma)\\
% &\quad\quad\qquad
% \times
% D_\bmu\bSigma(\btheta_0)\{ D_\bmu\bSigma(\btheta_0)\transpose D_\bmu\bSigma(\btheta_0)\}^{-1}D_\bmu\bSigma(\btheta_0)\\
% &\quad\quad\qquad
% \times
% (\bSigma_0\otimes\eye_p - \eye_p\otimes\bSigma_0)
% (\bC_0\otimes\bC_0)\vect(\bX_\bvarphi)
% \\
&\quad \geq 4\vect(\bX_\bvarphi)\transpose
\left\{(\bC_0\transpose\otimes\bC_0\transpose)(\bSigma_0^2\otimes\eye_p - 2\bSigma_0\otimes\bSigma_0 + \eye_p\otimes \bSigma_0^2)(\bC_0\otimes\bC_0)\right\}
\vect(\bX_\bvarphi)\\
&\quad\quad - 4\vect(\bX_\bvarphi)(\bC_0\transpose\otimes\bC_0\transpose)
(\bSigma_0\otimes\eye_p - \eye_p\otimes\bSigma_0)(\bU_0\otimes\bU_0)(\bU_0\otimes\bU_0)\transpose\\
&\quad\quad\quad\times(\bSigma_0\otimes\eye_p - \eye_p\otimes\bSigma_0)
% \\
% &\quad\quad\quad\times
(\bC_0\otimes\bC_0)\vect(\bX_\bvarphi)\\
&\quad = 4\vect(\bX_\bvarphi)\transpose
\left\{(\bC_0\transpose\otimes\bC_0\transpose)(\bSigma_0^2\otimes\eye_p - 2\bSigma_0\otimes\bSigma_0 + \eye_p\otimes \bSigma_0^2)(\bC_0\otimes\bC_0)\right\}
\vect(\bX_\bvarphi)\\
&\quad\quad - 4\vect(\bX_\bvarphi)(\bC_0\transpose\otimes\bC_0\transpose)
(\bSigma_0^2\otimes\bU_0\bU_0\transpose - 2\bSigma_0\otimes \bSigma_0 + \bU_0\bU_0\transpose\otimes \bSigma_0^2)\\
&\quad\quad\quad\times
(\bC_0\otimes\bC_0)\vect(\bX_\bvarphi)\\
&\quad = 4\vect(\bX_\bvarphi)\transpose
(\bC_0\transpose\otimes\bC_0\transpose)\left\{\bSigma_0^2\otimes(\eye_p - \bU_0\bU_0\transpose) + (\eye_p - \bU_0\bU_0\transpose)\otimes \bSigma_0^2\right\}\\
&\quad\quad\times (\bC_0\otimes\bC_0)
\vect(\bX_\bvarphi)\\
&\quad\geq 4\lambda_{r}^2(\bM_0)\vect(\bX_\bvarphi)\transpose(\bC_0\transpose\otimes\bC_0\transpose)(
 \bU_0\bU_0\transpose\otimes \bU_{0\perp}\bU_{0\perp}\transpose + 
\bU_{0\perp}\bU_{0\perp}\transpose\otimes\bU_0\bU_0\transpose
)\\
&\quad\quad\times (\bC_0\otimes\bC_0)\vect(\bX_\bvarphi)\\
&\quad = 8\lambda_{r}^2(\bM_0)\|\bC_{22}\transpose\bA\bC_{11} - \bC_{12}\transpose\bA\transpose\bC_{21}\|_{\mathrm{F}}^2\\
&\quad \geq 8\lambda_{r}^2(\bM_0)\sigma_{\min}^2\left\{(\bC_{11}\otimes \bC_{22}) - (\bC_{21}\transpose\otimes \bC_{12}\transpose)\bK_{(p - r)r}\right\}\|\bvarphi\|_{2}^2.
\end{align*}
It follows from Lemma \ref{lemma:Cmatrix_singular_value} (ii) that
\begin{align*}
\lambda_{\min}(\bJ_1 - \bJ_2\bJ_3^{-1}\bJ_2\transpose)
\geq\left\{
\begin{aligned}
&\frac{8\lambda_{r}^2(\bM_0)(1 - \|\bA_0\|_2^2)}{(1 + \|\bA_0\|_2^2)^2},\quad&\text{if }r > 1,\\
&\frac{8\lambda_{r}^2(\bM_0)}{1 + \|\bA_0\|_2^2},\quad&\text{if }r = 1.
\end{aligned}
\right.
\end{align*}
Therefore, using \eqref{eqn:Schur_complement_upper_spectral_bound},
\begin{align*}
\|\{D\bSigma(\btheta_0)\transpose D\bSigma(\btheta_0)\}^{-1}\|_2
\leq \left\{
\begin{aligned}
&1 + \frac{(1 + 64\|\bM_0\|_2^2)(1 + \|\bA_0\|_2^2)^2}{8\lambda_{r}^2(\bM_0)(1 - \|\bA_0\|_2^2)},\quad&\text{if }r > 1,\\
&1 + \frac{(1 + 64\|\bM_0\|_2^2)(1 + \|\bA_0\|_2^2)}{8\lambda_{r}^2(\bM_0)},\quad&\text{if }r = 1,
\end{aligned}
\right.
\end{align*}
and the proof is thus completed.
\end{proof}

% subsection proof_of_theorem_thm:dsigma_singular_value (end)

% section proofs_of_the_main_results (end)

\section*{Supplement}

The supplementary material  includes the proofs of Lemmas \ref{lemma:DSigma_lower_bound} and \ref{lemma:Cmatrix_singular_value}, Theorems \ref{thm:CT_deviation_Sigma_rectangular} and \ref{thm:DSigma_singular_value_rectangular}, and the results in Section \ref{sec:applications}. 

\clearpage

\begin{center}
  \begin{Large}
    \textbf{Supplementary Material for ``Euclidean Representation of Low-Rank Matrices and Its Statistical Applications''}
  \end{Large}
\end{center}
\appendix
\counterwithin{lemma}{section}
\counterwithin{theorem}{section}

The Supplementary Material contains the proofs of Lemmas \ref{lemma:DSigma_lower_bound} and \ref{lemma:Cmatrix_singular_value}, Theorems \ref{thm:CT_deviation_Sigma_rectangular} and \ref{thm:DSigma_singular_value_rectangular}, and the results in Section \ref{sec:applications}.

\section{Proofs of Technical Lemmas in Section \ref{sec:proofs_of_the_main_results}} % (fold)
\label{sec:proofs_of_technical_lemmas_in_Proofs}

\begin{proof}[Proof of Lemma \ref{lemma:DSigma_lower_bound}]
let $\bU_{0\perp}$ be the orthogonal complement of $\bU_0$ such that $\bW_0:= [\bU_0,\bU_{0\perp}]\in\mathbb{O}(p)$, and denote 
\[
\widetilde\bM_0 = \eye_{p\times r}\bM_0\eye_{p\times r}\transpose = \begin{bmatrix*}
\bM_0 & \\ & \zero_{(p - r)\times(p - r)}
\end{bmatrix*}.
\]
Clearly, one can take $\bW_0 = \bC_0\inverseT\bC_0$. Suppose $\bM_0 = \bV_0\bLambda_0\bV_0\transpose$ is the spectral decomposition of $\bM_0$. Then
\begin{align*}
\widetilde{\bM}_0 = \eye_{p\times r}\bV_0\bLambda_0\bV_0\transpose\eye_{p\times r}\transpose
= \widetilde{\bV}_0\widetilde{\bLambda}_0\widetilde{\bV}_0\transpose,
\end{align*}
where
\[
\widetilde{\bV}_0 := \begin{bmatrix*}
\bV_0 & \\ & \eye_{p - r}
\end{bmatrix*},\quad\text{and}\quad
\widetilde{\bLambda}_0 = \begin{bmatrix*}
\bLambda_0 & \\ & \zero_{(p - r)\times (p - r)}
\end{bmatrix*}.
\]
For convenience let $\lambda_{0k}:=\lambda_k(\bM_0)$, $k\in [r]$. For the first assertion, write
\begin{align*}
&\bSigma_0^2\otimes\eye_p - 2\bSigma_0\otimes\bSigma_0 + \eye_p\otimes \bSigma_0^2\\
&\quad = (\bW_0\otimes\bW_0)(\widetilde\bM_0^2\otimes\eye_p - 2\widetilde\bM_0\otimes\widetilde\bM_0 + \eye_p\otimes\widetilde\bM_0^2)(\bW_0\transpose\otimes\bW_0\transpose)\\
&\quad = (\bW_0\otimes\bW_0)
\begin{bmatrix*}
\bM_0^2\otimes\eye_p - 2\bM_0\otimes\widetilde\bM_0 + \eye_r\otimes\widetilde\bM_0^2 & \\
& \eye_{(p - r)}\otimes \widetilde{\bM}_0^2
\end{bmatrix*}
(\bW_0\transpose\otimes\bW_0\transpose)
\\
% &\quad = (\bW_0\otimes\bW_0)
% \begin{bmatrix*}
% \bV_0\bLambda_0^2\bV_0\transpose\otimes\widetilde{\bV}_0\widetilde{\bV}_0\transpose - 2\bV_0\bLambda_0\bV_0\transpose\otimes\widetilde{\bV}_0\widetilde{\bLambda}_0\widetilde{\bV}_0\transpose + \bV_0\bV_0\transpose\otimes\widetilde{\bV}_0\widetilde{\bLambda}_0\widetilde{\bV}_0\transpose & \\
% & \eye_{(p - r)}\otimes \widetilde{\bM}_0^2
% \end{bmatrix*}
% (\bW_0\transpose\otimes\bW_0\transpose)
% \\
% &\quad = (\bW_0\otimes\bW_0)
% \begin{bmatrix*}
% (\bV_0\otimes\widetilde\bV_0)
% (\bLambda_0\otimes\eye_p - 2\bLambda_0\otimes\widetilde{\bLambda}_0 + \eye_r\otimes\widetilde\bLambda_0)
% (\bV_0\otimes\widetilde\bV_0)\transpose
% % \bV_0\bLambda_0^2\bV_0\transpose\otimes\widetilde{\bV}_0\widetilde{\bV}_0\transpose - 2\bV_0\bLambda_0\bV_0\transpose\otimes\widetilde{\bV}_0\widetilde{\bLambda}_0\widetilde{\bV}_0\transpose + \bV_0\bV_0\transpose\otimes\widetilde{\bV}_0\widetilde{\bLambda}_0\widetilde{\bV}_0\transpose
%  & \\
% & \eye_{(p - r)}\otimes \widetilde{\bV}_0\widetilde{\bLambda}_0^2\widetilde{\bV}_0\transpose
% \end{bmatrix*}
% (\bW_0\transpose\otimes\bW_0\transpose)\\
&\quad
= (\bW_0\otimes\bW_0)(\widetilde{\bV}_0\otimes\widetilde{\bV}_0)
% \begin{bmatrix*}
% \bV_0\otimes \widetilde{\bV}_0 & \\ & \eye_{p - r}\otimes\widetilde{\bV}_0
% \end{bmatrix*}
\begin{bmatrix*}
% (\bV_0\otimes\widetilde\bV_0)
\bLambda_0^2\otimes\eye_p - 2\bLambda_0\otimes\widetilde{\bLambda}_0 + \eye_r\otimes\widetilde\bLambda_0^2
 & \\
& \eye_{(p - r)}\otimes \widetilde{\bLambda}_0^2
\end{bmatrix*}\\
&\quad\times
(\widetilde{\bV}_0\transpose\otimes\widetilde{\bV}_0\transpose)
(\bW_0\transpose\otimes\bW_0\transpose)\\
&\quad = (\bW_0\otimes\bW_0)(\widetilde{\bV}_0\otimes\widetilde{\bV}_0)
\begin{bmatrix*}
\lambda_{01}\eye_p - \widetilde\bLambda_0 &  &  &  &  & \\
& \ddots & & & & \\
&        & \lambda_{0r}\eye_p - \widetilde\bLambda_0 & & & \\ 
&		 & 												 & \widetilde\bLambda_0 & & \\
&		 & 												 &  &\ddots & \\
&		 & 												 &  & & \widetilde\bLambda_0
\end{bmatrix*}^2\\
&\quad\quad\times
(\widetilde{\bV}_0\transpose\otimes\widetilde{\bV}_0\transpose)
(\bW_0\transpose\otimes\bW_0\transpose).
\end{align*}
Since for each $k\in [r]$,
\begin{align*}
(\lambda_{0k}\eye_p - \widetilde\bLambda_0)^2 & 
= \begin{bmatrix*}
(\lambda_{0k} - \lambda_{01})^2 & & & & & \\
& \ddots & & & & \\
& & (\lambda_{0k} - \lambda_{0r})^2 & & & \\
& & & \lambda_{0k}^2 & & \\
& & & & \ddots & \\
& & & & & \lambda_{0k}^2
\end{bmatrix*}
\succeq\lambda_{0r}^2\begin{bmatrix*}
\zero_{r\times r} & \\ & \eye_{p - r}
\end{bmatrix*},
\end{align*}
and
\[
\widetilde\bLambda_0^2\succeq \lambda_{0r}^2\begin{bmatrix*}
\eye_r & \\ & \zero_{(p - r)\times(p - r)}
\end{bmatrix*},
\]
we can further write
\begin{align*}
&\bSigma_0^2\otimes\eye_p - 2\bSigma_0\otimes\bSigma_0 + \eye_p\otimes \bSigma_0^2\\
&\quad \succeq
\lambda_{0r}^2(\bW_0\otimes\bW_0)(\widetilde{\bV}_0\otimes\widetilde{\bV}_0)
\begin{bmatrix*}
\eye_r\otimes\begin{bmatrix*}
\zero_{r\times r} & \\ & \eye_{p - r}
\end{bmatrix*} & \\
& \eye_{p - r}\otimes\begin{bmatrix*}
\eye_r & \\
& \zero_{(p - r)\times(p - r)}
\end{bmatrix*}
\end{bmatrix*}\\
&\quad\quad\times
(\widetilde{\bV}_0\transpose\otimes\widetilde{\bV}_0\transpose)
(\bW_0\transpose\otimes\bW_0\transpose).
\end{align*}
Now we focus on the matrix on the right-hand side of the previous display. Write
\begin{align*}
&(\widetilde{\bV}_0\otimes\widetilde{\bV}_0)
\begin{bmatrix*}
\eye_r\otimes\begin{bmatrix*}
\zero_{r\times r} & \\ & \eye_{p - r}
\end{bmatrix*} & \\
& \eye_{p - r}\otimes\begin{bmatrix*}
\eye_r & \\
& \zero_{(p - r)\times(p - r)}
\end{bmatrix*}
\end{bmatrix*}
(\widetilde{\bV}_0\transpose\otimes\widetilde{\bV}_0\transpose)\\
&\quad
= \begin{bmatrix*}
\bV_0\otimes\widetilde{\bV}_0 & \\ & \eye_{p - r}\otimes\widetilde{\bV}_0
\end{bmatrix*}
\begin{bmatrix*}
\eye_r\otimes\begin{bmatrix*}
\zero & \\ & \eye_{p - r}
\end{bmatrix*} & \\
& \eye_{p - r}\otimes\begin{bmatrix*}
\eye_r & \\
& \zero
\end{bmatrix*}
\end{bmatrix*}
\begin{bmatrix*}
\bV_0\transpose\otimes\widetilde{\bV}_0\transpose & \\ & \eye_{p - r}\otimes\widetilde{\bV}_0\transpose
\end{bmatrix*}\\
&\quad
= \begin{bmatrix*}
\bV_0\bV_0\transpose\otimes\begin{bmatrix*}
\bV_0 & \\ & \eye_{p - r}
\end{bmatrix*}\begin{bmatrix*}
\zero & \\ & \eye_{p - r}
\end{bmatrix*}\begin{bmatrix*}
\bV_0\transpose & \\ & \eye_{p - r}
\end{bmatrix*} & \\
& \eye_{p - r}\otimes\begin{bmatrix*}
\bV_0 & \\ & \eye_{p - r}
\end{bmatrix*}\begin{bmatrix*}
\eye_r & \\
& \zero
\end{bmatrix*}
\begin{bmatrix*}
\bV_0\transpose & \\ & \eye_{p - r}
\end{bmatrix*}
\end{bmatrix*}\\
&\quad = \begin{bmatrix*}
\eye_r\otimes\begin{bmatrix*}
\zero_{r\times r} & \\ & \eye_{p - r}
\end{bmatrix*} & \\
& \eye_{p - r}\otimes\begin{bmatrix*}
\eye_r & \\
& \zero_{(p - r)\times(p - r)}
\end{bmatrix*}
\end{bmatrix*}.
\end{align*}
Therefore,
\begin{align*}
&\bSigma_0^2\otimes\eye_p - 2\bSigma_0\otimes\bSigma_0 + \eye_p\otimes \bSigma_0^2\\
&\quad \succeq\lambda_{0r}^2
(\bW_0\otimes\bW_0)(\widetilde{\bV}_0\otimes\widetilde{\bV}_0)
\begin{bmatrix*}
\eye_r\otimes\begin{bmatrix*}
\zero_{r\times r} & \\ & \eye_{p - r}
\end{bmatrix*} & \\
& \eye_{p - r}\otimes\begin{bmatrix*}
\eye_r & \\
& \zero_{(p - r)\times(p - r)}
\end{bmatrix*}
\end{bmatrix*}\\
&\quad\quad\times
(\widetilde{\bV}_0\transpose\otimes\widetilde{\bV}_0\transpose)
(\bW_0\transpose\otimes\bW_0\transpose)\\
&\quad = \lambda_{0r}^2(\bW_0\otimes\bW_0)
\begin{bmatrix*}
\eye_r\otimes\begin{bmatrix*}
\zero_{r\times r} & \\ & \eye_{p - r}
\end{bmatrix*} & \\
& \eye_{p - r}\otimes\begin{bmatrix*}
\eye_r & \\
& \zero_{(p - r)\times(p - r)}
\end{bmatrix*}
\end{bmatrix*}
(\bW_0\transpose\otimes\bW_0\transpose)\\
&\quad = \lambda_{0r}^2\begin{bmatrix*}
\bU_0\otimes \bW_0 & \bU_{0\perp}\otimes\bW_0
\end{bmatrix*}
\begin{bmatrix*}
\eye_r\otimes\begin{bmatrix*}
\zero_{r\times r} & \\ & \eye_{p - r}
\end{bmatrix*} & \\
& \eye_{p - r}\otimes\begin{bmatrix*}
\eye_r & \\
& \zero_{(p - r)\times(p - r)}
\end{bmatrix*}
\end{bmatrix*}\\
&\quad\quad\times
\begin{bmatrix*}
\bU_0\transpose\otimes \bW_0\transpose\\
\bU_{0\perp}\transpose\otimes \bW_0\transpose
\end{bmatrix*}\\
&\quad = \lambda_{0r}^2(\bU_0\otimes\bW_0)\left\{\eye_r\otimes\begin{bmatrix*}
\zero & \\ & \eye_{p - r}
\end{bmatrix*}\right\}(\bU_0\transpose\otimes\bW_0\transpose)\\
&\quad\quad +\lambda_{0r}^2
(\bU_{0\perp}\otimes\bW_0)\left\{\eye_{p - r}\otimes\begin{bmatrix*}
\eye_r & \\ & \zero
\end{bmatrix*}\right\}(\bU_{0\perp}\transpose\otimes \bW_0\transpose)\\
&\quad = \lambda_{0r}^2(\bU_0\bU_0\transpose\otimes \bU_{0\perp}\bU_{0\perp}\transpose + 
\bU_{0\perp}\bU_{0\perp}\transpose\otimes\bU_0\bU_0\transpose)\\
&\quad = \lambda_{0r}^2\{\bU_0\bU_0\transpose\otimes (\eye_p - \bU_{0}\bU_{0}\transpose) + (\eye_p - \bU_0\bU_0\transpose)\otimes\bU_0\bU_0\transpose\}.
\end{align*}
For the matrix $\bSigma_0^2\otimes (\eye_p - \bU_0\bU_0\transpose) + (\eye_p - \bU_0\bU_0\transpose)\otimes\bSigma_0^2$, we write
\begin{align*}
&\bSigma_0^2\otimes (\eye_p - \bU_0\bU_0\transpose) + (\eye_p - \bU_0\bU_0\transpose)\otimes\bSigma_0^2\\
&\quad = \bW_0\widetilde{\bM}_0\bW_0\transpose\otimes \bW_0\begin{bmatrix*}
\zero_r & \\ & \eye_{p - r}
\end{bmatrix*}\bW_0\transpose + 
\bW_0\begin{bmatrix*}
\zero_r & \\ & \eye_{p - r}
\end{bmatrix*}\bW_0\transpose\otimes \bW_0\widetilde{\bM}_0\bW_0\transpose\\
&\quad = (\bW_0\otimes\bW_0)\left\{
\widetilde{\bM}_0^2\otimes \begin{bmatrix*}
\zero_r & \\ & \eye_{p - r}
\end{bmatrix*} + \begin{bmatrix*}
\zero_r & \\ & \eye_{p - r}
\end{bmatrix*}\otimes \widetilde{\bM}_0^2
\right\}(\bW_0\transpose\otimes\bW_0\transpose)\\
&\quad = (\bW_0\otimes\bW_0)(\widetilde{\bV}_0\otimes\widetilde{\bV}_0)
\left\{
\widetilde{\bLambda}_0^2\otimes \begin{bmatrix*}
\zero_r & \\ & \eye_{p - r}
\end{bmatrix*} + \begin{bmatrix*}
\zero_r & \\ & \eye_{p - r}
\end{bmatrix*}\otimes \widetilde{\bLambda}_0^2
\right\}\\
&\quad\quad\times (\widetilde{\bV}_0\otimes\widetilde{\bV}_0)\transpose(\bW_0\transpose\otimes\bW_0\transpose)\\
&\quad = 
(\bW_0\otimes\bW_0)(\widetilde{\bV}_0\otimes\widetilde{\bV}_0)
% \left\{
\begin{bmatrix*}
\bLambda_0^2\otimes \begin{bmatrix*}
\zero_r & \\ & \eye_{p - r}
\end{bmatrix*} & \\
 & \eye_{p - r}\otimes \begin{bmatrix*}
\bLambda_0^2 & \\ & \zero_{(p - r)\times(p - r)}
\end{bmatrix*}
\end{bmatrix*}\\
&\quad\quad\times
(\widetilde{\bV}_0\otimes\widetilde{\bV}_0)\transpose(\bW_0\transpose\otimes\bW_0\transpose)\\
&\quad\succeq
\lambda_{0r}^2
(\bW_0\otimes\bW_0)(\widetilde{\bV}_0\otimes\widetilde{\bV}_0)
% \left\{
\begin{bmatrix*}
\eye_r\otimes \begin{bmatrix*}
\zero_r & \\ & \eye_{p - r}
\end{bmatrix*} & \\
 & \eye_{p - r}\otimes \begin{bmatrix*}
\eye_r & \\ & \zero_{(p - r)\times(p - r)}
\end{bmatrix*}
\end{bmatrix*}\\
&\quad\quad\times
(\widetilde{\bV}_0\otimes\widetilde{\bV}_0)\transpose(\bW_0\transpose\otimes\bW_0\transpose)
\end{align*}
Now we focus on the matrix on the right-hand side of the previous display. Write
\begin{align*}
&(\widetilde{\bV}_0\otimes\widetilde{\bV}_0)
\begin{bmatrix*}
\eye_r\otimes\begin{bmatrix*}
\zero_{r\times r} & \\ & \eye_{p - r}
\end{bmatrix*} & \\
& \eye_{p - r}\otimes\begin{bmatrix*}
\eye_r & \\
& \zero_{(p - r)\times(p - r)}
\end{bmatrix*}
\end{bmatrix*}
(\widetilde{\bV}_0\transpose\otimes\widetilde{\bV}_0\transpose)\\
&\quad
= \begin{bmatrix*}
\bV_0\otimes\widetilde{\bV}_0 & \\ & \eye_{p - r}\otimes\widetilde{\bV}_0
\end{bmatrix*}
\begin{bmatrix*}
\eye_r\otimes\begin{bmatrix*}
\zero & \\ & \eye_{p - r}
\end{bmatrix*} & \\
& \eye_{p - r}\otimes\begin{bmatrix*}
\eye_r & \\
& \zero
\end{bmatrix*}
\end{bmatrix*}
\begin{bmatrix*}
\bV_0\transpose\otimes\widetilde{\bV}_0\transpose & \\ & \eye_{p - r}\otimes\widetilde{\bV}_0\transpose
\end{bmatrix*}\\
&\quad
= \begin{bmatrix*}
\bV_0\bV_0\transpose\otimes\begin{bmatrix*}
\bV_0 & \\ & \eye_{p - r}
\end{bmatrix*}\begin{bmatrix*}
\zero & \\ & \eye_{p - r}
\end{bmatrix*}\begin{bmatrix*}
\bV_0\transpose & \\ & \eye_{p - r}
\end{bmatrix*} & \\
& \eye_{p - r}\otimes\begin{bmatrix*}
\bV_0 & \\ & \eye_{p - r}
\end{bmatrix*}\begin{bmatrix*}
\eye_r & \\
& \zero
\end{bmatrix*}
\begin{bmatrix*}
\bV_0\transpose & \\ & \eye_{p - r}
\end{bmatrix*}
\end{bmatrix*}\\
&\quad = \begin{bmatrix*}
\eye_r\otimes\begin{bmatrix*}
\zero_{r\times r} & \\ & \eye_{p - r}
\end{bmatrix*} & \\
& \eye_{p - r}\otimes\begin{bmatrix*}
\eye_r & \\
& \zero_{(p - r)\times(p - r)}
\end{bmatrix*}
\end{bmatrix*}.
\end{align*}
Therefore,
\begin{align*}
&\bSigma_0^2\otimes(\eye_p - \bU_0\bU_0\transpose) + (\eye_p - \bU_0\bU_0\transpose)\otimes \bSigma_0^2\\
&\quad \succeq \lambda_{0r}^2(\bW_0\otimes\bW_0)(\widetilde{\bV}_0\otimes\widetilde{\bV}_0)
\begin{bmatrix*}
\eye_r\otimes\begin{bmatrix*}
\zero_{r\times r} & \\ & \eye_{p - r}
\end{bmatrix*} & \\
& \eye_{p - r}\otimes\begin{bmatrix*}
\eye_r & \\
& \zero_{(p - r)\times(p - r)}
\end{bmatrix*}
\end{bmatrix*}\\
&\quad\quad\times
(\widetilde{\bV}_0\transpose\otimes\widetilde{\bV}_0\transpose)
(\bW_0\transpose\otimes\bW_0\transpose)\\
&\quad = \lambda_{0r}^2(\bW_0\otimes\bW_0)
\begin{bmatrix*}
\eye_r\otimes\begin{bmatrix*}
\zero_{r\times r} & \\ & \eye_{p - r}
\end{bmatrix*} & \\
& \eye_{p - r}\otimes\begin{bmatrix*}
\eye_r & \\
& \zero_{(p - r)\times(p - r)}
\end{bmatrix*}
\end{bmatrix*}
(\bW_0\transpose\otimes\bW_0\transpose)\\
&\quad = \lambda_{0r}^2\begin{bmatrix*}
\bU_0\otimes \bW_0 & \bU_{0\perp}\otimes\bW_0
\end{bmatrix*}
\begin{bmatrix*}
\eye_r\otimes\begin{bmatrix*}
\zero_{r\times r} & \\ & \eye_{p - r}
\end{bmatrix*} & \\
& \eye_{p - r}\otimes\begin{bmatrix*}
\eye_r & \\
& \zero_{(p - r)\times(p - r)}
\end{bmatrix*}
\end{bmatrix*}\\
&\quad\quad\times
\begin{bmatrix*}
\bU_0\transpose\otimes \bW_0\transpose\\
\bU_{0\perp}\transpose\otimes \bW_0\transpose
\end{bmatrix*}\\
&\quad = \lambda_{0r}^2(\bU_0\otimes\bW_0)\left\{\eye_r\otimes\begin{bmatrix*}
\zero & \\ & \eye_{p - r}
\end{bmatrix*}\right\}(\bU_0\transpose\otimes\bW_0\transpose)\\
&\quad\quad
+
\lambda_{0r}^2(\bU_{0\perp}\otimes\bW_0)\left\{\eye_{p - r}\otimes\begin{bmatrix*}
\eye_r & \\ & \zero
\end{bmatrix*}\right\}(\bU_{0\perp}\transpose\otimes \bW_0\transpose)\\
&\quad = \lambda_{0r}^2(\bU_0\bU_0\transpose\otimes \bU_{0\perp}\bU_{0\perp}\transpose) + 
\lambda_{0r}^2(\bU_{0\perp}\bU_{0\perp}\transpose\otimes\bU_0\bU_0\transpose)\\
&\quad = \lambda_{0r}^2\{\bU_0\bU_0\transpose\otimes (\eye_p - \bU_{0}\bU_{0}\transpose) + 
(\eye_p - \bU_{0}\bU_{0}\transpose)\otimes\bU_0\bU_0\transpose\},
\end{align*}
and the proof is thus completed. 
\end{proof}

\begin{proof}[Proof of Lemma \ref{lemma:Cmatrix_singular_value}]
Let $\bU_{0\perp}$ be the orthogonal complement of $\bU_0$ such that $[\bU_0, \bU_{0\perp}]\in\mathbb{O}(p)$, and one can therefore take $\bW_0 = \bC_0^{-\mathrm{T}}\bC_0$. For any $\bA\in\mathbb{R}^{(p - r)\times r}$ and $\bvarphi = \vect(\bA)$,
\begin{align*}
&\vect(\bX_\bvarphi)\transpose
(\bC_0\transpose\otimes\bC_0\transpose)\{\bU_0\bU_0\transpose\otimes (\eye_p - \bU_0\bU_0\transpose)\}(\bC_0\otimes\bC_0)
\vect(\bX_\bvarphi)\\
&\quad = \vect(\bX_\bvarphi)\transpose
(\bC_0\transpose\otimes\bC_0\transpose)(\bU_0\otimes\bU_{0\perp})(\bU_0\transpose\otimes\bU_{0\perp}\transpose)(\bC_0\otimes\bC_0)
\vect(\bX_\bvarphi)\\
&\quad = \|(\bU_0\transpose\otimes\bU_{0\perp}\transpose)(\bC_0\otimes\bC_0)
\vect(\bX_\bvarphi)\|_2^2\\
&\quad = \left\|
\begin{bmatrix*}
\zero_{r\times r} & \eye_{p - r}
\end{bmatrix*}\bC_0\transpose\bX_\bvarphi\bC_0\begin{bmatrix*}
\eye_r\\\zero_{(p - r)\times r}
\end{bmatrix*}
\right\|_{\mathrm{F}}^2\\
&\quad = 
\left\|
\begin{bmatrix*}
\zero_{r\times r} & \eye_{p - r}
\end{bmatrix*}
\begin{bmatrix*}
\bC_{11}\transpose & \bC_{21}\transpose \\
\bC_{12}\transpose & \bC_{22}\transpose
\end{bmatrix*}
\begin{bmatrix*}
\zero_{r\times r} & -\bA\transpose \\ \bA & \zero_{(p - r)\times (p - r)}
\end{bmatrix*}
\begin{bmatrix*}
\bC_{11} & \bC_{12} \\ \bC_{21} & \bC_{22}
\end{bmatrix*}
\begin{bmatrix*}
\eye_r \\ \zero_{(p - r)\times r}
\end{bmatrix*}
\right\|_{\mathrm{F}}^2\\
&\quad = \|\bC_{22}\transpose\bA\bC_{11} - \bC_{12}\transpose\bA\transpose\bC_{21}\|_{\mathrm{F}}^2,
\end{align*}
and similarly,
\begin{align*}
&\vect(\bX_\bvarphi)\transpose
(\bC_0\transpose\otimes\bC_0\transpose)\{(\eye_p - \bU_0\bU_0\transpose)\otimes\bU_0\bU_0\transpose\}(\bC_0\otimes\bC_0)
\vect(\bX_\bvarphi)\\
&\quad = \vect(\bX_\bvarphi)\transpose
(\bC_0\transpose\otimes\bC_0\transpose)(\bU_{0\perp}\otimes\bU_0)(\bU_{0\perp}\transpose\otimes\bU_0\transpose)(\bC_0\otimes\bC_0)
\vect(\bX_\bvarphi)\\
&\quad = \|(\bU_{0\perp}\transpose\otimes\bU_{0}\transpose)(\bC_0\otimes\bC_0)
\vect(\bX_\bvarphi)\|_2^2\\
&\quad = \left\|
\begin{bmatrix*}
\eye_r & \zero_{r\times(p - r)}
\end{bmatrix*}\bC_0\transpose\bX_\bvarphi\bC_0\begin{bmatrix*}
\zero_{r\times r} \\ \eye_{p - r}
\end{bmatrix*}
\right\|_{\mathrm{F}}^2\\
&\quad = 
\left\|
\begin{bmatrix*}
\eye_r & \zero_{r\times(p - r)}
\end{bmatrix*}
\begin{bmatrix*}
\bC_{11}\transpose & \bC_{21}\transpose \\
\bC_{12}\transpose & \bC_{22}\transpose
\end{bmatrix*}
\begin{bmatrix*}
\zero_{r\times r} & -\bA\transpose \\ \bA & \zero_{(p - r)\times (p - r)}
\end{bmatrix*}
\begin{bmatrix*}
\bC_{11} & \bC_{12} \\ \bC_{21} & \bC_{22}
\end{bmatrix*}
\begin{bmatrix*}
\zero_{r\times r} \\ \eye_{p - r}
\end{bmatrix*}
\right\|_{\mathrm{F}}^2\\
&\quad = \|\bC_{21}\transpose\bA\bC_{12} - \bC_{11}\transpose\bA\transpose\bC_{22}\|_{\mathrm{F}}^2\\
&\quad = \|\bC_{22}\transpose\bA\bC_{11} - \bC_{12}\transpose\bA\transpose\bC_{21}\|_{\mathrm{F}}^2,
\end{align*}
and the proof of assertion (i) is completed. In the rest of the proof, we focus on assertion (ii), which is slightly involved. We consider two scenarios separately, i.e., $p \geq 2r$ and $p < 2r$. 

\vspace*{2ex}
\noindent\textbf{Case I: $p - r \geq r$. }Let $\bA_0 = \bU_{\bA_0}\bS_{\bA_0}\bV_{\bA_0}\transpose$ be the singular value decomposition of ${\bA_0}$, where $\bU_{\bA_0}\in\mathbb{O}(p - r, r)$, $\bV_{\bA_0}\in\mathbb{O}(r)$, and $\bS_{\bA_0} = \mathrm{diag}\{\sigma_1(\bA_0),\ldots,\sigma_r(\bA_0)\}$. 
Suppose $\bU_{{\bA_0}\perp}\in\mathbb{O}(p - r, p - 2r)$ spans the orthogonal complement of $\mathrm{Span}(\bU_{\bA_0})$, i.e., $\bU_{\bA_0}\transpose\bU_{{\bA_0}\perp} = \zero_{r \times (p - 2r)}$. Note that by assumption, $0 < \sigma_r(\bA_0) \leq \sigma_1(\bA_0) < 1$. Using the property of commutation matrices \citep{magnus1979} that
\[
(\bC_{21}\transpose\otimes \bC_{12}\transpose)\bK_{(p - r)r} = 
\bK_{r(p - r)}(\bC_{12}\transpose\otimes \bC_{21}\transpose),\quad
\bK_{r(p - r)}(\bU_{\bA_0}\otimes\bV_{\bA_0}) = (\bV_{\bA_0}\otimes \bU_{\bA_0})\bK_{rr},
\]
we write
\begin{align*}
&\bC_{11}\otimes\bC_{22} - (\bC_{21}\transpose\otimes \bC_{12}\transpose)\bK_{(p - r)r}\\
&\quad = \bC_{11}\otimes\bC_{22} - \bK_{r(p - r)}(\bC_{12}\transpose\otimes \bC_{21}\transpose)\\
&\quad = \bV_{\bA_0}(\eye_r + \bS_{\bA_0}^2)^{-1}\bV_{\bA_0}\transpose\otimes \{\eye_{p - r} - \bU_{\bA_0}\bS_{\bA_0}(\eye_r + \bS_{\bA_0}^2)^{-1}\bS_{\bA_0}\bU_{\bA_0}\transpose\}\\
&\quad\quad + \bK_{r(p - r)}\{\bU_{\bA_0}\bS_{\bA_0}(\eye_r + \bS_{\bA_0}^2)^{-1}\bV_{\bA_0}\transpose\otimes \bV_{\bA_0}(\eye_r + \bS_{\bA_0}^2)^{-1}\bS_{\bA_0}\bU_{\bA_0}\transpose\}\\
&\quad = \bV_{\bA_0}(\eye_r + \bS_{\bA_0}^2)^{-1}\bV_{\bA_0}\transpose\otimes (\bU_{\bA_0}\bU_{\bA_0}\transpose + \bU_{{\bA_0}\perp}\bU_{{\bA_0}\perp}\transpose)\\
&\quad\quad - \bV_{\bA_0}(\eye_r + \bS_{\bA_0}^2)^{-1}\bV_{\bA_0}\transpose\otimes\{\bU_{\bA_0}\bS_{\bA_0}(\eye_r + \bS_{\bA_0}^2)^{-1}\bS_{\bA_0}\bU_{\bA_0}\transpose\}\\
&\quad\quad + \bK_{r(p - r)}(\bU_{\bA_0}\otimes \bV_{\bA_0})\{\bS_{\bA_0}(\eye_r + \bS_{\bA_0}^2)^{-1} \otimes (\eye_r + \bS_{\bA_0}^2)^{-1}\bS_{\bA_0}\}(\bV_{\bA_0}\transpose\otimes\bU_{\bA_0}\transpose)\\
&\quad = (\bV_{\bA_0}\otimes \bU_{\bA_0})[(\eye_r + \bS_{\bA_0}^2)^{-1}\otimes\{\eye_r - \bS_{\bA_0}(\eye_r + \bS_{\bA_0}^2)^{-1}\bS_{\bA_0}\}](\bV_{\bA_0}\transpose\otimes \bU_{\bA_0}\transpose)
% \{\bU_\bA\bU_\bA\transpose + \bU_{\bA\perp}\bU_{\bA\perp}\transpose - \bU_\bA\bS_\bA(\eye_r + \bS_\bA^2)^{-1}\bS_\bA\bU_\bA\transpose\}
\\
&\quad\quad + (\bV_{\bA_0}\otimes \bU_{{\bA_0}\perp})\{(\eye_r + \bS_{\bA_0}^2)^{-1}\otimes\eye_{p - r}\}(\bV_{\bA_0}\transpose\otimes\bU_{{\bA_0}\perp}\transpose)\\
&\quad\quad + (\bV_{\bA_0}\otimes \bU_{\bA_0})\bK_{rr}\{(\eye_r + \bS_{\bA_0}^2)^{-1}\bS_{\bA_0} \otimes \bS_{\bA_0}(\eye_r + \bS_{\bA_0}^2)^{-1}\}(\bV_{\bA_0}\transpose\otimes\bU_{\bA_0}\transpose)\\
&\quad
= (\bV_{\bA_0}\otimes\bU_{\bA_0})\widetilde{\bR}(\bV_{\bA_0}\transpose\otimes \bU_{\bA_0}\transpose)\\
&\quad\quad + 
(\bV_{\bA_0}\otimes \bU_{{\bA_0}\perp})\{(\eye_r + \bS_{\bA_0}^2)^{-1}\otimes\eye_{p - r}\}(\bV_{\bA_0}\transpose\otimes\bU_{{\bA_0}\perp}\transpose),
% &\quad\succeq
%  (\bV_\bA\otimes \bU_\bA)[(\eye_r + \bS_\bA^2)^{-1}\otimes\{\eye_r - \bS_\bA(\eye_r + \bS_\bA^2)^{-1}\bS_\bA\}](\bV_\bA\transpose\otimes \bU_\bA\transpose)\\
% &\quad\quad + \bK_{r(p - r)}(\bV_\bA\otimes \bU_\bA)\{(\eye_r + \bS_\bA^2)^{-1}\bS_\bA \otimes \bS_\bA(\eye_r + \bS_\bA^2)\}(\bU_\bA\transpose\otimes\bV_\bA\transpose)\\
% &
\end{align*}
where
\begin{align*}
\widetilde{\bR}
& := (\eye_r + \bS_{\bA_0}^2)^{-1}\otimes\{\eye_r - \bS_{\bA_0}(\eye_r + \bS_{\bA_0}^2)^{-1}\bS_{\bA_0}\}\\
&\quad + 
 \bK_{rr}\{(\eye_r + \bS_{\bA_0}^2)^{-1}\bS_{\bA_0} \otimes \bS_{\bA_0}(\eye_r + \bS_{\bA_0}^2)^{-1}\}\\
& = (\eye_r + \bS_{\bA_0}^2)^{-1}\otimes\{\eye_r - \bS_{\bA_0}(\eye_r + \bS_{\bA_0}^2)^{-1}\bS_{\bA_0}\}\\
&\quad - (\eye_r + \bS_{\bA_0}^2)^{-1}\bS_{\bA_0} \otimes \bS_{\bA_0}(\eye_r + \bS_{\bA_0}^2)^{-1}\\
&\quad + (\bK_{rr} + \eye_{r^2})\{(\eye_r + \bS_{\bA_0}^2)^{-1}\bS_{\bA_0} \otimes \bS_{\bA_0}(\eye_r + \bS_{\bA_0}^2)^{-1}\}\\
& = (\eye_r + \bS_{\bA_0}^2)^{-1}\otimes(\eye_r + \bS_{\bA_0}^2)^{-1} - (\eye_r + \bS_{\bA_0}^2)^{-1}\bS_{\bA_0} \otimes \bS_{\bA_0}(\eye_r + \bS_{\bA_0}^2)^{-1}\\
&\quad + (\bK_{rr} + \eye_{r^2})\{(\eye_r + \bS_{\bA_0}^2)^{-1}\bS_{\bA_0} \otimes \bS_{\bA_0}(\eye_r + \bS_{\bA_0}^2)^{-1}\}.
\end{align*}
Note that $[\bV_{\bA_0}\otimes\bU_{\bA_0}, \bV_{\bA_0}\otimes\bU_{\bA_0\perp}]\in\mathbb{O}(r(p - r))$. It follows that
\begin{align*}
&\sigma_{\min}\{\bC_{11}\transpose\otimes\bC_{22}\transpose - (\bC_{21}\transpose\otimes \bC_{12}\transpose)\bK_{(p - r)r}\}
% \\&\quad
 = \min\left[
\sigma_{\min}(\widetilde{\bR}), \sigma_{\min}\{(\eye_r + \bS_{\bA_0}^2)^{-1}\otimes\eye_{p - r}\}
\right].
\end{align*}
We now provide a lower bound for the smallest singular value of $\widetilde{\bR}$. When $r = 1$, $\bK_{rr} = \bK_{11} = 1$, $\bS_{\bA_0} = \|\bA_0\|_2\in\mathbb{R}^{1\times 1}$, and hence
\[
\widetilde{\bR} = \frac{1}{(1 + \bS_{\bA_0}^2)^2} + \frac{\bS_{\bA_0}^2}{(1 + \bS_{\bA_0}^2)^2} = \frac{1}{1 + \|\bA\|_2^2}.
\]
When $r > 1$, we consider the following approach. Denote the diagonal matrices
\[
\bD_1 = (\eye_r + \bS_{\bA_0}^2)^{-1},\quad
% \bD_2 = \eye_r - \bS_\bA(\eye_r + \bS_\bA^2)^{-1}\bS_\bA,\quad
\bD_2 = (\eye_r + \bS_{\bA_0}^2)^{-1}\bS_{\bA_0}.
\]
Note that by the property of the commutation matrix, 
\begin{align*}
&
(\bK_{rr} + \eye_{r^2})\{(\eye_r + \bS_{\bA_0}^2)^{-1}\bS_{\bA_0} \otimes \bS_{\bA_0}(\eye_r + \bS_\bA^2)^{-1}\}\\
&\quad
 = (\bK_{rr} + \eye_{r^2})(\bD_2\otimes\bD_2) = (\bD_2\otimes\bD_2)(\bK_{rr} + \eye_{r^2}).
\end{align*}
Therefore, $(\bK_{rr} + \eye_{r^2})(\bD_2\otimes\bD_2)$ is symmetric and is a product two positive semidefinite matrices $\bK_{rr} + \eye_{r^2}$ and $\bD_2\otimes\bD_2$ because $\bK_{rr}$ is symmetric and only has eigenvalues in $\{\pm1\}$ \citep{magnus1979}. Therefore, by Corollary 11 in \cite{1695991}, we have
\[
\lambda_{\min}\{(\bK_{rr} + \eye_{r^2})(\bD_2\otimes\bD_2)\}
\geq \lambda_{\min}(\bK_{rr} + \eye_{r^2})\lambda_{\min}(\bD_2\otimes\bD_2) = 0.
\]
This further implies that $(\bK_{rr} + \eye_{r^2})(\bD_2\otimes\bD_2)$ is positive semidefinite and $\widetilde{\bR}$ is symmetric. Note that $\bD_1\succeq \bD_2$ because $\sigma_k(\bA_0)\in[0, 1)$, and that $\widetilde{\bR}$ is positive semidefinite. Hence, to provide a lower bound for the smallest singular value of $\widetilde{\bR}$, it is sufficient to provide a lower bound for the smallest eigenvalue of 
\begin{align*}
&(\eye_r + \bS_{\bA_0}^2)^{-1}\otimes (\eye_r + \bS_{\bA_0}^2)^{-1} - (\eye_r + \bS_{\bA_0}^2)^{-1}\bS_{\bA_0} \otimes \bS_{\bA_0}(\eye_r + \bS_{\bA_0}^2)^{-1}\\
&\quad = \bD_1\otimes \bD_1 - \bD_2\otimes\bD_2.
\end{align*}
Write
\begin{align*}
&\bD_1\otimes \bD_1 - \bD_2\otimes\bD_2\\
&\quad = \mathrm{diag}\left\{\frac{1}{1 + \sigma_1^2({\bA_0})},\ldots,\frac{1}{1 + \sigma_r^2({\bA_0})}\right\}
\otimes \mathrm{diag}\left\{\frac{1}{1 + \sigma_1^2({\bA_0})},\ldots,\frac{1}{1 + \sigma_r^2({\bA_0})}\right\}\\
&\quad\quad - \mathrm{diag}\left\{\frac{\sigma_1({\bA_0})}{1 + \sigma_1^2({\bA_0})},\ldots,\frac{\sigma_r({\bA_0})}{1 + \sigma_r^2({\bA_0})}\right\}\otimes \mathrm{diag}\left\{\frac{\sigma_1({\bA_0})}{1 + \sigma_1^2({\bA_0})},\ldots,\frac{\sigma_r({\bA_0})}{1 + \sigma_r^2({\bA_0})}\right\}\\
&\quad = \begin{bmatrix*}
\frac{1}{1 + \sigma_1^2(\bA_0)} \bD_1 - \frac{\sigma_1(\bA_0)}{1 + \sigma_1^2(\bA_0)}\bD_2 &  & \\
 & \vdots & \\
 & & \frac{1}{1 + \sigma_r^2(\bA_0)}  \bD_1 - \frac{\sigma_r(\bA_0)}{1 + \sigma_r^2(\bA_0)}\bD_2
\end{bmatrix*}\succeq \zero_{r^2\times r^2}.
\end{align*}
It follows that
\begin{align*}
\sigma_{\min}(\widetilde\bR)
& = \sigma_{\min}\left\{
\bD_1\otimes \bD_1- \bD_2\otimes \bD_2 + (\bK_{rr} + \eye_{r^2})(\bD_2\otimes\bD_2)
\right\}\\
& = \lambda_{\min}\left\{
\bD_1\otimes \bD_1- \bD_2\otimes \bD_2 + (\bK_{rr} + \eye_{r^2})(\bD_2\otimes\bD_2)
\right\}\\
&\geq \lambda_{\min}\left\{
\bD_1\otimes \bD_1- \bD_2\otimes \bD_2)
\right\}\\
& = \min_{k\in [r]}\lambda_{\min}\left\{\frac{1}{1 + \sigma_k^2(\bA_0)}\bD_1 - \frac{\sigma_k(\bA_0)}{1 + \sigma_k^2(\bA_0)}\bD_2\right\}\\
& = \min_{k,l\in [r]}\frac{1 - \sigma_k(\bA_0)\sigma_l(\bA_0)}{\{1 + \sigma_k^2(\bA_0)\}\{1 + \sigma_l^2(\bA_0)\}}.
\end{align*}
We finally conclude that when $r > 1$, 
\begin{align*}
&\sigma_{\min}\{\bC_{11}\otimes \bC_{22} - (\bC_{21}\transpose\otimes\bC_{12}\transpose)\bK_{(p - r)r}\}\\
&\quad = \lambda_{\min}\{\bC_{11}\otimes \bC_{22} - (\bC_{21}\transpose\otimes\bC_{12}\transpose)\bK_{(p - r)r}\}\\
&\quad = \min\left\{
\sigma_{\min}(\widetilde\bR), \sigma_{\min}(\bD_1\otimes \eye_{p - r})
\right\}\\
&\quad\geq \min\left[
\min_{k,l\in [r]}\frac{1 - \sigma_k(\bA_0)\sigma_l(\bA_0)}{\{1 + \sigma_k^2(\bA_0)\}\{1 + \sigma_l^2(\bA_0)\}},
\min_{k\in [r]}\frac{1}{1 + \sigma_k^2(\bA_0)}
\right]\\
&\quad = \frac{1 - \|\bA_0\|_2^2}{(1 + \|\bA_0\|_2^2)^2},
\end{align*}
and when $r = 1$, we directly obtain
\begin{align*}
\sigma_{\min}\{\bC_{11}\otimes \bC_{22} - (\bC_{21}\transpose\otimes\bC_{12}\transpose)\bK_{(p - r)r}\}
% \\
% &\quad = \lambda_{\min}\{\bC_{11}\transpose\otimes \bC_{22}\transpose - (\bC_{21}\transpose\otimes\bC_{12}\transpose)\bK_{(p - r)r}\}\\
% &\quad
% & = \min\left\{
% \sigma_{\min}(\widetilde\bR), \sigma_{\min}(\bD_1\otimes \eye_{p - r})
% \right\}
% \\
% &\quad\geq \min\left[
% \min_{k,l\in [r]}\frac{1 - \sigma_k(\bA_0)\sigma_l(\bA_0)}{\{1 + \sigma_k^2(\bA_0)\}\{1 + \sigma_l^2(\bA_0)\}},
% \min_{k\in [r]}\frac{1}{1 + \sigma_k^2(\bA_0)}
% \right]\\
% &\quad
& = \frac{1}{1 + \|\bA_0\|_2^2}.
\end{align*}

\vspace*{2ex}
\noindent\textbf{Case II: $p - r < r$. }This situation occurs only if $r > 1$. Let $\bA_0 = \bU_{\bA_0}\bS_{\bA_0}\bV_{\bA_0}\transpose$ be the singular value decomposition of ${\bA_0}$, where $\bU_{\bA_0}\in\mathbb{O}(p - r)$, $\bV_{\bA_0}\in\mathbb{O}(r, p - r)$, and $\bS_{\bA_0} = \mathrm{diag}\{\sigma_1(\bA_0),\ldots,\sigma_{p - r}(\bA_0)\}$. Suppose $\bV_{{\bA_0}\perp}\in\mathbb{O}(r, 2r - p)$ spans the orthogonal complement of $\mathrm{Span}(\bV_{\bA_0})$, i.e., $\bV_{\bA_0}\transpose\bV_{{\bA_0}\perp} = \zero_{p - r \times (2r - p)}$. Note that by assumption, $0 < \sigma_r(\bA_0) \leq \sigma_1(\bA_0) < 1$. Observe that
\begin{align*}
\bC_{11} & =(\eye_r + \bA_0\transpose\bA_0)^{-1}
 = 
\begin{bmatrix*}
\bV_{\bA_0} & \bV_{\bA_0\perp}
\end{bmatrix*}\begin{bmatrix*}
(\eye_{p - r} + \bS_{\bA_0}^2)^{-1} & \\ & \eye_{2r - p}
\end{bmatrix*}\begin{bmatrix*}
\bV_{\bA_0}\transpose\\\bV_{\bA_0\perp}\transpose
\end{bmatrix*},
\end{align*}
and
\begin{align*}
\bC_{22} &= \eye_{p - r} - \bA_0(\eye_r + \bA_0\transpose\bA_0)^{-1}\bA_0\transpose\\
&
% \quad
 = \eye_{p - r} - \bU_{\bA_0}\bS_{\bA_0}\bV_{\bA_0}\transpose
\begin{bmatrix*}
\bV_{\bA_0} & \bV_{\bA_0\perp}
\end{bmatrix*}\begin{bmatrix*}
(\eye_{p - r} + \bS_{\bA_0}^2)^{-1} & \\ & \eye_{2r - p}
\end{bmatrix*}\begin{bmatrix*}
\bV_{\bA_0}\transpose\\\bV_{\bA_0\perp}\transpose
\end{bmatrix*}
\bV_{\bA_0}\bS_{\bA_0}\bU_{\bA_0}\transpose\\
&
% \quad
 = \eye_{p - r} - \bU_{\bA_0}\bS_{\bA_0}(\eye_{p - r} + \bS_{\bA_0}^2)^{-1}\bS_{\bA_0}\bU_{\bA_0}\transpose
 	   = \bU_{\bA_0}(\eye_{p - r} + \bS_{\bA_0}^2)^{-1}\bU_{\bA_0}\transpose.
\end{align*}
Also,
\begin{align*}
\bC_{12}\transpose & = -\bA_0(\eye_r + \bA_0\transpose\bA_0)^{-1} = -\bU_{\bA_0}\bS_{\bA_0}(\eye_r + \bS_{\bA_0}^2)^{-1}\bV_{\bA_0}\transpose,\\
\bC_{21}\transpose & = (\eye_r + \bA_0\transpose\bA_0)^{-1}\bA_0\transpose = \bV_{\bA_0}(\eye_r + \bS_{\bA_0}^2)^{-1}\bS_{\bA_0}\bU_{\bA_0}\transpose.
\end{align*}
Using the property of commutation matrices \citep{magnus1979} that
\begin{align*}
&(\bC_{21}\transpose\otimes \bC_{12}\transpose)\bK_{(p - r)r} = 
\bK_{r(p - r)}(\bC_{12}\transpose\otimes \bC_{21}\transpose),\\
&\bK_{r(p - r)}(\bU_{\bA_0}\otimes\bV_{\bA_0}) = (\bV_{\bA_0}\otimes \bU_{\bA_0})\bK_{(p - r)(p - r)},
\end{align*}
we write
\begin{align*}
&\bC_{11}\otimes\bC_{22} - (\bC_{21}\transpose\otimes \bC_{12}\transpose)\bK_{(p - r)r}\\
&\quad = \bC_{11}\otimes\bC_{22} - \bK_{r(p - r)}(\bC_{12}\transpose\otimes \bC_{21}\transpose)\\
&\quad = 
% \bV_{\bA_0}(\eye_r + \bS_{\bA_0}^2)^{-1}\bV_{\bA_0}\transpose
\begin{bmatrix*}
\bV_{\bA_0} & \bV_{\bA_0\perp}
\end{bmatrix*}\begin{bmatrix*}
(\eye_{p - r} + \bS_{\bA_0}^2)^{-1} & \\ & \eye_{2r - p}
\end{bmatrix*}\begin{bmatrix*}
\bV_{\bA_0}\transpose\\\bV_{\bA_0\perp}\transpose
\end{bmatrix*}
\otimes 
\bU_{\bA_0}(\eye_{p - r} + \bS_{\bA_0}^2)^{-1}\bU_{\bA_0}\transpose
% \{\eye_{p - r} - \bU_{\bA_0}\bS_{\bA_0}(\eye_r + \bS_{\bA_0}^2)^{-1}\bS_{\bA_0}\bU_{\bA_0}\transpose\}
\\
&\quad\quad + \bK_{r(p - r)}\{\bU_{\bA_0}\bS_{\bA_0}(\eye_{p - r} + \bS_{\bA_0}^2)^{-1}\bV_{\bA_0}\transpose\otimes \bV_{\bA_0}(\eye_{p - r} + \bS_{\bA_0}^2)^{-1}\bS_{\bA_0}\bU_{\bA_0}\transpose\}\\
&\quad = \{\bV_{\bA_0}(\eye_{p - r} + \bS_{\bA_0}^2)^{-1}\bV_{\bA_0}\transpose + \bV_{\bA_0\perp}\bV_{\bA_0\perp}\transpose\}\otimes 
\{\bU_{\bA_0}(\eye_{p - r} + \bS_{\bA_0}^2)^{-1}\bU_{\bA_0}\transpose\}\\
&\quad\quad + \bK_{r(p - r)}(\bU_{\bA_0}\otimes \bV_{\bA_0})\{\bS_{\bA_0}(\eye_{p - r} + \bS_{\bA_0}^2)^{-1} \otimes (\eye_{p - r} + \bS_{\bA_0}^2)^{-1}\bS_{\bA_0}\}\\
&\quad\quad\quad\times(\bV_{\bA_0}\transpose\otimes\bU_{\bA_0}\transpose)\\
&\quad = (\bV_{\bA_0}\otimes \bU_{\bA_0})\{(\eye_{p - r} + \bS_{\bA_0}^2)^{-1}\otimes(\eye_{p - r} + \bS_{\bA_0}^2)^{-1}\}(\bV_{\bA_0}\transpose\otimes \bU_{\bA_0}\transpose)
% \{\bU_\bA\bU_\bA\transpose + \bU_{\bA\perp}\bU_{\bA\perp}\transpose - \bU_\bA\bS_\bA(\eye_r + \bS_\bA^2)^{-1}\bS_\bA\bU_\bA\transpose\}
\\
&\quad\quad + (\bV_{\bA_0\perp}\otimes \bU_{{\bA_0}})\{\eye_{2r - p}\otimes(\eye_{p - r} + \bS_{\bA_0}^2)^{-1}\}(\bV_{\bA_0\perp}\transpose\otimes\bU_{{\bA_0}}\transpose)\\
&\quad\quad + (\bV_{\bA_0}\otimes \bU_{\bA_0})\bK_{(p - r)(p - r)}\{(\eye_{p - r} + \bS_{\bA_0}^2)^{-1}\bS_{\bA_0} \otimes \bS_{\bA_0}(\eye_{p - r} + \bS_{\bA_0}^2)^{-1}\}\\
&\quad\quad\quad\times (\bV_{\bA_0}\transpose\otimes\bU_{\bA_0}\transpose)\\
&\quad
= (\bV_{\bA_0}\otimes\bU_{\bA_0})\widetilde{\bR}(\bV_{\bA_0}\transpose\otimes \bU_{\bA_0}\transpose)\\
&\quad\quad + 
(\bV_{\bA_0\perp}\otimes \bU_{{\bA_0}})\{\eye_{2r - p}\otimes(\eye_{p - r} + \bS_{\bA_0}^2)^{-1}\}(\bV_{\bA_0\perp}\transpose\otimes\bU_{{\bA_0}}\transpose),
% &\quad\succeq
%  (\bV_\bA\otimes \bU_\bA)[(\eye_r + \bS_\bA^2)^{-1}\otimes\{\eye_r - \bS_\bA(\eye_r + \bS_\bA^2)^{-1}\bS_\bA\}](\bV_\bA\transpose\otimes \bU_\bA\transpose)\\
% &\quad\quad + \bK_{r(p - r)}(\bV_\bA\otimes \bU_\bA)\{(\eye_r + \bS_\bA^2)^{-1}\bS_\bA \otimes \bS_\bA(\eye_r + \bS_\bA^2)\}(\bU_\bA\transpose\otimes\bV_\bA\transpose)\\
% &
\end{align*}
where
\begin{align*}
\widetilde{\bR}
& = (\eye_{p - r} + \bS_{\bA_0}^2)^{-1}\otimes(\eye_{p - r} + \bS_{\bA_0}^2)^{-1}\\
&\quad + 
 \bK_{(p - r)(p - r)}\{(\eye_{p - r} + \bS_{\bA_0}^2)^{-1}\bS_{\bA_0} \otimes \bS_{\bA_0}(\eye_{p - r} + \bS_{\bA_0}^2)^{-1}\}\\
& = (\eye_{p - r} + \bS_{\bA_0}^2)^{-1}\otimes (\eye_{p - r} + \bS_{\bA_0}^2)^{-1} - (\eye_{p - r} + \bS_{\bA_0}^2)^{-1}\bS_{\bA_0} \otimes \bS_{\bA_0}(\eye_{p - r} + \bS_{\bA_0}^2)^{-1}\\
&\quad + (\bK_{(p - r)(p - r)} + \eye_{(p - r)^2})\{(\eye_{p - r} + \bS_{\bA_0}^2)^{-1}\bS_{\bA_0} \otimes \bS_{\bA_0}(\eye_{p - r} + \bS_{\bA_0}^2)^{-1}\}
% \\
% & = (\eye_r + \bS_{\bA_0}^2)^{-1}\otimes(\eye_r + \bS_{\bA_0}^2)^{-1} - (\eye_r + \bS_{\bA_0}^2)^{-1}\bS_{\bA_0} \otimes \bS_{\bA_0}(\eye_r + \bS_{\bA_0}^2)^{-1}\\
% &\quad + (\bK_{rr} + \eye_{r^2})\{(\eye_r + \bS_{\bA_0}^2)^{-1}\bS_{\bA_0} \otimes \bS_{\bA_0}(\eye_r + \bS_{\bA_0}^2)^{-1}\}
.
\end{align*}
Note that $[\bV_{\bA_0}\otimes\bU_{\bA_0}, \bV_{\bA_0\perp}\otimes\bU_{\bA_0}]\in\mathbb{O}(r(p - r))$. It follows that
\begin{align*}
&\sigma_{\min}\{\bC_{11}\otimes\bC_{22} - (\bC_{21}\transpose\otimes \bC_{12}\transpose)\bK_{(p - r)r}\}
% \\&\quad
 = 
\sigma_{\min}(\widetilde{\bR})\wedge \sigma_{\min}\{\eye_{2r - p}\otimes(\eye_{p - r} + \bS_{\bA_0}^2)^{-1}\}.
\end{align*}
Similar to the case where $p - r\geq r$, we also provide a lower bound for the smallest singular value of $\widetilde{\bR}$. Denote the diagonal matrices
\[
\bD_1 = (\eye_{p - r} + \bS_{\bA_0}^2)^{-1},\quad
% \bD_2 = \eye_r - \bS_\bA(\eye_r + \bS_\bA^2)^{-1}\bS_\bA,\quad
\bD_2 = (\eye_{p - r} + \bS_{\bA_0}^2)^{-1}\bS_{\bA_0}.
\]
Note that by the property of the commutation matrix, 
\begin{align*}
&
(\bK_{(p - r)(p - r)} + \eye_{(p - r)^2})\{(\eye_{p - r} + \bS_{\bA_0}^2)^{-1}\bS_{\bA_0} \otimes \bS_{\bA_0}(\eye_{p - r} + \bS_{\bA_0}^2)^{-1}\}
\\&\quad
 = (\bK_{(p - r)(p - r)} + \eye_{(p - r)^2})(\bD_2\otimes\bD_2) = (\bD_2\otimes\bD_2)(\bK_{(p - r)(p - r)} + \eye_{(p - r)^2}).
\end{align*}
Therefore, $(\bK_{(p - r)(p - r)} + \eye_{(p - r)^2})(\bD_2\otimes\bD_2)$ is symmetric and is a product two positive semidefinite matrices $\bK_{(p - r)(p - r)} + \eye_{(p - r)^2}$ and $\bD_2\otimes\bD_2$% because $\bK_{(p - r)(p - r)}$ is symmetric and only has eigenvalues in $\{\pm1\}$ \citep{magnus1979}
. Again, by Corollary 11 in \cite{1695991}, we have
\[
\lambda_{\min}\{(\bK_{(p - r)(p - r)} + \eye_{(p - r)^2})(\bD_2\otimes\bD_2)\}
\geq \lambda_{\min}(\bK_{(p - r)(p - r)} + \eye_{(p - r)^2})\lambda_{\min}(\bD_2\otimes\bD_2) = 0.
\]
This further implies that $(\bK_{(p - r)(p - r)} + \eye_{(p - r)^2})(\bD_2\otimes\bD_2)$ is positive semidefinite and $\widetilde{\bR}$ is symmetric. Note that $\bD_1\succeq \bD_2$ because $\sigma_k(\bA_0)\in[0, 1)$, and that $\widetilde{\bR}$ is positive semidefinite. Hence, to provide a lower bound for the smallest singular value of $\widetilde{\bR}$, it is sufficient to provide a lower bound for the smallest eigenvalue of 
\begin{align*}
&(\eye_{p - r} + \bS_{\bA_0}^2)^{-1}\otimes (\eye_{p - r} + \bS_{\bA_0}^2)^{-1} - (\eye_{p - r} + \bS_{\bA_0}^2)^{-1}\bS_{\bA_0} \otimes \bS_{\bA_0}(\eye_{p - r} + \bS_{\bA_0}^2)^{-1}\\
&\quad = \bD_1\otimes \bD_1 - \bD_2\otimes\bD_2.
\end{align*}
Write
\begin{align*}
&\bD_1\otimes \bD_1 - \bD_2\otimes\bD_2\\
&\quad = \mathrm{diag}\left\{\frac{1}{1 + \sigma_1^2({\bA_0})},\ldots,\frac{1}{1 + \sigma_{p - r}^2({\bA_0})}\right\}
\otimes \mathrm{diag}\left\{\frac{1}{1 + \sigma_1^2({\bA_0})},\ldots,\frac{1}{1 + \sigma_{p - r}^2({\bA_0})}\right\}\\
&\quad\quad - \mathrm{diag}\left\{\frac{\sigma_1({\bA_0})}{1 + \sigma_1^2({\bA_0})},\ldots,\frac{\sigma_{p - r}({\bA_0})}{1 + \sigma_{p - r}^2({\bA_0})}\right\}\otimes \mathrm{diag}\left\{\frac{\sigma_1({\bA_0})}{1 + \sigma_1^2({\bA_0})},\ldots,\frac{\sigma_{p - r}({\bA_0})}{1 + \sigma_{p - r}^2({\bA_0})}\right\}\\
&\quad = \begin{bmatrix*}
\frac{1}{1 + \sigma_1^2(\bA_0)} \bD_1 - \frac{\sigma_1(\bA_0)}{1 + \sigma_1^2(\bA_0)}\bD_2 &  & \\
 & \vdots & \\
 & & \frac{1}{1 + \sigma_{p - r}^2(\bA_0)}  \bD_1 - \frac{\sigma_{p - r}(\bA_0)}{1 + \sigma_{p - r}^2(\bA_0)}\bD_2
\end{bmatrix*}\succeq \zero_{{p - r}^2\times {p - r}b^2}.
\end{align*}
It follows that
\begin{align*}
\sigma_{\min}(\widetilde\bR)
& = \sigma_{\min}\left\{
\bD_1\otimes \bD_1- \bD_2\otimes \bD_2 + (\bK_{(p - r)(p - r)} + \eye_{(p - r)^2})(\bD_2\otimes\bD_2)
\right\}\\
& = \lambda_{\min}\left\{
\bD_1\otimes \bD_1- \bD_2\otimes \bD_2 + (\bK_{(p - r)(p - r)} + \eye_{(p - r)^2})(\bD_2\otimes\bD_2)
\right\}\\
&\geq \lambda_{\min}\left\{
\bD_1\otimes \bD_1- \bD_2\otimes \bD_2)
\right\}\\
& = \min_{k\in [p - r]}\lambda_{\min}\left\{\frac{1}{1 + \sigma_k^2(\bA_0)}\bD_1 - \frac{\sigma_k(\bA_0)}{1 + \sigma_k^2(\bA_0)}\bD_2\right\}\\
& = \min_{k,l\in [p - r]}\frac{1 - \sigma_k(\bA_0)\sigma_l(\bA_0)}{\{1 + \sigma_k^2(\bA_0)\}\{1 + \sigma_l^2(\bA_0)\}}.
\end{align*}
We finally conclude that
\begin{align*}
&\sigma_{\min}\{\bC_{11}\otimes \bC_{22} - (\bC_{21}\transpose\otimes\bC_{12}\transpose)\bK_{(p - r)r}\}\\
&\quad = \lambda_{\min}\{\bC_{11}\otimes \bC_{22} - (\bC_{21}\transpose\otimes\bC_{12}\transpose)\bK_{(p - r)r}\}\\
&\quad = \min\left\{
\sigma_{\min}(\widetilde\bR), \sigma_{\min}(\eye_{2r - p}\otimes\bD_1)
\right\}\\
&\quad\geq \min\left[
\min_{k,l\in [p - r]}\frac{1 - \sigma_k(\bA_0)\sigma_l(\bA_0)}{\{1 + \sigma_k^2(\bA_0)\}\{1 + \sigma_l^2(\bA_0)\}},
\min_{k\in [p - r]}\frac{1}{1 + \sigma_k^2(\bA_0)}
\right]\\
&\quad = \frac{1 - \|\bA_0\|_2^2}{(1 + \|\bA_0\|_2^2)^2}.
\end{align*}
The proof is thus completed. 
\end{proof}

% section proofs_of_technical_lemmas_in_Proofs (end)

\section{Proofs for Section \ref{sub:extension_to_general_rectangular_matrices}} % (fold)
\label{sec:proofs_for_section_sub:extension_to_general_rectangular_matrices}

\begin{proof}[Proof of Theorem \ref{thm:CT_deviation_Sigma_rectangular}]
First observe that simple algebra leads to the following matrix decomposition
\begin{align*}
\bSigma - \bSigma_0
& = \bSigma(\btheta) - \bSigma(\btheta_{0})
% \\&
 = (\bM - \bM_0)\bU_{0}\transpose + \bM_{0}\{\bU(\bvarphi) - \bU_{0}\}\transpose + \bR_{\bSigma}(\btheta, \btheta_{0}),
\end{align*}
where the remainder
% \begin{align*}
$\bR_\bSigma(\btheta, \btheta_{0}) = (\bM - \bM_0)\{\bU(\bvarphi) - \bU_{0}\}\transpose$
satisfies 
\begin{align*}
\|R_\bSigma(\btheta, \btheta_{0})\|_{\mathrm{F}}
&\leq \|\bM - \bM_0\|_{\mathrm{F}}\|\bU(\bvarphi) - \bU_0\|_{\mathrm{F}}\leq \frac{1}{2}\|\bmu - \bmu_0\|_2^2 + \frac{1}{2}\|\bU(\bvarphi) - \bU_0\|_{\mathrm{F}}^2.
\end{align*}
By Theorem \ref{thm:second_order_deviation_CT},
\begin{align*}
\|\bU(\bvarphi) - \bU_0\|_{\mathrm{F}}
& \leq 2\sqrt{2}\|\bvarphi - \bvarphi_0\|_2
 % + C\|\bvarphi - \bvarphi_0\|_2^2
 . 
\end{align*}
% Note that for $\bvarphi$ such that $\|\bvarphi - \bvarphi_0\|_2\leq 1/(2\sqrt{2})$. 
% \begin{align*}
% \|D\bU(\bvarphi_0)\|_2 &\leq 2\|(\eye_p - \bX_0)^{-\mathrm{T}}\|_2\|(\eye_p - \bX_0)^{-1}\|_2\|\bGamma_\bvarphi\|_2\\
% &\leq 2\sqrt{2}\|(\eye_p - \bX_0)^{-1}\|_2^2\leq 2\sqrt{2}. 
% \end{align*}
Hence, 
$\|\bR_\bSigma(\btheta, \btheta_0)\|_{\mathrm{F}}\leq 4\|\btheta - \btheta_0\|_2^2$% when $\|\btheta - \btheta_0\|_2\leq 1/(2\sqrt{2})$
. Furthermore, using Theorem \ref{thm:second_order_deviation_CT} again, we obtain
\begin{align*}
\vect\{\bU(\bvarphi) - \bU_0\} = 
D\bU(\bvarphi_0)(\bvarphi - \bvarphi_0) + \vect\{\bR_\bU(\bvarphi, \bvarphi_0)\},
\end{align*}
where $\|\bR_\bU(\bvarphi, \bvarphi_0)\|_{\mathrm{F}}\leq 8\|\bvarphi - \bvarphi_0\|_2^2$
 for all $\bvarphi, \bvarphi_0$. In matrix form, we have
\begin{align*}
\bU(\bvarphi) - \bU_0 = 2(\eye_p - \bX_0)^{-1}(\bX_\bvarphi - \bX_0)(\eye_p - \bX_0)^{-1}\eye_{p\times r} + \bR_\bU(\bvarphi, \bvarphi_0).
\end{align*}
Hence we finally obtain
\begin{align*}
\vect\{\bSigma(\btheta) - \bSigma_0\}
& = D_\bmu\bSigma(\btheta_0)(\bmu - \bmu_0) +  \bK_{p_2p_1}(\bM_0\otimes \eye_{p_2})\vect\{\bU(\bvarphi )- \bU_0\}\\
&\quad + \vect\{\bR_\bSigma(\btheta, \btheta_0)\}\\
& = D\bSigma(\btheta_0)(\btheta - \btheta_0) +  \bK_{p_2p_1}(\bM_0\otimes\eye_{p_2})\vect\{\bR_\bU(\bvarphi, \bvarphi_0)\}\\
&\quad + \vect\{\bR_\bSigma(\btheta, \btheta_0)\}\\
& = D\bSigma(\btheta_0)(\btheta - \btheta_0) + \vect\{\bR(\btheta, \btheta_0)\},
\end{align*}
where
\begin{align*}
\|\bR(\btheta, \btheta_0)\|_{\mathrm{F}}
&\leq \|\bM_0\|_2\|\bR_\bU(\bvarphi, \bvarphi_0)\|_{\mathrm{F}} + \|\bR_\bSigma(\btheta, \btheta_0)\|_{\mathrm{F}}\leq (4 + 8\|\bM_0\|_2)\|\btheta - \btheta_0\|_2^2.
\end{align*}
\end{proof}

\begin{proof}[Proof of Theorem \ref{thm:DSigma_singular_value_rectangular}]
Let $\btheta_0 = [\bvarphi_0\transpose, \bmu_0\transpose]\transpose$, where $\bvarphi_0 = \vect(\bA_0)$ for some $\bA_0\in\mathbb{R}^{(p_2 - r)\times r}$ and $\bmu_0 = \vect(\bM_0)$ for some $\bM_0\in\mathbb{R}^{p_1\times r}$. Let $\bU_0 := \bU(\bvarphi_0)$. Then
\begin{align*}
&D\bSigma(\btheta_0)\transpose D\bSigma(\btheta_0)\\
&\quad = \begin{bmatrix*}
\bJ_1 & \bJ_2\\
\bJ_2\transpose & \bJ_3
\end{bmatrix*}
: = \begin{bmatrix*}
D_\bvarphi\bSigma(\btheta_0)\transpose D_\bvarphi\bSigma(\btheta_0) & D_\bvarphi\bSigma(\btheta_0)\transpose D_\bmu\bSigma(\btheta_0)\\
D_\bmu\bSigma(\btheta_0)\transpose D_\bvarphi\bSigma(\btheta_0) & D_\bmu\bSigma(\btheta_0)\transpose D_\bmu\bSigma(\btheta_0)
\end{bmatrix*}\\
&\quad = \begin{bmatrix*}
D\bU(\bvarphi)\transpose (\bM_0\transpose\bM_0\otimes \eye_{p_2}) D\bU(\bvarphi) & 
D\bU(\bvarphi)\transpose (\bM_0\transpose\otimes\eye_{p_2})\bK_{p_1p_2}(\bU_0\otimes\eye_{p_1})
\\
(\bU_0\transpose\otimes\eye_{p_1})\bK_{p_2p_1}(\bM_0\otimes\eye_{p_2})D\bU(\bvarphi) & 
\eye_r\otimes \eye_{p_1}
\end{bmatrix*}.
\end{align*}
The Schur complement of $\bJ_3$ of the entire matrix $D\bSigma(\btheta_0)\transpose D\bSigma(\btheta_0)$ is given by
\begin{align*}
&\bJ_1 - \bJ_2\bJ_3^{-1}\bJ_2\transpose\\
&\quad = D\bU(\bvarphi_0)\transpose (\bM_0\transpose\bM_0\otimes \eye_{p_2}) D\bU(\bvarphi_0)\\
&\quad\quad - 
D\bU(\bvarphi_0)\transpose (\bM_0\transpose\otimes \eye_{p_2}) \bK_{p_1p_2} (\bU_0\bU_0\transpose\otimes\eye_{p_1}) \bK_{p_2p_1} (\bM_0\otimes \eye_{p_2}) D\bU(\bvarphi_0).
\end{align*}
Similar to the proof of Theorem \ref{thm:DSigma_singular_value}, we provide a lower bound for the smallest eigenvalue of $\bJ_1 - \bJ_2\bJ_3^{-1}\bJ_2\transpose$. 
Denote 
\begin{align*}
\bX_0 = \bX_{\bvarphi_0}
 = \begin{bmatrix*}
 \zero_{r\times r} & - \bA_0\transpose{}\\\bA_0 & \zero_{(p_2 - r)\times (p_2 - r)}
 \end{bmatrix*},\quad
\bC_0 = (\eye_{p_2} - \bX_{\bvarphi_0})^{-1},\quad \bW_0 = \bC_0^{-\mathrm{T}}\bC_0.
\end{align*}
For any nonzero vector $\bvarphi\in\mathbb{R}^{(p_2 - r)r}$, write
\begin{align*}
&\bvarphi\transpose(\bJ_1 - \bJ_2\bJ_3^{-1}\bJ_2\transpose)\bvarphi\\
&\quad = \bvarphi D\bU(\bvarphi)\transpose (\bM_0\transpose\bM_0\otimes \eye_{p_2}) D\bU(\bvarphi)\bvarphi\\
&\quad\quad - \{\bK_{p_2p_1}(\bM_0\otimes\eye_{p_2}) D\bU(\bvarphi_0)\bvarphi\}\transpose(\bU_0\bU_0\transpose\otimes\eye_{p_1})\{\bK_{p_2p_1}(\bM_0\otimes\eye_{p_2}) D\bU(\bvarphi_0)\bvarphi\}\\
&\quad = 4\{(\eye_{p_2\times r}\transpose\bC_0\transpose\otimes\bC_0)\vect(\bX_{\bvarphi})\}\transpose (\bM_0\transpose\bM_0\otimes \eye_{p_2}) \{(\eye_{p_2\times r}\transpose\bC_0\transpose\otimes\bC_0)\vect(\bX_{\bvarphi})\}\\
&\quad\quad - 4\{\bK_{p_2p_1}(\bM_0\otimes\eye_{p_2})(\eye_{p_2\times r}\transpose\bC_0\transpose\otimes\bC_0)\vect(\bX_{\bvarphi})\}\transpose (\bU_0\bU_0\transpose\otimes \eye_{p_1})\\
&\quad\quad\quad\times \{\bK_{p_2p_1}(\bM_0\otimes\eye_{p_2})(\eye_{p_2\times r}\transpose\bC_0\transpose\otimes\bC_0)\vect(\bX_{\bvarphi})\}\\
&\quad = 4\{\vect(\bC_0\bX_{\bvarphi}\bC_0\eye_{p_2\times r})\}\transpose (\bM_0\transpose\bM_0\otimes \eye_{p_2}) \{\vect(\bC_0\bX_{\bvarphi}\bC_0\eye_{p_2\times r})\}\\
&\quad\quad - 4\{\bK_{p_2p_1}(\bM_0\otimes\eye_{p_2})\vect(\bC_0\bX_{\bvarphi}\bC_0\eye_{p_2\times r})\}\transpose (\bU_0\bU_0\transpose\otimes \eye_{p_1})\\
&\quad\quad\quad\times \{\bK_{p_2p_1}(\bM_0\otimes\eye_{p_2})\vect(\bC_0\bX_{\bvarphi}\bC_0\eye_{p_2\times r})\}\\
&\quad = 4\{\vect(\bC_0\bX_{\bvarphi}\bC_0\eye_{p_2\times r})\}\transpose (\bM_0\transpose\bM_0\otimes \eye_{p_2}) \{\vect(\bC_0\bX_{\bvarphi}\bC_0\eye_{p_2\times r})\}\\
&\quad\quad - 4\{\vect(\bC_0\bX_{\bvarphi}\bC_0\eye_{p_2\times r})\}\transpose (\bM_0\transpose\otimes\eye_{p_2})\bK_{p_1p_2}(\bU_0\bU_0\transpose\otimes \eye_{p_1})\bK_{p_2p_1}(\bM_0\otimes \eye_{p_2})
\\
&\quad\quad\quad\times 
\{\vect(\bC_0\bX_{\bvarphi}\bC_0\eye_{p_2\times r})\}\\
&\quad = 4\{\vect(\bC_0\bX_{\bvarphi}\bC_0\eye_{p_2\times r})\}\transpose (\bM_0\transpose\bM_0\otimes \eye_{p_2}) \{\vect(\bC_0\bX_{\bvarphi}\bC_0\eye_{p_2\times r})\}\\
&\quad\quad - 4\{\vect(\bC_0\bX_{\bvarphi}\bC_0\eye_{p_2\times r})\}\transpose (\bM_0\transpose\bM_0\otimes\bU_0\bU_0\transpose)
% \\&\quad\quad\times 
\{\vect(\bC_0\bX_{\bvarphi}\bC_0\eye_{p_2\times r})\}\\
&\quad = 4\{\vect(\bC_0\bX_{\bvarphi}\bC_0\eye_{p_2\times r})\}\transpose \{\bM_0\transpose\bM_0\otimes (\eye_{p_2} - \bU_0\bU_0\transpose)\} \{\vect(\bC_0\bX_{\bvarphi}\bC_0\eye_{p_2\times r})\}\\
&\quad = 4\vect(\bX_{\bvarphi})\transpose(\bC_0\eye_{p_2\times r}\otimes \bC_0\transpose)\{\bM_0\transpose\bM_0\otimes (\eye_{p_2} - \bU_0\bU_0\transpose)\}(\eye_{p_2\times r}\transpose\bC_0\transpose\otimes\bC_0)\vect(\bX_\bvarphi)\\
&\quad = 4\vect(\bX_{\bvarphi})(\bC_0\otimes\bC_0\transpose)\{\eye_{p_2\times r}\bM_0\transpose\bM_0\eye_{p_2\times r}\transpose\otimes (\eye_{p_2} - \bU_0\bU_0\transpose)\}(\bC_0\transpose\otimes\bC_0)\vect(\bX_{\bvarphi})\\
&\quad = 4\vect(\bX_{\bvarphi})(\bC_0\transpose\otimes\bC_0\transpose)\{\bU_0\bM_0\transpose\bM_0\bU_0\transpose\otimes (\eye_{p_2} - \bU_0\bU_0\transpose)\}(\bC_0\otimes\bC_0)\vect(\bX_{\bvarphi}).
\end{align*}
Let $\bM_0\transpose\bM_0 = \bV\bS\bV\transpose$ be the spectral decomposition of $\bM_0\transpose\bM_0$, where $\bV\in\mathbb{O}(r)$, and 
% $\bS$ is the diagonal matrix of the eigenvalues of $\bM_0\transpose\bM_0$, 
$\bS = \mathrm{diag}\{\sigma_1(\bM_0)^2,\ldots,\sigma_r(\bM_0)^2\}$. Denote 
\[
\widetilde{\bV} = \begin{bmatrix*}
\bV & \\ & \eye_{p_2 - r}
\end{bmatrix*}.
\]
Then we compute
\begin{align*}
&\bU_0\bM_0\transpose\bM_0\bU_0\transpose\otimes (\eye_{p_2} - \bU_0\bU_0\transpose)\\
&\quad = \bW_0\widetilde{\bV}\begin{bmatrix*}
\bS & \\ & \zero_{(p_2 - r)\times (p_2 - r)}
\end{bmatrix*}\widetilde{\bV}\transpose\bW_0\transpose \otimes \bW_0\widetilde{\bV}\begin{bmatrix*}
\zero_{r\times r} & \\ & \eye_{p_2 - r}
\end{bmatrix*}\widetilde{\bV}\transpose\bW_0\transpose\\
&\quad = (\bW_0\otimes\bW_0)(\widetilde{\bV}\otimes\widetilde{\bV})
\begin{bmatrix*}
\bS\otimes\begin{bmatrix*}
\zero_{r\times r} & \\ & \eye_{p_2 - r}
\end{bmatrix*}
 & \\ & \zero_{(p_2 - r)^2\times (p_2 - r)^2}
\end{bmatrix*}
(\widetilde{\bV}\otimes\widetilde{\bV})\transpose(\bW_0\otimes\bW_0)\\
&\quad \succeq \sigma_{r}^2(\bM_0)(\bW_0\otimes\bW_0)
\begin{bmatrix*}
\bV\otimes\widetilde{\bV} & \\ & \eye_{p_2 - r}\otimes\widetilde{\bV}
\end{bmatrix*}
\begin{bmatrix*}
\eye_r\otimes\begin{bmatrix*}
\zero_{r\times r} & \\ & \eye_{p_2 - r}
\end{bmatrix*}
 & \\ & \zero_{(p_2 - r)^2\times (p_2 - r)^2}
\end{bmatrix*}\\
&\quad\quad\times
\begin{bmatrix*}
\bV\transpose\otimes\widetilde{\bV}\transpose & \\ & \eye_{p_2 - r}\otimes\widetilde{\bV}\transpose
\end{bmatrix*}(\bW_0\transpose\otimes\bW_0\transpose)\\
&\quad = \sigma_r^2(\bM_0)(\bW_0\otimes\bW_0)\begin{bmatrix*}
\eye_r & \\ & \zero_{(p_2 - r)\times(p_2 - r)}
\end{bmatrix*}\otimes \begin{bmatrix*}
\zero_{r\times r} & \\ & \eye_{p_2 - r}
\end{bmatrix*}
(\bW_0\transpose\otimes\bW_0\transpose)\\
&\quad = \sigma_r^2(\bM_0)\{\bU_0\bU_0\otimes (\eye_{p_2} - \bU_0\bU_0\transpose)\}.
\end{align*}
Therefore, the quadratic form $\bvarphi\transpose(\bJ_1 - \bJ_2\bJ_3^{-1}\bJ_2\transpose)\bvarphi$ can be lower bounded:
\begin{align*}
&\bvarphi\transpose(\bJ_1 - \bJ_2\bJ_3^{-1}\bJ_2\transpose)\bvarphi\\
&\quad \geq 4\sigma_r^2(\bM_0)\vect(\bX_\bvarphi)\transpose(\bC_0\transpose\otimes \bC_0\transpose)\{\bU_0\bU_0\transpose\otimes(\eye_{p_2} - \bU_0\bU_0\transpose)\}(\bC_0\otimes\bC_0)\vect(\bX_\bvarphi)\\
&\quad = 4\sigma_r^2(\bM_0)\|(\bU_0\transpose\bC_0 \otimes \bU_{0\perp}\transpose\bC_0)\vect(\bX_\bvarphi)\|_2^2\\
&\quad = 4\sigma_r^2(\bM_0)\left\|\left\{(\eye_{p_2\times r}\transpose\bC_0\transpose\bC_0^{-1}\bC_0)\otimes \begin{bmatrix*}
\zero_{(p_2 - r)\times r} & \eye_{p_2 - r}
\end{bmatrix*}\bC_0\transpose\bC_0^{-1}\bC_0\right\}\vect(\bX_\bvarphi)\right\|_2^2\\
&\quad = 4\sigma_r^2(\bM_0)\left\|\left\{(\eye_{p_2\times r}\transpose\bC_0\transpose)\otimes \begin{bmatrix*}
\zero_{(p_2 - r)\times r} & \eye_{p_2 - r}
\end{bmatrix*}\bC_0\transpose\right\}\vect(\bX_\bvarphi)\right\|_2^2,
\end{align*}
where $\bU_{0\perp}$ is the orthogonal complement of $\bU_0$, \emph{i.e.}, $\bU_{0\perp} = \bC_0^{-\mathrm{T}}\bC_0\bTheta_2\transpose$, and $\bTheta_2 = [\zero_{(p_2 - r)\times r},\eye_{p_2 - r}]$. Write $\bC_0$ in the block matrix form
\[
\bC_0 = \begin{bmatrix*}
\bC_{11} & \bC_{12} \\ \bC_{21} & \bC_{22}
\end{bmatrix*}
= \begin{bmatrix*}
(\eye_r + \bA_0\transpose\bA_0)^{-1} & - (\eye_r + \bA_0\transpose\bA_0)^{-1}\bA_0\transpose\\
\bA_0(\eye_r + \bA_0\transpose\bA_0)^{-1} & \eye_{p_2 - r} - \bA_0(\eye_r + \bA_0\transpose\bA_0)^{-1}\bA_0\transpose
\end{bmatrix*}.
\]
It follows that
\begin{align*}
\bvarphi\transpose(\bJ_1 - \bJ_2\bJ_3^{-1}\bJ_2\transpose)\bvarphi
& \geq 4\sigma_r^2(\bM_0)
\left\|\left\{\begin{bmatrix*}
\bC_{11}\transpose & \bC_{21}\transpose
\end{bmatrix*}\otimes \begin{bmatrix*}
\bC_{12}\transpose & \bC_{22}\transpose
\end{bmatrix*}\right\} \vect(\bX_\bvarphi)\right\|_2^2\\
& = 4\sigma_r^2(\bM_0)
\left\|
\begin{bmatrix*}
\bC_{12}\transpose & \bC_{22}\transpose
\end{bmatrix*}\begin{bmatrix*}
\zero_{r\times r} & -\bA\transpose \\
\bA & \zero_{(p_2 - r)\times(p_2 - r)}
\end{bmatrix*}\begin{bmatrix*}
\bC_{11} \\ \bC_{21}
\end{bmatrix*}
\right\|_{\mathrm{F}}^2\\
& = 4\sigma_r^2(\bM_0)\|\bC_{22}\transpose\bA\bC_{11} - \bC_{12}\transpose\bA\transpose\bC_{21}\|\\
& = 4\sigma_r^2(\bM_0) \|\{(\bC_{11}\otimes \bC_{22}) - (\bC_{21}\transpose\otimes \bC_{12}\transpose)\bK_{(p_2 - r)r}\}\vect(\bA)\|_{\mathrm{F}}^2.
\end{align*}
We finally invoke Lemma \ref{lemma:Cmatrix_singular_value} to conclude that
\begin{align*}
\bvarphi\transpose(\bJ_1 - \bJ_2\bJ_3^{-1}\bJ_2\transpose)\bvarphi
&\geq \left\{
\begin{aligned}
&\frac{4\sigma_r^2(\bM_0)(1 - \|\bA_0\|_2^2)^2\|\bvarphi\|_2^2}{(1 + \|\bA_0\|_2^2)^4},&\quad\text{if }r \geq 2,\\
&\frac{4\sigma_r^2(\bM_0)\|\bvarphi\|_2^2}{(1 + \|\bA_0\|_2^2)^2},&\quad\text{if }r = 1,
\end{aligned}
\right.
\end{align*}
which further implies that
\begin{align*}
\|(\bJ_1 - \bJ_2\bJ_3^{-1}\bJ_2\transpose)^{-1}\|_2\leq\left\{
\begin{aligned}
&\frac{(1 + \|\bA_0\|_2^2)^4}{4\sigma_r^2(\bM_0)(1 - \|\bA_0\|_2^2)^2},&\quad\text{if }r \geq 2,\\
&\frac{(1 + \|\bA_0\|_2^2)^2}{4\sigma_r^2(\bM_0)},&\quad\text{if }r = 1.
\end{aligned}
\right.
\end{align*}
This shows that $\bJ_1 - \bJ_2\bJ_3^{-1}\bJ_2\transpose$ is invertible. Since $\bJ_3$ is also invertible, the property of the Schur complement immediately implies that $D\bSigma(\btheta_0)\transpose D\bSigma(\btheta_0)$ is invertible. Furthermore, by the block matrix inversion formula,
\begin{align*}
&\{D\bSigma(\btheta_0)\transpose D\bSigma(\btheta_0)\}^{-1}\\
% & = \begin{bmatrix*}
% \bJ_1 & \bJ_2\\
% \bJ_2\transpose & \bJ_3
% \end{bmatrix*}^{-1}
&\quad = \begin{bmatrix*}
(\bJ_1 - \bJ_2\bJ_3^{-1}\bJ_2\transpose)^{-1} & -(\bJ_1 - \bJ_2\bJ_3^{-1}\bJ_2\transpose)^{-1}\bJ_2\bJ_3^{-1}\\
-\bJ_3^{-1}\bJ_2\transpose(\bJ_1 - \bJ_2\bJ_3^{-1}\bJ_2\transpose)^{-1} & \bJ_3^{-1} + \bJ_3^{-1}\bJ_2\transpose(\bJ_1 - \bJ_2\bJ_3^{-1}\bJ_2\transpose)^{-1}\bJ_2\bJ_3^{-1}
\end{bmatrix*}.
\end{align*}
By construction, $\|\bJ_3^{-1}\|_2 = 1$ and
\begin{align*}
\|\bJ_2\|_2 &\leq \|D\bU(\bvarphi_0)\transpose(\bM_0\transpose\otimes \eye_p) \bK_{p_2p_1}(\bU_0\otimes\eye_{p_1})\|\leq \|D\bU(\bvarphi_0)\|_2\|\bM_0\|_2\\
&\leq 2\sqrt{2}\|\bM_0\|_2\|\eye_{p\times r}(\eye_p - \bX_{\bvarphi_0})^{-1}\|_2\|(\eye_p - \bX_{\bvarphi_0})^{-1}\|_2\leq 2\sqrt{2}\|\bM_0\|_2.
\end{align*}
Thus, by Lemma 3.4 of \cite{10.1112/blms/bds080}, we see that
\begin{align*}
\|\{D\bSigma(\btheta_0)\transpose D\bSigma(\btheta_0)\}^{-1}\|_2
&\leq \|(\bJ_1 - \bJ_2\bJ_3^{-1}\bJ_2\transpose)^{-1}\|_2\\
&\quad + \|\bJ_3^{-1} + \bJ_3^{-1}\bJ_2\transpose(\bJ_1 - \bJ_2\bJ_3^{-1}\bJ_2\transpose)^{-1}\bJ_2\bJ_3^{-1}\|_2\\
&\leq \|(\bJ_1 - \bJ_2\bJ_3^{-1}\bJ_2\transpose)^{-1}\|_2\\
&\quad + \|\bJ_3^{-1}\|_2 + \|\bJ_3^{-1}\|_2^2\|\bJ_2\|_2^2\|(\bJ_1 - \bJ_2\bJ_3^{-1}\bJ_2\transpose)^{-1}\|_2\\
&\leq 1 + (1 + 8\|\bM_0\|_2^2)\|(\bJ_1 - \bJ_2\bJ_3^{-1}\bJ_2\transpose)^{-1}\|_2.
\end{align*}
The proof is completed by combining the upper bound for $\|(\bJ_1 - \bJ_2\bJ_3^{-1}\bJ_2\transpose)^{-1}\|_2$. 
\end{proof}

% section proofs_for_section_sub:extension_to_general_rectangular_matrices (end)

\section{Proofs for Section \ref{sub:bayesian_sparse_pca_and_non_intrinsic_loss}} % (fold)
\label{sec:proofs_for_section_sub:bayesian_sparse_pca_and_non_intrinsic_loss}

In this section we prove the main result of Section \ref{sub:bayesian_sparse_pca_and_non_intrinsic_loss}, namely, Theorem \ref{thm:contraction_spectral_norm}. The proof is lengthy and is partitioned into several subsections. The sketch of the proof can be loosely summarized as the following steps:
\begin{enumerate}
	\item Prior concentration (Section \ref{sub:prior_concentration}). We provide an lower bound for the prior probability that $\bOmega(\btheta)$ is inside a small neighborhood of $\bOmega_0$, i.e., $\Pi_\btheta\{\|\bOmega(\btheta) - \bOmega_0\|_{\mathrm{F}} \leq \eta_n\}$, where $(\eta_n)_{n = 1}^\infty$ is a sequence converging to $0$.
	\item Posterior consistency (Section \ref{sub:posterior_sparsity}). We prove that with posterior probability going to $1$, the intrinsic dimension cannot be too large, namely, $\expect_0\{\Pi_\btheta(\btheta:|S_\bA|\leq \kappa_0 s\mid\bY_n)\} = o(1)$ for some constant $\kappa_0 > 0$. 
	\item Construction of certain test functions (Section \ref{sub:construction_of_test_functions}). This step is needed in order to obtain the rate-optimal posterior contraction under the Frobenius norm following the general framework of \cite{ghosal2007convergence}. 
	\item Posterior contraction under the Frobenius norm (Section \ref{sub:posterior_contraction_under_frobenius_norm}). This is immediate once the previous steps are completed, but also serves as an intermediate step to the posterior contraction under the spectral norm. 
	\item Local asymptotic normality (Section \ref{sub:local_asymptotic_normality}). We expand the log-likelihood function locally at $\btheta_0$ under the posterior sparsity restriction via a Taylor expansion argument, which can be viewed as a variant of the local asymptotic normality (see, e.g., Chapter 7 in \citealp{van2000asymptotic}).
	\item Distributional approximation (Section \ref{sub:distributional_approximation}). Leveraging the local asymptotic normality result established in Section \ref{sub:local_asymptotic_normality}, we prove Theorem \ref{thm:BvM}, i.e., the asymptotic characterization of the shape of the posterior distribution $\Pi_\btheta(\btheta\in\cdot\mid\bY_n)$ using a random mixture of normal distributions. 
	\item Posterior contraction under the spectral norm (Section \ref{sub:posterior_contraction_under_spectral_norm}). Finally, we prove the rate-optimal posterior contraction of $\Span(\bU)$ under the spectral sine-theta distance using the asymptotic distributional approximation result obtained in Theorem \ref{thm:BvM}. 
\end{enumerate}
Now denote, $S_0 = \mathrm{supp}(\bA_0)$, $s_0 = |S_0|$, and
\[
\gamma(|S|) = \int_{\|\bA_S\|_2 < 1}\exp(-2\|\vect(\bA_S)\|_1)\mathrm{d}\bA_S.
\]
We begin the proof with the following upper and lower bounds for $\gamma(|S|)$:
\begin{align}\label{eqn:gamma_S_upper_bound}
\gamma(|S|) &\leq \int_{\mathbb{R}^{|S|\times r}}\exp(-2\|\vect(\bA_S)\|_1)\mathrm{d}\bA_S = 1,\\
\label{eqn:gamma_S_lower_bound}
\gamma(|S|) &\geq \exp\left\{-\frac{1}{2}r|S|\log(r|S|) - (2 - \log 2)r|S|\right\},\quad |S| \geq 1,
\end{align}
where the lower bound can be derived as follows:
\begin{align*}
\gamma(|S|) 
& \geq \int_{\|\bA_S\|_{\mathrm{F}} < 1}\exp(-2\|\vect(\bA_S)\|_1)\mathrm{d}\bA_S\\
&\geq\int_{\|\vect(\bA_S)\|_\infty\leq (r|S|)^{-1/2}}\exp(-2\|\vect(\bA_S)\|_1)\mathrm{d}\bA_S\\
& = \left\{\int_{-(r|S|)^{-1/2}}^{(r|S|)^{-1/2}}e^{-2x}\mathrm{d}x\right\}^{r|S|}
\geq \left\{\frac{2e^{-2}}{(r|S|)^{1/2}}\right\}^{r|S|}\\
& = \exp\left\{-\frac{1}{2}r|S|\log(r|S|) - (2 - \log 2)r|S|\right\}.
\end{align*}
Also, observe that for sufficiently large $n$, $z_n\in [1/2, 2]$. 

\subsection{Prior concentration} % (fold)
\label{sub:prior_concentration}
This subsection focuses on proving the following lemma that describes the prior concentration behavior of $\Pi_\btheta(\cdot)$:
\begin{lemma}\label{lemma:prior_concentration}
Under the prior specification and setup in Section \ref{sub:bayesian_sparse_pca_and_non_intrinsic_loss}, if $(\eta_n)_{n = 1}^\infty$ is a sequence such that $\eta_n/\|\bOmega_0\|_2\to 0$ and $n\eta_n^2\to\infty$, then
\[
\Pi_\btheta\{\|\bOmega(\btheta) - \bOmega_0\|_{\mathrm{F}} < \eta_n\}\geq
\exp\left(-C_0rs_0\log n + Cs_0\log p\right)
\]
for some constant $C_0 = C(\|\bOmega_0\|_2) > 0$ that only depends on $\|\bOmega_0\|_2$, and some absolute constant $C > 0$. 
\end{lemma}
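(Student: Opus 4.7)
The plan is to transfer the prior small-ball event for $\bOmega$ to a prior small-ball event for the Euclidean representer $\btheta$ via Theorem \ref{thm:CT_deviation_Sigma}, and then to bound the prior mass of a product Euclidean neighborhood of $\btheta_0$ factor-by-factor.

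First, since $\bOmega(\btheta) - \bOmega_0 = \bSigma(\btheta) - \bSigma_0$, Theorem \ref{thm:CT_deviation_Sigma} gives
\[
\|\bOmega(\btheta) - \bOmega_0\|_{\mathrm{F}} \leq \|D\bSigma(\btheta_0)\|_2\,\|\btheta - \btheta_0\|_2 + 16(1 + \|\bM_0\|_2)\|\btheta - \btheta_0\|_2^2,
\]
and $\|D\bSigma(\btheta_0)\|_2$ is bounded by a constant depending only on $\|\bM_0\|_2$ and $\|\bA_0\|_2$, both of which are bounded by Assumptions A2--A3. Hence there is a constant $c_1 = c_1(\|\bOmega_0\|_2) > 0$ such that $\|\btheta - \btheta_0\|_2 \leq \delta_n := c_1\eta_n$ implies $\|\bOmega(\btheta) - \bOmega_0\|_{\mathrm{F}} < \eta_n$, provided $\delta_n$ is small enough; this is guaranteed since $\eta_n/\|\bOmega_0\|_2 \to 0$. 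So it suffices to lower bound $\Pi_\btheta\{\|\btheta - \btheta_0\|_2 \leq \delta_n\}$.

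Next, restrict to the subevent $\calE_n = \{\bA_{S_0^c} = \zero\} \cap \{\|\vect(\bA_{S_0}) - \vect(\bA_{0S_0})\|_2 \leq \delta_n/2\} \cap \{\|\bmu - \bmu_0\|_2 \leq \delta_n/2\}$, whose indicator certainly sits inside $\{\|\btheta - \btheta_0\|_2 \leq \delta_n\}$. The product structure \eqref{eqn:prior_sparse_pca} factors $\Pi_\btheta(\calE_n)$ into three pieces. For the model-selection factor $\pi_S(S_0)$, using $z_n \in [1/2, 2]$ and $\binom{p-r}{s_0} \leq (p-r)^{s_0}$,
\[
\log \pi_S(S_0) \geq -r s_0 \log n - (a + 1)s_0 \log p - \log 2.
\]
For the density piece on $\bA_{S_0}$, Assumption A2 ensures that a ball of radius $\delta_n/2$ around $\bA_{0S_0}$ lies inside $\{\|\bA_{S_0}\|_2 < 1\}$ for $n$ large; combined with the trivial upper bound $\gamma(s_0) \leq 1$ in \eqref{eqn:gamma_S_upper_bound} and the triangle inequality on $\|\vect(\bA_{S_0})\|_1$, the density is at least $\exp(-2\|\vect(\bA_{0S_0})\|_1 - \delta_n\sqrt{rs_0})$ throughout this ball. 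Bounding the Lebesgue volume of an $rs_0$-dimensional Euclidean ball of radius $\delta_n/2$ by Stirling gives
\[
\log\Pi_{\bA_{S_0}}\bigl(\|\bA_{S_0} - \bA_{0S_0}\|_{\mathrm{F}} \leq \delta_n/2\bigr) \geq rs_0 \log\delta_n - C rs_0 \log(rs_0) - 2\|\vect(\bA_{0S_0})\|_1 - C'.
\]
The crude bound $\|\vect(\bA_{0S_0})\|_1 \leq \sqrt{r s_0}\,\|\bA_0\|_{\mathrm{F}} \leq C\sqrt{r^2 s_0}$ shows this cross term is dominated. An identical computation bounds the $\bmu$ factor, contributing only $r(r+1)/2$-dimensional terms that are swallowed by $rs_0 \log n$ under Assumption A4.

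Finally, combining the three bounds and using $n\eta_n^2 \to \infty$ to conclude $\log(1/\delta_n) \leq \frac{1}{2}\log n + O(1)$, one arrives at
\[
\log \Pi_\btheta\{\|\bOmega(\btheta) - \bOmega_0\|_{\mathrm{F}} < \eta_n\} \geq -C_0 rs_0 \log n - C s_0 \log p,
\]
which is the claimed bound (with the sign on the $s_0 \log p$ term as expected from a prior-concentration inequality). The main step requiring care is the volume bound on the Laplace ball in dimension $rs_0$: one must track that $\log$-volume $\asymp -rs_0\log(rs_0) + rs_0\log\delta_n$ combines with $\log\pi_S(S_0) \asymp -rs_0\log n - s_0\log p$ to absorb the $\log(rs_0)$ and $\log(1/\delta_n)$ terms into the $rs_0\log n$ budget, which holds precisely because $\delta_n = c_1\eta_n \gtrsim n^{-1/2}$.
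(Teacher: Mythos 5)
Your proof is correct and follows essentially the same route as the paper's: transfer the small Frobenius ball around $\bOmega_0$ to a small Euclidean ball around $\btheta_0$, restrict the model-selection factor to $S=S_0$, then lower-bound the restricted Laplace mass of product balls in $\bA_{S_0}$ and $\bmu$. The only stylistic difference is that you invoke Theorem \ref{thm:CT_deviation_Sigma} to get the Lipschitz-type bound $\|\bOmega(\btheta)-\bOmega_0\|_{\mathrm{F}}\lesssim\|\btheta-\btheta_0\|_2$, whereas the paper re-derives this inline from the matrix decomposition and the bound $\|\bU(\bvarphi)-\bU_0\|_{\mathrm{F}}\leq 2\sqrt{2}\|\bvarphi-\bvarphi_0\|_2$ of Theorem \ref{thm:second_order_deviation_CT}; the two routes yield the same radius up to constants depending only on $\|\bOmega_0\|_2$. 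One small attribution slip: the absorption of the $r(r+1)/2$-dimensional $\bmu$-contribution into the $rs_0\log n$ budget follows from $s_0\geq r$ (a consequence of Assumption A1, $s=|\mathrm{supp}(\bU_0)|\geq 2r$ and $s=s_0+r$), not Assumption A4. You also correctly identified that the sign on the $s_0\log p$ term in the lemma statement is a typo.
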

Before proving Lemma \ref{lemma:prior_concentration}, we need the following auxiliary lemma from \cite{pati2014posterior}.
\begin{lemma}[Lemma 9.1 in the Supplement of \citealp{pati2014posterior}]
\label{lemma:evidence_lower_bound}
Let $(\eta_n)_{n = 1}^\infty$ be a sequence converging to $0$ with $n\eta_n^2\to\infty$. Then there exist a constant $C > 0$ and a sequence of events $(\Xi_n)_{n = 1}^\infty$ with $\prob_0(\Xi_n) \to 1$ such that over the event $\Xi_n$, 
\[
D_n\geq \exp\left\{-Cn\eta_n^2\log(2\|\bOmega_0\|_2)\right\}\Pi_\btheta\left\{\btheta:\|\bOmega(\btheta) - \bOmega_0\|_{\mathrm{F}} < \eta_n\right\}.
\]
\end{lemma}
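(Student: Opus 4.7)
The plan is to adapt the standard evidence-lower-bound argument of Ghosal--Ghosh--van der Vaart (2000, Lemma~8.1) to the Gaussian sparse spiked covariance setting. Starting from the trivial inequality
\begin{align*}
D_n \;\geq\; \int_{B_n}\exp\{\ell(\bOmega(\btheta)) - \ell(\bOmega_0)\}\,\Pi_\btheta(\mathrm{d}\btheta),\qquad B_n := \{\btheta : \|\bOmega(\btheta) - \bOmega_0\|_{\mathrm{F}} < \eta_n\},
\end{align*}
the problem reduces to controlling the integrand from below uniformly over $B_n$ on a high-$\prob_0$-probability event $\Xi_n$.

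First, I would bound the Gaussian Kullback--Leibler divergence and its second central moment on $B_n$. A second-order Taylor expansion of $\bOmega \mapsto \log\det(\bOmega^{-1}\bOmega_0)+\mathrm{tr}(\bOmega^{-1}\bOmega_0)$ about $\bOmega_0$, combined with the assumption $\eta_n/\|\bOmega_0\|_2\to 0$ (which makes $\bOmega(\btheta)$ spectrally comparable to $\bOmega_0$ throughout $B_n$), yields
\begin{align*}
\mathrm{KL}(P_{\bOmega_0},P_{\bOmega(\btheta)})\;\lesssim\;\log(2\|\bOmega_0\|_2)\,\eta_n^2,\qquad V_{2,0}(P_{\bOmega_0},P_{\bOmega(\btheta)})\;\lesssim\;\log(2\|\bOmega_0\|_2)\,\eta_n^2,
\end{align*}
uniformly in $\btheta\in B_n$, where the logarithmic factor tracks the spectral scale of $\bOmega_0$ through the $\log\det$ terms in the Gaussian log-likelihood.

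Second, I would combine these with a Fubini--Chebyshev argument. By Fubini--Tonelli, both the $\prob_0$-expectation and the $\prob_0$-variance of the $\Pi_\btheta$-average $\int_{B_n}\{\ell(\bOmega(\btheta))-\ell(\bOmega_0)\}\Pi_\btheta(\mathrm{d}\btheta)$ are controlled by $n\eta_n^2\log(2\|\bOmega_0\|_2)\,\Pi_\btheta(B_n)$ up to absolute constants, and Chebyshev's inequality then produces an event $\Xi_n$ with $\prob_0(\Xi_n)\geq 1-(Cn\eta_n^2)^{-1}\to 1$ on which that $\Pi_\btheta$-average exceeds $-Cn\eta_n^2\log(2\|\bOmega_0\|_2)\,\Pi_\btheta(B_n)$. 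Passing from the $\Pi_\btheta$-average of the log-likelihood ratio to the $\Pi_\btheta$-average of its exponential via Jensen's inequality (after normalizing $\Pi_\btheta(\cdot\cap B_n)$ to a probability measure on $B_n$) yields the claimed bound.

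The principal technical obstacle is that the Cayley-chart map $\btheta\mapsto\bOmega(\btheta)$ is highly nonlinear, so one must expand the KL functionals directly in $\bOmega$ rather than in $\btheta$ so that the bounds factor cleanly through $\|\bOmega(\btheta)-\bOmega_0\|_{\mathrm{F}}$, which is precisely how $B_n$ is defined. No regularity of $\Pi_\btheta$ is needed at this stage beyond its being a probability measure: the sparsity structure of $\Pi_\bA$ and the choice of $\pi_\bmu$ enter only through the companion prior-mass bound provided by Lemma~\ref{lemma:prior_concentration}.
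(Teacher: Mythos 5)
The paper does not give its own proof of this lemma: it is cited verbatim as Lemma~9.1 from the Supplement of \citet{pati2014posterior}, so there is no in-paper argument to compare against. Your reconstruction via the Ghosal--Ghosh--van~der~Vaart evidence-lower-bound template is the right one and is, up to notation, the argument used there: restrict $D_n$ to the Frobenius ball $B_n$; use the spectral comparability of $\bOmega(\btheta)$ to $\bOmega_0$ on $B_n$ (together with $\|\bOmega_0^{-1}\|_2\leq 1$) to bound $\mathrm{KL}$ and the second moment of the per-observation log-likelihood ratio by $O(\eta_n^2)$; apply Chebyshev to obtain $\Xi_n$; and finish with Jensen applied to the prior renormalized on $B_n$. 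One small imprecision worth flagging: as you have written it the $\prob_0$-variance of $\int_{B_n}\{\ell(\bOmega(\btheta))-\ell(\bOmega_0)\}\Pi_\btheta(\mathrm{d}\btheta)$ scales like $n\eta_n^2\Pi_\btheta(B_n)^2$, not $n\eta_n^2\Pi_\btheta(B_n)$; if one used the looser stated bound together with a deviation threshold of order $n\eta_n^2\Pi_\btheta(B_n)$, Chebyshev would give a failure probability of order $\{n\eta_n^2\Pi_\btheta(B_n)\}^{-1}$, which need not vanish. The fix is simply to run the Chebyshev step for the normalized measure $\bar\Pi:=\Pi_\btheta(\cdot\cap B_n)/\Pi_\btheta(B_n)$ from the start (as you in fact do in the Jensen step), in which case the $\Pi_\btheta(B_n)$ factors cancel and the failure probability is $O\{(n\eta_n^2)^{-1}\}\to 0$.
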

\begin{proof}[Proof of Lemma \ref{lemma:prior_concentration}]
% For simplicity we shall use $C > 0$ to denote constants that may change from line to line but are universal and independent of $n$, the hyperparameter, and the spectra of the true covariance matrix $\bOmega_0$. 
Let $\bvarphi_0 = \vect(\bA_0) = \bU^{-1}(\bU_0)$ be such that $\bU(\bvarphi_0) = \bU_0$, where $\bU(\bvarphi)$ is the Cayley transform of $\bvarphi\in\mathbb{R}^d$, $\bmu_0$ is the vector formed by taking the upper diagonal entries of $\bM_0$, and $\btheta_0 = [\bvarphi_0\transpose, \bmu_0\transpose]\transpose$. 
First observe that by Theorem \ref{thm:second_order_deviation_CT}, for any $\bvarphi\in B_2(\bvarphi_0, \eps)$ with sufficiently small $\eps > 0$,
\[
\|\bU(\bvarphi) - \bU_0\|_{\mathrm{F}}\leq 2\|D\bU(\bvarphi_0)\|_{\mathrm{2}}\|\bvarphi - \bvarphi_0\|_2\leq 4\sqrt{2}\|\bvarphi - \bvarphi_0\|_2.
\]
Therefore, for $\bvarphi\in B_2(\bvarphi_0, \eta_n/[32\|\bOmega_0\|_2])$ and $\bmu\in B_2(\bmu_0, \eta_n/8)$ with sufficiently large $n$, we have
\[
\|\btheta - \btheta_0\|_2\leq \|\bvarphi - \bvarphi_0\|_2 + \|\bmu - \bmu_0\|_2< \frac{\eta_n}{32\|\bOmega_0\|_2} + \frac{\eta_n}{8}\leq \frac{\eta_n}{4},
\]
and then, for sufficiently large $n$, we obtain
\begin{align*}
\|\bOmega(\btheta) - \bOmega_0\|_{\mathrm{F}}
& = \|\bSigma(\btheta) - \bSigma_0\|_{\mathrm{F}}\\
&\leq 2\|\bM_0\|_2\|\bU(\bvarphi) - \bU_0\|_{\mathrm{F}} + \|\bM - \bM_0\|_{\mathrm{F}} + 2\|\bM - \bM_0\|_{\mathrm{F}}\|\bU(\bvarphi) - \bU_0\|_{\mathrm{F}}\\
&\quad + \|\bM_0\|_2\|\bU(\bvarphi) - \bU_0\|_{\mathrm{F}}^2\\
&\leq 8\sqrt{2}\|\bOmega_0\|_2\|\bvarphi - \bvarphi_0\|_2 + \sqrt{2}\|\bmu - \bmu_0\|_2 + 32\|\btheta - \btheta\|_2^2\\
&\quad + \|\bOmega_0\|_2\|\bU(\bvarphi) - \bU_0\|_{\mathrm{F}}^2\\
&\leq 16\|\bOmega_0\|_2\|\bA - \bA_0\|_2 + 2\|\bmu - \bmu_0\|_2 < \eta_n.
\end{align*}
Now we can estimate the prior mass $\Pi_\btheta\{\|\bOmega(\btheta) - \bOmega_0\|_{\mathrm{F}} < \eta_n\}$ from below:
\begin{align*}
\Pi_\btheta\{\|\bOmega(\btheta) - \bOmega_0\|_{\mathrm{F}} < \eta_n\}
% &\geq \Pi_\bmu\left(\|\bmu - \bmu_0\|_{\mathrm{2}} < \frac{\eta_n}{8}\right)\Pi_\bvarphi\left(\|\bvarphi - \bvarphi_0\|_2 < \frac{\eta_n}{32\|\bOmega_0\|_2}\right)\\
&\geq \Pi_\bmu\left(\|\bmu - \bmu_0\|_{\mathrm{2}} < \frac{\eta_n}{8}\right)\Pi_\bA\left(\|\bA - \bA_0\|_{\mathrm{F}} < \frac{\eta_n}{32\|\bOmega_0\|_2}\right),
\end{align*}
where $\Pi_\bmu(\mathrm{d}\bmu) = \pi_\bmu(\bmu)\mathrm{d}\bmu$. 
Denote $\Pi^{L}_\bmu(\mathrm{d}\bmu)$ the Laplace distribution on $\bmu$ given by
\[
\Pi^{L}_\bmu(\mathrm{d}\bmu) = \exp(-2\|\bmu\|_1)\mathrm{d}\bmu,\quad\bmu\in\mathbb{R}^{r(r + 1)/2}.
\]
Clearly, $\Pi_\bmu(\mathrm{d}\bmu)$ is the normalized restriction of $\Pi^L_\bmu(\mathrm{d}\bmu)$ on $\bM(\bmu)\in\mathbb{M}_+(r)$, where $\bmu=\vech\{\bM(\bmu)\}$. 
Now let $\bmu\in B_2(\bmu_0, \eta_n/8)$. Then for sufficiently large $n$,
\[
\|\bM(\bmu) - \bM(\bmu_0)\|_{\mathrm{F}}\leq \sqrt{2}\|\bmu - \bmu_0\|_2 < \frac{\sqrt{2}\eta_n}{8}\to 0.
\]
Since $\bM_0$ is already strictly positive definite with $\lambda_r(\bM_0)$ bounded away from $0$, it follows that $\bM(\bmu)$ is also positive definite. Now we proceed to provide a lower bound the first factor as follows for sufficiently large $n$:
\begin{align*}
\Pi_\bmu\left(\|\bmu - \bmu_0\|_{\mathrm{2}} < \frac{\eta_n}{8}\right)
& = \frac{\Pi^L_\bmu\{\|\bmu - \bmu_0\|_{\mathrm{2}} < \eta_n/8, \bM(\bmu)\in\mathbb{M}_+(r)\}}{\Pi^L_\bmu\{\bM(\bmu)\in\mathbb{M}_+(r)\}}\\
& = \frac{\Pi^L_\bmu(\|\bmu - \bmu_0\|_{\mathrm{2}} < \eta_n/8)}{\Pi^L_\bmu\{\bM(\bmu)\in\mathbb{M}_+(r)\}}\\
& \geq \Pi^L_\bmu\left(\|\bmu - \bmu_0\|_{\mathrm{2}} < \frac{\eta_n}{8}\right)\\
&\geq \vol\left\{B_2\left(\bmu_0, \frac{\eta_n}{8}\right)\right\}\exp\left\{-2\max_{\|\bmu - \bmu_0\|_2\leq 1}(\|\bmu_0\|_1 + \|\bmu - \bmu_0\|_1)\right\}\\
&\geq \vol\left\{B_2\left(\zero_{r(r + 1)/2}, 1\right)\right\}\left(\frac{\eta_n}{8}\right)^{r(r + 1)/2}\\
&\quad\times \exp\left\{-{2\sqrt{\frac{r(r + 1)}{2}}}(\|\bmu_0\|_2 + 1)\right\}\\
& \gtrsim\frac{1}{\sqrt{\pi r(r + 1)/2}}\left(\frac{\sqrt{2\pi e}\eta_n}{8\sqrt{r(r + 1)/2}}\right)^{r(r + 1)/2}\exp\left(-2r\|\bM_0\|_{\mathrm{F}}\right)\\
& \geq\exp\left( - Cr^2\left|\log\frac{\eta_n}{r}\right| - 2r^{3/2}\|\bOmega_0\|_2\right)\\
& \geq\exp\left( - Crs\left|\log\frac{\eta_n}{r}\right| - 2rs\|\bOmega_0\|_2\right).
\end{align*}
We now focus on the first factor. Note that for any row index $j\in [p]$, $j > r$, $[\bU_0]_{j*} = \zero_r$ if and only if $[\bA(\bU_0)]_{(j - r)*} = \zero_r$. 
Given $S$ drawn from $\pi_S(S)$, denote $\Pi_{\bA_S}^{L}(\mathrm{d}\bA_S)$ the Laplace distribution on $\vect(\bA_S)$, i.e.,
\[
\Pi_{\bA_S}^L(\mathrm{d}\bA_S) = \exp(-2\|\vect(\bA_S)\|_1)\mathrm{d}\bA_S.
\]
Clearly, $\Pi_{\bA_S}$ is the normalized restriction of $\Pi_{\bA_S}^L$ on $\{\bA_S\in \mathbb{R}^{|S|\times r}:\|\bA_S\|_2 < 1\}$. Furthermore, given $S = S_0$ drawn from $\pi_S(S)$, for any $\bA_{S_0}\in \{\|\bA_{S_0} - \bA_{0S_0}\|_{\mathrm{F}} < \eta_n/(32\|\bOmega_0\|_2)\}$ with $\eta_n\to 0$, we have
\[
\|\bA_{S_0}\|_2 \leq \|\bA_{0S_0}\|_2 + \|\bA_{S_0} - \bA_{0S_0}\|_{\mathrm{F}}\leq \sup_{n\geq 1}\|\bA_{0}\|_2 + o(1) < 1. 
\]
This implies that
\[
\left\{\bA_{S_0}:\|\bA_{S_0} - \bA_{0S_0}\|_{\mathrm{F}} < \frac{\eta_n}{32\|\bOmega_0\|_2}\right\}\subset\{\bA_{S_0}:\|\bA_{S_0}\|_2 < 1\}.
\]
Then for sufficiently large $n$, we provide the following lower bound the first factor by restricting $S$ to be $S_0$:
\begin{align*}
&\Pi_\bA\left(\|\bA - \bA_0\|_{\mathrm{F}} < \frac{\eta_n}{32\|\bOmega_0\|_2}\right)\\
&\quad\geq \Pi_S(S_0)\Pi_{\bA_{S_0}}\left(\|\bA_{S_0} - \bA_{0S_0}\|_{\mathrm{F}} < \frac{\eta_n}{32\|\bOmega_0\|_2}\right)\\
&\quad= \Pi_S(S_0)\frac{\Pi_{\bA_{S_0}}^L\left\{\|\bA_{S_0} - \bA_{0S_0}\|_{\mathrm{F}} < {\eta_n}/{(32\|\bOmega_0\|_2)}, \|\bA_{S_0}\|_2 < 1\right\}}
{\Pi_{\bA_{S_0}}^L\left(\bA_{S_0}:\|\bA_{S_0}\|_2 < 1\right)}\\
&\quad \geq \Pi_S(S_0){\Pi_{\bA_{S_0}}^L\left(\|\bA_{S_0} - \bA_{0S_0}\|_{\mathrm{F}} < \frac{\eta_n}{32\|\bOmega_0\|_2}\right)}\\
&\quad\geq \frac{\pi_p(s_0)}{{p - r\choose s_0}}\vol\left\{B_2(\zero_{s_0r}, 1)\right\}\left(\frac{\eta_n}{32\|\bOmega_0\|_2}\right)^{s_0r}\\
&\quad\quad\times \exp\left\{-2\max_{\|\bA_{S_0} - \bA_{0S_0}\|_{\mathrm{F}} < 1}(\|\vect(\bA_{0S_0})\|_1 + \|\vect(\bA_{S_0} - \bA_{0S_0})\|_1\right\}\\
&\quad\geq \frac{\pi_p(s_0)}{{p - r\choose s_0}}\frac{1}{2\sqrt{s_0r\pi}}\left(\frac{\sqrt{2\pi e}\eta_n}{32\|\bOmega_0\|_2\sqrt{s_0r}}\right)^{s_0r}
% \\&\quad\quad\times 
\exp\left\{-2\sqrt{s_0r}\left(\|\bA_0\|_{\mathrm{F}} + 1\right)\right\}\\
&\quad\geq \frac{\pi_p(s_0)}{{p - r\choose s_0}}\frac{1}{2\sqrt{s_0r\pi}}\left(\frac{\sqrt{2\pi e}\eta_n}{32\|\bOmega_0\|_2\sqrt{s_0r}}\right)^{s_0r}
% \\&\quad\quad\times 
\exp\left(-4\sqrt{s_0r^2}\right)\\
&\quad\geq \frac{\pi_p(s_0)}{{p - r\choose s_0}}\exp\left\{-Cs_0r\left|\log\frac{\eta_n}{\|\bOmega_0\|_2\sqrt{s_0r}}\right| - 4s_0r\right\}.
\end{align*}
Since for sufficiently large $n$, $z_n\geq 1/2$ and
\begin{align*}
\frac{\pi_p(s_0)}{{p - r\choose s_0}}&\geq \frac{1}{2}n^{-rs_0}(p - r)^{-as_0}\left(\frac{s_0}{p - r}\right)^{2s_0}
% \\&
\geq \frac{1}{2}n^{-rs_0}(p - r)^{-as_0}(p - r)^{-2s_0}\\
% &\geq \frac{1}{2}\exp\left(-rs_0\log n - s_0\log p\right)\\
% &\geq \frac{1}{2}\exp\left(-rs_0\log n - s_0\log p - s_0\log s_0\right)\\
&\geq \frac{1}{2}\exp(-rs_0\log n - cs_0\log p)
\end{align*}
for some constant $c > 0$, it follows that
\begin{align*}
&\Pi_\bA\left(\|\bA - \bA_0\|_{\mathrm{F}} < \frac{\eta_n}{32\|\bOmega_0\|_2}\right)\\
&\quad\geq\exp\left\{-Crs_0\left|\log\frac{\eta_n}{\|\bOmega_0\|_2\sqrt{s_0r}}\right| - C(rs_0\log n +s_0\log p)\right\}.
\end{align*}
Hence, using the fact that $n\eta_n^2 \to \infty$, we conclude that
\begin{align*}
\Pi_\btheta\left\{\|\bOmega(\btheta) - \bOmega_0\|_{\mathrm{F}} < \eta_n\right\}
\geq\exp\left\{-C(\|\bOmega_0\|_2)rs_0 - Cs_0\log p\right\}
% \geq\exp\left\{-\frac{(1 + 2\beta)\sqrt{2}}{\tau_{\max}}(r + \|\bSigma_0\|_2)s_0 - C(s_0 - r)\log(p - r) - Cs_0r|\log \eta_n| - Cs_0r|\log(\|\bSigma_0\|_2\tau_{\max}s_0r) \right\}
\end{align*}
for some constant $C(\|\bOmega_0\|_2) > 0$ that only depends on $\|\bOmega_0\|_2$. The proof is thus completed. 
\end{proof}

% subsection prior_concentration (end)

\subsection{Posterior sparsity} % (fold)
\label{sub:posterior_sparsity}
In this subsection, we aim at establishing Lemma \ref{lemma:posterior sparsity} regarding the posterior sparsity of $\bA$ given the observed data, which in turn depends on Lemma \ref{lemma:prior_sparsity} that characterizes the prior sparsity of $\bA$. 
\begin{lemma}\label{lemma:prior_sparsity}
Under the setup and the prior specification in Section \ref{sub:bayesian_sparse_pca_and_non_intrinsic_loss}, for any constant $\kappa \geq 1$, there exists some constant $C > 0$ such that
\[
\Pi_\btheta(\btheta:|\mathrm{supp}(\bA)| > \kappa s_0)\lesssim \exp\{-C\kappa (rs_0\log n + s_0\log p)\}.
\]
\end{lemma}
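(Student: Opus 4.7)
The plan is to reduce the prior tail probability to a direct calculation involving only the model-size distribution $\pi_p$, since the cardinality of $\mathrm{supp}(\bA)$ under $\Pi_\bA$ is precisely the cardinality of the selected index set $S$. Marginalizing over subsets of a given size gives
\begin{align*}
\Pi_\btheta(\btheta:|\mathrm{supp}(\bA)|=t)\;=\;\sum_{S\subset[p-r],\,|S|=t}\pi_S(S)\;=\;\sum_{|S|=t}\frac{\pi_p(t)}{\binom{p-r}{t}}\;=\;\pi_p(t),
\end{align*}
because the density of $\bmu$ integrates to one and the conditional density $\pi_{\bA_S}$ is a probability density on $\{\|\bA_S\|_2<1\}$. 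Hence
\begin{align*}
\Pi_\btheta(\btheta:|\mathrm{supp}(\bA)|>\kappa s_0)\;=\;\sum_{t>\kappa s_0}\pi_p(t)\;=\;\frac{1}{z_n}\sum_{t>\kappa s_0}n^{-rt}(p-r)^{-at}.
\end{align*}

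Next I would recognize the right-hand side as a geometric tail in the ratio $q:=n^{-r}(p-r)^{-a}$, which tends to $0$ in the high-dimensional regime $p/n\to\infty$ (in particular $q<1/2$ for large $n$). Using the standard bound $z_n\geq 1/2$ noted at the beginning of the proofs, for sufficiently large $n$ one obtains
\begin{align*}
\sum_{t>\kappa s_0}q^t\;\leq\;\frac{q^{\lceil\kappa s_0\rceil}}{1-q}\;\leq\;2q^{\kappa s_0},
\end{align*}
so that
\begin{align*}
\Pi_\btheta(\btheta:|\mathrm{supp}(\bA)|>\kappa s_0)\;\leq\;4\,n^{-r\kappa s_0}(p-r)^{-a\kappa s_0}.
\end{align*}
Taking logarithms and using $\log(p-r)\asymp\log p$ (since $p-r\geq p/2$ in the high-dimensional regime), the right-hand side is bounded by $\exp\{-C\kappa(rs_0\log n+s_0\log p)\}$ for some constant $C>0$ depending only on $a$, which is exactly the claimed bound.

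There is essentially no obstacle here: the argument is a direct computation once one notices the key structural fact that marginalizing $\Pi_\bA$ over the non-support coordinates collapses the prior on the support size to $\pi_p$. The only book-keeping items are (i) confirming $q\to 0$ and $z_n\geq 1/2$ in the regime dictated by Assumption A4, both of which are immediate, and (ii) absorbing the constants from $\log(p-r)\asymp\log p$ and the factor of $4$ into a single constant $C$.
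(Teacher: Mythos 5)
Your proof is correct and follows the same route as the paper's: marginalize over the non-support coordinates to reduce the tail probability to a tail sum of the model-size prior $\pi_p$, then bound that sum by a geometric series in $q = n^{-r}(p-r)^{-a}$ using $z_n \geq 1/2$ and $\log(p-r)\asymp\log p$. Your write-up of the geometric-tail step is in fact a bit cleaner than the paper's (which has some apparent typos in the intermediate lines), but the underlying argument is identical.
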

\begin{proof}
[Proof of Lemma \ref{lemma:prior_sparsity}]
Write
\begin{align*}
&\Pi_\btheta(\btheta:|\mathrm{supp}(\bA)| > \kappa s_0)\\
&\quad= \sum_{|S| = \lfloor\kappa s_0\rfloor}^{p - r}\pi_p(|S|)
= \frac{1}{z_n}\sum_{t = \lfloor\kappa s_0\rfloor}^{p - r}\exp\{-rt\log n - at\log (p - r)\}\\
&\quad\leq 2\sum_{t = \lfloor\kappa s_0\rfloor}^{p - r}\exp(-rs_0\log n - Cs_0\log p)\\
&\quad\lesssim \exp\left(-\kappa rs_0\log n - \frac{C}{2}\kappa s_0 \log p\right)\sum_{t = \lfloor\kappa s_0\rfloor}^{p - r}\exp\left(-\frac{C}{2}s_0\log p\right)\\
&\quad\lesssim \exp\{-C\kappa (rs_0\log n + s_0 \log p)\}
\end{align*}
for some absolute constant $C > 0$. The proof is thus completed. 
\end{proof}

\begin{lemma}\label{lemma:posterior sparsity}
Under the setup and the prior specification in Section \ref{sub:bayesian_sparse_pca_and_non_intrinsic_loss}, there exists some constant $\kappa_0 \geq 1$ depending on $\|\bOmega_0\|_2$, such that
\begin{align*}
\expect_0\left[\Pi_\btheta\left\{\btheta:|\mathrm{supp}(\bA)| > \kappa_0
% \left(1 + \frac{r\log n}{\log p}\right)
s_0\mathrel{\Big|}\bY_n\right\}\right]\to 0.
\end{align*}
\end{lemma}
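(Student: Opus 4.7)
The plan is to execute the familiar numerator/denominator posterior mass argument: write
\[
\Pi_\btheta\{|\mathrm{supp}(\bA)| > \kappa_0 s_0 \mid \bY_n\} = \frac{N_n}{D_n}, \quad N_n := \int_{|\mathrm{supp}(\bA)| > \kappa_0 s_0}\exp\{\ell(\bOmega(\btheta)) - \ell(\bOmega_0)\}\Pi_\btheta(\mathrm{d}\btheta),
\]
and $D_n$ the corresponding normalizing integral over the full parameter space. Lower bound $D_n$ with high $\prob_0$-probability, upper bound $\expect_0[N_n]$ directly via Fubini, and choose $\kappa_0$ large so the ratio tends to $0$.

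For the denominator, I would pick $\eta_n^2 = C_*(rs_0\log n + s_0\log p)/n$ for a constant $C_* > 0$. Assumption A4 and A3 ensure $\eta_n\to 0$ (in particular $\eta_n/\|\bOmega_0\|_2 \to 0$) and $n\eta_n^2\to\infty$, so Lemma~\ref{lemma:evidence_lower_bound} applies, yielding
\[
D_n \geq \exp\{-C n\eta_n^2\log(2\|\bOmega_0\|_2)\}\,\Pi_\btheta\{\|\bOmega(\btheta)-\bOmega_0\|_{\mathrm{F}} < \eta_n\}
\]
on an event $\Xi_n$ with $\prob_0(\Xi_n)\to 1$. Combining this with the prior concentration lower bound of Lemma~\ref{lemma:prior_concentration} gives, on $\Xi_n$,
\[
D_n \geq \exp\{-C_1(rs_0\log n + s_0\log p)\}
\]
for some constant $C_1 = C_1(\|\bOmega_0\|_2) > 0$ absorbing both the likelihood and prior concentration exponents.

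For the numerator, since $\expect_0[\exp\{\ell(\bOmega(\btheta)) - \ell(\bOmega_0)\}] = 1$ for every fixed $\btheta$ (likelihood ratios have unit mean under the truth), Fubini's theorem gives
\[
\expect_0[N_n] = \Pi_\btheta\{|\mathrm{supp}(\bA)| > \kappa_0 s_0\} \lesssim \exp\{-C_2\kappa_0(rs_0\log n + s_0\log p)\}
\]
for an absolute constant $C_2 > 0$, by Lemma~\ref{lemma:prior_sparsity}.

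Combining the two bounds and noting that $\Pi_\btheta\{|\mathrm{supp}(\bA)| > \kappa_0 s_0\mid\bY_n\}\leq 1$, we get
\[
\expect_0\left[\Pi_\btheta\{|\mathrm{supp}(\bA)| > \kappa_0 s_0\mid\bY_n\}\right] \leq \frac{\expect_0[N_n]}{\exp\{-C_1(rs_0\log n + s_0\log p)\}} + \prob_0(\Xi_n^c),
\]
which tends to $0$ once $\kappa_0$ is chosen so that $C_2\kappa_0 > C_1 + 1$. The main (minor) obstacle is coordinating the three exponents, so that a single choice of $\eta_n$ simultaneously makes Lemma~\ref{lemma:evidence_lower_bound} applicable, keeps the $n\eta_n^2$ term from Lemma~\ref{lemma:evidence_lower_bound} of the same order as the prior concentration cost $rs_0\log n + s_0\log p$, and leaves enough slack in $\kappa_0$ for the prior sparsity bound from Lemma~\ref{lemma:prior_sparsity} to dominate; this is where the rate condition in Assumption A4 enters.
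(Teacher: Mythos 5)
Your proposal is correct and follows essentially the same strategy as the paper's proof: restrict to the event $\Xi_n$ from Lemma~\ref{lemma:evidence_lower_bound}, combine it with the prior concentration bound of Lemma~\ref{lemma:prior_concentration} to lower bound $D_n$, bound $\expect_0[N_n]$ by the prior sparsity mass via Fubini and Lemma~\ref{lemma:prior_sparsity}, and take $\kappa_0$ large enough for the prior-sparsity exponent to dominate. The only cosmetic difference is that you explicitly isolate the numerator/denominator ratio, whereas the paper works directly with the conditional probability multiplied by $\mathbbm{1}(\Xi_n)$; the content is identical.
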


\begin{proof}
[Proof of Lemma \ref{lemma:posterior sparsity}]
Let 
\[
\Xi_n := \{D_n \geq \exp\{-Cn\eta_n^2\log(2\|\bOmega_0\|_2)\}\Pi_\btheta\{\btheta:\|\bOmega(\btheta) - \bOmega_0\|_{\mathrm{F}} < \eta_n\}\},
\] where $\eta_n = \sqrt{(rs\log n + s\log p)/n}$. By Lemma \ref{lemma:evidence_lower_bound}, $\prob_0(\Xi_n^c)\to 0$, and by Lemma \ref{lemma:prior_concentration}, 
\[
\Pi_\btheta\{\|\bOmega(\btheta) - \bOmega_0\|_{\mathrm{F}} < \eta_n\}\geq \exp(- C_0rs_0\log n + Cs_0\log p)
\]
for some constant $C_0$ depending on $\|\bOmega_0\|_2$.
Therefore, over the event $\Xi_n$, we have
\begin{align*}
D_n&\geq \exp\{-Cn\eta_n^2\log(2\|\bOmega_0\|_2) - C_0rs_0\log n - Cs_0\log p\}\\
&\geq\exp\{-C_0(rs_0\log n + s_0\log p)\} = \exp(-C_0n\eta_n^2)
\end{align*}
for some constant $C_0$ depending on $\|\bOmega_0\|_2$. Hence, by Lemma \ref{lemma:prior_sparsity} and the Fubini's theorem, we have,
\begin{align*}
&\expect_0\left[\Pi_\btheta\left\{\btheta:|\mathrm{supp}(\bA)| > \kappa_0
s_0\mathrel{\big|}\bY_n\right\}\right]\\
&\quad \leq \expect_0\left[\Pi_\btheta\left\{\btheta:|\mathrm{supp}(\bA)| > \kappa_0
s_0\mathrel{\big|}\bY_n\right\}\mathbbm{1}(\Xi_n)\right] + \prob_0(\Xi_n^c)\\
&\quad \leq \exp(C_0n\eta_n^2)\expect_0\left[\int_{\{\btheta:|\mathrm{supp}(\bA)| > \kappa_0s_0\}}\exp\{\ell(\bOmega(\btheta)) - \ell(\bOmega_0)\}\Pi_\btheta(\mathrm{d}\btheta)\right] + \prob_0(\Xi_n^c)\\
&\quad = \exp(C_0n\eta_n^2)\int_{\{\btheta:|\mathrm{supp}(\bA)| > \kappa_0s_0\}}\expect_0\left[\exp\{\ell(\bOmega(\btheta)) - \ell(\bOmega_0)\}\right]\Pi_\btheta(\mathrm{d}\btheta) + \prob_0(\Xi_n^c)\\
&\quad = \exp(C_0n\eta_n^2)\Pi_\btheta\left\{\btheta:|\mathrm{supp}(\bA)| > \kappa_0s_0\right\} + \prob_0(\Xi_n^c)\\
&\quad \lesssim  \exp\left\{C_0n\eta_n^2 - C\kappa_0\left(rs_0\log n + s_0\log p\right)\right\} + \prob_0(\Xi_n^c)\\
&\quad =  \exp\left(C_0n\eta_n^2 - C\kappa_0n\eta_n^2\right) + o(1).
\end{align*}
Therefore, we conclude that 
\[
\expect_0\left[\Pi_\btheta\left\{\btheta:|\mathrm{supp}(\bA)| > \kappa_0 s_0\mathrel{\Big|}\bY_n\right\}\right]\to 0
\]
by taking $\kappa_0$, possibly depending on $\|\bOmega_0\|_2$, to be sufficiently large. 
\end{proof}

% subsection posterior_sparsity (end)

\subsection{Construction of test functions} % (fold)
\label{sub:construction_of_test_functions}
In this section, we construct a test function that will be useful for deriving posterior contraction under the Frobenius norm through Lemma \ref{lemma:existence_global_test} below. 
\begin{lemma}\label{lemma:existence_global_test}
Assume the random vectors $\by_1,\ldots,\by_n$ follows $\mathrm{N}_p(\zero_p, \bOmega)$ independently, where $\bOmega = \bU\bM\bU\transpose + \eye_p$, $\bU\in\mathbb{O}_+(p, r)$, and $\bM\in\mathbb{M}_+(r)$. Let $\bOmega_0 = \bU_0\bM_0\bU_0\transpose + \eye_p$, where $\bU_0\in\mathbb{O}_+(p, r)$ with $|\mathrm{supp}\{\bA(\bU_0)\}|\leq s_0$ and $\bM_0\in\mathbb{M}_+(r)$. If $(\eps_n)_{n = 1}^\infty$ is a sequence converging to $0$, then for any $\kappa \geq 1$ and $M > 4$, there exists a sequence of test functions $(\phi_n)_{n = 1}^\infty$ such that 
\begin{align*}
\expect_{\bOmega_0}(\phi_n)&\leq 3\exp\left\{(2C + 4)\kappa s_0 \log p - \frac{CM^2n\eps_n^2}{4\|\bOmega_0\|_2^2}\right\},\\
\sup_{\bOmega\in H_1}\expect_{\bOmega}(1 - \phi_n)&\leq \exp\left(2C\kappa s_0 - \frac{CMn\eps_n^2}{8\|\bOmega_0\|_2^2}\right),
\end{align*}
where 
\begin{align*}
H_1 = \{\bOmega = \bU\bM\bU\transpose + \eye_p&:\|\bOmega - \bOmega_0\|_{\mathrm{F}} > M\eps_n, \bU\in\mathbb{O}_+(p, r),\bM\in\mathbb{M}_+(r),\\
&\quad|\mathrm{supp}\{\bA(\bU)\}| \leq \kappa s\}
\end{align*}
and $C$ is some absolute constant.
\end{lemma}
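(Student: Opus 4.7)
The plan is to construct $\phi_n$ as the maximum of Neyman--Pearson likelihood ratio tests over a finite cover of the sparsity-restricted alternative $H_1$, following the standard ``point-test plus covering'' strategy of Ghosal--van der Vaart adapted to Gaussian covariance models. For each fixed $\bOmega_1$ I will define
\[
\psi(\bOmega_0,\bOmega_1) := \mathbbm{1}\bigl\{\log L_n(\bOmega_1) - \log L_n(\bOmega_0) \geq 0\bigr\},
\]
and apply a Chernoff/R\'enyi bound to the moment generating function of the Gaussian log-likelihood ratio. Using that $\bOmega_0 = \bSigma_0 + \eye_p \succeq \eye_p$ forces $\lambda_{\min}(\bOmega_0) \geq 1$, this will give $\expect_{\bOmega_0}[\psi(\bOmega_0,\bOmega_1)] \leq \exp(-cnR^2/\|\bOmega_0\|_2^2)$ with $R := \|\bOmega_1 - \bOmega_0\|_{\mathrm{F}}$, and a routine continuity argument will extend the Type~II estimate $\expect_{\bOmega}[1 - \psi(\bOmega_0,\bOmega_1)] \leq \exp(-cnR^2/\|\bOmega_0\|_2^2)$ to every $\bOmega$ in a Frobenius ball of radius $cR$ around $\bOmega_1$.

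I will then cover $H_1$ in two stages. First, the constraint $|\mathrm{supp}(\bA(\bU))| \leq \kappa s_0$ limits $\bA(\bU)$ to one of $\binom{p-r}{\kappa s_0} \leq \exp(C\kappa s_0 \log p)$ support patterns $S \subseteq [p-r]$, which will contribute the $\kappa s_0\log p$ factor in the Type~I bound but is inert for Type~II (which is a supremum). Second, since $\|\bM\|_2$ is unrestricted on $H_1$, I decompose each sparsity-fixed slice dyadically by Frobenius distance from $\bOmega_0$:
\[
H_1^{(j)} := \bigl\{\bOmega\in H_1 : jM\eps_n \leq \|\bOmega - \bOmega_0\|_{\mathrm{F}} < (j+1)M\eps_n\bigr\}, \qquad j \geq 1.
\]
On $H_1^{(j)}$ the triangle inequality forces $\|\bM\|_2 \leq \|\bM_0\|_2 + (j+1)M\eps_n$, so $H_1^{(j)}$ is totally bounded; for each $S$, the $\kappa s_0 r + r(r+1)/2$ free coordinates of $(\bA_S,\bM)$ admit a $\delta$-cover of cardinality at most $(C j M\eps_n/\delta)^{\kappa s_0 r + r(r+1)/2}$.

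Choosing $\delta \asymp jM\eps_n$ so that each cover center's Type~II neighborhood absorbs its $\delta$-cell, I let $\calN = \bigcup_{S,j}\calN_{S,j}$ be the global cover and set $\phi_n := \max_{\bOmega_1 \in \calN}\psi(\bOmega_0,\bOmega_1)$. The Type~I bound follows from a union bound: $\expect_{\bOmega_0}\phi_n \leq \sum_{S,j} |\calN_{S,j}|\exp(-cj^2 M^2 n\eps_n^2/\|\bOmega_0\|_2^2)$, and the polynomial-in-$j$ cover cardinalities are dominated by the Gaussian-in-$j$ decay, collapsing the sum to the target $3\exp\{(2C+4)\kappa s_0\log p - CM^2 n\eps_n^2/(4\|\bOmega_0\|_2^2)\}$. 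For Type~II, any $\bOmega\in H_1$ lies in some $\calN_{S,j}$-cell whose center $\bOmega_1$ satisfies $\|\bOmega-\bOmega_1\|_{\mathrm{F}} \leq cjM\eps_n$, whence the neighborhood extension of the point test yields $\expect_\bOmega(1-\phi_n) \leq \expect_\bOmega(1-\psi(\bOmega_0,\bOmega_1)) \leq \exp\{2C\kappa s_0 - CMn\eps_n^2/(8\|\bOmega_0\|_2^2)\}$; the local log-cover exponent $C\kappa s_0$ (independent of $p$, since $S$ is already fixed for the given $\bOmega$) arises from the dimension-dependent correction in the neighborhood extension.

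The principal obstacle is the unboundedness of $\|\bM\|_2$, and hence of $\|\bOmega\|_2$, over $H_1$, which rules out a single finite cover. The dyadic shelling resolves this only because the shell-wise cover cardinality grows polynomially in $j$ while the single-point rate decays as $\exp(-cj^2 M^2 n\eps_n^2/\|\bOmega_0\|_2^2)$; verifying this domination uniformly, with the precise constants quoted in the lemma, is the main technical step. A secondary delicacy is ensuring that the R\'enyi divergence between $\mathrm{N}(\zero_p,\bOmega_0)$ and $\mathrm{N}(\zero_p,\bOmega_1)$ genuinely scales as $\|\bOmega_0 - \bOmega_1\|_{\mathrm{F}}^2/\|\bOmega_0\|_2^2$ rather than a weaker quantity, which relies crucially on $\lambda_{\min}(\bOmega) \geq 1$ throughout the model.
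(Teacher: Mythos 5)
Your proposal takes a genuinely different route from the paper. The paper does not construct a covering of $H_1$ at all: it decomposes $H_1$ only by support pattern $S$, reduces each slice to a Gaussian model in ambient dimension $\bar s + r \leq (\kappa+1)s_0 + r$ on the coordinates indexed by $\bar S = S \cup S_0$, and then invokes the pre-packaged oracle test of Gao and Zhou (stated here as Lemma~\ref{lemma:existence_local_test}), which already tests $\bOmega_0$ against the \emph{entire} unbounded composite alternative $\{\bOmega: \|\bOmega-\bOmega_0\|_2 > M\eps_n\}$ in a single shot, with a dimension factor $\exp(Cd)$ with $d = \bar s + r$. The union bound is then only over the $\sum_{s \leq \kappa s_0}\binom{p-r}{s}$ support patterns, which is the source of the $\kappa s_0 \log p$ term. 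No dyadic shelling in Frobenius distance, no $\delta$-cover of the parameter space, and no Neyman--Pearson point tests are needed; the Gao--Zhou test does the heavy lifting that your shelling and covering argument reconstructs by hand.

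Beyond being more laborious, your route has two unresolved issues that prevent it from recovering the lemma's stated constants. First, the dimension factor: your cover within a shell has cardinality $\exp\{C(\kappa s_0 r + r(r+1)/2)\}$, the \emph{parametric} dimension of the $(\bA_S, \bM)$-manifold, whereas the lemma's bounds feature only $\kappa s_0$, the \emph{ambient} dimension of the reduced Gaussian. When $r > 1$ these differ by a factor of roughly $r$, and nothing in your argument reconciles them; you assert that the Type~II exponent ``arises from the dimension-dependent correction in the neighborhood extension'' and equals $C\kappa s_0$, but you also say the local cover has exponent $\kappa s_0 r + r(r+1)/2$, and these two claims are in tension. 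Second, the neighborhood extension of a point-wise Neyman--Pearson likelihood-ratio test $\psi(\bOmega_0,\bOmega_1)$ to a Frobenius ball of radius $\asymp jM\eps_n$ around $\bOmega_1$ --- i.e.\ a ball whose radius is of the same order as the distance from $\bOmega_1$ to the null --- is exactly the delicate step in the Birg\'e/Le~Cam testing machinery for non-exponential-family parameters, and you leave it as ``the main technical step'' rather than proving it. For Gaussian covariances this extension is nontrivial precisely because the log-likelihood ratio is not a monotone function of $\|\hat\bOmega - \bOmega_0\|_{\mathrm F}$; Gao and Zhou avoid the issue by basing their test on a spectral statistic of $\hat\bOmega$ rather than a likelihood ratio, which is why their oracle test handles the unbounded alternative uniformly. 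In short, your strategy is plausible but would require redoing the Gao--Zhou argument from scratch, and as written it does not yield the exponents claimed.
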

The proof of Lemma \ref{lemma:existence_global_test} relies on the oracle testing lemma from \cite{gao2015rate} below.
\begin{lemma}[\citealp{gao2015rate}]
\label{lemma:existence_local_test}
Let the random vectors $\by_1,\ldots,\by_n$ follow $\mathrm{N}_d(\zero_d, \bOmega)$ independently, where $\bOmega\in\mathbb{R}^{d\times d}$. If $(\eps_n)_{n = 1}^\infty$ is a sequence converging to $0$, then for any $M>0$ and $d\times d$ covariance matrices $\bOmega^{(1)}$ and $\bOmega^{(2)}$, there exists a test function $\phi_n$ such that
\begin{align}
\expect_{\bOmega^{(1)}}(\phi_n)&\leq \exp\left(Cd-\frac{CM^2n\eps_n^2}{4\|\bOmega^{(1)}\|_2^2}\right)+2\exp\left(Cd-C\sqrt{M}n\right)\nonumber,\\
\sup_{\{\bOmega^{(2)}:\|\bOmega^{(2)}-\bOmega^{(1)}\|_2>M\eps_n\}}\expect_{\bOmega^{(2)}}(1-\phi_n)&\leq \exp\left[Cd - \frac{CMn\eps_n^2}{4}\left\{1\vee\frac{M}{(\sqrt{M}+2)^2\|\bOmega^{(1)}\|_2^2}\right\}\right]\nonumber
\end{align}
with some absolute constant $C>0$.
\end{lemma}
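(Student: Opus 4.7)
The plan is to construct $\phi_n$ directly from the sample covariance matrix $\widehat{\bOmega}_n := n^{-1}\sum_{i=1}^n \by_i\by_i\transpose$ as a spectral-norm test centered at $\bOmega^{(1)}$:
\begin{align*}
\phi_n \;=\; \mathbbm{1}\bigl\{\|\widehat{\bOmega}_n - \bOmega^{(1)}\|_2 \;>\; \tfrac{M\eps_n}{\sqrt{M}+2}\bigr\}.
\end{align*}
The slightly odd-looking threshold $M\eps_n/(\sqrt{M}+2)$ is chosen so that, under the separation assumption $\|\bOmega^{(2)}-\bOmega^{(1)}\|_2 > M\eps_n$, the triangle inequality forces $\|\widehat{\bOmega}_n - \bOmega^{(2)}\|_2 > (\sqrt{M}+1)M\eps_n/(\sqrt{M}+2)$ on the event $\{\phi_n = 0\}$; this is precisely the quantity whose square appears in the $(\sqrt{M}+2)^2$ denominator of the stated Type II bound.

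The main analytic tool is to convert $\|\widehat{\bOmega}_n - \bOmega\|_2$ into a supremum over a discrete set. A standard construction produces a $(1/4)$-net $\calN \subset \mathbb{S}^{d-1}$ with $|\calN| \leq 9^d$, so that for any symmetric $\bA$, $\|\bA\|_2 \leq 2\sup_{\bv\in\calN} |\bv\transpose\bA\bv|$. For each fixed unit vector $\bv$, the linear form $\bv\transpose\by_i$ is $\mathrm{N}(0,\sigma_\bv^2)$ under $\bOmega$, where $\sigma_\bv^2 := \bv\transpose\bOmega\bv \leq \|\bOmega\|_2$, so $\bv\transpose\widehat{\bOmega}_n\bv = (\sigma_\bv^2/n)\sum_i Z_i^2$ with $Z_i \iidsim \mathrm{N}(0,1)$. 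The Laurent--Massart inequality then gives
\begin{align*}
\prob_{\bOmega}\bigl\{|\bv\transpose(\widehat{\bOmega}_n - \bOmega)\bv| \geq 2\sigma_\bv^2(\sqrt{u/n} + u/n)\bigr\} \;\leq\; 2e^{-u}
\end{align*}
for every $u > 0$. Taking a union bound over $\calN$ and solving for $u$ in the two natural regimes ($\sqrt{u/n}$ dominant vs.\ $u/n$ dominant) yields, for every threshold $t$,
\begin{align*}
\prob_{\bOmega}\bigl\{\|\widehat{\bOmega}_n - \bOmega\|_2 > t\bigr\} \;\lesssim\; \exp\bigl(Cd - c\,n t^2/\|\bOmega\|_2^2\bigr) \;+\; \exp\bigl(Cd - c\,nt/\|\bOmega\|_2\bigr).
\end{align*}
Applying this under $\bOmega^{(1)}$ with $t = M\eps_n/(\sqrt{M}+2)$ produces the two terms of the Type I bound, where the sub-exponential branch yields the $\exp(Cd - C\sqrt{M}n)$ contribution (after absorbing $\eps_n \to 0$ into the regime split).

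The Type II argument starts from the observation that, on $\{\phi_n = 0\}$, the triangle inequality forces $\|\widehat{\bOmega}_n - \bOmega^{(2)}\|_2 > (\sqrt{M}+1)M\eps_n/(\sqrt{M}+2)$, so the generic tail above can be applied with $\bOmega = \bOmega^{(2)}$. The main obstacle — and the reason the stated bound has the awkward $1\vee M/((\sqrt{M}+2)^2\|\bOmega^{(1)}\|_2^2)$ factor — is that $\|\bOmega^{(2)}\|_2$ is not assumed bounded; it can be as large as $\|\bOmega^{(1)}\|_2 + \|\bOmega^{(2)} - \bOmega^{(1)}\|_2$. I would handle this by a case split on the relative size of $\|\bOmega^{(2)}\|_2$: when $\|\bOmega^{(2)}\|_2 \lesssim \|\bOmega^{(1)}\|_2$, the Gaussian branch of the tail gives $\exp(Cd - cMn\eps_n^2)$, matching the ``$1$'' in the $1\vee(\cdot)$; when $\|\bOmega^{(2)}\|_2$ is much larger, one necessarily has $\|\bOmega^{(2)}-\bOmega^{(1)}\|_2 \asymp \|\bOmega^{(2)}\|_2$, so the sub-exponential branch activates and its exponent $n t/\|\bOmega^{(2)}\|_2$ can be re-expressed, via the separation bound and the threshold choice, as the quadratic $M^2 n\eps_n^2/[(\sqrt{M}+2)^2\|\bOmega^{(1)}\|_2^2]$. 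Combining the two cases gives the $\{1\vee\cdot\}$ factor. The principal difficulty of the whole argument lies here: calibrating the threshold constant $1/(\sqrt{M}+2)$ and the case split so that both regimes of Laurent--Massart produce bounds that depend only on $\|\bOmega^{(1)}\|_2$, uniformly over $\bOmega^{(2)}$ in the alternative.
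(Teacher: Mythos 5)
First, a structural point: the paper does not prove this lemma at all --- it is imported verbatim from \cite{gao2015rate} and used as a black box in the proof of Lemma \ref{lemma:existence_global_test} --- so there is no in-paper argument to compare yours against. Your overall strategy (reject when $\|\widehat{\bOmega}_n-\bOmega^{(1)}\|_2$ exceeds a threshold, reduce the spectral norm to a $1/4$-net of the sphere, apply chi-square tails to each quadratic form, and use the triangle inequality under the alternative) is the natural and standard route for results of this type. The difficulty, however, lies entirely in the calibration, and there your sketch does not deliver the bounds as stated.

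Concretely: (i) with the threshold $M\eps_n/(\sqrt{M}+2)$, the Gaussian branch of your tail bound gives a Type I exponent of order $nM^2\eps_n^2/\{(\sqrt{M}+2)^2\|\bOmega^{(1)}\|_2^2\}\asymp Mn\eps_n^2/\|\bOmega^{(1)}\|_2^2$ for large $M$, which is genuinely weaker than the claimed $M^2n\eps_n^2/(4\|\bOmega^{(1)}\|_2^2)$; this is an explicit $M$-dependence and cannot be absorbed into the absolute constant $C$. To get $M^2/4$ you need a threshold of order $M\eps_n/2$, which then upsets the Type II calibration you set up. (ii) The second Type I term $2\exp(Cd-C\sqrt{M}n)$ contains neither $\eps_n$ nor $\|\bOmega^{(1)}\|_2$ and cannot be produced by your construction: since your threshold is proportional to $\eps_n\to 0$, only the sub-Gaussian branch of Laurent--Massart is eventually active, and ``absorbing $\eps_n\to 0$ into the regime split'' makes the sub-exponential exponent smaller, not larger. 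That term signals that the intended rejection region has an $\eps_n$-free component. (iii) The heart of the lemma is the Type II bound uniform over alternatives with $\|\bOmega^{(2)}\|_2$ unbounded, and your treatment of the large-$\|\bOmega^{(2)}\|_2$ case is not correct as written: when $\|\bOmega^{(2)}\|_2=L\to\infty$, the required deviation $\|\widehat{\bOmega}_n-\bOmega^{(2)}\|_2\gtrsim L$ is a constant multiple of $L$, so the tail you obtain is $\exp(Cd-cn)$ --- independent of $M$, $\eps_n$, and $\|\bOmega^{(1)}\|_2$ --- and there is no algebra that turns $nt/\|\bOmega^{(2)}\|_2$ into $M^2n\eps_n^2/\{(\sqrt{M}+2)^2\|\bOmega^{(1)}\|_2^2\}$. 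This case can be salvaged because $\exp(-cn)$ eventually dominates the claimed bound as $M\eps_n^2\to 0$, but that must be argued explicitly, and the intermediate regime in which $\|\bOmega^{(2)}\|_2$ is a bounded multiple of $\|\bOmega^{(1)}\|_2$ --- the regime from which the $(\sqrt{M}+2)^2$ factor actually originates --- still requires its own computation. As it stands, the proposal is a correct outline of the right method with its quantitative core left unproved.
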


\begin{proof}[Proof of Lemma \ref{lemma:existence_global_test}]
The proof of Lemma \ref{lemma:existence_global_test} is very similar to that of Lemma 5.4 in \cite{gao2015rate} and is included here for completeness. Decompose $H_1$ by
\[
H_1 \subset\bigcup_{S:|S| \leq \kappa  s_0 } H_{1S},
\]
where
\begin{align*}
H_{1S} = \{\bOmega = \bU\bM\bU\transpose + \eye_p&:\bU\in\mathbb{O}_+(p, r), \bM\in\mathbb{M}_+(r),\|\bOmega - \bOmega_0\|_{\mathrm{F}} > M\eps_n,\\
&\quad S = \mathrm{supp}\{\bA(\bU)\}\}.
\end{align*}
Let $\bar{S}: = S\cup S_0$, where $S_0 = \mathrm{supp}\{\bA(\bU_0)\}$, and let $\bar{s}: = |\bar{S}|$. Clearly, $\bar{s}\leq (\kappa + 1)s_0$ and
\[
\|\bOmega - \bOmega_0\|_{\mathrm{F}} = \|\bar{\bOmega} - \bar{\bOmega}_0\|_{\mathrm{F}},
\]
where
\[
\bar{\bOmega} = \bU(\bA_{\bar{S}})\bM\bU(\bA_{\bar{S}})\transpose + \eye_{\bar{s} + r},\quad 
\bar{\bOmega}_0 = \bU(\bA_{0\bar{S}})\bM_0\bU(\bA_{0\bar{S}})\transpose + \eye_{\bar{s} + r}.
\]
% and $\bU = [\bu_1,\ldots,\bu_r]$, $\bu_{k\bar{S}} = [u_{kj}:k\in \bar{S}]\transpose$, $\bU_0 = [\bu_{01},\ldots,\bu_{0r}]$, $\bu_{0k\bar{S}} = [u_{0kj}:k\in \bar{S}]\transpose$. 
For each ${S}\subset[p]$, denote $\by_{iS} = [y_{ij}:j\in S]\transpose$ for $i = 1,\ldots,n$. 
By Lemma \ref{lemma:existence_local_test}, for each $S$ and $M > 4$, there exists a sequence of tests $(\phi_{nS})_{n = 1}^\infty$, where $\phi_{nS}$ is a measurable function of $\{\by_{1\mathrm{supp}\{\bU(\bA_{\bar{S}})\}},\ldots,\by_{n\mathrm{supp}\{\bU(\bA_{\bar{S}})\}}\}$, such that
\begin{align*}
\expect_{\bar{\bOmega}_0}(\phi_{nS})&\leq \exp\left\{C(\kappa + 1)s_0 - \frac{CM^2n\eps_n^2}{4\|\bOmega_0\|_2^2}\right\} + 2\exp\{C(\kappa + 1)s_0 - C\sqrt{M}n\}\\
&\leq 3\exp\left(2C\kappa s_0-\frac{CM^2n\eps_n^2}{4\|\bOmega_0\|_2^2}\right),
\end{align*}
and
\begin{align*}
\sup_{\bar{\bOmega}\in \bar{H}_{1S}}\expect_{\bar{\bOmega}}(1 - \phi_n)
&\leq \exp\left[C(\kappa + 1)s_0 - \frac{CMn\eps_n^2}{4}\left\{1\vee \frac{M}{(\sqrt{M} + 2)^2\|\bOmega_0\|_2^2}\right\}\right]\\
&\leq \exp\left(2C\kappa s_0-\frac{CMn\eps_n^2}{8\|\bOmega_0\|_2^2}\right),
\end{align*} 
where 
\[
\bar{H}_{1S} = \left\{\bar{\bOmega} = \bU(\bA_{\bar{S}})\bM\bU(\bA_{\bar{S}})\transpose + \eye_{\bar{s} + r}: \bM\in\mathbb{M}_+(r), \|\bar{\bOmega} - \bar{\bOmega}_0\|_{\mathrm{F}} > M\eps_n\right\}.
\]
Hence we can combine tests by taking $\phi_n = \max_{S}\phi_{nS}$ and apply the union bound to obtain
\begin{align*}
\expect_0(\phi_n)&\leq \sum_{s = 1}^{\lceil \kappa s_0\rceil}{p - r\choose s}3\exp\left(2C\kappa s_0 - \frac{CM^2n\eps_n^2}{4\|\bOmega_0\|_2^2}\right)\\
&\leq 3\kappa s_0\exp\left(3\kappa s_0\log p\right)\exp\left(2C\kappa s_0 - \frac{CM^2n\eps_n^2}{4\|\bOmega_0\|_2^2}\right)\\
&\leq 3\exp\left\{(2C + 4)\kappa s_0 \log p -  \frac{CM^2n\eps_n^2}{4\|\bOmega_0\|_2^2}\right\},
\end{align*}
and
\begin{align*}
\sup_{\bOmega\in H_1}\expect_{\bOmega}(1 - \phi_n)&\leq \sup_{S:|S|<\kappa s_0}\sup_{\bar{\bOmega}\in \bar{H}_{1S}}\expect_{\bar{\bOmega}}(1 - \phi_{nS})
% \\&
\leq \exp\left(2C\kappa s_0 - \frac{CMn\eps_n^2}{8\|\bOmega_0\|_2^2}\right).
\end{align*}
The proof is thus completed. 
\end{proof}

% subsection construction_of_test_functions (end)

\subsection{Posterior contraction under Frobenius norm} % (fold)
\label{sub:posterior_contraction_under_frobenius_norm}

\begin{theorem}\label{thm:contraction_Frobenius_norm}
Under the prior specification and setup in Section \ref{sub:bayesian_sparse_pca_and_non_intrinsic_loss}, there exists some large constant $M_0 > 0$ (possibly depending on $\|\bOmega_0\|_2$), such that
\[
\expect_0\left\{\Pi_\btheta\left(\|\bOmega(\btheta) - \bOmega_0\|_{\mathrm{F}} > M\sqrt{\frac{rs_0\log n + s_0\log p}{n}}\mathrel{\Big|}\bY_n\right)\right\} \to 0.
\]
\end{theorem}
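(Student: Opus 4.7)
The plan is to apply the standard Ghosal--Ghosh--van der Vaart testing framework for posterior contraction, combining the three ingredients already assembled: the prior concentration bound (Lemma \ref{lemma:prior_concentration}), the posterior sparsity control (Lemma \ref{lemma:posterior sparsity}), and the sieve tests (Lemma \ref{lemma:existence_global_test}). Set $\eps_n^2 = (rs_0\log n + s_0\log p)/n$ so that $n\eps_n^2 = rs_0\log n + s_0\log p$, and define the sieve $\mathscr{F}_n = \{\btheta\in\mathscr{D}(p,r):|\mathrm{supp}(\bA)|\leq\kappa_0 s_0\}$, with $\kappa_0$ chosen large enough (depending on $\|\bOmega_0\|_2$) so that Lemma \ref{lemma:posterior sparsity} applies.

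The first step is to decompose
\[
\Pi_\btheta\{\|\bOmega(\btheta)-\bOmega_0\|_{\mathrm{F}}>M\eps_n\mid\bY_n\}
\leq \Pi_\btheta(\mathscr{F}_n^c\mid \bY_n)
 + \Pi_\btheta(H_1\mid \bY_n),
\]
where $H_1 = \{\|\bOmega(\btheta)-\bOmega_0\|_{\mathrm{F}}>M\eps_n\}\cap\mathscr{F}_n$. The first term vanishes in $\expect_0$-expectation by Lemma \ref{lemma:posterior sparsity}. For the second, I would invoke Lemma \ref{lemma:existence_global_test} with $\kappa=\kappa_0$ to obtain tests $\phi_n$ and bound
\[
\Pi_\btheta(H_1\mid \bY_n) \leq \phi_n + (1-\phi_n)\frac{N_n}{D_n},
\qquad
N_n = \int_{H_1}\exp\{\ell(\bOmega(\btheta)) - \ell(\bOmega_0)\}\Pi_\btheta(\mathrm{d}\btheta).
\]

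The type I error gives $\expect_0(\phi_n)\leq 3\exp\{(2C+4)\kappa_0 s_0\log p - CM^2 n\eps_n^2/(4\|\bOmega_0\|_2^2)\}$, which is $o(1)$ once $M$ is chosen sufficiently large. For the second piece, Fubini and the type II bound give
\[
\expect_0\{(1-\phi_n)N_n\}
 = \int_{H_1}\expect_{\bOmega(\btheta)}(1-\phi_n)\Pi_\btheta(\mathrm{d}\btheta)
\leq \exp\Bigl(2C\kappa_0 s_0 - \frac{CMn\eps_n^2}{8\|\bOmega_0\|_2^2}\Bigr).
\]
The denominator is controlled via Lemma \ref{lemma:evidence_lower_bound} combined with Lemma \ref{lemma:prior_concentration}: on an event $\Xi_n$ with $\prob_0(\Xi_n)\to 1$, one has $D_n\geq \exp(-C_0 n\eps_n^2)$ for some $C_0$ depending only on $\|\bOmega_0\|_2$. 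Splitting expectations according to $\Xi_n$ and $\Xi_n^c$, and pushing the $\exp(C_0 n\eps_n^2)$ factor from $1/D_n$ into the type II bound, the combined exponent becomes $C_0 n\eps_n^2 + 2C\kappa_0 s_0 - CMn\eps_n^2/(8\|\bOmega_0\|_2^2)$, which tends to $-\infty$ by choosing $M$ large relative to $C_0\|\bOmega_0\|_2^2$.

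There is no real obstacle here; the theorem is essentially a packaging step that collects the three previously established ingredients inside the standard testing framework. The only care required is the bookkeeping of the constants: $M$ must be chosen large enough (depending on $\|\bOmega_0\|_2$, $\kappa_0$, and the absolute constant $C$ from Lemma \ref{lemma:existence_global_test}) so that the negative quadratic-in-$M$ term in the type I error and the linear-in-$M$ term in the type II error both dominate the $n\eps_n^2$ contributions from the prior concentration lower bound and the $s_0\log p$ combinatorial factor. This is precisely the same calibration performed in \cite{gao2015rate,pati2014posterior}, adapted here to the Cayley-parameterized sparsity-inducing prior $\Pi_\btheta$.
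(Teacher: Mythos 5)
Your proof is correct and follows essentially the same route as the paper: decompose on the sparsity sieve via Lemma \ref{lemma:posterior sparsity}, invoke the tests from Lemma \ref{lemma:existence_global_test}, control the evidence denominator on the event $\Xi_n$ via Lemmas \ref{lemma:evidence_lower_bound} and \ref{lemma:prior_concentration}, and push all exponential factors into a single bound that decays once $M$ is taken large relative to $C_0\|\bOmega_0\|_2^2$. The constant bookkeeping and Fubini step match the paper's argument; no gaps.
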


\begin{proof}
% [\bf Proof of Theorem \ref{thm:contraction_Frobenius_norm}]
Denote $\eps_n = \sqrt{(rs_0\log n + s_0\log p)/n}$. We first decompose the expected posterior probability by
\begin{align*}
&\expect_0[\Pi_\btheta\{\|\bOmega(\btheta) - \bOmega_0\|_{\mathrm{F}} > M\eps_n\mid\bY_n\}]\\
&\quad\leq 
\expect_0\left[\Pi_\btheta\left\{\|\bOmega(\btheta) - \bOmega_0\|_{\mathrm{F}} > M\eps_n, |\mathrm{supp}(\bA)| \leq \kappa_0
% \left(1 + \frac{r\log n}{\log p}\right)
s_0 \mathrel{\Big|}\bY_n\right\}\right]\\
&\quad\quad+ \expect_0\left[\Pi_\btheta\left\{\btheta:|\mathrm{supp}(\bA)| > \kappa_0
% \left(1 + \frac{r\log n}{\log p}\right)
s_0\mid\bY_n\right\}\right]
\end{align*}
where $\kappa_0$ is set to be large enough such that the second term on the right-hand side is $o(1)$ according to Lemma \ref{lemma:posterior sparsity}. It suffices to focus on the first term consequently. Let $\Xi_n = \{D_n\geq \exp(-C_0n\eps_n^2)\}$, where $C_0$ is a constant depending on $\|\bOmega_0\|_2$ such that $\prob_0(\Xi_n^c) = o(1)$ according to Lemma \ref{lemma:evidence_lower_bound} and Lemma \ref{lemma:prior_concentration}. Take $\phi_n$ to be the test function given by Lemma \ref{lemma:existence_global_test}. Then we can decompose the first term on the right-hand side of the previous display by
\begin{align*}
&\expect_0\left[\Pi_\btheta\left\{\|\bOmega(\btheta) - \bOmega_0\|_{\mathrm{F}} > M\eps_n, |\mathrm{supp}(\bA)| \leq \kappa_0
% \left(1 + \frac{r\log n}{\log p}\right)
s_0 \mathrel{\Big|}\bY_n\right\}\right]\\
&\quad\leq \expect_0\left[\Pi_\btheta\left\{\|\bOmega(\btheta) - \bOmega_0\|_{\mathrm{F}} > M\eps_n, |\mathrm{supp}(\bA)| \leq \kappa_0
% \left(1 + \frac{r\log n}{\log p}\right)
s_0 \mathrel{\Big|}\bY_n\right\}\mathbbm{1}(\Xi_n)(1 - \phi_n)\right]\\
&\quad\quad + \expect_0(\phi_n) + \prob_0(\Xi_n^c).
\end{align*}
Since the third term on the right-hand side is upper bounded by
\[
3\exp\left\{(2C + 4)\kappa_0
% \left(1 + \frac{r\log n}{\log p}\right)
s_0\log p - \frac{CM^2n\eps_n^2}{4\|\bOmega_0\|_2^2}\right\} = o(1)
\]
by Lemma \ref{lemma:existence_global_test} with a sufficiently large $ M > 0$, and the second term is also $o(1)$ by Lemma \ref{lemma:evidence_lower_bound} and Lemma \ref{lemma:prior_concentration}, it suffices to show that the first term is also $o(1)$. Denote
\begin{align*}
H_1 = \{\bOmega = \bU\bM\bU\transpose + \eye_p&:\bU\in\mathbb{O}_+(p, r), \bM\in\mathbb{M}_+(r), \|\bOmega - \bOmega_0\|_{\mathrm{F}} > M\eps_n,\\
&\quad |\mathrm{supp}\{\bA(\bU)\}| \leq \kappa_0
% \left(1 + \frac{r\log n}{\log p}\right)
s_0\}. 
\end{align*}
Then by Lemma \ref{lemma:existence_global_test}, the Fubini's theorem, and the definition of $\Xi_n$, 
\begin{align*}
&\expect_0\left[\Pi_\btheta\left\{\|\bOmega(\btheta) - \bOmega_0\|_{\mathrm{F}} > M\eps_n, |\mathrm{supp}(\bA)| \leq \kappa_0
% \left(1 + \frac{r\log n}{\log p}\right)
s_0 \mathrel{\Big|}\bY_n\right\}\mathbbm{1}(\Xi_n)(1 - \phi_n)\right] \\
&\quad 
\leq \exp(C_0n\eps_n^2)\expect_0\left[(1 - \phi_n)\int_{\{\btheta:\bOmega(\btheta)\in H_1\}}\exp\{\ell(\bOmega(\btheta)) - \ell(\bOmega_0)\}\Pi_\btheta(\mathrm{d}\btheta)\right]\\
&\quad = 
\exp(C_0n\eps_n^2)\int_{\btheta:\bOmega(\btheta)\in H_1}\expect_0\left[(1 - \phi_n)\exp\{\ell(\bOmega(\btheta)) - \ell(\bOmega_0)\}\right]\Pi_\btheta(\mathrm{d}\btheta)\\
&\quad  = 
\exp(C_0n\eps_n^2)\int_{\{\btheta:\bOmega(\btheta)\in H_1\}}\expect_{\bOmega}\left\{(1 - \phi_n)\right\}\Pi(\mathrm{d}\bOmega)\\
&\quad\leq\exp(C_0n\eps_n^2)\sup_{\bOmega\in H_1}\expect_{\bOmega}\left\{(1 - \phi_n)\right\}\\
&\quad\leq \exp\left\{C_0n\eps_n^2 + 2C\kappa_0
% \left(1 + \frac{r\log n}{\log p}\right)
s_0 - \frac{CMn\eps_n^2}{8\|\bOmega_0\|_2^2}\right\} = o(1)
\end{align*}
by taking $M$ to be suffciently large enough. The proof is thus completed. 
\end{proof}

% subsection posterior_contraction_in_frobenius_norm (end)

\subsection{Local asymptotic normality} % (fold)
\label{sub:local_asymptotic_normality}
In this subsection, we establish the local asymptotic normality of the spiked covariance model under the sparsity constraint through Theorem \ref{thm:LAN} below. Some preliminaries are needed in order to proof this theorem. Define
\begin{align}\label{eqn:concentration_set}
\calA_{n} = \left\{\btheta:
\|\bOmega(\btheta) - \bOmega_0\|_{\mathrm{F}} < M\eps_n
, |\mathrm{supp}(\bA)|\leq \kappa_0 s_0
\right\},
\end{align}
where $\eps_n = \sqrt{(rs\log n + s\log p)/n}$. By Theorem \ref{thm:contraction_Frobenius_norm} and Lemma \ref{lemma:posterior sparsity}, there exists some constant $M > 0$ and $\kappa_0 > 0$, possibly depending on $\|\bOmega_0\|_2$, such that
\[
\expect_0\{\Pi_\btheta(\btheta\in\calA_{n}\mid\bY_n)\}\to 1
\]
% Now assume that $\|(\eye_r + \bQ_{01})^{-1}\|_2 < \beta$ for some constant $\beta > 0$. 
Under the assumption that $\sup_{n\geq 1}(\bA_0)$ is bounded away from $1$, by Theorem \ref{thm:intrinsic_deviation_Sigma}
\begin{align*}
\calA_{n}&\subset \{\btheta: \|\btheta - \btheta_0\|_2\leq M'\eps_n, |\mathrm{supp}(\bA)|\leq \kappa_0s_0\}.
\end{align*}
for some large constant $M' > 0$. Note that with a slight abuse of notation, we may use $M$ to denote a generic constant that is sufficiently large such that we can write
\begin{align*}
\calA_{n}&\subset \{\btheta: \|\btheta - \btheta_0\|_2\leq M\eps_n, |\mathrm{supp}(\bA)|\leq \kappa_0s_0\}
\end{align*}
and $\calA_n$ still satisfies $\expect_0\{\Pi_\btheta(\btheta\in\calA_n\mid\bY_n)\} \to 1$. For any $S\subset[p - r]$ with $|S|\leq \kappa_0 s_0$, let
\[
\calA_{n}(S) = \left\{\btheta:\|\btheta - \btheta_{0}\|_1\leq M\sqrt{\frac{r^2s_0^2\log n + rs_0^2\log p}{n}}, \mathrm{supp}(\bA) = S
\right\}
\]
for some large constant $M > 0$. It follows that
\[
\calA_n \subset\calB_n:= \bigcup_{S:|S|\leq \kappa_0 s_0}\calA_{n}(S)
\]
This is because for all $\btheta = [\vect(\bA)\transpose, \bmu\transpose]\transpose\in\calA_n$ with $\mathrm{supp}(\bA) = S$, $|S|\leq \kappa_0s_0$, we have
\begin{align*}
\|\btheta - \btheta_{0}\|_1&\leq \sqrt{(|S| + |S_0|)r + 2r^2 + r(r + 1)}\|\btheta - \btheta_{0}\|_2
\\
&
\lesssim \sqrt{(\kappa_0 + 1)rs_0}\sqrt{\frac{rs_0\log n + s_0\log p}{n}}
% \\&
\lesssim \sqrt{\frac{r^2s_0^2\log n + rs_0^2\log p}{n}}.
\end{align*}
% Also, we know that
% \[
% \|D\bU(\bvarphi)\|_{2}\leq 2\| \eye_{p\times r}\transpose(\eye_p - \bX_\bvarphi)\inverseT\otimes (\eye_p - \bX_\bvarphi)^{-1} \|_2\|\bGamma\|_{2}\leq 2\sqrt{2}
% \]
% for all $\bvarphi\in\mathbb{R}^d$.
\begin{theorem}\label{thm:LAN}
Under the prior specification and setup in Section \ref{sub:bayesian_sparse_pca_and_non_intrinsic_loss}, the log-likelihood function of $\btheta$ yields the following local asymptotic normality expansion:
\begin{align*}
\ell(\bOmega(\btheta)) - \ell(\bOmega_0)
 = 
 &\frac{n}{2}\vect\left(\widehat\bOmega - \bOmega_0\right)\transpose(\bOmega_0^{-1}\otimes\bOmega_0^{-1})D\bSigma(\btheta_{0})(\btheta - \btheta_{0})\\
 & - \frac{n}{4}(\btheta - \btheta_{0})\transpose D\bSigma(\btheta_{0})\transpose(\bOmega_0^{-1}\otimes\bOmega_0^{-1})D\bSigma(\btheta_{0})(\btheta - \btheta_{0})\\
 & + R_n(\btheta, \btheta_{0}),
\end{align*}
where $\widehat\bOmega = (1/n)\sum_{i = 1}^n\by_i\by_i\transpose$ denotes the sample covariance matrix and the remainder $R_n$ satisfies
\[
\sup_{\btheta\in\calB_{n}}|R_n(\btheta, \btheta_{0})| = o_{\prob_0}(1).
\] 
\end{theorem}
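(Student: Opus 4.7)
The plan is to combine three ingredients: the standard Taylor expansion of the Gaussian log-likelihood viewed as a function of the matrix $\bOmega$, the intrinsic perturbation result of Theorem \ref{thm:CT_deviation_Sigma} to re-express the matrix perturbation via $\btheta-\btheta_0$, and concentration-plus-sparsity arguments to kill the cross-remainder terms uniformly over $\btheta\in\calB_n$.

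First I would carry out a third-order matrix Taylor expansion. Setting $\bE=\bOmega-\bOmega_0$ and using $\log\det(\bOmega)-\log\det(\bOmega_0)=\mathrm{tr}(\bOmega_0^{-1}\bE)-\tfrac{1}{2}\mathrm{tr}\{(\bOmega_0^{-1}\bE)^2\}+O(\|\bOmega_0^{-1}\bE\|_{\mathrm{F}}^3)$ together with the Neumann expansion $\bOmega^{-1}=\bOmega_0^{-1}-\bOmega_0^{-1}\bE\bOmega_0^{-1}+\bOmega_0^{-1}\bE\bOmega_0^{-1}\bE\bOmega_0^{-1}+O(\|\bOmega_0^{-1}\bE\|^{3})$, valid whenever $\|\bOmega_0^{-1}\bE\|_2<1$, yields after a cancellation of first-order terms the ``matrix LAN''
\begin{align*}
\ell(\bOmega)-\ell(\bOmega_0)
&=\tfrac{n}{2}\vect(\widehat\bOmega-\bOmega_0)\transpose(\bOmega_0^{-1}\otimes\bOmega_0^{-1})\vect(\bE)\\
&\quad-\tfrac{n}{4}\vect(\bE)\transpose(\bOmega_0^{-1}\otimes\bOmega_0^{-1})\vect(\bE)+\widetilde R_n,
\end{align*}
with $|\widetilde R_n|\lesssim n(\|\widehat\bOmega\|_2+\|\bOmega_0\|_2)\|\bOmega_0^{-1}\|_2^{3}\|\bE\|_{\mathrm{F}}^{3}$ on the high-probability event $\{\|\bE\|_2\le\tfrac{1}{2}\|\bOmega_0^{-1}\|_2^{-1}\}$, which contains $\calB_n$ once $n$ is large by Theorem \ref{thm:contraction_Frobenius_norm}.

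Second, I would invoke Theorem \ref{thm:CT_deviation_Sigma} with $\bE=\bSigma(\btheta)-\bSigma(\btheta_0)$ to write $\vect(\bE)=D\bSigma(\btheta_0)(\btheta-\btheta_0)+\vect\{\bR(\btheta,\btheta_0)\}$ with $\|\bR\|_{\mathrm{F}}\le 16(1+\|\bM_0\|_2)\|\btheta-\btheta_0\|_2^{2}$. Plugging this into the matrix LAN gives the target expansion plus four explicit remainders:
\begin{align*}
&\text{(a)}~\tfrac{n}{2}\vect(\widehat\bOmega-\bOmega_0)\transpose(\bOmega_0^{-1}\otimes\bOmega_0^{-1})\vect(\bR),\\
&\text{(b)}~-\tfrac{n}{2}(\btheta-\btheta_0)\transpose D\bSigma(\btheta_0)\transpose(\bOmega_0^{-1}\otimes\bOmega_0^{-1})\vect(\bR),\\
&\text{(c)}~-\tfrac{n}{4}\vect(\bR)\transpose(\bOmega_0^{-1}\otimes\bOmega_0^{-1})\vect(\bR),\qquad\text{(d)}~\widetilde R_n.
\end{align*}
The crucial geometric observation is that for $\btheta\in\calA_n(S)$ with $|S|\le\kappa_0 s_0$, both $\bSigma(\btheta)$ and $\bSigma_0$ are supported on the index set $T=[r]\cup(r+S)\cup(r+S_0)$ of cardinality at most $(\kappa_0+2)s_0+r$; consequently $\bE$, $\bR$, and the unvectorization of $D\bSigma(\btheta_0)(\btheta-\btheta_0)$ all vanish outside the $|T|\times|T|$ block, so the inner products in (a)--(c) only see the corresponding block of $\widehat\bOmega-\bOmega_0$.

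Third, I would bound each remainder uniformly over $\calB_n$. Terms (b) and (c) are deterministic and, under A2--A3 and $\|\btheta-\btheta_0\|_2\lesssim\eps_n$ on $\calB_n$, are of order $n\eps_n^{3}$ and $n\eps_n^{4}$, both $o(1)$ by Assumption A4. Term (d) is handled similarly after noting that $\|\widehat\bOmega\|_2=O_{\prob_0}(1)$ on the sparse support via a standard sparse operator-norm bound. The main obstacle is term (a): a union bound over all $\binom{p-r}{\le\kappa_0 s_0}\le e^{Cs_0\log p}$ possible supports $T$, combined with Gaussian concentration of $(\widehat\bOmega-\bOmega_0)_{TT}$ in Frobenius norm, gives $\sup_{|T|\le Cs_0}\|(\widehat\bOmega-\bOmega_0)_{TT}\|_{\mathrm{F}}=O_{\prob_0}(s_0\sqrt{\log p/n})$; coupled with $\|\bR\|_{\mathrm{F}}\lesssim\eps_n^{2}$ this produces $|\text{(a)}|=O_{\prob_0}(n\cdot s_0\sqrt{\log p/n}\cdot\eps_n^{2})$, which is $o_{\prob_0}(1)$ precisely under A4. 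The technical heart of the proof is the simultaneous handling of the uncountably many $\btheta\in\calA_n(S)$: I would cover each $\calA_n(S)$ by a $\delta_n$-net with $\delta_n\asymp n^{-1}$ (whose log-cardinality is $O(rs_0\log n)$, absorbed by A4) and use the Lipschitz continuity of $\btheta\mapsto\bR(\btheta,\btheta_0)$ that follows from the explicit matrix formula of Theorem \ref{thm:CT_deviation_Sigma} to extend the pointwise bound on the net to the whole sparse neighborhood.
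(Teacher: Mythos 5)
Your proposal follows the same three-stage architecture as the paper's proof: Taylor-expand $\ell(\bOmega)$ in the matrix argument, substitute the first-order expansion of $\bSigma(\btheta)$ from Theorem~\ref{thm:CT_deviation_Sigma}, and kill the cross terms uniformly over $\calB_n$ using sparsity and a union bound over the finitely many supports. The blueprint is sound and the rates work. The principal technical difference is in how you control the stochastic cross term (your term (a)). The paper's Lemma~\ref{lemma:stochastic_remainder} observes that the remainder matrices ($\bR_\bU\bM_0\bU_0\transpose$, $\bR_\bOmega$, $\bR_1$) are not only supported on the small $\bar S\times\bar S$ block but also low-rank (rank $\lesssim r$), and pairs the spectral norm of $(\widehat\bOmega-\bOmega_0)_{\bar S\bar S}$ against the nuclear norm of the remainder via H\"older, $|\langle A,B\rangle|\le\|A\|_2\|B\|_*\le\sqrt{\mathrm{rank}(B)}\|A\|_2\|B\|_{\mathrm F}$. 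This yields a bound that is uniform over $\btheta\in\calA_n(S)$ by pure algebra using the $\ell_1$-radius of $\calA_n(S)$, so no net is needed. Your Frobenius--Frobenius pairing is looser by roughly a factor $\sqrt{|T|/r}\asymp\sqrt{s_0/r}$ but still closes under Assumption~A4, as your computation shows; however, the $\delta_n$-net over $\calA_n(S)$ is superfluous --- once $\sup_T\|(\widehat\bOmega-\bOmega_0)_{TT}\|_{\mathrm F}$ is controlled, the supremum over $\btheta$ inside each $\calA_n(S)$ is deterministic because $\|\bR(\btheta,\btheta_0)\|_{\mathrm F}$ is monotone in $\|\btheta-\btheta_0\|_2$.

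One imprecision worth fixing: after canceling first-order terms in the Neumann expansion, a stochastic \emph{quadratic} cross term of the form $-\tfrac{n}{2}\mathrm{tr}\{(\widehat\bOmega-\bOmega_0)\bOmega_0^{-1}\bE\bOmega_0^{-1}\bE\bOmega_0^{-1}\}$ survives in addition to your terms (a)--(d). It is quadratic in $\bE$ and linear in $\widehat\bOmega-\bOmega_0$, so it is \emph{not} covered by your displayed bound $|\widetilde R_n|\lesssim n(\|\widehat\bOmega\|_2+\|\bOmega_0\|_2)\|\bOmega_0^{-1}\|_2^3\|\bE\|_{\mathrm F}^3$, which only captures terms cubic in $\bE$. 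Restricted to the support block it is $O_{\prob_0}\bigl(n\sqrt{s_0\log p/n}\,\|\bE\|_{\mathrm F}^2\bigr) = o_{\prob_0}(1)$ by A4, so the gap is easily repaired, but the displayed $\widetilde R_n$ bound should be corrected to reflect it (or this term should be routed into term~(a) alongside the linear-in-$\bR$ cross term).
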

The key to the proof of the local asymptotic normality expansion in Theorem \ref{thm:LAN} is the following lemma that controls the stochastic remainder in the Taylor expansion of the log-likelihood function. For convenience denote
\begin{align*}
\bR_\bU(\bvarphi, \bvarphi_{0}) & = \bU(\bvarphi) - \bU(\bvarphi_{0}) - 2(\eye_p - \bX_{0})^{-1}(\bX_\bvarphi - \bX_{0})(\eye_p - \bX_{0})^{-1}\eye_{p\times r},\\
\bR_\bOmega(\btheta, \btheta_{0}) & = \bU(\bvarphi)(\bM - \bM_0)\{\bU(\bvarphi) - \bU_{0}\}\transpose + \{\bU(\bvarphi) - \bU_{0}\}\bM_0\{\bU(\bvarphi) - \bU_{0}\}\transpose\\
&\quad + \{\bU(\bvarphi) - \bU_{0}\}(\bM - \bM_0)\bU_{0}\transpose,\\
\bR_1(\bOmega, \bOmega_0) & = \bOmega_0^{-1}(\bOmega_0 - \bOmega)\bOmega_0^{-1}\sum_{m = 1}^\infty\{(\bOmega_0 - \bOmega)\bOmega_0^{-1}\}^m,\quad \|\bOmega - \bOmega_0\|_2 < 1
\end{align*}
\begin{lemma}\label{lemma:stochastic_remainder}
Under the prior specification and setup in Section \ref{sub:bayesian_sparse_pca_and_non_intrinsic_loss}, the following stochastic remainders are asymptotically uniformly negligible:
\begin{align}
% \begin{aligned}
&\sup_{\btheta\in\calB_{n}(S)} \left|2n\vect(\widehat\bOmega - \bOmega_0)\transpose (\bOmega_0^{-1}\otimes\bOmega_0^{-1})\vect\{\bR_\bU(\bvarphi, \bvarphi_{0})\bM_0\bU_0\transpose\}\right|
% \nonumber\\
\label{eqn:stochastic_remainder_I}
% &\quad
 = o_{\prob_0}(1),\\
\label{eqn:stochastic_remainder_II}
&
% \sup_{S\in\calS(\kappa_n)}
\sup_{\btheta\in\calB_{n}(S)}\left|n\vect(\widehat\bOmega - \bOmega_0)\transpose(\bOmega_0^{-1}\otimes\bOmega_0^{-1}) \vect\{\bR_\bOmega(\btheta, \btheta_{0})\}\right| = o_{\prob_0}(1),\\
\label{eqn:stochastic_remainder_III}
&\sup_{\btheta\in\calB_{n}(S)} \left|n\vect(\widehat\bOmega - \bOmega_0)\transpose \vect\{\bR_1(\bOmega(\btheta), \bOmega_0)\}\right| = o_{\prob_0}(1),
% \end{aligned}
\end{align}
where $\widehat\bOmega = (1/n)\sum_{i = 1}^n\by_i\by_i\transpose$ denotes the sample covariance matrix.
\end{lemma}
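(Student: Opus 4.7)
The plan is to apply the Hanson--Wright inequality to each of the three displays, each of which is a centered Gaussian quadratic form in $\by_1,\ldots,\by_n$. Writing
\[
n\,\vect(\widehat{\bOmega}-\bOmega_0)^{\!\top}(\bOmega_0^{-1}\otimes\bOmega_0^{-1})\vect(\bV)
\;=\;\sum_{i=1}^n\bigl\{\by_i^{\!\top}\bB_n(\btheta)\,\by_i-\expect_0[\by_i^{\!\top}\bB_n(\btheta)\,\by_i]\bigr\}
\]
with $\bB_n(\btheta)$ equal (after symmetrization) to $\bOmega_0^{-1}\bV\bOmega_0^{-1}$, where $\bV$ is $\bR_\bU\bM_0\bU_0^{\!\top}$ for \eqref{eqn:stochastic_remainder_I}, $\bR_\bOmega$ for \eqref{eqn:stochastic_remainder_II}, and $\bR_1$ for \eqref{eqn:stochastic_remainder_III} (the third has no outer $\bOmega_0^{-1}$ weighting, i.e.\ $\bB_n(\btheta)=\bR_1$ directly), the cyclic property of trace combined with the Woodbury identity $\bOmega_0^{-1}\bU_0=\bU_0(\eye_r+\bM_0)^{-1}$ absorbs the outer $\bOmega_0^{-1}$ factors into bounded kernels and yields the $p$-free Frobenius-norm bound $\|\bB_n(\btheta)\,\bOmega_0\|_{\mathrm{F}}\lesssim\|\bV\|_{\mathrm{F}}$, where the hidden constant depends only on $\|\bOmega_0^{-1}\|_2$ and $\|\bM_0\|_2$, both bounded under Assumption A3. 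Hanson--Wright then gives the pointwise tail bound $|\sum_i(\cdots)|=O_{\prob_0}(\sqrt{n}\,\|\bV\|_{\mathrm{F}})$.

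The Frobenius-norm estimates on $\bV$ come from the intrinsic perturbation machinery: Theorem \ref{thm:second_order_deviation_CT} gives $\|\bR_\bU\|_{\mathrm{F}}\le 8\|\bvarphi-\bvarphi_0\|_2^2$ for the first display; Theorem \ref{thm:CT_deviation_Sigma} gives $\|\bR_\bOmega\|_{\mathrm{F}}\lesssim(1+\|\bM_0\|_2)\|\btheta-\btheta_0\|_2^2$ for the second; and for the third, the Neumann series $\bR_1=\bOmega_0^{-1}(\bOmega_0-\bOmega)\bOmega_0^{-1}\sum_{m\ge 1}[(\bOmega_0-\bOmega)\bOmega_0^{-1}]^m$ converges because $\|\bOmega-\bOmega_0\|_2\lesssim\|\btheta-\btheta_0\|_2\to 0$ by Theorem \ref{thm:CT_deviation_Sigma}, yielding $\|\bR_1\|_{\mathrm{F}}\lesssim\|\btheta-\btheta_0\|_2^2$. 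On $\calA_n(S)$ the $\ell_1$-bound $\|\btheta-\btheta_0\|_1\le M\sqrt{(r^2 s_0^2\log n+rs_0^2\log p)/n}$ gives $\|\btheta-\btheta_0\|_2^2\lesssim(r^2 s_0^2\log n+rs_0^2\log p)/n$, so the pointwise estimate becomes $O_{\prob_0}((r^2 s_0^2\log n+rs_0^2\log p)/\sqrt{n})$, which is $o_{\prob_0}(1)$ under Assumption A4. Uniformity over $\btheta$ and over the pattern $S$ is then handled by a generic chaining argument: the maps $\btheta\mapsto\bR_\bU,\bR_\bOmega,\bR_1$ are Lipschitz in $\btheta$ in Frobenius norm (by direct inspection and the convergent Neumann expansion), and $\calA_n(S)$ sits inside a subspace of dimension $\lesssim rs_0$; the resulting entropy integral and the union bound over the $\binom{p-r}{\lceil\kappa_0 s_0\rceil}\le\exp(Cs_0\log p)$ choices of $S$ are absorbed by the Hanson--Wright tail under A4.

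The main obstacle is that $\bR_\bU$ is not row-sparse even though $\bU(\bvarphi)-\bU(\bvarphi_0)$ is: the first-order Taylor term $2\bC_0(\bX_\bvarphi-\bX_{\bvarphi_0})\bC_0\eye_{p\times r}$ inherits density from $\bC_0=(\eye_p-\bX_{\bvarphi_0})^{-1}$. A naive Cauchy--Schwarz applied to $\langle\widehat{\bOmega}-\bOmega_0,\bR_\bU\bM_0\bU_0^{\!\top}\rangle$ would introduce the factor $\|\widehat{\bOmega}-\bOmega_0\|_2\asymp\sqrt{p/n}$, hopelessly large in the regime $p/n\to\infty$. The decisive step is to keep the two $\bOmega_0^{-1}$ weights attached to the random side when rotating the trace, so that the Hanson--Wright Frobenius-norm parameter becomes $\|\bOmega_0^{-1}\bR_\bU\bM_0\bU_0^{\!\top}\|_{\mathrm{F}}\le\|\bOmega_0^{-1}\|_2^2\|\bM_0\|_2\|\bR_\bU\|_{\mathrm{F}}$, which is $p$-free under A3; this bypasses the density of $\bR_\bU$ entirely and restores the scheme described above.
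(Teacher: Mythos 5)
Your approach is genuinely different from the paper's. The paper exploits sparsity to reduce the effective dimension \emph{before} touching the randomness: it permutes so that all the remainder matrices live in a $(\bar s + r)\times(\bar s + r)$ principal block (where $\bar S = S \cup S_0$, $\bar s = |\bar S|$), restricts the trace to $\bar S$, applies a single random-matrix spectral-norm bound on $\widehat\bOmega_{\bar S} - \bOmega_{0\bar S}$ with a union bound over the $\binom{p-r}{\kappa_0 s_0}$ patterns, and then finishes with a \emph{deterministic} H\"older (nuclear/spectral) inequality. The supremum over $\btheta\in\calA_n(S)$ is thus completely free — there is no stochastic chaining. Your approach keeps the Gaussian quadratic form intact and applies Hanson--Wright directly; the Frobenius-norm parameter $\|\bOmega_0^{-1/2}\bV\bOmega_0^{-1/2}\|_{\mathrm F} \le \|\bV\|_{\mathrm F}$ is indeed $p$-free, and the dimension of $\calA_n(S)$ does enter, through the chaining entropy rather than through the size of a reduced covariance block. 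With the arithmetic I can verify, the chaining cost $\sqrt{rs_0}$ and union-bound cost $\sqrt{s_0\log p}$ both fold into Assumption A4, so the method should close if you write out the Dudley integral carefully. What your proposal buys is a one-pass argument that never needs the permutation-block structure; what it costs is that the uniformity step becomes genuinely stochastic rather than the paper's deterministic Cauchy--Schwarz.

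There is, however, an error in the motivation. Your claim that ``$\bR_\bU$ is not row-sparse because $\bC_0 = (\eye_p - \bX_{\bvarphi_0})^{-1}$ is dense'' is false, and this is precisely the structural fact the paper's proof turns on. Because $\bA_0$ is supported on $S_0 \subset \bar S$, the matrix $\eye_p - \bX_{\bvarphi_0}$ is, after conjugating by the permutation $\bQ_S = \mathrm{diag}(\eye_r, \bP_S)$, block-diagonal: a nontrivial $(\bar s + r)\times(\bar s + r)$ block and an identity block of size $p - (\bar s + r)$. Its inverse $\bC_0$ inherits this block-diagonal structure, so $\bC_0(\bX_{\bvarphi}-\bX_{\bvarphi_0})\bC_0\eye_{p\times r}$, and hence $\bR_\bU(\bvarphi,\bvarphi_0)$, has nonzero rows only in $[r]\cup(r+\bar S)$. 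The same holds for $\bR_\bOmega$ and $\bR_1$. So the ``naive Cauchy--Schwarz'' you warn against is not what the paper does — it applies Cauchy--Schwarz/H\"older only to the $\bar S$-reduced block, where $\|\widehat\bOmega_{\bar S}-\bOmega_{0\bar S}\|_2 \asymp \sqrt{(rs_0\log n + s_0\log p)/n}$, not $\sqrt{p/n}$. The error does not invalidate your alternative (you never rely on row-sparsity of $\bR_\bU$), and the Woodbury identity $\bOmega_0^{-1}\bU_0 = \bU_0(\eye_r+\bM_0)^{-1}$ that you invoke is unnecessary for the Frobenius-norm bound; $\|\bOmega_0^{-1}\|_2 \le 1$ already suffices. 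But you should correct the misconception: the sparsity of the remainders is not an obstacle to be bypassed — it is the mechanism by which the paper avoids chaining entirely.
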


\begin{proof}
[Proof of Lemma \ref{lemma:stochastic_remainder}]
The proof is based on reducing the dimension of the deterministic remainders $\bR_\bU$, $\bR_\bSigma$, and $\bR_1$ because for $\btheta$ and $\btheta_{0}$, the instrinsic dimension is much smaller than the ambient dimension due to the sparsity. We first fix $S\in\calS(\kappa_0s_0)$.
Let $\bar{S} = S\cup S_0$, where $S_0 = S_{\bA_{0}}$ and $\btheta_{0} = [\vect(\bA_{0})\transpose, \bmu_0\transpose]\transpose$, and let $\bar{s} = |\bar{S}|$. Denote
\begin{align*}
\bA = \bP_S\begin{bmatrix*}
\bA_{\bar{S}}\\ \zero
\end{bmatrix*},
\quad
\bvarphi_{\bar{S}} = 
\vect(\bA_{\bar{S}})
,\quad
\btheta_{\bar{S}} = 
\begin{bmatrix*}
\bvarphi_{\bar{S}}\\
\bmu
\end{bmatrix*}
\end{align*}
for a suitable permutation matrix $\bP_S$. 
Similarly, denote
\begin{align*}
\bA_{0} = \bP_S\begin{bmatrix*}
\bA_{0\bar{S}}\\ \zero
\end{bmatrix*},
\quad
\bvarphi_{0\bar{S}} = 
% \begin{bmatrix*}
% \bb\\
\vect(\bA_{0\bar{S}})
% \end{bmatrix*}
,\quad
\btheta_{0\bar{S}} = 
\begin{bmatrix*}
\bvarphi_{0\bar{S}}\\
\bmu
\end{bmatrix*}.
\end{align*}
By definition of the Cayley parameterization $\bvarphi\mapsto \bU(\bvarphi)$,
\begin{align*}
\bU(\bvarphi) &
% = \begin{bmatrix*}
% (\eye_r - \bA\transpose \bA)(\eye_r + \bA\transpose \bA)^{-1}\\
% \bA(\eye_r + \bA\transpose \bA)^{-1}
% \end{bmatrix*}
% \\&
=
\begin{bmatrix*}
(\eye_r - \bA_{\bar{S}}\transpose \bA_{\bar{S}})(\eye_r + \bA_{\bar{S}}\transpose \bA_{\bar{S}})^{-1}\\
\bP_S\begin{bmatrix*}
\bA_{\bar{S}}(\eye_r + \bA\transpose \bA)^{-1}\\
\zero
\end{bmatrix*}
\end{bmatrix*}
% \\&
= \begin{bmatrix*}
\eye_r & \\ & \bP_S
\end{bmatrix*}\begin{bmatrix*}
\bU(\bvarphi_{\bar{S}})\\\zero
\end{bmatrix*}
\end{align*}
where $\bU(\bvarphi_{\bar{S}})$ is the Cayley parameterization that maps the vector $\bvarphi_{\bar{S}}$ to $\in\mathbb{O}(\bar{s} + r, r)$. Write $\bQ_S = \diag(\eye_r, \bP_S)$. Similarly, we can also write $\bU_{0} = \bQ_S[\bU_{0\bar{S}}\transpose, \zero]\transpose$, where $\bU_{0\bar{S}} = \bU(\bvarphi_{0\bar{S}})$. The permutation matrix $\bQ_S$ will be useful in this proof. For $\bR_\bU$, write
\begin{align*}
\eye_p - \bX_{0}
& = \begin{bmatrix*}
\eye_r & & \\
	   & \eye_{\bar{s}} & \\
	   &				& \eye_{p - (\bar{s} + r)}
\end{bmatrix*} - \bQ_S\begin{bmatrix*}
\zero & -\bA_{0\bar{S}}\transpose & \zero\\
\bA_{0\bar{S}} & \zero & \zero\\
\zero & \zero & \zero
\end{bmatrix*}\bQ_S\transpose
= \bQ_S\begin{bmatrix*}
\eye - \bX_{0\bar{S}} & \\
& \eye_{p - (\bar{s} + r)} \\
\end{bmatrix*}\bQ_S\transpose,
\end{align*}
where 
\[
\bX_{0\bar{S}} := \bX_{\bvarphi_{0\bar{S}}} = \begin{bmatrix*}
\zero & -\bA_{0\bar{S}}\transpose\\
-\bA_{0\bar{S}} & \zero
\end{bmatrix*}.
\]
Similarly, we also have
\begin{align*}
\bX_\bvarphi - \bX_{0} = \bQ_S\begin{bmatrix*}
\bX_{\bvarphi_{\bar{S}}} - \bX_{0\bar{S}} & \zero \\
\zero & \zero
\end{bmatrix*}\bQ_S\transpose,
\quad\text{where}\quad
\bX_{\bvarphi_{\bar{S}}} = \begin{bmatrix*}
\zero & -\bA_{\bar{S}}\transpose\\
\bA_{\bar{S}} & \zero
\end{bmatrix*}.
\end{align*}

\vspace*{2ex}\noindent
$\blacksquare$
We first consider $\bR_\bU(\bvarphi, \bvarphi_{0})$. Write $\bR_\bU(\bvarphi, \bvarphi_{0})$ in the following block form with a zero matrix in the lower block:
\begin{align*}
&\bR_\bU(\bvarphi, \bvarphi_{0})\\
&\quad = \bQ_S\begin{bmatrix*}
\bU(\bvarphi_{\bar{S}}) - \bU(\bvarphi_{0\bar{S}})\\\zero
\end{bmatrix*}
- \bQ_S\begin{bmatrix*}
(\eye - \bX_{0\bar{S}})^{-1}(\bX_{\bvarphi_{\bar{S}}} - \bX_{0\bar{S}})(\eye - \bX_{0\bar{S}})^{-1}\eye_{(\bar{s} + r)\times r}\\\zero
\end{bmatrix*}\\
&\quad := \bQ_S\begin{bmatrix*}
\bR_{\bU}(\bvarphi_{\bar{S}}, \bvarphi_{0\bar{S}})\\\zero
\end{bmatrix*},
\end{align*}
where we have used the fact that $\bQ_S\eye_{p\times r} = \eye_{p\times r}$. Therefore, 
\begin{align*}
&\vect(\widehat\bOmega - \bOmega_0)\transpose(\bOmega_0^{-1}\otimes\bOmega_0^{-1})\vect\{\bR_\bU(\bvarphi, \bvarphi_{0})\bM_0\bU_0\transpose\}
\\
&\quad = \mathrm{tr}\left\{(\widehat\bOmega - \bOmega_0)\bOmega_0^{-1}\bR_\bU(\bvarphi, \bvarphi_{0})\bM_0\bU_{0}\transpose\bOmega_0^{-1}\right\}
 % + \mathrm{tr}\left\{(\widehat\bOmega - \bOmega_0)\bOmega_0^{-1}\bU_{0}\bM_0\bR_\bU(\bvarphi, \bvarphi_{0})\transpose\bOmega_0^{-1}\right\}
 % \\&\quad
 = \mathrm{tr}\left\{(\widehat\bOmega - \bOmega_0)\bOmega_0^{-1}\bR_\bU(\bvarphi, \bvarphi_{0})\tilde\bM_0\bU_{0}\transpose\right\}
 % + \mathrm{tr}\left\{\left(\widehat\bOmega - \bOmega_0\right)\bU_{0}\tilde\bM_0\bR_\bU(\bvarphi, \bvarphi_{0})\transpose\bOmega_0^{-1}\right\}
 \\
&\quad = \mathrm{tr}\left\{\bU_{0}\transpose(\widehat\bOmega - \bOmega_0)\bOmega_0^{-1}\bR_\bU(\bvarphi, \bvarphi_{0})\tilde\bM_0\right\}
 % + \mathrm{tr}\left\{\bR_\bU(\bvarphi, \bvarphi_{0})\transpose\bOmega_0^{-1}\left(\widehat\bOmega - \bOmega_0\right)\bU_{0}\tilde\bM_0\right\}
 ,
\end{align*}
where $\tilde\bM_0 = \bM_0(\bM_0 + \eye)^{-1}$. 
Write $\widehat\bOmega$ and $\bOmega_0$ in the block forms
\[
\widehat\bOmega = \bQ_S\begin{bmatrix*}
\widehat\bOmega_{\bar{S}} & \widehat\bOmega_{12}\\ \widehat\bOmega_{21} & \widehat\bOmega_{22}
\end{bmatrix*}\bQ_S\transpose.
\]
and
\begin{align*}
\bOmega_0 &= \bQ_S\left(\begin{bmatrix*}
\bU_{0\bar{S}}\\\zero
\end{bmatrix*}\bM_0
\begin{bmatrix*}
\bU_{0\bar{S}}\transpose & \zero
\end{bmatrix*} + \eye_p\right)\bQ_S\transpose
% \\&
= \bQ_S\begin{bmatrix*}
\bU_{0\bar{S}}\bM_0\bU_{0\bar{S}}\transpose + \eye_{\bar{s} + r} & \zero\\
\zero & \eye_{p - (\bar{s} + r)}
\end{bmatrix*}\bQ_S\transpose\\
& = \bQ_S
\begin{bmatrix*}
\bOmega_{0\bar{S}} & \zero\\
\zero & \eye_{p - (\bar{s} + r)}
\end{bmatrix*}\bQ_S\transpose,
\end{align*}
where $\bOmega_{0\bar{S}} = \bU_{0\bar{S}}\bM_0\bU_{0\bar{S}}\transpose + \eye_{\bar{s} + r}$. 
It follows that
\begin{align*}
&\bU_{0}\transpose(\widehat\bOmega - \bOmega_0)\bOmega_0^{-1}\bR_\bU(\bvarphi, \bvarphi_{0})\tilde\bM_0\\
&\quad = \begin{bmatrix*}
\bU_{0\bar{S}}\transpose & \zero 
\end{bmatrix*}\bQ_S\transpose \bQ_S\begin{bmatrix*}
\widehat\bOmega_{\bar{S}} - \bOmega_{0\bar{S}} & \widehat\bOmega_{12}\\
\widehat\bOmega_{21} & \widehat\bOmega_{22} - \eye_{p - (\bar{s} + r)}
\end{bmatrix*}
\begin{bmatrix*}
\bOmega_{0\bar{S}}^{-1} & \zero\\ \zero & \eye_{p - (\bar{s} + r)}
\end{bmatrix*}
\begin{bmatrix*}
\bR_\bU(\bvarphi_{\bar{S}}, \bvarphi_{0\bar{S}})\\ \zero
\end{bmatrix*}\tilde\bM_0\\
&\quad = \bU_{0\bar{S}}\transpose(\widehat\bOmega_{\bar{S}} - \bOmega_{0\bar{S}})\bOmega_{0\bar{S}}^{-1}\bR_\bU(\bvarphi_{\bar{S}}, \bvarphi_{0\bar{S}})\tilde\bM_0.
% \\
% &\bR_\bU(\bvarphi, \bvarphi_{0})\transpose\bSigma_0^{-1}\left(\widehat\bSigma - \bSigma_0\right)\bU_{0}\tilde\bM_0\\
% &\quad = \bR_\bU(\bvarphi_{\bar{S}}, \bvarphi_{0\bar{S}})\transpose\bSigma_{0\bar{S}}^{-1}(\widehat\bSigma_{\bar{S}} - \bSigma_{0\bar{S}})\bU_{0\bar{S}}\tilde\bM_0.
\end{align*}
By the random matrix theory (see, for example, Section 5.4.1. in \citealp{vershynin2010introduction}), for any $t > 0$, 
\begin{align*}
\prob_0\left(\|\widehat\bOmega_{\bar{S}} - \bOmega_{0\bar{S}}\|_2 > C_0\sqrt{\frac{\kappa_0s_0 + t^2}{n}}\right) \leq 2\exp(-ct^2)
\end{align*}
for some absolute constant $c > 0$ and some constant $C_0 > 0$ that depends on $\|\bSigma_0\|_2$, where $\kappa_n = \kappa_0 s_0$ and $\bSigma_{0\bar{S}} = \bU_{0\bar{S}}\bM_0\bU_{0\bar{S}}\transpose + \eye_{\bar{s} + r}$. Therefore, with $t^2 = (2\kappa_0/c)(rs_0\log n + s_0\log p)$, we have, 
\begin{equation}
\label{eqn:RMT_sample_covariance}
\begin{aligned}
&\prob_0\left(
\sup_{S\in\calS(\kappa_0s_0)}\|\widehat\bOmega_{\bar{S}} - \bOmega_{0\bar{S}}\|_2 > C_0\sqrt{\frac{rs_0\log n + s_0\log p}{n}}\right)\\
&\quad\leq 2{p - r\choose \kappa_0s_0}\exp\left\{-2\kappa_0(rs_0\log n  + s_0\log p)\right\}\\
&\quad\leq2\exp\{-\kappa_0(rs_0\log n  + s_0\log p)\}
\end{aligned}
\end{equation}
Denote $\|\bA\|_* = \sum_i\sigma_i(\bA)$ the nuclear norm of a matrix. Therefore, by H\"older's inequality, the equivalence between nuclear norm and Frobenius norm, and Theorem \ref{thm:second_order_deviation_CT}, the left-hand side of \eqref{eqn:stochastic_remainder_I} is upper bounded by
\begin{align*}
&\sup_{S\in\calS(\kappa_0s_0)}\sup_{\btheta\in \calA_{n}(S)}2n\|\widehat\bOmega_{\bar{S}} - \bOmega_{0\bar{S}}\|_2\|\bU_{0\bar{S}}\tilde\bM_0\bR_\bU(\bvarphi_{\bar{S}}, \bvarphi_{0\bar{S}})\transpose\bOmega_{0\bar{S}}^{-1}\|_*\\
&\quad\leq 2n\sqrt{r}\sup_{S\in\calS(\kappa_0s_0)}\|\widehat\bOmega_{\bar{S}} - \bOmega_{0\bar{S}}\|_2
% \sup_{S\in\calS(\kappa_0s_0)}
\sup_{S\in\calS(\kappa_0s_0)}\sup_{\btheta\in \calA_{n}(S)}\|\bU_{0\bar{S}}\tilde\bM_0\bR_\bU(\bvarphi_{\bar{S}}, \bvarphi_{0\bar{S}})\transpose\bOmega_{0\bar{S}}^{-1}\|_{\mathrm{F}}\\
&\quad\lesssim n\sqrt{\frac{r(rs_0\log n + s_0\log p)}{n}} \sup_{S\in\calS(\kappa_0s_0)}\sup_{\btheta\in \calA_{n}(S)}\|\bR_\bU(\bvarphi_{\bar{S}}, \bvarphi_{0\bar{S}})\|_{\mathrm{F}}\\
&\quad\lesssim n\sqrt{\frac{r(rs_0\log n + s_0\log p)}{n}}\|\btheta - \btheta_0\|_1^2\\
&\quad\lesssim \sqrt{\frac{(r^2s^2\log n + rs^2\log p)^3}{n}} = o(1)
\end{align*}
with probability greater than $1 - 2\exp(-\kappa_0n\eps_n^2)\to 1$. This shows that the left-hand side of \eqref{eqn:stochastic_remainder_I} is asymptotically negligible by Assumption A4.  

\vspace*{2ex}\noindent
$\blacksquare$
For $\bR_\bOmega$, we have, using the permutation matrix $\bQ_S$,
\begin{align*}
\bU(\bvarphi)(\bM - \bM_0)\{\bU(\bvarphi) - \bU_{0}\}\transpose
& = \bQ_S\begin{bmatrix*}
\bU(\bvarphi_{\bar{S}})(\bM - \bM_0)\{\bU(\bvarphi_{\bar{S}}) - \bU_{0\bar{S}}\}\transpose & \zero \\
\zero & \zero
\end{bmatrix*}\bQ_S\transpose,\\
\{\bU(\bvarphi) - \bU_{0}\}\bM_0\{\bU(\bvarphi) - \bU_{0}\}\transpose
& = \bQ_S\begin{bmatrix*}
\{\bU(\bvarphi_{\bar{S}}) - \bU_{0\bar{S}}\}\bM_0\{\bU(\bvarphi_{\bar{S}}) - \bU_{0\bar{S}}\}\transpose & \zero \\
\zero & \zero
\end{bmatrix*}\bQ_S\transpose,\\
\{\bU(\bvarphi) - \bU_{0}\}(\bM - \bM_0)\bU_{0}\transpose
& = \bQ_S\begin{bmatrix*}
\{\bU(\bvarphi_{\bar{S}}) - \bU_{0\bar{S}}\}(\bM - \bM_0)\bU_{0\bar{S}}\transpose & \zero \\
\zero & \zero
\end{bmatrix*}\bQ_S\transpose,
\end{align*}
which implies that
\begin{align*}
\bR_\bOmega(\btheta, \btheta_{0})
& = \bQ_S\begin{bmatrix*}
\bR_\bOmega(\btheta_{\bar{S}}, \btheta_{0\bar{S}})
& \zero\\
\zero & \zero
\end{bmatrix*}\bQ_S\transpose,
\end{align*}
where
\begin{align*}
\bR_\bOmega(\btheta_{\bar{S}}, \btheta_{0\bar{S}})
& = \bU(\bvarphi_{\bar{S}})(\bM - \bM_0)\{\bU(\bvarphi_{\bar{S}}) - \bU_{0\bar{S}}\}\transpose
\\&\quad
 + \{\bU(\bvarphi_{\bar{S}}) - \bU_{0\bar{S}}\}\bM_0\{\bU(\bvarphi_{\bar{S}}) - \bU_{0\bar{S}}\}\transpose
\\&\quad
 + \{\bU(\bvarphi_{\bar{S}}) - \bU_{0\bar{S}}\}(\bM - \bM_0)\bU_{0\bar{S}}\transpose.
\end{align*}
Clearly, $\mathrm{rank}\{\bR_\bOmega(\btheta_{\bar{S}}, \btheta_{0\bar{S}})
\}\leq 3r$. Hence, the left-hand side of \eqref{eqn:stochastic_remainder_II} can be upper bounded similarly using H\"older's inequality \eqref{eqn:RMT_sample_covariance} by
\begin{align*}
&\sup_{\calB_{n}}\left|n\vect(\widehat\bOmega - \bOmega_0)\transpose(\bOmega_0^{-1}\otimes \bOmega_0^{-1})\vect\{\bR_\bOmega(\btheta, \btheta_{0})\}\right|\\
&\quad = 
\sup_{\calB_{n}}\left|n\mathrm{tr}\left\{(\widehat\bOmega - \bOmega_0)\bOmega_0^{-1}\bR_\bOmega(\btheta, \btheta_{0})\bOmega_0^{-1}\right\}\right|\\
&\quad = \sup_S\sup_{\calA_{n}(S)}\left|n\mathrm{tr}\left[
\begin{bmatrix*}
\widehat\bOmega_{\bar{S}} - \bOmega_{0\bar{S}} & \widehat\bOmega_{12}\\
\widehat\bOmega_{21} & \widehat\bOmega_{22} - \eye_{p - (\bar{s} + r)}
\end{bmatrix*}
\begin{bmatrix*}
\bOmega_{0\bar{S}}^{-1}\bR_\bOmega(\btheta_{\bar{S}}, \btheta_{0\bar{S}})\bOmega_{0\bar{S}}^{-1}
& \zero\\
\zero & \zero
\end{bmatrix*}
\right]\right|\\
&\quad = \sup_S\sup_{\calA_{n}(S)}\left|n\mathrm{tr}\left[
\begin{bmatrix*}
\widehat\bOmega_{\bar{S}} - \bOmega_{0\bar{S}} & \widehat\bOmega_{12}\\
\widehat\bOmega_{21} & \widehat\bOmega_{22} - \eye_{p - (\bar{s} + r)}
\end{bmatrix*}
\begin{bmatrix*}
\bOmega_{0\bar{S}}^{-1}\bR_\bOmega(\btheta_{\bar{S}}, \btheta_{0\bar{S}})\bOmega_{0\bar{S}}^{-1}
\\
\zero 
\end{bmatrix*}\begin{bmatrix*}
\eye_{\bar{s} + r} & \zero
\end{bmatrix*}
\right]\right|\\
&\quad = \sup_S\sup_{\calA_{n}(S)}\left|n\mathrm{tr}\left[\begin{bmatrix*}
\eye_{\bar{s} + r} & \zero
\end{bmatrix*}
\begin{bmatrix*}
\widehat\bOmega_{\bar{S}} - \bOmega_{0\bar{S}} & \widehat\bOmega_{12}\\
\widehat\bOmega_{21} & \widehat\bOmega_{22} - \eye_{p - (\bar{s} + r)}
\end{bmatrix*}
\begin{bmatrix*}
\bOmega_{0\bar{S}}^{-1}\bR_\bOmega(\btheta_{\bar{S}}, \btheta_{0\bar{S}})\bOmega_{0\bar{S}}^{-1}
\\
\zero 
\end{bmatrix*}
\right]\right|\\
% &\quad\leq \sup_{\calB_{n}(S)}\left|n\mathrm{tr}\left\{
% \begin{bmatrix*}
% \widehat\bOmega_{\bar{S}} - \bOmega_{0\bar{S}} & \widehat\bOmega_{12}\\
% \widehat\bOmega_{21} & \widehat\bOmega_{22} - \eye_{p - (\bar{s} + r)}
% \end{bmatrix*}
% \begin{bmatrix*}
% \bOmega_{0\bar{S}}^{-1}\bU(\bvarphi_{\bar{S}})\\\zero
% \end{bmatrix*}
% (\bM - \bM_0)\begin{bmatrix*}
% \{\bU(\bvarphi_{\bar{S}}) - \bU_{0\bar{S}}\}\transpose\bOmega_{0\bar{S}}^{-1} & \zero
% \end{bmatrix*}
% \right\}\right|\\
% &\quad\quad 
% + \sup_{\calB_{n}(S)}\left|n\mathrm{tr}\left\{
% \begin{bmatrix*}
% \widehat\bOmega_{\bar{S}} - \bOmega_{0\bar{S}} & \widehat\bOmega_{12}\\
% \widehat\bOmega_{21} & \widehat\bOmega_{22} - \eye_{p - (\bar{s} + r)}
% \end{bmatrix*}
% \begin{bmatrix*}
% \bOmega_{0\bar{S}}^{-1}\{\bU(\bvarphi_{\bar{S}}) - \bU_{0\bar{S}}\}\\\zero
% \end{bmatrix*}
% \bM_0\begin{bmatrix*}
% \{\bU(\bvarphi_{\bar{S}}) - \bU_{0\bar{S}}\}\transpose\bOmega_{0\bar{S}}^{-1} & \zero
% \end{bmatrix*}
% \right\}\right|\\
% &\quad\quad 
% + \sup_{\calB_{n}(S)}\left|n\mathrm{tr}\left\{
% \begin{bmatrix*}
% \widehat\bOmega_{\bar{S}} - \bOmega_{0\bar{S}} & \widehat\bOmega_{12}\\
% \widehat\bOmega_{21} & \widehat\bOmega_{22} - \eye_{p - (\bar{s} + r)}
% \end{bmatrix*}
% \begin{bmatrix*}
% \bOmega_{0\bar{S}}^{-1}\{\bU(\bvarphi_{\bar{S}}) - \bU_{0\bar{S}}\}\\\zero
% \end{bmatrix*}
% (\bM - \bM_0)\begin{bmatrix*}
% \bU_{0\bar{S}}\transpose\bOmega_{0\bar{S}}^{-1} & \zero
% \end{bmatrix*}
% \right\}\right|\\
&\quad \leq \sup_S\sup_{\calA_{n}(S)}\left|n\mathrm{tr}\left\{(\widehat\bOmega_{\bar{S}} - \bOmega_{0\bar{S}})
 \bOmega_{0\bar{S}}^{-1}\bR_\bOmega(\btheta_{\bar{S}}, \btheta_{0\bar{S}})\bOmega_{0\bar{S}}^{-1}\right\}\right|
 \\
&\quad = \sup_S\sup_{\calA_{n}(S)}\left|n\mathrm{tr}\left\{\bOmega_{0\bar{S}}^{-1}(\widehat\bOmega_{\bar{S}} - \bOmega_{0\bar{S}})
 \bOmega_{0\bar{S}}^{-1}\bR_\bOmega(\btheta_{\bar{S}}, \btheta_{0\bar{S}})\right\}\right|
 \\ 
%  &\quad\quad + 
%  \sup_{S}\sup_{\calA_{n}(S)}\left|\mathrm{tr}\left[\{\bU(\bvarphi_{\bar{S}}) - \bU_{0\bar{S}}\}\transpose\bOmega_{0\bar{S}}^{-1}(\widehat\bOmega_{\bar{S}} - \bOmega_{0\bar{S}})\bOmega_{0\bar{S}}^{-1}
%  \{\bU(\bvarphi_{\bar{S}}) - \bU_{0\bar{S}}\}\bM_0\right]\right|\\
% &\quad\quad + \sup_{S}\sup_{\calA_{n}(S)}\left|\mathrm{tr}\left[\bU_{0\bar{S}}\transpose\bOmega_{0\bar{S}}^{-1}\left(\widehat\bOmega_{\bar{S}} - \bOmega_{0\bar{S}}\right)\bOmega_{0\bar{S}}^{-1}\{\bU(\bvarphi_{\bar{S}}) - \bU_{0\bar{S}}\}(\bM - \bM_0)
% \right]\right|\\
&\quad\leq n\sup_S\|\bOmega_{0\bar{S}}^{-1}\|_2^2\|\widehat\bOmega_{\bar{S}} - \bOmega_{0\bar{S}}\|_2\sup_{S}\sup_{\btheta\in\calA_n(S)}\|\bR_\bOmega(\btheta_{\bar{S}}, \btheta_{0\bar{S}})\|_*\\
&\quad\leq n\sup_S\|\widehat\bOmega_{\bar{S}} - \bOmega_{0\bar{S}}\|_2\sup_{S}\sup_{\btheta\in\calA_n(S)}\sqrt{3r}\|\bR_\bOmega(\btheta_{\bar{S}}, \btheta_{0\bar{S}})\|_{\mathrm{F}}\\
&\quad\lesssim n\sqrt{\frac{r(rs_0\log n + s_0\log p)}{n}}\|\btheta - \btheta_{0}\|_1^2\lesssim \sqrt{
	\frac{(r^2s^2\log n + rs^2\log p)^3}{n}
} = o(1)
\end{align*}
with probability greater than $1 - 2\exp(-\kappa_0n\eps_n^2)\to 1$. Hence the left-hand side of \eqref{eqn:stochastic_remainder_II} is also $o_{\prob_0}(1)$ by Assumption A4. 

\vspace*{2ex}\noindent
$\blacksquare$
For $\bR_1$, we follow the same spirit and
let $\bOmega_{\bar{S}} = \bU(\bvarphi_{\bar{S}})\bM\bU(\bvarphi_{\bar{S}}) + \eye_{\bar{s} + r}$ and $\bOmega_{0\bar{S}} = \bU_{0\bar{S}}\bM_0\bU_{0\bar{S}}\transpose + \eye_{\bar{s} + r}$. Denote $\bOmega = \bOmega(\btheta)$. It follows that
\begin{align*}
(\bOmega_0 - \bOmega)\bOmega_0^{-1} 
& = \bQ_S
\begin{bmatrix*}
\bOmega_{0\bar{S}} - \bOmega_{\bar{S}} & \zero \\
\zero & \zero 
\end{bmatrix*}\bQ_S\transpose \bQ_S\begin{bmatrix*}
\bOmega_{0\bar{S}}^{-1} & \zero \\ \zero & \eye_{p - (\bar{s} + r)}
\end{bmatrix*}\bQ_S\transpose\\
& = \bQ_s\begin{bmatrix*}
(\bOmega_{0\bar{S}} - \bOmega_{\bar{S}})\bOmega_{0\bar{S}}^{-1} & \zero \\ \zero & \zero
\end{bmatrix*}\bQ_S\transpose
\end{align*}
Therefore, 
\begin{align*}
\bR_1(\bOmega, \bOmega_0)
& = \bQ_S\begin{bmatrix*}
\bOmega_{0\bar{S}}^{-1}(\bOmega_{0\bar{S}} - \bOmega_{\bar{S}})\bOmega_{0\bar{S}}^{-1} & \zero \\ \zero & \zero
\end{bmatrix*}\bQ_S\transpose\sum_{m = 1}^\infty \bQ_s\begin{bmatrix*}
\{(\bOmega_{0\bar{S}} - \bOmega_{\bar{S}})\bOmega_{0\bar{S}}^{-1}\}^m & \zero \\ \zero & \zero
\end{bmatrix*}\bQ_S\transpose\\
& = 
\bQ_S\begin{bmatrix*}
\bOmega_{0\bar{S}}^{-1}(\bOmega_{0\bar{S}} - \bOmega_{\bar{S}})\bOmega_{0\bar{S}}^{-1}\sum_{m = 1}^\infty\{(\bOmega_{0\bar{S}} - \bOmega_{\bar{S}})\bOmega_{0\bar{S}}^{-1}\}^m & \zero \\ \zero & \zero
\end{bmatrix*}\bQ_S\transpose\\
& := \bQ_S\begin{bmatrix*}
\bR_1(\bOmega_{\bar{S}}, \bOmega_{0\bar{S}}) & \zero \\ \zero & \zero
\end{bmatrix*}\bQ_S\transpose.
\end{align*}
Let $\bR_1(\bOmega_{\bar{S}}, \bOmega_{0\bar{S}})$ yield singular value decomposition $\bW_1\bS\bW_2\transpose$. 
% Clearly, $\mathrm{rank}(\bS) \leq 2r$, and f
Following the same reasoning, we have,
\begin{align*}
&\sup_{\calB_{n}}\left|n\vect(\widehat\bOmega - \bOmega_0)\vect\{\bR_1(\bOmega(\btheta), \bOmega_0)\}\right|\\
&\quad = \sup_{S}\sup_{\calA_{n}(S)}n\left|\mathrm{tr}\left\{
\bQ_S\begin{bmatrix*}
\widehat\bOmega_{\bar{S}} - \bOmega_{0\bar{S}} & \widehat\bOmega_{12}\\
\widehat\bOmega_{21} & \widehat\bOmega_{22} - \eye_{p - (\bar{s} + r)}
\end{bmatrix*}\bQ_S\transpose\bQ_S
\begin{bmatrix*}
\bW_1\\\zero
\end{bmatrix*}\bS\begin{bmatrix*}
\bW_2\transpose & \zero
\end{bmatrix*}
\bQ_S\transpose\right]\right|\\
&\quad = \sup_{S}\sup_{\calA_{n}(S)}n\left|\mathrm{tr}\left\{
\begin{bmatrix*}
\bW_2\transpose & \zero
\end{bmatrix*}\begin{bmatrix*}
\widehat\bOmega_{\bar{S}} - \bOmega_{0\bar{S}} & \widehat\bOmega_{12}\\
\widehat\bOmega_{21} & \widehat\bOmega_{22} - \eye_{p - (\bar{s} + r)}
\end{bmatrix*}
\begin{bmatrix*}
\bW_1\\\zero
\end{bmatrix*}\bS
\right]\right|\\
&\quad = \sup_{S}\sup_{\calA_{n}(S)}n\left|\mathrm{tr}\left\{\bW_2\transpose(
\widehat\bOmega_{\bar{S}} - \bOmega_{0\bar{S}})
\bW_1\right\}\bS\right|\\
&\quad\lesssim n\sqrt{s_0}\sup_{S\in\calS(\kappa_0s_0)}\|\widehat\bOmega_{\bar{S}} - \bOmega_{0\bar{S}}\|_2
\sup_{S}\sup_{\btheta\in\calA_{n}(S)}\|\bR_1(\bOmega_{\bar{S}}, \bOmega_{0\bar{S}})\|_{\mathrm{F}}\\
&\quad\lesssim \sqrt{ns_0(rs_0\log n + s_0\log p)}\|\bOmega(\btheta) - \bOmega_0\|_{\mathrm{F}}^2\lesssim  
\sqrt{
	\frac{(r^2s^2\log n + rs^2\log p)^3}{n}
} = o(1)
\end{align*}
with probability at least $1 - 2\exp(-\kappa_0n\eps_n^2)\to 1$ for some $c > 0$ by Assumption A4. This shows that the left-hand side of \eqref{eqn:stochastic_remainder_III} is also $o_{\prob_0}(1)$ and the proof is thus completed. 
\end{proof}

\begin{proof}[Proof of Theorem \ref{thm:LAN}]
We first consider the Taylor expansion of $\ell$ as a function of $\bOmega$ when $\|\bOmega - \bOmega_0\|_{\mathrm{F}}$ is sufficiently small. By definition, 
\begin{align*}
\ell(\bOmega) - \ell(\bOmega_0) & = \frac{n}{2}\mathrm{tr}\left\{\widehat\bOmega(\bOmega_0^{-1} - \bOmega^{-1})\right\} + \frac{n}{2}\log\det(\bOmega^{-1}\bOmega_0)\\
& = \frac{n}{2}\mathrm{tr}\left\{(\widehat\bOmega - \bOmega_0)(\bOmega_0^{-1} - \bOmega^{-1})\right\}\\
&\quad + \frac{n}{2}\mathrm{tr}(\eye_p - \bOmega_0\bOmega^{-1}) + \frac{n}{2}\log\det\{(\bOmega^{-1} - \bOmega_0^{-1})\bOmega_0 + \eye_p\}\\
& = \frac{n}{2}\mathrm{tr}\left\{(\widehat\bOmega - \bOmega_0)(\bOmega_0^{-1} - \bOmega^{-1})\right\} + \frac{n}{2}\mathrm{tr}\{\bOmega_0^{1/2}(\bOmega_0^{-1} - \bOmega^{-1})\bOmega_0^{1/2}\}\\
&\quad + \frac{n}{2}\log\det\{\eye_p - \bOmega_0^{1/2}(\bOmega_0^{-1} - \bOmega^{-1})\bOmega_0^{1/2}\}.
\end{align*}
Let $h_k = \lambda_k\{\bOmega_0^{1/2}(\bOmega_0^{-1} - \bOmega^{-1})\bOmega_0^{1/2}\}$, $k = 1,\ldots,2r$. Clearly, 
\begin{align*}
\max_{1\leq k\leq 2r}|h_k|&\leq \|\bOmega_0^{1/2}\|_2\|\|\bOmega_0^{-1} - \bOmega^{-1}\|_2\|\bOmega_0^{1/2}\|_2\\
&\leq \|\bOmega_0\|_2\|\bOmega^{-1}(\bOmega - \bOmega_0)\bOmega_0^{-1}\|_2\\
&\leq \|\bOmega_0\|_2\|\bOmega - \bOmega_0\|_2.
\end{align*}
Furthermore, using the Taylor expansion technique with the integral remainder (see, for example, Lemma 6.2 in \citealp{gao2016}), 
\begin{align*}
&\frac{n}{2}\mathrm{tr}\{\bOmega_0^{1/2}(\bOmega_0^{-1} - \bOmega^{-1})\bOmega_0^{1/2}\}
 + \frac{n}{2}\log\det\{\eye_p - \bOmega_0^{1/2}(\bOmega_0^{-1} - \bOmega^{-1})\bOmega_0^{1/2}\}\\
&\quad = \frac{n}{2}\sum_{k = 1}^{2r}\{h_k + \log(1 - h_k)\}
% \\&\quad
 = -\frac{n}{4}\sum_{k = 1}^{2r}h_k^2 - \frac{n}{2}\sum_{k = 1}^{2r}\int_0^{h_k}\frac{(h_k - s)^2}{(1 - s)^3}\mathrm{d}s\\
&\quad = -\frac{n}{4}\|\bOmega_0^{1/2}(\bOmega^{-1} - \bOmega_0^{-1})\bOmega_0^{1/2}\|_{\mathrm{F}}^2 - \frac{n}{2}\sum_{k = 1}^{2r}\int_0^{h_k}\frac{(h_k - s)^2}{(1 - s)^3}\mathrm{d}s.
\end{align*}
We now analyze the linear term $\mathrm{tr}\{(\widehat\bOmega - \bOmega_0)(\bOmega_0^{-1} - \bOmega^{-1})\}$ and the quadratic term $\|\bOmega_0^{1/2}(\bOmega_0^{-1} - \bOmega^{-1})\bOmega_0^{1/2}\|_{\mathrm{F}}^2$ separately. By the matrix series expansion, 
\begin{align*}
\bOmega_0^{-1} - \bOmega^{-1} & = \bOmega_0^{-1}(\bOmega - \bOmega_0)(\bOmega - \bOmega_0 + \bOmega_0)^{-1}\\
& = \bOmega_0^{-1}(\bOmega - \bOmega_0)\bOmega_0^{-1}\{(\bOmega - \bOmega_0)\bOmega_0^{-1} + \eye_p\}^{-1}\\
& = \bOmega_0^{-1}(\bOmega - \bOmega_0)\bOmega_0^{-1} + \bR_1(\bOmega, \bOmega_0),
\end{align*}
where the remainder
\begin{align*}
\bR_1(\bOmega, \bOmega_0) = \bOmega_0^{-1}(\bOmega_0 - \bOmega)\bOmega_0^{-1}\sum_{m = 1}^\infty\{(\bOmega_0 - \bOmega)\bOmega_0^{-1}\}^m
\end{align*}
satisfies $\|\bR_1(\bOmega, \bOmega_0)\|_{\mathrm{F}}\lesssim\|\bOmega - \bOmega_0\|_{\mathrm{F}}^2$. 
The vectorization form of the previous equation can be written as
\begin{align*}
\vect(\bOmega_0^{-1} - \bOmega^{-1}) = (\bOmega_0^{-1}\otimes \bOmega_0^{-1})\vect(\bOmega - \bOmega_0) + \vect\{\bR_1(\bOmega, \bOmega_0)\}.
\end{align*} 
Now we consider parameterize $\bOmega$ by $\bOmega = \bOmega(\btheta)$. It follows from Theorem \ref{thm:CT_deviation_Sigma} that
\[
\|\bR_1(\bOmega(\btheta), \bOmega_0)\|_{\mathrm{F}}\lesssim\|\bOmega(\btheta) - \bOmega_0\|_{\mathrm{F}}^2
\lesssim \|\btheta - \btheta_{0}\|_1^2.
\]
Following the proof of Theorem \ref{thm:CT_deviation_Sigma}, we obtain the following matrix decomposition
\begin{align*}
\bOmega(\btheta) - \bOmega_0
& = \bSigma(\btheta) - \bSigma(\btheta_{0})\\
& = \bU_{0}(\bM - \bM_0)\bU_{0}\transpose + \bU_{0}\bM_0\{\bU(\bvarphi) - \bU_{0}\}\transpose + \{\bU(\bvarphi) - \bU_{0}\}\bM_0\bU_{0}\transpose\\
&\quad + \bR_{\bOmega}(\btheta, \btheta_{0}),
\end{align*}
where the remainder
\begin{align*}
R_\bOmega(\btheta, \btheta_{0}) & = \bU(\bvarphi)(\bM - \bM_0)\{\bU(\bvarphi) - \bU_{0}\}\transpose + \{\bU(\bvarphi) - \bU_{0}\}\bM_0\{\bU(\bvarphi) - \bU_{0}\}\transpose\\
&\quad + \{\bU(\bvarphi) - \bU_{0}\}(\bM - \bM_0)\bU_0\transpose
\end{align*}
satisfies $\|R_\bOmega(\btheta, \btheta_{0})\|_{\mathrm{F}}\lesssim \|\btheta - \btheta_{0}\|_{2}^2\lesssim \|\btheta - \btheta_{0}\|_1^2$.
In the vectorization form, we can write
\begin{align*}
\vect(\bSigma - \bSigma_0)
& = D_\bmu\bSigma(\btheta_{0})(\bmu - \bmu_0) + (\eye_{p^2} + \bK_{pp})(\bU_{0}\bM_0\otimes \eye_p)\vect\{\bU(\bvarphi) - \bU_{0}\}\\
&\quad + \vect\{\bR_{\bOmega}(\btheta, \btheta_{0})\}.
\end{align*}
In addition, by Theorem \ref{thm:second_order_deviation_CT}, we have, 
\begin{align*}
\bU(\bvarphi) - \bU_{0} = 2(\eye_p - \bX_{0})^{-1}(\bX_\bvarphi - \bX_{0})(\eye_p - \bX_{0})^{-1}\eye_{p\times r} + \bR_\bU(\bvarphi, \bvarphi_{0}),
\end{align*}
where $\bU_{0} = \bU(\bvarphi_{0})$, and $\bR_\bU(\bvarphi, \bvarphi_{0})$ satisfies $\|\bR_\bU(\bvarphi, \bvarphi_{0})\|_{\mathrm{F}}\lesssim \|\btheta - \btheta_{0}\|_1^2$. The vector version of the previous display can be written as
\begin{align*}
\vect\{\bU(\bvarphi) - \bU_{0}\} = D\bU(\bvarphi_{0})(\bvarphi - \bvarphi_{0}) + \vect\{\bR_\bU(\bvarphi, \bvarphi_{0})\},
\end{align*}
where the Fr\'echet derivative $D\bU$ is defined by \eqref{eqn:Differential_CT}. Recall that $D\bSigma(\btheta_{0})$ is defined by \eqref{eqn:covariance_differential}. It follows from the above derivations that
\begin{align*}
\vect\{\bOmega(\btheta) - \bOmega_0\} & = D\bSigma(\btheta_{0})(\btheta - \btheta_{0})\\
&\quad + (\eye_{p^2} + \bK_{pp})(\bU_{0}\bM_0\otimes \eye_p)\vect\{\bR_\bU(\bvarphi, \bvarphi_{0})\}
       + \vect\{\bR_\bOmega(\btheta, \btheta_{0})\}.
\end{align*}
This means that over $\calB_{n}$, we can have well control of the Frobenius norm deviation $\|\bOmega(\btheta) - \bOmega_0\|_{\mathrm{F}}$:
For any $\btheta\in\calB_{n}$,
\begin{align*}
\|\bOmega(\btheta) - \bOmega_0\|_{\mathrm{F}} & = \|\vect\{\bOmega(\btheta)\} - \vect(\bOmega_0)\|_2\\
& \leq \|D\bSigma(\btheta_{0})\|_2\|\btheta -  \btheta_{0}\|_2 + 2\|\bM_0\|_2\|\bR_\bU(\bvarphi, \bvarphi_{0})\|_{\mathrm{F}}
% \\&\quad
 + \|\bR_\bOmega(\btheta, \btheta_{0})\|_{\mathrm{F}}\\
&\lesssim \|\btheta - \btheta_{0}\|_1\lesssim \sqrt{\frac{r^2s^2\log n + rs^2\log p}{n}}.
\end{align*}
% where we have assume that $\eps_n(\btheta)\to 0$ and $n\eps_n(\btheta)^3\to 0$. 
Hence for the precision matrix $\bOmega^{-1}$, we have
\begin{align*}
\vect(\bOmega_0^{-1} - \bOmega^{-1}) & = (\bOmega_0^{-1}\otimes \bOmega_0^{-1})D\bSigma(\btheta_{0})(\btheta - \btheta_{0})\\
&\quad + (\bOmega_0^{-1}\otimes \bOmega_0^{-1})(\eye_{p^2} + \bK_{pp})(\bU_{0}\bM_0\otimes \eye_p)\vect\{\bR_\bU(\bvarphi, \bvarphi_{0})\}\\
&\quad + (\bOmega_0^{-1}\otimes \bOmega_0^{-1})\vect\{\bR_\bOmega(\btheta, \btheta_{0})\}
% \\&\quad
 + \vect\{\bR_1(\bOmega(\btheta), \bOmega_0)\},
\end{align*}
with the remainder $\bR_1$ satisfies $\|\bR_1(\bOmega(\btheta), \bOmega_0)\|_{\mathrm{F}}\lesssim \|\bOmega(\btheta) - \bOmega_0\|_{\mathrm{F}}^2\lesssim \|\btheta - \btheta_{0}\|_1^2$. 
Denote
\begin{align*}
\mathbf{r}_{\bOmega^{-1}}(\btheta, \btheta_{0})
&: = (\bOmega_0^{-1}\otimes \bOmega_0^{-1})(\eye_{p^2} + \bK_{pp})(\bU_{0}\bM_0\otimes \eye_p)\vect\{\bR_\bU(\bvarphi, \bvarphi_{0})\}\\
&\quad + (\bOmega_0^{-1}\otimes \bOmega_0^{-1})\vect\{\bR_\bOmega(\btheta, \btheta_{0})\}
% \\&\quad
 + \vect\{\bR_1(\bOmega(\btheta), \bOmega_0)\}\\
& = (\eye_{p^2} + \bK_{pp})(\bOmega_0^{-1}\otimes \bOmega_0^{-1})\vect\{\bR_\bU(\bvarphi, \bvarphi_{0})\bM_0\bU_0\transpose\}\\
&\quad + (\bOmega_0^{-1}\otimes \bOmega_0^{-1})\vect\{\bR_\bOmega(\btheta, \btheta_{0})\}
% \\&\quad
 + \vect\{\bR_1(\bOmega(\btheta), \bOmega_0)\}.
\end{align*}
Clearly, $\|\mathbf{r}_{\bOmega^{-1}}(\btheta, \btheta_{0})\|_2\lesssim \|\btheta - \btheta_{0}\|_1^2$ by the properties of the remainders $\bR_1$, $\bR_\bOmega$, and $\bR_\bU$. 
It follows that
\begin{align*}
\mathrm{tr}\{(\widehat\bOmega - \bOmega_0)(\bOmega_0^{-1} - \bOmega^{-1})\}
& = \vect(\widehat\bOmega - \bOmega_0)\transpose (\bOmega_0^{-1}\otimes\bOmega_0^{-1})D\bSigma(\btheta_{0})(\btheta - \btheta_{0})\\
&\quad + \vect(\widehat\bOmega - \bOmega_0)\transpose\mathbf{r}_{\bOmega^{-1}}(\btheta, \btheta_{0}),
\end{align*}
and
\begin{align*}
\|\bOmega_0^{1/2}(\bOmega^{-1} - \bOmega_0^{-1})\bOmega_0^{1/2}\|_{\mathrm{F}}^2
& = \mathrm{tr}\left\{(\bOmega^{-1} - \bOmega_0^{-1})\bOmega_0(\bOmega^{-1} - \bOmega_0^{-1})\bOmega_0\right\}\\
& = \vect(\bOmega_0^{-1} - \bOmega^{-1})(\bOmega_0\otimes \bOmega_0)\vect(\bOmega_0^{-1} - \bOmega^{-1})\\
& = (\btheta - \btheta_{0})\transpose D\bSigma(\btheta_{0})(\bOmega_0^{-1}\otimes \bOmega_0^{-1}) D\bSigma(\btheta_{0})(\btheta - \btheta_{0})\\
&\quad + 2(\btheta - \btheta_{0})\transpose D\bSigma(\btheta_{0})\transpose \mathbf{r}_{\bOmega^{-1}}(\btheta, \btheta_{0})\\
&\quad + \mathbf{r}_{\bOmega^{-1}}(\btheta, \btheta_{0})\transpose(\bOmega_0\otimes \bOmega_0)\mathbf{r}_{\bOmega^{-1}}(\btheta, \btheta_{0}).
\end{align*}
Denote
\begin{align*}
r_q(\btheta, \btheta_{0})& = 2(\btheta - \btheta_{0})\transpose D\bSigma(\btheta_{0})\transpose \mathbf{r}_{\bOmega^{-1}}(\btheta, \btheta_{0})
% \\&\quad
 +  \mathbf{r}_{\bOmega^{-1}}(\btheta, \btheta_{0})\transpose(\bOmega_0\otimes \bOmega_0)\mathbf{r}_{\bOmega^{-1}}(\btheta, \btheta_{0}).
\end{align*}
By the property of the remainder $\mathbf{r}_{\bOmega^{-1}}$, we see that $|r_q(\btheta, \btheta_{0})|\lesssim \|\btheta - \btheta_{0}\|_1^3$.

\vspace*{2ex}\noindent
Now putting all the above derivations together, we obtain the following expansion of the log-likelihood function:
\begin{align*}
\ell(\bOmega(\btheta)) - \ell(\bOmega(\btheta_{0}))
& = \frac{n}{2}\vect(\widehat\bOmega - \bOmega_0)(\bOmega_0^{-1}\otimes\bOmega_0^{-1})D\bSigma(\btheta_{0})\\
&\quad - \frac{n}{4}(\btheta - \btheta_{0})\transpose D\bSigma(\btheta_{0})\transpose(\bOmega_0^{-1}\otimes \bOmega_0^{-1}) D\bSigma(\btheta_{0})(\btheta - \btheta_{0})\\
&\quad - \frac{n}{2}\sum_{k = 1}^{2r}\int_0^{h_k}\frac{(h_k - s)^2}{(1 - s)^3}\mathrm{d}s - \frac{n}{4}r_q(\btheta, \btheta_{0})\\
&\quad + \frac{n}{2}\vect(\widehat\bOmega - \bOmega_0)\transpose \mathbf{r}_{\bOmega^{-1}}(\btheta, \btheta_{0}).
\end{align*}
The third line of the previous equation is the deterministic remainder and the fourth line is the stochastic remainder. 
For the sum of the integrals in the third line of the above display, since $\|\bOmega(\btheta) - \bOmega_0\|_{\mathrm{F}} \to 0$, we may assume that $\max_k|h_k|\leq 1/2$, and hence, 
\begin{align*}
\sup_{\btheta\in\calA_{n}(S)}\left|
\frac{n}{2}\sum_{k = 1}^{2r}\int_0^{h_k}\frac{(h_k - s)^2}{(1 - s)^3}\mathrm{d}s
\right|
&\lesssim n\sup_{\|\bOmega(\btheta) - \bOmega_0\|_{\mathrm{F}} < M\eps_n}\sum_{k = 1}^{2r}\int_0^{h_k}(h_k - s)^2\mathrm{d}s\\
&\leq n\max_{1\leq k\leq 2r}|h_k|\frac{1}{3}\sum_{k = 1}^{2r}h_k^2
% \\&
\lesssim n\|\bOmega(\btheta) - \bOmega_0\|_{\mathrm{F}}^3\\
&\lesssim n\|\btheta - \btheta_0\|_1^3
\lesssim \sqrt{
	\frac{(r^2s^2\log n + rs^2\log p)^3}{n}
}\to 0.
\end{align*}
The stochastic remainder is given by
\begin{align*}
\frac{n}{2}\vect(\widehat\bOmega - \bOmega_0)\transpose \mathbf{r}_{\bOmega^{-1}}(\btheta, \btheta_{0})
& = n\vect(\widehat\bOmega - \bOmega_0)(\bOmega_0^{-1}\otimes\bOmega_0^{-1})\vect\{\bR_\bU(\bvarphi, \bvarphi_0)\bM_0\bU_0\transpose\}\\
&\quad + \frac{n}{2}\vect(\widehat\bOmega - \bOmega_0)\transpose(\bOmega_0^{-1}\otimes\bOmega_0^{-1})\vect\{\bR_\bOmega(\btheta, \btheta_0)\}\\
&\quad + \frac{n}{2}\vect(\widehat\bOmega - \bOmega_0)\transpose\vect\{\bR_1(\bOmega(\btheta),\bOmega_0)\}.
\end{align*}
By Lemma \ref{lemma:stochastic_remainder}, the supremum of the stochastic remainder over $\btheta\in\calB_{n}$ is also $o_{\prob_0}(1)$, and hence completing the proof. 
\end{proof}

% subsection local_asymptotic_normality (end)

\subsection{Distributional approximation: Proof of Theorem \ref{thm:BvM}} % (fold)
\label{sub:distributional_approximation}

This subsection elaborates on the proof of Theorem \ref{thm:BvM}. We remark that the proof is a generalization of the proof of Theorem 6 in \cite{castillo2015} modulus a local asymptotic normality argument developed in Section \ref{sub:local_asymptotic_normality}. For convenience, we introduce additional notations that will be used to characterize the limit shape of the posterior distribution. 
\begin{align*}
\begin{aligned}
\bZ_{0} & = \sqrt{\frac{n}{2}}(\bOmega_0^{-1/2}\otimes \bOmega_0^{-1/2})D\bSigma(\btheta_{0}),
% \\
\quad
\beps_n   = \sqrt{\frac{n}{2}}(\bOmega_0^{-1/2}\otimes \bOmega_0^{-1/2})\vect(\widehat\bOmega - \bOmega_0).
\end{aligned}
\end{align*}
Let $\bZ_{0S}: = \bZ_0\bF_S$. Then $\widehat{\btheta}_S$, $\eye_S(\btheta_0)$, and $\widehat{w}_S$ can be equivalently written as
\begin{align*}
% \label{eqn:theta_hat}
\widehat\btheta_{S} &= (\bZ_{0S}\transpose\bZ_{0S})^{-1} \bZ_{0S}\transpose(\bZ_{0}\btheta_{0} + \beps_n),\quad \eye_S(\btheta_0) = \bZ_{0S}\transpose\bZ_{0S},\\
% \label{eqn:w_hat}
\widehat{w}_{S} &\propto \frac{\pi_p(|S|)}{{p - r\choose |S|}\gamma(|S|)}\det\{2\pi(\bZ_{0S}\transpose\bZ_{0S})^{-1}\}^{1/2}\exp\left(\frac{1}{2}\|\bZ_{0S}\widehat\btheta_{S}\|_2^2\right),
\end{align*}
where $\gamma(|S|)$ is defined by
\[
\gamma(|S|) = \int_{\|\bA_S\|_2 < 1}\exp(-2\|\vect(\bA_S)\|_1)\mathrm{d}\bA_S.
\]
The proof is based on the following collection of technical lemmas. Recall that the sub-Gaussian norm and the sub-exponential norm of a random variable $X$ is defined by
\begin{align*}
\|X\|_{\psi_2} = \sup_{p\geq 1}p^{-1/2}\{\expect_0(|X|^p)\}^{1/p},\quad
\|X\|_{\psi_1} = \sup_{p\geq 1}p^{-1}\{\expect_0(|X|^p)\}^{1/p}.
\end{align*}
We refer to \cite{vershynin2010introduction} for a detailed review on the concept of these (Orlicz) norms. 

\begin{lemma}\label{lemma:residual_linfty_deviation}
Under the setup in Section \ref{sub:bayesian_sparse_pca_and_non_intrinsic_loss}, there exists some constant $C_0, c_0 > 0$ only depending on the spectra of $\bSigma_0$, such that
\begin{align*}
\prob_0\left\{|(\btheta - \btheta_{0})\transpose\bZ_{0}\transpose\beps_n| > C_0\|\btheta - \btheta_{0}\|_1\sqrt{n\log p}\right\}\leq \frac{2}{p^{c_0}}.
\end{align*}
\end{lemma}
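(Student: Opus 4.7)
The plan is to reduce the bilinear quantity to a supremum of centered Gaussian quadratic forms and then apply a Hanson--Wright / Bernstein tail bound coordinate-wise with a union bound. Let $d = r(r+1)/2 + (p-r)r$ denote the dimension of $\btheta$. By H\"older's inequality,
\[
|(\btheta - \btheta_{0})\transpose\bZ_{0}\transpose\beps_n|
\leq \|\btheta - \btheta_{0}\|_{1}\,\|\bZ_{0}\transpose\beps_n\|_{\infty},
\]
so it suffices to show that $\|\bZ_{0}\transpose\beps_n\|_{\infty}\leq C_0\sqrt{n\log p}$ with probability at least $1 - 2p^{-c_0}$, and then to replace $\log d$ by $O(\log p)$ via the trivial bound $d\leq p^2$.

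Next, for each coordinate $k \in [d]$, let $\bv_k := [D\bSigma(\btheta_0)]_{*k}$ and let $\bM_k\in\mathbb{R}^{p\times p}$ be the matrix such that $\vect(\bM_k)=\bv_k$. Using the identity $\bv_k\transpose(\bOmega_0^{-1}\otimes\bOmega_0^{-1})\vect(\bB) = \mathrm{tr}(\bOmega_0^{-1}\bM_k\transpose\bOmega_0^{-1}\bB)$ and writing $\widetilde{\by}_i := \bOmega_0^{-1/2}\by_i\iidsim\mathrm{N}(\zero_p,\eye_p)$, I would rewrite
\[
(\bZ_0\transpose\beps_n)_k = \tfrac{n}{2}\mathrm{tr}\{\bN_k\,(\tfrac{1}{n}\textstyle\sum_{i=1}^n\widetilde{\by}_i\widetilde{\by}_i\transpose - \eye_p)\}
= \tfrac{1}{2}\sum_{i=1}^n(\widetilde{\by}_i\transpose\bN_k\widetilde{\by}_i - \mathrm{tr}(\bN_k)),
\]
where $\bN_k := \bOmega_0^{-1/2}\bM_k\transpose\bOmega_0^{-1/2}$. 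This is a centered quadratic form in the $np$-dimensional standard Gaussian vector $[\widetilde{\by}_1\transpose,\ldots,\widetilde{\by}_n\transpose]\transpose$ with associated block-diagonal matrix $\mathrm{diag}(\bN_k,\ldots,\bN_k)$, whose Frobenius norm is $\sqrt{n}\|\bN_k\|_{\mathrm{F}}$ and spectral norm is $\|\bN_k\|_{2}$.

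The norms of $\bN_k$ are controlled uniformly: since $\|\bM_k\|_{\mathrm{F}}=\|\bv_k\|_2\leq \|D\bSigma(\btheta_0)\|_2$ and Theorem \ref{thm:CT_deviation_Sigma} (together with $\|D\bU(\bvarphi_0)\|_2\leq 2\sqrt{2}$ from the proof of Theorem \ref{thm:second_order_deviation_CT}) implies $\|D\bSigma(\btheta_0)\|_2\leq C(1+\|\bM_0\|_2)$, which is bounded by a constant depending only on the spectra of $\bSigma_0$ under Assumption A3. Hence $\max_k\|\bN_k\|_{\mathrm{F}}\vee\|\bN_k\|_2 \leq C_1$ for some $C_1$ depending only on $\|\bOmega_0^{-1}\|_2$ and $\|\bM_0\|_2$. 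Applying the Hanson--Wright inequality (e.g., Theorem 1.1 in \citealp{Rudelson2013}) to each coordinate with $t = 2C_0\sqrt{n\log p}$ yields
\[
\prob_0\bigl\{|(\bZ_0\transpose\beps_n)_k| > C_0\sqrt{n\log p}\bigr\}
\leq 2\exp\!\Bigl\{-c\min\Bigl(\tfrac{C_0^2\log p}{C_1^2},\tfrac{C_0\sqrt{n\log p}}{C_1}\Bigr)\Bigr\}.
\]
Under Assumption A4 we have $n\gtrsim\log p$, so the sub-Gaussian term dominates; choosing $C_0$ sufficiently large (depending on $C_1$) and taking a union bound over $k\in[d]$ with $d\leq p^2$ gives the desired tail $2/p^{c_0}$.

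The main technical obstacle is verifying that $\|D\bSigma(\btheta_0)\|_2$ is genuinely controlled by a constant independent of $p$ so that $C_1$ depends only on the spectra of $\bSigma_0$; once the bound from Theorem \ref{thm:CT_deviation_Sigma} / Theorem \ref{thm:second_order_deviation_CT} is invoked this is routine, and the remainder is a standard Gaussian concentration argument.
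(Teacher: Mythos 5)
The proposal is correct and arrives at the same conclusion, but through a genuinely different decomposition than the paper. After the initial H\"older step you bound $\|\bZ_0\transpose\beps_n\|_\infty$ directly, identifying each coordinate $(\bZ_0\transpose\beps_n)_k$ as a centered Gaussian quadratic form $\tfrac{1}{2}\sum_i(\widetilde{\by}_i\transpose\bN_k\widetilde{\by}_i - \mathrm{tr}(\bN_k))$ with $\bN_k = \bOmega_0^{-1/2}\bM_k\transpose\bOmega_0^{-1/2}$, and apply Hanson--Wright. The paper instead expands $(\btheta - \btheta_0)\transpose\bZ_0\transpose\beps_n$ using the explicit Cayley structure of $D\bSigma(\btheta_0)$, pairing $\ell_1$ norms of $\vect(\bX_\bvarphi - \bX_0)$ and $\bmu - \bmu_0$ against a maximum over $p^2 + r^2$ rank-one bilinear forms $\balpha_j\transpose(\widehat\bOmega - \bOmega_0)\bbeta_h$ and $\bgamma_k\transpose(\widehat\bOmega - \bOmega_0)\bgamma_l$ (with fixed vectors of bounded Euclidean norm), each a sub-exponential random variable handled by the sub-exponential Bernstein inequality. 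Your route is more modular and abstract; you need to verify uniform Frobenius/spectral norm control on the potentially higher-rank matrices $\bN_k$ (which you do correctly via $\|\bv_k\|_2\leq\|D\bSigma(\btheta_0)\|_2$ and the explicit bounds on $D\bU$), whereas the paper reduces to rank-one bilinear forms and sidesteps Hanson--Wright entirely. Both approaches use a union bound over $O(p^2)$ events and both implicitly use $n\gtrsim\log p$ (guaranteed here by Assumption A4) so the sub-Gaussian branch of the tail bound dominates. One small note: the constant in $\|D\bSigma(\btheta_0)\|_2\lesssim 1+\|\bM_0\|_2$ follows from the definition \eqref{eqn:covariance_differential}, $\|D\bU(\bvarphi_0)\|_2\leq 2\sqrt{2}$, and $\|\mathbb{D}_r\|_2\leq\sqrt 2$, rather than from Theorem \ref{thm:CT_deviation_Sigma}, which is a remainder bound; the conclusion is the same, but the attribution should be to the explicit formula for $D\bSigma$.
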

\begin{proof}[Proof of Lemma \ref{lemma:residual_linfty_deviation}]
% Let $\btheta = [\bvarphi\transpose, \bmu\transpose]\transpose$, $\btheta_{0} = [\bvarphi_{0}\transpose, \bmu_0\transpose]\transpose$, and $\bU_{0} = \bU(\bvarphi_{0}) = [\bu_{01},\ldots,\bu_{0r}]$. 
Denote 
\[
\bX_0 = \bX_{\bvarphi_0} = \begin{bmatrix*}
\zero_{r\times r} & -\bA_0\transpose \\
\bA_0 & \zero_{(p - r)\times (p - r)}
\end{bmatrix*}\quad\text{and}\quad
\bC_0 = (\eye_p - \bX_{0})^{-1}.
\]
By definition, we have
\begin{align*}
&\frac{2}{n}(\btheta - \btheta_{0})\transpose\bZ_{0}\transpose\beps_n\\
&\quad = (\bvarphi - \bvarphi_{0})\transpose D\bU(\bvarphi_{0})\transpose(\bM_0\bU_{0}\transpose\otimes\eye_p)(\eye_{p^2} + \bK_{pp})(\bOmega_{0}^{-1}\otimes\bOmega_0^{-1})\vect(\widehat\bOmega - \bOmega_0)\\
&\quad\quad + (\bmu - \bmu_0)\transpose D_\bmu\bSigma(\btheta_{0})\transpose(\bOmega_0^{-1}\otimes\bOmega_0^{-1})\vect(\widehat\bOmega -\bOmega_0)\\
&\quad = 4\vect(\bX_\bvarphi - \bX_{0})\transpose \{(\eye_p - \bX_{0})^{-1}\eye_{p\times r}\bM_0\bU_{0}\transpose\otimes (\eye_p - \bX_{0})\inverseT\}
\vect\{\bOmega_0^{-1}(\widehat\bOmega - \bOmega_0)\bOmega_0^{-1}\}\\
&\quad\quad + (\bmu - \bmu_0)\transpose\mathbb{D}_r\transpose\vect\{\bOmega_0^{-1}(\widehat\bOmega - \bOmega_0)\bOmega_0^{-1}\}
% \begin{bmatrix*}
% \bu_{01}\transpose\bSigma_0^{-1}(\widehat\bSigma - \bSigma_0)\bSigma_0^{-1}\bu_{01}\\
% \vdots\\
% \bu_{0r}\transpose\bSigma_0^{-1}(\widehat\bSigma - \bSigma_0)\bSigma_0^{-1}\bu_{0r}\}
% \end{bmatrix*}
\\
&\quad = 4\vect(\bX_\bvarphi - \bX_{0})\transpose\vect\{\bC_{0}\transpose\bOmega_0^{-1}(\widehat\bOmega - \bOmega_0)\bOmega_0^{-1}\bU_{0}\bM_0\eye_{p\times r}\transpose\bC_{0}\transpose\} + \\
&\quad\quad + \vect(\bM - \bM_0)\transpose\vect\{\bU_0\transpose\bOmega_0^{-1}(\widehat\bOmega - \bOmega_0)\widehat\bOmega_0^{-1}\bU_0\}
\end{align*}
Let $\be_j$ be the standard basis vector along the $j$th coordinate in $\mathbb{R}^p$, i.e., the $j$th coordinate being $1$ and the rest of the coordinates being zeros, $\balpha_i = \bOmega_0^{-1}\bC_{0}\be_i$, $\bbeta_j = \bOmega_0^{-1}\bU_{0}\bM_0\eye_{p\times r}\transpose\bC_{0}\transpose\be_j$, and $\bgamma_k = \bOmega_0^{-1}\bU_0\be_k$. Then by the H\"older's inequality, 
\begin{align*}
|(\btheta - \btheta_{0})\transpose\bZ_{0}\transpose\beps_n|
&\leq 2n\|\vect(\bX_\bvarphi - \bX_{0})\|_1\max_{j,h\in[p]}\left|\balpha_j\transpose(\widehat\bOmega - \bOmega_0)\bbeta_h\right|\\
&\quad + n\|\bmu - \bmu_0\|_1\max_{k,l\in[r]}\left|\bgamma_k\transpose(\widehat\bOmega - \bOmega_0)\bgamma_l\right|\\
&\leq n\|\btheta - \btheta_{0}\|_1\max_{j,h\in[p],k,l\in[r]}\left\{2\left|\balpha_j\transpose(\widehat\bOmega - \bOmega_0)\bbeta_h\right|, \left|\bgamma_k\transpose(\widehat\bOmega - \bOmega_0)\bgamma_l\right|\right\}.
\end{align*}
Observe that 
\begin{align*}
\|\bC_{0}\|_2^2 &= \lambda_{\max}(\bC_{0}\transpose\bC_{0}) = \lambda_{\min}^{-1}\{(\eye + \bX_{0})(\eye - \bX_{0})\} = \lambda_{\min}^{-1}(\eye + \bX_{0}\bX_{0}\transpose)\leq 1, 
\end{align*}
and that
\begin{align*}
\|\balpha_j\|_2&\leq \|\bOmega_0^{-1}\|_2\|\bC_{0}\|_2 \leq 1,\quad
\|\bbeta_j\|_2\leq \|\bOmega_0^{-1}\|_2\|\bLambda_0\|_2\|\bC_{0}\|_2\leq \|\bOmega_0\|_2,\\
\|\bgamma_k\|_2&\leq \|\bOmega_0^{-1}\|_2\leq 1,\quad
\|\bu\transpose\by_i\|_{\psi_2}\lesssim (\bu\transpose\bOmega_0\bu)^{1/2}\leq \|\bOmega_0\|_2^{1/2}\|\bu\|_2\quad\text{for all }\bu \in\mathbb{R}^p. 
\end{align*}
It follows from the properties of Orlicz norms that
\begin{align*}
&\max_{j,h\in[p]}\|\balpha_j\transpose\by_i\by_i\transpose\bbeta_h\|_{\psi_1}\leq \max_{j\in[p]}\|\balpha_j\transpose\by_i\|_{\psi_2}\max_{j\in[p]}\|\bbeta_j\transpose\by_i\|_{\psi_2} = O(1),\\
&\max_{k,l\in[r]}\|\bgamma_k\transpose\by_i\by_i\transpose\bgamma_l\|_{\psi_1}\leq \left(\max_{k\in[r]}\|\bgamma_k\transpose\by_i\|_{\psi_2}\right)^2 = O(1), 
\end{align*}
and hence, by the union bound and the Bernstein-type inequality for sub-exponential random variables (see, for example, Proposition 5.16 in \citealp{vershynin2010introduction}), we have, for any $t > 0$,
\begin{align*}
&\prob_0\left(|(\btheta - \btheta_{0})\transpose\bZ_{0}\transpose\beps_n| > t\|\btheta - \btheta_{0}\|_1\right)\\
&\quad\leq \sum_{j = 1}^p\sum_{h = 1}^p\prob_0\left(\left|\balpha_j\transpose(\widehat\bOmega - \bOmega_0)\bbeta_h\right| > \frac{t}{2n}\right) + \sum_{k = 1}^r\sum_{l = 1}^r\prob_0\left(\left|\bgamma_k\transpose(\widehat\bOmega - \bOmega_0)\bgamma_l\right| > \frac{t}{n}\right)\\
&\quad = \sum_{j = 1}^p\sum_{h = 1}^p\prob_0\left(\left|\frac{1}{n}\sum_{i = 1}^n(\balpha_j\transpose\by_i\by_i\transpose\bbeta_h - \balpha_j\transpose\bOmega_0\bbeta_h)\right| > \frac{t}{2n}\right)\\
&\quad\quad + \sum_{k = 1}^r\sum_{l = 1}^r\prob_0\left(\left|\frac{1}{n}\sum_{i = 1}^n(\bgamma_k\transpose\by_i\by_i\transpose\bgamma_l - \bgamma_k\transpose\bOmega_0\bgamma_l)\right| > \frac{t}{n}\right)\\
&\quad\leq 2(p^2 + r^2)\exp\left\{-C_0\min\left(\frac{t^2}{n}, t\right)\right\}\\
&\quad\leq 4\exp\left\{2\log p-C_0\min\left(\frac{t^2}{n}, t\right)\right\}
\end{align*}
for some constant $C_0 > 0$ (possibly depending on the spectra of $\bOmega_0$). The proof is completed by taking $t = (4/C_0)^{1/2}\sqrt{n\log p}$
\end{proof}

\begin{lemma}\label{lemma:normal_restriction}
Under the prior specification and the setup in Section \ref{sub:bayesian_sparse_pca_and_non_intrinsic_loss} as well as Assumptions A1-A5, there exists some constant $M > 0$ such that
\begin{align*}
\expect_0\left\{\Pi^{\infty}_\btheta\left(\|\btheta - \btheta_0\|_1 > M\sqrt{\frac{r^2s^2\log n + rs^2\log p}{n}}\mathrel{\Bigg|}\bY_n\right)\right\} \to 0.
\end{align*}
\end{lemma}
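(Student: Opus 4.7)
The plan is to reduce the claim to an $\ell_2$ bound on a vector of dimension at most $d_S := |S|r + r(r+1)/2 \lesssim rs_0$, and then to split that bound into a conditional Gaussian tail and a frequentist tail. First, by construction of $\Pi^\infty_\btheta$, under any draw $\btheta$ the component $\btheta_{S^c}$ vanishes. For $S\in\calS_0$ we have $\mathrm{supp}(\bA_0)\subset S$, so $\btheta_{0S^c}=\zero$ as well; hence $\|\btheta-\btheta_0\|_1 = \|\btheta_S-\btheta_{0S}\|_1 \leq \sqrt{d_S}\,\|\btheta_S-\btheta_{0S}\|_2$. Since $d_S \leq \kappa_0 rs_0 + r(r+1)/2 \lesssim rs_0$, it suffices to establish, with $\prob_0\otimes\Pi^\infty_\btheta$-probability tending to one, the $\ell_2$ bound $\|\btheta_S-\btheta_{0S}\|_2 \lesssim \sqrt{(rs_0+s_0\log p)/n}$; the missing logarithm in the stated rate is absorbed for free using $\log n\geq 1$.

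Decompose $\btheta_S - \btheta_{0S} = (\btheta_S-\widehat\btheta_S) + (\widehat\btheta_S-\btheta_{0S})$. The deterministic (given $\bY_n$) term satisfies $\widehat\btheta_S-\btheta_{0S} = (\bZ_{0S}\transpose\bZ_{0S})^{-1}\bZ_{0S}\transpose\beps_n$. Under Assumptions A2 and A3, Theorem \ref{thm:DSigma_singular_value} yields $\|\eye_S(\btheta_0)^{-1}\|_2 \leq C/n$ uniformly in $S\in\calS_0$; applying a Bernstein-type inequality coordinatewise to $\bZ_{0S}\transpose\beps_n\in\mathbb{R}^{d_S}$ in the spirit of Lemma \ref{lemma:residual_linfty_deviation}, then taking a union bound over the $|\calS_0|\leq e^{Cs_0\log p}$ index sets, one obtains a $\prob_0$-event $\Xi_n$ with $\prob_0(\Xi_n^c)=o(1)$ on which
\begin{align*}
\sup_{S\in\calS_0}\|\widehat\btheta_S-\btheta_{0S}\|_2 \leq C\sqrt{\frac{rs_0+s_0\log p}{n}}.
\end{align*}

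Conditionally on $S$ and $\bY_n$, the Gaussian increment $\btheta_S-\widehat\btheta_S\sim\mathrm{N}(\zero,\eye_S(\btheta_0)^{-1})$ can be written as $\sum_{i=1}^{d_S}\lambda_i Z_i^2$ with $\lambda_i\leq C/n$ and $Z_i\iidsim\mathrm{N}(0,1)$, so the Laurent--Massart chi-squared bound with deviation parameter $x = C_1 s_0\log p$ yields, uniformly in $S\in\calS_0$,
\begin{align*}
\Pi^\infty_\btheta\left\{\|\btheta_S-\widehat\btheta_S\|_2 > C_2\sqrt{\tfrac{rs_0+s_0\log p}{n}}\,\Big|\,S,\bY_n\right\} \leq e^{-C_1 s_0\log p}.
\end{align*}
Integrating against $\sum_{S\in\calS_0}\widehat{w}_S=1$, combining with the bound on $\Xi_n$, and multiplying by $\sqrt{d_S}\lesssim\sqrt{rs_0}$ gives, on $\Xi_n$,
\begin{align*}
\Pi^\infty_\btheta\left\{\|\btheta-\btheta_0\|_1 > M\sqrt{\tfrac{r^2s^2\log n+rs^2\log p}{n}}\,\Big|\,\bY_n\right\} \leq e^{-C_1 s_0\log p} = o(1)
\end{align*}
for $M$ sufficiently large. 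Taking $\expect_0$ and splitting on $\Xi_n$ versus $\Xi_n^c$ finishes the argument.

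The main obstacle is the $\prob_0$-uniform control of $\widehat\btheta_S-\btheta_{0S}$ over the combinatorially large family $\calS_0$: each coordinate of $\bZ_{0S}\transpose\beps_n$ is a linear combination of sub-exponential quantities of the form $\bu\transpose(\widehat\bOmega-\bOmega_0)\bv$, and the union bound over roughly $p^{\kappa_0 s_0}$ subsets must be absorbed into the deviation exponent at the correct $\sqrt{(d_S+s_0\log p)/n}$ scale. This parallels Lemma \ref{lemma:residual_linfty_deviation} but is now needed in $\ell_2$ rather than entrywise; a Hanson--Wright inequality combined with the uniform upper bound $\|\eye_S(\btheta_0)^{-1}\|_2 \leq C/n$ from Theorem \ref{thm:DSigma_singular_value} suffices.
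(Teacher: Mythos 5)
Your proposal takes a genuinely different route from the paper's. The paper works directly with the explicit ratio $N_n^\infty/D_n^\infty$ defining $\Pi^\infty_\btheta(\widetilde{\calA}_n^c\mid\bY_n)$: it lower-bounds the denominator via Jensen's inequality restricted to $S=S_0$, and upper-bounds the numerator by combining the $\ell_1$-to-$\ell_\infty$ bound from Lemma \ref{lemma:residual_linfty_deviation} with a completing-the-square step of the form $(\btheta-\btheta_0)\transpose\bZ_0\transpose\beps_n \le \tfrac12\|\bZ_0(\btheta-\btheta_0)\|_2^2 + Crs_0\log p - \bar\lambda\|\btheta-\btheta_0\|_1$, so that the $\ell_1$ penalty in the prior crushes the Gaussian tail on $\widetilde{\calA}_n^c$. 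Your approach instead goes via the decomposition $\btheta_S-\btheta_{0S}=(\btheta_S-\widehat\btheta_S)+(\widehat\btheta_S-\btheta_{0S})$, a conditional Gaussian tail, and a frequentist tail on the drift. That route is plausible in outline, but it has a genuine gap in the frequentist step.

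The gap is the uniform $\ell_2$ concentration $\sup_{S\in\calS_0}\|\widehat\btheta_S-\btheta_{0S}\|_2 \lesssim \sqrt{(d_S+s_0\log p)/n}$. None of the tools already in the paper deliver this at the right scale. Lemma \ref{lemma:residual_linfty_deviation} gives $\|\bZ_{0S}\transpose\beps_n\|_\infty\lesssim\sqrt{n\log p}$, and converting via $\|\cdot\|_2\le\sqrt{d_S}\|\cdot\|_\infty$ gives $\|\widehat\btheta_S-\btheta_{0S}\|_2\lesssim\sqrt{d_S\log p/n}$, which costs an extra $\sqrt{\log p}$ on the $d_S$-dimensional part and, after multiplying by $\sqrt{d_S}\asymp\sqrt{rs_0}$, produces $\sqrt{r^2s_0^2\log p/n}$ rather than the target $\sqrt{(r^2s^2\log n+rs^2\log p)/n}$ — these differ precisely when $\log p\gg\log n$, which the statement allows. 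Alternatively, routing through the spectral-norm RMT bound of eq.~\eqref{eqn:RMT_sample_covariance} forces you to pass from $\|\widehat\bOmega_{\bar S}-\bOmega_{0\bar S}\|_2$ to a Frobenius or nuclear norm, again costing a dimensional factor of $\sqrt{s_0}$ that makes the final rate off. To get the claimed Gaussian-scale bound $\sqrt{(d_S+s_0\log p)/n}$ you genuinely need a Hanson--Wright-type inequality for the rank-$d_S$ projection $\|P_S\beps_n\|_2^2$ where $\beps_n$ is a linear map of the centered sample covariance; but $\beps_n$ is a quadratic, not linear, functional of the iid Gaussians $\by_i$, so the resulting quantity is a degree-4 Gaussian chaos, which is not covered by the vanilla Hanson--Wright for vectors with independent sub-Gaussian coordinates. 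It would require a decoupled Hanson--Wright for Gaussian chaos (or a moment-generating-function computation for quadratic functionals of Wishart matrices that is more delicate than the scalar Wishart-MGF argument used in the spectral-norm contraction proof). You correctly flag this as the main obstacle, but asserting that ``a Hanson--Wright inequality\ldots\ suffices'' is not a proof: it is precisely the step whose verification would have to carry the whole argument, and it is strictly harder than what the paper's completing-the-square trick requires.
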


\begin{proof}[Proof of Lemma \ref{lemma:normal_restriction}]
Denote
\[
\widetilde{\calA}_{n} = \left\{
\btheta:\|\btheta - \btheta_{0}\|_1\leq M\sqrt{\frac{r^2s^2\log n + rs^2\log p}{n}}
\right\}
\]
and
\[
Q_{S}(\mathrm{d}\btheta) = 
\{\phi(\btheta_{S}\mid\widehat\btheta_{S}, (\bZ_{0S}\transpose\bZ_{0S})^{-1})\mathrm{d}\btheta_{S}\}\{\delta_{\zero}(\mathrm{d}\btheta_{S^c})\}.
\]
By definition, we can write
\begin{align*}
&\Pi^\infty_\btheta(\btheta\in\widetilde{\calA}_n^c\mid\bY_n)\\
 % & = \frac{\Pi^\infty(\widetilde{\calA}_n\mid\bY_n)}{\Pi^\infty(\mathbb{R}^{d + r(r + 1)/2}\mid\bY_n)}\\
&\quad = 
% \frac{
\left[
\sum_{S\in \calS_0}\frac{\pi_p(|S|)\exp(\|\bZ_{0S}\widehat{\btheta}_S\|_2^2/2)}{{p - r \choose |S|}\gamma(|S|)}\det\{2\pi(\bZ_{0S}\transpose\bZ_{0S})^{-1}\}^{1/2}Q_S(\widetilde{\calA}_n^c)
% \int_{\calA_n(S)}\exp\{-\frac{1}{2}(\btheta_S - \widehat{\btheta}_S)\transpose\bZ_{0S}\transpose\bZ_{0S}(\btheta_S - \widehat{\btheta}_S)\}\mathrm{d}\btheta_S
\right]\\
&\quad\quad\times
\left[
\sum_{S\in \calS_0}\frac{\pi_p(|S|)\exp(\|\bZ_{0S}\widehat{\btheta}_S\|_2^2/2)}{{p - r \choose |S|}\gamma(|S|)}\det\{2\pi(\bZ_{0S}\transpose\bZ_{0S})^{-1}\}^{1/2}Q_S(\mathbb{R}^{(p - r)r + r(r + 1)/2})
\right]^{-1}.
\end{align*}
Using the fact that for any fixed index set $S\in \calS_0$, and any measurable set $\calB\subset\mathbb{R}^{(p - r)r + r(r + 1)/2}$
\begin{align*}
\det\{2\pi(\bZ_{0S}\transpose\bZ_{0S})^{-1}\}^{1/2}Q_S(\calB)
& = \int_{\calB_S}\exp\left\{-\frac{1}{2}\|\bZ_{0S}(\btheta_S - \widehat{\btheta}_S)\|_2^2\right\}\mathrm{d}\btheta_S,
\end{align*}
where $\calB_S = \{\btheta_S:[\btheta_S\transpose, \zero_{S^c}\transpose]\transpose\in \calB\}$ is the intersection of $\calB$ with the subspace $\mathbb{R}^{|S|r + r(r + 1)/2}$, we write
\begin{align*}
&\Pi^\infty_\btheta(\btheta\in\widetilde{\calA}_n^c\mid\bY_n) \\
&\quad = 
% \frac{
\left[
\sum_{S\in \calS_0}\frac{\pi_p(|S|)\exp(\|\bZ_{0S}\widehat{\btheta}_S\|_2^2/2)}{{p - r \choose |S|}\gamma(|S|)}\int_{(\widetilde{\calA}_n^c)_S}\exp\left\{-({1}/{2})\|\bZ_{0S}(\btheta_S - \widehat{\btheta}_S)\|_2^2\right\}\mathrm{d}\btheta_S
\right]\\
% }{
&\quad\quad\quad\times
\left[
\sum_{S\in \calS_0}\frac{\pi_p(|S|)\exp(\|\bZ_{0S}\widehat{\btheta}_S\|_2^2/2)}{{p - r \choose |S|}\gamma(|S|)}
\int \exp\left\{-({1}/{2})\|\bZ_{0S}(\btheta_S - \widehat{\btheta}_S)\|_2^2\right\}\mathrm{d}\btheta_S
% }
\right]^{-1}
\\
&\quad = 
% \frac{
\left[
\sum_{S\in \calS_0}\frac{\pi_p(|S|)}{{p - r \choose |S|}\gamma(|S|)}\int_{(\widetilde{\calA}_n^c)_S}\exp\left\{\frac{1}{2}\|\bZ_{0S}\widehat{\btheta}_S\|_2^2-\frac{1}{2}\|\bZ_{0S}(\btheta_S - \widehat{\btheta}_S)\|_2^2\right\}\mathrm{d}\btheta_S
\right]\\
% }{
&\quad\quad\quad\times
\left[
\sum_{S\in \calS_0}\frac{\pi_p(|S|)}{{p - r \choose |S|}\gamma(|S|)}
\int \exp\left\{\frac{1}{2}\|\bZ_{0S}\widehat{\btheta}_S\|_2^2-\frac{1}{2}\|\bZ_{0S}(\btheta_S - \widehat{\btheta}_S)\|_2^2\right\}\mathrm{d}\btheta_S
% }
\right]^{-1}
.
\end{align*}
Let $\bt_n = \bZ_0\btheta_0 + \beps_n$. Observe that $\bZ_{0S}\widehat{\btheta}_S$ is the projection of $\bt_n$ onto the subspace spanned by the columns of $\bZ_{0S}$, and by Parseval's identity, we have
\begin{align*}
% &
\frac{1}{2}\|\bZ_{0S}\widehat{\btheta}_S\|_2^2 - \frac{1}{2}\|\bZ_{0S}(\btheta_S - \widehat{\btheta}_S)\|_2^2
% \\
&
% \quad
 = \frac{1}{2}\|\bt_n\|_2^2 - \frac{1}{2}\| \bt_n - \bZ_{0S}\widehat{\btheta}_{0S} \|_2^2 - \frac{1}{2}\|\bZ_{0S}(\btheta_S - \widehat{\btheta}_S)\|_2^2\\
&
% \quad
 = \frac{1}{2}\|\bt_n\|_2^2 - \frac{1}{2}\|\bt_n - \bZ_{0S}\btheta_S\|_2^2
% \\
% &
% \quad
%  = \bt_n\transpose\bZ_{0S}\btheta_S - \frac{1}{2}\|\bZ_{0S}\btheta_S\|_2^2\\
% &
% \quad
%  = (\bt_n - \bZ_{0S}\btheta_{0S})\transpose\bZ_{0S}(\btheta_S - \btheta_{0S}) - \frac{1}{2}\|\bZ_{0S}(\btheta_S - \btheta_{0S})\|_2^2 + \bt_n\transpose\bZ_{0S}\btheta_{0S} - \frac{1}{2}\|\bZ_{0S}\btheta_{0S}\|_2^2
\end{align*}
and that
\begin{align*}
- \frac{1}{2}\|\bt_n - \bZ_{0S}\btheta_S\|_2^2 + \frac{1}{2}\|\bt_n - \bZ_{0}\btheta_{0}\|_2^2
& = (\bt_n - \bZ_0\btheta_0)\transpose\bZ_0(\btheta_S - \btheta_{0S}) - \frac{1}{2}\|\bZ_{0S}(\btheta_S - \btheta_{0S})\|_2^2\\
& = \beps_n\transpose\bZ_{0S}(\btheta_S - \btheta_{0S}) - \frac{1}{2}\|\bZ_{0S}(\btheta_S - \btheta_{0S})\|_2^2
.
\end{align*}
Note that $\|\bt_n\|_2^2$ and $\|\bt_n - \bZ_0\btheta_0\|_2^2$ do not depend on $\btheta$ or the indexing set $S$. It follows that
\begin{align*}
&\Pi^\infty_\btheta(\btheta\in\widetilde{\calA}^c_n\mid\bY_n) \\
&\quad = 
% \frac{
\left[
\sum_{S\in \calS_0}\frac{\pi_p(|S|)}{{p - r \choose |S|}\gamma(|S|)}
\int_{(\widetilde{\calA}^c_n)_S}\exp\left\{\frac{1}{2}\|\bZ_{0S}\widehat{\btheta}_S\|_2^2 - \frac{1}{2}\|\bZ_{0S}(\btheta_S - \widehat{\btheta}_S)\|_2^2\right\}\mathrm{d}\btheta_S
\right]\\
% }{
&\quad\quad\times\left[
\sum_{S\in \calS_0}\frac{\pi_p(|S|)}{{p - r \choose |S|}\gamma(|S|)}
\int \exp\left\{\frac{1}{2}\|\bZ_{0S}\widehat{\btheta}_S\|_2^2- \frac{1}{2}\|\bZ_{0S}(\btheta_S - \widehat{\btheta}_S)\|_2^2\right\}\mathrm{d}\btheta_S
\right]^{-1}
% }
\\
&\quad = 
% \frac{
\left[
\sum_{S\in \calS_0}\frac{\pi_p(|S|)}{{p - r \choose |S|}\gamma(|S|)}
\int_{(\widetilde{\calA}^c_n)_S}\exp\left\{\beps_n\transpose\bZ_{0S}(\btheta_S - \btheta_{0S}) - \frac{1}{2}\|\bZ_{0S}(\btheta_S - \btheta_{0S})\|_2^2\right\}\mathrm{d}\btheta_S
\right]\\
% }{
&\quad\quad\times
\left[
\sum_{S\in \calS_0}\frac{\pi_p(|S|)}{{p - r \choose |S|}\gamma(|S|)}
\int \exp\left\{
\beps_n\transpose\bZ_{0S}(\btheta_S - \btheta_{0S}) - \frac{1}{2}\|\bZ_{0S}(\btheta_S - \btheta_{0S})\|_2^2
\right\}\mathrm{d}\btheta_S
\right]^{-1}
% }
\\
&\quad = \frac{N_n^\infty}{D_n^\infty}.
\end{align*}
% because $\|\bt_n\|_2^2$ and $\|\bt_n - \bZ_0\btheta_0\|_2^2$ does not depend on $S$ and $\btheta$. 
We now analyze the numerator $N_n^\infty$ and the denominator $D_n^\infty$ separately. 

\vspace*{2ex}\noindent
$\blacksquare$ We first analyze the denominator $D_n^\infty$. Denote $U_S(\mathrm{d}\btheta) = (\mathrm{d}\btheta_{S})\{\delta_{\zero}(\mathrm{d}\btheta_{S^c})\}$ for any $S\in\calS_0$. It follows that
\begin{align*}
D_n^\infty
& = \sum_{S\in \calS_0}
\frac{\pi_p(|S|)}{
 {p - r \choose |S|} \gamma(|S|)
 }
\int\exp\left\{
\beps_n\transpose\bZ_0(\btheta - \btheta_0) - \frac{1}{2}\|\bZ_{0}(\btheta - \btheta_0)\|_2^2
\right\}U_S(\mathrm{d}\btheta)\\
& \geq \frac{\pi_p(|S_0|)}{
 {p - r \choose |S_0|} \gamma(|S_0|)
 }
\int\exp\left\{
\beps_n\transpose\bZ_0(\btheta - \btheta_0) - \frac{1}{2}\|\bZ_{0}(\btheta - \btheta_0)\|_2^2
\right\}U_{S_0}(\mathrm{d}\btheta)
\end{align*}
By definition of the multivariate normal distribution, we have,
\begin{align*}
Q_{S_0}(\mathrm{d}\btheta)
% & = \frac{\exp\{-(1/2)\|\bt_{n} - \bZ_{0}\btheta\|_2^2 + (1/2)\|\bt_{n} - \bZ_{0}\btheta_{0}\|_2^2\}U_S(\mathrm{d}\btheta)}
% 	{\int_{\mathbb{R}^{d + r}}\exp\{-(1/2)\|\bt_{n} - \bZ_{0}\btheta\|_2^2 + (1/2)\|\bt_{n} - \bZ_{0}\btheta_{0}\|_2^2\}U_{S_0}(\mathrm{d}\btheta)}\\
& = \frac{\exp\{-(1/2)\|\bZ_{0}(\btheta - \btheta_{0})\|_2^2 + (\btheta - \btheta_{0})\transpose\bZ_{0}\transpose\beps_n\}U_{S_0}(\mathrm{d}\btheta)}
	{\int_{\mathbb{R}^{(p - r)r + r(r + 1)/2}}\exp\{-(1/2)\|\bZ_{0}(\btheta - \btheta_{0})\|_2^2 + (\btheta - \btheta_{0})\transpose\bZ_{0}\transpose\beps_n\}U_{S_0}(\mathrm{d}\btheta)}.
\end{align*}
Define the measures
\begin{align*}
\eta_{S_0}(\mathrm{d}\btheta) & = \exp\left\{-\frac{1}{2}\|\bZ_{0}(\btheta - \btheta_{0})\|_2^2 + (\btheta - \btheta_{0})\transpose\bZ_{0}\transpose\beps_n\right\}U_{S_0}(\mathrm{d}\btheta),\\
\sigma_{S_0}(\mathrm{d}\bbeta_{S_0}) & = \exp\left\{-\frac{1}{2}\|\bZ_{0S_0}\bbeta_{S_0}\|_2^2\right\}\mathrm{d}\bbeta_{S_0},\quad\bbeta_{S_0} = \btheta_{S_0} - \btheta_{0S_0}
\end{align*}
and the probability distribution $\bar{\sigma}_{S_0}(\mathrm{d}\bbeta_{S_0}) = \sigma_{S_0}(\mathrm{d}\bbeta_{S_0})/\sigma_{S_0}(\mathbb{R}^{|S_0|r + r(r + 1)/2})$. 
Then the denominator $\eta(\mathbb{R}^{(p - r)r + r(r + 1)/2})$ can be lower bounded as follows:
\begin{align*}
\eta_{S_0}(\mathbb{R}^{(p - r)r + r(r + 1)/2})
& = \int_{\mathbb{R}^{(p - r)r + r(r + 1)/2}} \exp\left\{-\frac{1}{2}\|\bZ_{0}(\btheta - \btheta_{0})\|_2^2 + (\btheta - \btheta_{0})\transpose\bZ_{0}\transpose\beps_n\right\}U_{S_0}(\mathrm{d}\btheta)\\
& = \int_{\mathbb{R}^{|S_0|r + r(r + 1)/2}} \exp\left\{-\frac{1}{2}\|\bZ_{0S_0}\bbeta_{S_0}\|_2^2 + \bbeta_{S_0}\transpose\bZ_{0{S_0}}\transpose\beps_n\right\}\mathrm{d}\bbeta_{S_0}\\
& = \sigma_{S_0}(\mathbb{R}^{|{S_0}|r  + r(r + 1)/2}) \int_{\mathbb{R}^{|{S_0}|r + r(r + 1)/2}}\exp(\bbeta_{S_0}\transpose\bZ_{0{S_0}}\transpose\beps_n)\bar{\sigma}_{S_0}(\mathrm{d}\bbeta_{S_0})\\
& \geq \sigma_{S_0}(\mathbb{R}^{|S_0|r + r(r + 1)/2}) \exp\left\{\int_{\mathbb{R}^{|S_0|r + r(r + 1)/2}}(\bbeta_{S_0}\transpose\bZ_{0S_0}\transpose\beps_n)\mathrm{d}\bar{\sigma}_{S_0}(\mathrm{d}\bbeta_{S_0})\right\}\\
& = \sigma_{S_0}(\mathbb{R}^{|S_0|r + r(r + 1)/2})
% \\
% & = \int_{\mathbb{R}^{|{S_0}|r + r(r + 1)/2}}\exp\left\{-\frac{1}{2}\|\bZ_{0S_0}\bbeta_{S_0}\|_2^2\right\}\mathrm{d}\bbeta_{S_0}\\
% & 
= \det\{2\pi(\bZ_{0S_0}\transpose\bZ_{0S_0})^{-1}\}^{1/2},
\end{align*}
where we have used the change of variable $\bbeta_{S_0} = \btheta_{S_0} - \btheta_{0{S_0}}$, the Jensen's inequality applied to the distribution $\bar{\sigma}$, and the fact that $\bar{\sigma}_{S_0}$ is symmetric about zero so that the expected value of $\bbeta_{S_0}\transpose\bZ_{0{S_0}}\transpose\beps_n$ with regard to $\bar{\sigma}(\mathrm{d}\bbeta_{S_0})$ is $0$. Hence we obtain the following lower bound for the denominator $D_n^\infty$:
\begin{align*}
D_n^\infty 
& \geq \frac{\pi_p(|S_0|)}{{p - r\choose |S_0|}\gamma(|S_0|)}\int\exp\left\{\beps_n\transpose\bZ_0(\btheta - \btheta_0) - \frac{1}{2}\|\bZ_0(\btheta - \btheta_0)\|_2^2\right\}U_{S_0}(\mathrm{d}\btheta)\\
& = \frac{\pi_p(|S_0|)}{{p - r\choose |S_0|}\gamma_{S_0}(|S_0|)}\eta_{S_0}(\mathbb{R}^{(p - r)r + r(r + 1)/2})\geq \frac{\pi_p(|S_0|)}{{p - r\choose |S_0|}}\det\{2\pi(\bZ_{0S_0}\transpose\bZ_{0S_0})^{-1}\}^{1/2}\\
&\geq \frac{\pi_p(s_0)(2\pi)^{s_0r/2 + r(r + 1)/4}}{\exp(s_0\log p)\det(\bZ_{0S}\transpose\bZ_{0S})^{1/2}}.
\end{align*}
By the geometric-algorithmic mean inequality,
\begin{align*}
\det(\bZ_{0S}\transpose\bZ_{0S})^{1/2}
& = \left\{\prod_{i = 1}^{|S|r + r(r + 1)/2}\lambda_i(\bZ_{0S}\transpose\bZ_{0S})\right\}^{1/2}\\
&\leq \left\{\frac{1}{|S|r + r(r + 1)/2}\sum_{i = 1}^{|S|r + r(r + 1)/2}\lambda_i(\bZ_{0S}\transpose\bZ_{0S})\right\}^{|S|r/2 + r(r + 1)/4}\\
& = \left\{\frac{\mathrm{tr}(\bZ_{0S_0}\transpose\bZ_{0S_0})}{|S|r + r(r + 1)/2}\right\}^{|S_0|r/2 + r(r + 1)/4}
% \\&
\leq (\|\bZ_{0S}\|_2^2)^{s_0r/2 + r(r + 1)/4}\\
&\leq \left(\frac{n}{2}\|D\bSigma(\btheta_0)\|_2^2\right)^{s_0r/2 + r(r + 1)/4}\leq\exp(Crs_0\log n)
\end{align*}
for some constant $C > 0$. Therefore, 
\[
D_n^\infty\geq \pi_p(s_0)\exp\left(-s_0\log p - Crs_0\log n
\right).
\]

\vspace*{2ex}\noindent
$\blacksquare$ We next analyze the numerator $N_n^\infty$. 
Write
\begin{align*}
N_n^\infty & = 
\sum_{S\in \calS_0}\frac{\pi_p(|S|)}{{p - r \choose |S|}\gamma(|S|)}
\int_{(\widetilde{\calA}^c_n)_S}\exp\left\{\beps_n\transpose\bZ_{0S}(\btheta_S - \btheta_{0S}) - \frac{1}{2}\|\bZ_{0S}(\btheta_S - \btheta_{0S})\|_2^2\right\}\mathrm{d}\btheta_S\\
& = 
\sum_{S\in \calS_0}\frac{\pi_p(|S|)}{{p - r \choose |S|}\gamma(|S|)}
\int_{\widetilde{\calA}^c_n}\exp\left\{\beps_n\transpose\bZ_{0}(\btheta - \btheta_{0}) - \frac{1}{2}\|\bZ_{0}(\btheta - \btheta_{0})\|_2^2\right\}U_S(\mathrm{d}\btheta)
\end{align*}
Denote the event 
\[
\Xi_n = \left\{
|(\btheta - \btheta_{0})\transpose\bZ_{0}\transpose\beps_n| \leq \bar{\lambda}\|\btheta - \btheta_{0}\|_1,
\right\},
\]
where $\bar{\lambda} = C_0\sqrt{n\log p}$ is such that $\prob_0(\Xi_n^c)\leq 2/p = o(1)$ by Lemma \ref{lemma:residual_linfty_deviation}. 
By definition of $\calA_n$, for any $\btheta\in\widetilde{\calA}_{n}^c$, we have,
\[
\|\btheta - \btheta_{0}\|_1> M\sqrt{\frac{r^2s^2\log n + rs^2\log p}{n}}.
\]
By Theorem \ref{thm:DSigma_singular_value}, we have
\begin{align*}
\sigma_{\min}^2(\bZ_0)& = {\frac{n}{2}}\lambda_{\min}\left\{D\bSigma(\btheta_0)\transpose(\bSigma_0^{-1}\otimes\bSigma_0^{-1})D\bSigma(\btheta_0)\right\}
\\&
= \frac{n}{2}\min_{\|\btheta\|_2 = 1}\|(\bOmega_0^{-1/2}\otimes\bOmega_0^{-1/2})D\bSigma(\btheta_0)\btheta\|_2^2
\\&
\geq \frac{n}{2}\lambda_{\min}(\bOmega_0^{-1})^2\min_{\|\btheta\|_2 = 1}\|D\bSigma(\btheta_0)\btheta\|_2^2
\\&
\geq \frac{n\sigma_{\min}^2\{D\bSigma(\btheta_0)\}}{2\|\bOmega_0\|_2^2}\gtrsim n.
\end{align*}
Then over the event $\Xi_n$, with $\mathrm{supp}(\bA)\in \calS_0$ and $\btheta = [\vect(\bA)\transpose, \bmu\transpose]\transpose$, we have,
\begin{align*}
(\btheta - \btheta_{0})\transpose\bZ_{0}\transpose\beps_n
& \leq 2\bar{\lambda}\|\btheta - \btheta_{0}\|_1 - \bar{\lambda}\|\btheta - \btheta_{0}\|_1\\
& = 2\left\{\frac{1}{2}\|\bZ_{0}(\btheta - \btheta_{0})\|_2^2\right\}^{1/2}\left\{\frac{\sqrt{2}\bar{\lambda}\|\btheta - \btheta_{0}\|_1}{\|\bZ_{0}(\btheta - \btheta_{0})\|_2}\right\} - \bar{\lambda}\|\btheta - \btheta_{0}\|_1\\
& \leq \frac{1}{2}\|\bZ_{0}(\btheta - \btheta_{0})\|_2^2 + \frac{2\bar{\lambda}^2\|\btheta - \btheta_{0}\|_1^2}{\sigma_{\min}(\bZ_{0})^2\|\btheta - \btheta_{0}\|^2_2} - \bar{\lambda}\|\btheta - \btheta_{0}\|_1\\
&\leq \frac{1}{2}\|\bZ_{0}(\btheta - \btheta_{0})\|_2^2 + \frac{\bar{C}_0rs_0\log p\|\btheta - \btheta_{0}\|_2^2}{\|\btheta - \btheta_{0}\|_2^2} 
 - \bar{\lambda}\|\btheta - \btheta_{0}\|_1\\
& = \frac{1}{2}\|\bZ_{0}(\btheta - \btheta_{0})\|_2^2 + 
\bar{C}_0rs_0\log p
 - \bar{\lambda}\|\btheta - \btheta_{0}\|_1.
\end{align*}
where $\bar{C}_0$ is a constant depending on the spectra of $\bOmega_0$. 
For any $\btheta\in\widetilde{\calA}_{n}^c$, we have,
\begin{align*}
\bar{\lambda}\|\btheta - \btheta_{0}\|_1
& = C_0\sqrt{n\log p}\|\btheta - \btheta_{0}\|_1\geq {C_0M\sqrt{r^2s^2(\log n)(\log p) + rs^2(\log p)^2}}\\
& \geq C_1 M(rs_0\log n + s_0\sqrt{r}\log p)
\end{align*}
for some constant $C_1 > 0$.
Therefore, by choosing a sufficiently large $M > 0$, we have, for any $\btheta\in\widetilde{\calA}_n^c$,
\begin{align*}
&\exp\left\{
- \frac{1}{2}\|\bZ_{0}(\btheta - \btheta_{0})\|_2^2 + 
(\btheta - \btheta_{0})\transpose\bZ_{0}\transpose\beps_n
\right\}\\
&\quad \leq \exp\left\{ - \frac{C_1M(rs_0\log n + s_0\sqrt{r}\log p)}{4} - \frac{\bar{\lambda}}{2}\|\btheta - \btheta_0\|_1\right\}
\end{align*}
for some constant $C_1 > 0$ (possibly depending on the spectra of $\bOmega_0$), which further implies that over the event $\Xi_n$,
\begin{align*}
N_n^\infty & = \sum_{S\in\calS_0}\frac{\pi_p(|S|)}{{p - r\choose |S|}\gamma(|S|)}\int_{(\widetilde{\calA}^c_n)_S}\exp\left\{\beps_n\transpose\bZ_0(\btheta - \btheta_0) - \frac{1}{2}\|\bZ_{0}(\btheta - \btheta_0)\|_2^2\right\}U_S(\mathrm{d}\btheta)\\
& \leq 
\exp\left\{ - \frac{C_1M(rs_0\log n + s_0\sqrt{r}\log p)}{4}\right\}\\
&\quad\times\sum_{S\in\calS_0}\frac{\pi_p(|S|)}{{p - r\choose |S|}\gamma(|S|)} \int_{\widetilde{\calA}^c_n}\exp\left(- \frac{\bar{\lambda}}{2}\|\btheta - \btheta_0\|_1\right)U_S(\mathrm{d}\btheta)\\
& = \exp\left\{ - \frac{C_1M(rs_0\log n + s_0\sqrt{r}\log p)}{4}\right\}\sum_{S\in\calS_0}\frac{\pi_p(|S|)}{{p - r\choose |S|}\gamma(|S|)}\left(\frac{4}{\bar{\lambda}}\right)^{|S|r + r(r + 1)/2}\\
& \leq \exp\left\{ - \frac{C_1M (rs_0\log n + s_0\sqrt{r}\log p)}{4}\right\}\sum_{t = 0}^{\kappa_0s_0}\frac{\pi_p(t)}{\gamma(t)}\\
& = \exp\left\{ - \frac{C_1M (rs_0\log n + s_0\sqrt{r}\log p)}{4}\right\}\exp\left\{\frac{1}{2}\kappa_0s_0r\log(\kappa_0s_0r) + 2\kappa_0s_0r\right\}\\
&\leq \exp\left\{ - \frac{C_1M (rs_0\log n + s_0\sqrt{r}\log p)}{4}\right\}\exp\left(\frac{3}{2}\kappa_0s_0r\log n +2\kappa_0s_0r\right)\\
&\leq \exp\left\{ - \frac{C_1M (rs_0\log n + s_0\sqrt{r}\log p)}{8}\right\}
% \left(\frac{4}{\bar{\lambda}}\right)^{sr + r^2 + r(r + 1)/2}
\end{align*}
for a sufficiently large $M > 0$, where we have used the fact that $\bar{\lambda}\to 0$ as $n\to\infty$ and \eqref{eqn:gamma_S_lower_bound}. 

\vspace*{2ex}\noindent
$\blacksquare$ We are now finally in a position to analyze the ratio $N_n^\infty/D_n^\infty$. Write
\begin{align*}
&\expect_0\left\{
\Pi_\btheta^\infty(\btheta\in\widetilde{\calA}_n^c\mid\bY_n)
\right\}\\
&\quad\leq \prob_0(\Xi_n^c) + \expect_0\left\{\mathbbm{1}(\Xi_n)\frac{N_n^\infty}{D_n^\infty}\right\}\\
% &\quad\leq o(1) + \frac{{p - r\choose |S_0|}}{\pi_p(|S_0|)}\exp\left\{ - \frac{C_1M(rs_0\log n + s_0\sqrt{r}\log p)}{8}\right\}
% \det\{2\pi(\bZ_{0S_0}\transpose\bZ_{0S_0})^{-1}\}^{-1/2}\\
% % \sum_{s = 0}^{p - r}\pi_p(s)\\
% &\quad\leq o(1) + \frac{1}{\pi_p(|S_0|)}\exp\left\{
% s_0\log p - \frac{C_1M (rs_0\log n + s_0\sqrt{r}\log p)}{8}
% \right\}\frac{\det(\bZ_{0S_0}\transpose\bZ_{0S_0})^{1/2}}{(2\pi)^{(s_0 - r)r/2 + r(r + 1)/4}}\\
% &\quad\leq o(1) + \frac{1}{\pi_p(|S_0|)}\exp\left\{
% s_0\log p - \frac{C_1M (rs_0\log n + s_0\sqrt{r}\log p)}{8}
% \right\}\left\{\frac{\mathrm{tr}(\bZ_{0S_0}\transpose\bZ_{0S_0})}{d_0}\right\}^{d_0/2}\\
% &\quad\leq o(1) + \frac{1}{\pi_p(|S_0|)}\exp\left\{
% s_0\log p - \frac{C_1M (rs_0\log n + s_0\sqrt{r}\log p)}{8}
% \right\}\left(\|\bZ_{0S_0}\|_2^2\right)^{d_0/2}\\
% &\quad\leq o(1) + \frac{1}{\pi_p(|S_0|)}\exp\left\{
% s_0\log p - \frac{C_1M (rs_0\log n + s_0\sqrt{r}\log p)}{8}
% \right\}\left(\frac{n}{2}\|D\bSigma(\btheta_0)\|_2^2\right)^{d_0/2}\\
% &\quad\leq o(1) + \frac{1}{\pi_p(|S_0|)}\exp\left\{
% s_0\log p - \frac{C_1M (rs_0\log n + s_0\sqrt{r}\log p)}{8} + Cd_0\log n
% \right\}\\
&\quad\leq o(1) + \frac{1}{\pi_p(s_0)}\exp\left\{
s_0\log p - \frac{C_1M (rs_0\log n + s_0\sqrt{r}\log p)}{8} + Cs_0r\log n
\right\}\\
&\quad\leq o(1) + 2\exp\left\{(C + 1)rs_0\log n + (a + 1)s_0\log p
 - \frac{C_0M(rs_0\log n + s_0\sqrt{r}\log p)}{8}
\right\}\\
&\quad\to 0
\end{align*}
by taking a sufficiently large $M > 0$ again. The proof is thus completed. 
\end{proof}

We are now in a position to present the proof of Theorem \ref{thm:BvM}. 
\begin{proof}[Proof of Theorem \ref{thm:BvM}]
% For any $\btheta$, let $\btheta = [\vect(\bA)\transpose, \bmu\transpose]\transpose$, where $\bA\in\mathbb{R}^{(p - r)\times r}$ with $\|\bA\|_2 < 1$. 
We first claim that Assumption A5 implies that 
\[
\expect_0\{\Pi_\btheta(\btheta:S_0\subset \mathrm{supp}(\bA)\mid\bY_n)\} \to 1.
\]
In fact, if $S_0\cap \mathrm{supp}(\bA)^c\neq \varnothing$, then there exists some $j\in S_0$ such that $j\in \mathrm{supp}(\bA)^c$. Therefore,
\begin{align*}
\|\btheta - \btheta_0\|_2 
& \geq \|\vect(\bA) - \vect(\bA_0)\|_{\mathrm{F}} = \left\{\sum_{l = 1}^p\|\be_l\transpose(\bA - \bA_0)\|^2_2\right\}^{1/2}\\
&\geq \|[\bA]_{j*} - [\bA_0]_{j*}\|_2 = \|[\bA_0]_{j*}\|_2
\geq\min_{j\in S_0}\|[\bA_0]_{j*}\|_2. 
\end{align*}
Using the result from Theorem \ref{thm:contraction_Frobenius_norm}, we have,
\begin{align*}
\expect_0\left[\Pi_\btheta\left\{\btheta:\|\btheta - \btheta_0\|_2 < M\sqrt{\frac{rs\log n + s\log p}{n}}\mathrel{\Bigg|}\bY_n\right\}\right] \to 1
\end{align*}
for some constant $M > 0$. Hence,
\begin{align*}
&
\expect_0[\Pi_\btheta\{\btheta:S_0\subset \mathrm{supp}(\bA)\mid\bY_n\}]
\\
&\quad = 1 - \expect_0[\Pi_\btheta\{\btheta:S_0\cap \mathrm{supp}(\bA)^c\neq \varnothing\mid\bY_n\}]\\
&\quad \geq 1 - \expect_0\left\{\Pi_\btheta\left(\btheta:\|\btheta - \btheta_0\|_2\geq \min_{j\in S_0}\|[\bA_0]_{j*}\|_2 \mid\bY_n\right)\right\}\\
&\quad \geq 1 - \expect_0\left\{\Pi_\btheta\left(\btheta:\|\btheta - \btheta_0\|_2\geq M\sqrt{\frac{rs\log n + s\log p}{n}} \mathrel{\Big|}\bY_n\right)\right\} \to 1.
\end{align*}
Let 
\[
\calE_n := \bigcup_{S\in\calS_0}\calA_n(S) = \bigcup_{S\in\calS_0}\left\{\btheta:\|\btheta - \btheta_0\|_1\leq M\sqrt{\frac{r^2s^2\log n + rs^2\log p}{n}},\mathrm{supp}(\bA) = S\right\}
\]
By Theorem \ref{thm:contraction_Frobenius_norm}, Lemma \ref{lemma:posterior sparsity}, and Assumption A5, we immediately see that 
\[
\expect_0\{\Pi_\btheta(\btheta\in\calE_n\mid\bY_n)\}\to 1.
\] 
% First observe that for any $\btheta = [\vect(\bA)\transpose, \bmu\transpose]\transpose\in\widetilde\calA_n(S)$, we have $\|\bA\|_2 < 1$ and $\bM\in\mathbb{M}_+^r$, where $\bM = \bM(\bmu)$ is the symmetric matrix whose upper diagonal entries are elements of $\bmu$. 
Let $\bt_n = \bZ_0\btheta_0 + \beps_n$, 
\begin{align*}
v_S & \propto 
\frac{\pi_p(|S|)}{{p - r\choose |S|}}\int\exp\{\ell(\bOmega(\btheta)) - \ell(\bOmega_0)\}\Pi_\btheta(\mathrm{d}\btheta)\mathbbm{1}\{S\in\calS_0\},\\
G_S(\mathrm{d}\btheta) & := \{\exp(-2\|\btheta_S\|_1)\mathrm{d}\btheta_S\}\{\delta_\zero(\mathrm{d}\btheta_{S^c})\}, \\
\mu_{S}(\mathrm{d}\btheta) & := 
\frac{\pi_p(|S|)}{{p - r\choose |S|}\gamma(|S|)}\mathbbm{1}\{\btheta\in\calA_{n}(S)\}\exp\{\ell(\bOmega(\btheta)) - \ell(\bOmega_0)\} G_S(\mathrm{d}\btheta),\\
\nu_{S}(\mathrm{d}\btheta) & := 
\frac{\pi_p(|S|)}{{p - r\choose |S|}\gamma(|S|)}
\exp\left(\frac{1}{2}\beps_n\transpose\beps_n - \frac{1}{2}\|\bt_{n} - \bZ_{0S}\widehat{\btheta}_{S}\|_2^2 - 2\|\btheta_{0}\|_1\right)\\
&\quad\times 
\mathbbm{1}\{\btheta\in\calA_{n}(S)\}\exp\left\{
-\frac{1}{2}(\btheta_S - \widehat\btheta_{S})\transpose\bZ_{0S}\transpose\bZ_{0S}(\btheta_S - \widehat\btheta_{S})
\right\}\mathrm{d}\btheta_S\times \delta_{\zero}(\mathrm{d}\btheta_{S^c}),\\
\widetilde{\varpi}_{S} & :=  \frac{\pi_p(|S|)}{{p - r\choose |S|}\gamma(|S|)}
\exp\left(\frac{1}{2}\beps_n\transpose\beps_n - \frac{1}{2}\|\bt_{n} - \bZ_{0S}\widehat{\btheta}_{S}\|_2^2 - 2\|\btheta_{0}\|_1\right)\mathbbm{1}\{S\in\calS_0\}\\
&\quad\times \int_{(\calA_{n}(S))_S}\exp\left\{-\frac{1}{2}(\btheta_S - \widehat\btheta_{S})\transpose\bZ_{0S}\transpose\bZ_{0S}(\btheta_S - \widehat\btheta_{S})\right\}\mathrm{d}\btheta_{S},\\
\widehat{\varpi}_{S} & :=  \frac{\pi_p(|S|)}{{p - r\choose |S|}\gamma(|S|)}
\exp\left(\frac{1}{2}\beps_n\transpose\beps_n - \frac{1}{2}\|\bt_{n} - \bZ_{0S}\widehat{\btheta}_{S}\|_2^2 - 2\|\btheta_{0}\|_1\right)\mathbbm{1}\{S\in\calS_0\}\\
&\quad\times \int_{\mathbb{R}^{|S|r + r(r + 1)/2}}\exp\left\{-\frac{1}{2}(\btheta_S - \widehat\btheta_{S})\transpose\bZ_{0S}\transpose\bZ_{0S}(\btheta_S - \widehat\btheta_{S})\right\}\mathrm{d}\btheta_{S},\\
% \widetilde{w}_{mS} & = \frac{\widetilde{\Omega}_{mS}}{\sum_{S\in\calS(\kappa_n)}\sum_{m = 1}^{2^r}\widetilde{\Omega}_{mS}},\\
% {Q}_{mS}(\mathrm{d}\btheta) & = \phi(\btheta_S\mid\widehat{\btheta}_{mS}, (\bZ_{0mS}\transpose\bZ_{0mS})^{-1})\mathrm{d}\btheta_S\otimes\delta_{\zero}(\mathrm{d}\btheta_{S^c}),\\
\widetilde{Q}_{S}(\mathrm{d}\btheta) & := \frac{\mathbbm{1}\{\btheta\in\calA_{n}(S)\}
Q_{S}(\mathrm{d}\btheta)
}{Q_{S}(\btheta\in\calA_{n}(S))}.
 % = \frac{\mathbbm{1}\{\btheta\in\calA_{n}(S)\}\phi(\btheta_S\mid\widehat{\btheta}_{S}, (\bZ_{0S}\transpose\bZ_{0S})^{-1})\mathrm{d}\btheta_S\times\delta_{\zero}(\mathrm{d}\btheta_{S^c})}{Q_{S}(\btheta\in\calA_{n}(S))}
\end{align*}
By Parseval's identity, we have
\begin{align*}
&\exp\left(-2\|\btheta_{0}\|_1 - \frac{1}{2}\|\bt_{n} - \bZ_{0S}\widehat\btheta_{S}\|_2^2\right)\\
&
\quad
 = \exp\left\{-2\|\btheta_0\|_1 - \frac{1}{2}\|\bt_n - \bZ_{0S}(\bZ_{0S}\transpose\bZ_{0S})^{-1}\bZ_{0S}\bt_n\|_2^2\right\}\\
&\quad = \exp\left\{-2\|\btheta_0\|_1 - \frac{1}{2}\|\bt_n\|_2^2 + \frac{1}{2}\|\bZ_{0S}(\bZ_{0S}\transpose\bZ_{0S})^{-1}\bZ_{0S}\bt_n\|_2^2\right\}\\
&\quad = \exp\left(-2\|\btheta_0\|_1 - \frac{1}{2}\|\bt_n\|_2^2\right)\exp\left(\frac{1}{2}\|\bZ_{0S}\widehat{\btheta}_S\|_2^2\right).
\end{align*}
Note that $\exp\{-2\|\btheta_0\|_1 - (1/2)\|\bt_n\|_2^2\}$ does not depend on the supporting set $S$. 
Therefore, by definiton of $\widehat{w}_{S}$, we have
\begin{align*}
\widehat{w}_{S}&\propto\frac{\pi_p(|S|)}{{p - r\choose |S|}\gamma(|S|)}\det\{2\pi(\bZ_{0S}\transpose\bZ_{0S})^{-1}\}^{1/2}\exp\left(\frac{1}{2}\|\bZ_{0S}\widehat\btheta_{S}\|_2^2\right)\\
&\propto \frac{\pi_p(|S|)}{{p - r\choose |S|}\gamma(|S|)}\det\{2\pi(\bZ_{0S}\transpose\bZ_{0S})^{-1}\}^{1/2}\exp\left(\frac{1}{2}\|\beps_n\|_2^2 - 2\|\btheta_{0}\|_1 - \frac{1}{2}\|\bt_{n} - \bZ_{0S}\widehat\btheta_{S}\|_2^2\right)\\
& = \widehat{\varpi}_S.
\end{align*}
Namely,
% \begin{align*}
$\widehat{w}_{S} = {\widehat\varpi_{S}}/{\sum_{T\in\calS_0}\widehat\varpi_{T}}$, $S\in\calS_0$.
% \end{align*}
For any probability distribution $\prob(\cdot)$ and any event $\calA$, we have, by the law of total probability,
\begin{align*}
\left\|\prob(\cdot) - \frac{\prob(\cdot\cap \calA)}{\prob(\calA)}\right\|_{\mathrm{TV}}
& = \sup_{\calB}\left|\prob(\calB) - \frac{\prob(\calB\cap \calA)}{\prob(\calA)}\right|
% \\&
 = \sup_{\calB}\left|\frac{\prob(\calB\cap \calA^c)\prob(\calA) - \prob(\calB\cap\calA)\prob(\calA^c)}{\prob(\calA)}\right|\\
&\leq \sup_{\calB}\frac{\prob(\calB\cap \calA^c)\prob(\calA) + \sup_{\calB}\prob(\calB\cap\calA)\prob(\calA^c)}{\prob(\calA)}
% \\&
\leq 2\prob(\calA^c).
\end{align*}
For any measurable set $\calB\subset\mathscr{D}(p, r)$, the (exact) posterior probability of $\calB$ given $\calE_n$ and $\bY_n$ can be written as
\begin{align*}
\frac{\Pi_\btheta(\calB\cap\calE_n\mid\bY_n)}{\Pi_\btheta(\calE_n\mid\bY_n)} 
&= \frac{\sum_{S\in\calS_0}\pi_p(|S|){p - r\choose |S|}^{-1}\gamma(|S|)^{-1}\int_{\calB\cap\calE_n}\exp\{\ell(\bOmega(\btheta)) - \ell(\bOmega_0)\}G_S(\mathrm{d}\btheta)}
{\sum_{S\in\calS_0}\pi_p(|S|){p - r\choose |S|}^{-1}\gamma(|S|)^{-1}\int_{\calE_n}\exp\{\ell(\bOmega(\btheta)) - \ell(\bOmega_0)\}G_S(\mathrm{d}\btheta)}\\
&=
\frac{\sum_{S\in\calS_0}
% \sum_{m = 1}^{2^r}
\pi_p(|S|){p - r\choose |S|}^{-1}\gamma(|S|)^{-1}\int_{\calB\cap\calA_{n}(S)}\exp\{\ell(\bOmega(\btheta)) - \ell(\bOmega_0)\}G_S(\mathrm{d}\btheta)}{
\sum_{S\in\calS_0}
% \sum_{m = 1}^{2^r}
\pi_p(|S|){p - r\choose |S|}^{-1}\gamma(|S|)^{-1}\int_{\calA_{n}(S)}\exp\{\ell(\bOmega(\btheta)) - \ell(\bOmega_0)\}G_S(\mathrm{d}\btheta)}\\
& = \sum_{S\in\calS_0}
% \sum_{m = 1}^{2^r}
\frac{\mu_{S}(\calB)}{\|\sum_{S\in\calS_0}
% \sum_{m = 1}^{2^r}
\mu_{S}(\cdot)\|_{\mathrm{TV}}}.
\end{align*}
By the triangle inequality, the total variation distance between $\Pi(\mathrm{d}\btheta\mid\bY_n)$ and $\Pi^\infty(\mathrm{d}\btheta\mid\bY_n)$ can be decomposed as follows:
\begin{align}
&
\|\Pi_\btheta(\btheta\in\cdot\mid\bY_n) - \Pi_\btheta^\infty(\btheta\in\cdot\mid\bY_n)\|_{\mathrm{TV}}\nonumber\\
\label{eqn:total_variation_triangle_I}
&\quad \leq \left\|\Pi_\btheta(\btheta\in\cdot\mid\bY_n) - \frac{\Pi_\btheta(\btheta\in\cdot\cap\calE_n)}{\Pi_\btheta(\btheta\in\calE_n)}\right\|_{\mathrm{TV}}
\\
\label{eqn:total_variation_triangle_II}
&\quad\quad
 + \left\|\sum_{S\in\calS_0}
% \sum_{m = 1}^{2^r}
\frac{\mu_{S}(\cdot)}{\|\sum_{S\in\calS_0}
% \sum_{m = 1}^{2^r}
\mu_{S}(\cdot)\|_{\mathrm{TV}}} - \sum_{S\in\calS(\kappa_n)}
% \sum_{m = 1}^{2^r}
\frac{\nu_{S}(\cdot)}{\|\sum_{S\in\calS_0}
% \sum_{m = 1}^{2^r}
\nu_{S}(\cdot)\|_{\mathrm{TV}}} \right\|_{\mathrm{TV}}\\
\label{eqn:total_variation_triangle_III}
&\quad\quad + \left\|\sum_{S\in\calS_0}
% \sum_{m = 1}^{2^r}
\frac{\nu_{S}(\cdot)}{\|\sum_{S\in\calS_0}
% \sum_{m = 1}^{2^r}
\nu_{S}(\cdot)\|_{\mathrm{TV}}} 
 - \Pi_\btheta^\infty(\btheta\in\cdot\mid\bY_n)\|
\right\|_{\mathrm{TV}}.
% & \quad:= I + II + III.
\end{align}
The first term on the right-hand side is upper bounded by $2\Pi_\btheta(\btheta\in\calE_n^c\mid\bY_n)$, which is $o_{\prob_0}(1)$ by Theorem \ref{thm:contraction_Frobenius_norm}, Lemma \ref{lemma:posterior sparsity}, and Assumption A5. It suffices to focus on the second and the third term. For the second term on line \eqref{eqn:total_variation_triangle_II}, write
\begin{align*}
&\left\|\sum_{S} \frac{\mu_{S}}{\|\sum_{S}\mu_{S}\|_{\mathrm{TV}}} - \sum_{S}\frac{\nu_{S}}{\|\sum_{S}\nu_{S}\|_{\mathrm{TV}}} \right\|_{\mathrm{TV}}\\
&\quad = \left\|\frac{\|\sum_{S}\nu_{S}\|_{\mathrm{TV}}\sum_S(\mu_{S} - \nu_{S}) + (\|\sum_{S}\nu_{S}\|_{\mathrm{TV}} - \|\sum_{S}\mu_{S}\|_{\mathrm{TV}})\sum_{S}\nu_{S}}{\|\sum_{S}\mu_{S}\|_{\mathrm{TV}}\|\sum_{S}\nu_{S}\|_{\mathrm{TV}}}\right\|_{\mathrm{TV}}\\
&\quad\leq \frac{2\sum_{S}\|\mu_{S} - \nu_{S}\|_{\mathrm{TV}}}{\|\sum_{S}\mu_{S}\|_{\mathrm{TV}}}\\
&\quad = \frac{2}{\|\sum_{S}\mu_{S}\|_{\mathrm{TV}}}\sum_{S}\sup_{\calB}\left|\int_\calB\mathrm{d}\mu_{S} - \int_\calB\left(\frac{\mathrm{d}\nu_{S}}{\mathrm{d}\mu_{S}}\right)\mathrm{d}\mu_{S}\right|\\
&\quad\leq \frac{2}{\|\sum_{S}\mu_{S}\|_{\mathrm{TV}}}\sum_{S}\sup_{\calB}\int_\calB\left|1 - \left(\frac{\mathrm{d}\nu_{S}}{\mathrm{d}\mu_{S}}\right)\right|\mathrm{d}\mu_{S}\\
&\quad\leq \frac{2}{\|\sum_{S}\mu_{S}\|_{\mathrm{TV}}}\sum_{S}\sup_{\calB}\mu_{S}(\calB)\left\|1 - \left(\frac{\mathrm{d}\nu_{S}}{\mathrm{d}\mu_{S}}\right)\right\|_{L_\infty(\calA_n(S))}\\
&\quad = 2\sup_{S\in\calS_0}\left\|1 - \left(\frac{\mathrm{d}\nu_{S}}{\mathrm{d}\mu_{S}}\right)\right\|_{L_\infty(\calA_n(S))}.
\end{align*}
By definition of $\nu_S$ and $\mu_S$, for any $\btheta\in\calA_{n}(S)$, we have
\begin{align*}
-\log\frac{\mathrm{d}\nu_{S}}{\mathrm{d}\mu_{S}}(\btheta)
& = \ell(\bOmega(\btheta)) - \ell(\bOmega_0) -\frac{1}{2}\|\beps_n\|_2^2 + \frac{1}{2}\|\bt_{n} - \bZ_{0S}\widehat\btheta_{S}\|_2^2 + \frac{1}{2}\|\bZ_{0S}(\btheta_S - \widehat\btheta_{S})\|_2^2\\
&\quad + 2\|\btheta_{0S}\|_1 - 2\|\btheta_{S}\|_1.
\end{align*}
Observe that $\bZ_{0S}(\widehat\btheta_{S} - \btheta_S)$ is inside the column space of $\bZ_{0S}$, and $\bt_{n} - \bZ_{0S}\widehat\btheta_{S}$ lies in the orthogonal complement of the column space of $\bZ_{0S}$, it follows from the Parseval's identity that for $\btheta\in\calA_{n}(S)$,
\begin{align*}
\|\bt_{n} - \bZ_{0}\btheta\|_2^2
&=\|\bt_{n} - \bZ_{0S}\btheta_{S}\|_2^2
% \\&
= \|\bt_{n} - \bZ_{0S}\widehat\btheta_{S} + \bZ_{0S}(\btheta_S - \widehat\btheta_{S})\|_2^2\\
& = \|\bt_{n} - \bZ_{0S}\widehat\btheta_{S}\|_2^2 + \|\bZ_{0S}(\btheta_S - \widehat\btheta_{S})\|_2^2.
\end{align*}
Using the fact that
\begin{align*}
(\btheta - \btheta_{0})\transpose\bZ_{0}\transpose\beps_n - \frac{1}{2}\|\bZ_{0}(\btheta - \btheta_{0})\|_2^2
& = 
-\frac{1}{2}\|\bt_{n} - \bZ_{0}\btheta\|_2^2 + \frac{1}{2}\|\bt_{n} - \bZ_{0}\btheta_{0}\|_2^2,
\end{align*}
we can further obtain
\begin{align*}
-\log\frac{\mathrm{d}\nu_{S}}{\mathrm{d}\mu_{S}}(\btheta)
& = \ell(\bOmega(\btheta)) - \ell(\bOmega_0) - \frac{1}{2}\|\bt_{n} - \bZ_{0}\btheta_{0}\|_2^2 + \frac{1}{2}\|\bt_{n} - \bZ_{0}\btheta\|_2^2\\
&\quad + 2(\|\btheta_{0S}\|_1 - \|\btheta_S\|_1)\\
& = \ell(\bOmega(\btheta)) - \ell(\bOmega_0) - (\btheta - \btheta_{0})\transpose\bZ_{0}\transpose\beps_n + \frac{1}{2}\|\bZ_{0}(\btheta - \btheta_{0})\|_2^2\\
&\quad + 2(\|\btheta_{0S}\|_1 - \|\btheta_S\|_1)\\
& = R_n(\btheta, \btheta_{0}) + 2(\|\btheta_{0S}\|_1 - \|\btheta_S\|_1),
\end{align*}
where the remainder $R_n$ satisfies
\[
\sup_{S\in\calS_0}\sup_{\btheta\in\calA_{n}(S)}|R_n(\btheta, \btheta_{0})|\leq \sup_{\btheta\in\calB_n}|R_n(\btheta, \btheta_{0})| = o_{\prob_0}(1)
\]
by Theorem \ref{thm:LAN}. In addition, we also have 
\begin{align*}
\sup_{S\in\calS_0}\sup_{\btheta\in\calA_{n}(S)}2|\|\btheta_{0S}\|_1 - \|\btheta_S\|_1|
&\leq \sup_{S\in\calS_0}\sup_{\btheta\in\calA_{n}(S)}2\|\btheta_{S} - \btheta_{0S}\|_1\\
&\lesssim\sqrt{\frac{r^2s^2\log n + rs^2\log p}{n}} = o(1)
\end{align*}
by definition.
Therefore, the term on line \eqref{eqn:total_variation_triangle_II} is upper bounded by
\begin{align*}
&2\sup_{S\in\calS_0}\left\|1 - \frac{\mathrm{d}\nu_{S}}{\mathrm{d}\mu_{S}}\right\|_{L_\infty(\calA_n(S))}
\\&
\quad = 2\sup_{S\in\calS_0}\left\|1 - \exp\{-R_n(\btheta, \btheta_{0}) - 2(\|\btheta_{0S}\|_1 - \|\btheta_S\|_1)\}\right\|_{L_\infty(\calA_n(S))}\\
&\quad\leq 2
% \sup_{S\in\calS(\kappa_n)}
\left[\exp\left\{\sup_{S\in\calS_0}\sup_{\btheta\in\calA_{n}(S)}(|R_n(\btheta, \btheta_{0})| + 2(\|\btheta_{0S}\|_1 - \|\btheta_S\|_1))\right\} - 1\right]
% \\&
 = o_{\prob_0}(1).
\end{align*}
This shows that the term on line \eqref{eqn:total_variation_triangle_II} is $o_{\prob_0}(1)$. We now finally focus on the term on line \eqref{eqn:total_variation_triangle_III}. Using the fact that $\widehat{w}_S = \widehat{\varpi}_S/\sum_{T\in\calS_0}\widehat{\varpi}_T$, we have
\begin{align*}
\Pi_\btheta^\infty(\mathrm{d}\btheta\mid\bY_n) = 
% \sum_{m = 1}^{2^r}
\sum_{S\in\calS_0}\left(\frac{\widehat\varpi_{S}}{\sum_{S\in\calS_0}\widehat\varpi_{S}}\right)Q_{S}(\mathrm{d}\btheta)
\end{align*}
and
\begin{align*}
\sum_{S\in\calS_0}\frac{\nu_{S}(\mathrm{d}\btheta)}{\|\sum_{S\in\calS_0}\nu_{S}(\cdot)\|_{\mathrm{TV}}}
& = \sum_{S\in\calS_0}\left(\frac{\widetilde\varpi_{S}}{\sum_{S\in\calS_0}\widetilde\varpi_{S}}\right)\widetilde{Q}_{S}(\mathrm{d}\btheta)
\end{align*}
because by construction, $\nu_S(\mathrm{d}\btheta) = \widetilde{\varpi}_S\widetilde{Q}_S(\mathrm{d}\btheta)$. 
Note that for any measurable set $\calB$, $Q_S(\calB\cap\calE_n) = Q_S(\calB\cap \calA_n(S))$, and by definition, $\widetilde{Q}_S(\calB) = Q_S(\calB\cap \calA_n(S))/Q_S(\calA_n(S))$. Therefore,
\begin{align*}
\frac{\Pi^{\infty}_\btheta(\btheta\in\calB\cap\calE_n\mid\bY_n)}{\Pi^{\infty}_\btheta(\btheta\in\calE_n\mid\bY_n)}
& = \frac{\sum_{S\in\calS_0}\widehat{w}_SQ_S(\calB\cap \calE_n)
}{\sum_{S\in\calS_0}\widehat{w}_SQ_S({\calE}_n)}
% \\&
 = \frac{\sum_{S\in\calS_0}\widehat{\varpi}_SQ_S(\calB\cap {\calA}_n(S))
}{\sum_{S\in\calS_0}\widehat{\varpi}_SQ_S({\calA}_n(S))}\\
& = \frac{\sum_{S\in\calS_0}\widehat{\varpi}_SQ_S({\calA}_n(S))\widetilde{Q}_S(\calB)
}{\sum_{S\in\calS_0}\widehat{\varpi}_SQ_S({\calA}_n(S))}.
\end{align*}
Furthermore, we have
\begin{align*}
&\widehat{\varpi}_SQ_S(\calA_n(S))\\
&\quad \propto  \frac{\pi_p(|S|)}{{p - r\choose |S|}\gamma(|S|)}\exp\left(\frac{1}{2}\|\bZ_{0S}\widehat{\btheta}_S\|_2^2\right)\det\{2\pi(\bZ_{0S}\transpose\bZ_{0S})^{-1}\}^{1/2}Q_S(\calA_n(S))\\
&\quad = \frac{\pi_p(|S|)}{{p - r\choose |S|}\gamma(|S|)}\exp\left(\frac{1}{2}\|\bZ_{0S}\widehat{\btheta}_S\|_2^2\right)
\int_{\calA_n}\exp\left\{ -\frac{1}{2}\|\bZ_{0S}(\btheta_S - \widehat{\btheta}_S)\|_2^2 \right\}\mathrm{d}\btheta_S\times \delta_{\zero}(\mathrm{d}\btheta_{S^c})\\
&\quad = \frac{\pi_p(|S|)}{{p - r\choose |S|}\gamma(|S|)}\exp\left(\frac{1}{2}\|\bZ_{0S}\widehat{\btheta}_S\|_2^2\right)\int_{(\calA_n(S))_S}\exp\left\{-\frac{1}{2}\|\bZ_{0S}(\btheta_S - \widehat{\btheta}_S)\|_2^2\right\}\mathrm{d}\btheta_S\\
&\quad\propto \widetilde{\varpi}_S,
\end{align*}
implying that
\begin{align*}
\frac{\Pi^{\infty}_\btheta(\btheta\in\calB\cap{\calE}_n\mid\bY_n)}{\Pi^{\infty}_\btheta({\calE}_n\mid\bY_n)}
& = \frac{\sum_{S\in\calS_0}\widehat{\varpi}_SQ_S({\calA}_n(S))\widetilde{Q}_S(\calB)
}{\sum_{S\in\calS_0}\widehat{\varpi}_SQ_S({\calA}_n(S))}
 = \sum_{S\in\calS_0}\left(\frac{\widetilde{\varpi}_S
}{\sum_{S\in\calS_0}\widetilde{\varpi}_S}\right)\widetilde{Q}_S(\calB)\\
& = \frac{\sum_{S\in \calS_0}\nu_S(\calB)}{\|\sum_{S\in \calS_0}\nu_S(\cdot)\|_{\mathrm{TV}}}
.
\end{align*}
Hence by Lemma \ref{lemma:normal_restriction} we know that the term on line \eqref{eqn:total_variation_triangle_III} is upper bounded by
\begin{align*}
\left\|
\frac{\sum_{S\in\calS_0}\nu_S(\cdot)}{\|\sum_{S\in\calS_0}\nu_S(\cdot)\|_{\mathrm{TV}}} - \Pi^\infty_\btheta(\btheta\in \cdot\mid\bY_n)
\right\|_{\mathrm{TV}}
&= \left\|
\frac{\Pi^\infty_\btheta(\btheta\in \cdot\cap\calE_n\mid\bY_n)}{\Pi^\infty_\btheta(\btheta\in{\calE}_n\mid\bY_n)} - \Pi^\infty_\btheta(\btheta\in \cdot\mid\bY_n)
\right\|_{\mathrm{TV}}\\
&\leq 2\Pi^\infty_\btheta(\btheta\in {\calE}^c_n\mid\bY_n) = o_{\prob_0}(1).
\end{align*}
The proof is thus completed.
\end{proof}

% subsection distributional_approximation (end)

\subsection{Posterior contraction under spectral norm} % (fold)
\label{sub:posterior_contraction_under_spectral_norm}

\begin{proof}[Proof of Theorem \ref{thm:contraction_spectral_norm}]
The proof of Theorem \ref{thm:contraction_spectral_norm} is based on Theorem \ref{thm:BvM} together with a discretization trick for the spectral norm loss. 
By Davis-Kahan theorem, $\|\sin\Theta(\bU(\bvarphi), \bU_0)\|_2\lesssim \|\bSigma(\btheta) - \bSigma_0\|_2/\lambda_r(\bSigma_0)$, it suffices to consider the posterior contraction under $\|\bOmega(\btheta) - \bOmega_0\|_2 = \|\bSigma(\btheta) - \bSigma_0\|_2$. Because
\begin{align*}
&\expect_0\left[\Pi_\btheta\left\{\|\bOmega(\btheta) - \bOmega_0\|_2 > M\sqrt{\frac{s\log p}{n}}\mathrel{\Bigg|}\bY_n\right\}\right]\\
&\quad\leq \expect_0\left[\Pi^\infty_\btheta\left\{\|\bSigma(\btheta) - \bSigma_0\|_2 > M\sqrt{\frac{s\log p}{n}}\mathrel{\Bigg|}\bY_n\right\}\right]\\
&\quad\quad + \expect_0\left\{\|\Pi_\btheta(\mathrm{d}\btheta\mid\bY_n) - \Pi^\infty_\btheta(\mathrm{d}\btheta\mid\bY_n)\|_{\mathrm{TV}}\right\},
\end{align*}
and
% \[
$\expect_0\left\{\|\Pi_\btheta(\mathrm{d}\btheta\mid\bY_n) - \Pi^\infty_\btheta(\mathrm{d}\btheta\mid\bY_n)\|_{\mathrm{TV}}\right\} = o(1)$
% \]
by Lebesgue dominated convergence theorem and Theorem \ref{thm:BvM}. Therefore it suffices to focus on 
\[
\expect_0\left[\Pi^\infty_\btheta\left\{\|\bSigma(\btheta) - \bSigma_0\|_2 > M\sqrt{\frac{s\log p}{n}}\mathrel{\Bigg|}\bY_n\right\}\right].
\]
% In particular, the limiting distribution can be written as
% \begin{align*}
% \Pi^\infty\left(\mathrm{d}\btheta\mid\bY_n\right) = \sum_{S\in\calS_0}\widehat{w}_SQ_{S}(\mathrm{d}\btheta),
% \end{align*}
% where
% where $\kappa_n = \kappa_0s_0$ for some large constant $\kappa_0 > 0$, and 
% \begin{align*}
% \widehat{w}_S&\propto \frac{\pi_p(|S|)}{{p - r\choose |S|}\gamma(|S|)}\det\{2\pi(\bZ_{0S}\transpose\bZ_{0S})^{-1}\}^{-1/2}\exp\left(\frac{1}{2}\|\bZ_{0S}\widehat\btheta_{0S}\|_2^2\right)\mathbbm{1}\{S\in\calS(\kappa_n),S_0\subset S\},\\
Denote $Q_S(\mathrm{d}\btheta) = \{\phi(\btheta_S\mid\widehat{\btheta}_S, (\bZ_{0S}\transpose\bZ_{0S})^{-1})\mathrm{d}\btheta_S\}\{\delta_{\zero}(\mathrm{d}\btheta_{S^c})\}$.
% \end{align*}
For any $S\in\calS_0$, 
% denote $\bU_S(\bvarphi_S)$ the Cayley transform from $\mathbb{R}^{d(S)}$ to $\mathbb{O}(p(S)\times r)$, where $d(S) = |S|r$, $p(S) = (|S| + r)r$, and $\bvarphi_S = \vect(\bA_S)$ for a $|S|\times r$ matrix $\bA_S$. Namely,
% \[
% \bU_S(\bvarphi_S) = \begin{bmatrix*}
% \zero_{r\times r} & -\bA_S\transpose \\ \bA_S & \zero_{|S|\times |S|}
% \end{bmatrix*}.
% \]
% We further 
denote
% \[
$\bSigma_S(\btheta_S) = \bU(\bvarphi_S)\bM\bU(\bvarphi_S)$,
% \]
where $\btheta_S = [\bvarphi_S\transpose, \bmu\transpose]\transpose = [\vect(\bA_S)\transpose, \bmu\transpose]\transpose$, and $\bU(\bvarphi_S)$ denotes the Cayley	parameterization of $\bvarphi_S = \vect(\bA_S)$ from $\mathbb{R}^{|S|r}$ to $\mathbb{O}(|S| + r, r)$. Then from the proof of Lemma \ref{lemma:stochastic_remainder}, we see that for any $\btheta = [\vect(\bA)\transpose, \bmu\transpose]\transpose = [\bvarphi\transpose, \bmu\transpose]\transpose$ with $\mathrm{supp}(\bA) = S\supset S_0$, there exists a permutation matrix $\bP_S$ such that
\[
\bA = \bP_S\begin{bmatrix*}
\bA_S \\ \zero
\end{bmatrix*},\quad
\bA_0 = \bP_S\begin{bmatrix*}
\bA_{0S}\\ \zero
\end{bmatrix*},
\]
which further implies that
\[
\bU(\bvarphi) = \bQ_S\begin{bmatrix*}
\bU(\bvarphi_S) \\ \zero
\end{bmatrix*},\quad
\bU_0 = \bQ_S\begin{bmatrix*}
\bU(\bvarphi_{0S}) \\ \zero
\end{bmatrix*},
\]
where $\bQ_S = \mathrm{diag}(\eye_r, \bP_S)$, and $\bvarphi_{0S} = \vect(\bA_{0S})$ for an appropriate $\bA_{0S}\in\mathbb{R}^{|S|\times r}$. Therefore, for any $\btheta = [\vect(\bA)\transpose, \bmu\transpose]\transpose$ with $\mathrm{supp}(\bA) = S\supset S_0$, we have
\begin{align*}
&\|\bSigma(\btheta) - \bSigma_0\|_2\\
&\quad = \|\bU(\bvarphi)\bM\bU(\bvarphi)\transpose - \bU_0\bM_0\bU_0\transpose\|_2\\
&\quad = \left\|
\bQ_S\begin{bmatrix*}
\bU(\bvarphi_S) \\ \zero
\end{bmatrix*}\bM \begin{bmatrix*}
\bU(\bvarphi_S)\transpose & \zero
\end{bmatrix*}\bQ_S\transpose - \bQ_S\begin{bmatrix*}
\bU(\bvarphi_{0S}) \\ \zero
\end{bmatrix*}\bM_0 \begin{bmatrix*}
\bU(\bvarphi_{0S})\transpose & \zero
\end{bmatrix*}\bQ_S\transpose
\right\|_2\\
&\quad = \left\|
\bQ_S\begin{bmatrix*}
\bU(\bvarphi_S)\bM\bU(\bvarphi_S) - \bU(\bvarphi_{0S})\bM_0\bU(\bvarphi_{0S})\transpose & \zero \\ \zero & \zero 
\end{bmatrix*}
\bQ_S\transpose
\right\|_2
% \\&
 = \|\bSigma_S(\btheta_S) - \bSigma_S(\btheta_{0S})\|_2,
\end{align*}
where $\btheta_{0S} = [\bvarphi_{0S}\transpose, \bmu_0\transpose]\transpose$. 

\vspace*{2ex}\noindent
Now we proceed to analyze the probability of the event $\{\|\bSigma(\btheta) - \bSigma_0\|_2> M\sqrt{(s_0\log p)/n}\}$ under the $\Pi^\infty_\btheta(\mathrm{d}\btheta\mid\bY_0)$ distribution. By Lemma \ref{lemma:normal_restriction}, there exists some constants $M_1 > 0$, such that $\Pi_\btheta^\infty(\btheta\in\widetilde{\calA}_n^c\mid\bY_n) = o_{\prob_0}(1)$, where
\[
\widetilde{\calA}_n = \left\{\btheta:\|\btheta - \btheta_0\|_1\leq M_1\sqrt{\frac{r^2s^2\log n + rs^2\log p}{n}}\right\}.
\]
Therefore,
\begin{align*}
&\Pi^\infty_\btheta\left\{\|\bSigma(\btheta) - \bSigma_0\|_2 > M\sqrt{\frac{s\log p}{n}}\mathrel{\Bigg|}\bY_n\right\}\\
&\quad\leq \Pi^\infty_\btheta\left\{\|\bSigma(\btheta) - \bSigma_0\|_2 > M\sqrt{\frac{s\log p}{n}}, \btheta\in\widetilde{\calA}_n\mathrel{\Bigg|}\bY_n\right\} + \Pi^\infty_\btheta\left(\btheta\in\widetilde{\calA}_n^c\mid\bY_n\right)\\
&\quad = \sum_{S\in \calS_0}\widehat{w}_SQ_S\left\{\btheta: \|\bSigma(\btheta) - \bSigma_0\|_2 > M\sqrt{\frac{s\log p}{n}},\btheta\in\widetilde{\calA}_n\right\} + o_{\prob_0}(1)\\
&\quad = \sum_{S\in \calS_0}\widehat{w}_SQ_S\left\{\btheta: \|\bSigma_S(\btheta_S) - \bSigma_{S}(\btheta_{0S})\|_2 > M\sqrt{\frac{s\log p}{n}},\btheta\in\widetilde{\calA}_n\right\} + o_{\prob_0}(1)\\
&\quad = \sum_{S\in \calS_0}\widehat{w}_SQ_S(\calB_n(S)),
\end{align*}
where
\begin{align*}
\calB_n(S)& = \left\{\btheta: \|\bSigma_S(\btheta_S) - \bSigma_S(\btheta_{0S})\|_2 > M\sqrt{\frac{s_0\log p}{n}},\btheta\in\widetilde{\calA}_n, \mathrm{supp}(\bA) = S\right\}.
\end{align*}
Now let $p(S):= |S| + r$, $\calS^{p(S) - 1} := \{\bv\in\mathbb{R}^{p(S)}:\|\bv\|_2 = 1\}$ be the unit sphere in $\mathbb{R}^{p(S)}$, and let $\calS^{p - 1}(1/5)$ be a $1/5$-net of $\calS^{p(S) - 1}$ with smallest cardinality, namely, for any $\bv\in \calS^{p(S) - 1}$, there exists some $\bu(\bv)\in\calS^{p(S) - 1}(1/5)$, such that $\|\bu(\bv) - \bv\|_2 < 1/5$. It follows that
\begin{align*}
&\|\bOmega_S(\btheta_S) - \bOmega_S(\btheta_{0S})\|_2\\
&\quad = \max_{\bv\in \calS^{p(S) - 1}}|\bv\transpose\{\bOmega_S(\btheta_S) - \bOmega_S(\btheta_{0S})\}\bv|\\
&\quad = \max_{\bv\in \calS^{p(S) - 1}}|\{\bv - \bu(\bv) + \bu(\bv)\}\transpose\{\bOmega_S(\btheta_S) - \bOmega_S(\btheta_{0S})\}\{\bv - \bu(\bv) + \bu(\bv)\}|\\
&\quad\leq \max_{\bv\in \calS^{p(S) - 1}}\{2\|\bv - \bu(\bv)\|_2 + \|\bv - \bu(\bv)\|_2^2\}\|\bOmega_S(\btheta_S) - \bOmega_S(\btheta_{0S})\|_2\\
&\quad\quad + \max_{\bu\in \calS^{p(S) - 1}(1/5)}|\bu\transpose\{\bOmega_S(\btheta_S) - \bOmega_S(\btheta_{0S})\}\bu|\\
&\quad\leq \frac{1}{2}\|\bOmega_S(\btheta_S) - \bOmega_S(\btheta_{0S})\|_2 + \max_{\bu\in \calS^{p(S) - 1}(1/5)}|\bu\transpose\{\bOmega_S(\btheta_S) - \bOmega_S(\btheta_{0S})\}\bu|,
\end{align*}
implying that
\begin{align*}
\|\bOmega_S(\btheta_S) - \bOmega_S(\btheta_{0S})\|_2
\leq 2\max_{\bu\in \calS^{p(S) - 1}(1/5)}|\bu\transpose\{\bOmega_S(\btheta_S) - \bOmega_S(\btheta_{0S})\}\bu|.
\end{align*}
In addition, we also observe that there exists some constant $c > 0$ such that
\[
\log |\calS^{p(S) - 1}(1/5)| \leq c p(S) = |S| + r\leq c\kappa_0s_0.
\]
Clearly, for any $\btheta\in \calB_n(S)$, we have
\[
\|\btheta_S - \btheta_{0S}\|_2\leq \|\btheta_S - \btheta_{0S}\|_1\leq M_1(\btheta)\eps_n(\btheta)\to 0.
\]
Then by Theorem \ref{thm:CT_deviation_Sigma}, we have
\begin{align*}
\vect\{\bSigma_S(\btheta_S) - \bSigma_{S}(\btheta_{0S})\} = D\bSigma_S(\btheta_{0S})(\btheta_S - \btheta_{0S}) + \vect\{\bR_S(\btheta_S, \btheta_{0S})\},
\end{align*}
where 
\[
\|\bR_S(\btheta_S, \btheta_{0S})\|_{\mathrm{F}}\lesssim \|\btheta_S - \btheta_{0S}\|_2^2\leq \|\btheta_S - \btheta_{0S}\|_1^2\lesssim \frac{r^2s^2\log n + rs^2\log p}{n}
\]
whenever $\btheta\in \calB_n(S)$. Hence, for all $\btheta\in\calB_n(S)$ and all $\bu\in\calS^{p(S) - 1}$,
\begin{align*}
&|\bu\transpose\{\bSigma_S(\btheta_S) - \bSigma_S(\btheta_{0S})\}\bu|\\
&\quad = |(\bu\otimes\bu)\transpose\vect\{\bSigma_S(\btheta_S) - \bSigma_S(\btheta_{0S})\}|\\
&\quad \leq |(\bu\otimes\bu)\transpose D\bSigma_S(\btheta_{0S})(\btheta_S - \btheta_{0S})| + |(\bu\otimes\bu)\transpose\vect\{\bR_S(\btheta_S, \btheta_{0S})\}|\\
&\quad = |(\bu\otimes\bu)\transpose D\bSigma_S(\btheta_{0S})(\btheta_S - \btheta_{0S})| + |\bu\transpose\bR_S(\btheta_S, \btheta_{0S})\bu|\\
&\quad\leq |(\bu\otimes\bu)\transpose D\bSigma_S(\btheta_{0S})(\btheta_S - \btheta_{0S})| + \|\bR_S(\btheta_S, \btheta_{0S})\|_{\mathrm{F}}.
\end{align*}
Note that
\begin{align*}
\frac{r^2s^2\log n + rs^2\log p}{n}
% &  \lesssim \sqrt{\frac{s\log p}{n}}\sqrt{\max\left\{\frac{r^4s^3(\log n)^2}{n\log p},\frac{r^2s^3\log p}{n}\right\}}\\
&  \lesssim \sqrt{\frac{s\log p}{n}}\sqrt{\max\left\{\frac{r^4s^3(\log n)}{n},\frac{r^2s^3\log p}{n}\right\}}\\
&  \lesssim \sqrt{\frac{s\log p}{n}}\sqrt{\max\left\{\frac{(r^2s^2\log n)^3}{n},\frac{(rs^2\log p)^3}{n}\right\}} = o(1) \sqrt{\frac{s_0\log p}{n}}
\end{align*}
because of Assumption A4. This implies that $\|\bR_S(\btheta_S, \btheta_{0S})\|_{\mathrm{F}} = o(\sqrt{(s\log p)/n})$ whenever $\btheta\in\widetilde{\calA}_n$. 
Hence, by the union	bound, we further write
\begin{align*}
Q_S(\calB_n(S)) &
\leq Q_S\left\{
\max_{\bu\in \calS^{p(S) - 1}(1/5)}|\bu\transpose\{\bSigma_S(\btheta_S) - \bSigma_S(\btheta_{0S})\}\bu| > \frac{M}{2}\sqrt{\frac{s\log p}{n}}, \btheta\in\widetilde{\calA}_n
\right\}
\\
&\leq \sum_{\bu\in \calS^{p(S) - 1}(1/5)}
Q_S\left\{
|\bu\transpose\{\bSigma_S(\btheta_S) - \bSigma_S(\btheta_{0S})\}\bu| > \frac{M}{2}\sqrt{\frac{s\log p}{n}}, 
\btheta\in\widetilde{\calA}_n
\right\}\\
% & \leq \sum_{\bu\in \calS^{p(S) - 1}(1/5)}
% Q_S\left\{|(\bu\otimes\bu)\transpose D\bSigma_S(\btheta_{0S})(\btheta_S - \btheta_{0S})| + M_1(\btheta)^2\beps_n(\btheta)^2 > \frac{M}{2}\sqrt{\frac{s_0\log p}{n}}\right\}\\
& \leq  \sum_{\bu\in \calS^{p(S) - 1}(1/5)}
Q_S\left\{|(\bu\otimes\bu)\transpose D\bSigma_S(\btheta_{0S})(\btheta_S - \btheta_{0S})| > \frac{M}{4}\sqrt{\frac{s\log p}{n}}\right\}\\
&\leq \sum_{\bu\in \calS^{p(S) - 1}(1/5)}
Q_S\left\{|(\bu\otimes\bu)\transpose D\bSigma_S(\btheta_{0S})(\btheta_S - \widehat{\btheta}_S)| > \frac{M}{8}\sqrt{\frac{s\log p}{n}}\right\}\\
&\quad + \sum_{\bu\in \calS^{p(S) - 1}(1/5)}
Q_S\left\{|(\bu\otimes\bu)\transpose D\bSigma_S(\btheta_{0S})(\widehat{\btheta}_S - \btheta_{0S})| > \frac{M}{8}\sqrt{\frac{s\log p}{n}}\right\}\\
&= \sum_{\bu\in \calS^{p(S) - 1}(1/5)}
Q_S\left\{|(\bu\otimes\bu)\transpose D\bSigma_S(\btheta_{0S})(\btheta_S - \widehat{\btheta}_S)| > \frac{M}{8}\sqrt{\frac{s\log p}{n}}\right\}\\
&\quad + \sum_{\bu\in \calS^{p(S) - 1}(1/5)}
\mathbbm{1}\left\{|(\bu\otimes\bu)\transpose D\bSigma_S(\btheta_{0S})(\widehat{\btheta}_S - \btheta_{0S})| > \frac{M}{8}\sqrt{\frac{s\log p}{n}}\right\}.
\end{align*}
Therefore, we obtain
\begin{align}
&\sum_{S\in\calS_0}\expect_0\{\widehat{w}_S Q_S(\calB_n(S))\}\nonumber\\
\label{eqn:spectral_norm_contraction_Pi_infinity_I}
&\quad\leq \sum_{S\in\calS_0}\sum_{\bu\in \calS^{p(S) - 1}(1/5)}\expect_0\left[Q_S\left\{|(\bu\otimes\bu)\transpose D\bSigma_S(\btheta_{0S})(\btheta_S - \widehat{\btheta}_S)| > \frac{M}{8}\sqrt{\frac{s\log p}{n}}\right\}\right]
\\
\label{eqn:spectral_norm_contraction_Pi_infinity_II}
&\quad\quad
 + 
 \sum_{S\in\calS_0}\sum_{\bu\in \calS^{p(S) - 1}(1/5)}\prob_0\left\{|(\bu\otimes\bu)\transpose D\bSigma_S(\btheta_{0S})(\widehat{\btheta}_S - \btheta_{0S})| > \frac{M}{8}\sqrt{\frac{s\log p}{n}}\right\}.
\end{align}
% For any $S\in \calS(\kappa_n)$, define $\bE_S$ to be the matrix such that $\bZ_{0S} = \bZ_0\bE_{S}$. 
We analyze the two terms on line \eqref{eqn:spectral_norm_contraction_Pi_infinity_I} and line \eqref{eqn:spectral_norm_contraction_Pi_infinity_II} separately. 

\vspace*{2ex}\noindent
$\blacktriangle$
For the term on line \eqref{eqn:spectral_norm_contraction_Pi_infinity_I}, we use the fact that $Q_S$ is a (degenerate) multivariate normal distribution and write
\begin{align*}
&Q_S\left\{|(\bu\otimes\bu)\transpose D\bSigma_S(\btheta_{0S})(\btheta_S - \widehat{\btheta}_S)| > \frac{M}{8}\sqrt{\frac{s_0\log p}{n}}\right\}
% \\&\quad
 = \prob_{\omega_S}\left(|\omega_S| > \frac{M}{8}\sqrt{\frac{s_0\log p}{n}}\right),
\end{align*}
where condition on the data $\bY_n$, and hence, $\widehat{\btheta}_S$,
\begin{align*}
\omega_S &: = (\bu\otimes\bu)\transpose D\bSigma_S(\btheta_{0S})(\btheta_S - \widehat{\btheta}_S)
\overset{Q_S}{\sim} \mathrm{N}\left(0, V_S\right),\\
V_S &: = (\bu\otimes\bu)\transpose D\bSigma_S(\btheta_{0S})
(\bZ_{0S}\transpose\bZ_{0S})^{-1} D\bSigma_S(\btheta_{0S})\transpose(\bu\otimes \bu),
\end{align*}
Note that by Theorem \ref{thm:DSigma_singular_value} and Assumption A2,
\begin{align*}
\|(\bZ_{0S}\transpose\bZ_{0S})^{-1}\|_2
& = \frac{2}{n}\|\{
\bF_S\transpose D\bSigma(\btheta_{0})\transpose(\bOmega_{0}^{-1}\otimes \bOmega_{0}^{-1})D\bSigma(\btheta_{0})\bF_S
\}^{-1}\|_2\\
& = \frac{2}{n}\sigma_{\min}^{-1}\{\bF_S\transpose 
D\bSigma(\btheta_{0})\transpose(\bOmega_{0}^{-1}\otimes \bOmega_{0}^{-1})D\bSigma(\btheta_{0})\bF_S
\}\\
& = \frac{2}{n}\left[\min_{\|\btheta_S\|_2 = 1}
\btheta\transpose_S \bF_S\transpose D\bSigma(\btheta_{0})\transpose(\bOmega_{0}^{-1}\otimes \bOmega_{0}^{-1})D\bSigma(\btheta_{0})\bF_S
\btheta_S\right]^{-1}\\
% &\leq \frac{2}{n}\left[\min_{\|\btheta_S\|_2 = 1}\lambda_{\min}(\bOmega_{0}^{-1}\otimes \bOmega_{0}^{-1})\|D\bSigma(\btheta_{0})\bF_S\btheta_S\|_2^2\right]^{-1}\\
&\leq \frac{2}{n}\left[ \lambda_{\min}(\bOmega_{0}^{-1}\otimes \bOmega_{0}^{-1})\sigma^2_{\min}\{D\bSigma(\btheta_{0})
\}\sigma_{\min}^2(\bF_S)\right]^{-1} = O(1/n),
\end{align*}
implying that
% \begin{align*}
$V_S \leq  \|D\bSigma_S(\btheta_{0S})\|_2^2\|(\bZ_{0S}\transpose\bZ_{0S})^{-1}\|_2\leq {C}/{n}$
% \end{align*}
for some constant $C > 0$.
By Chernoff bound and the fact that $\omega_S\overset{\calL}{ = }-\omega_S$ under $Q_S$, we further have
\begin{align*}
\prob_{\omega_S}\left(|\omega_S| > \frac{M}{8}\sqrt{\frac{s\log p}{n}}\right)
& = \prob_{\omega_S}\left(\omega_S > \frac{M}{8}\sqrt{\frac{s\log p}{n}}\right) + \prob_{\omega_S}\left(\omega_S < -\frac{M}{8}\sqrt{\frac{s\log p}{n}}\right)\\
& = 2\prob_{\omega_S}\left(\omega_S\sqrt{ns\log p} > \frac{Ms\log p}{8}\right)
 % + \prob_{\omega_S}\left(-\sqrt{ns_0\log p}\omega_S > \frac{M{s_0\log p}}{8}\right)
 \\
&\leq \frac{2\expect_{\omega_S}\{\exp(\omega_S\sqrt{ns\log p})\}}{\exp\{(Ms\log p)/8\}}
 = \frac{2\exp\{(V_Sns\log p)/2\}}{\exp\{(Ms\log p)/8\}}
 \\&
 \leq \frac{2\exp\{(Cs\log p)/2\}}{\exp\{(Ms\log p)/8\}}
 % \\&
 = 2\exp\left\{ -\left(\frac{M}{8} - \frac{C}{2}\right)s\log p \right\}.
\end{align*}
Therefore, the term on line \eqref{eqn:spectral_norm_contraction_Pi_infinity_I} is upper bounded by
\begin{align*}
% &\sum_{S\in\calS(\kappa_n):S_0\subset S}\sum_{\bu\in \calS^{p(S) - 1}(1/5)}\expect_0\left[Q_S\left\{|(\bu\otimes\bu)\transpose D\bSigma_S(\btheta_{0S})(\btheta_S - \widehat{\btheta}_S)| > \frac{M}{8}\sqrt{\frac{s_0\log p}{n}}\right\}\right]\\
& 2\sum_{S\in\calS_0}\sum_{\bu\in \calS^{p(S) - 1}(1/5)}\exp\left\{ -\left(\frac{M}{8} - \frac{C}{2}\right)s\log p \right\}\\
&\quad\leq 2\sum_{t = s_0}^{\kappa_0s_0}{p - r\choose t}\exp(c\kappa_0s_0)\exp\left\{ -\left(\frac{M}{8} - \frac{C}{2}\right)s\log p \right\}\\
&\quad\leq 2\kappa_0s_0\exp\left\{\kappa_0 s_0\log p + c\kappa_0s_0 -\left(\frac{M}{8} - \frac{C}{2}\right)s\log p \right\}\to 0
\end{align*}
by taking a sufficiently large $M > 0$ because $s = s_0 + r$. 

\vspace*{2ex}\noindent
$\blacktriangle$ 
We are now left with the concentration of $\widehat{\btheta}_S - \btheta_{0S}$ on line \eqref{eqn:spectral_norm_contraction_Pi_infinity_II}. Since $S_0\subset S$, it follows that
\begin{align*}
\widehat{\btheta}_S - \btheta_{0S}
& = (\bZ_{0S}\transpose\bZ_{0S})^{-1}\bZ_{0S}\transpose(\bZ_{0S}\btheta_{0S} + \beps_n) - \btheta_{0S} = (\bZ_{0S}\transpose\bZ_{0S})^{-1}\bZ_{0S}\transpose\beps_n\\
& = \sqrt{\frac{n}{2}}(\bZ_{0S}\transpose\bZ_{0S})^{-1}\bF_S\transpose\bZ_{0}\transpose\vect\{\bOmega_{0}^{-1/2}(\widehat{\bOmega} - \bOmega_0)\bOmega_0^{-1/2}\}\\
& = {\frac{n}{2}}(\bZ_{0S}\transpose\bZ_{0S})^{-1}\bF_S\transpose D\bSigma(\btheta_0)\transpose\vect\{\bOmega_{0}^{-1}(\widehat{\bOmega} - \bOmega_0)\bOmega_0^{-1}\}\\
& =\left\{\bF_S\transpose D\bSigma(\btheta_0)\transpose(\bOmega_0^{-1}\otimes\bOmega_0^{-1}) D\bSigma(\btheta_0)\bF_S \right\}^{-1}\bF_S\transpose D\bSigma(\btheta_0)\transpose\\
&\quad\times \vect\{\bOmega_{0}^{-1}(\widehat{\bOmega} - \bOmega_0)\bOmega_0^{-1}\}.
\end{align*}
Therefore,
\begin{align*}
(\bu\otimes\bu)\transpose D\bSigma_S(\btheta_{0S})(\widehat{\btheta}_S - \btheta_{0S})
& = \bbeta_S\transpose \vect\{\bOmega_{0}^{-1}(\widehat{\bOmega} - \bOmega_0)\bOmega_0^{-1}\},
\end{align*}
where
\begin{align*}
\bbeta_S\transpose = 
(\bu\otimes\bu)\transpose D\bSigma_S(\btheta_{0S})\left\{\bF_S\transpose D\bSigma(\btheta_0)\transpose(\bOmega_0^{-1}\otimes\bOmega_0^{-1}) D\bSigma(\btheta_0)\bF_S \right\}^{-1}\bF_S\transpose D\bSigma(\btheta_0)\transpose.
\end{align*}
Consider a $p\times p$ matrix $\bB_S$ such that $\vect(\bB_S) = \bbeta_S$. It follows that
\begin{align*}
\rho_S
& = (\bu\otimes\bu)\transpose D\bSigma_S(\btheta_{0S})(\widehat{\btheta}_S - \btheta_{0S})
% \\&
 = \vect(\bB_S)\transpose\vect\{\bSigma_0^{-1}(\widehat{\bSigma} - \bSigma_0)\bSigma_0^{-1}\}\\
& = \mathrm{tr}\left\{\bB_S\transpose\bSigma_0^{-1}
(\widehat{\bSigma} - \bSigma_0)
\bSigma_0^{-1}
\right\}
% \\&
 = \mathrm{tr}\left\{\widetilde{\bB}_S\bSigma_0^{-1}
(\widehat{\bSigma} - \bSigma_0)
\bSigma_0^{-1}
\right\}\\
& = \mathrm{tr}\left\{\bSigma_0^{-1}
\widetilde{\bB}_S\bSigma_0^{-1}
(\widehat{\bSigma} - \bSigma_0)\right\}
% \\&
 = - \mathrm{tr}(\bSigma_0^{-1}\widetilde{\bB}_S) + \frac{1}{n}\mathrm{tr}\left(\bSigma_0^{-1}\widetilde{\bB}_S\bSigma_0^{-1}\sum_{i = 1}^n\by_i\by_i\transpose
\right),
\end{align*}
where $\widetilde{\bB}_S$ is the symmetrization of $\bB_S$ defined by $\widetilde{\bB}_S = (\bB_S + \bB_S\transpose)/2$. Also note that
\begin{align*}
\|\widetilde{\bB}_S\|_{\mathrm{F}} & \leq \|\bB_S\|_{\mathrm{F}} = \|\bbeta_S\|_2\\
& \leq \|D\bSigma(\btheta_{0})\|_2\|D\bSigma_S(\btheta_{0S})\|_2\|\{\bF_S\transpose D\bSigma(\btheta_0)\transpose(\bOmega_0^{-1}\otimes\bOmega_0^{-1})D\bSigma(\btheta_0)\bF_S\}^{-1}\|_2\\
& = O(1).
\end{align*}
Since $\sum_{i = 1}^n\by_i\by_i\sim\mathrm{Wishart}(n, \bSigma_0)$, it follows from the moment-generating function of the Wishart distribution that (see, e.g., Chapter 8 of \cite{eaton1983multivariate}) for any  $u\in\mathbb{R}$ with $u/n\to 0$ and sufficiently large $n$, 
\[
\expect_0\left[\exp\left\{\mathrm{tr}\left(\frac{u}{n}\bSigma_0^{-1}\widetilde{\bB}_S\bSigma_0^{-1}\sum_{i = 1}^n\by_i\by_i\transpose\right)\right\}\right]
= \exp\left\{-\frac{n}{2}\log\det\left(\eye - \frac{2u}{n}\bSigma_0^{-1}\widetilde{\bB}_S\right)\right\}.
\]
Observe that $\bSigma_0^{-1}\widetilde{\bB}_S$ and $\bSigma_0^{-1/2}\widetilde{\bB}_S\bSigma_0^{-1/2}$ are similar matrices having the same set of eigenvalues, that $(2u/n)\lambda_j(\bSigma_0^{-1/2}\widetilde{\bB}_S\bSigma_0^{-1/2}) = o(1)$, and that $\log(1 + x)\geq x - x^2$for sufficiently small $|x|$, we further write
\begin{align*}
\log\det\left(\eye - \frac{2u}{n}\bSigma_0^{-1}\widetilde{\bB}_S\right)
& = \sum_{j = 1}^p\log\lambda_j\left(\eye - \frac{2u}{n}\bSigma_0^{-1}\widetilde{\bB}_S\right)
  = \sum_{j = 1}^p\log\left\{1 - \frac{2u}{n}\lambda_j(\bSigma_0^{-1}\widetilde{\bB}_S)\right\}\\
& = \sum_{j = 1}^p\log\left\{1 - \frac{2u}{n}\lambda_j(\bSigma_0^{-1/2}\widetilde{\bB}_S\bSigma_0^{-1/2})\right\}\\
&\geq -\sum_{j = 1}^p\frac{2u}{n}\lambda_j(\bSigma_0^{-1/2}\widetilde{\bB}_S\bSigma_0^{-1/2}) - \sum_{j = 1}^p\left\{\frac{2u}{n}\lambda_j(\bSigma_0^{-1/2}\widetilde{\bB}_S\bSigma_0^{-1/2})\right\}^2\\
& = - \frac{2u}{n}\mathrm{tr}(\bSigma_0^{-1/2}\widetilde{\bB}_S\bSigma_0^{-1/2}) - \frac{4u^2}{n^2}\|\bSigma_0^{-1}\widetilde{\bB}_S\|_{\mathrm{F}}^2\\
& \geq - \frac{2u}{n}\mathrm{tr}(\bSigma_0^{-1}\widetilde{\bB}_S) - \frac{4u^2}{n^2}\|\bSigma_0^{-1}\|_2^2\|\widetilde{\bB}_S\|_{\mathrm{F}}^2\\
&\geq - \frac{2u}{n}\mathrm{tr}(\bSigma_0^{-1}\widetilde{\bB}_S) - \frac{Cu^2}{n^2}.
\end{align*}
for some constant $C > 0$. Therefore, with $u/n = o(1)$, for sufficiently large $n$, we have
\begin{align*}
\expect_0\{\exp(u\rho_S)\}
& = \exp\{-u\mathrm{tr}(\bSigma_0^{-1}\widetilde{\bB}_S)\}\expect_0\left[ \exp\left\{\frac{u}{n}
\mathrm{tr}\left(\bSigma_0^{-1}\widetilde{\bB}_S\bSigma_0^{-1}\sum_{i = 1}^n\by_i\by_i\transpose\right)
\right\} \right]\\
& = \exp\{-u\mathrm{tr}(\bSigma_0^{-1}\widetilde{\bB}_S)\}\exp\left\{-\frac{n}{2}\log\det\left(\eye - \frac{2u}{n}\bSigma_0^{-1}\widetilde{\bB}_S\right)\right\}\\
&\leq \exp\left\{-u\mathrm{tr}(\bSigma_0^{-1}\widetilde{\bB}_S) + \frac{n}{2}\frac{2u}{n}\mathrm{tr}(\bSigma_0^{-1}\widetilde{\bB}_S) + \frac{Cu^2}{2n}\right\}\\
&\leq \exp(Cu^2/n).
\end{align*}
Hence, by the Chernoff bound for normal, we obtain
\begin{align*}
&\prob_0\left\{|(\bu\otimes\bu)\transpose D\bSigma_S(\btheta_{0S})(\widehat{\btheta}_S - \btheta_{0S})| > \frac{M}{8}\sqrt{\frac{s\log p}{n}}\right\}\\
&\quad = \prob_0\left(\rho_S > \frac{M}{8}\sqrt{\frac{s\log p}{n}}\right) + \prob_0\left(\rho_S <- \frac{M}{8}\sqrt{\frac{s\log p}{n}}\right)\\
&\quad\leq \prob_0\left(\rho_S\sqrt{ns\log p} > \frac{Ms\log p}{8}\right) + \prob_0\left(-\rho_S\sqrt{ns\log p} > \frac{Ms\log p}{8}\right)\\
&\quad\leq \frac{\expect_0\{\exp(\rho_S\sqrt{ns\log p})\}}{\exp\{(Ms\log p)/8\}} + \frac{\expect_0\{\exp(-\rho_S\sqrt{ns\log p})\}}{\exp\{(Ms\log p)/8\}}\\
&\quad\leq 2\exp\left\{-\left(\frac{1}{8}M - C\right)s\log p  \right\}.
\end{align*}
Finally, the above bound leads to the following upper bound for the term on line \eqref{eqn:spectral_norm_contraction_Pi_infinity_II}:
\begin{align*}
&\sum_{S\in\calS_0}\sum_{\bu\in \calS^{p(S) - 1}(1/5)}\prob_0\left\{|(\bu\otimes\bu)\transpose D\bSigma_S(\btheta_{0S})(\widehat{\btheta}_S - \btheta_{0S})| > \frac{M}{8}\sqrt{\frac{s\log p}{n}}\right\}\\
&\quad\leq 2\sum_{S\in\calS_0}\sum_{\bu\in \calS^{p(S) - 1}(1/5)}\exp\left\{-\left(\frac{1}{8}M - C\right)s\log p\right\}\\
&\quad\leq 2\sum_{t = s_0}^{\kappa_0s_0}{p - r \choose t}|S^{(t + r) - 1}(1/5)|\exp\left\{-\left(\frac{1}{8}M - C\right)s\log p\right\}\\
&\quad\leq 2\sum_{t = s_0}^{\kappa_0s_0}(\kappa_0s_0)^{p - r}\exp\left\{c\kappa_0s_0 - \left(\frac{1}{8}M - C\right)s\log p\right\}\\
&\quad\leq 2\kappa_0s_0\exp\left\{(c + 1)\kappa_0s_0\log p-\frac{1}{8}Ms\log p + {Cs\log p}\right\}\to 0
\end{align*}
by taking a sufficiently large $M > 0$. The proof is thus completed.
\end{proof}

% subsection posterior_contraction_in_spectral_norm (end)

% section proof_of_section_sub:bayesian_sparse_pca_and_non_intrinsic_loss (end)

\section{Proofs for Section \ref{sub:stochastic_block_model}} % (fold)
\label{sec:proofs_for_section_sub:stochastic_block_model}

In this section we provide the proof Theorem \ref{thm:OSE_SBM}. The proof is a modification of the asymptotic normality for classical M-estimators in a parametric model established in Theorem 5.45 in \cite{van2000asymptotic} but also relies on two technical lemmas established in Section \ref{sub:technical_lemmas_for_section_sub:stochastic_block_model}. These technical Lemmas are also useful in the proofs involved in Appendix \ref{sec:proofs_for_section_sub:biclustering}. 

\subsection{Technical lemmas for Section \ref{sub:stochastic_block_model}} % (fold)
\label{sub:technical_lemmas_for_section_sub:stochastic_block_model}

\begin{lemma}\label{lemma:matrix_differential_lemma}
Let $\Theta\subset\mathbb{R}^q$ be open, $\bF:\Theta\to\mathbb{R}^{p\times u_1u_2}$, $\bG:\Theta\to\mathbb{R}^{u_1\times v_1}, \bH:\Theta\to\mathbb{R}^{u_2\times v_2}$ be continuously differentiable matrix-valued functions, and $\bJ\in\mathbb{R}^{v_1\times v_2}$ be a constant matrix. If
\[
\sup_{\btheta\in\Theta}\left\|\frac{\partial\vect\{\bF(\btheta)\}}{\partial\btheta}\right\|_{\mathrm{F}},\quad
\sup_{\btheta\in\Theta}\left\|\frac{\partial\vect\{\bG(\btheta)\}}{\partial\btheta}\right\|_{\mathrm{F}},\quad\text{and}\quad
\sup_{\btheta\in\Theta}\left\|\frac{\partial\vect\{\bH(\btheta)\}}{\partial\btheta}\right\|_{\mathrm{F}}
\]
are bounded, then 
\[
\sup_{\btheta\in \Theta}\left\| \frac{\partial\bF(\btheta)\{\bG(\btheta)\otimes\bH(\btheta)\}\vect(\bF)}{\partial\btheta\transpose}\right\|_{\mathrm{F}} < \infty.
\]
\begin{proof}[Proof of Lemma \ref{lemma:matrix_differential_lemma}]
The proof is a straightforward matrix differential calculus computation. 
Following Theorem 9 in \cite{MAGNUS1985474}, we have
\begin{align*}
&\frac{\partial\bF(\btheta)\{\bG(\btheta)\otimes\bH(\btheta)\}\vect(\bF)}{\partial\btheta\transpose}\\
&\quad = \frac{\partial\bF(\btheta)\vect\{\bH(\btheta)\bJ\bG(\btheta)\transpose\}}{\partial\btheta\transpose}\\
&\quad = \left[\vect\{\bH(\btheta)\bJ\bG(\btheta)\transpose\}\transpose\otimes\eye_{p}\right]\frac{\partial\vect\{\bF(\btheta)\}}{\partial\btheta\transpose} + 
	\bF(\btheta)\frac{\partial\vect\{\bH(\btheta)\bJ\bG(\btheta)\transpose\}}{\partial\btheta\transpose}\\
&\quad = \left[\vect\{\bH(\btheta)\bJ\bG(\btheta)\transpose\}\transpose\otimes\eye_{p}\right]\frac{\partial\vect\{\bF(\btheta)\}}{\partial\btheta\transpose}\\
&\quad\quad + \bF(\btheta)\left[\{\bG(\btheta)\otimes \eye_{u_2}\}(\bJ\transpose\otimes\eye_{u_2})\frac{\partial\vect\{\bH(\btheta)\}}{\partial\btheta\transpose{}} + \{\eye_{u_1}\otimes\bH(\btheta)\bJ\}\frac{\partial\vect\{\bG(\btheta)\transpose{}\}}{\partial\btheta}\right].
\end{align*}
Therefore,
\begin{align*}
\left\|\frac{\partial\bF(\btheta)\{\bG(\btheta)\otimes\bH(\btheta)\}\vect(\bF)}{\partial\btheta\transpose}\right\|_{\mathrm{F}}
&\leq \|\bH(\btheta)\|_{\mathrm{F}}\|\bJ\|_2\|\bG(\btheta)\transpose\|_{2}\|\left\|\frac{\partial\vect\{\bF(\btheta)\}}{\partial\btheta\transpose}\right\|_{\mathrm{F}}\\
&\quad + \|\bF(\btheta)\|_{\mathrm{F}}\|\bG(\btheta)\|_2\|\bJ\|_2\left\|\frac{\partial\vect\{\bH(\btheta)\}}{\partial\btheta\transpose{}}\right\|_{\mathrm{F}}\\
&\quad + \|\bF(\btheta)\|_{\mathrm{F}}\|\bH(\btheta)\|_{2}\|\bJ\|_2\left\|\frac{\partial\vect\{\bG(\btheta)\}}{\partial\btheta\transpose{}}\right\|_{\mathrm{F}},
\end{align*}
and hence,
\[
\sup_{\btheta\in\Theta}\left\|\frac{\partial\bF(\btheta)\{\bG(\btheta)\otimes\bH(\btheta)\}\vect(\bF)}{\partial\btheta\transpose}\right\|_{\mathrm{F}} < \infty.
\]
\end{proof}
\end{lemma}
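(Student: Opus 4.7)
The plan is to rewrite the target expression so that the chain and product rules of matrix differential calculus can be applied cleanly, and then to bound each piece using submultiplicativity of matrix norms.

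First I would simplify the target using the standard \texttt{vec} identity $(\bG \otimes \bH)\vect(\bJ) = \vect(\bH \bJ \bG\transpose)$, which turns the quantity $\bF(\btheta)\{\bG(\btheta) \otimes \bH(\btheta)\}\vect(\bJ)$ into the more tractable $\bF(\btheta)\,\vect\{\bH(\btheta)\bJ\bG(\btheta)\transpose\}$. This reduces the problem to differentiating the product of the matrix $\bF(\btheta)$ with a vector-valued function of $\btheta$.

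Next I would apply the product rule for the Jacobian. Concretely, for a product $\bA(\btheta)\bv(\btheta)$ of a matrix-valued function and a vector-valued function, one has
\[
\frac{\partial \bA(\btheta)\bv(\btheta)}{\partial \btheta\transpose} = \{\bv(\btheta)\transpose \otimes \eye_p\}\frac{\partial \vect\{\bA(\btheta)\}}{\partial \btheta\transpose} + \bA(\btheta)\frac{\partial \bv(\btheta)}{\partial \btheta\transpose}.
\]
With $\bA = \bF$ and $\bv(\btheta) = \vect\{\bH(\btheta)\bJ\bG(\btheta)\transpose\}$, I then need the Jacobian of the inner \texttt{vec} expression. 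Applying the product rule again via the identity $\partial\vect(\bX\bY\bZ)/\partial\btheta\transpose = (\bZ\transpose \otimes \bX)\,\partial\vect(\bY)/\partial\btheta\transpose + \ldots$, one obtains a sum of two Kronecker-structured terms, one involving $\partial\vect(\bH)/\partial\btheta\transpose$ and the other $\partial\vect(\bG\transpose)/\partial\btheta\transpose$ (which equals $\bK_{v_1 u_1}\,\partial\vect(\bG)/\partial\btheta\transpose$).

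Finally I would bound the Frobenius norm of each of the three resulting terms by the submultiplicative inequalities $\|\bA\bB\|_{\mathrm{F}} \leq \|\bA\|_2 \|\bB\|_{\mathrm{F}}$ and $\|\bA \otimes \bB\|_2 = \|\bA\|_2\|\bB\|_2$, together with the constant-factor bound $\|\bJ\|_2$. The three resulting bounds have the schematic form $\|\bH\|_{\mathrm{F}}\|\bJ\|_2\|\bG\|_2 \cdot \|\partial\vect\bF/\partial\btheta\transpose\|_{\mathrm{F}}$, $\|\bF\|_{\mathrm{F}}\|\bG\|_2\|\bJ\|_2 \cdot \|\partial\vect\bH/\partial\btheta\transpose\|_{\mathrm{F}}$, and $\|\bF\|_{\mathrm{F}}\|\bH\|_2\|\bJ\|_2 \cdot \|\partial\vect\bG/\partial\btheta\transpose\|_{\mathrm{F}}$; taking $\sup_{\btheta\in\Theta}$ then yields a finite bound under the stated hypotheses (with the implicit understanding that bounded Jacobians together with the context of this lemma's application provide bounded norms of $\bF$, $\bG$, $\bH$ themselves). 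The only mild obstacle is bookkeeping of the Kronecker factors and the commutation matrix $\bK_{v_1 u_1}$ when differentiating $\vect(\bG\transpose)$; this is purely mechanical and presents no conceptual difficulty.
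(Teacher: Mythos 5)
Your proposal follows essentially the same route as the paper's proof: rewrite $\{\bG\otimes\bH\}\vect(\bJ)$ as $\vect\{\bH\bJ\bG\transpose\}$, apply the Jacobian product rule (Theorem~9 of the matrix differential calculus reference), expand the inner derivative into two Kronecker-structured pieces, and bound each term by submultiplicativity. Your remark about needing $\|\bF\|_{\mathrm{F}}$, $\|\bG\|_2$, $\|\bH\|_2$ themselves to be bounded is astute: the lemma's hypotheses only bound the Jacobians, which does not control the functions themselves on an arbitrary open $\Theta$, yet the paper's final inequality implicitly assumes this control; the gap is harmless in the applications because $\Theta$ is there a bounded ball, where bounded Jacobian yields Lipschitz continuity and hence boundedness.
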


\begin{lemma}\label{lemma:DDB_theta_bound}
Under the setup and notations in Section \ref{sub:stochastic_block_model}, for every choice $\bar{\btheta}_0$ such that $\bSigma(\bar\btheta_0)\in \mathbb{M}(r)\cap (0, 1)^{K\times K}$, there exists some $\eps > 0$ such that the Jacobian
	\[
	\frac{\partial}{\partial\btheta}\{\vect(\bE_{st})\transpose D\bSigma(\btheta)\}
	\]
	is Lipschitz continuous for all $\btheta\in B_2(\bar{\btheta}_0, \eps)$ for all $s,t\in [K]$.
\end{lemma}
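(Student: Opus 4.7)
The plan is to reduce the Lipschitz claim to a smoothness statement about the Cayley parameterization and then invoke the mean value inequality on a compact subneighborhood. Since $\bSigma(\bar\btheta_0)\in \mathbb{M}(r)\cap(0,1)^{K\times K}$, the vector $\bar\btheta_0$ lies in the interior of $\mathscr{D}(p,r)$, so the associated matrix $\bar\bA_0$ satisfies $\|\bar\bA_0\|_2 < 1$. By continuity there exists some $\eps > 0$ (possibly depending on $\bar\btheta_0$) such that every $\btheta\in B_2(\bar\btheta_0,\eps)$ yields a matrix $\bA$ with $\|\bA\|_2\leq 1-\delta$ for some $\delta > 0$. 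Consequently, $\eye_p - \bX_\bvarphi$ is uniformly invertible on this ball with $\|(\eye_p - \bX_\bvarphi)^{-1}\|_2 \leq 1$.

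Next, I would exploit the explicit expressions in \eqref{eqn:covariance_differential} together with the formula $D\bU(\bvarphi) = 2[\eye_{p\times r}\transpose(\eye_p - \bX_\bvarphi)\inverseT\otimes(\eye_p - \bX_\bvarphi)^{-1}]\bGamma_\bvarphi$ to observe that every entry of $D\bSigma(\btheta)$ is a rational function of the entries of $\btheta$ whose denominator is a fixed power of $\det(\eye_p - \bX_\bvarphi)$. Since the map $\bvarphi\mapsto \bX_\bvarphi$ and $\bmu\mapsto \bM(\bmu)$ are linear, and the denominator stays bounded away from zero on $B_2(\bar\btheta_0,\eps)$, the matrix-valued function $\btheta\mapsto D\bSigma(\btheta)$ is $C^\infty$ on this ball. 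In particular, the row vector $\vect(\bE_{st})\transpose D\bSigma(\btheta)$ is $C^\infty$, so its Jacobian $\partial_{\btheta}\{\vect(\bE_{st})\transpose D\bSigma(\btheta)\}$ exists and is itself continuously differentiable on $B_2(\bar\btheta_0,\eps)$.

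The remaining task is a quantitative bound: I would apply Lemma \ref{lemma:matrix_differential_lemma} iteratively to the Kronecker-product structure of $D_\bvarphi\bSigma(\btheta)$ and $D_\bmu\bSigma(\btheta)$, using the uniform spectral bounds on $\bU(\bvarphi)$, $\bM(\bmu)$, $(\eye_p - \bX_\bvarphi)^{-1}$, and $\bGamma_\bvarphi$ (all of which are finite on $B_2(\bar\btheta_0,\eps)$ by the previous paragraph). This yields a uniform upper bound on the Frobenius norm of the second-order partial derivatives of $\vect(\bE_{st})\transpose D\bSigma(\btheta)$ on the closed ball $\overline{B_2(\bar\btheta_0,\eps/2)}$. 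The mean value inequality then implies that its Jacobian is Lipschitz on $B_2(\bar\btheta_0,\eps/2)$, which (after relabeling $\eps/2$ as $\eps$) is the desired conclusion.

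The main technical obstacle is not conceptual but bookkeeping: carefully propagating the product rule through the commutation matrix $\bK_{pp}$, the duplication matrix $\mathbb{D}_r$, and the Kronecker factors appearing in $D\bU(\bvarphi)$. This is exactly the content Lemma \ref{lemma:matrix_differential_lemma} is designed to automate, so the proof should reduce to a short chain of its applications, with the uniform invertibility of $\eye_p - \bX_\bvarphi$ on $B_2(\bar\btheta_0,\eps)$ providing the bounded-denominator property needed at each stage.
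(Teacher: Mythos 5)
Your proposal is correct, and the key observation — that $D\bSigma(\btheta)$ is a rational function of $\btheta$ whose denominator is a nonvanishing power of $\det(\eye_K - \bX_\bvarphi)$ — is a genuine conceptual shortcut that the paper does not take. Since $\bX_\bvarphi$ is skew-symmetric, $\eye_K - \bX_\bvarphi$ is invertible for \emph{every} $\bvarphi$ (indeed $\det(\eye_K - \bX_\bvarphi)^2 = \det(\eye_K + \bX_\bvarphi\bX_\bvarphi\transpose) > 0$), so $D\bSigma(\cdot)$ is globally $C^\infty$; in particular its second-order partial derivatives are continuous, hence automatically bounded on any compact closure $\overline{B_2(\bar\btheta_0, \eps/2)}$, and the mean value inequality on that convex ball immediately delivers Lipschitz continuity of the Jacobian. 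Because the lemma only asserts a qualitative Lipschitz property (a constant depending on $\bar\btheta_0$ is allowed), this suffices — there is no need for an explicit constant.

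The paper proceeds differently: it fixes the coordinate vectors $\bv_{kl}$, $\vect(\bE_{ef})$, writes out $\vect(\bE_{st})\transpose D\bSigma(\btheta)\bv_{kl}$ explicitly, and then invokes Lemma \ref{lemma:matrix_differential_lemma} to reduce the Lipschitz claim to the boundedness of six derivative quantities $D_1(\btheta),\ldots,D_6(\btheta)$, each of which it bounds by hand through matrix differential calculus. Your plan to ``apply Lemma \ref{lemma:matrix_differential_lemma} iteratively'' mirrors this, but in light of your own $C^\infty$-plus-compactness observation it is redundant: once smoothness on an open set is established, continuity already furnishes the uniform second-derivative bound on a compact subball. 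So your proposal is not only correct, it is arguably cleaner than the proof in the paper — you would do well to drop the explicit bookkeeping step and rest the whole argument on the rational-function structure.

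One minor slip: you attribute the uniform bound $\|(\eye_K - \bX_\bvarphi)^{-1}\|_2\leq 1$ to the shrinking of the ball that guarantees $\|\bA\|_2\leq 1-\delta$. In fact this operator norm bound holds for \emph{all} $\bvarphi$, by the skew-symmetry computation $\|(\eye_K - \bX_\bvarphi)^{-1}\|_2^2 = \lambda_{\min}^{-1}(\eye_K + \bX_\bvarphi\bX_\bvarphi\transpose)\leq 1$. The only reason you need to shrink the ball is to keep $\btheta$ inside the open domain $\mathscr{D}(K,r)$, which controls $\|\bA\|_2 < 1$ but is unrelated to the invertibility of $\eye_K - \bX_\bvarphi$.
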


\begin{proof}[Proof of Lemma \ref{lemma:DDB_theta_bound}]
We consider the coordinates of $\vect(\bE_{st})\transpose D\bSigma(\btheta)$. Recall that
\[
D\bSigma(\btheta) = \begin{bmatrix*}
(\eye_{K^2} + \bK_{KK})\{\bU(\bvarphi)\bM\otimes\eye_K\}D\bU(\bvarphi) & \{\bU(\bvarphi)\otimes\bU(\bvarphi)\}\mathbb{D}_r
\end{bmatrix*}.
\]
% where $\bGamma_\bmu$ is such that $\bGamma_\bmu\bmu = \vect(\bM)$, and the entries of $\bmu$ are the upper diagonal entries of $\bM$. 
Denote $\be_j(m)$ be the standard basis vector in $\mathbb{R}^{m}$, where the $j$th coordinate of $\be_j(m)$ is $1$, and the rest of the coordinates of $\be_j(m)$ are zeros. Denote
\[
\bvarphi_{kl} = \vect\{\be_k(K - d)\be_l(d)\transpose\}
\]
 for any $k\in [K - r]$ and $l\in [r]$, $\bv_{kl} = [\bvarphi_{kl}, \zero_{r(r + 1)/2}\transpose]\transpose$, $\btheta = [\bvarphi\transpose, \bmu\transpose]\transpose$, $\bvarphi = \vect(\bA)$ for $\bA\in\mathbb{R}^{(K - r)\times r}$,
\[
\bX_\bvarphi = \begin{bmatrix*}
\zero_{r\times r} & -\bA\transpose\\ \bA & \zero_{(K - r)\times (K - r)}
\end{bmatrix*},
\]
and $\bC(\bvarphi) = (\eye_K - \bX_\bvarphi)^{-1}$. Then
\begin{align*}
% &
\vect(\bE_{st})\transpose D\bSigma(\btheta)
\bv_{kl}
% \begin{bmatrix*}
% \bvarphi_{kl}\\
% \zero_{d(d + 1)/2}
% \end{bmatrix*}
% \\
% &\quad = 
&
=
\vect(\bE_{st})\transpose 
(\eye_{K^2} + \bK_{KK})\{\bU(\bvarphi)\bM\otimes\eye_K\}D\bU(\bvarphi) \bvarphi_{kl}
\\
&
% =2 \vect(\bE_{rs} + \bE_{sr})\transpose\vect\{
% (\eye_K - \bX_\bvarphi)^{-1}\bX_{\bvarphi_{kl}}(\eye_K - \bX_\bvarphi)^{-\mathrm{T}}\bU(\bvarphi)\bM\bU(\bvarphi)\transpose
% \}\\
% &
= 2\vect(\bE_{st} + \bE_{ts})\transpose\vect\{\bC(\bvarphi)\bX_{\bvarphi_{kl}}\bC(\bvarphi)\transpose\bSigma(\btheta)\}.
% &\quad 
\end{align*}
Therefore, using Theorem 9 in \cite{MAGNUS1985474}, we have
\begin{align*}
\frac{\partial}{\partial\btheta\transpose}\{\vect(\bE_{st})\transpose D\bSigma(\btheta)
\bv_{kl}\}
 & = 2\vect(\bE_{st} + \bE_{ts})\transpose \frac{\partial\vect\{\bC(\bvarphi)\bX_{\bvarphi_{kl}}\bC(\bvarphi)\transpose\bSigma(\btheta)\}}{\partial\btheta\transpose}\\
 % & = 2\vect(\bE_{rs} + \bE_{sr})\transpose \left[
 % \{\bSigma(\btheta)\otimes \eye_K\}\frac{\partial\vect\{\bC(\bvarphi)\bX_{\bvarphi_{kl}}\bC(\bvarphi)\transpose\}}{\partial\btheta\transpose}
 % \right]\\
 % &\quad + 2\vect(\bE_{rs} + \bE_{sr})\transpose \left[
 % \{\eye_K\otimes \bC(\bvarphi)\bX_{\bvarphi_{kl}}\bC(\bvarphi)\transpose\}\frac{\partial\vect\{\bSigma(\btheta)\}}{\partial\btheta\transpose}
 % \right]\\
 & = 2\vect(\bE_{st} + \bE_{ts})\transpose 
 % \left[
 \{\bSigma(\btheta)\otimes \eye_K\}\frac{\partial\vect\{\bC(\bvarphi)\bX_{\bvarphi_{kl}}\bC(\bvarphi)\transpose\}}{\partial\btheta\transpose}
 % \right]
 \\
 &\quad + 2\vect(\bE_{st} + \bE_{ts})\transpose 
 % \left[
 \{\eye_K\otimes \bC(\bvarphi)\bX_{\bvarphi_{kl}}\bC(\bvarphi)\transpose\}D\bSigma(\btheta)
 % \right]
 .
\end{align*}
Similarly, for all $e,f\in[r]$ with $e\leq f$, 
\begin{align*}
&\frac{\partial}{\partial\btheta\transpose}\left\{\vect(\bE_{st})\transpose D\bSigma(\btheta)\begin{bmatrix*}
\zero_{(K - r)r}\\ \vect(\bE_{ef})
\end{bmatrix*}\right\}\\
&\quad = \vect({\bE_{st}})\transpose\left[\frac{\partial\vect\{\bU(\bvarphi)\bT_{ef}\bU(\bvarphi)\transpose\}}{\partial\btheta\transpose}\right]\\
&\quad = \begin{bmatrix*}
\vect(\bE_{st})\transpose (\eye_{K^2} + \bK_{KK})\{\bU(\bvarphi)\bT_{ef}\otimes \eye_K\}D\bU(\bvarphi) & \zero_{K^2\times d(d + 1)/2}
\end{bmatrix*}.
\end{align*}
By Lemma \ref{lemma:matrix_differential_lemma}, to show that 
\[
\frac{\partial}{\partial\btheta\transpose}\{\vect(\bE_{st})\transpose D\bSigma(\btheta)\}
\]
is Lipschitz continuous, it suffices show that 
\[
\sup_{\btheta\in B_2(\bar\btheta_0, \eps)}\max\{D_1(\btheta), D_2(\btheta), D_3(\btheta), D_4(\btheta), D_5(\btheta), D_6(\btheta)\} <\infty,
\]
for some sufficiently small $\eps > 0$, where
\begin{align*}
&D_1(\btheta) := \|D\bSigma(\btheta)\|_{F},\quad
D_2(\btheta): = \left\|\frac{\partial\vect\{\bC(\bvarphi)\bX_{\bvarphi_{kl}}\bC(\bvarphi)\transpose\}}{\partial\btheta\transpose}\right\|_{\mathrm{F}}\\
&D_3(\btheta) := \left\|\frac{\partial}{\partial\btheta\transpose}\vect\left[
\frac{\partial\vect\{\bC(\bvarphi)\bX_{\bvarphi_{kl}}\bC(\bvarphi)\transpose\}}{\partial\btheta\transpose}
\right]\right\|_{\mathrm{F}},\quad
D_4(\btheta):=\left\|\frac{\partial\vect\{D\bSigma(\btheta)\}}{\partial\btheta\transpose}\right\|_{\mathrm{F}},\\
&D_5(\btheta):= \|D\bU(\bvarphi)\|,\quad D_6(\btheta) := \left\|\frac{\partial \vect\{D\bU(\bvarphi)\}}{\partial\btheta\transpose}\right\|.
\end{align*}
Note that for any $\btheta\in B_2(\bar\btheta_0, \eps)$,
\begin{align*}
\sup_{\btheta\in B_2(\bar\btheta_0,\eps)} D_1(\btheta) & = \sup_{\btheta\in B_2(\bar\btheta_0,\eps)}\|D\bSigma(\btheta)\|_{\mathrm{F}}
\leq \sup_{\btheta\in B_2(\bar\btheta_0,\eps)}\|\bU(\bvarphi)\otimes\bU(\bvarphi)\|_{\mathrm{F}}\|\mathbb{D}_r\|_2\\
&\quad + 
2\sup_{\btheta\in B_2(\bar\btheta_0,\eps)}\|\bU(\bvarphi)\bM\otimes\eye_K\|_{\mathrm{F}}\|D\bU(\bvarphi)\|_2
% \\&
 < \infty,
\end{align*}
it suffices to focus on the remaining derivative matrices, and we consider them separately. 

\vspace*{2ex}\noindent
$\blacksquare$ We first consider $D_2(\btheta)$. Using Theorem 9 in \cite{MAGNUS1985474}, we have,
\begin{align*}
&\frac{\partial\vect\{\bC(\bvarphi)\bX_{\bvarphi_{kl}}\bC(\bvarphi)\transpose\}}{\partial\btheta\transpose}\\
&\quad = \{\bC(\bvarphi)\otimes \eye_K\}\frac{\partial\vect\{\bC(\bvarphi)\bX_{\bvarphi_{kl}}\}}{\partial\btheta\transpose} + \{\eye_K\otimes \bC(\bvarphi)\bX_{\bvarphi_{kl}}\}\frac{\partial\vect\{\bC(\bvarphi)\transpose\}}{\partial\btheta\transpose}\\
&\quad = \{\bC(\bvarphi)\otimes \eye_K\}(\bX_{\bvarphi_{kl}}\transpose\otimes\eye_K)\frac{\partial\vect\{\bC(\bvarphi)\}}{\partial\btheta\transpose} + \{\eye_K\otimes \bC(\bvarphi)\bX_{\bvarphi_{kl}}\}\bK_{KK}\frac{\partial\vect\{\bC(\bvarphi)\}}{\partial\btheta\transpose}\\
&\quad = (\bK_{KK} - \eye_{K^2})\{\bC(\bvarphi)\bX_{\bvarphi_{kl}}\otimes\eye_K\}
\begin{bmatrix*}
\displaystyle\frac{\partial\vect\{\bC(\bvarphi)\}}{\partial\bvarphi\transpose} & \zero_{K^2\times r(r + 1)/2}
\end{bmatrix*}.
\end{align*}
Denote $[\bvarphi]_t$ be the $t$th coordinate of $\bvarphi$. By matrix differential calculus, for any $t\in [(K - r)r]$, we have
\begin{align*}
\frac{\partial\vect\{\bC(\bvarphi)\}}{\partial[\bvarphi]_t}
& = \vect\left\{\frac{\partial(\eye - \bX_\bvarphi)^{-1}}{\partial[\bvarphi]_t}\right\}
  = -\vect\left\{(\eye_K - \bX_\bvarphi)^{-1}\frac{\partial(\eye - \bX_\bvarphi)}{\partial[\bvarphi]_t}(\eye_K - \bX_\bvarphi)^{-1}\right\}\\
& = \vect\left\{(\eye_K - \bX_\bvarphi)^{-1}\frac{\partial\bX_\bvarphi}{\partial[\bvarphi]_t}(\eye_K - \bX_\bvarphi)^{-1}\right\}\\
& = \{\bC(\bvarphi)\transpose\otimes\bC(\bvarphi)\}\vect\left\{\frac{\partial\bX_\bvarphi}{\partial[\bvarphi]_t}\right\}
% \\&
 = \{\bC(\bvarphi)\transpose\otimes\bC(\bvarphi)\}\bGamma_\bvarphi\be_{t}\{(K - r)r\}.
\end{align*}
Therefore,
\begin{align*}
\frac{\partial\vect\{\bC(\bvarphi)\}}{\partial\bvarphi\transpose} = \{\bC(\bvarphi)\transpose\otimes\bC(\bvarphi)\}\bGamma_\bvarphi,
\end{align*}
and hence,
\begin{align*}
&\frac{\partial\vect\{\bC(\bvarphi)\bX_{\bvarphi_{kl}}\bC(\bvarphi)\transpose\}}{\partial\btheta\transpose}\\
&\quad = 
\begin{bmatrix*}
(\bK_{KK} - \eye_{K^2})\{\bC(\bvarphi)\bX_{\bvarphi_{kl}}\otimes\eye_K\}\{\bC(\bvarphi)\transpose\otimes\bC(\bvarphi)\}\bGamma_\bvarphi
&
\zero_{K^2\times r(r + 1)/2}
\end{bmatrix*}\\
&\quad = 
\begin{bmatrix*}
(\bK_{KK} - \eye_{K^2})\{\bC(\bvarphi)\bX_{\bvarphi_{kl}}\bC(\bvarphi)\transpose\otimes\bC(\bvarphi)\}\bGamma_\bvarphi
&
\zero_{K^2\times r(r + 1)/2}
\end{bmatrix*}.
\end{align*}
These results show that
\begin{align*}
% &\sup_{\btheta:n\|\btheta - \btheta_0\|_2\leq M}\left\|\frac{\partial\vect\{\bC(\bvarphi)\}}{\partial\btheta\transpose}\right\|_{\mathrm{F}}
% \leq \sup_{\btheta:n\|\btheta - \btheta_0\|_2\leq M}\|\bC(\bvarphi)\|_2^2\|\bGamma_\bvarphi\|_{\mathrm{F}} < \infty,\\
\sup_{\btheta\in B_2(\bar\btheta_0, \eps)} D_2(\btheta)
& = \sup_{\btheta\in B_2(\bar\btheta_0, \eps)}
\left\|\frac{\partial\vect\{\bC(\bvarphi)\bX_{\bvarphi_{kl}}\bC(\bvarphi)\transpose\}}{\partial\btheta\transpose}\right\|_{\mathrm{F}} \\
&\leq \sup_{\btheta\in B_2(\bar\btheta_0, \eps)}2\|\bC(\bvarphi)\|_2^3\|\bX_{\bvarphi_{kl}}\|_2\|\bGamma_\bvarphi\|_{\mathrm{F}}
< \infty.
\end{align*}

\vspace*{2ex}\noindent
$\blacksquare$ We next consider $D_3(\btheta)$. We leverage the previous result. 
For any $a\in[K - r], b\in [r]$,
\begin{align*}
\frac{\partial\vect\{\bC(\bvarphi)\bX_{\bvarphi_{kl}}\bC(\bvarphi)\transpose\}}{\partial\btheta\transpose}
\begin{bmatrix*}
\bvarphi_{ab}\\\zero_{r(r + 1)/2}
\end{bmatrix*}
% \\
&
% \quad 
= (\bK_{KK} - \eye_{K^2})\{\bC(\bvarphi)\bX_{\bvarphi_{kl}}\bC(\bvarphi)\transpose\otimes \bC(\bvarphi)\}\bGamma_\bvarphi\bvarphi_{ab}\\
% & = (\bK_{KK} - \eye_{K^2})\{\bC(\bvarphi)\bX_{\bvarphi_{kl}}\bC(\bvarphi)\transpose\otimes \bC(\bvarphi)\}\vect(\bX_{\bvarphi_{ab}})\\
& = (\bK_{KK} - \eye_{K^2})\vect\{\bC(\bvarphi)\bX_{\bvarphi_{ab}}\bC(\bvarphi)\bX_{\bvarphi_{kl}}\transpose\bC(\bvarphi)\transpose\}.
\end{align*}
Using Theorem 9 in \cite{MAGNUS1985474} again, we have,
\begin{align*}
&\left\|\frac{\partial\vect\{\bC(\bvarphi)\bX_{\bvarphi_{ab}}\bC(\bvarphi)\bX_{\bvarphi_{kl}}\transpose\bC(\bvarphi)\transpose\}}{\partial\btheta\transpose{}}\right\|_{\mathrm{F}}\\
&\quad\leq \|\bC(\bvarphi)\bX_{\bvarphi_{kl}}\bC(\bvarphi)\transpose\otimes\eye_K\|_2\left\|\frac{\partial\vect\{\bC(\bvarphi)\bX_{\bvarphi_{ab}}\}}{\partial\btheta}\right\|_{\mathrm{F}}
\\&\quad
 + \|\eye_K\otimes\bC(\bvarphi)\bX_{\bvarphi_{ab}}\|_2\left\|\frac{\partial\vect\{\bC(\bvarphi)\bX_{\bvarphi_{kl}}\bC(\bvarphi)\transpose{}\}}{\partial\btheta\transpose{}}\right\|_{\mathrm{F}}\\
&\quad\leq \|\bC(\bvarphi)\|_2^2\|\bX_{\bvarphi_{kl}}\|_2\|\bX_{\bvarphi_{ab}}\|_2\left\|\frac{\partial\vect\{\bC(\bvarphi)\}}{\partial\btheta}\right\|_{\mathrm{F}}\\
&\quad\quad + \|\bC(\bvarphi)\|_2\|\bX_{\bvarphi_{ab}}\|_2\left\|\frac{\partial\vect\{\bC(\bvarphi)\bX_{\bvarphi_{kl}}\bC(\bvarphi)\transpose{}\}}{\partial\btheta\transpose{}}\right\|_{\mathrm{F}},
\end{align*}
implying that
\begin{align*}
&\sup_{\btheta\in B_2(\bar\btheta_0, \eps)}\left\|
\frac{\partial}{\partial\btheta\transpose}\vect\left[\frac{\partial\vect\{\bC(\bvarphi)\bX_{\bvarphi_{kl}}\bC(\bvarphi)\transpose\}}{\partial\btheta\transpose}\right]
\right\|_{\mathrm{F}}\\
&\quad \leq\sum_{a\in [K - r]}\sum_{b\in [r]}\sup_{\btheta\in B_2(\bar\btheta_0, \eps)}\left\|\frac{\partial}{\partial\btheta\transpose}\vect\left[
\frac{\partial\vect\{\bC(\bvarphi)\bX_{\bvarphi_{kl}}\bC(\bvarphi)\transpose\}}{\partial\btheta\transpose}
\begin{bmatrix*}
\bvarphi_{ab}\\\zero_{r(r + 1)/2}
\end{bmatrix*}
\right]
\right\|_2\\
&\quad\leq \sum_{a\in [K - r]}\sum_{b\in [r]}\|\bK_{KK} - \eye_{K^2}\|_2\sup_{\btheta\in B_2(\bar\btheta_0, \eps)}
\left\|\frac{\partial\vect\{\bC(\bvarphi)\bX_{\bvarphi_{ab}}\bC(\bvarphi)\bX_{\bvarphi_{kl}}\transpose\bC(\bvarphi)\transpose\}}{\partial\btheta\transpose{}}\right\|_{\mathrm{F}}
 < \infty. 
\end{align*}

\vspace*{2ex}\noindent
$\blacksquare$ We finally turn to $D_4(\btheta)$. Recall that
\begin{align*}
D\bSigma(\btheta)
& = \begin{bmatrix*}
(\eye_{K^2} + \bK_{KK})\{\bU(\bvarphi)\bM\otimes\eye_K\}D\bU(\bvarphi) & \{\bU(\bvarphi)\otimes\bU(\bvarphi)\}\mathbb{D}_r
\end{bmatrix*}
\end{align*}
Then for any $c\in [K - r],d\in[r]$, we have
\begin{align*}
D\bSigma(\btheta)\begin{bmatrix*}
\bvarphi_{cd}\\\zero_{r(r + 1)/2}
\end{bmatrix*}
& = (\eye_{K^2} + \bK_{KK})\{\bU(\bvarphi)\bM\otimes \eye_K\}D\bU(\bvarphi)\bvarphi_{cd}\\
& = 2(\eye_{K^2} + \bK_{KK})\vect\{\bC(\bvarphi)\bX_{\bvarphi_{cd}}\bC(\bvarphi)\transpose\bSigma(\btheta)\},
\end{align*}
and for any $e,f\in [r]$, $e\leq f$, 
\begin{align*}
D\bSigma(\btheta)\begin{bmatrix*}
\zero_{(K - r)r}\\
\vect(\bE_{ef})
\end{bmatrix*}
& = \{\bU(\bvarphi)\otimes\bU(\bvarphi)\}\mathbb{D}_r\bE_{ef} = \{\bU(\bvarphi)\otimes\bU(\bvarphi)\}\vect\{\bT_{ef}\}\\
& = \vect\{\bU(\bvarphi)\bT_{ef}\bU(\bvarphi)\transpose\},
\end{align*}
where $\bE_{ef} = \be_e(r)\be_f(r)\transpose$, and $\bT_{ef} = \bE_{ef} + \bE_{fe}$ if $e\neq f$ and $\bT_{ee} = \bE_{ee}$.
In addition, we use Theorem 9 in \cite{MAGNUS1985474} again to compute matrix derivatives
\begin{align*}
&
\sup_{\btheta\in B_2(\bar\btheta_0,\eps)}\left\|\frac{\partial\vect\{\bC(\bvarphi)\bX_{\bvarphi_{cd}}\bC(\bvarphi)\transpose\bSigma(\btheta)\}}{\partial\btheta\transpose}\right\|_{\mathrm{F}}
\\
&
\quad
\leq \sup_{\btheta\in B_2(\bar\btheta_0,\eps)}\|\bSigma(\btheta)\otimes\eye_K\|_2\left\|\frac{\partial\vect\{\bC(\bvarphi)\bX_{\bvarphi_{cd}}\bC(\bvarphi)\transpose\}}{\partial\btheta\transpose}\right\|_{\mathrm{F}}\\
&
\quad
\quad + \sup_{\btheta\in B_2(\bar\btheta_0,\eps)}\|\eye_K\otimes\bC(\bvarphi)\bX_{\bvarphi_{cd}}\bC(\bvarphi)\transpose\|_2\|D\bSigma(\btheta)\|_{\mathrm{F}}\\
&\quad\leq K\sup_{\btheta\in B_2(\bar\btheta_0,\eps)}\left\|\frac{\partial\vect\{\bC(\bvarphi)\bX_{\bvarphi_{cd}}\bC(\bvarphi)\transpose\}}{\partial\btheta\transpose}\right\|_{\mathrm{F}}\\
&\quad\quad + \sup_{\btheta\in B_2(\bar\btheta_0,\eps)}\|\bC(\bvarphi)\|_2^2\|\bX_{\bvarphi_{cd}}\transpose\|_2\|D\bSigma(\btheta)\|_{\mathrm{F}}
% \\&
<\infty,\\
&\sup_{\btheta\in B_2(\bar\btheta_0,\eps)}\left\|\frac{\partial\vect\{\bU(\bvarphi)\bT_{ef}\bU(\bvarphi)\transpose\}}{\partial\btheta\transpose}\right\|_{\mathrm{F}}\\
&\quad = \sup_{\btheta\in B_2(\bar\btheta_0,\eps)}\left\|\frac{\partial\vect\{\bU(\bvarphi)\bT_{ef}\bU(\bvarphi)\transpose\}}{\partial\bvarphi\transpose}\right\|_{\mathrm{F}}\\
&\quad\leq \sup_{\btheta\in B_2(\bar\btheta_0,\eps)}\|(\eye_{K^2} + \bK_{KK})\{\bU(\bvarphi)\bT_{ef}\otimes\eye_K\}D\bU(\bvarphi)\|_{\mathrm{F}}
% \\&
% \quad
 <\infty,
\end{align*}
which further implies that
\begin{align*}
\sup_{\btheta\in B_2(\bar\btheta_0, \eps)}\left\|\frac{\partial\vect\{D\bSigma(\btheta)\}}{\partial\btheta\transpose{}}\right\|_{\mathrm{F}} < \infty.
\end{align*}
% Hence, we conclude that
% \begin{align*}
% \sup_{\btheta\in B_2(\bar\btheta_0, \eps)}\left\|\frac{\partial^2}{\partial\btheta\partial\btheta\transpose}\{\vect(\bE_{st})\transpose D\bSigma(\btheta)\bv_{kl}\}
% \transpose\right\|_{\mathrm{F}}
%  < \infty
% \end{align*}
% for all $k\in[K - r]$ and $l\in [r]$ by Lemma \ref{lemma:matrix_differential_lemma}. 

\vspace*{2ex}\noindent
$\blacksquare$ Finally, we consider $D_5(\btheta)$ and $D_6(\btheta)$ by showing that
\begin{align}
\label{eqn:Lipschitz_continuity_Jacobian_I}
&\sup_{\btheta\in B_2(\bar\btheta_0, \eps)}\left\|\frac{\partial\vect\{\bU(\bvarphi)\}}{\partial\btheta\transpose}\right\|_{\mathrm{F}} = \sup_{\btheta\in B_2(\bar\btheta_0,\eps)}\|D\bU(\bvarphi)\|_{\mathrm{F}} < \infty,\\
\label{eqn:Lipschitz_continuity_Jacobian_II}
&\sup_{\btheta\in B_2(\bar\btheta_0, \eps)}\left\|\frac{\partial\vect\{D\bU(\bvarphi)\}}{\partial\btheta\transpose}\right\|_{\mathrm{F}} = \sup_{\btheta\in B_2(\bar\btheta_0, \eps)}\left\|\frac{\partial\vect\left[2\eye_{p\times r}\transpose\bC(\bvarphi)\transpose\otimes \bC(\bvarphi)\}\bGamma_\bvarphi\right]}{\partial\btheta\transpose}\right\|_{\mathrm{F}} < \infty
\end{align}
for a sufficiently small $\eps > 0$. Equation \eqref{eqn:Lipschitz_continuity_Jacobian_I} follows directly from the fact that 
\[
\|D\bU(\bvarphi)\|_{\mathrm{F}}\lesssim 2\|\bC(\bvarphi)\|_2^2\|\bGamma_\bvarphi\|_{\mathrm{F}} < \infty
\]
 for all $\bvarphi$. Equation \eqref{eqn:Lipschitz_continuity_Jacobian_II} follows from Lemma \ref{lemma:matrix_differential_lemma} and the fact that
 \[
\sup_{\btheta\in B_2(\bar\btheta_0, \eps)}\left\|\frac{\partial\vect\{\bC(\bvarphi)\}}{\partial\btheta\transpose}\right\|_{\mathrm{F}} = \sup_{\btheta\in B_2(\bar\btheta_0, \eps)}\|\{\bC(\bvarphi)\transpose\otimes \bC(\bvarphi)\}\bGamma_\bvarphi\|_{\mathrm{F}} < \infty. 
 \]
% and for any $a\in [K - r], b\in [r]$, we have
% \begin{align*}
% &\frac{\partial}{\partial\btheta\transpose}\vect(\bE_{rs})\transpose (\eye_{K^2} + \bK_{KK})\{\bU(\bvarphi)\bT_{ef}\otimes \eye_K\}D\bU(\bvarphi)\bvarphi_{ab}\\
% &\quad = 2\frac{\partial}{\partial\btheta\transpose}\vect(\bE_{rs})\transpose (\eye_{K^2} + \bK_{KK})\vect\{\bC(\bvarphi)\bX_{\bvarphi_{ab}}\bC(\bvarphi)\transpose\bSigma(\btheta)\}\\
% &\quad = 2\vect(\bE_{rs})\transpose (\eye_{K^2} + \bK_{KK})\frac{\partial\vect\{\bC(\bvarphi)\bX_{\bvarphi_{ab}}\bC(\bvarphi)\transpose\bSigma(\btheta)\}}{\partial\btheta\transpose}.
% \end{align*}
% According to the previous derivation, we have
% \begin{align*}
% &\sup_{\btheta:n\|\btheta - \btheta_0\|_2\leq M}\left\|\frac{\partial}{\partial\btheta\transpose}\vect(\bE_{rs})\transpose (\eye_{K^2} + \bK_{KK})\{\bU(\bvarphi)\bT_{ef}\otimes \eye_K\}D\bU(\bvarphi)\bvarphi_{ab}\right\|\\
% &\quad\leq 
% 4\sup_{\btheta:n\|\btheta - \btheta_0\|_2\leq M}\left\|\frac{\partial\vect\{\bC(\bvarphi)\bX_{\bvarphi_{ab}}\bC(\bvarphi)\transpose\bSigma(\btheta)\}}{\partial\btheta\transpose}\right\|_{\mathrm{F}} < \infty.
% \end{align*}
The proof is thus completed.
\end{proof}

% subsection technical_lemmas_for_section_sub:stochastic_block_model (end)

\subsection{Proof of Theorem \ref{thm:OSE_SBM}} % (fold)
\label{sub:proof_of_theorem_thm:ose_sbm}

We first consider the case where the cluster assignment function $\tau_0$ is known up to a permutation, in the sense that we are aware of an oracle cluster assignment function $\sigma_0:[n]\to [K]$ such that $\tau_0 = \omega\circ\sigma_0$ for some permutation $\omega:[K]\to [K]$. Under the notations and setup in Section \ref{sub:stochastic_block_model}, we consider the following oracle estimators:
\begin{itemize}
	\item[(i)] Let $\bSigma^*_{n,\sigma_0}$ be the $\sigma_0$-dependent oracle estimator for $\bSigma_0$, whose $(s, t)$ entry is given by
	\[
	[\bSigma^*_{n,\sigma_0}]_{st} = \frac{\sum_{i = 1}^n\sum_{j = 1}^nA_{ij}\mathbbm{1}\{\sigma_0(i) = s, \sigma_0(j) = t\}}{\sum_{i = 1}^n\sum_{j = 1}^n\mathbbm{1}\{\sigma_0(i) = s, \sigma_0(j) = t\}}
	\]
	\item[(ii)] Compute the $\sigma_0$-dependent oracle least-squares estimator for $\btheta_0$ by solving
	\[
	\btheta_{n,\sigma_0}^* = \argmin_{\btheta\in\mathscr{D}(K, r)}\|\bSigma^*_{n,\sigma_0} - \bSigma(\btheta)\|_{\mathrm{F}}^2
	\]
	\item[(iii)] Compute the $\sigma_0$-dependent oracle one-step estimator for $\btheta_0$:
	\[
	\widehat{\btheta}^*_{n,\sigma_0} = \btheta^*_{n,\sigma_0} - \eye(\btheta^*, \sigma_0)^{-1}\frac{\partial\ell}{\partial\btheta}(\btheta^*(\sigma_0), \sigma_0),
	\]
	where $\partial\ell/\partial\btheta$ is the score function given by \eqref{eqn:score_function_SBM} and $\eye(\cdot, \cdot)$ is the Fisher information matrix given by \eqref{eqn:Fisher_infor_SBM}. 
\end{itemize}
\begin{lemma}\label{lemma:OSE_oracle}
Under the notations and setup in Section \ref{sub:stochastic_block_model}, there exists a $K\times K$ permutation matrix $\bPi_0$ depending on $\sigma_0$, such that
\[
{n}\bJ_{\bPi_0}(\btheta_{0\bPi_0})^{1/2}(\widehat{\btheta}^*_{n,\sigma_0} - \btheta_{0\bPi_0})\overset{\calL}{\to}\mathrm{N}(\zero_d, \eye_d),
\]
where, for any permutation matrix $\bPi$ and $\btheta\in\mathscr{D}(K, r)$, $\bJ_{\bPi}(\btheta)$ is defined in Theorem \ref{thm:OSE_SBM}. 
\end{lemma}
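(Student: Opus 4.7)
My plan is to follow the classical one-step argument (Theorem 5.45 in \cite{van2000asymptotic}), specialized to the intrinsic parameterization $\btheta\in\mathscr{D}(K,r)$ and to the non-standard rate $1/n$ native to SBM. The first task is to identify the permutation $\bPi_0$. Let $\omega:[K]\to[K]$ be the permutation of cluster labels satisfying $\sigma_0 = \omega\circ\tau_0$ and let $\bPi_0$ be its matrix. Then $\bSigma(\btheta_{0\bPi_0}) = \bPi_0\bSigma_0\bPi_0\transpose$, and under $\prob_0$ the adjacency matrix $\bA$ has the same distribution as an SBM with cluster assignment $\sigma_0$ and block probability matrix $\bSigma(\btheta_{0\bPi_0})$; throughout the argument I can therefore treat $(\btheta_{0\bPi_0},\sigma_0)$ as ``the'' true parameters.

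Next I will establish the $n$-consistency of the initial estimator $\btheta^*_{n,\sigma_0}$. Each entry of $\bSigma^*_{n,\sigma_0}$ is an average of $\Theta(n^2)$ independent Bernoullis (because $n_k/n\to [\bPi_0\bpi]_k > 0$), so Bernstein's inequality gives $\|\bSigma^*_{n,\sigma_0} - \bPi_0\bSigma_0\bPi_0\transpose\|_{\mathrm{F}} = O_{\prob_0}(1/n)$. The least-squares step together with the triangle inequality yields $\|\bSigma(\btheta^*_{n,\sigma_0}) - \bSigma(\btheta_{0\bPi_0})\|_{\mathrm{F}} = O_{\prob_0}(1/n)$. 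Theorem \ref{thm:intrinsic_deviation_Sigma}, whose hypotheses $\sup_n\|\bA_{0\bPi_0}\|_2 < 1$ and $\bM_{0\bPi_0}\succ 0$ follow from the assumption that any $r$ columns of $\bV$ are linearly independent together with $\mathrm{rank}(\bSigma_0) = r$, then lifts this bound to $\|\btheta^*_{n,\sigma_0} - \btheta_{0\bPi_0}\|_2 = O_{\prob_0}(1/n)$.

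Write $\dot\ell(\btheta) := (\partial\ell/\partial\btheta)(\btheta,\sigma_0)$ and
\[
\bH_n := \int_0^1\frac{\partial^2\ell}{\partial\btheta\partial\btheta\transpose}\bigl(\btheta_{0\bPi_0} + t(\btheta^*_{n,\sigma_0} - \btheta_{0\bPi_0}),\sigma_0\bigr)\,\mathrm{d}t;
\]
Taylor's theorem with integral remainder gives $\dot\ell(\btheta^*_{n,\sigma_0}) = \dot\ell(\btheta_{0\bPi_0}) + \bH_n(\btheta^*_{n,\sigma_0} - \btheta_{0\bPi_0})$, and substituting into the one-step formula (under the sign convention producing a genuine Newton step) yields
\[
\widehat\btheta^*_{n,\sigma_0} - \btheta_{0\bPi_0} = \eye(\btheta^*_{n,\sigma_0},\sigma_0)^{-1}\dot\ell(\btheta_{0\bPi_0}) + \eye(\btheta^*_{n,\sigma_0},\sigma_0)^{-1}\bigl[\eye(\btheta^*_{n,\sigma_0},\sigma_0) + \bH_n\bigr](\btheta^*_{n,\sigma_0} - \btheta_{0\bPi_0}).
\]
To bound the remainder, Bernstein applied to $m_{st}(\sigma_0) - n_{st}(\sigma_0)[\bSigma(\btheta_{0\bPi_0})]_{st}$ gives $\ddot\ell(\btheta_{0\bPi_0},\sigma_0) + \eye(\btheta_{0\bPi_0},\sigma_0) = O_{\prob_0}(n)$. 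Using Lemma \ref{lemma:DDB_theta_bound} together with the uniform bound $n_{st}(\sigma_0)\lesssim n^2$, both $\eye(\cdot,\sigma_0)$ and $\ddot\ell(\cdot,\sigma_0)$ are $O(n^2)$-Lipschitz in $\btheta$ (in operator norm) on a shrinking neighborhood of $\btheta_{0\bPi_0}$, so $\bH_n + \eye(\btheta^*_{n,\sigma_0},\sigma_0) = O_{\prob_0}(n)$. Theorem \ref{thm:DSigma_singular_value} together with the limit $n^{-2}\eye(\btheta_{0\bPi_0},\sigma_0)\to\bJ_{\bPi_0}(\btheta_{0\bPi_0})\succ 0$ yields $\|\eye(\btheta^*_{n,\sigma_0},\sigma_0)^{-1}\|_2 = O_{\prob_0}(n^{-2})$, so the remainder is $O_{\prob_0}(n^{-2})\cdot O_{\prob_0}(n)\cdot O_{\prob_0}(n^{-1}) = o_{\prob_0}(n^{-1})$.

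For the leading term I will apply the Lindeberg CLT. The symmetry $D\bSigma(\btheta)\transpose\vect(\bE_{st}) = D\bSigma(\btheta)\transpose\vect(\bE_{ts})$ (from $\bK_{KK}D\bSigma(\btheta) = D\bSigma(\btheta)$) collapses the $(s,t)$ and $(t,s)$ contributions to $\eye$, after which $(n_{st}(\sigma_0) + n_{ts}(\sigma_0))/n^2\to [\bPi_0\bpi]_s[\bPi_0\bpi]_t$ for $s\neq t$ and $n_{ss}(\sigma_0)/n^2\to [\bPi_0\bpi]_s^2/2$ deliver $n^{-2}\eye(\btheta_{0\bPi_0},\sigma_0)\to \bJ_{\bPi_0}(\btheta_{0\bPi_0})$ almost surely. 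The score $\dot\ell(\btheta_{0\bPi_0})$ decomposes as $\sum_{i<j}\bxi_{ij}$ with $\bxi_{ij}$ independent, uniformly bounded, mean-zero $d$-dimensional vectors, so the Lindeberg CLT gives $n^{-1}\dot\ell(\btheta_{0\bPi_0})\overset{\calL}{\to}\mathrm{N}(\zero_d,\bJ_{\bPi_0}(\btheta_{0\bPi_0}))$. Combined with the remainder bound via Slutsky's theorem, $n(\widehat\btheta^*_{n,\sigma_0} - \btheta_{0\bPi_0})\overset{\calL}{\to}\mathrm{N}(\zero_d,\bJ_{\bPi_0}(\btheta_{0\bPi_0})^{-1})$, which is equivalent to the stated claim after premultiplying by $\bJ_{\bPi_0}(\btheta_{0\bPi_0})^{1/2}$. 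The main obstacle is the Lipschitz control used in the third paragraph: because $\eye$ and $\ddot\ell$ depend on $\btheta$ through the Cayley-parameterized basis $\bU(\bvarphi)$ nonlinearly, showing that their Jacobians in $\btheta$ remain uniformly bounded (so that the operator-norm Lipschitz constants stay of size $O(n^2)$) on a $1/n$-neighborhood of $\btheta_{0\bPi_0}$ is exactly what Lemma \ref{lemma:DDB_theta_bound} is designed to deliver.
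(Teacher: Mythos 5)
Your overall architecture (identify $\bPi_0$ via $\sigma_0=\omega\circ\tau_0$, establish $n$-consistency of the least-squares initializer, Taylor-expand the score and use Lipschitz control of the Hessian through Lemma~\ref{lemma:DDB_theta_bound}, then apply a Lindeberg CLT) matches the paper's proof, which also follows the framework of Theorem~5.45 in van der Vaart.

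There is, however, a genuine gap in the $n$-consistency step. You invoke Theorem~\ref{thm:intrinsic_deviation_Sigma} directly to lift $\|\bSigma(\btheta^*_{n,\sigma_0})-\bSigma(\btheta_{0\bPi_0})\|_{\mathrm{F}}=O_{\prob_0}(1/n)$ to $\|\btheta^*_{n,\sigma_0}-\btheta_{0\bPi_0}\|_2=O_{\prob_0}(1/n)$, and claim that the hypothesis $\bM_{0\bPi_0}\succ 0$ ``follows from the assumption that any $r$ columns of $\bV$ are linearly independent together with $\mathrm{rank}(\bSigma_0)=r$.'' This is false: in SBM the block probability matrix $\bSigma_0\in(0,1)^{K\times K}$ is symmetric of rank $r$ but is \emph{not} assumed positive semidefinite (e.g.\ $\bSigma_0=\left(\begin{smallmatrix}0.1&0.9\\0.9&0.1\end{smallmatrix}\right)$ has an eigenvalue $-0.8$). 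The rank condition only gives $\bM_0$ invertible, not positive definite, so the hypothesis of Theorem~\ref{thm:intrinsic_deviation_Sigma} (which explicitly requires $\bM_0,\bM\succ 0$) is not satisfied and the theorem cannot be applied as you propose. The paper's proof explicitly flags this obstacle and works around it by passing to the squared matrices: it controls $\|(\bSigma^*_{n,\sigma_0})^2-\bPi_0\bSigma_0^2\bPi_0\transpose\|_{\mathrm{F}}=O_{\prob_0}(1/n)$ (which \emph{is} a perturbation of PSD matrices), uses Davis--Kahan on the squares to bound $\|\sin\Theta\{\bU(\bvarphi^*_{n,\sigma_0}),\bU(\bvarphi_{0\bPi_0})\}\|_{\mathrm{F}}$, then applies the purely-subspace results Corollary~\ref{corr:intrinsic_deviation_Projection} and Theorem~\ref{thm:alignment_free_Davis_Kahan} to get $\|\bvarphi^*_{n,\sigma_0}-\bvarphi_{0\bPi_0}\|_2=O_{\prob_0}(1/n)$, and finally handles $\bmu$ by a separate direct estimate $\|\bU\transpose\bSigma^*\bU-\bU_0\transpose\bSigma_0\bU_0\|_{\mathrm{F}}$. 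You should replace the appeal to Theorem~\ref{thm:intrinsic_deviation_Sigma} with this squared-matrix argument (or with any other argument that avoids the positive-definiteness hypothesis); otherwise the rest of your proof is sound.
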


\begin{proof}[Proof of Lemma \ref{lemma:OSE_oracle}]
We mimic the proof of Theorem 5.45 in \cite{van2000asymptotic} and show that there exists a permutation matrix $\bPi_0$ depending on $\sigma_0$, such that:
\begin{itemize}
	\item[(a)] ${\btheta}^*_{n,\sigma_0}$ is ${n}$-consistent for $\btheta_{0\bPi_0}$, \emph{i.e.}, ${n}({\btheta}^*_{n,\sigma_0} - \btheta_{0\bPi_0}) = O_{\prob_0}(1)$.
	\item[(b)] $(1/n^2)\eye({\btheta}^*_{n,\sigma_0}, \sigma_0)\overset{\prob_0}{\to}\bJ_{\bPi_0}(\btheta_{0\bPi_0})$;
	\item[(c)] For any constant $M > 0$, 
	\[
	\sup_{{n}\|\btheta - \btheta_{0\bPi_0}\|_2 < M}\left\|\frac{1}{n}\left\{\frac{\partial\ell}{\partial\btheta}(\btheta, \sigma_0) - \frac{\partial\ell}{\partial\btheta}(\btheta_{0\bPi_0}, \sigma_0)\right\} - {n}\bJ_{\bPi_0}(\btheta_{0\bPi_0})(\btheta - \btheta_{0\bPi_0})\right\|_2 = o_{\prob_0}(1).
	\]
	% $\blacktriangle$
\end{itemize}
Since there exists a permutation $\omega:[K]\to [K]$ such that $\sigma_0 = \omega\circ\tau_0$, then correspondingly, there exists a permutation matrix $\bPi_0$, such that
\begin{align*}
\bPi_0\begin{bmatrix*}
1\\\vdots\\K
\end{bmatrix*} = \begin{bmatrix*}
\omega^{-1}(1)\\\vdots \\\omega^{-1}(K)
\end{bmatrix*},\quad
\bPi_0\begin{bmatrix*}
\pi_1\\\vdots\\\pi_K
\end{bmatrix*} = \begin{bmatrix*}
\omega^{-1}(\pi_1)\\\vdots \\\omega^{-1}(\pi_K)
\end{bmatrix*}.
\end{align*}
It follows that
\[
[\bPi_0\transpose\bSigma^*_{n,\sigma_0}\bPi_0]_{st} = \frac{\sum_{i = 1}^n\sum_{j = 1}^nA_{ij}\mathbbm{1}\{\tau_0(i) = s, \tau_0(j) = t\}}{\sum_{i = 1}^n\sum_{j = 1}^n\mathbbm{1}\{\tau_0(i) = s, \tau_0(j) = t\}},\quad s,t\in [K].
\]
\begin{itemize}
	\item[$\blacksquare$] We first consider (a). By Lemma 1 in \cite{bickel2013}, we know that 
	\[
	{n}\|\bPi_0\transpose\bSigma^*_{n,\sigma_0}\bPi_0 - \bSigma_0\|_{\mathrm{F}} = O_{\prob_0}(1).
	\]
	Note that neither $\bSigma_0$ nor $\bPi_0\transpose\bSigma^*_{n,\sigma_0}\bPi_0$ is necessarily positive semidefinite. However, $\bSigma_0$ and $(\bPi_0\transpose\bSigma^*_{n,\sigma_0}\bPi_0)^2$ are positive semidefinite so that we can apply Theorem \ref{thm:intrinsic_deviation_Sigma} appropriately. 
	By construction, 
	\begin{align*}
	\|(\bSigma^*_{n,\sigma_0})^2 - \bPi_0\bSigma_0^2\bPi_0\transpose\|_{\mathrm{F}}
	& = \|(\bPi_0\transpose\bSigma^*_{n,\sigma_0}\bPi_0)^2 - \bSigma_0^2\|_{\mathrm{F}}\\
	&\leq (\|\bPi_0\transpose\bSigma^*_{n,\sigma_0}\bPi_0\|_2 + \|\bSigma_0\|_2)\|\bPi_0\transpose\bSigma^*_{n,\sigma_0}\bPi_0 - \bSigma_0\|_{\mathrm{F}}\\
	& \leq 2K\|\bPi_0\transpose\bSigma^*_{n,\sigma_0}\bPi_0 - \bSigma_0\|_{\mathrm{F}} = O_{\prob_0}\left(\frac{1}{{n}}\right).
	\end{align*}
	Let $\btheta^*_{n,\sigma_0}: = [(\bvarphi^*_{n,\sigma_0})\transpose, (\bmu^*_{n,\sigma_0})\transpose]\transpose$ and $\btheta_{0\bPi_0} = [\bvarphi_{0\bPi_0}\transpose, \bmu_{0\bPi_0}\transpose]\transpose$ such that $\bPi_0\bSigma_0\bPi_0\transpose = \bSigma(\btheta_{0\bPi_0}) = \bU(\bvarphi_{0\bPi_0})\bM(\bmu_{0\bPi_0})\bU(\bvarphi_{0\bPi_0})\transpose$. Note that $\bSigma_{n,\sigma_0}^*$ can be written as
	\[
	\bSigma^*_{n,\sigma_0} = \bU(\bvarphi^*_{n,\sigma_0})\bM(\bmu^*_{n,\sigma_0})\bU(\bvarphi^*_{n,\sigma_0})\transpose + \bU_\perp\bLambda_\perp\bU_\perp\transpose,
	\]
	where $\bU_{\perp}$ is the orthogonal complement of $\bU(\bvarphi^*_{n,\sigma_0})$ such that $[\bU(\bvarphi^*_{n,\sigma_0}),\bU_{\perp}]\in\mathbb{O}(K)$, and $\bLambda_\perp = \mathrm{diag}\{\lambda_{r+1}(\bSigma^*_{n,\sigma_0}),\ldots,\lambda_K(\bSigma^*_{n,\sigma_0})\}$. 
	By Davis-Kahan theorem,
	\[
	% \lesssim
	\|\sin\Theta\{\bU(\bvarphi^*_{n,\sigma_0}), \bU(\bvarphi_{0\bPi_0})\}\|_{\mathrm{F}}
	\lesssim \|(\bSigma_{n,\sigma_0}^*)^2 - \bPi_0\bSigma_0^2\bPi_0\transpose\|_{\mathrm{F}} = O_{\prob_0}\left(\frac{1}{{n}}\right).
	\]
	Since any $r$ columns of $\bU(\bvarphi_{0\bPi_0})$ are linearly independent, it follows from Corollary \ref{corr:intrinsic_deviation_Projection} and Theorem \ref{thm:alignment_free_Davis_Kahan} that
	\[
	\|\bU(\bvarphi^*_{n,\sigma_0}) - \bU(\bvarphi_{0\bPi_0})\|_{\mathrm{F}}\lesssim 
	\|\bvarphi^*_{n,\sigma_0} - \bvarphi_{0\bPi_0}\|_2 = O_{\prob_0}\left(\frac{1}{{n}}\right).
	\]
	Therefore, 
	\begin{align*}
	\|\bmu^*_{n,\sigma_0} - \bmu_{0\bPi_0}\|_2
	&\leq \|\bU(\bvarphi^*_{n,\sigma_0})\transpose \bSigma(\btheta^*_{n,\sigma_0}) \bU(\bvarphi^*_{n,\sigma_0}) - \bU(\bvarphi_{0\bPi_0})\transpose \bSigma(\btheta_{0\bPi_0})\bU(\bvarphi_{0\bPi_0})\|_{\mathrm{F}}\\
	& = \|\bU(\bvarphi^*_{n,\sigma_0})\transpose \bSigma^*_{n,\sigma_0} \bU(\bvarphi^*_{n,\sigma_0}) - \bU(\bvarphi_{0\bPi_0})\transpose \bSigma(\btheta_{0\bPi_0})\bU(\bvarphi_{0\bPi_0})\|_{\mathrm{F}}\\
	&\leq \|\bU(\bvarphi^*_{n,\sigma_0})\transpose\bSigma^*_{n,\sigma_0} \{\bU(\bvarphi^*_{n,\sigma_0}) - \bU(\bvarphi_{0\bPi_0})\}\|_{\mathrm{F}}\\
	&\quad + \|\bU(\bvarphi^*_{n,\sigma_0})\transpose\{\bSigma^*_{n,\sigma_0} - \bSigma(\btheta_{0\bPi_0})\}\bU(\bvarphi_{0\bPi_0})\|_{\mathrm{F}}\\
	&\quad + \|\{\bU(\bvarphi^*_{n,\sigma_0}) - \bU(\bvarphi_{0\bPi_0})\}\transpose \bSigma(\btheta_{0\bPi_0})\bU(\bvarphi_{0\bPi_0})\|_{\mathrm{F}}\\
	&\leq \{\|\bSigma_{n,\sigma_0}^*\|_{\mathrm{F}} + \|\bSigma(\btheta_{0\bPi_0})\|_{\mathrm{F}}\}\|\bU(\bvarphi_{n,\sigma_0}^*) - \bU(\bvarphi_{0\bPi_0})\|_{\mathrm{F}}\\
	&\quad + \|\bSigma_{n,\sigma_0}^* - \bSigma(\btheta_{0\bPi_0})\|_{\mathrm{F}} = O_{\prob_0}(n^{-1}),
	\end{align*}
	and hence, $\|\btheta_{n,\sigma_0}^* - \btheta_{0\bPi_0}\|_2\leq \|\bvarphi_{n,\sigma_0}^* - \bvarphi_{0\bPi_0}\|_2 + \|\bmu_{n,\sigma_0}^* - \bmu_{0\bPi_0}\|_2 = O_{\prob_0}(n^{-1})$. \\

	\item[$\blacksquare$] We next consider (b). By the result (b), we know that
	\[
	\prob_0\left\{
	\|\bSigma_{n,\sigma_0} - \bSigma(\btheta_{0\bPi_0})\|_{\max} > \frac{\log n}{n}
	\right\}\leq
	\prob_0\left\{
	\|\bSigma_{n,\sigma_0} - \bSigma(\btheta_{0\bPi_0})\|_{\mathrm{F}} > \frac{\log n}{n}
	\right\}\to 0.
	\]
	Let $\Xi_n = \{\|\bSigma_{n,\sigma_0} - \bSigma(\btheta_{0\bPi_0})\|_{\max} \leq {(\log n)}/{n}\}$. By Theorem \ref{thm:second_order_deviation_CT}, Corollary \ref{corr:intrinsic_deviation_Projection}, and Davis-Kahan theorem, over the event $\Xi_n$, we have,
	\begin{align*}
	\|\bSigma_{n,\sigma_0}^2 - \bSigma(\btheta_{0\bPi_0})^2\|_{\mathrm{F}}
	&\leq 2K\|\bSigma_{n,\sigma_0} - \bSigma(\btheta_{0\bPi_0})\|_{\max}\leq\frac{2K\log n}{n},\\
	\|\bU(\bvarphi_{n,\sigma_0}^*) - \bU(\bvarphi_{0\bPi_0})\|_{2\to\infty}
	&\leq \|\bU(\bvarphi_{n,\sigma_0}^*) - \bU(\bvarphi_{0\bPi_0})\|_{\mathrm{F}}\lesssim 
	\|\bvarphi_{n,\sigma_0}^* - \bvarphi_{0\bPi_0}\|_2\\
	&\lesssim
	% \|\bU()\bU(\widehat{\bvarphi}(\tau_0))\transpose - \bU_0\bU_0\transpose\|_{\mathrm{F}}
	 \|\bSigma_{n,\sigma_0}^2 - \bSigma(\btheta_{0\bPi_0})^2\|_{\mathrm{F}}\lesssim\frac{\log n}{n},\\
	\|\bM(\bmu_{n,\sigma_0}^*) - \bM(\bmu_{0\bPi_0})\|_{\mathrm{F}}
	&\leq \|\bU(\bvarphi^*_{n,\sigma_0})\transpose\bSigma^*_{n,\sigma_0} \{\bU(\bvarphi^*_{n,\sigma_0}) - \bU(\bvarphi_{0\bPi_0})\}\|_{\mathrm{F}}\\
	&\quad + \|\bU(\bvarphi^*_{n,\sigma_0})\transpose\{\bSigma^*_{n,\sigma_0} - \bSigma(\btheta_{0\bPi_0})\}\bU(\bvarphi_{0\bPi_0})\|_{\mathrm{F}}\\
	&\quad + \|\{\bU(\bvarphi^*_{n,\sigma_0}) - \bU(\bvarphi_{0\bPi_0})\}\transpose \bSigma(\btheta_{0\bPi_0})\bU(\bvarphi_{0\bPi_0})\|_{\mathrm{F}}\\
	&\leq \{\|\bSigma_{n,\sigma_0}^*\|_{\mathrm{F}} + \|\bSigma(\btheta_{0\bPi_0})\|_{\mathrm{F}}\}\|\bU(\bvarphi_{n,\sigma_0}^*) - \bU(\bvarphi_{0\bPi_0})\|_{\mathrm{F}}\\
	&\quad + \|\bSigma_{n,\sigma_0}^* - \bSigma(\btheta_{0\bPi_0})\|_{\mathrm{F}}
	% \\&
	\lesssim \frac{\log n}{n}.
	\end{align*}
	Hence, over the event $\Xi_n$,
	\begin{align*}
	&\|\bSigma(\btheta_{n,\sigma_0}^*) - \bSigma(\btheta_{0\bPi_0})\|_{\max}\\
	&\quad = \max_{s,t\in[K]}|\be_s\transpose\{\bSigma(\btheta_{n,\sigma_0}^*) - \bSigma(\btheta_{0\bPi_0})\}\be_t|\\
	&\quad = \max_{s,t\in[K]}|\be_s\transpose\{\bU(\bvarphi_{n,\sigma_0}^*)\bM(\bmu_{n,\sigma_0}^*)\bU(\bvarphi_{n,\sigma_0}^*)\transpose - \bU(\bvarphi_{0\bPi_0})\bM(\bmu_{0\bPi_0})\bU(\bvarphi_{0\bPi_0})\transpose\}\be_t|\\
	&\quad\leq \max_{s,t\in[K]}|\be_s\transpose[\bU(\bvarphi_{n,\sigma_0}^*)\bM(\bmu_{n,\sigma_0}^*)\{\bU(\bvarphi_{n,\sigma_0}^*) - \bU(\bvarphi_{0\bPi_0})\}\transpose]\be_t|\\
	&\quad\quad + \max_{s,t\in[K]}|\be_s\transpose[\bU(\bvarphi_{n,\sigma_0}^*)\{\bM(\bmu_{n,\sigma_0}^*) - \bM(\bmu_{0\bPi_0})\}\bU(\bvarphi_{0\bPi_0})\transpose]\be_t|\\
	&\quad\quad + \max_{s,t\in[K]}|\be_s\transpose[\{\bU(\bvarphi_{n,\sigma_0}^*) - \bU(\bvarphi_{0\bPi_0})\}\bM(\bmu_{0\bPi_0})\bU(\bvarphi_{0\bPi_0})\transpose]\be_t|\\
	&\quad\lesssim \|\bU(\bvarphi_{n,\sigma_0}^*) - \bU(\bvarphi_{0\bPi_0})\|_{2\to\infty} + \|{\bM}(\bmu_{n,\sigma_0}^*) - \bM(\bmu_{0\bPi_0})\|_{\mathrm{F}}\lesssim \frac{\log n}{n},
	\end{align*}
	and hence,
	\begin{align*}
	[\bSigma(\btheta_{n,\sigma_0}^*)]_{st}&\geq [\bSigma(\btheta_{0\bPi_0})]_{st} - \|\bSigma(\btheta_{n,\sigma_0}^*) - \bSigma(\btheta_{0\bPi_0})\|_{\max}\geq \frac{1}{2}[\bSigma(\btheta_{0\bPi_0})]_{st},\\
	1 - [\bSigma(\btheta_{n,\sigma_0}^*)]_{st}&\geq 1 - [\bSigma(\btheta_{0\bPi_0})]_{st} - \|\bSigma(\btheta_{n,\sigma_0}^*) - \bSigma(\btheta_{0\bPi_0})\|_{\max}\geq \frac{1 - [\bSigma(\btheta_{0\bPi_0})]_{st}}{2}.
	\end{align*}
	Now consider a parameter space 
	\[
	\Theta = \left\{\btheta: [\bSigma(\btheta)]_{st} \geq \frac{[\bSigma(\btheta_{0\bPi_0})]_{st}}{2}, 1 - [\bSigma(\btheta)]_{st} \geq \frac{1 - [\bSigma(\btheta_{0\bPi_0})]_{st}}{2},s,t\in [K]\right\}.
	\]
	% and let $n_s(\sigma_0): = \sum_{i = 1}^n\mathbbm{1}\{\sigma_0(i) = s\}$, $s\in [K]$. 
	Then we have
	\begin{align*}
	&\sup_{\btheta\in \Theta}\left\|\frac{1}{n^2}\eye(\btheta, \sigma_0) - \bJ_{\bPi_0}(\btheta)\right\|_{\mathrm{F}}\\
	&\quad\leq \sup_{\btheta\in \Theta}\sum_{s = 1}^K\sum_{t = 1}^K\left|\frac{n_{st}(\sigma_0)}{n^2} - \frac{[\bPi_0\bpi]_s[\bPi_0\bpi]_t}{2}\right|\frac{\|D\bSigma(\btheta)\|_2}{[\bSigma(\btheta)]_{st}\{1 - [\bSigma(\btheta)]_{st}\}}\\
	&\quad = \sup_{\btheta\in \Theta}\sum_{s = 1}^K\sum_{t = 1}^K\left|\frac{1}{2n^2}\sum_{i\neq j}\mathbbm{1}\{\tau_0(i) = \omega^{-1}(s)\}\mathbbm{1}\{\tau_0(j) = \omega^{-1}(t)\} - \frac{\pi_{\omega^{-1}(s)}\pi_{\omega^{-1}(t)}}{2}\right|\\
	&\quad\quad\qquad\qquad\qquad\times\frac{\|D\bSigma(\btheta)\|_2}{[\bSigma(\btheta)]_{st}\{1 - [\bSigma(\btheta)]_{st}\}}\\
	&\quad\lesssim \max_{s,t\in[K]}\frac{4K^2}{[\bSigma(\btheta_{0\bPi_0})]_{st}\{1 - [\bSigma(\btheta_{0\bPi_0})]_{st}\}}\left|\frac{1}{2n^2}\sum_{i\neq j}\mathbbm{1}\{\tau(i) = s\}\mathbbm{1}\{\tau(j) = t\} - \frac{\pi_s\pi_t}{2}\right|
	\\&\quad
	\to 0.
	\end{align*}
	Namely, $(1/n^2)\eye(\cdot, \sigma_0)$ converges to $\bJ_{\bPi_0}(\cdot)$ uniformly over $\btheta\in\Theta$. Therefore, for all $\eps > 0$, there exists some $N(\eps)$, such that 
	\[
	\sup_{\btheta\in \Theta}\left\|\frac{1}{n^2}\eye(\btheta, \sigma_0) - \bJ_{\bPi_0}(\btheta)\right\|_{\mathrm{F}} < \eps/2
	\]
	for all $n\geq N(\eps)$. Also note that $\bJ_{\bPi_0}(\btheta_{n,\sigma_0}^*) \overset{\prob_0}{\to} \bJ_{\bPi_0}(\btheta_{0\bPi_0})$ by the continuous mapping theorem. Hence, for all $\eps > 0$ and all $n \geq N(\eps)$, 
	\begin{align*}
	&\prob_0\left\{\left\|\frac{1}{n^2}\eye(\btheta_{n,\sigma_0}^*, \sigma_0) - \bJ_{\bPi_0}(\btheta_{0\bPi_0})\right\|_{\mathrm{F}} > \eps\right\}\\
	&\quad \leq \prob_0\left\{\left\|\frac{1}{n^2}\eye(\btheta_{n,\sigma_0}^*, \sigma_0) - \bJ_{\bPi_0}(\btheta_{0\bPi_0})\right\|_{\mathrm{F}} > \eps, \bA\in \Xi_n\right\}
	% \\&\quad\quad
	 + \prob_0\left(\Xi_n^c\right)\\
	&\quad \leq \prob_0\left\{\left\|\frac{1}{n^2}\eye(\btheta_{n,\sigma_0}^*, \sigma_0) - \bJ_{\bPi_0}(\btheta_{n,\sigma_0}^*)\right\|_{\mathrm{F}} + \|\bJ_{\bPi_0}(\btheta_{n,\sigma_0}^*) - \bJ_{\bPi_0}(\btheta_{0\bPi_0})\|_{\mathrm{F}} > \eps, \bA\in\Xi_n\right\}\\
	&\quad\quad + o(1)\\
	&\quad \leq \prob_0\left\{\sup_{\btheta\in \Theta}\left\|\frac{1}{n^2}\eye(\btheta, \sigma_0) - \bJ_{\bPi_0}(\btheta)\right\|_{\mathrm{F}} + \|\bJ_{\bPi_0}(\btheta_{n,\sigma_0}^*) - \bJ_{\bPi_0}(\btheta_{0\bPi_0})\|_{\mathrm{F}} > \eps\right\} + o(1)\\
	&\quad \leq \prob_0\left\{ \|\bJ_{\bPi_0}(\btheta_{n,\sigma_0}^*) - \bJ_{\bPi_0}(\btheta_{0\bPi_0})\|_{\mathrm{F}} > \eps/2\right\} + o(1)\to 0.
	\end{align*}

	\item[$\blacksquare$] We finally verify condition (c). By definition,
	\begin{align*}
	&\frac{1}{n^2}\frac{\partial^2\ell}{\partial\btheta\partial\btheta\transpose}(\btheta, \sigma_0)\\
	&\quad = -\frac{\eye(\btheta, \sigma_0)}{n^2} + \frac{1}{n^2}\sum_{s,t\in [K]}\frac{\partial}{\partial\btheta}\{\vect(\bE_{st})\transpose D\bSigma(\btheta)\}\frac{m_{st}(\sigma_0) - n_{st}(\sigma_0)[\bSigma(\btheta)]_{st}}{[\bSigma(\btheta)]_{st}\{1 - [\bSigma(\btheta)]_{st}\}}\\
	&\quad\quad - \frac{1}{n^2}\sum_{s,t\in [K]}\frac{m_{st}(\sigma_0) - n_{st}(\sigma_0)[\bSigma(\btheta)]_{st}}{[\bSigma(\btheta)]_{st}^2\{1 - [\bSigma(\btheta)]_{st}\}^2}\frac{\partial[\bSigma(\btheta)]_{st}\{1 - [\bSigma(\btheta)]_{st}\}\vect(\bE_{st})\transpose D\bSigma(\btheta)}{\partial\btheta}\\
	&\quad = -\frac{\eye(\btheta, \sigma_0)}{n^2} + \frac{1}{n^2}\sum_{s,t\in [K]}\frac{\partial}{\partial\btheta}\{\vect(\bE_{st})\transpose D\bSigma(\btheta)\}\frac{m_{st}(\sigma_0) - n_{st}(\sigma_0)[\bSigma(\btheta)]_{st}}{[\bSigma(\btheta)]_{st}(\btheta)\{1 - [\bSigma(\btheta)]_{st}\}}\\
	&\quad\quad - \sum_{s,t\in [K]}\frac{\{m_{st}(\sigma_0) - n_{st}(\sigma_0)[\bSigma(\btheta)]_{st}\}
	}{n^2[\bSigma(\btheta)]_{st}^2\{1 - [\bSigma(\btheta)]_{st}\}^2}\\
	&\quad\quad\qquad\qquad\times \{1 - 2[\bSigma(\btheta)]_{st}\} D\bSigma(\btheta)\transpose\vect(\bE_{st})
	% \frac{\partial}{\partial\btheta}[B_{rs}(\btheta)\{1 - B_{rs}(\btheta)\}]
	\vect(\bE_{st})\transpose D\bSigma(\btheta)
	.
	\end{align*}
	By Lemma \ref{lemma:DDB_theta_bound}, $D\bSigma(\btheta)$ is Lipschitz continuous over $B_2(\btheta_{0\bPi_0}, \eps)$ for some small $\eps > 0$, and 
	\[
	\frac{\partial}{\partial\btheta}\{\vect(\bE_{rs})\transpose D\bSigma(\btheta)\}
	\]
	is also Lipschitz continuous over $B_2(\btheta_{0\bPi_0}, \eps)$. Therefore, the function class
	\[
	\left\{
	\frac{1}{n^2}\frac{\partial^2\ell}{\partial\btheta\partial\btheta\transpose}(\btheta, \sigma_0)
	\right\}_{n = 1}^\infty
	\]
	is uniformly Lipschitz continuous on $B_2(\btheta_{0\bPi_0}, \eps)$. Hence, by Taylor's theorem, there exists a constant $L_0 > 0$, such that
	\[
	\left\|\left\{\frac{1}{n}\frac{\partial\ell}{\partial\btheta}(\btheta, \sigma_0) - \frac{1}{n}\frac{\partial\ell}{\partial\btheta}(\btheta_{0\bPi_0}, \sigma_0)\right\}
	 - \frac{1}{n}\frac{\partial^2\ell(\btheta_{0\bPi_0}, \sigma_0)}{\partial\btheta\partial\btheta\transpose}(\btheta - \btheta_{0\bPi_0})
	\right\|_{\mathrm{F}}\leq L_0n\|\btheta - \btheta_{0\bPi_0}\|_2^2
	\]
	for all $\btheta\in\{\btheta:n\|\btheta - \btheta_{0\bPi_0}\|_2\leq M\}\subset B_2(\btheta_{0\bPi_0}, \eps)$. Hence,
	\begin{align*}
	&\sup_{\btheta:n\|\btheta - \btheta_{0\bPi_0}\|_2\leq M}
	\left\|\left\{\frac{1}{n}\frac{\partial}{\partial\btheta}(\btheta, \sigma_0) - \frac{1}{n}\frac{\partial}{\partial\btheta}(\btheta_{0\bPi_0}, \sigma_0)\right\}
	 - \frac{1}{n}\frac{\partial^2\ell}{\partial\btheta\partial\btheta\transpose}(\btheta_{0\bPi_0}, \sigma_0)(\btheta - \btheta_{0\bPi_0})
	\right\|_{\mathrm{F}}\\
	&\quad\leq \sup_{\btheta:n\|\btheta - \btheta_{0\bPi_0}\|_2\leq M}L_0n\|\btheta - \btheta_{0\bPi_0}\|_2^2 = o(1).
	\end{align*}
	By triangle inequality, it suffices to show
	\begin{align*}
	\sup_{\btheta:n\|\btheta - \btheta_{0\bPi_0}\|_2\leq M}\left\|\frac{1}{n^2}\frac{\partial^2\ell}{\partial\btheta\partial\btheta\transpose}(\btheta_{0\bPi_0}, \sigma_0) + \bJ_{\bPi_0}(\btheta_{0\bPi_0})\right\|_{\mathrm{F}}n\|\btheta - \btheta_{0\bPi_0}\|_{2} = o_{\prob_0}(1).
	\end{align*}
	By construction, the result (b), and the law of large numbers,
	\begin{align*}
	&\frac{1}{n^2}\frac{\partial^2\ell}{\partial\btheta\partial\btheta\transpose}(\btheta_{0\bPi_0}, \sigma_0) + \bJ_{\bPi_0}(\btheta_{0\bPi_0})\\
	&\quad = \bJ_{\bPi_0}(\btheta_{0\bPi_0}) - \frac{\eye(\btheta_{0\bPi_0}, \sigma_0)}{n^2}\\
	&\quad\quad + \frac{1}{n^2}\sum_{s,t\in [K]}\left[\frac{\partial}{\partial\btheta}\{\vect(\bE_{st})\transpose D\bSigma(\btheta)\}\mathrel{\Bigg|}_{\btheta = \btheta_{0\bPi_0}}\right]\frac{m_{st}(\sigma_0) - n_{st}(\sigma_0)[\bSigma(\btheta_{0\bPi_0})]_{st}}{[\bSigma(\btheta_{0\bPi_0})]_{st}\{1 - [\bSigma(\btheta_{0\bPi_0})]_{st}\}}\\
	&\quad\quad - \frac{1}{n^2}\sum_{s,t\in [K]}\frac{\{m_{st}(\sigma_0) - n_{st}(\sigma_0)[\bSigma(\btheta_{0\bPi_0})]_{st}\}}{[\bSigma(\btheta_{0\bPi_0})]_{st}^2\{1 - [\bSigma(\btheta_{0\bPi_0})]_{st}\}^2}\\
	&\quad\quad\quad\quad\quad\quad\quad\times
	\{1 - 2[\bSigma(\btheta_{0\bPi_0})]_{st}\} D\bSigma(\btheta_{0\bPi_0})\transpose\vect(\bE_{st})
	% \frac{\partial}{\partial\btheta}[B_{rs}(\btheta)\{1 - B_{rs}(\btheta)\}]
	\vect(\bE_{st})\transpose D\bSigma(\btheta_{0\bPi_0})\\
	&\quad = \bJ_{\bPi_0}(\btheta_{0\bPi_0}) - \frac{\eye(\btheta_{0\bPi_0}, \sigma_0)}{n^2} + o_{\prob_0}(1) = o_{\prob_0}(1).
	\end{align*}
	Hence,
	\begin{align*}
	&\sup_{\btheta:n\|\btheta - \btheta_{0\bPi_0}\|_2\leq M}\left\|\frac{1}{n^2}\frac{\partial^2\ell}{\partial\btheta\partial\btheta\transpose}(\btheta_{0\bPi_0}, \sigma_0) + \bJ_{\bPi_0}(\btheta_{0\bPi_0})\right\|_{\mathrm{F}}n\|\btheta - \btheta_{0\bPi_0}\|_{2}\\
	&\quad\leq M\left\|\frac{1}{n^2}\frac{\partial^2\ell}{\partial\btheta\partial\btheta\transpose}(\btheta_{0\bPi_0}, \sigma_0) + \bJ_{\bPi_0}(\btheta_{0\bPi_0})\right\|_{\mathrm{F}} = o_{\prob_0}(1).
	\end{align*}
\end{itemize}

\vspace*{2ex}\noindent
We now use the theory for the one-step estimator to prove that
\[
{n}\bJ_{\bPi_0}(\btheta_{0\bPi_0})^{1/2}(\widehat{\btheta}_{n,\sigma_0}^* - \btheta_{0\bPi_0})\overset{\calL}{\to}\mathrm{N}(\zero_d, \eye_{d}).
\]
By definition, the results (a), (b), and (c), we have
\begin{align*}
&\frac{1}{n^2}\eye({\btheta}_{n,\sigma_0}^*, \sigma_0)n(\widehat{\btheta}_{n,\sigma_0}^* - \btheta_{0\bPi_0})\\
&\quad = \frac{1}{n^2}\eye({\btheta}_{n,\sigma_0}^*, \sigma_0)n({\btheta}_{n,\sigma_0}^* - \btheta_{0\bPi_0})
% \\&\quad\quad
 - n\left\{
\frac{1}{n^2}\frac{\partial\ell}{\partial\btheta}({\btheta}_{n,\sigma_0}^*, \sigma_0) - \frac{1}{n^2}\frac{\partial\ell}{\partial\btheta}(\btheta_{0\bPi_0}, \sigma_0)
\right\}
\\&\quad\quad
 + \frac{1}{n}\frac{\partial \ell}{\partial\btheta }(\btheta_{0\tau_0}, \sigma_0)\\
&\quad = \left\{
\frac{1}{n^2}\eye(\btheta_{n,\sigma_0}^*, \sigma_0) - \bJ_{\bPi_0}(\btheta_{0\bPi_0})
\right\}n( {\btheta}_{n,\sigma_0}^* - \btheta_{0\bPi_0})
\\&\quad\quad
 - \left[
n\left\{
\frac{1}{n^2}\frac{\partial\ell}{\partial\btheta}(\btheta_{n,\sigma_0}^*, \sigma_0) - \frac{1}{n^2}\frac{\partial\ell}{\partial\btheta}(\btheta_{0\bPi_0}, \sigma_0)
\right\} - n\bJ_{\bPi_0}(\btheta_{0\bPi_0})(\btheta_{n,\sigma_0}^* - \btheta_{0\bPi_0})
\right]
\\&\quad\quad
 + \frac{1}{n}\frac{\partial\ell}{\partial\btheta}(\btheta_{0\bPi_0}, \sigma_0)\\
&\quad = \frac{1}{n}\frac{\partial\ell}{\partial\btheta}(\btheta_{0\bPi_0}, \sigma_0) + o_{\prob_0}(1).
\end{align*}
By the central limit theorem, for any $s,t\in [K]$,
\begin{align*}
Z_{st}& = \frac{m_{rs}(\sigma_0) - n_{rs}(\sigma_0)[\bSigma(\btheta_{0\bPi_0})]_{st}}{n[\bSigma(\btheta_{0\bPi_0})]_{st}\{1 - [\bSigma(\btheta_{0\bPi_0})]_{st}\}}\\
& = \frac{\sqrt{n_{rs}(\sigma_0)}}{n[\bSigma(\btheta_{0\bPi_0})]_{st}\{1 - [\bSigma(\btheta_{0\bPi_0})]_{st}\}}\frac{1}{\sqrt{n_{rs}(\sigma_0)}}\sum_{i < j:\sigma_0(i) = s,\sigma_0(j) = t}\{A_{ij} - \expect_0(A_{ij})\}\\
&\overset{\calL}{\to}\mathrm{N}\left(
0, \frac{[\bPi_0\bpi]_r[\bPi_0\bpi]_s}{2[\bSigma(\btheta_{0\bPi_0})]_{st}\{1 - [\bSigma(\btheta_{0\bPi_0})]_{st}\}}
\right).
\end{align*}
Note that $\{Z_{st}\}_{s,t\in [K]}$ are independent random variables, we immediately obtain
\begin{align*}
\frac{1}{n}\frac{\partial\ell}{\partial\btheta}(\btheta_{0\bPi_0}, \sigma_0)
& = \sum_{s,t\in [K]}\frac{m_{st}(\sigma_0) - n_{st}(\sigma_0)[\bSigma(\btheta_{0\bPi_0})]_{st}}{n[\bSigma(\btheta_{0\bPi_0})]_{st}\{1 - [\bSigma(\btheta_{0\bPi_0})]_{st}\}}D\bSigma(\btheta_{0\bPi_0})\transpose\vect(\bE_{st})\\
& = \sum_{s,t\in [K]}Z_{st}D\bSigma(\btheta_{0\bPi_0})\transpose\vect(\bE_{st})\\
&\overset{\calL}{\to}\mathrm{N}\left(\zero_d, \sum_{s,t\in [K]}\frac{[\bPi_0\bpi]_r[\bPi_0\bpi]_s
D\bSigma(\btheta_{0\bPi_0})\transpose\vect(\bE_{st})\vect(\bE_{st})\transpose D\bSigma(\btheta_{0\bPi_0})
}{2[\bSigma(\btheta_{0\bPi_0})]_{st}\{1 - [\bSigma(\btheta_{0\bPi_0})]_{st}\}}\right)\\
& = \mathrm{N}(\zero_d, \bJ_{\bPi_0}(\btheta_{0\bPi_0})).
\end{align*}
By result (b) and the Slutsky's theorem, we conclude that
\begin{align*}
n\bJ_{\bPi_0}(\btheta_{0\bPi_0})^{1/2}(\widehat{\btheta}_{n,\sigma_0}^* - \btheta_{0\bPi_0})\overset{\calL}{\to}\mathrm{N}(\zero_d, \eye_d).
\end{align*}
\end{proof}

\begin{proof}[Proof of Theorem \ref{thm:OSE_SBM}]
By the strong consistency of $\widehat{\tau}$, there exists a sequence of permutations $(\omega_n)_{n = 1}^\infty$ such that
\[
\prob_0\left[\sum_{i = 1}^n\mathbbm{1}\{\widehat{\tau}(i) \neq \omega_n\circ\tau_0(i)\} = 0\right]\to 1.
\]
Also, for any permutation $\omega:[K]\to[K]$ and the associated permutation matrix $\bPi_\omega$ such that
\[
\bPi_\omega\begin{bmatrix*}
1\\\vdots\\K
\end{bmatrix*} = \begin{bmatrix*}
\omega^{-1}(1)\\\vdots\\\omega^{-1}(K)
\end{bmatrix*},
\]
denote 
\[
Z_{n,\omega} := n\bJ_{\bPi_\omega}(\btheta_{0\bPi_{\omega}})^{1/2}({\btheta}_{n,\omega\circ\tau_0}^* - \btheta_{0\bPi_{\omega}}).
\]
By Lemma \ref{lemma:OSE_oracle}, $Z_{n,\omega}\overset{\calL}{\to}\mathrm{N}(\zero_d, \eye_d)$. Therefore, for any measurable set $A\subset\mathbb{R}^d$, 
\begin{align*}
\min_{\omega:[K]\to [K]}\prob_0(Z_{n,\omega}\in A)\to \int_A\phi(\bx\mid\zero_d,\eye_d)\mathrm{d}\bx,\\
\max_{\omega:[K]\to [K]}\prob_0(Z_{n,\omega}\in A)\to \int_A\phi(\bx\mid\zero_d,\eye_d)\mathrm{d}\bx,
\end{align*}
where the minimum and maximum are taken with regard to all permutations $\omega:[K]\to [K]$. 
Now let $A\subset\mathbb{R}^{d}$ be measurable. First note that
\begin{align*}
0&\leq \prob_0\left[n\bJ_{\bPi_{\omega_n}}(\btheta_{0\bPi_{\omega_n}})^{1/2}(\widehat{\btheta}_n - \btheta_{0\bPi_{\omega_n}})\in A, 
\sum_{i = 1}^n\mathbbm{1}\{\widehat{\tau}(i) \neq \omega_n\circ\tau_0(i)\}
 > 0\right]\\
&\leq \prob_0\left[Z_{n,\omega_n}\in A, 
\sum_{i = 1}^n\mathbbm{1}\{\widehat{\tau}(i) \neq \omega_n\circ\tau_0(i)\} > 0\right]\\
&\leq \prob_0\left[
\sum_{i = 1}^n\mathbbm{1}\{\widehat{\tau}(i) \neq \omega_n\circ\tau_0(i)\}
 > 0\right] \to 0.
\end{align*}
Let $\bPi_n := \bPi_{\omega_n}$. Using the asymptotic normality of $Z_{n,\omega}$, we have
\begin{align*}
&\prob_0\left[n\bJ_{\bPi_n}(\btheta_{0\bPi_n})^{1/2}(\widehat{\btheta}_n - \btheta_{0\bPi_n})\in A, 
\sum_{i = 1}^n\mathbbm{1}\{\widehat{\tau}(i) \neq \omega_n\circ\tau_0(i)\}
 = 0\right]\\
&\quad = \prob_0\left[n\bJ_{\bPi_n}(\btheta_{0\bPi_n})^{1/2}({\btheta}_{n, \omega_n\circ\tau_0}^* - \btheta_{0\bPi_n})\in A, 
\sum_{i = 1}^n\mathbbm{1}\{\widehat{\tau}(i) \neq \omega_n\circ\tau_0(i)\} = 0
\right]\\
&\quad \leq \prob_0(Z_{n,\omega_n}\in A)\leq \max_{\omega:[K]\to [K]}\prob_0(Z_{n,\omega}\in A) \to \int_A\phi(\bx\mid\zero_d,\eye_d)\mathrm{d}\bx,\\
&\prob_0\left[n\bJ_{\bPi_n}(\btheta_{0\bPi_n})^{1/2}(\widehat{\btheta}_n - \btheta_{0\bPi_n})\in A, 
\sum_{i = 1}^n\mathbbm{1}\{\widehat{\tau}(i) \neq \omega_n\circ\tau_0(i)\} = 0
\right]\\
&\quad = \prob_0\left[n\bJ_{\bPi_n}(\btheta_{0\bPi_n})^{1/2}(\btheta_{n,\omega_n\circ\tau_0}^* - \btheta_{0\bPi_n})\in A, 
\sum_{i = 1}^n\mathbbm{1}\{\widehat{\tau}(i) \neq \omega_n\circ\tau_0(i)\}
 = 0\right]\\
&\quad = \prob_0(Z_{n,\omega_n}\in A) + \prob_0\left[\sum_{i = 1}^n\mathbbm{1}\{\widehat{\tau}(i) \neq \omega_n\circ\tau_0(i)\} = 0\right]\\
&\quad\quad - \prob_0\left[\left\{Z_{n,\omega_n}\in A\right\}\cup
\left\{
\sum_{i = 1}^n\mathbbm{1}\{\widehat{\tau}(i) \neq \omega_n\circ\tau_0(i)\} = 0
\right\}\right]\\
&\quad \geq \prob_0(Z_{n,\omega_n}\in A) + \prob_0\left[\sum_{i = 1}^n\mathbbm{1}\{\widehat{\tau}(i) \neq \omega_n\circ\tau_0(i)\} = 0
\right] - 1\\
&\quad \geq \min_{\omega:[K]\to [K]}\prob_0(Z_{n,\omega_n}\in A) + 1 - o(1) - 1
% \\&\quad
 \to \int_A\phi(\bx\mid\zero_d,\eye_d)\mathrm{d}\bx.
\end{align*}
Namely, 
\[
\prob_0\left[n\bJ_{\bPi_n}(\btheta_{0\bPi_n})^{1/2}(\widehat{\btheta}_n - \btheta_{0\bPi_n})\in A, \sum_{i = 1}^n\mathbbm{1}\{\widehat{\tau}(i) \neq \omega_n\circ\tau_0(i)\} = 0\right]
 \to\int_A\phi(\bx\mid\zero_d,\eye_d)\mathrm{d}\bx.
\]
Hence, for any measurable set $A\subset\mathbb{R}^{d}$, 
\begin{align*}
&\prob_0\left[n\bJ_{\bPi_n}(\btheta_{0\bPi_n})^{1/2}(\widehat{\btheta}_n - \btheta_{0\bPi_n}) \in A\right]\\
&\quad = \prob_0\left[n\bJ_{\bPi_n}(\btheta_{0\bPi_n})^{1/2}(\widehat{\btheta}_n - \btheta_{0\bPi_n})\in A, 
\sum_{i = 1}^n\mathbbm{1}\{\widehat{\tau}(i) \neq \omega_n\circ\tau_0(i)\} = 0\right]\\
&\quad\quad + \prob_0\left[n\bJ_{\bPi_n}(\btheta_{0\bPi_n})^{1/2}(\widehat{\btheta}_n - \btheta_{0\bPi_n})\in A, \sum_{i = 1}^n\mathbbm{1}\{\widehat{\tau}(i) \neq \omega_n\circ\tau_0(i)\} > 0\right]\\
&\quad = \prob_0\left[n\bJ_{\bPi_n}(\btheta_{0\bPi_n})^{1/2}(\widehat{\btheta}_n - \btheta_{0\bPi_n})\in A, 
\sum_{i = 1}^n\mathbbm{1}\{\widehat{\tau}(i) \neq \omega_n\circ\tau_0(i)\} = 0\right] + o(1)\\
% & = \prob_0\left[ n\{\widehat{\btheta}^{(\mathrm{OS})}(\tau_0) - \btheta_0\}\in A\right] + o(1)\\
&\quad\to \int_A\phi(\bx\mid\zero_d,\eye_d)\mathrm{d}\bx.
\end{align*}
This completes the proof of the claim that $n\bJ_{\bPi_n}(\btheta_{0\bPi_n})^{1/2}(\widehat{\btheta}_n - \btheta_{0\bPi_n})\overset{\calL}{\to}\mathrm{N}(\zero_d, \eye_d)$.
\end{proof}

% subsection proof_of_theorem_thm:ose_sbm (end)

% section proofs_for_section_sub:stochastic_block_model (end)

\section{Proofs for Section \ref{sub:biclustering}} % (fold)
\label{sec:proofs_for_section_sub:biclustering}

This section provides the proofs of Theorems \ref{thm:Kmeans_biclustering} and Theorems \ref{thm:asymptotic_normality_biclustering}. In preparation for doing so, we need several technical lemmas that will be established in Section \ref{sub:technical_lemmas_for_section_sub:biclustering}. Subsequently, Sections \ref{sub:proof_of_theorem_thm:kmeans_biclustering} and \ref{sub:proof_of_theorem_thm:asymptotic_normality_biclustering} present the proofs of two main theorems of interest. We begin this section with the introduction of several notations that are designed for the proofs in this section. Let $\bU\bS\bV\transpose$ be the singular value decomposition of $\bY^*_0$, where $\bU\in\mathbb{O}(m, r)$, $\bV\in\mathbb{O}(n, r)$, and $\bS = \mathrm{diag}\{\sigma_1(\bY_0^*),\ldots,\sigma_r(\bY_0^*)\}$. For any cluster assignment functions $\tau:[m]\to [p_1],\gamma:[n]\to [p_2]$, let $m_s(\tau) := \sum_{i = 1}^m\mathbbm{1}\{\tau(i) = s\}$, $n_t(\gamma):= \sum_{j = 1}^n\mathbbm{1}\{\gamma(j) = t\}$, $s\in [p_1],t\in [p_2]$. For any $s\in [p_1],t\in [p_2]$, let $\bE_{st}$ be the $p_1\times p_2$ matrix of all zeros except $1$ at the $(s,t)$th element. Denote
 % the singular value decomposition of $\expect_0\bY$ as $\expect_0\bY
 % = \bP\bSigma\bQ\transpose = \bU\bS\bV\transpose$, where $\bS\in\mathbb{R}^{r\times r}$ is diagonal with diagonal entries being singular values of $\expect_0\bY$. Also, denote 
 the singular value decomposition of $\bP_0\bW_1 = \bU_\bP\bS_\bP\bV_\bP\transpose$, $\bQ_0\bW_2 = \bU_\bQ\bS_\bQ\bV_\bQ\transpose$, where $\bV_\bP, \bV_\bQ\in\mathbb{O}(r)$, $\bU_\bP\in\mathbb{O}(m, r)$, $\bU_\bQ\in\mathbb{O}(n, r)$, $\bS_\bP$ and $\bS_\bQ$ are diagonal matrix of singular values: $\bS_\bP = \mathrm{diag}\{\sigma_1(\bP_0\bW_1),\ldots,\sigma_r(\bP_0\bW_1)\}$, and $\bS_\bQ = \mathrm{diag}\{\sigma_1(\bQ_0\bW_2),\ldots,\sigma_r(\bQ_0\bW_2)\}$. Denote $\be_j(m)$ be the standard basis vector in $\mathbb{R}^{m}$, where the $j$th coordinate of $\be_j(m)$ is $1$, and the rest of the coordinates of $\be_j(m)$ are zeros. 

\subsection{Technical lemmas for Section \ref{sub:biclustering}} % (fold)
\label{sub:technical_lemmas_for_section_sub:biclustering}

\begin{lemma}\label{thm:two_to_infinity_biclustering}
Uthe notations and setup in Sections \ref{sub:euclidean_representation_of_subspaces}, \ref{sub:extension_to_general_rectangular_matrices}, and \ref{sub:biclustering}, there exists $\bW_\bU,\bW_\bV\in\mathbb{O}(r)$ such that
\begin{align*}
&\|\widehat{\bU} - \bU\bW_\bU\|_{2\to\infty} = O_{\prob_0}\left\{
\frac{1}{\sqrt{m}}\left(\frac{\log m}{\sqrt{n}} + \frac{1}{\sqrt{m}}\right)
\right\},\\
&\|\widehat{\bV} - \bV\bW_\bV\|_{2\to\infty} = O_{\prob_0}\left\{
\frac{1}{\sqrt{n}}\left(\frac{\log n}{\sqrt{m}} + \frac{1}{\sqrt{n}}\right)
\right\}.
\end{align*}
\end{lemma}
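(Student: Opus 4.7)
The plan is to combine a coarse Wedin-type spectral bound with sharper row-wise concentration of the noise $\bE$ through a two-to-infinity perturbation identity. I will first collect structural facts about the signal. Because $\bY_0^* = \bP_0 \bSigma_0 \bQ_0\transpose$ with cluster-size matrices $\bP_0\transpose\bP_0 = \mathrm{diag}(m_1,\ldots,m_{p_1})$ and $\bQ_0\transpose\bQ_0 = \mathrm{diag}(n_1,\ldots,n_{p_2})$ whose diagonal entries grow linearly in $m$ and $n$, the factorizations $\bP_0 \bW_1 = \bU_\bP \bS_\bP \bV_\bP\transpose$ and $\bQ_0 \bW_2 = \bU_\bQ \bS_\bQ \bV_\bQ\transpose$ yield $\sigma_r(\bY_0^*) \asymp \sqrt{mn}$, and the fact that each row of $\bP_0\bW_1$ is one of only $p_1$ distinct vectors of Euclidean norm $O(1)$ forces $\|\bU\|_{2\to\infty} \lesssim 1/\sqrt{m}$; the analogous bound $\|\bV\|_{2\to\infty} \lesssim 1/\sqrt{n}$ also holds. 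Standard sub-Gaussian matrix concentration then gives $\|\bE\|_2 = O_{\prob_0}(\sqrt{m}+\sqrt{n})$, so Weyl's inequality yields $\sigma_r(\bY) \asymp \sqrt{mn}$ and Wedin's theorem delivers $\|\sin\Theta(\widehat{\bU},\bU)\|_{\mathrm{F}} \vee \|\sin\Theta(\widehat{\bV},\bV)\|_{\mathrm{F}} = O_{\prob_0}(1/\sqrt{m\wedge n})$.

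Next, I will sharpen to the two-to-infinity norm. For each $i \in [m]$, the row $[\bE\bV]_{i*} = \sum_{j=1}^n E_{ij}[\bV]_{j*}$ is a sum of $n$ independent, mean-zero, sub-Gaussian vectors whose sub-Gaussian norms are bounded by $\sigma\|[\bV]_{j*}\|_2 \lesssim 1/\sqrt{n}$; a Hanson--Wright inequality combined with a union bound over $i \in [m]$ gives $\|\bE\bV\|_{2\to\infty} = O_{\prob_0}(\sqrt{\log m})$, and symmetrically $\|\bE\transpose\bU\|_{2\to\infty} = O_{\prob_0}(\sqrt{\log n})$. I then choose $\bW_\bU$ and $\bW_\bV$ as the orthogonal Procrustes aligners and invoke a two-to-infinity singular subspace perturbation theorem of the type developed in \cite{cape2019,abbe2020}. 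Such a result produces a decomposition
\[
\widehat{\bU}-\bU\bW_\bU = \bE \bV \bS^{-1} \bW_{\star} + \bR,
\]
where the leading term contributes $O_{\prob_0}(\sqrt{\log m}/\sqrt{mn})$ in the two-to-infinity norm, while the residual $\bR$ is controlled in the same norm by $\|\bU\|_{2\to\infty} \|\bE\|_2/\sigma_r(\bY_0^*) + \|\bU\|_{2\to\infty}\|\sin\Theta(\widehat{\bU},\bU)\|_{\mathrm{F}}$, which is $O_{\prob_0}(1/m + 1/\sqrt{mn})$. Collecting the pieces delivers the stated bound for $\widehat{\bU}$, and the symmetric argument with the roles of $m$ and $n$ interchanged yields the bound for $\widehat{\bV}$.

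The main obstacle is controlling the residual $\bR$, because $\widehat{\bU}$ and $\bE$ are statistically dependent and a naive substitution of spectral-norm bounds into the identity would produce a noise contribution of order $\|\bE\|_{2\to\infty}\|\sin\Theta\|_{\mathrm{F}}/\sigma_r(\bY_0^*)$, which fails to match the claimed rate. The standard remedy is a leave-one-out construction: define $\bY^{(i)}$ as $\bY$ with its $i$-th row replaced by its mean $[\bY_0^*]_{i*}$, let $\widehat{\bU}^{(i)}$ denote the corresponding leading left singular vector matrix, and exploit that $[\widehat{\bU}^{(i)}]_{i*}$ is independent of $[\bE]_{i*}$ to decouple the cross term. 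The rectangular, asymmetric setting forces this decoupling to be performed in both coordinates, but the independent sub-Gaussian structure of $\bE$ and the symmetry between $\widehat{\bU}$ and $\widehat{\bV}$ keep the argument essentially routine.
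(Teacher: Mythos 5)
Your overall strategy — structural facts about the signal, then a two-to-infinity singular subspace perturbation bound — is the same one the paper takes, and the facts you assemble (the $\sqrt{mn}$ scale of $\sigma_r(\bY_0^*)$, $\|\bU\|_{2\to\infty}\lesssim 1/\sqrt{m}$, $\|\bE\|_2 = O_{\prob_0}(\sqrt{m}+\sqrt{n})$, the two-to-infinity concentration of $\bE\bV$) all appear in the paper's proof. But at the crucial step you diverge. You write down an Abbe et al.\ (2020)-style \emph{equality} $\widehat{\bU}-\bU\bW_\bU = \bE\bV\bS^{-1}\bW_\star + \bR$ and then flag the residual $\bR$ as the main obstacle, since $\widehat{\bU}$ and $\bE$ are dependent and naive bounding fails; your proposed cure is a leave-one-out construction. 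This is unnecessary. The paper instead invokes Theorem~3.7 of Cape, Tang, and Priebe (the reference \texttt{cape2019}), which is a \emph{deterministic} two-to-infinity bound: it asserts, without any distributional assumption, that for some orthogonal alignments
\begin{align*}
\|\widehat\bU - \bU\bW_\bU\|_{2\to\infty}
&\leq 2\,\frac{\|(\eye - \bU\bU\transpose)\bE\bV\bV\transpose\|_{2\to\infty}}{\sigma_r(\bY_0^*)}
+ 2\,\frac{\|(\eye - \bU\bU\transpose)\bE(\eye - \bV\bV\transpose)\|_{2\to\infty}}{\sigma_r(\bY_0^*)}\,\|\sin\Theta(\widehat{\bV}, \bV)\|_2\\
&\quad + \|\sin\Theta(\widehat{\bU}, \bU)\|_2^2\,\|\bU\|_{2\to\infty},
\end{align*}
and symmetrically for $\widehat{\bV}$. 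Because this is an inequality in terms of fixed norms of $\bE$, $\bU$, $\bV$, and the sine-theta distances — not an identity separating signal and noise — the dependence between $\widehat{\bU}$ and $\bE$ that motivates leave-one-out simply never arises. Each term is controlled directly by the facts you already have, and the proof closes in a few lines.

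Two smaller points. First, in the Cape bound the paper needs the \emph{unilateral} singular-subspace perturbation rates $\|\sin\Theta(\widehat{\bU},\bU)\|_2 = O_{\prob_0}(1/\sqrt{n})$ and $\|\sin\Theta(\widehat{\bV},\bV)\|_2 = O_{\prob_0}(1/\sqrt{m})$ from Cai and Zhang (the reference \texttt{cai2018}), not the symmetric Wedin rate $O_{\prob_0}(1/\sqrt{m\wedge n})$ that you quote. When $m$ and $n$ are badly unbalanced (say $m\gg n$), the Wedin rate is too weak: the middle term of the Cape bound would become of order $1/n$, which does not fit the claimed rate $\log m /\sqrt{mn} + 1/m$ without additional assumptions relating $m$ and $n$. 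Second, $[\bE\bV]_{ik} = \sum_j [\bE]_{ij}[\bV]_{jk}$ is a linear form in the independent sub-Gaussian entries of $\bE$, so the elementary Hoeffding-type sub-Gaussian tail plus a union bound suffices; invoking Hanson--Wright is both unnecessary and technically off-target since that inequality is for quadratic forms.

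In summary: your ingredients are the right ones, but the route through a decomposition with leave-one-out is considerably heavier than needed. Using Cape's deterministic Theorem~3.7 together with the unilateral Cai--Zhang sine-theta rates makes the proof short and removes the obstacle you identify as central.
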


\begin{proof}[Proof of Lemma \ref{thm:two_to_infinity_biclustering}]
% Suppose $\bSigma$ has singular value decomposition $\bSigma = \bW_1\bD\bW_2\transpose$, where $\bW_1\in\mathbb{O}(p_1, r)$, $\bW_2 \in\mathbb{O}(p_2, r)$, and $\bD = \mathrm{diag}\{\sigma_1(\bSigma),\ldots,\sigma_r(\bSigma)\}$ with $\sigma_1(\bSigma)\geq\ldots\geq\sigma_r(\bSigma) > 0$. 
First write
 \[
 \bU\bS\bV\transpose = \bP_0\bSigma_0\bQ_0\transpose = \bP_0\bW_1\bD\bW_2\transpose\bQ_0\transpose = \bU_\bP\bS_\bP\bV_\bP\transpose\bD\bV_\bQ\bS_\bQ\bU_\bQ\transpose.
 \]
 Note that $\bS_\bP\bV_\bP\transpose\bD\bV_\bQ\bS_\bQ$ is a $r\times r$ full rank matrix. This implies that $\mathrm{Span}(\bU) = \mathrm{Span}(\bU_\bP)$ and $\mathrm{Span}(\bV) = \mathrm{Span}(\bV_\bP)$. In addition, 
\begin{align*}
\max_{s\in[p_1]}m_s(\tau_0)\eye_r& = \max_{s\in[p_1]}m_s(\tau_0)\bW_1\transpose\bW_1\succeq \bW_1\transpose\mathrm{diag}\{m_1(\tau_0),\ldots,m_{p_1}(\tau_0)\}\bW_1\\
& = \bW_1\transpose\bP_0\transpose\bP_0\bW_1 = \bV_\bP\bS_\bP^2\bV_\bP\transpose\succeq
% \min_{s\in[p_1]}m_s\bV_\bP\bV_\bP\transpose = 
\min_{s\in[p_1]}m_s(\tau_0)\eye_r,\\
\max_{t\in[p_2]}n_t(\gamma_0)\eye_r& = \max_{t\in[p_2]}n_t(\gamma_0)\bW_2\transpose\bW_2\succeq \bW_2\transpose\mathrm{diag}\{n_1(\gamma_0),\ldots,n_{p_2}(\gamma_0)\}\bW_2\\
& = \bW_2\transpose\bQ_0\transpose\bQ_0\bW_2 = \bV_\bQ\bS_\bQ^2\bV_\bQ\transpose\succeq
% \min_{s\in[p_1]}m_s\bW_1\transpose\bW_1 = 
\min_{t\in[p_2]}n_t(\gamma_0)\eye_r.
\end{align*}
 Therefore,
 \begin{align*}
& \sqrt{m} \asymp\sqrt{\min_{s\in [p_1]}m_s(\tau_0)}\leq \sigma_{\min}(\bS_\bP)\leq \sigma_{\max}(\bS_\bP) \leq 
 				\sqrt{\max_{s\in [p_1]}m_s(\tau_0)} \asymp \sqrt{m},\\
& \sqrt{n} \asymp\sqrt{\min_{t\in [p_2]}n_t(\gamma_0)}\leq \sigma_{\min}(\bS_\bQ)\leq \sigma_{\max}(\bS_\bQ) \leq 
 				\sqrt{\max_{t\in [p_2]}n_t(\gamma_0)} \asymp \sqrt{n},
 \end{align*}
 and hence,
 \begin{align*}
 &\|\bU\|_{2\to\infty} = \|\bU_\bP\|_{2\to\infty} = \|\bP_0\bW_1\bV_\bP\bS_\bP^{-1}\|_{2\to\infty}
 \leq \|\bP_0\|_{2\to\infty}\|\bS_\bP^{-1}\|_2\lesssim \frac{1}{\sqrt{m}},\\
 &\|\bV\|_{2\to\infty} = \|\bU_\bQ\|_{2\to\infty} = \|\bQ_0\bW_2\bV_\bQ\bS_\bQ^{-1}\|_{2\to\infty}
 \leq \|\bQ_0\|_{2\to\infty}\|\bS_\bQ^{-1}\|_2\lesssim \frac{1}{\sqrt{n}}.
 \end{align*}
 Also observe that for any square matrix $\bA,\bB\in\mathbb{R}^{r\times r}$, 
 \begin{align*}
 \sigma_r(\bA\bB) &= \min_{\|\bx\|_2 = 1}\|\bA\bB\bx\|_2\geq \min_{\|\bx\|_2 = 1}\sigma_r(\bA)\|\bB\bx\|_2\\
 &\geq \min_{\|\bx\|_2 = 1}\sigma_r(\bA)\sigma_r(\bB)\|\bx\|_2 = \sigma_r(\bA)\sigma_r(\bB).
 \end{align*}
 Note that $\mathrm{Span}(\bU) = \mathrm{Span}(\bU_\bP)$ and $\mathrm{Span}(\bV) = \mathrm{Span}(\bU_\bQ)$, so that there exists orthogonal matrices $\bR_\bP,\bR_\bQ\in\mathbb{O}(r)$, such that $\bU = \bU_\bP\bR_\bP$ and $\bV = \bU_\bQ\bR_\bQ$. Hence,
 \begin{align*}
&\bU\bS\bV\transpose = \bU_\bP\bS_\bP\bV_\bP\transpose\bD\bV_\bQ\bS_\bQ\bU_\bQ\transpose\\
&\quad
\Longrightarrow
\quad
\bS = (\bU\transpose\bU_\bP\bS_\bP\bV_\bP\transpose)\bD(\bV_\bQ\bS_\bQ\bU_\bQ\transpose\bV)
 = (\bR_\bP\transpose\bS_\bP\bV_\bP\transpose)\bD(\bV_\bQ\bS_\bQ\bR_\bQ),
\end{align*}
and hence,
\begin{align*}
\sigma_r(\bP_0\bSigma_0\bQ_0\transpose)
& = \sigma_r(\bS)
\geq \sigma_r(\bR_\bP\transpose\bS_\bP\bV_\bP\transpose)\sigma_r(\bD)\sigma_r(\bV_\bQ\bS_\bQ\bR_\bQ)
% \\
% &\geq \sigma_r(\bSigma)\sqrt{\min_{s\in[p_1]}m_s\min_{t\in [p_2]}n_t}
\gtrsim \sqrt{mn}
\end{align*}
because $\sigma_r(\bD) = \sigma_r(\bSigma_0) > 0$ and $\sigma_r(\bS_\bP)\asymp \sqrt{m}$, $\sigma_r(\bS_\bQ)\asymp\sqrt{n}$. By Corollary 3.3 in \cite{bandeira2016}, we see that $\|\bE\|_2 = O_{\prob_0}(\sqrt{m} + \sqrt{n})$ with $\sqrt{m} + \sqrt{n} = o\{\sigma_r(\bP_0\bSigma_0\bQ_0\transpose)\}$. Therefore, by Theorem 3.7 in \cite{cape2019}, there exists orthogonal matrices $\bW_\bU,\bW_\bV\in\mathbb{O}(r)$, such that
\begin{equation}
\label{eqn:two_to_infinity_bound_I_biclustering}
\begin{aligned}
\|\widehat\bU - \bU\bW_\bU\|_{2\to\infty}
&\leq 2\left\{\frac{\|(\eye_r - \bU\bU\transpose)\bE\bV\bV\transpose\|_{2\to\infty}}{\sigma_r(\bP_0\bSigma_0\bQ_0\transpose)}\right\}\\
&\quad + 2\left\{\frac{\|(\eye_r - \bU\bU\transpose)\bE(\eye_r - \bV\bV\transpose)\|_{2\to\infty}}{\sigma_r(\bP_0\bSigma_0\bQ_0\transpose)}\right\}\|\sin\Theta(\widehat{\bV}, \bV)\|_2\\
&\quad + \|\sin\Theta(\widehat{\bU}, \bU)\|_2^2\|\bU\|_{2\to\infty},
\end{aligned}
\end{equation}
\begin{equation}
\label{eqn:two_to_infinity_bound_II_biclustering}
\begin{aligned}
\|\widehat\bV - \bV\bW_\bV\|_{2\to\infty}
&\leq 2\left\{\frac{\|(\eye_r - \bV\bV\transpose)\bE\transpose\bU\bU\transpose\|_{2\to\infty}}{\sigma_r(\bP_0\bSigma_0\bQ_0\transpose)}\right\}\\
&\quad + 2\left\{\frac{\|(\eye_r - \bV\bV\transpose)\bE\transpose(\eye_r - \bU\bU\transpose)\|_{2\to\infty}}{\sigma_r(\bP_0\bSigma_0\bQ_0\transpose)}\right\}\|\sin\Theta(\widehat{\bU}, \bU)\|_2\\
&\quad + \|\sin\Theta(\widehat{\bV}, \bV)\|_2^2\|\bV\|_{2\to\infty}.
\end{aligned}
\end{equation}
The proof proceeds by working on the individual terms on the right-hand side of \eqref{eqn:two_to_infinity_bound_I_biclustering} and \eqref{eqn:two_to_infinity_bound_II_biclustering}. To this end, we collect the following facts:
\begin{itemize}
	\item[(i)] $\|\bE\|_2 = O_{\prob_0}(\sqrt{m} + \sqrt{n})$. This is a consequence of Corollary 3.3 in \cite{bandeira2016}. 
	\item[(ii)] $\|\sin\Theta(\widehat{\bU}, \bU)\|_2 = O_{\prob_0}(1/\sqrt{n})$ and $\|\sin\Theta(\widehat{\bV}, \bV)\|_2 = O_{\prob_0}(1/\sqrt{m})$. This is a consequence of the unilateral perturbation bound of singular subspace for random matrices due to \cite{cai2018} (see Theorem 3 there). 
	\item[(iii)] $\|\bE\bV\|_{2\to\infty} = o_{\prob_0}(\log m)$ and $\|\bE\transpose\bU\|_{2\to\infty} = o_{\prob_0}(\log n)$. 
	% Denote the $(j, k)$th entry of $\bV$ by $v_{ik}$ for $i\in [m]$ and $k \in [r]$. 
	Then by definition of $\|\cdot\|_{2\to\infty}$, 
	\begin{align*}
	\|\bE\bV\|_{2\to\infty}
	&\leq \sqrt{r}\max_{i\in [m]}\max_{k\in [r]}\left|\sum_{j = 1}^m[\bE]_{ij}[\bV]_{jk}\right|.
	\end{align*}
	Since $[\bE]_{ij}$'s are sub-Gaussian random variables, then by the Hoeffding-type inequality (see, e.g., Proposition 5.10 in \citealp{vershynin2010introduction}) and the union bound, there exists some constant $c > 0$ such that
	\begin{align*}
	\prob_{0}\left(\|\bE\bV\|_{2\to\infty} > \log m\right)
	&\leq \prob_0\left(\sqrt{r}\max_{i\in [m]}\max_{k\in [r]}\left|\sum_{j = 1}^m[\bE]_{ij}[\bV]_{jk}\right| > \log m\right)\\
	&\leq \sum_{i = 1}^m\sum_{k = 1}^r\prob_0\left(\left|\sum_{j = 1}^m[\bE]_{ij}[\bV]_{jk}\right| > \frac{\log m}{\sqrt{r}}\right)\\
	&\leq rm\exp\left\{1 - \frac{c(\log m)^2}{r\sum_{j = 1}^m[\bV]_{jk}^2}\right\} = o(1).
	\end{align*}
	The result for $\|\bE\transpose\bU\|_{2\to\infty}$ follows from a symmetric argument applied to the transpose of $\bY$ and $\bY_0^*$. 
\end{itemize}
Then applying the above facts together with the two-to-infinity norm bound obtained in \eqref{eqn:two_to_infinity_bound_I_biclustering} and \eqref{eqn:two_to_infinity_bound_II_biclustering}, we obtain
\begin{align*}
\|\widehat\bU - \bU\bW_\bU\|_{2\to\infty}
&\lesssim \frac{(1 + \|\bU\bU\transpose\|_{\infty})\|\bE\bV\|_{2\to\infty}}{\sqrt{mn}}\\
&\quad + \frac{(1 + \|\bU\bU\transpose\|_{2\to\infty})\|\bE\|_2\|\eye_r - \bV\bV\transpose\|_2}{\sqrt{mn}}\|\sin\Theta(\widehat{\bV}, \bV)\|_2\\
&\quad + \frac{1}{\sqrt{m}}\|\sin\Theta(\widehat{\bU}, \bU)\|_2^2\\
&\lesssim \frac{\|\bE\bV\|_{2\to\infty}}{\sqrt{mn}} + \frac{\|\bE\|_2\|\sin\Theta(\widehat{\bV}, \bV)\|_2}{\sqrt{mn}} + \frac{1}{\sqrt{m}}\|\sin\Theta(\widehat{\bU}, \bU)\|_2^2\\
& = o_{\prob_0}\left(\frac{\log m}{\sqrt{mn}}\right) + O_{\prob_0}\left\{\frac{(\sqrt{m} + \sqrt{n})}{\sqrt{mn}}\frac{1}{\sqrt{m}}\right\} + O_{\prob_0}\left(\frac{1}{n\sqrt{m}}\right)\\
% & = o_{\prob_0}\left\{\frac{1}{\sqrt{m}}\left(\frac{\log m}{\sqrt{n}} + \frac{\sqrt{m} + \sqrt{n}}{\sqrt{mn}} + \frac{1}{n}\right)\right\}\\
& = O_{\prob_0}\left\{\frac{1}{\sqrt{m}}\left(\frac{\log m}{\sqrt{n}} + \frac{1}{\sqrt{m}}\right)\right\}.
\end{align*}
A symmetric argument applied to the right singular vector $\widehat{\bV}$ and $\bV$ implies
\[
\|\widehat{\bV} - \bV\bW_\bV\|_{2\to\infty} = O_{\prob_0}\left\{\frac{1}{\sqrt{n}}\left(\frac{\log n}{\sqrt{m}} + \frac{1}{\sqrt{n}}\right)\right\}.
\]
The proof is thus completed. 
\end{proof}

\begin{lemma}\label{lemma:DDB_theta_bound_biclustering}
Under the notations and setup in Sections \ref{sub:euclidean_representation_of_subspaces}, \ref{sub:extension_to_general_rectangular_matrices}, and \ref{sub:biclustering}, for every choice $\bar{\btheta}_0$, there exists some $\eps > 0$ such that the Jacobian
	\[
	\frac{\partial}{\partial\btheta}\{\vect(\bE_{st})\transpose D\bSigma(\btheta)\}
	\]
	is Lipschitz continuous for all $\btheta\in B_2(\bar{\btheta}_0, \eps)$ for all $s\in[p_1]$ and $t\in [p_2]$.
\end{lemma}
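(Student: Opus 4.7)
The plan is to adapt the proof of Lemma \ref{lemma:DDB_theta_bound} to the rectangular Fr\'echet derivative defined in \eqref{eqn:Cayley_transform_rectangular_DSigma}, namely
\[
D\bSigma(\btheta) = \begin{bmatrix} \bK_{p_2 p_1}(\bM\otimes \eye_{p_2})D\bU(\bvarphi) & \bU(\bvarphi)\otimes \eye_{p_1} \end{bmatrix}.
\]
Lipschitz continuity of the Jacobian on a ball $B_2(\bar\btheta_0, \eps)$ follows, via the mean value inequality, from uniform boundedness of every second-order partial derivative of each entry of $\vect(\bE_{st})\transpose D\bSigma(\btheta)$, so it suffices to verify such a uniform bound on a sufficiently small $\eps$-ball.

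First I would evaluate the Jacobian along each coordinate direction. Writing $\bvarphi_{kl} := \vect\{\be_k(p_2 - r)\be_l(r)\transpose\}$ for $k\in [p_2 - r]$ and $l\in [r]$, and denoting $\bC(\bvarphi) := (\eye_{p_2} - \bX_\bvarphi)^{-1}$, the identity $\bK_{p_2 p_1}\vect(\bA) = \vect(\bA\transpose)$ together with the formula $D\bU(\bvarphi)\bvarphi_{kl} = 2\vect\{\bC(\bvarphi)\bX_{\bvarphi_{kl}}\bC(\bvarphi)\transpose\eye_{p_2\times r}\}$ lets me rewrite $\vect(\bE_{st})\transpose D_\bvarphi\bSigma(\btheta)\bvarphi_{kl}$ as a trace of a product of $\bE_{st}$, $\bM$, $\bC(\bvarphi)$, $\bX_{\bvarphi_{kl}}$, and the selector $\eye_{p_2\times r}$. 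For the $\bmu$-block, $\vect(\bE_{st})\transpose D_\bmu\bSigma(\btheta) = \vect(\bE_{st})\transpose(\bU(\bvarphi)\otimes\eye_{p_1})$ is independent of $\bmu$ and linear in $\bU(\bvarphi)$, so its Jacobian involves only a single derivative of $\bU(\bvarphi)$ in $\bvarphi$.

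Next I would differentiate these expressions once more with respect to $\btheta$ by iterating Theorem 9 of \cite{MAGNUS1985474} together with the identity $\partial\vect\{\bC(\bvarphi)\}/\partial\bvarphi\transpose = \{\bC(\bvarphi)\transpose\otimes\bC(\bvarphi)\}\bGamma_\bvarphi$ that was derived in the proof of Lemma \ref{lemma:DDB_theta_bound}. The resulting expressions are sums of Kronecker products of $\bC(\bvarphi)$, $\bX_{\bvarphi_{kl}}$, $\bM$, $\bU(\bvarphi)$, and $\bGamma_\bvarphi$. The universal estimates $\|\bC(\bvarphi)\|_2\leq 1$ (by skew-symmetry of $\bX_\bvarphi$, as in the proof of Theorem \ref{thm:second_order_deviation_CT}), $\|\bU(\bvarphi)\|_2 = 1$, and $\|\bGamma_\bvarphi\|_2 = \sqrt{2}$, together with boundedness of $\bM$ on $B_2(\bar\btheta_0, \eps)$, will then allow an iterative application of Lemma \ref{lemma:matrix_differential_lemma} to conclude that every second-order partial derivative is uniformly bounded on $B_2(\bar\btheta_0, \eps)$.

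The main obstacle will be the bookkeeping with Kronecker products and commutation matrices: the rectangular setting differs from the symmetric one in that $\bM$ is now a rectangular full-rank matrix and the $\bmu$-block of $D\bSigma$ no longer contains the duplication matrix $\mathbb{D}_r$, so the six quantities $D_1(\btheta),\ldots,D_6(\btheta)$ analogous to those in the proof of Lemma \ref{lemma:DDB_theta_bound} must each be recomputed with the revised Kronecker structure. However, no new analytic ideas are required; the estimates rest on the same universal norm bounds for the Cayley parameterization that drive the symmetric case, and the argument reduces to a direct, if tedious, transcription.
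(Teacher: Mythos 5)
Your proposal follows the same strategy as the paper's proof: reduce to the symmetric case (Lemma~\ref{lemma:DDB_theta_bound}), differentiate the coordinate entries $\vect(\bE_{st})\transpose D\bSigma(\btheta)$ along the $\bvarphi_{kl}$ and $\bmu$ directions via Theorem~9 of \cite{MAGNUS1985474} and the identity $\partial\vect\{\bC(\bvarphi)\}/\partial\bvarphi\transpose = \{\bC(\bvarphi)\transpose\otimes\bC(\bvarphi)\}\bGamma_\bvarphi$, then invoke Lemma~\ref{lemma:matrix_differential_lemma} together with the universal bounds $\|\bC(\bvarphi)\|_2\leq 1$, $\|\bGamma_\bvarphi\|_2 = \sqrt{2}$ to get local uniform bounds on the second derivatives. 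Aside from a small typographical slip ($D\bU(\bvarphi)\bvarphi_{kl} = 2\vect\{\bC(\bvarphi)\bX_{\bvarphi_{kl}}\bC(\bvarphi)\eye_{p_2\times r}\}$, without a transpose on the second $\bC$), this is the paper's argument.
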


\begin{proof}[Proof of Lemma \ref{lemma:DDB_theta_bound_biclustering}]
The proof is similar to that of Lemma \ref{lemma:DDB_theta_bound} and is included for completeness. We consider the coordinates of $\vect(\bE_{st})\transpose D\bB(\btheta)$. Recall that
\[
D\bSigma(\btheta) = \begin{bmatrix*}
\bK_{p_2p_1}\{\bM\otimes\eye_{p_2}\}D\bU(\bvarphi) & \bU(\bvarphi)\otimes\eye_{p_1}
\end{bmatrix*},
\]
% where $\bGamma_\bmu$ is such that $\bGamma_\bmu\bmu = \vect(\bM)$, and the entries of $\bmu$ are the upper diagonal entries of $\bM$. 
Denote
\[
\bvarphi_{kl} = \vect\{\be_k(p_2 - r)\be_l(r)\transpose\}
\]
 for any $k\in [p_2 - r]$ and $l\in [r]$, $\bv_{kl} = [\bvarphi_{kl}, \zero_{p_1r}\transpose]\transpose$, $\btheta = [\bvarphi\transpose, \bmu\transpose]\transpose$, $\bvarphi = \vect(\bA)$ for $\bA\in\mathbb{R}^{(p_2 - r)\times r}$,
\[
\bX_\bvarphi = \begin{bmatrix*}
\zero_{r\times r} & -\bA\transpose\\ \bA & \zero_{(p_2 - r)\times (p_2 - r)}
\end{bmatrix*},
\]
and $\bC(\bvarphi) = (\eye_{p_2} - \bX_\bvarphi)^{-1}$. It follows immediately that $\|\bC(\bvarphi)\|_2\leq 1$ for all $\bvarphi$. Furthermore, for any $e\in [p_1]$ and $f\in [r]$, denote
% \[
$\bF_{ef} = \be_e(p_1)\be_f(r)\transpose$.
% \]
Then to show that
\[
	\frac{\partial}{\partial\btheta}\{\vect(\bE_{st})\transpose D\bSigma(\btheta)\}
\]
is Lipschitz continuous within a small neigborhood of $\bar{\btheta}_0$, it suffices to show that
\begin{align}
\label{eqn:Lipschitz_continuity_DSigma_rectangular_I}
\sup_{\btheta:\|\btheta - \bar\btheta_0\|_2 < \eps}\left\|\frac{\partial^2}{\partial\btheta\partial\btheta\transpose}\vect(\bE_{st})\transpose D\bSigma(\btheta)
\bv_{kl}\right\| < \infty
\end{align}
and
\begin{align}
\label{eqn:Lipschitz_continuity_DSigma_rectangular_II}
\sup_{\btheta:\|\btheta - \bar\btheta_0\|_2 < \eps} 
\left\|
\frac{\partial^2}{\partial\btheta\partial\btheta\transpose}\vect(\bE_{st})\transpose D\bSigma(\btheta)
\begin{bmatrix*}
\zero_{(p_1 - r)r} \\ \vect(\bF_{ef})
\end{bmatrix*}
\right\| < \infty
\end{align}
for any $k\in [p_2 - r],l\in [r],e\in [p_1], f\in [r]$. 
We begin the proof by collecting several facts:
\begin{itemize}
	\item[(i)] $(\partial/\partial\bvarphi\transpose)\vect\{\bC(\bvarphi)\} = \{\bC(\bvarphi)\transpose\otimes \bC(\bvarphi)\}\bGamma_\bvarphi$ and 
	\[
	\sup_{\btheta:\|\btheta - \bar\btheta_0\|_2 < \eps}\left\|\frac{\partial\vect\{\bC(\bvarphi)\}}{\partial\bvarphi\transpose}\right\|_{\mathrm{F}} \leq \sup_{\btheta:\|\btheta - \bar\btheta_0\|_2 < \eps}\|\bC(\bvarphi)\|_2^2\|\bGamma_\bvarphi\|_{\mathrm{F}} < \infty.
	\]
	This is an immediate consequence of the proof of Lemma \ref{lemma:DDB_theta_bound}. 
	\item[(ii)] For any $k \in [p_2 - r]$ and $l\in [r]$, 
	\begin{align*}
	&\frac{\partial\vect\{\bC(\bvarphi)\bX_{\bvarphi_{kl}}\bC(\bvarphi)\transpose\}}{\partial\btheta\transpose}	
	=
	\begin{bmatrix*}
	(\bK_{p_2p_2} - \eye_{p_2^2})\{\bC(\bvarphi)\bX_{\bvarphi_{kl}}\bC(\bvarphi)\transpose\otimes\bC(\bvarphi)\}\bGamma_\bvarphi
	&
	\zero_{p_2^2\times p_1r}
	\end{bmatrix*},\\
	&\sup_{\btheta:\|\btheta - \bar\btheta_0\|_2 < \eps}\left\|\frac{\partial\vect\{\bC(\bvarphi)\bX_{\bvarphi_{kl}}\bC(\bvarphi)\transpose\}}{\partial\btheta\transpose}
	\right\|_{\mathrm{F}} < \infty.
	\end{align*}
	This is also a direct consequence of the proof of Lemma \ref{lemma:DDB_theta_bound}. 
	\item[(iii)] For any $k \in [p_2 - r]$ and $l\in [r]$,
	\[
	\sup_{\btheta:\|\btheta - \bar\btheta_0\|_2 < \eps}\left\|\frac{\partial}{\partial\btheta\transpose}\vect\left[
	\frac{\partial\vect\{\bC(\bvarphi)\bX_{\bvarphi_{kl}}\bC(\bvarphi)\transpose\}}{\partial\btheta\transpose}
	\right]
	\right\|_{\mathrm{F}} < \infty.
	\]
	This fact has already been established in the proof of Lemma \ref{lemma:DDB_theta_bound} as well. 
	\item[(iv)] $\|D\bSigma(\btheta)\|_{\mathrm{F}}$ is finite within a small neigborhood of $\bar\btheta_0$. This follows directly from the following computation:
	\begin{align*}
	\sup_{\btheta:\|\btheta - \bar\btheta_0\|_2 < \eps}\|D\bSigma(\btheta)\|_{\mathrm{F}}
	&\leq \sup_{\btheta:\|\btheta - \bar\btheta_0\|_2 < \eps}\|\bK_{p_2p_1}(\bM\otimes \eye_{p_2})D\bU(\bvarphi)\|_{\mathrm{F}} + \|\bU(\bvarphi)\otimes\eye_{p_1}\|_{\mathrm{F}}\\
	&\leq \sup_{\btheta:\|\btheta - \bar\btheta_0\|_2 < \eps}2\|\bM\|_2\|\bC(\bvarphi)\|_2^2\|\bGamma_\bvarphi\|_{\mathrm{F}} + \sup_{\btheta:\|\btheta - \bar\btheta_0\|_2 < \eps}\|\bU(\bvarphi)\|_{\mathrm{F}}\\
	& < \infty.
	\end{align*}
	\item[(v)] $\vect\{D\bSigma(\btheta)\}$ is Lipschitz continuous in a neighborhood of $B_2(\bar{\btheta}_0, \eps)$. Recall that
\begin{align*}
D\bSigma(\btheta)
& = \begin{bmatrix*}
\bK_{p_2p_1}(\bM\otimes\eye_{p_2})D\bU(\bvarphi) & \bU(\bvarphi)\otimes\eye_{p_1}
\end{bmatrix*}
\end{align*}
Then for any $a\in [p_2 - r],b\in[r]$, we have
\begin{align*}
D\bSigma(\btheta)\begin{bmatrix*}
\bvarphi_{ab}\\\zero_{p_2r}
\end{bmatrix*}
& = \bK_{p_2p_1}(\bM\otimes \eye_{p_2})D\bU(\bvarphi)\bvarphi_{ab}\\
& = 2\bK_{p_2p_1}\vect\{\bC(\bvarphi)\bX_{\bvarphi_{ab}}\bC(\bvarphi)\transpose\bSigma(\btheta)\transpose\},
\end{align*}
and for any $e\in [p_1] $, $f\in [r]$, 
\begin{align*}
D\bSigma(\btheta)\begin{bmatrix*}
\zero_{(p_2 - r)r}\\
\vect(\bF_{ef})
\end{bmatrix*}
& = \vect\{\bF_{ef}\bU(\bvarphi)\transpose\}.
\end{align*}
In addition, we use Theorem 9 in \cite{MAGNUS1985474} again to compute matrix derivatives
\begin{align*}
&
\sup_{\btheta\in B_2(\bar\btheta_0,\eps)}\left\|\frac{\partial\vect\{\bC(\bvarphi)\bX_{\bvarphi_{ab}}\bC(\bvarphi)\transpose\bSigma(\btheta)\transpose\}}{\partial\btheta\transpose}\right\|_{\mathrm{F}}
\\
&
\quad
\leq \sup_{\btheta\in B_2(\bar\btheta_0,\eps)}\|\bSigma(\btheta)\otimes\eye_{p_2}\|_2\left\|\frac{\partial\vect\{\bC(\bvarphi)\bX_{\bvarphi_{ab}}\bC(\bvarphi)\transpose\}}{\partial\btheta\transpose}\right\|_{\mathrm{F}}\\
&
\quad
\quad + \sup_{\btheta\in B_2(\bar\btheta_0,\eps)}\|\eye_{p_1}\otimes\bC(\bvarphi)\bX_{\bvarphi_{ab}}\bC(\bvarphi)\transpose\|_2\|D\bSigma(\btheta)\|_{\mathrm{F}}\\
&\quad\leq  \sup_{\btheta\in B_2(\bar\btheta_0,\eps)}\left\|\frac{\partial\vect\{\bC(\bvarphi)\bX_{\bvarphi_{ab}}\bC(\bvarphi)\transpose\}}{\partial\btheta\transpose}\right\|_{\mathrm{F}}\\
&\quad\quad +  \sup_{\btheta\in B_2(\bar\btheta_0,\eps)}\|\bC(\bvarphi)\|_2^2\|\bX_{\bvarphi_{ab}}\transpose\|_2\|D\bSigma(\btheta)\|_{\mathrm{F}}
% \\&
<\infty,
% \\
% &\sup_{\btheta\in B_2(\bar\btheta_0,\eps)}\left\|\frac{\partial\vect\{\bU(\bvarphi)\bT_{ef}\bU(\bvarphi)\transpose\}}{\partial\btheta\transpose}\right\|_{\mathrm{F}}\\
% &\quad = \sup_{\btheta\in B_2(\bar\btheta_0,\eps)}\left\|\frac{\partial\vect\{\bU(\bvarphi)\bT_{ef}\bU(\bvarphi)\transpose\}}{\partial\bvarphi\transpose}\right\|_{\mathrm{F}}\\
% &\quad\leq \sup_{\btheta\in B_2(\bar\btheta_0,\eps)}\|(\eye_{K^2} + \bK_{KK})\{\bU(\bvarphi)\bT_{ef}\otimes\eye_K\}D\bU(\bvarphi)\|_{\mathrm{F}}
% % \\&
% % \quad
%  <\infty,
\end{align*}
which further implies that
\begin{align*}
\sup_{\btheta\in B_2(\bar\btheta_0, \eps)}\left\|\frac{\partial\vect\{D\bSigma(\btheta)\}}{\partial\btheta\transpose{}}\right\|_{\mathrm{F}} < \infty.
\end{align*}
because $\|D\bU(\bvarphi)\|_{\mathrm{F}}$ is always upper bounded by an absolute constant. 
\end{itemize}
By matrix differential calculus,
\begin{align*}
% &
\vect(\bE_{st})\transpose D\bSigma(\btheta)
\bv_{kl}
% \begin{bmatrix*}
% \bvarphi_{kl}\\
% \zero_{d(d + 1)/2}
% \end{bmatrix*}
% \\
% &\quad = 
&
=
\vect(\bE_{st})\transpose 
\bK_{p_2p_1}\{\bM\otimes\eye_{p_2}\}D\bU(\bvarphi) \bvarphi_{kl}
\\
&
=2 \vect(\bE_{st}\transpose)\transpose\vect\{
(\eye_{p_2} - \bX_\bvarphi)^{-1}\bX_{\bvarphi_{kl}}(\eye_{p_2} - \bX_\bvarphi)^{-\mathrm{T}}\bU(\bvarphi)\bM\transpose
\}\\
&
= 2\vect(\bE_{st}\transpose)\transpose\vect\{\bC(\bvarphi)\bX_{\bvarphi_{kl}}\bC(\bvarphi)\transpose\bSigma(\btheta)\transpose\}.
% &\quad 
\end{align*}
Therefore, using Theorem 9 in \cite{MAGNUS1985474}, we have
\begin{align*}
\frac{\partial}{\partial\btheta\transpose}\{\vect(\bE_{ts})\transpose D\bSigma(\btheta)
\bv_{kl}\}
 & = \vect(\bE_{st}\transpose)\transpose \frac{\partial\vect\{\bC(\bvarphi)\bX_{\bvarphi_{kl}}\bC(\bvarphi)\transpose\bSigma(\btheta)\transpose\}}{\partial\btheta\transpose}\\
 & = \vect(\bE_{st}\transpose)\transpose \left[
 \{\bSigma(\btheta)\otimes \eye_{p_2}\}\frac{\partial\vect\{\bC(\bvarphi)\bX_{\bvarphi_{kl}}\bC(\bvarphi)\transpose\}}{\partial\btheta\transpose}
 \right]\\
 &\quad + \vect(\bE_{st}\transpose)\transpose \left[
 \{\eye_{p_1}\otimes \bC(\bvarphi)\bX_{\bvarphi_{kl}}\bC(\bvarphi)\transpose\}\frac{\partial\vect\{\bSigma(\btheta)\transpose\}}{\partial\btheta\transpose}
 \right]\\
 & = \vect(\bE_{st}\transpose)\transpose \left[
 \{\bSigma(\btheta)\otimes \eye_K\}\frac{\partial\vect\{\bC(\bvarphi)\bX_{\bvarphi_{kl}}\bC(\bvarphi)\transpose\}}{\partial\btheta\transpose}
 \right]\\
 &\quad + \vect(\bE_{st}\transpose)\transpose \left[
 \{\eye_K\otimes \bC(\bvarphi)\bX_{\bvarphi_{kl}}\bC(\bvarphi)\transpose\}\bK_{p_1p_2}D\bSigma(\btheta)
 \right],
\end{align*}
or equivalently,
\begin{align*}
\frac{\partial}{\partial\btheta}\{\vect(\bE_{ts})\transpose D\bSigma(\btheta)
\bv_{kl}\}
 & = \frac{\partial\vect\{\bC(\bvarphi)\bX_{\bvarphi_{kl}}\bC(\bvarphi)\transpose\}\transpose{}}{\partial\btheta}
 % \left[
 \{\bSigma(\btheta)\transpose\otimes \eye_K\}
 % \right]
 \vect(\bE_{st}\transpose)\\
 &\quad +  D\bSigma(\btheta)\transpose\bK_{p_2p_1}
 % \left[
 \{\eye_K\otimes \bC(\bvarphi)\bX_{\bvarphi_{kl}}\transpose\bC(\bvarphi)\transpose\}
 % \right]
 \vect(\bE_{st}\transpose).
\end{align*}
Applying Lemma \ref{lemma:matrix_differential_lemma} with 
\[
\bF(\btheta) = \frac{\partial\vect\{\bC(\bvarphi)\bX_{\bvarphi_{kl}}\bC(\bvarphi)\transpose\}\transpose{}}{\partial\btheta},\quad
\bG(\btheta) = \bSigma(\btheta)\transpose,\quad \bH(\btheta) = \eye_{p_2},\quad \bJ = \bE_{st}\transpose,
\]
together with fact (iii) and fact (iv), we see that
\[
\sup_{\btheta:\|\btheta - \bar\btheta_0\|_2 < \eps}\left\|\frac{\partial}{\partial\btheta\transpose}\vect\left[
\frac{\partial\vect\{\bC(\bvarphi)\bX_{\bvarphi_{kl}}\bC(\bvarphi)\transpose\}\transpose{}}{\partial\btheta}
 % \left[
 \{\bSigma(\btheta)\transpose\otimes \eye_K\}
 % \right]
 \vect(\bE_{st}\transpose)
\right]\right\|_{\mathrm{F}} < \infty.
\]
Applying Lemma \ref{lemma:matrix_differential_lemma} with 
\[
\bF(\btheta) = D\bSigma(\btheta)\transpose\bK_{p_2p_1},\quad
\bG(\btheta) = \eye_{p_2},\quad \bH(\btheta) = \bC(\bvarphi)\bX_{\bvarphi_{kl}}\bC(\bvarphi)\transpose,\quad \bJ = \bE_{st}\transpose,
\]
together with fact (ii) and fact (v), we see that
\[
\sup_{\btheta:\|\btheta - \bar\btheta_0\|_2 < \eps}\left\|\frac{\partial}{\partial\btheta\transpose}\vect\left[
D\bSigma(\btheta)\transpose\bK_{p_2p_1}
 % \left[
 \{\eye_K\otimes \bC(\bvarphi)\bX_{\bvarphi_{kl}}\transpose\bC(\bvarphi)\transpose\}
 % \right]
 \vect(\bE_{st}\transpose)
\right]\right\|_{\mathrm{F}} < \infty.
\]
Therefore,
\begin{align*}
&\sup_{\btheta:\|\btheta - \bar\btheta_0\|_2 < \eps}\left\|\frac{\partial^2}{\partial\btheta\partial\btheta\transpose}\{\vect(\bE_{ts})\transpose D\bSigma(\btheta)
\bv_{kl}\}\right\|_{\mathrm{F}}\\
&\quad\leq\sup_{\btheta:\|\btheta - \bar\btheta_0\|_2 < \eps}\left\|\frac{\partial}{\partial\btheta\transpose}\vect
\left[
\frac{\partial}{\partial\btheta}\{\vect(\bE_{ts})\transpose D\bSigma(\btheta)
\bv_{kl}\}
\right]
\right\|_{\mathrm{F}} < \infty.
\end{align*}
This completes the proof of \eqref{eqn:Lipschitz_continuity_DSigma_rectangular_I}. 
Similarly, for any $e\in [p_1]$ and $f\in [r]$,
\begin{align*}
&\frac{\partial}{\partial\btheta\transpose}\vect(\bE_{st})\transpose D\bSigma(\btheta)
\begin{bmatrix*}
\zero_{(p_1 - r)r} \\ \vect(\bF_{ef})
\end{bmatrix*}\\
&\quad = \frac{\partial}{\partial\btheta\transpose} \vect(\bE_{st})\transpose \vect\{\bF_{ef}\bU(\bvarphi)\transpose\}\\
&\quad = \frac{\partial}{\partial\btheta\transpose} \vect(\bE_{st})\transpose (\eye_{p_2}\otimes \bF_{ef})\vect\{\bU(\bvarphi)\transpose\}\\
&\quad = \frac{\partial}{\partial\btheta\transpose} \vect(\bE_{st})\transpose (\eye_{p_2}\otimes \bF_{ef})\bK_{p_2r}\vect\{\bU(\bvarphi)\}\\
&\quad = \vect(\bE_{st})\transpose (\eye_{p_2}\otimes \bF_{ef})\bK_{p_2r}D\bU(\bvarphi)\\
&\quad = 2\vect(\bE_{st})\transpose(\eye_{p_2}\otimes \bF_{ef})\bK_{p_2r}\{\eye_{p_2\times r}\transpose\bC(\bvarphi)\transpose\otimes \bC(\bvarphi)\}\bGamma_\bvarphi.
\end{align*}
Namely,
\[
\frac{\partial}{\partial\btheta}\vect(\bE_{st})\transpose D\bSigma(\btheta)
\begin{bmatrix*}
\zero_{(p_1 - r)r} \\ \vect(\bF_{ef})
\end{bmatrix*}
 = 2\bGamma_\bvarphi\transpose(\eye_{p_2}\otimes \bF_{ef}\transpose)\bK_{rp_2}\{\bC(\bvarphi)\eye_{p_2\times r}\otimes \bC(\bvarphi)\transpose\}
 \vect(\bE_{st}).
\]
Then applying Lemma \ref{lemma:matrix_differential_lemma} with
\[
\bF(\btheta) = 2\bGamma_\bvarphi\transpose,\quad \bG(\btheta) = \bC(\bvarphi)\eye_{p_2\times r},\quad \bH(\btheta) = \bC(\bvarphi),\quad
\bJ = \bK_{rp_2}(\eye_{p_2}\otimes \bF_{ef}\transpose)\vect(\bE_{st}),
\]
together with fact (i), we conclude that
\begin{align*}
&\sup_{\btheta:\|\btheta - \bar\btheta_0\|_2 < \eps}
\left\|
\frac{\partial^2}{\partial\btheta\partial\btheta\transpose}
\vect(\bE_{st})\transpose D\bSigma(\btheta)
\begin{bmatrix*}
\zero_{(p_1 - r)r} \\ \vect(\bF_{ef})
\end{bmatrix*}\right\|_{\mathrm{F}}\\
&
\quad
= 
\sup_{\btheta:\|\btheta - \bar\btheta_0\|_2 < \eps}
\left\|
\frac{\partial}{\partial\btheta\transpose}\vect\left\{\frac{\partial}{\partial\btheta}\vect(\bE_{st})\transpose D\bSigma(\btheta)
\begin{bmatrix*}
\zero_{(p_1 - r)r} \\ \vect(\bF_{ef})
\end{bmatrix*}
\right\}
\right\|_{\mathrm{F}} < \infty.
\end{align*}
Hence, the proof of \eqref{eqn:Lipschitz_continuity_DSigma_rectangular_II} is completed. 
\end{proof}

% subsection technical_lemmas_for_section_sub:biclustering (end)

\subsection{Proof of Theorem \ref{thm:Kmeans_biclustering}} % (fold)
\label{sub:proof_of_theorem_thm:kmeans_biclustering}

\begin{proof}[\bf Proof of Theorem \ref{thm:Kmeans_biclustering}]
% Suppose $\bSigma$ has singular value decomposition $\bSigma = \bW_1\bD\bW_2\transpose$, where $\bW_1\in\mathbb{O}(p_1, r)$, $\bW_2 \in\mathbb{O}(p_2, r)$, and $\bD = \mathrm{diag}\{\sigma_1(\bSigma),\ldots,\sigma_r(\bSigma)\}$ with $\sigma_1(\bSigma)\geq\ldots\geq\sigma_r(\bSigma) > 0$. 
% Denote the singular value decomposition of $\bP\bSigma\bQ\transpose$ as
% $\bP\bSigma\bQ\transpose = \bU\bS\bV\transpose$, where $\bS\in\mathbb{R}^{r\times r}$ is diagonal with diagonal entries being singular values of $\bP\bSigma\bQ\transpose$. Also, denote the singular value decomposition of $\bP\bW_1 = \bU_\bP\bS_\bP\bV_\bP\transpose$, $\bQ\bW_2 = \bU_\bQ\bS_\bQ\bV_\bQ\transpose$, where $\bV_\bP, \bV_\bQ\in\mathbb{O}(r)$, $\bU_\bP\in\mathbb{O}(m, r)$, $\bU_\bQ\in\mathbb{O}(n, r)$, $\bS_\bP = \mathrm{diag}\{\sigma_1(\bP\bW_1),\ldots,\sigma_r(\bP\bW_1)\}$, and $\bS_\bQ = \mathrm{diag}\{\sigma_1(\bQ\bW_2),\ldots,\sigma_r(\bQ\bW_2)\}$. 
The key to the proof of Theorem \ref{thm:Kmeans_biclustering} lies in the two-to-infinity norm control of $\bU$ and $\bV$ via Lemma \ref{thm:two_to_infinity_biclustering}. 
We begin the proof by establishing the following results:
\begin{itemize}
	\item[(i)] The spectra of $\bY_0^*$ have the following bounds:
	\begin{align*}
	&\sigma_r(\bSigma_0)\left\{\min_{s\in[p_1]}m_s(\tau_0)\min_{t\in [p_2]}n_t(\gamma_0)\right\}^{1/2}\leq \sigma_r(\bS),\\
	&\sigma_1(\bS)\leq \sigma_1(\bSigma_0)\left\{\max_{s\in[p_1]}m_s(\tau_0)\max_{t\in [p_2]}n_t(\gamma_0)\right\}^{1/2}.
	\end{align*}
	The result follows by exploiting the proof of Lemma \ref{thm:two_to_infinity_biclustering}.
	\item[(ii)] The spectra of $\bP\bW_1$ and $\bQ\bW_2$ have the following bounds:
	\begin{align*}
	&	\left\{\min_{s\in[p_1]}m_s(\tau_0)\right\}^{1/2}\leq \sigma_r(\bP_0\bW_1)\leq \sigma_1(\bP_0\bW_1)\leq \left\{\max_{s\in[p_1]}m_s(\tau_0)\right\}^{1/2},\\
	& 	\left\{\min_{t\in [p_2]}n_t(\gamma_0)\right\}^{1/2}\leq \sigma_r(\bQ_0\bW_2)\leq \sigma_1(\bQ_0\bW_2)\leq\left\{\max_{t\in [p_2]}n_t(\gamma_0)\right\}^{1/2}.
	\end{align*}
	This result is also a by-product of the proof of Lemma \ref{thm:two_to_infinity_biclustering}.
	\item[(iii)] For any $i_1,i_2\in [m]$ and $j_1,j_2\in [n]$ such that $\tau_0(i_1)\neq \tau_0(i_2)$ and $\gamma_0(j_1)\neq \gamma_0(j_2)$, 
	\begin{align*}
	&\left\|[\bU]_{i_1*} - [\bU]_{i_2*}\right\|_2\geq \frac{\delta\{\sigma_r(\bSigma_0)\min_{t\in [p_2]}n_t(\gamma_0)\}^{1/2}}{\sigma_1(\bSigma_0)\{\max_{s\in [p_1]}m_s(\tau_0)\max_{t\in [p_2]}n_t(\gamma_0)\}^{1/2}},\\
	&\left\|[\bV]_{j_1*} - [\bV]_{j_2*}\right\|_2\geq \frac{\delta\{\sigma_r(\bSigma_0)\min_{s\in [p_1]}m_s(\tau_0)\}^{1/2}}{\sigma_1(\bSigma_0)\{\max_{s\in [p_1]}m_s(\tau_0)\max_{t\in [p_2]}n_t(\gamma_0)\}^{1/2}}.
	\end{align*}
	Let $\bD^{1/2}\bW_1\transpose\bP_0\transpose\bP_0\bW_1\bD^{1/2} = \bZ_\bP\bD_\bP^2\bZ_\bP\transpose$ and $\bD^{1/2}\bW_2\transpose\bQ_0\transpose\bQ_0\bW_2\bD^{1/2} = \bZ_\bQ\bD_\bQ^2\bZ_\bQ\transpose$ be the spectral decompositions of $\bD^{1/2}\bW_1\transpose\bP_0\transpose\bP_0\bW_1\bD^{1/2}$ and $\bD^{1/2}\bW_2\transpose\bQ_0\transpose\bQ_0\bW_2\bD^{1/2}$, respectively, where $\bD_\bP,\bD_\bQ$ are diagonal matrices, and $\bZ_\bP,\bZ_\bQ\in\mathbb{O}(r)$. 
	% Then 
	% \[
	% \bW_1\transpose\bP\transpose\bP\bW_1 = \bV_\bP\bS_\bP^2\bV_\bP\transpose,\quad
	% \bW_2\transpose\bQ\transpose\bQ\bW_2 = \bV_\bQ\bS_\bQ^2\bV_\bQ\transpose.
	% \]
	Further define the following matrices:
	$\bG = \bQ_0\bW_2\bD^{1/2}\bZ_\bP$, $\widetilde{\bG} = \bG\bD_\bP$, $\widetilde{\bV} = \bV\bS$, $\bH = \bP_0\bW_1\bD^{1/2}\bZ_\bQ$, $\widetilde{\bH} = \bH\bD_\bQ$, and $\widetilde{\bU} = \bU\bS$. It follows that
	\begin{align*}
	\widetilde{\bG}\widetilde{\bG}\transpose
	& = \bG\bD_\bP^2\bG\transpose = (\bQ_0\bW_2\bD^{1/2})(\bZ_\bP\bD_\bP^2\bZ_\bP\transpose)(\bD^{1/2}\bW_2\transpose\bQ_0\transpose)\\
	&  = (\bQ_0\bW_2\bD^{1/2}) (\bD^{1/2}\bW_1\transpose\bP_0\transpose\bP_0\bW_1\bD^{1/2}) (\bD^{1/2}\bW_2\transpose\bQ_0\transpose)\\
	& = \bQ_0\bSigma_0\transpose\bP_0\transpose\bP_0\bSigma_0\bQ_0\transpose = \bV\bS^2\bV\transpose = \widetilde{\bV}\widetilde{\bV}\transpose,\\
	\widetilde{\bH}\widetilde{\bH}\transpose
	& = \bH\bD_\bQ^2\bH\transpose = (\bP_0\bW_1\bD^{1/2})(\bZ_\bQ\bD_\bQ^2\bZ_\bQ\transpose)(\bD^{1/2}\bW_1\transpose\bP_0\transpose)\\
	&  = (\bP_0\bW_1\bD^{1/2}) (\bD^{1/2}\bW_2\transpose\bQ_0\transpose\bQ_0\bW_2\bD^{1/2}) (\bD^{1/2}\bW_1\transpose\bP_0\transpose)\\
	& = \bP_0\bSigma_0\bQ_0\transpose\bQ_0\bSigma_0\transpose\bP_0\transpose = \bU\bS^2\bU\transpose = \widetilde{\bU}\widetilde{\bU}\transpose.
	\end{align*}
	Denote $\be_{i_1i_2}(m) = \be_{i_1}(m) - \be_{i_2}(m)\in\mathbb{R}^m$ and $\be_{j_1j_2}(n) = \be_{j_1}(n) - \be_{j_2}(n)\in\mathbb{R}^n$. With $\tau_0(i_1)\neq \tau_0(i_2)$ and $\gamma_0(j_1)\neq \gamma_0(j_2)$, we see that
	\begin{align*}
	\|\be_{i_1}\transpose(m){\bH} - \be_{i_2}\transpose(m){\bH}\|_2
	& = \|\{\be_{i_1}\transpose(m)\bP_0\bW_1\bD^{1/2} - \be_{i_2}\transpose(m)\bP_0\bW_1\bD^{1/2}\}\bZ_\bQ\|_2\\
	& = \|\be_{i_1}\transpose(m)\bP_0\bSigma_{01} - \be_{i_2}\transpose(m)\bP_0\bSigma_{01}\|_2\\
	& = \|\be_{\tau_0(i_1)}\transpose(p_1)\bSigma_{01} - \be_{\tau_0(i_2)}\transpose(p_1)\bSigma_{01}\|_2 \geq \delta,\\
	\|\be_{j_1}\transpose(n){\bG} - \be_{j_2}\transpose(n){\bG}\|_2
	& = \|\{\be_{j_1}\transpose(n)\bQ_0\bW_2\bD^{1/2} - \be_{j_2}\transpose(n)\bQ_0\bW_2\bD^{1/2}\}\bZ_\bP\|_2\\
	& = \|\be_{j_1}\transpose(n)\bQ_0\bSigma_{02} - \be_{j_2}\transpose(n)\bQ_0\bSigma_{02}\|_2\\
	& = \|\be_{\gamma_0(j_1)}\transpose(p_2)\bSigma_{02} - \be_{\gamma_0(j_2)}\transpose(p_2)\bSigma_{02}\|_2 \geq \delta.
	\end{align*}
	Therefore,
	\begin{align*}
	\delta^2 
	& \leq \|\be_{i_1}\transpose(m){\bH} - \be_{i_2}\transpose(m){\bH}\|_2^2 
	\leq \|\be_{i_1}\transpose(m)\widetilde{\bH} - \be_{i_2}\transpose(m)\widetilde{\bH}\|_2^2\|\bD_\bQ^{-1}\|_2^2\\
	&\leq \left\{\sigma_r(\bSigma_0)\min_{t\in [p_2]}n_t(\gamma_0)\right\}^{-1}\be_{i_1i_2}(m)\transpose\widetilde{\bH}\widetilde{\bH}\transpose\be_{i_1i_2}(m)\\
	& =  \left\{\sigma_r(\bSigma_0)\min_{t\in [p_2]}n_t(\gamma_0)\right\}^{-1}\be_{i_1i_2}(m)\transpose\widetilde{\bU}\widetilde{\bU}\transpose\be_{i_1i_2}(m)\\
	& = \left\{\sigma_r(\bSigma_0)\min_{t\in [p_2]}n_t(\gamma_0)\right\}^{-1}\|\be_{i_1}\transpose(m)\widetilde{\bU} - \be_{i_2}\transpose(m)\widetilde{\bU}\|_2^2\\
	& \leq \left\{\sigma_r(\bSigma_0)\min_{t\in [p_2]}n_t(\gamma_0)\right\}^{-1}\|\bS\|_2^2\|[\bU]_{i_1*} - [\bU]_{i_2*}\|_2^2\\
	& \leq \frac{\max_{s\in [p_1]}m_s(\tau_0)\max_{t\in [p_2]}n_t(\gamma_0)}{\min_{t\in [p_2]}n_t(\gamma_0)}\frac{\sigma_1^2(\bSigma_0)}{\sigma_r(\bSigma_0)}\|[\bU]_{i_1*} - [\bU]_{i_2*}\|_2^2,\end{align*}
	and similarly,
	\begin{align*}
	\delta^2 
	& \leq \|\be_{j_1}\transpose(n){\bG} - \be_{j_2}\transpose(n){\bG}\|_2^2 
	\leq \|\be_{j_1}\transpose(n)\widetilde{\bG} - \be_{j_2}\transpose(n)\widetilde{\bG}\|_2^2\|\bD_\bP^{-1}\|_2^2\\
	&\leq\left\{\sigma_r(\bSigma_0)\min_{s\in [p_1]}m_s(\tau_0)\right\}^{-1}\be_{j_1j_2}(n)\transpose\widetilde{\bG}\widetilde{\bG}\transpose\be_{j_1j_2}(n)\\
	& 
	= \left\{\sigma_r(\bSigma_0)\min_{s\in [p_1]}m_s(\tau_0)\right\}^{-1}\be_{j_1j_2}(n)\transpose\widetilde{\bV}\widetilde{\bV}\transpose\be_{j_1j_2}(n)\\
	& = \left\{\sigma_r(\bSigma_0)\min_{s\in [p_1]}m_s(\tau_0)\right\}^{-1}\|\be_{j_1}\transpose(n)\widetilde{\bV} - \be_{j_2}\transpose(n)\widetilde{\bV}\|_2^2\\
	& \leq \left\{\sigma_r(\bSigma_0)\min_{s\in [p_1]}m_s(\tau_0)\right\}^{-1}\|\bS\|_2^2\|[\bV]_{i_1*} - [\bV]_{i_2*}\|_2^2\\
	& \leq \frac{\max_{s\in [p_1]}m_s(\tau_0)\max_{t\in [p_2]}n_t(\gamma_0)}{\min_{s\in [p_1]}m_s(\tau_0)}\frac{\sigma_1^2(\bSigma)}{\sigma_r(\bSigma_0)}\|[\bV]_{i_1*} - [\bV]_{i_2*}\|_2^2.
	\end{align*}
	This completes the proof of result (iii).
\end{itemize}
From the result (iii) above, we see that there exists some constant $\delta' > 0$, such that
\[
\|[\bU]_{i_1*} - [\bU]_{i_2*}\|_2 \geq \frac{\delta'}{\sqrt{m}},\quad
\|[\bV]_{j_1*} - [\bV]_{j_2*}\|_2 \geq \frac{\delta'}{\sqrt{n}}.
\]
Observe that 
\begin{align*}
\bP_0\bSigma_0\bQ_0\transpose
& = \bP_0\bW_1\bD\bW_2\transpose\bQ_0 = \bU_\bP\bS_\bP\bV_\bP\transpose\bD\bV_\bQ\bS_\bQ\bU_\bQ\transpose = \bU\bS\bV\transpose,
\end{align*}
and we know that $\mathrm{Span}(\bU) = \mathrm{Span}(\bU_\bP)$, $\mathrm{Span}(\bV) = \mathrm{Span}(\bU_\bQ)$. Therefore, there exists orthogonal matrices $\bR_\bP,\bR_\bQ$ such that $\bU = \bU_\bP\bR_\bP$ and $\bV = \bU_\bQ\bR_\bQ$, and hence,
\begin{align*}
&\bU\bW_\bU = \bU_\bP\bR_\bP\bW_\bU = \bP_0\bW_1\bV_\bP\bS_\bP^{-1}\bR_\bP\bW_\bU,\\
&\bV\bW_\bV = \bU_\bQ\bR_\bQ\bW_\bV = \bQ_0\bW_2\bV_\bQ\bS_\bQ^{-1}\bR_\bQ\bW_\bV.
\end{align*}
This shows that $\bU\bW_\bU$ has $p_1$ unique rows and $\bV\bW_\bV$ has $p_2$ unique rows because of the cluster assignment structure of $\bP_0$ and $\bQ_0$. Then by Theorem \ref{thm:two_to_infinity_biclustering} and the definition of $\bC(\widehat{\bU})$ and $\bC(\widehat{\bV})$, 
\begin{align*}
\|\bC(\widehat{\bU}) - \widehat{\bU}\|_{\mathrm{F}}
& \leq \|\bU\bW_\bU - \widehat{\bU}\|_{\mathrm{F}}\leq \sqrt{m}\|\widehat{\bU} - \bU\bW_\bU\|_{2\to\infty}
%  = O_{\prob_0}\left(
% \frac{\log m}{\sqrt{n}} + \frac{1}{\sqrt{m}}
% \right)
 = o_{\prob_0}(1),\\
\|\bC(\widehat{\bV}) - \widehat{\bV}\|_{\mathrm{F}}
& \leq \|\bV\bW_\bV - \widehat{\bV}\|_{\mathrm{F}}\leq \sqrt{n}\|\widehat{\bV} - \bV\bW_\bV\|_{2\to\infty}
%  = O_{\prob_0}\left(
% \frac{\log n}{\sqrt{m}} + \frac{1}{\sqrt{n}}
% \right)
 = o_{\prob_0}(1).
\end{align*}
Now we establish the strong consistency of our spectral clustering method. 
% upper bounds for $\|\bC(\widehat{\bU}) - \bU\bW_\bV\|_{2\to\infty}$ and $\|\bC(\widehat{\bV}) - \bV\|_{2\to\infty}$, respectively. 
Let 
\[
\eps \equiv \eps_{mn} := \max\left(\frac{M_{mn}\log m}{\sqrt{n}} + \frac{1}{\sqrt{m}},
\frac{M_{mn}\log n}{\sqrt{m}} + \frac{1}{\sqrt{n}}\right),
\]
where $M_{mn}$ is a slowly growing sequence such that $\eps = o(1)$ and $M_{mn}\to\infty$ as $\min(m, n)\to\infty$. Let $\calA_1,\ldots,\calA_{p_1}$ be the $\ell_2$-balls with radii $\eps\{\min_{s\in[p_1]}m_s(\tau_0)\}^{-1/2}$ centered at the $p_1$ distinct rows of $\bU$, and $\calB_1,\ldots,\calB_{p_2}$ be the $\ell_2$-balls with radii $\eps\{\min_{t\in[p_2]}n_t(\gamma_0)\}^{-1/2}$ centered at the $p_2$ distinct rows of $\bV$. By the result (iii), $(\calA_s)_{s = 1}^{p_1}$ are disjoint, and $(\calB_t)_{t = 1}^{p_2}$ are disjoint, if we take $\min(m, n)$ to be sufficiently large. Define events $\Xi_{\bU} = \{\bY:\|\widehat{\bU} - \bU\bW_\bU\|_{2\to\infty} < \eps/(2\sqrt{m})\}$ and $\Xi_{\bV} = \{\bY:\|\widehat{\bV} - \bV\bW_\bV\|_{2\to\infty} < \eps/(2\sqrt{n})\}$. Clearly, $P(\Xi_\bU\cap \Xi_\bV)\to 1$. 
\begin{enumerate}
	\item[(a)] We claim that each $\calA_s$ contains at least one row of $\bC(\widehat{\bU})$ and each $\calB_t$ contains at least one row of $\bC(\widehat{\bV})$ over the event $\Xi_\bU\cap \Xi_\bV$. We prove it by contradiction and assume otherwise. Then it is either the case that there exists a index $s\in [p_1]$ such that $\calA_s$ does not contain any rows of $\bC(\widehat{\bU})$, or that there exists a index $t\in [p_2]$ such that $\calB_t$ does not contain any rows of $\bC(\widehat{\bU})$. Consequently, 
	\begin{align*}
	\|\bC(\widehat{\bU}) - \bU\bW_\bU\|_{\mathrm{F}}^2 > \min_{s\in [p_1]}m_s(\tau_0)\{\eps^2\min_{s\in [p_1]}m_s(\tau_0)^{-1}\} = \eps^2
	\end{align*} or 
	\begin{align*}
	\|\bC(\widehat{\bV}) - \bV\bW_\bV\|_{\mathrm{F}}^2 > \min_{t\in [p_2]}n_t(\gamma_0)\{\eps^2 \min_{t\in [p_2]}n_t(\gamma_0)^{-1}\} = \eps^2.
	\end{align*} 
	In the former case, 
	\begin{align*}
	\|\bC(\widehat{\bU}) - \widehat{\bU}\|_{\mathrm{F}}
	&\geq \|\bC(\widehat{\bU}) - \bU\bW_\bU\|_{\mathrm{F}} - \|\bU\bW_\bU - \widehat{\bU}\|_{\mathrm{F}}> \eps - \frac{\eps}{2} = \frac{\eps}{2},
	\end{align*}
	and in the latter case,
	\begin{align*}
	\|\bC(\widehat{\bV}) - \widehat{\bV}\|_{\mathrm{F}}
	&\geq \|\bC(\widehat{\bV}) - \bV\bW_\bV\|_{\mathrm{F}} - \|\bV\bW_\bV - \widehat{\bV}\|_{\mathrm{F}}> \eps - \frac{\eps}{2} = \frac{\eps}{2},
	\end{align*}
	where we have used the fact that $\|\widehat{\bU} - \bU\bW_\bU\|_{\mathrm{F}} < \eps/2$ and $\|\widehat{\bV} - \bV\bW_\bV\|_{\mathrm{F}} < \eps/2$ over the event $\Xi_\bU\cap\Xi_\bV$. In either case, contradiction is resulted with the fact that 
	\[
	\|\bC(\widehat{\bU}) - \widehat{\bU}\|_{\mathrm{F}} \leq \|\bU\bW_\bU - \widehat{\bU}\|_{\mathrm{F}} < \eps/2
	\]
	and
	\[
	\|\bC(\widehat{\bV}) - \widehat{\bV}\|_{\mathrm{F}} \leq \|\bV\bW_\bV - \widehat{\bV}\|_{\mathrm{F}} < \eps/2
	\]
	 over the event $\Xi_\bU\cap\Xi_\bV$. The claim is therefore proved. \\

	\item[(b)] We claim that each $\calA_s$ contains exactly one unique row of $\bC(\widehat{\bU})$ and each $\calB_t$ contains exactly one unique row of $\bC(\widehat{\bV})$ over the event $\Xi_\bU\cap\Xi_\bV$. In fact, since each $\calA_s$ contains at least one row of $\bC(\widehat{\bU})$, which by itself has $p_1$ distinct rows, and there are $p_1$ disjoint balls $(\calA_s)_{s = 1}^{p_1}$, then by the pigeonhole principle, each $\calA_s$ contains exactly one unique row of $\bC(\widehat{\bU})$. The same reasoning applied to $\bC(\widehat{\bV})$ and $(\calB_t)_{t = 1}^{p_2}$ implies that each $\calB_t$ contains exactly one unique row of $\bC(\widehat{\bV})$. \\

	\item[(c)] Suppose $\widehat{\tau}:[m]\to [p_1]$ is the estimated cluster assignment function. We claim that for any $s\in [p_1]$, $\widehat{\tau}^{-1}(s):=\{i\in [m]:\widehat{\tau}(i) = s\}$ has at least two element over the event $\Xi_\bU$. The proof proceeds by showing the weak consistency of the $k$-means clustering, namely, the ratio between number of mis-clustered rows and $m$ is $o(1)$. Now consider the row index set
	\[
	\calI = \left\{i\in [m]:\|[\bC(\widehat{\bU})]_{i*} - [\bU\bW_\bU]_{i*}\|_2 \geq \frac{\delta'}{2\sqrt{m}}\right\}.
	\]
	By definition of $\calI$,
	\[
	\|\bC(\widehat{\bU}) - \bU\bW_\bU\|_{\mathrm{F}}\geq |\calI|^{1/2}\frac{\delta'}{2\sqrt{m}}.
	\]
	On the other hand, over the event $\Xi_\bU$,
	\begin{align*}
	\|\bC(\widehat{\bU}) - \bU\bW_\bU\|_{\mathrm{F}}
	&\leq \|\bC(\widehat{\bU}) - \widehat{\bU}\|_{\mathrm{F}} + \|\widehat{\bU} - \bU\bW_\bU\|_{\mathrm{F}}\\
	&\leq \|\bU\bW_\bU - \widehat{\bU}\|_{\mathrm{F}} + \|\widehat{\bU} - \bU\bW_\bU\|_{\mathrm{F}}\\
	& = 2 \|\widehat{\bU} - \bU\bW_\bU\|_{\mathrm{F}}\leq 2\sqrt{m}\|\widehat{\bU} - \bU\bW_\bU\|_{2\to\infty}
	% \\&
	\leq \eps.
	\end{align*}
	We conclude with the previous two inequalities that
	% \[
	$|\calI|\leq 4m(\eps/\delta')^2 = o(m)$ since $\eps = o(1)$.
	% \]
	For any $i_1,i_2\in \calI^c$ with $[\bC(\widehat{\bU})]_{i_1*} = [\bC(\widehat{\bU})]_{i_2*}$, we see that
	\begin{align*}
	\|[\bU]_{i_1*} - [\bU]_{i_2*}\|_2
	& = \|[\bU\bW_\bU]_{i_1*} - [\bU\bW_\bU]_{i_2*}\|_2\\
	& \leq \|[\bC(\widehat{\bU})]_{i_1*} - [\bU\bW_\bU]_{i_1*}\|_2 + \|[\bC(\widehat{\bU})]_{i_2*} - [\bU\bW_\bU]_{i_2*}\|_2
	% \\& \leq \frac{\eps}{2\sqrt{m}}
	 < \frac{\delta'}{\sqrt{m}},
	\end{align*}
	which implies that $[\bU]_{i_1*} - [\bU]_{i_2*}$, \emph{i.e.}, $\tau_0(i_1) = \tau_0(i_2)$, by the result (iii).

	\vspace*{1ex}	
	\noindent
	Note that $m_s \geq |\calI|$ for all $s\in [p_1]$, namely, $\{[\bU\bW_\bU]_{i*}:i\in \calI^c\}$ has exactly $p_1$ distinct rows because $|\calI| = o(m)$ but $m_s \asymp m$. Let $\calU_1,\ldots,\calU_{p_1}$ be $\ell_2$-balls with radii $\delta'/(2\sqrt{m})$ that are centered at the unique rows of $\bU\bW_\bU$. Clearly, $\calU_s$'s are disjoint. By definition of $\calI$, each $\calU_s$ contains at least one element of $\{[\bC(\widehat{\bU})]_{i*}:i\in \calI^c\}$. By the pigeonhole principle and the fact that $\calU_1,\ldots,\calU_{p_1}$ are disjoint, we also conclude that each $\calU_s$ contains exactly one element of $\{[\bC(\widehat{\bU})]_{i*}:i\in \calI^c\}$. 

	\vspace*{1ex}	
	\noindent
	Consequently, for any $i_1,i_2\in \calI^c$ with $\tau_0(i_1) = \tau_0(i_2)$, this implies $[\bU\bW_\bU]_{i_1*} = [\bU\bW_\bU]_{i_2*}$. Also, $[\bC(\widehat{\bU})]_{i_1*}$ and $[\bC(\widehat{\bU})]_{i_2*}$ both lie inside one of the balls among $\calU_1,\ldots,\calU_{p_1}$ that is centered at $[\bU\bW_\bU]_{i_1*}$. By the uniqueness of the row in $\bC(\widehat{\bU})$ that is contained in this ball, we conclude that $[\bC(\widehat{\bU})]_{i_1*} = [\bC(\widehat{\bU})]_{i_2*}$, and hence, $\widehat{\tau}(i_1) = \widehat{\tau}(i_2)$. 

	\vspace*{1ex}
	\noindent
	The above reasoning implies that every element in $\{[\widehat{\bU}]_{i*}:i\in \calI^c\}$ is correctly clustered by the $K$-means method. Since the number of mis-clustered rows is upper bounded by a constant, we conclude that the number of correctly clustered rows is at least $m - |\calI| \asymp m$, and hence, for each cluster $s\in [p_1]$, $\{i\in [m]:\widehat{\tau}(i) = s\}\geq m_s - |\calI|\geq 2$. \\

	\item[(d)] Suppose $\widehat{\gamma}:[n]\to [p_2]$ is the estimated cluster assignment function. The same reasoning as above also implies that for any $t\in [p_1]$, $\widehat{\gamma}^{-1}(t):=\{j\in [n]:\widehat{\gamma}(j) = t\}$ has at least two element over the event $\Xi_\bV$. \\

	\item[(e)] We claim that 
	\begin{align*}
	&\|\bC(\widehat{\bU}) - \bU\bW_\bU\|_{2\to\infty} < \eps\{\min_{s\in[p_1]}m_s(\tau_0)\}^{-1/2},\\
	% \quad\text{and}\quad
	&\|\bC(\widehat{\bV}) - \bV\bW_\bV\|_{2\to\infty} < \eps\{\min_{t\in[p_2]}n_t(\gamma_0)\}^{-1/2}
	\end{align*}
	over the event $\Xi_\bU\cap\Xi_\bV$, and prove it by contradiction. 

	\vspace*{1ex}\noindent
	Suppose the $p_1$ unique rows of $\bU\bW_\bU$ are denoted by $\bu_{1}^*,\ldots,\bu_{p_1}^*$, and the cluster assignment function $\tau$ is arranged such that $[\bU\bW_\bU]_{i*} = \bu_{\tau_0(i)}^*$. Also, for each $s \in [p_1]$, by the results (a) and (b), there exists exactly one unique row of $\bC(\widehat{\bU})$, denoted by $\widehat{\bc}_s^*$, such that $\|\widehat{\bc}_s^* - \bu_s^*\|_2 < \eps\{\min_{s\in[p_1]}m_s(\tau_0)\}^{-1/2}$. Then an estimated cluster assignment function $\widehat{\tau}$ associated with 
	$\bC(\widehat{\bU})$ can be taken such that $[\bC(\widehat{\bU})]_{i*} = \widehat{\bc}_{\widehat{\tau}(i)}^*$ for all $i \in [m]$. 

	\vspace*{1ex}\noindent
	Assume that $\|\bC(\widehat{\bU}) - \bU\bW_\bU\|_{2\to\infty} \geq \eps\{\min_{s\in[p_1]}m_s(\tau_0)\}^{-1/2}$. Then there exists some row index $i\in [m]$ such that 
	\[
	\|[\bC(\widehat{\bU})]_{i*} - [\bU\bW_\bU]_{i*}\|_2 = \|\widehat{\bc}_{\widehat{\tau}(i)}^* - \bu_{\tau(i)}^*\|> \eps\left\{\min_{s\in[p_1]}m_s(\tau_0)\right\}^{-1/2}.
	\]
	Fix the index $i$. This immediately implies that $\widehat{\tau}(i)\neq \tau_0(i)$ according to the above analysis. 
	% Suppose $\tau(i) = s\in [p_1]$ and $\widehat{\tau}(i) = \widehat{s}\in [p_1]$ for some $s,\widehat{s}$. 
	Hence, by letting $\min(m, n)$ be sufficiently large,
	\begin{align*}
	\|\widehat{\bc}_{\widehat{\tau}(i)}^* - [\widehat{\bU}]_{i*}\|_2
	% & \geq \|[\widehat{\bU}]_{i*} -  \bu_{\widehat\tau(i)}^*\|_2 - \|\widehat{\bc}_{\widehat{\tau}(i)}^* - \bu_{\widehat\tau(i)}^*\|_2\\
	&\geq \|\bu_{\tau_0(i)}^* - \bu_{\widehat\tau(i)}^*\|_2 - \|[\widehat{\bU}]_{i*} - \bu_{\tau_0(i)}^*\|_2 - \|\widehat{\bc}_{\widehat{\tau}(i)}^* - \bu_{\widehat\tau(i)}^*\|_2\\
	&\geq \frac{\delta'}{\sqrt{m}} - \|\widehat{\bU} - \bU\bW_\bU\|_{2\to\infty} - \frac{\eps}{\{\min_{s\in[p_1]}m_s(\tau_0)\}^{1/2}}\\
	& > \frac{\delta'}{\sqrt{m}} - \frac{\eps}{2\sqrt{m}} - \frac{\eps}{\{\min_{s\in[p_1]}m_s(\tau_0)\}^{1/2}} > \frac{\delta'}{2\sqrt{m}},\\
	\|\widehat{\bc}_{{\tau_0}(i)}^* - [\widehat{\bU}]_{i*}\|_2
	&\leq \|\widehat{\bc}_{{\tau_0}(i)}^* - \bu_{\tau_0(i)}^*\|_2 + \|\bu_{\tau_0(i)}^* - [\widehat{\bU}]_{i*}\|_2\\
	&\leq \|\widehat{\bc}_{{\tau_0}(i)}^* - \bu_{\tau_0(i)}^*\|_2 + \|\bU\bW_\bU - \widehat{\bU}\|_{2\to\infty}\\
	&\leq \frac{\eps}{\{\min_{s\in[p_1]}m_s(\tau_0)\}^{1/2}} + \frac{\eps}{2\sqrt{m}} < \frac{\delta'}{2\sqrt{m}}.
	\end{align*}
	Namely, 
	% \[
	$\|\widehat{\bc}_{\widehat{\tau}(i)}^* - [\widehat{\bU}]_{i*}\|_2 > \|\widehat{\bc}_{{\tau_0}(i)}^* - [\widehat{\bU}]_{i*}\|_2$.
	% \]
	However, if one instead define another cluster assignment function 
	\[
	\widetilde{\tau}(j) = \left\{
	\begin{aligned}
	&\widehat{\tau}(j),\quad & \text{if }j\neq i,\\
	&\tau_0(i),\quad & \text{if }j = i,
	\end{aligned}
	\right.
	\]
	and define a matrix $\widetilde{\bC}$ with its $j$th row defined as follows:
	\[
	[\widetilde{\bC}]_{j*} = \widehat{\bc}_{\widetilde{\tau}(j)}^* = \left\{
	\begin{aligned}
	&\widehat{\bc}_{\widehat{\tau}(j)}^*,\quad & \text{if }j\neq i\\
	&\widehat{\bc}_{\tau_0(i)},\quad & \text{if }j = i
	\end{aligned}
	\right.,
	\]
	Then we see immediately that
	\begin{align*}
	\|\widetilde{\bC} - \widehat{\bU}\|_{\mathrm{F}}^2
	& = \sum_{j\neq i}\|\widehat{\bc}_{\widehat{\tau}(j)}^* - [\widehat\bU]_{j*}\|_2^2 + \| \widehat{\bc}_{\tau_0(i)}^* - [\widehat{\bU}]_{i*} \|_2^2\\
	&< \sum_{j\neq i}\|\widehat{\bc}_{\widehat{\tau}(j)}^* - [\bU]_{j*}\|_2^2 + \| \widehat{\bc}_{\widehat{\tau}(i)} -  [\widehat{\bU}]_{i*}\|_2^2
	= \|\bC(\widehat{\bU}) - \widehat{\bU}\|_{\mathrm{F}}^2. 
	\end{align*}
	We also know that $\widetilde{\bC}$ has $p_1$ distinct rows because $\{j\in [m]:\widehat{\tau}(j) = \widehat{\tau}(i)\} \geq 2$ according to the result (c). Namely, changing $\widehat{\tau}(i)$ to $\tau_0(i)$ does not reduce the number of unique rows of $\widetilde{\bC}$. Hence, the above result contradicts with the fact that $\bC(\widehat{\bU})$ is the minimizer of the $K$-means criterion function. Namely, over the event $\Xi_\bU$, 
	\[
	\|\bC(\widehat{\bU}) - \bU\bW_\bU\|_{2\to\infty} < \eps\{\min_{s\in [p_1]} m_s(\tau_0)\}^{-1/2}.
	\] 

	\vspace*{1ex}\noindent
	The same reasoning applied to $\widehat{\bV}$ and $\bV\bW_\bV$ yields 
	\[
	\|\bC(\widehat{\bV}) - \bV\bW_\bV\|_{2\to\infty} < \eps\{\min_{t\in [p_2]}n_t(\gamma_0)\}^{-1/2}.
	\] 

	\item[(f)] Using the result (e), over the event $\Xi_\bU\cap \Xi_\bV$, we have
	\begin{align*}
	&\|\bC(\widehat{\bU}) - \bU\bW_\bU\|_{2\to\infty} < \eps\{\min_{s\in [p_1]} m_s(\tau_0)\}^{-1/2},\\
	&\|\bC(\widehat{\bV}) - \bV\bW_\bV\|_{2\to\infty} < \eps\{\min_{t\in [p_2]}n_t(\gamma_0)\}^{-1/2},
	\end{align*}
	it follows immediately that the sets
	\begin{align*}
	\calI = \left\{i\in [m]:\|[\bC(\widehat{\bU})]_{i*} - [\bU\bW_\bU]_{i*}\|_2 \geq \frac{\delta'}{2\sqrt{m}}\right\},\\
	\calJ = \left\{j\in [n]:\|[\bC(\widehat{\bV})]_{i*} - [\bV\bW_\bV]_{j*}\|_2 \geq \frac{\delta'}{2\sqrt{n}}\right\}
	\end{align*}
	are empty by letting $\min(m, n)$ be sufficiently large. We have also proved in the result (c) that the rows with indices $i\in \calI^c$ are correctly clustered, and the exactly same argument also leads to the result that the columns with indices $j\in \calJ^c$ are correctly clustered. Since $\calI = [m]$ and $\calJ = [n]$, we complete the proof of the strong consistency over the set $\Xi_\bU\cap \Xi_\bV$, which has probability going to $1$. 
\end{enumerate}
\end{proof}

% subsection proof_of_theorem_thm:kmeans_biclustering (end)

\subsection{Proof of Theorem \ref{thm:asymptotic_normality_biclustering}} % (fold)
\label{sub:proof_of_theorem_thm:asymptotic_normality_biclustering}
The proof is similar to Appendix \ref{sub:proof_of_theorem_thm:ose_sbm}. We first consider the case where $\tau_0$ and $\gamma_0$ are known. Define an oracle matrix
$\widetilde\bSigma$ whose $(s, t)$ entry is given by
\[
[\widetilde\bSigma]_{st} = \frac{1}{m_s(\tau_0)n_t(\gamma_0)}\sum_{i = 1}^m\sum_{j = 1}^ny_{ij}\mathbbm{1}\{\tau_0(i) = s, \gamma_0(j) = t\}.
\]
For any permutation matrices $\bPi_1\in\mathbb{O}(p_1)$ and $\bPi_2\in\mathbb{O}(p_2)$, define the following oracle least-squares estimator
\[
\widetilde{\btheta}_{\bPi_1\bPi_2} = \argmin_{\btheta\in\mathscr{T}(p_1, p_2, r)}\|\bPi_1\widetilde{\bSigma}\bPi_2\transpose - \bSigma(\btheta)\|_{\mathrm{F}}^2.
\]
We begin the proof with the following lemma addressing the $\sqrt{mn}$-consistency of the oracle least-squares estimator
\begin{lemma}\label{lemma:consistency_biclustering}
Under the notations and setup in Sections \ref{sub:euclidean_representation_of_subspaces}, \ref{sub:extension_to_general_rectangular_matrices}, and \ref{sub:biclustering}, for any two permutation matrices $\bPi_1\in\mathbb{O}(m)$ and $\bPi_2\in\mathbb{O}(n)$,
\begin{align*}
\|\bSigma(\widetilde\btheta_{\bPi_1\bPi_2}) - \bPi_1\bSigma(\btheta_0)\bPi_2\transpose\|_{\mathrm{F}} = O_{\prob_0}\left(\frac{1}{\sqrt{mn}}\right),\quad
\|\widetilde{\btheta}_{\bPi_1\bPi_2} - \btheta_{0\bPi_1\bPi_2}\|_2 = O_{\prob_0}\left(\frac{1}{\sqrt{mn}}\right).
\end{align*}
\end{lemma}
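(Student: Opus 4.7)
The proof breaks naturally into two stages: first control the Frobenius deviation at the level of matrices, then convert that into a Euclidean deviation at the level of parameters.

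My plan for the first stage is to show that $\|\widetilde{\bSigma} - \bSigma_0\|_{\mathrm{F}} = O_{\prob_0}(1/\sqrt{mn})$ by entrywise sub-Gaussian concentration. For every $(s,t)$, the entry $[\widetilde{\bSigma}]_{st}$ is the average of exactly $m_s(\tau_0)n_t(\gamma_0) \asymp mn$ independent sub-Gaussian random variables with mean $[\bSigma_0]_{st}$ and variance $\sigma^2$, so a Hoeffding-type bound plus a union bound over the fixed $p_1 p_2$ many entries delivers the claim. Using the least-squares optimality of $\widetilde{\btheta}_{\bPi_1\bPi_2}$ and the fact that $\bSigma(\btheta_{0\bPi_1\bPi_2}) = \bPi_1\bSigma_0\bPi_2\transpose$ lies on the low-rank manifold, I would write
\begin{align*}
\|\bPi_1\widetilde{\bSigma}\bPi_2\transpose - \bSigma(\widetilde{\btheta}_{\bPi_1\bPi_2})\|_{\mathrm{F}}
\leq \|\bPi_1\widetilde{\bSigma}\bPi_2\transpose - \bSigma(\btheta_{0\bPi_1\bPi_2})\|_{\mathrm{F}}
= \|\widetilde{\bSigma} - \bSigma_0\|_{\mathrm{F}},
\end{align*}
and then a triangle inequality gives $\|\bSigma(\widetilde{\btheta}_{\bPi_1\bPi_2}) - \bPi_1\bSigma_0\bPi_2\transpose\|_{\mathrm{F}}\leq 2\|\widetilde{\bSigma} - \bSigma_0\|_{\mathrm{F}} = O_{\prob_0}(1/\sqrt{mn})$, which is the first assertion.

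For the second stage I would emulate the rectangular analog of Theorem \ref{thm:intrinsic_deviation_Sigma}. First I would establish consistency $\widetilde{\btheta}_{\bPi_1\bPi_2} \overset{\prob_0}{\to} \btheta_{0\bPi_1\bPi_2}$: Wedin's sine-theta theorem applied to the $O_{\prob_0}(1/\sqrt{mn})$ Frobenius closeness of $\bSigma(\widetilde{\btheta}_{\bPi_1\bPi_2})$ and $\bPi_1\bSigma_0\bPi_2\transpose$ shows that the right singular subspaces are close, and since $\eye_{p_2\times r}\transpose\bV_2$ is assumed non-singular, the inverse Cayley parameterization (Theorem \ref{thm:first_order_deviation_ICT}) gives $\|\widetilde{\bvarphi} - \bvarphi_{0\bPi_1\bPi_2}\|_2 = o_{\prob_0}(1)$, after which the $\bM$-component follows by a direct projection argument analogous to the one in the proof of Theorem \ref{thm:intrinsic_deviation_Sigma}. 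Once consistency is in hand, Theorem \ref{thm:CT_deviation_Sigma_rectangular} yields
\begin{align*}
\vect\{\bSigma(\widetilde{\btheta}_{\bPi_1\bPi_2}) - \bPi_1\bSigma_0\bPi_2\transpose\}
= D\bSigma(\btheta_{0\bPi_1\bPi_2})(\widetilde{\btheta}_{\bPi_1\bPi_2} - \btheta_{0\bPi_1\bPi_2}) + \vect\{\bR\},
\end{align*}
with $\|\bR\|_{\mathrm{F}} \leq C\|\widetilde{\btheta}_{\bPi_1\bPi_2} - \btheta_{0\bPi_1\bPi_2}\|_2^2 = o_{\prob_0}(1)\cdot\|\widetilde{\btheta}_{\bPi_1\bPi_2} - \btheta_{0\bPi_1\bPi_2}\|_2$. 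Combining this with the lower bound $\sigma_{\min}\{D\bSigma(\btheta_{0\bPi_1\bPi_2})\}\gtrsim 1$ from Theorem \ref{thm:DSigma_singular_value_rectangular} and absorbing the remainder on large-probability events gives $\|\widetilde{\btheta}_{\bPi_1\bPi_2} - \btheta_{0\bPi_1\bPi_2}\|_2 \lesssim \|\bSigma(\widetilde{\btheta}_{\bPi_1\bPi_2}) - \bPi_1\bSigma_0\bPi_2\transpose\|_{\mathrm{F}} = O_{\prob_0}(1/\sqrt{mn})$.

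The main obstacle will be the preliminary consistency step $\widetilde{\btheta}_{\bPi_1\bPi_2}\to\btheta_{0\bPi_1\bPi_2}$, because the intrinsic perturbation inequality of Theorem \ref{thm:intrinsic_deviation_Sigma} is stated only for symmetric positive-definite matrices and has no verbatim rectangular analog in the paper; I will have to assemble the rectangular version by combining Wedin's theorem on the right singular subspace of $\bSigma_0^\bPi$, which has smallest nonzero singular value $\sigma_r(\bSigma_0)$ bounded away from zero, with the Lipschitz control of the inverse Cayley map from Theorem \ref{thm:first_order_deviation_ICT}, and then handling $\bM$ by the identity $\bM = \bSigma\bU$. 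Everything else is either straightforward concentration or a direct appeal to the rectangular intrinsic perturbation and regularity theorems already proved in Section \ref{sub:extension_to_general_rectangular_matrices}.
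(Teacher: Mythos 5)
Your plan is essentially correct and follows the same high-level strategy as the paper (basic inequality, then Wedin plus the Cayley machinery, then the $\bM=\bSigma\bU$ projection), with two differences worth noting and one small citation gap. For the first stage, after the basic inequality both you and the paper reduce to controlling $\|\widetilde{\bSigma}-\bSigma_0\|_{\mathrm{F}}$; the paper does this through a sub-Gaussian process and Dudley's entropy integral over the unit Frobenius ball, whereas you propose a plain entrywise Hoeffding bound plus a union bound over the fixed $p_1p_2$ entries. Since the matrix dimension is fixed, your route is genuinely simpler and just as rigorous. For the second stage, your overall plan is the same as the paper's, but you cite Theorem \ref{thm:first_order_deviation_ICT} as the tool converting the Wedin sin-theta bound into a bound on $\|\widetilde{\bvarphi}-\bvarphi_{0\bPi_1\bPi_2}\|_2$; that theorem is Lipschitz with respect to the Stiefel matrix $\bU$, not with respect to the subspace, so on its own it does not bridge the gap from $\|\sin\Theta\|$ to $\|\bvarphi-\bvarphi_0\|_2$. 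The right tool is Corollary \ref{corr:intrinsic_deviation_Projection} (equivalently Theorem \ref{thm:alignment_free_Davis_Kahan}), which is exactly what the paper invokes; it already packages the passage from the sine-theta distance to the Euclidean distance of the Cayley coordinates under the $\mathbb{O}_+$ representation. Finally, you add an extra linearization step through Theorem \ref{thm:CT_deviation_Sigma_rectangular} and the regularity bound Theorem \ref{thm:DSigma_singular_value_rectangular} after establishing mere consistency. This is a valid and more ``generic'' way to upgrade consistency to a rate, but it is not necessary: once Corollary \ref{corr:intrinsic_deviation_Projection} gives $\|\widetilde{\bvarphi}-\bvarphi^*\|_2 \lesssim \|\sin\Theta\{\bU(\widetilde{\bvarphi}),\bU^*\}\|_{\mathrm{F}} = O_{\prob_0}(1/\sqrt{mn})$ and Theorem \ref{thm:second_order_deviation_CT} converts this into $\|\bU(\widetilde{\bvarphi})-\bU^*\|_{\mathrm{F}} = O_{\prob_0}(1/\sqrt{mn})$, the $\bM$-component follows at the same rate directly from $\bM=\bSigma\bU$, and there is no need to go back through the Fr\'echet derivative and its smallest singular value. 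So the paper's proof is a bit shorter for stage two, while your Hoeffding argument is a bit shorter for stage one; both are correct modulo the substitution of Corollary \ref{corr:intrinsic_deviation_Projection} for Theorem \ref{thm:first_order_deviation_ICT}.
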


\begin{proof}[Proof of Lemma \ref{lemma:consistency_biclustering}]
For convenience denote $\widetilde{\btheta} = \widetilde{\btheta}_{\bPi_1\bPi_2}$ and $\btheta^* = \btheta_{0\bPi_1\bPi_2}$. 
The proof is based on a ``basic inequality'' and the tools from empirical processes. 
Since $\widetilde{\btheta}$ is the minimizer of the loss function $\btheta\mapsto \|\bPi_1\widetilde{\bSigma}\bPi_2\transpose - \bSigma(\btheta)\|_{\mathrm{F}}^2$, it follows that
\begin{align*}
\|\bPi_1\widetilde{\bSigma}\bPi_2\transpose - \bSigma(\widetilde{\btheta})\|_{\mathrm{F}}^2\leq \|\bPi_1\widetilde{\bSigma}\bPi_2\transpose - \bSigma(\btheta_0)\|_{\mathrm{F}}^2.
\end{align*}
Write $\bPi_1\widetilde{\bSigma}\bPi_2\transpose - \bSigma(\widetilde{\btheta}) = \bPi_1\widetilde{\bSigma}\bPi_2\transpose - \bPi_1\bSigma(\btheta_0)\bPi_2\transpose + \bPi_1\bSigma(\btheta_0)\bPi_2\transpose - \bSigma(\widetilde{\btheta})$ and expand the squared Frobenius norm:
\begin{align*}
&\|\bPi_1\widetilde{\bSigma}\bPi_2\transpose - \bPi_1\bSigma(\btheta_0)\bPi_2\transpose\|_{\mathrm{F}}^2 + 2\langle \bPi_1\widetilde{\bSigma}\bPi_2\transpose - \bPi_1\bSigma_0\bPi_2\transpose, \bPi_1\bSigma_0\bPi_2\transpose - \bSigma(\widetilde{\btheta})\rangle_{\mathrm{F}}\\
&\quad + \|\bPi_1\bSigma_0\bPi_2\transpose - \bSigma(\widetilde{\btheta})\|_{\mathrm{F}}^2\\
&\quad\leq \|\bPi_1\widetilde{\bSigma}\bPi_2\transpose - \bPi_1\bSigma(\btheta_0)\bPi_2\transpose\|_{\mathrm{F}}^2,
\end{align*}
where $\langle\cdot,\cdot\rangle_{\mathrm{F}}$ is the Frobenius inner product induced by the Frobenius norm. Therefore,
\begin{align*}
&\|\bPi_1\bSigma_0\bPi_2\transpose - \bSigma(\widetilde{\btheta})\|_{\mathrm{F}}^2\\
&\quad\leq 2\langle \bPi_1\widetilde{\bSigma}\bPi_2\transpose - \bPi_1\bSigma_0\bPi_2\transpose, \bSigma(\widetilde{\btheta}) - \bPi_1\bSigma_0\bPi_2\transpose\rangle\\
&\quad\leq 2\left|\left\langle \bPi_1\widetilde{\bSigma}\bPi_2\transpose - \bPi_1\bSigma_0\bPi_2\transpose, 
	\frac{\bPi_1\bSigma(\widetilde{\btheta})\bPi_2\transpose - \bPi_1\bSigma_0\bPi_2\transpose}{\|\bPi_1\bSigma(\widetilde{\btheta})\bPi_2\transpose - \bPi_1\bSigma_0\bPi_2\transpose\|_{\mathrm{F}}} \right\rangle_{\mathrm{F}}\right|\\
&\quad\quad\times \|\bPi_1\bSigma(\widetilde{\btheta})\bPi_2\transpose  - \bPi_1\bSigma_0\bPi_2\transpose\|_{\mathrm{F}}\\
&\quad\leq 2\sup_{\bDelta\in\mathbb{R}^{p_1\times p_2}:\|\bDelta\|_{\mathrm{F}} = 1}|\langle\bPi_1\widetilde{\bSigma}\bPi_2\transpose - \bPi_1\bSigma_0\bPi_2\transpose, \bDelta\rangle_{\mathrm{F}}|\|\bPi_1\bSigma(\widetilde{\btheta})\bPi_2\transpose - \bPi_1\bSigma_0\bPi_2\transpose\|_{\mathrm{F}}
\end{align*}
whenever $\|\bPi_1\bSigma(\widetilde{\btheta})\bPi_2\transpose - \bPi_1\bSigma_0\bPi_2\transpose\|_{\mathrm{F}} > 0$. Then
\begin{align*}
\|\bPi_1\bSigma_0\bPi_2\transpose - \bSigma(\widetilde{\btheta})\|_{\mathrm{F}}
&\leq 2\sup_{\bDelta\in\mathbb{R}^{p_1\times p_2}:\|\bDelta\|_{\mathrm{F}} = 1}|\langle\bPi_1(\widetilde{\bSigma} - \bSigma_0)\bPi_2\transpose, \bDelta\rangle_{\mathrm{F}}|\\
&= 2\sup_{\bDelta\in\mathbb{R}^{p_1\times p_2}:\|\bDelta\|_{\mathrm{F}} = 1}|\langle\widetilde{\bSigma} - \bSigma_0, \bPi_1\transpose\bDelta\bPi_2\rangle_{\mathrm{F}}|\\
&\leq  2\sup_{\bDelta\in\mathbb{R}^{p_1\times p_2}:\|\bDelta\|_{\mathrm{F}} \leq 1}|\langle\widetilde{\bSigma} - \bSigma_0, \bDelta\rangle_{\mathrm{F}}|.
\end{align*}
Note that this inequality also holds when $\|\bPi_1\bSigma_0\bPi_2\transpose - \bSigma(\widetilde{\btheta})\|_{\mathrm{F}} = 0$. To bound the supremum of the collection of random variables $(|\langle\widetilde{\bSigma} - \bSigma_0, \bDelta\rangle_{\mathrm{F}}|)_{\bDelta}$, we use a maximum inequality for empirical processes. Define a stochastic process $J(\bDelta) = \langle\widetilde{\bSigma} - \bSigma_0, \bDelta\rangle_{\mathrm{F}}$ indexed by $\bDelta\in\mathbb{R}^{p_1\times p_2},\|\bDelta\|_{\mathrm{F}} \leq 1$. It follows from the sub-Gaussian inequality (see, \emph{e.g.}, Proposition 5.10 in \citealp{vershynin2010introduction}) that for any $\bDelta_1,\bDelta_2\in\mathbb{R}^{p_1\times p_2}$ and any $u > 0$,
\begin{align*}
&\prob_0\left\{
|J(\bDelta_1) - J(\bDelta_2)| > u
\right\}\\
&\quad = \prob_0\left\{
|\langle\widetilde{\bSigma} - \bSigma_0, \bDelta_1 - \bDelta_2\rangle_{\mathrm{F}}| > u
\right\}\\
&\quad = \prob_0\left\{
\left|
\sum_{s = 1}^{p_1}\sum_{t = 1}^{p_2}\sum_{i = 1}^m\sum_{j = 1}^n\frac{\{y_{ij} - \expect_0(y_{ij})\}\mathbbm{1}\{\tau_0(i) = s,\gamma_0(j) = t\}[\bDelta_1 - \bDelta_2]_{st}}{m_s(\tau_0)n_t(\gamma_0)}
\right| > u
\right\}\\
&\quad\leq e\cdot\exp\left[-cu^2\left\{\sum_{s = 1}^{p_1}\sum_{t = 1}^{p_2}\sum_{i = 1}^m\sum_{j = 1}^n\left(\frac{\mathbbm{1}\{\tau_0(i) = s,\gamma_0(j) = t\}[\bDelta_1 - \bDelta_2]_{st}}{m_s(\tau_0)n_t(\gamma_0)}\right)^2\right\}^{-1}\right]\\
&\quad = e\cdot\exp\left[-cu^2\left\{\sum_{s = 1}^{p_1}\sum_{t = 1}^{p_2}\frac{([\bDelta_1 - \bDelta_2]_{st})^2}{m_s(\tau_0)n_t(\gamma_0)}\right\}^{-1}\right]\\
&\quad = e\cdot\exp\left[-cu^2\left\{\sum_{s = 1}^{p_1}\sum_{t = 1}^{p_2}\frac{([\bDelta_1 - \bDelta_2]_{st})^2}{m_s(\tau_0)n_t(\gamma_0)}\right\}^{-1}\right]\\
&\quad\leq e\cdot\exp\left[-cu^2\left\{\sum_{s = 1}^{p_1}\sum_{t = 1}^{p_2}\frac{([\bDelta_1 - \bDelta_2]_{st})^2}{\min_{s\in[p_1]}m_s\min_{t\in [p_2]}n_t}\right\}^{-1}\right]\\
&\quad\leq e\cdot\exp\left(-\frac{c'u^2mn}{\|\bDelta_1 - \bDelta_2\|_{\mathrm{F}}^2}\right).
\end{align*}
This shows that the stochastic process $\{J(\bDelta)\}_{\bDelta}$ is a sub-Gaussian process with respect to the metric $d(\bDelta_1, \bDelta_2) = C\|\bDelta_1 - \bDelta_2\|_{\mathrm{F}}/\sqrt{mn}$ for some constant $C > 0$. For any metric space $(\calT, d)$, let $\calN(\eps, \calT, d)$ be the $\eps$-covering number of $\calT$, i.e., the minimum number of balls of the form $B_d(x, \eps):=\{y\in \calT:d(x, y) < \eps\}$ that are needed to cover $\calT$.  It follows from the covering number for Euclidean balls \citep{10.2307/4153175} that
\begin{align*}
\calN\left(\frac{\eps}{2}, \{\bDelta\in\mathbb{R}^{p_1\times p_2}:\|\bDelta\|_{\mathrm{F}} \leq 1\}, d\right)
% \\
&
% \quad
\leq \calN\left(\frac{\eps\sqrt{mn}}{2C}, \{\bx\in\mathbb{R}^{p_1p_2}:\|\bx\|_2\leq 1\}, \|\cdot\|_2\right)\\
&
% \quad
\leq \left(\frac{6C}{\eps\sqrt{mn}}\right)^{p_1p_2}.
\end{align*}
Also, the diameter of $\calT := \{\bDelta\in\mathbb{R}^{p_1p_2}:\|\bDelta\|_{\mathrm{F}} \leq 1\}$ can be upper bounded by
\[
\mathrm{diam}(\calT) = \sup_{\bDelta_1,\bDelta_2\in\calT}\frac{C}{\sqrt{mn}}\|\bDelta_1 - \bDelta_2\|_{\mathrm{F}}\leq \frac{2C}{\sqrt{mn}}.
\]
Hence, the Dudley's integral can be further computed:
\begin{align*}
&\int_0^{\mathrm{diam}(\calT)}\sqrt{\log\calN\left(\frac{\eps}{2}, \{\bDelta\in\mathbb{R}^{p_1\times p_2}:\|\bDelta\|_{\mathrm{F}} \leq 1\}, d\right)}\mathrm{d}\eps\\
&\quad\leq \int_0^{2C/\sqrt{mn}}\sqrt{p_1p_2\log\left(\frac{6C}{\eps\sqrt{mn}}\right)}\mathrm{d}\eps\\
&\quad\leq 2C\sqrt{\frac{p_1p_2\log 3}{mn}} + \sqrt{p_1p_2}\int_0^{2C/\sqrt{mn}}\sqrt{\log\left(\frac{2C}{\eps\sqrt{mn}}\right)}\mathrm{d}\eps\\
&\quad = 2C\sqrt{\frac{p_1p_2\log 3}{mn}} + 2C\sqrt{\frac{p_1p_2}{mn}}\int_0^1\sqrt{\log(1/t)}\mathrm{d}t\asymp \frac{1}{\sqrt{mn}}.
\end{align*}
It follows from the maximal inequality (see, \emph{e.g.}, Corollary 8.5 in \citealp{kosorok2007introduction}) that
\begin{align*}
\|\bSigma(\widetilde{\btheta}) - \bPi_1\bSigma_0\bPi_2\transpose\|_{\mathrm{F}}\leq 2\sup_{\|\bDelta\|_{\mathrm{F}}\leq 1}\left|\langle \widetilde{\bSigma} - \bSigma_0, \bDelta\rangle_{\mathrm{F}}\right| = O_{\prob_0}\left(\frac{1}{\sqrt{mn}}\right).
\end{align*}
Denote $\widetilde\btheta = [\widetilde\bvarphi\transpose, \widetilde\bmu\transpose]\transpose$, $\btheta^* = [(\bvarphi^*)\transpose, (\bmu^*)\transpose]\transpose$, $\bSigma^* = \bSigma(\btheta^*) = \bPi_1\bSigma_0\bPi_2\transpose = \bM^*(\bU^*)\transpose = \bM(\bmu^*)\bU(\bvarphi^*)\transpose$. 
For the second assertion, we first observe that by a variant of the Wedin's $\sin\Theta$ theorem (see Theorem 3 in \citealp{10.1093/biomet/asv008}), 
\begin{align*}
% \|\bU(\widetilde\bvarphi)\bU(\widetilde\bvarphi)\transpose - \bU_*\bU_*\transpose\|_{\mathrm{F}}
% &\asymp
\|\sin\Theta\{\bU(\widetilde\bvarphi), \bU^*\}\|_{\mathrm{F}}
% \\
&\leq \frac{2(2\|\bSigma^*\|_2 + \|\bSigma(\widetilde\btheta) - \bSigma^*\|_{\mathrm{F}}) \|\bSigma(\widetilde\btheta) - \bSigma^*\|_{\mathrm{F}}}
{\sigma_r(\bSigma^*)}
% \\&
 = O_{\prob_0}\left(\frac{1}{\sqrt{mn}}\right).
\end{align*}
Since any $r$ columns of $\bU_0$ are linearly independent, then by Corollary \ref{corr:intrinsic_deviation_Projection}, 
\[
\|\widetilde\bvarphi - \bvarphi^*\|_2\lesssim \|\sin\Theta\{\bU(\widetilde\bvarphi),\bU^*\}\|_{\mathrm{F}} = O_{\prob_0}\left(\frac{1}{\sqrt{mn}}\right).
\]
In addition, by Theorem \ref{thm:second_order_deviation_CT}, we see that
\[
\|\bU(\widetilde\bvarphi) - \bU^*\|_{\mathrm{F}}\leq \|D\bSigma(\btheta^*)\|_2\|\widetilde\bvarphi - \bvarphi^*\|_2 + C\|\widetilde\bvarphi - \bvarphi^*\|_2^2 = O_{\prob_0}\left(\frac{1}{\sqrt{mn}}\right).
\]
Therefore,
\begin{align*}
\|\widetilde\bmu - \bmu^*\|_2 & = \|\bM(\widetilde\bmu) - \bM^*\|_{\mathrm{F}}
= \|\{\bSigma(\widetilde\btheta) - \bSigma(\btheta^*)\}\bU(\widetilde\bvarphi) - \bSigma(\btheta^*)\{\bU(\widetilde\bvarphi) - \bU^*\}\|_{\mathrm{F}}\\
&\leq \|\bSigma(\widetilde\btheta) - \bSigma(\btheta^*)\|_{\mathrm{F}} + \|\bSigma(\btheta^*)\|_2\|\bU(\widetilde\bvarphi) - \bU^*\|_{\mathrm{F}}
% \\&
 = O_{\prob_0}\left(\frac{1}{\sqrt{mn}}\right),
\end{align*}
and hence,
\[
\|\widetilde\btheta - \btheta^*\|_2\leq \|\widetilde\bvarphi - \bvarphi^*\|_2 + \|\widetilde\bmu - \bmu^*\|_2 = O_{\prob_0}\left(\frac{1}{\sqrt{mn}}\right).
\]
The proof is thus completed. 
\end{proof}

\begin{lemma}\label{thm:asymptotic_normality_biclustering_oracle}
Under the notations and setup in Sections \ref{sub:euclidean_representation_of_subspaces}, \ref{sub:extension_to_general_rectangular_matrices}, and \ref{sub:biclustering}, for any two permutation matrices $\bPi_1\in\mathbb{O}(m)$ and $\bPi_2\in\mathbb{O}(n)$, for any two permutation matrices $\bPi_1\in\mathbb{O}(p_1)$ and $\bPi_2\in\mathbb{O}(p_2)$,
% \[
% \widetilde{\btheta} - \btheta = \{D\bSigma(\btheta)\transpose D\bSigma(\btheta)\}^{-1}D\bSigma(\btheta)\transpose\vect\{\widetilde{\bSigma} - \bSigma(\btheta)\} + o_{\prob_0}\left(\frac{1}{\sqrt{mn}}\right).
% \]
% In addition,
\begin{align*}
\sqrt{mn}\bG(\btheta_{\bPi_1\bPi_2})(\widetilde{\btheta}_{\bPi_1\bPi_2} - \btheta_{0\bPi_1\bPi_2}) \overset{\calL}{\to}\mathrm{N}(\zero_d, \eye_d).
\end{align*}
% where
% \begin{align*}
% \bG(\btheta^*)
% & = 
% \{D\bSigma(\btheta^*)\transpose(\bPi_2\otimes\bPi_1)
% \bLambda
% (\bPi_1\transpose\otimes\bPi_2\transpose)
% D\bSigma(\btheta^*)\}^{-1/2}
% \{D\bSigma(\btheta^*)\transpose D\bSigma(\btheta^*)\}
% ,\\
% \bLambda
% & = \mathrm{diag}\{\sigma^2\vect(\bw\bpi\transpose)\}
% \end{align*}
% and $d = p_1r + (p_2 - r)r$ is the dimension of the parameter space $\mathscr{T}$. 
\end{lemma}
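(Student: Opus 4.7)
The plan is to proceed via the standard M-estimator asymptotic expansion, leveraging the $\sqrt{mn}$-consistency already established in Lemma \ref{lemma:consistency_biclustering}. Writing $\btheta^\star := \btheta_{0\bPi_1\bPi_2}$ for brevity, the consistency implies that $\widetilde{\btheta}_{\bPi_1\bPi_2}$ lies in the interior of $\mathscr{T}(p_1,p_2,r)$ with probability tending to one, so the first-order optimality condition
\[
D\bSigma(\widetilde\btheta_{\bPi_1\bPi_2})\transpose \vect\{\bPi_1\widetilde\bSigma\bPi_2\transpose - \bSigma(\widetilde\btheta_{\bPi_1\bPi_2})\} = \zero_d
\]
holds on an event of probability tending to one.

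Next I would invoke Theorem \ref{thm:CT_deviation_Sigma_rectangular} to expand
\[
\vect\{\bSigma(\widetilde\btheta_{\bPi_1\bPi_2}) - \bSigma(\btheta^\star)\} = D\bSigma(\btheta^\star)(\widetilde\btheta_{\bPi_1\bPi_2} - \btheta^\star) + \vect(\bR),
\]
with $\|\bR\|_{\mathrm{F}} \lesssim \|\widetilde\btheta_{\bPi_1\bPi_2} - \btheta^\star\|_2^2 = O_{\prob_0}((mn)^{-1})$. Substituting into the first-order condition, using $\bSigma(\btheta^\star) = \bPi_1\bSigma_0\bPi_2\transpose$, and rearranging yields
\[
D\bSigma(\widetilde\btheta_{\bPi_1\bPi_2})\transpose D\bSigma(\btheta^\star)(\widetilde\btheta_{\bPi_1\bPi_2} - \btheta^\star) = D\bSigma(\widetilde\btheta_{\bPi_1\bPi_2})\transpose\vect\{\bPi_1(\widetilde\bSigma - \bSigma_0)\bPi_2\transpose\} + O_{\prob_0}((mn)^{-1}).
\]
The Lipschitz continuity of $D\bSigma(\cdot)$ near $\btheta^\star$ (which follows in a neighborhood from the calculations underlying Lemma \ref{lemma:DDB_theta_bound_biclustering}) together with the consistency of $\widetilde\btheta_{\bPi_1\bPi_2}$ lets me replace $D\bSigma(\widetilde\btheta_{\bPi_1\bPi_2})$ by $D\bSigma(\btheta^\star)$ modulo an $o_{\prob_0}(1)$ perturbation. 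Theorem \ref{thm:DSigma_singular_value_rectangular} provides the uniform invertibility of the Gram matrix $D\bSigma(\btheta^\star)\transpose D\bSigma(\btheta^\star)$, so after multiplying by $\sqrt{mn}$ and inverting I obtain the linearization
\[
\sqrt{mn}(\widetilde\btheta_{\bPi_1\bPi_2} - \btheta^\star) = \{D\bSigma(\btheta^\star)\transpose D\bSigma(\btheta^\star)\}^{-1} D\bSigma(\btheta^\star)\transpose (\bPi_2\otimes\bPi_1)\sqrt{mn}\vect(\widetilde\bSigma - \bSigma_0) + o_{\prob_0}(1).
\]

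The conclusion then follows from a classical central limit theorem applied to $\sqrt{mn}\vect(\widetilde\bSigma - \bSigma_0)$: the entries of $\widetilde\bSigma$ are independent across $(s,t)$, with $[\widetilde\bSigma]_{st}$ an average of $m_s(\tau_0)n_t(\gamma_0)$ independent mean-zero sub-Gaussian summands of variance $\sigma^2$, and under the hypothesis $m_s(\tau_0)/m \to w_s > 0$ and $n_t(\gamma_0)/n \to \pi_t > 0$ the Cram\'er-Wold device gives that $\sqrt{mn}\vect(\widetilde\bSigma - \bSigma_0)$ converges to a mean-zero Gaussian with the diagonal covariance in the statement. A final application of Slutsky's theorem to the linearization produces the stated asymptotic normality with covariance matrix $\bG(\bPi_1,\bPi_2)$. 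The main technical work is the Taylor-expansion bookkeeping; all the nontrivial ingredients (the second-order expansion of $\bSigma(\cdot)$ with a quadratic remainder, the regularity of $D\bSigma(\btheta^\star)$, the Lipschitz continuity of the Jacobian, and the $\sqrt{mn}$-consistency) have been prepared by earlier results, so no new estimates should be required.
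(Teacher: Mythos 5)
Your proposal is correct and reaches the same linearization and the same CLT/Slutsky conclusion as the paper, but the intermediate bookkeeping is genuinely different. The paper wraps the first-order condition into the estimating function $\bpsi(\btheta) := D\bSigma(\btheta)\transpose\vect\{\bPi_1\widetilde\bSigma\bPi_2\transpose - \bSigma(\btheta)\}$, computes the Jacobian $\partial\bpsi/\partial\btheta\transpose$ (which brings in a second-order term involving $\partial\vect\{D\bSigma(\btheta)\}/\partial\btheta\transpose$), invokes Lemma \ref{lemma:DDB_theta_bound_biclustering} for the Lipschitz continuity of that Jacobian, and then Taylor-expands $\bpsi$ around $\btheta^\star$. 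You instead expand $\bSigma(\widetilde\btheta) - \bSigma(\btheta^\star)$ directly via Theorem \ref{thm:CT_deviation_Sigma_rectangular}, substitute into the stationarity equation $D\bSigma(\widetilde\btheta)\transpose\vect\{\bPi_1\widetilde\bSigma\bPi_2\transpose - \bSigma(\widetilde\btheta)\} = \zero$, and handle the discrepancy between $D\bSigma(\widetilde\btheta)$ and $D\bSigma(\btheta^\star)$ by the Lipschitz continuity of $D\bSigma$ alone (fact (v) in the proof of Lemma \ref{lemma:DDB_theta_bound_biclustering}) together with $\|\widetilde\btheta - \btheta^\star\|_2 = O_{\prob_0}\{(mn)^{-1/2}\}$. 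This is a cleaner route: you never need the second-order Jacobian Lipschitz bound that the paper's $\bpsi$-expansion demands, only the boundedness and first-order Lipschitz continuity of $D\bSigma$ near $\btheta^\star$ plus the quadratic remainder from Theorem \ref{thm:CT_deviation_Sigma_rectangular}. As a minor point of comparison, the sign in your linearization is $+$ whereas the paper writes $-$; the two cannot both be right (a careful differentiation of $\bpsi$ yields a $-D\bSigma\transpose D\bSigma$ term, so your sign is the correct one), but since the limiting Gaussian is centered and symmetric, the discrepancy is immaterial for the stated conclusion.
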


\begin{proof}[Proof of Lemma \ref{thm:asymptotic_normality_biclustering_oracle}]
For convenience still denote $\widetilde{\btheta} = \widetilde{\btheta}_{\bPi_1\bPi_2}$ and $\btheta^* = \btheta_{0\bPi_1\bPi_2}$. 
By construction, $\mathscr{T}(p_1, p_2, r)$ is open and $\btheta^*$ is in the interior of $\mathscr{T}(p_1, p_2, r)$. Also, $\|\widetilde\btheta - \btheta^*\|_2 = O_{\prob_0}\{(mn)^{-1/2}\}$ by Lemma \ref{lemma:consistency_biclustering}, namely, $\widetilde\btheta$ is also in the interior of $\mathscr{T}(p_1, p_2, r)$ with probability going to one. Assume such an event occurs. Consider the function
\[
\bpsi(\btheta) := D\bSigma(\btheta)\transpose\vect\{\bPi_1\widetilde\bSigma\bPi_2\transpose - \bSigma(\btheta)\}.
\]
Clearly, $\bpsi(\btheta)$ is the gradient of the function $\btheta\mapsto \|\bPi_1\widetilde\bSigma\bPi_2\transpose - \bSigma(\btheta)\|_{\mathrm{F}}^2$, i.e.,
\[
\bpsi(\btheta) = \frac{\partial}{\partial\btheta}\|\bPi_1\widetilde\bSigma\bPi_2\transpose - \bSigma(\btheta)\|_{\mathrm{F}}^2.
\]
Since $\widetilde\btheta$ is the minimizer of the function $\btheta\mapsto \|\bPi_1\widetilde{\bSigma}\bPi_2\transpose - \bSigma(\btheta)\|_{\mathrm{F}}^2$ and is in the interior of $\mathscr{T}(p_1, p_2, r)$, it follows that $\bpsi(\widetilde\btheta) = \zero$. 
% Denote $d = p_1r + (p_2 - r)r$ the dimension of the parameter space $\mathscr{T}$. 
By the matrix differential calculus (see, \emph{e.g.}, Theorem 9 in \citealp{MAGNUS1985474}), 
\begin{align*}
\frac{\partial\bpsi(\btheta)}{\partial\btheta\transpose}
& = D\bSigma(\btheta)\transpose D\bSigma(\btheta) + \frac{\partial\vect\{D\bSigma(\btheta)\}\transpose}{\partial\btheta}[\eye_{d}\otimes \vect\{\bSigma(\btheta) - \bPi_1\widetilde\bSigma\bPi_2\transpose\}].
\end{align*}
By Lemma \ref{lemma:DDB_theta_bound_biclustering}, we see that
\[
D\bSigma(\btheta) = \frac{\partial\vect\{\bSigma(\btheta)\}}{\partial\btheta\transpose},\quad\frac{\partial\vect\{D\bSigma(\btheta)\}}{\partial\btheta\transpose}
\]
are both Lipschitz continuous for $\btheta\in \{\btheta:\|\btheta - \btheta^*\|_2 < \eps\}$ for some $\eps > 0$. Namely, the vector valued function
\[
\frac{\partial\bpsi(\btheta)}{\partial\btheta\transpose}
\]
is also Lipschitz continuous for $\btheta\in \{\btheta:\|\btheta - \btheta^*\|_2 < \eps\}$ for some $\eps > 0$, and hence, by Taylor's theorem, 
\[
\bpsi(\widetilde\btheta) = \zero = \bpsi(\btheta^*) + \frac{\partial\bpsi(\btheta^*)}{\partial\btheta\transpose}(\widetilde\btheta - \btheta^*) + \mathbf{r}(\widetilde\btheta, \btheta^*),
\]
where $\|\mathbf{r}(\widetilde\btheta, \btheta^*)\|_2\lesssim \|\widetilde\btheta - \btheta^*\|_2^2 = O_{\prob_0}\{(mn)^{-1}\}$. 
Note that
\begin{align*}
&\left\|\frac{\partial\vect\{D\bSigma(\btheta^*)\}\transpose}{\partial\btheta}[\eye_{d}\otimes \vect\{\bSigma(\btheta^*) - \bPi_1\widetilde\bSigma\bPi_2\transpose\}]\right\|_{\mathrm{F}}\\
&\quad\leq \left\|\frac{\partial\vect\{D\bSigma(\btheta^*)\}\transpose}{\partial\btheta}\right\|_{\mathrm{F}}\left\|[\eye_{d}\otimes \vect\{\bSigma(\btheta^*) - \bPi_1\widetilde\bSigma\bPi_2\transpose\}]\right\|_{2}\\
&\quad\leq \left\|\frac{\partial\vect\{D\bSigma(\btheta^*)\}\transpose}{\partial\btheta}\right\|_{\mathrm{F}}\|\bSigma(\btheta^*) - \bPi_1\widetilde\bSigma\bPi_2\transpose\|_{\mathrm{F}}.
\end{align*}
In addition, 
\begin{align*}
&\expect_0\|\bSigma(\btheta^*) - \widetilde\bSigma\|_{\mathrm{F}}^2\\
&\quad = \expect_0\|\bPi_1(\bSigma_0 - \widetilde\bSigma)\bPi_2\transpose\|_{\mathrm{F}}^2
% \\&
 = \expect_0\|\bSigma_0 - \widetilde\bSigma\|_{\mathrm{F}}^2\\
&\quad = \sum_{s = 1}^{p_1}\sum_{t = 1}^{p_2}\expect_0\left[\frac{1}{m_s(\tau_0)n_t(\gamma_0)}\sum_{i = 1}^m\sum_{j = 1}^n\mathbbm{1}\{\tau_0(i) = s,\gamma_0(j) = t\}\{y_{ij} - \expect_0(y_{ij})\}\right]^2\\
&\quad = \sum_{s = 1}^{p_1}\sum_{t = 1}^{p_2}\frac{1}{m_s(\tau_0)n_t(\gamma_0)}\var(y_{ij}) \lesssim \frac{1}{mn}.
\end{align*}
Therefore,
\[
\frac{\partial\bpsi(\btheta^*)}{\partial\btheta\transpose}
 = D\bSigma(\btheta^*)\transpose D\bSigma(\btheta^*) + O_{\prob_0}\left(\frac{1}{\sqrt{mn}}\right)
\]
Using the fact that $\|\widetilde\btheta - \btheta^*\|_2 = O_{\prob_0}\{(mn)^{-1/2}\}$ from Lemma \ref{lemma:consistency_biclustering}, we further write
\begin{align*}
-\psi(\btheta^*)
& = -D\bSigma(\btheta^*)\transpose\vect\{\bPi_1(\widetilde\bSigma - \bSigma_0)\bPi_2\transpose\}\\
& = \frac{\partial\bpsi(\btheta^*)}{\partial\btheta\transpose}(\widetilde\btheta - \btheta^*) + O_{\prob_0}\left(\frac{1}{mn}\right)\\
& = D\bSigma(\btheta^*)\transpose D\bSigma(\btheta^*)(\widetilde\btheta - \btheta^*) + O_{\prob_0}\left(\frac{1}{mn}\right),
\end{align*}
which implies
\[
\widetilde{\btheta} - \btheta^* = -\{D\bSigma(\btheta^*)\transpose D\bSigma(\btheta^*)\}^{-1}D\bSigma(\btheta^*)(\bPi_2\otimes\bPi_1)\vect(\widetilde{\bSigma} - \bSigma_0) + O_{\prob_0}\left(\frac{1}{mn}\right).
\]
Observe that the entries of $\sqrt{mn}(\widetilde{\bSigma} - \bSigma_0)$ are independent mean-zero random variables, and for all $s\in [p_1]$, $t\in [p_2]$,
\begin{align*}
[\widetilde{\bSigma} - \bSigma_0]_{st} & = \sum_{i = 1}^m\sum_{j = 1}^n\frac{\mathbbm{1}\{\tau_0(i) = s,\gamma_0(j) = t\}\{y_{ij} - \expect_0(y_{ij})\}}{m_s(\tau_0)n_t(\gamma_0)},\\
\var_0(\sqrt{mn}[\widetilde{\bSigma} - \bSigma_0]_{st}) & = \sum_{i = 1}^m\sum_{j = 1}^n\frac{mn\mathbbm{1}\{\tau_0(i) = s,\gamma_0(j) = t\}\var_0(y_{ij})}{m_s(\tau_0)^2n_t(\gamma_0)^2}\\
& = \frac{mn}{m_s(\tau_0)n_t(\gamma_0)}\var_0(y_{ij})\to \sigma^2w_s\pi_t,
\end{align*}
and
\begin{align*}
&\sum_{i = 1}^m\sum_{j = 1}^n\expect_0\left|\frac{\sqrt{mn}}{m_s(\tau_0)n_t(\gamma_0)}\mathbbm{1}\{\tau_0(i) = s,\gamma_0(j) = t\}\{y_{ij} - \expect_0(y_{ij})\}\right|^3\\
&\quad = \frac{(mn)^{3/2}}{m_s(\tau_0)^3n_t(\gamma_0)^3}\sum_{i = 1}^m\sum_{j = 1}^n\mathbbm{1}\{\tau_0(i) = s,\gamma_0(j) = t\}\expect_0\{|y_{ij} - \expect_0(y_{ij})|^3\}\\
&\quad = \frac{(mn)^{3/2}}{m_s(\tau_0)^2n_t(\gamma_0)^2}\expect_0\{|y_{ij} - \expect_0(y_{ij})|^3\} \to 0.
\end{align*}
It follows from the Lyapunov's central limit theorem that
\begin{align*}
\sqrt{mn}[\widetilde{\bSigma} - \bSigma_0]_{st}\overset{\calL}{\to}\mathrm{N}(0, w_s\pi_t\sigma_0^2),
\end{align*}
and hence,
\begin{align*}
\sqrt{mn}\vect(\widetilde\bSigma - \bSigma_0)\overset{\calL}{\to}\mathrm{N}(\zero, \sigma_0^2\mathrm{diag}\{\vect(\bw\bpi)\}).
\end{align*}
The proof is completed by using the fact that
\begin{align*}
&\sqrt{mn}(\widetilde\btheta - \btheta^*)\\
&\quad = -\{D\bSigma(\btheta^*)\transpose D\bSigma(\btheta^*)\}^{-1}D\bSigma(\btheta^*)\transpose(\bPi_2\otimes\bPi_1)\{\sqrt{mn}\vect(\widetilde\bSigma - \bSigma^*)\}
% \\&\quad\quad
 + o_{\prob_0}(1)\\
&\quad\overset{\calL}{\to}\mathrm{N}(\zero_d, \bG(\bPi_1,\bPi_2)),
\end{align*}
where by definition,
\begin{align*}
\bG(\bPi_1,\bPi_2) & = \{D\bSigma(\btheta^*)\transpose D\bSigma(\btheta^*)\}^{-1}D\bSigma(\btheta^*)\transpose\\
&\quad\times (\bPi_2\otimes\bPi_1)\sigma^2\mathrm{diag}\{\vect(\bw\bpi\transpose)\}(\bPi_2\transpose\otimes\bPi_1\transpose)\\
&\quad\times D\bSigma(\btheta^*)\{D\bSigma(\btheta^*)\transpose D\bSigma(\btheta^*)\}^{-1}.
\end{align*}
The proof is thus completed. 
\end{proof}

\begin{proof}[Proof of Theorem \ref{thm:asymptotic_normality_biclustering}]
By the strong consistency of $\widehat{\tau}$ and $\widehat{\gamma}$, there exists two sequences of permutations $(\omega_m)_{n = 1}^\infty$, $(\iota_n)_{n = 1}^\infty$, such that
\[
\prob_0\left[\sum_{i = 1}^m\mathbbm{1}\{\widehat{\tau}(i) \neq \omega_m\circ\tau_0(i)\} = 0, 
\sum_{j = 1}^n\mathbbm{1}\{\widehat{\gamma}(i) \neq \iota_n\circ\gamma_0(i)\} = 0
\right]\to 1.
\]
Denote the event
\[
\Xi_{mn} = \left\{
\sum_{i = 1}^m\mathbbm{1}\{\widehat{\tau}(i) \neq \omega_m\circ\tau_0(i)\} = 0, 
\sum_{j = 1}^n\mathbbm{1}\{\widehat{\gamma}(i) \neq \iota_n\circ\gamma_0(i)\} = 0
\right\}.
\]
Also, for any permutations $\omega:[p_1]\to[p_1],\iota:[p_2]\to[p_2]$, and the associated permutation matrices $\bPi_{1\omega},\bPi_{2\iota}$ such that
\[
\bPi_{1\omega}\begin{bmatrix*}
1\\\vdots\\p_1
\end{bmatrix*} = \begin{bmatrix*}
\omega^{-1}(1)\\\vdots\\\omega^{-1}(p_2)
\end{bmatrix*},\quad
\bPi_{2\iota}\begin{bmatrix*}
1\\\vdots\\p_2
\end{bmatrix*} = \begin{bmatrix*}
\iota^{-1}(1)\\\vdots\\\iota^{-1}(p_2)
\end{bmatrix*},
\]
denote 
\[
Z_{mn}(\omega,\iota) := \sqrt{mn}\bG(\bPi_{1\omega},\bPi_{2\iota})^{-1/2}(\widetilde{\btheta}_{\bPi_{1\omega}\bPi_{2\iota}} - \btheta_{0\bPi_{1\omega}\bPi_{2\iota}}).
\]
By Lemma \ref{thm:asymptotic_normality_biclustering_oracle}, $Z_{mn}(\omega,\iota)\overset{\calL}{\to}\mathrm{N}(\zero_d, \eye_d)$ for all permutations $\omega,\iota$. Therefore, for any measurable set $A\subset\mathbb{R}^d$, 
\begin{align*}
\min_{\omega:[p_1]\to [p_1],\iota:[p_2]\to [p_2]}\prob_0\{Z_{mn}(\omega,\iota)\in A\}\to \int_A\phi(\bx\mid\zero_d,\eye_d)\mathrm{d}\bx,\\
\max_{\omega:[p_1]\to [p_1],\iota:[p_2]\to [p_2]}\prob_0\{Z_{mn}(\omega,\iota)\in A\}\to \int_A\phi(\bx\mid\zero_d,\eye_d)\mathrm{d}\bx,
\end{align*}
where the minimum and maximum are taken with regard to all permutations $\omega:[K]\to [K]$. 
Now let $A\subset\mathbb{R}^{d}$ be measurable. First note that
\begin{align*}
0&\leq \prob_0\left\{\sqrt{mn}\bG(\bPi_{1\omega_m},\bPi_{2\iota_n})^{1/2}(\widehat{\btheta}_{mn} - \btheta_{0\bPi_{1\omega_m}\bPi_{2\iota_n}})\in A, \bY\in\Xi_{mn}^c\right\}\\
&\leq \prob_0\left\{Z_{mn}(\omega_m,\iota_n)\in A, \bY\in \Xi_{mn}^c\right\}
% \\&
\leq \prob_0(\Xi_{mn}^c) \to 0.
\end{align*}
Let $\bPi_{1m} := \bPi_{1\omega_m}$ and $\bPi_{2n} := \bPi_{2\iota_n}$. Note that over the event $\Xi_{mn}$, $\widehat{\btheta}_{mn} = \widetilde{\btheta}_{\bPi_{1\omega_m}\bPi_{2\iota_n}} = \widetilde{\btheta}_{\bPi_{1m}\bPi_{2n}}$. Using the asymptotic normality of $Z_{mn}(\omega, \iota)$ for all $\omega,\iota$, we have
\begin{align*}
&\prob_0\left[\sqrt{mn}\bG(\bPi_{1m},\bPi_{2n})^{-1/2}(\widehat{\btheta}_{mn} - \btheta_{0\bPi_{1m}\bPi_{2n}})\in A, \bY\in \Xi_{mn}\right]\\
&\quad = \prob_0\left[
\sqrt{mn}\bG(\bPi_{1m},\bPi_{2n})^{-1/2}(\widetilde{\btheta}_{\bPi_{1\omega_m}\bPi_{2\iota_n}} - \btheta_{0\bPi_{1m}\bPi_{2n}})\in  A, \bY\in \Xi_{mn}
\right]\\
&\quad \leq \prob_0\{Z_{mn}(\omega_m,\iota_n)\in A\}\leq \max_{\omega,\iota}\prob_0(Z_{mn}(\omega,\iota)\in A) \to \int_A\phi(\bx\mid\zero_d,\eye_d)\mathrm{d}\bx,\\
&\prob_0\left[
\sqrt{mn}\bG(\bPi_{1m},\bPi_{2n})^{-1/2}(\widehat{\btheta}_{mn} - \btheta_{0\bPi_{1m}\bPi_{2n}})\in A, \bY\in \Xi_{mn}
\right]\\
&\quad = \prob_0\left[
\sqrt{mn}\bG(\bPi_{1m},\bPi_{2n})^{-1/2}(\widetilde{\btheta}_{\bPi_{1\omega_m}\bPi_{2\iota_n}} - \btheta_{0\bPi_{1m}\bPi_{2n}})\in  A, \bY\in \Xi_{mn}
 \right]\\
&\quad = \prob_0\left\{Z_{mn}(\omega_m,\iota_n)\in A, \bY\in \Xi_{mn}\right\}\\
&\quad = \prob_0\left\{Z_{mn}(\omega_m,\iota_n)\in A\right\} + \prob_0(\bY\in\Xi_{mn})
% \\&\quad\quad
 - \prob_0\left[\left\{Z_{mn}(\omega_m,\iota_n)\in A\right\}\cup
\left\{\bY\in \Xi_{mn}
\right\}\right]\\
&\quad \geq \prob_0\left\{Z_{mn}(\omega_m,\iota_n)\in A\right\} + \prob_0(\bY\in\Xi_{mn}) - 1\\
&\quad \geq \min_{\omega,\iota}\prob_0\{Z_{mn}(\omega,\iota)\in A\} + 1 - o(1) - 1
% \\&\quad
 \to \int_A\phi(\bx\mid\zero_d,\eye_d)\mathrm{d}\bx.
\end{align*}
Namely, 
\[
\prob_0\left[
\sqrt{mn}\bG(\bPi_{1m},\bPi_{2n})^{-1/2}(\widehat{\btheta}_{mn} - \btheta_{0\bPi_{1m}\bPi_{2n}})\in A, \bY\in \Xi_{mn}
\right]
 \to\int_A\phi(\bx\mid\zero_d,\eye_d)\mathrm{d}\bx.
\]
Hence, for any measurable set $A\subset\mathbb{R}^{d}$, 
\begin{align*}
&\prob_0\left[
\sqrt{mn}\bG(\bPi_{1m},\bPi_{2n})^{-1/2}(\widehat{\btheta}_{mn} - \btheta_{0\bPi_{1m}\bPi_{2n}})\in A
\right]\\
&\quad = \prob_0\left[
\sqrt{mn}\bG(\bPi_{1m},\bPi_{2n})^{-1/2}(\widehat{\btheta}_{mn} - \btheta_{0\bPi_{1m}\bPi_{2n}})\in A, 
\bY\in\Xi_{mn}
\right]\\
&\quad\quad + \prob_0\left[
\sqrt{mn}\bG(\bPi_{1m},\bPi_{2n})^{-1/2}(\widehat{\btheta}_{mn} - \btheta_{0\bPi_{1m}\bPi_{2n}})\in A, \bY\in\Xi_{mn}^c
\right]\\
&\quad = \prob_0\left[
\sqrt{mn}\bG(\bPi_{1m},\bPi_{2n})^{-1/2}(\widehat{\btheta}_{mn} - \btheta_{0\bPi_{1m}\bPi_{2n}})\in A, 
\bY\in \Xi_{mn}
\right] + o(1)\\
% & = \prob_0\left[ n\{\widehat{\btheta}^{(\mathrm{OS})}(\tau_0) - \btheta_0\}\in A\right] + o(1)\\
&\quad\to \int_A\phi(\bx\mid\zero_d,\eye_d)\mathrm{d}\bx.
\end{align*}
The proof is thus completed.
\end{proof}

\bibliographystyle{apalike}
\bibliography{reference}
\end{document}